\documentclass{mod}
\usepackage{url}
\usepackage{setspace}
\usepackage{scrextend}
\usepackage{cite}
\usepackage{tocloft}

\usepackage{amsmath}
\usepackage{amsthm}
\usepackage{amssymb}

\usepackage{mathtools}
\usepackage{mathrsfs}
\usepackage[all]{xy}
\usepackage[toc,page]{appendix}
\usepackage{titlesec}
\usepackage{textcomp}

\usepackage{subfig}

 
 \newcounter{mainthm}

 \newtheorem{thm}{Theorem}[section]
 \newtheorem{lem}[thm]{Lemma}
 \newtheorem{prop}[thm]{Proposition}
 \newtheorem{cor}[thm]{Corollary}

 \theoremstyle{definition}
 \newtheorem{defn}[thm]{Definition}
 \newtheorem{ex}[thm]{Example}

\newtheorem{defn-thm}[thm]{Definition-Theorem}
\newtheorem{sublemma}[thm]{Sublemma}
\newtheorem{defn-lem}[thm]{Definition-Lemma}
\newtheorem{defn-prop}[thm]{Definition-Proposition}
\newtheorem{assumption}[thm]{Assumption}
\newtheorem{convention}
[thm]{Convention}

\newtheoremstyle{rmk}
{5pt}
{5pt}
{}
{}
{\itshape}
{}
{.5em}
{}

\theoremstyle{rmk}
\newtheorem{rmk}[thm]{Remark}

\newcommand{\smallsim}{{\text{\texttildelow}}}


%

\newcommand{\id}{\mathrm{id}}
\newcommand{\mathds}[1]{\text{\usefont{U}{dsrom}{m}{n}#1}}
\newcommand{\one}{\mathds {1}}

\newcommand{\ud}{\mathrm{ud}}
\newcommand{\UD}{\mathscr{UD}}
\newcommand{\simud}{\stackrel{\mathrm{ud}}{\sim}}
\newcommand{\con}{\mathsf{con}}

\newcommand{\pr}{\mathrm{pr}}
\newcommand{\ev}{\mathrm{ev}}

\newcommand{\forget}{{\mathfrak {forget}}}
\newcommand{\uu}{\mathbf u}
\newcommand{\ia}{{\mathfrak a}}
\newcommand{\e}{\mathbf e}
\newcommand{\B}{\mathsf B}
\newcommand{\mi}{\mathfrak i}
\newcommand{\mI}{\mathfrak I}
\newcommand{\mg}{\mathbf g}
\newcommand{\T}{\mathsf T}
\newcommand{\Tr}{\mathscr{T}}
\newcommand{\A}{\mathfrak A}
\newcommand{\Cint}{C^{\mathrm{int}}}
\newcommand{\Cext}{C^{\mathrm{ext}}}

\newcommand{\tri}{{\mathrm{tri}}}

\newcommand{\m}{\mathfrak m}

\newcommand{\M}{\mathfrak M}
\newcommand{\mc}{\mathfrak c}
\newcommand{\mC}{\mathfrak C}
\newcommand{\mP}{\mathfrak P}
\newcommand{\mQ}{\mathfrak Q}
\newcommand{\oi}{{[0,1]}}
\newcommand{\ab}{{[a,b]}}
\newcommand{\f}{\mathfrak f}
\newcommand{\F}{\mathfrak F}
\newcommand{\C}{C}

\newcommand{\mH}{ H}
\newcommand{\h}{\mathfrak h}
\newcommand{\z}{\mathbf z}

\newcommand{\s}{s}
\newcommand{\I}{\mathfrak I}

\newcommand{\K}{\mathbb K}
\newcommand{\GG}{\mathbb G}

\newcommand{\g}{\mathfrak g}
\newcommand{\G}{\mathfrak G}

\newcommand{\an}{\mathrm{an}}
\newcommand{\JJ}{{\pmb{\mathbb J}}}
\newcommand{\mF}{{\mathbf F}}
\newcommand{\mJ}{{\mathbf J}}
\newcommand{\HL}{H^*(L)}
\newcommand{\OL}{\Omega^*(L)}

\DeclareMathOperator{\Log}{Log}
\DeclareMathOperator{\Ob}{Ob}
\DeclareMathOperator{\Gr}{Gr}
\DeclareMathOperator{\vol}{vol}
\DeclareMathOperator{\dist}{dist}
\DeclareMathOperator{\area}{area}
\DeclareMathOperator{\Sp}{Sp}
\DeclareMathOperator{\incl}{Incl}
\DeclareMathOperator{\eval}{Eval}
\DeclareMathOperator{\restr}{Restr}

\DeclareMathOperator{\diff}{Diff}
\DeclareMathOperator{\Spec}{Spec}
\DeclareMathOperator{\val}{val}
\DeclareMathOperator{\Val}{\mathfrak{trop}}
\DeclareMathOperator{\trop}{\mathfrak {trop}}
\DeclareMathOperator{\Hom}{Hom}
\DeclareMathOperator{\DA}{DA}
\DeclareMathOperator{\DI}
{DI}
\DeclareMathOperator{\Obj}{Obj}
\DeclareMathOperator{\Mor}{Mor}
\DeclareMathOperator{\CU}{CU}
\DeclareMathOperator{\Corr}{Corr}
\DeclareMathOperator{\CC}{\mathbf{CC}}

\newcommand*{\Scale}[2][4]{\scalebox{#1}{$#2$}}%

\titleformat{\paragraph}[runin]{\small\sffamily\bfseries
}{}{}{}[]
\titleformat{\subsubsection}[runin]{\itshape\bfseries\normalsize}{\thesubsubsection \ }{0em}{}[\mbox{ . } ]
\titlespacing{\subsubsection}
{0pt}
{3.25ex plus 1ex minus .2ex}
{0pt}

\usepackage{indentfirst}

\usepackage{titletoc}


\titlecontents{section}
[0em]
{\scriptsize\bfseries\vspace{0pt}}%
{\thecontentslabel.\enspace}
{}
{\titlerule*[0.38pc]{.}\contentspage}%

\titlecontents{subsection}
[0em]
{\scriptsize \vspace{0pt}}%
{\qquad \quad \thecontentslabel.\enspace}
{}
{\titlerule*[0.5pc]{.}\contentspage}%

\titlecontents{subsubsection}
[0em]
{\footnotesize \vspace{1pt}}%
{\qquad \qquad \thecontentslabel.\enspace}
{}
{\titlerule*[0.5pc]{.}\contentspage}%

\setcounter{tocdepth}{2}

\begin{document}
	
	\setlength{\parindent}{15pt}	\setlength{\parskip}{0em}

\title{Family Floer program and non-archimedean SYZ mirror construction}
\author[Hang Yuan]{Hang Yuan}
\begin{abstract} {\sc Abstract:}
	\footnotesize
Given a Lagrangian fibration, we provide a natural construction of a mirror Landau-Ginzburg model consisting of a non-archimedean analytic space, a superpotential function, and a dual affinoid torus fibration.
The mirror in the B-side is constructed by the counts of holomorphic disks in the A-side together with the non-archimedean analysis and some novel homological algebra of $A_\infty$ structures.
The process builds on family Floer theory and introduces a critical enhancement to the traditional Maurer-Cartan framework.
It fits well with the SYZ dual fibration picture, elucidating both the quantum corrections and the wall-crossing phenomenon.
Instead of a special Lagrangian fibration, we only need to assume a weaker semipositive Lagrangian fibration to carry out the non-archimedean SYZ mirror reconstruction.
\end{abstract}
\maketitle
%
%

\tableofcontents

\section{Introduction}
\label{S_introduction}

The Strominger-Yau-Zaslow (SYZ) conjecture \cite{SYZ} posits that mirror symmetry for Calabi-Yau manifolds can be seen as a duality between torus fibrations. Framed within Kontsevich's Homological Mirror Symmetry (HMS) conjecture \cite{KonICM}, the SYZ perspective suggests that a mirror to a Calabi-Yau manifold could be realized as a moduli space of Lagrangian tori, each endowed with a rank-one local system over $U(1)$ (or actually $U_\Lambda$). Further, the mirror symmetry project has been expanded to include non-Calabi-Yau contexts by taking Landau-Ginzburg models.

The SYZ mirror reconstruction, based on dual Lagrangian fibrations, requires modifications through what is known as "quantum correction" or "instanton correction" to account for non-classical effects.
The quest to define this modification accurately has been ongoing for long time.
In \cite{AuTDual}, Auroux conjectures that for a K\"ahler manifold $X$ with an anticanonical divisor $D$ and a holomorphic volume form $\Omega$ having the pole along $D$, the mirror space $X_{\mathbb C}^\vee$ should be conceptualized as a moduli space of special Lagrangian tori in $X\setminus D$, each equipped with flat $U(1)$ connections. Moreover, there should be a global superpotential $W^\vee_{\mathbb C}: X^\vee_{\mathbb C}\to \mathbb C$, generated by the Fukaya-Oh-Ohta-Ono $\m_0$ obstruction to Floer homology. For a smooth torus fiber $L$, its SYZ dual was anticipated to be $H^1(L; U(1))$, i.e. the space of local $U(1)$-systems.
In this paper, we aim to positively affirm Auroux's conjecture, albeit within a non-archimedean framework.
Our approach builds on Fukaya's family Floer theory proposal \cite{FuFamily,FuAbelian_preprint,FuAbelian}.

\subsection{Main theorem}
Let $(X,\omega)$ be a symplectic manifold of real dimension $2n$ which is closed or convex at infinity. Suppose there is a smooth proper Lagrangian torus fibration $\pi_0:   X_0 \to B_0$ in an open subset $X_0$ of $X$ over an $n$-dimensional base manifold $B_0$.
Let $L_q$ denote the Lagrangian fiber over $q$.
Note that the quantum correction of holomorphic disks bounded by $\pi_0$-fibers is of global nature and sweeps not just in $X_0$ but in $X$. Thus, we should consider the pair $(X,\pi_0)$ as a whole.

The \textit{Novikov field} $\Lambda=\mathbb{C}((T^{\mathbb{R}}))$, consisting of formal power series $x=\sum_{i=0}^\infty a_i T^{\lambda_i}$ where $a_i \in \mathbb{C}$ and $\lambda_i\in\mathbb R$ strictly increases to infinity, is a non-archimedean field. It features a norm defined by $|x|=|x|_\Lambda=e^{-\lambda_0}$ for $a_0 \neq 0$, and this norm adheres to the non-archimedean triangle inequality $|x+y| \leqslant \max\{|x|,|y|\}$. This norm is equivalently characterized by a valuation map, $\val$, defined as $\val(x) = -\log|x|$.
Let $U_\Lambda=\{x\in\Lambda\mid |x|=1\}$ be the \textit{unitary Novikov group}, i.e., the unit circle in $\Lambda$.

\begin{thm}\label{Main_theorem_thm}
	Given Assumption \ref{assumption-mu ge 0} and \ref{assumption_obstruction_ideal} below, we can associate to the pair $(X,\pi_0)$ a triple
	$
	\mathbb X^\vee:=(X_0^\vee,W_0^\vee, \pi_0^\vee)
	$
	consisting of a non-archimedean analytic space $X_0^\vee$ over $\Lambda$, a global analytic function $W_0^\vee$, and an affinoid torus fibration $\pi_0^\vee: X_0^\vee\to B_0$
such that the following properties hold:

	\begin{enumerate}[i)]
		\itemsep 0.15em
		\item The analytic structure of $X_0^\vee$ is unique up to isomorphism.
		\item The integral affine structure on $B_0$ from $\pi_0^\vee$ coincides with the one from the fibration $\pi_0$.
		\item The set of closed points in $X_0^\vee$ coincides with the disjoint union
		\begin{equation}
			\label{union_mirror_intro}
			\bigcup_{q\in B_0} H^1(L_q; U_\Lambda)
		\end{equation}
	of the sets of local $U_\Lambda$-systems on the $\pi_0$-fibers,
	 and the map $\pi_0^\vee$ sends every $H^1(L_q; U_\Lambda)$ to $q$.
	\end{enumerate}
\end{thm}

Arnold-Liouville's theorem allows us to see any smooth Lagrangian torus fibration $\pi_0: X_0 \to B_0$ as a globalization of the complex logarithm map $\Log: (\mathbb C^*)^n \to \mathbb R^n$, where $z_k \mapsto \log|z_k|$. Namely, a smooth Lagrangian torus fibration merges multiple $\Log^{-1}(V_i) \to V_i$ for small open subsets $V_i$'s. The gluing of $\Log^{-1}(V_i)$ occurs within the symplectic manifold category, while the gluing of $V_i$'s endows $B_0$ with an \textit{integral affine structure}.
In non-archimedean analytic geometry, Kontsevich and Soibelman \cite{KSAffine} introduced a parallel concept, \textit{affinoid torus fibration}, acting as a globalization of the tropicalization map $\trop: (\Lambda^*)^n \to \mathbb R^n$, where $z_k \mapsto -\log |z_k|_\Lambda \equiv \val(z_k)$.
To grasp non-archimedean topology intuitively, consider this somewhat imprecise analogy: as sequence convergence in $(\mathbb C^*)^n$ is analyzed by the complex norm on $\mathbb C$, sequence convergence in $(\Lambda^*)^n$ can be approached by the non-archimedean norm on $\Lambda$.
Moreover, the affinoid torus fibration also inherently endows the base with an integral affine structure \cite[4.1]{KSAffine}, with the upstairs gluing executed within the category of non-archimedean analytic spaces.

As a toy model, the $\Log$ and $\trop$ torus fibrations can be considered SYZ duals. From this viewpoint, Theorem \ref{Main_theorem_thm} can be seen as a globalization of this toy SYZ model.
Roughly speaking, the quantum correction from Maslov-0 disks gives rise to a structure sheaf for a non-archimedean analytic space on the "fiberwise dual" set (\ref{union_mirror_intro}); the enumeration of Maslov-2 disks, grouped pearly with Maslov-0 disks, generates a superpotential function $W$ with respect to this analytic structure.
This picture follows the classic SYZ mirror duality of local $U(1)$-systems \cite{Gross_topo_MS, AuTDual}.
However, transitioning from $U(1)$ to $U_\Lambda$ is essential to accommodate the structure of affinoid torus fibrations. This adjustment ultimately leads to a mathematically precise formulation of the SYZ conjecture as exemplified in \cite{Yuan_local_SYZ,Yuan_conifold,Yuan_A_n}.

Let $\mathfrak J(X,\omega)$ be the space of $\omega$-tame almost complex structures. Fix a Lagrangian submanifold $L$. Let $\mu: \pi_2(X,L)\to\mathbb Z$ be the Maslov index. 
An almost complex structure $J$ is called \textit{$L$-semipositive} if $\mu(\beta)\ge 0$ for any $\beta\in \pi_2(X,L)$ that can be represented by a $J$-holomorphic stable map.
Denote by
$\mathfrak J(X,L,\omega)$ the subspace in $\mathfrak J(X,\omega)$ of all the $L$-semipositive $\omega$-tame almost complex structures.

\vspace{-0.4em}
\begin{assumption}\label{assumption-mu ge 0}
	For every compact subset $K\subset B_0$, the intersection
	$
	\mathfrak J_K$
	of $\mathfrak J(X,L_q,\omega)$ for all $q\in K$ has a non-empty interior in $\mathfrak J(X,\omega)$.
	In this case, we call the Lagrangian fibration $\pi$ is \textit{semipositive}.
	We further assume that the base $B_0$ and $\mathfrak J_K$ are connected and open.
\end{assumption}
\vspace{-0.2em}

	We justify the above assumption in two aspects as follows.
	We often know the non-emptiness.
	Assume there is a holomorphic volume form $\Omega$, and we can define the phase function $\theta=\arg \Omega|_L : L\to S^1$ on each Lagrangian fiber $L$.
	A Lagrangian fiber is called \textit{special} if $\theta$ is constant, while it is called \textit{graded} (see \cite{SeidelGraded}) if $\theta$ lifts to a real-valued function. By \cite[Lemma 3.1]{AuTDual}, every graded or special Lagrangian fibration is semipositive.
	Moreover, the openness condition is also inessential. Indeed, we similarly define $\mathfrak J_K^{\le E}$ for any $E>0$ by requiring only the stable maps of energy $\le E$ have non-negative Maslov indices.
	Then, the Gromov compactness exactly implies that every $\mathfrak J_K^{\le E}$ is open. Note that the $\mathfrak J_K^{\le E}$ decreases when $K$ or $E$ increases, and we have $\mathfrak J_K=\lim_{E\to \infty} \mathfrak J_K^{\le E}$.
 
 \vspace{-0.4em}
\begin{assumption}
	\label{assumption_obstruction_ideal}
	Every Lagrangian $\pi_0$-fiber $L$ has vanishing obstruction ideal.
\end{assumption}
\vspace{-0.2em}

The \textit{obstruction ideal} associated to $L$ is a finitely generated ideal in the formal power series ring $\Lambda[[\pi_1(L)]]$ (see (\ref{intro_ia_i}) or \S \ref{sss_local_charts_defn}).
The vanishing of the obstruction ideal remains consistent across different choices. In the recent study \cite{Yuan_unobs}, we call such $L$ as \textit{properly unobstructed} and justify that it is indeed a mild assumption.
It doesn't mean the absence of Maslov-0 holomorphic disks; rather, these disks still play a role in the wall-crossing phenomenon, albeit with a certain cancellation effect among them.
In fact, a sufficient condition is that every Lagrangian $\pi_0$-fiber $L$ is preserved by an anti-symplectic involution due to \cite{Solomon_Involutions}; in this case, an anti-symplectic involution $\varphi$ induces a pairwise cancellation for $\beta\xleftrightarrow{} -\varphi_*\beta$ on $\pi_2(X,L_q)$ to deduce Assumption \ref{assumption_obstruction_ideal}.

Finally, we remark that our proof for Theorem \ref{Main_theorem_thm} actually obtains a more general result at the cost of conciseness, and let us still state it below for completeness:

\vspace{-0.5em}
\begin{thm}
	In the situation of Theorem \ref{Main_theorem_thm}, if we drop Assumption \ref{assumption_obstruction_ideal}, then the same conclusions hold, except that the set of closed points in $X_0^\vee$ is only a subset of (\ref{union_mirror_intro}) locally identified with the zero loci of the obstruction ideals, and that the fibration map $\pi_0^\vee$ is not necessarily an affinoid torus fibration.
\end{thm}

\subsection{Application}
Next, we will discuss several applications, where the non-archimedean framework is always critical.

\vspace{0.1em}
\textbf{\textit{Examples of SYZ conjecture with singular fibers. }}
Recent work \cite{Yuan_local_SYZ,Yuan_conifold,Yuan_A_n} has provided many explicit examples of Theorem \ref{Main_theorem_thm}, where $\pi_0: X_0 \to B_0$ is the smooth part of a singular Lagrangian fibration $\pi: X \to B$. In these examples, the mirror analytic space $X_0^\vee$ embeds into the analytification of an algebraic variety anticipated in both mathematics and physics literature; the dual affinoid torus fibrations $\pi_0^\vee: X_0^\vee \to B_0$ admit explicit formulas that facilitate a natural analytic extension $\pi^\vee: X^\vee \to B$ over the singular region $\Delta=B\setminus B_0$, using the data from the singular part of $\pi$.
Drawing on \cite{hong2018immersed}, we also uncover that the dual singular $\pi^\vee$-fibers over $\Delta$ can be set-theoretically strictly larger than the Maurer-Cartan sets associated to singular Lagrangian $\pi$-fibers over $\Delta$ \cite{Yuan_local_SYZ}.
This supports our claim that the mirror analytic structure of $(X_0^\vee, \pi_0^\vee)$ typically extends beyond the traditional Maurer-Cartan framework.
There are also potential non-explicit examples to uncover. For instance, let $X$ be an elliptic K3 surface and $\pi:X\to B$ is a special Lagrangian fibration with 24 focus-focus singularities (cf. \cite{Gross_Wilson}). By applying Theorem \ref{Main_theorem_thm} to its smooth part $\pi_0$, we abstractly obtain a $\pi_0^\vee:X_0^\vee\to B_0$, and the local singular models in \cite{Yuan_local_SYZ,Yuan_A_n} may facilitate the singular extension $\pi^\vee: X^\vee\to B$. A mirror volume form is also anticipated to exist, but it needs further exploration.

\vspace{0.1em}
\textbf{\textit{Disk counting.}}
 The inclusion of the superpotential in Theorem \ref{Main_theorem_thm} plays a crucial role in performing many concrete calculations. 
 In \cite{Yuan_e.g._FamilyFloer}, classic enumerative geometry results are derived from studying the wall-crossing of $W^\vee$. The Gross's special Lagrangian fibration \cite{Gross_ex} has two chambers containing Clifford and Chekanov tori \cite{Chekanov1996LagrangianTI, eliashberg1997problem}, with the Clifford superpotential well-known \cite{Cho_Oh} and the Chekanov superpotential often unknown. The Chekanov superpotential can be determined by analyzing the gluing maps across the wall. This approach yields all one-pointed open Gromov-Witten invariants for Chekanov-type tori in smooth toric Fano compactifications of $\mathbb C^n$, such as $\mathbb {CP}^n$ or $\mathbb {CP}^r \times \mathbb {CP}^{n-r}$. 
 These retrieve the works of Auroux and Chekanov-Schlenk \cite[5.7 \& 5.12]{AuTDual}\cite{Chekanov_Schlenk}, and Pascaleff-Tonkonog \cite[Th 1.4]{PT_mutation}, showcasing similar results by distinct methods.

 \vspace{0.1em}
 
\textbf{\textit{Folklore conjecture for Landau-Ginzburg models.}}
A celebrated conjecture by Auroux, Seidel, and Kontsevich states that every critical value of the superpotential $W^\vee$ matches an eigenvalue in the quantum cohomology $QH^*(X)$ of $X$. This has been verified for monotone Lagrangians \cite{AuTDual}. In the presence of nontrivial Maslov-0 disks, this has also been recently confirmed in \cite{Yuan_I_FamilyFloer} under Assumption \ref{assumption_obstruction_ideal}.
The new challenge arises from the necessity to engage with minimal model $A_\infty$ algebras or "holomorphic pearly trees".
The usual unobstructedness (i.e. the existence of a weak bounding cochain) is inadequate for this, and we must employ the stronger unobstructedness as Assumption \ref{assumption_obstruction_ideal}.
This conclusion is further supported by explicit examples in \cite{Yuan_local_SYZ}.
The method in Theorem \ref{Main_theorem_thm} doesn't fully exploit ud-homotopy relations (Section \ref{ss_UD}), in contrast to the self-Floer cohomology with affinoid coefficients introduced in \cite{Yuan_c_1}, which extracts more information from these homotopy relations. All these approaches surpass the standard Maurer-Cartan framework and will be further developed in \cite{Yuan_affinoid_coeff}, allowing us to neutralize the impact of Maslov-0 disk obstructions up to affinoid algebra isomorphisms.

 \begin{figure}
 	  \vspace{-1em}
 	\centering
 	\includegraphics[width=3.5cm]{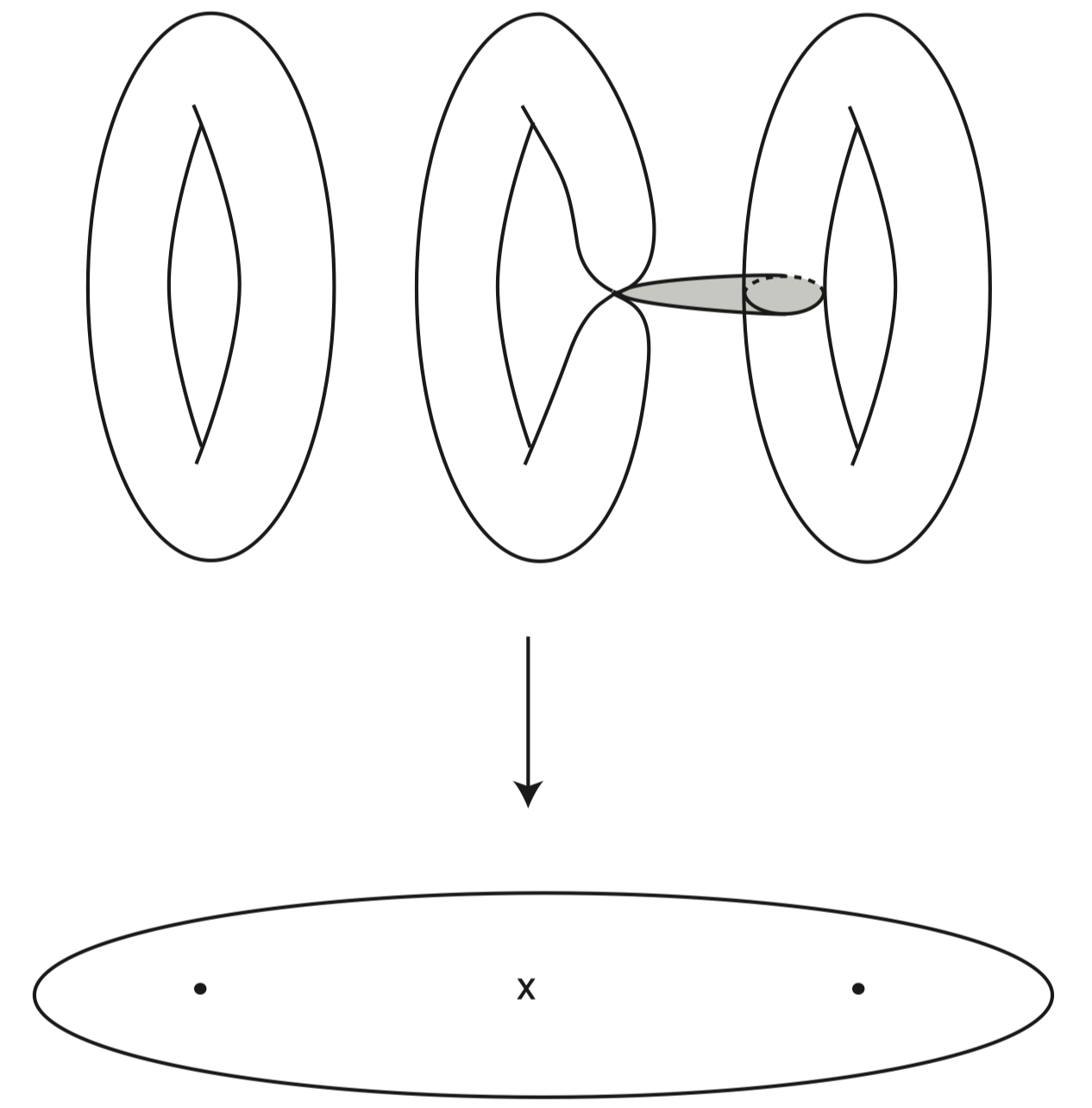}
 	\caption{
 		\footnotesize 
 		The shadowed disk represents the quantum correction (taken from \cite{MSClayII}).
 	}
 	\label{figure_intro_vanishing_cycle}
 	 \vspace{-1em}
 \end{figure}

\subsection{Outline}
\label{ss_sketch_of_proof}

This section offers a brief overview of Theorem \ref{Main_theorem_thm} and explains why the traditional Maurer-Cartan framework is inadequate for the global analytic structure.
A related result was claimed in \cite{Tu} in light of \cite{FuBerkeley,FuCyclic}, but the cocycle condition within the non-archimedean category appears to be unjustified.

\subsubsection{Motivating ideas}
\label{sss_motivating_ideas}
Pick a Lagrangian fiber $L$ and an $\omega$-tame almost complex structure $J$.
Given $\beta\in\pi_2(X,L)$, let $E(\beta)=\omega\cap \beta=\int_\beta \omega$ and $\mu(\beta)$ denote the energy and the Maslov index.
A filtered $A_\infty$ algebra (with topological labels; see \S \ref{ss_Gapped_A_infty}) associated to $L$ is a collection of multilinear maps of degree $2-k-\mu(\beta)$ on the de Rham complex
\[
\check \m_{k,\beta}: \Omega^*(L)^{\otimes k} \to \Omega^*(L)
\]
parameterized by $\beta\in\pi_2(X,L)$ and $k\geqslant 0$ such that
$\check \m_{k,0}=0$ for $k\neq 1,2$; $\check \m_{2,0}(h_1,h_2)= \pm h_1\wedge h_2$; $\check \m_{1,0}(h)=dh$; and the following $A_\infty$ associativity relation
\[
\textstyle \sum_{k_1+k_2=k+1} \sum_{\beta_1+\beta_2=\beta} \sum_{i=1}^{k_1+1} (-1)^\ast \check \m_{k_1,\beta_1}(h_1,\dots, \check \m_{k_2,\beta_2} (h_i, \dots, h_{i+k_2-1}) ,\dots, h_k)=0
\]
By Gromov's compactness, there are at most countably many $\beta$ such that $\m_{k,\beta}$ is nonzero.
Selecting the Hodge decomposition of $\Omega^*(L)$ with respect to a metric $g$ and applying the homological perturbation to the above operators, we can produce a new collection of multilinear maps of degree $2-k-\mu(\beta)$ on the de Rham cohomology
\[
\m_{k,\beta}: H^*(L)^{\otimes k} \to H^*(L)
\]
such that $\m_{1,0}=0$, $\m_{2,0}(h_1,h_2)= \pm h_1\wedge h_2$, and a similar $A_\infty$ relation holds.
Intuitively, these operators not only account for holomorphic disks but also include "holomorphic pearly trees" \cite{Sheridan15,FOOO_2009canonical_Morse}, which is also related to the "clusters" of \cite{cornea2006cluster} (see Figure \ref{figure_intro_homological_pert}).
Given that $\HL$ is finite-dimensional and $\OL$ is infinite-dimensional, it is extremely essential to prioritize the minimal model $A_\infty$ algebras $\m$ over the chain-level $\check\m$.

Define the Maurer-Cartan equation of $\m$ as
\begin{equation}
	\label{introduction_MC_eq}
	\textstyle
	\m_*(b):=
	\sum_\beta \sum_k T^{E(\beta)} \m_{k,\beta}(b,\dots,b)=0
\end{equation}
for $b\in H^1(L)\hat \otimes \Lambda_0$.
By degree reasons and by Assumption \ref{assumption-mu ge 0}, a class $\beta$ can contribute to (\ref{introduction_MC_eq}) only if $\mu(\beta)=0$ or $2$.
Given a basis $\{e_i\}$ of $H^1(L)$, we write $b=x_1e_1+\cdots+x_ne_n$ for $x_i\in \Lambda_0$.
Following \cite[\S 4]{FOOOToricOne}, the equation (\ref{introduction_MC_eq}) is decomposed into
a potential function in $x_i$:
\[
\textstyle
	\mathscr W^0 (x_1,\dots,x_n)
	:=
	\sum_{\mu(\beta)=2}  \sum_k T^{E(\beta)} \m_{k,\beta}(b,\dots,b)=0
\]
together with the weak Maurer-Cartan equation:
\begin{equation}
	\label{introduction_wMC_eq} 
	\textstyle
	\m_{*w}(b):=
	\sum_{\mu(\beta)=0}  \sum_k T^{E(\beta)} \m_{k,\beta}(b,\dots,b)=0
\end{equation}
A key property of $\m$ we use is the (open-string) \textbf{\textit{divisor axiom}}:
\begin{equation}
	\label{intro_DA_compute_eq}
	\textstyle
	\m_{k,\beta}(b,\dots,b)=\frac{(\partial\beta\cap b)^k}{k!} \ \m_{0,\beta} \ \in H^{2-\mu(\beta)}(L) \hat\otimes \Lambda_0
\end{equation}
Thus, the above series $\mathscr {W}^0 $ in $x_i$ can be transformed into a new series in $y_i=e^{x_i}$:
\begin{equation}
	\label{introduction_PO_y_eq} 
	\textstyle
	\mathscr {W}(y_1,\dots,y_n):= \sum_{\mu(\beta)=2} T^{E(\beta)} e^{\partial\beta \cap b}\m_{0,\beta} = \sum_{\mu(\beta)=2} T^{E(\beta)} y_1^{\partial_1\beta}\cdots y_n^{\partial_n \beta} \m_{0,\beta}
\end{equation}
We will forget $\mathscr W^0$ (the conventional Maurer-Cartan equation) and only consider $\mathscr W$ (the induced formal power series).
This is crucial for the non-archimedean analytic structure, without losing any information and ultimately enriching our insights. Note that a formal power series $f$ in $y_i$ is identically zero if and only if $f(y_1,\dots,y_n)=0$ whenever $y_i\in U_\Lambda$ (Lemma \ref{val=0-lem}). 
Besides, the Novikov field $\Lambda$ has the property that every element $y$ in $U_\Lambda$ can be written in the form $y=e^x$ for some $x\in \Lambda_0$ (Lemma \ref{exp-log-lem}). 
Thus, by the divisor axiom of $\m$, we get $\mathscr W^0(x_1,\dots,x_n)=\mathscr W(e^{x_1},\dots,e^{x_n})$, so we recover $\mathscr W^0$ from $\mathscr W$. This principle is applicable to (\ref{introduction_wMC_eq}) as well.

\subsubsection{Mirror local charts}
\label{sss_intro_local_chart}
As mentioned before, the tropicalization map $\trop:(\Lambda^*)^n\to\mathbb R^n$ resembles the logarithm map $\Log: (\mathbb C^*)^n\to\mathbb R^n$.
For a rational convex polyhedron $\Delta\subseteq \mathbb R^n$, the preimage $\Val^{-1}(\Delta)$ is an affinoid space $\Sp\Lambda\langle \Delta\rangle$ that is the spectrum of the \textit{polyhedral affinoid algebra}:
\[
\Lambda\langle \Delta\rangle = \left\{ \textstyle \ f=\sum_{\nu\in\mathbb Z^n} a_\nu \mathbf z^\nu \in \Lambda[[z_1^\pm,\dots, z_n^\pm ]] : f(y_1,\dots, y_n) \ \text{converges} \ \text{if} \  \trop(y_1,\dots, y_n) \in \Delta  \right\}
\]
Roughly, it serves as a basic building block in non-archimedean spaces, analogous to the role of polydisks in complex manifolds.
Alternatively, we note that $f\in\Lambda\langle\Delta\rangle$ if and only if $f(\mathbf y)$ converges at any point $\mathbf y$ in $\trop^{-1}(\Delta)$ for the aforementioned topology in $\Lambda$.
As a set, $\Sp\Lambda\langle \Delta\rangle$ is in bijection with the points in $\trop^{-1}(\Delta)$.
An analytic open domain $\trop^{-1}(U)$ is covered by various affinoid spaces $\trop^{-1}(\Delta_i)$ for $\Delta_i \subseteq U$ with $U=\bigcup \Delta_i$.
By definition, an \textit{{affinoid torus fibration}} is a continuous mapping $f: Y \rightarrow B$ with respect to the non-archimedean analytic topology on $Y$ and the manifold topology on $B$, locally resembling $\trop^{-1}(U) \rightarrow U$ for open subsets $U$ in $\mathbb R^n$ (see \ref{SA_non-archimedean} for more details).

For Theorem \ref{Main_theorem_thm}, the base $B_0$ gains an integral affine structure via the Arnold-Liouville theorem, resembling $\mathbb R^n$ locally, modulo the action of $GL(n,\mathbb Z)\ltimes \mathbb R^n$. A rational convex polyhedron $\Delta$ within an integral affine chart (notation $\Delta$ is reused) remains so in other integral affine charts as well.
Specifically, given a \textit{pointed} integral affine chart
\[
\chi: (U,q)\to (\mathbb R^n,c)
\]
centered at $q\in B_0$ with $\chi(q)=c$, suppose $\Delta\subseteq U$ such that $\chi(\Delta)$ is a rational convex polyhedron in $\mathbb R^n$. This also induces an isomorphism $\Lambda[[\pi_1(L_q)]]\cong \Lambda[[Y_1^\pm,\dots, Y_n^\pm]]$ where
\[
\Lambda[[\pi_1(L_q)]] =\left\{
\textstyle
\sum_{i=0}^\infty s_i Y^{\alpha_i}
\mid s_i\in\Lambda, \alpha_i\in \pi_1(L_q)
\right\}
\]
Define $\Lambda\langle \Delta, q\rangle$ to be the subalgebra in $\Lambda[[\pi_1(L_q)]]$ that is identified with the polyhedral affinoid algebra $\Lambda\langle \chi(\Delta) -c \rangle$ in $ \Lambda[[Y_1^\pm,\dots, Y_n^\pm]]$. The induced $\trop_q: \Sp\Lambda \langle \Delta, q\rangle \to \Delta$ is also identified with the restriction of $\trop$ over the subset $\chi(\Delta)-c $ in $\mathbb R^n$.

If there is no confusion, we often abuse the notations and write $\Delta=\chi(\Delta)-c$, $\trop=\trop_q$, etc.


\begin{figure}
	\centering
	\includegraphics[width=5.8cm]{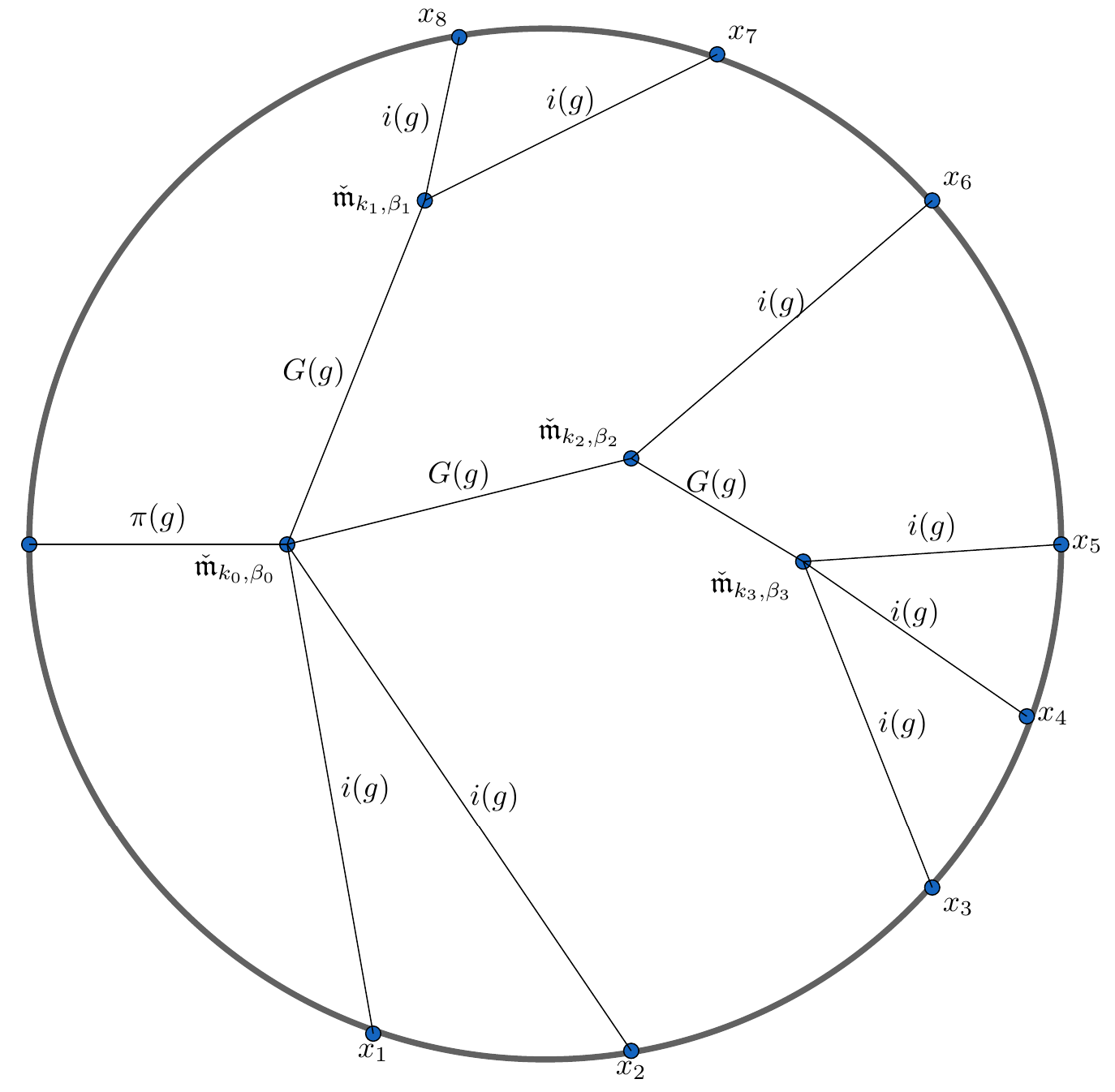}
	\caption{ \footnotesize
		Homological perturbation
	}
	\label{figure_intro_homological_pert}
\end{figure}

Let $J$ be an $\omega$-tame almost complex structure.
Fix a sufficiently fine rational convex polyhedron covering $\{\Delta_i\mid i\in \I\}$ of $B_0$. Pick points $q_i\in\Delta_i$, and take the Lagrangian fibers $L:= L_{q_i}= \pi^{-1}(q_i)$. For each $i\in\mathfrak I$, the moduli spaces of holomorphic disks can produce an $A_\infty$ algebra $\check \m:=\check\m^{J,L}$ on the de Rham complex of $L=L_{q_i}$.
To obtain a minimal\footnote{Here `minimal' means the energy-zero part of $\m_1=\sum T^{E(\beta)}\m_{1,\beta}$ is zero, i.e. $\m_{1,0}=0$, rather than $\m_1=0$.} $A_\infty$ algebra on the de Rham cohomology $H^*(L)$, we need to choose a \textit{contraction} to perform the \textit{homological perturbation} (\S \ref{S_homological_perturbation}).
Fix a metric $g$, and we always stick to the \textit{$g$-harmonic contraction} $\con(g)=(i(g),\pi(g), G(g))$.
It is roughly the data of a Hodge decomposition (see \S \ref{ss_harmonic_contraction}):
	\vspace{-0.7em}
\begin{equation}
	\label{introduction_harmonic_eq}
	\xymatrix{
		\con(g): &
		H^*(L) \ar@<.5ex>[rr]^{i(g)}	& & \Omega^*(L) 
		\ar@<.5ex>[ll]^{\pi(g)}
		\ar@(ul,ur)^{G(g)}
		& &
	}
\end{equation}

Denote by
$\m:=\m^{g,J,L}$ the minimal model $A_\infty$ algebra on $H^*(L)$ obtained by applying homological perturbation to $\check \m$ (Figure \ref{figure_intro_homological_pert}).
Each single $\m_{0,\beta}$ may involve various different Feynman-diagram-type trees with total class $\beta$.
The $\m$ is called a \textit{canonical model} of $\check \m$ (with respect to $g$).
In view of (\ref{union_mirror_intro}), we need to use $(\HL, \m)$ rather than $(\OL, \check \m)$ to carry out the mirror construction.
Consider
\begin{equation}
	\label{introduction_P_series_eq}
	P^i=\sum T^{E(\beta)} Y^{\partial\beta}\m_{0,\beta} \qquad \in \Lambda[[\pi_1(L)]] \hat\otimes H^*(L)
\end{equation}
One can interpret $P^i$ as a vector-valued formal power series in $\Lambda[[Y_1^\pm,\dots, Y_n^
\pm]]$ by choosing a basis. Restricting $P^i$ to $U_\Lambda^n\equiv \trop^{-1}(0)\cong \trop_{q_i}^{-1}(q_i)$ formally retrieves the Maurer-Cartan equation of $\m$, so $P^i$ actually contains more information.
It also enables the recovery of the Maurer-Cartan equation for the $A_\infty$ algebra associated to an adjacent Lagrangian fiber over any point in $\Delta_i$ by Fukaya's trick.
Since $\Delta_i$ is small, the $P^i$ converges on the polytopal domain $\trop^{-1}(\Delta_i)$ due to the \textit{reverse isoperimetric inequalities} \cite{ReverseI}. All the components of $P^i$ live in $\Lambda\langle \Delta_i,q_i\rangle$.

Let $\one$ be the constant-one function, and let $\{\Theta_1,\dots,\Theta_\ell\}$ be a basis of $H^2(L)$. As (\ref{introduction_MC_eq}), we write
\begin{equation}
	\label{W_i_Q_i_eq}
	P^i=W^i  \one +Q^i=W^i  \one+Q^i_1  \Theta_1+\cdots + Q^i_\ell \Theta_\ell
\end{equation}
$W^i$ and $Q^i_k$ refer to the terms with $\mu(\beta)=2$ and $\mu(\beta)=0$ respectively.
Let $\ia_i\subseteq \Lambda\langle \Delta_i, q_i\rangle$ be the ideal generated by the components $Q_k^i$ of $Q^i$, and we call it the \textit{\textbf{obstruction ideal}} (associated to $\m$).
Define:
\begin{equation}
	\label{intro_ia_i}
	X_i:=
	V(\ia_i) = \Sp  \big(\Lambda\langle\Delta_i, q_i\rangle / \ia_i \big)
\end{equation}
For Theorem \ref{Main_theorem_thm}, $X_i$ is a \textit{\textbf{local chart}} for the mirror space $X^\vee$, with $W^i$ representing a local piece of the mirror superpotential $W^\vee$; also, the dual fibration $\pi^\vee$ locally correspond to $\trop=\trop_{q_i}$ on $X_i$.

\subsubsection{Mirror transition maps}
\label{ss_Tu_formula}
We next develop \textit{transition maps} between these local charts. Roughly, a transition map is defined using the geometric input of a Lagrangian isotopy across fibers over a sufficiently small path. This isotopy can translate into a slight isotopy of the almost complex structures via {Fukaya's trick} (\S \ref{S_Fuk_trick}). Then, the algebraic output is a \textit{pseudo-isotopy}, leading to an \textit{$A_\infty$ homotopy equivalence}. We can also convert it to the level of minimal model $A_\infty$ algebras using a family version of harmonic contractions.
The story is further described as follows.

Let $L$ and $\tilde L$ be two adjacent fibers over $q, \tilde q\in B_0$. Choose a small isotopy $F$ so that $F(L)=\tilde L$. 
Denote by $\mathcal M(J,L)$ and $\mathcal M(J, \tilde L)$ the moduli spaces of $J$-holomorphic disks bounding $L$ and $\tilde L$.
Recall that they define the filtered $A_\infty$ algebras, denoted by
$\check \m:= \check \m^{J,L}$ and $\check \m^{\smallsim} := \check \m^{J,\tilde L}$, on $\Omega^*(L)$ and $\Omega^*(\tilde L)$ respectively. Denote the corresponding minimal model $A_\infty$ algebras on $\HL$ and $H^*(\tilde L)$ by
$\m:=\m^{g,J,L}$ and $\m^\smallsim:=\m^{g,J,\tilde L}$ respectively. They define local charts as (\ref{intro_ia_i}).

The \textit{Fukaya's trick} refers to the following.
A map $u:(\mathbb D,\partial\mathbb D)\to (X, L)$ is $J$-holomorphic if and only if
$F \circ u:(\mathbb D,\partial\mathbb D)\to (X, \tilde L)$ is $F_*J$-holomorphic.
So, for $F_*J:=dF\circ J\circ dF^{-1}$, there is a natural identification
$\mathcal M(F_*J,\tilde L) \cong \mathcal M(J,L)$;
the left side gives a new $A_\infty$ algebra, denoted by $\check \m^F:= \check \m^{F_*J, \tilde L}$.
It can be interpreted as a `pushforward' of $\check \m^{J,L}$ for the following Fukaya's trick relation
\begin{equation}
	\label{introduction_Fuk_chain_eq}
	\check \m^F\equiv (F^{-1})^* \diamond \check \m \diamond F^*
\end{equation}
This means $\check \m^F_{k,\beta}(x_1,\dots, x_k)= (F^{-1})^* \check \m_{k,\beta}(F^*x_1,\dots, F^* x_k)$ for $x_i\in\Omega^*(\tilde L)$.
The $A_\infty$ algebras $\check \m^F$ and $\check \m$ are nearly the same, with the distinction in energy given by $E(F_*\beta) = E(\beta) + \langle\partial\beta, \tilde q-q\rangle$ for $\beta = [u] \in \pi_2(X,L)$.
The purpose of using $\check \m^F$ in place of $\check \m$ is to unify the underlying Lagrangians.
Next, we take a path $\mJ=(J_s)$ from $F_*J$ to $J$. Then, the \textit{parameterized moduli space}
$\bigsqcup_s\mathcal M(J_s,\tilde L)$
generates the so-called \textit{pseudo-isotopy} $\check \M$. It is roughly a family of $A_\infty$ algebras, moving from $\check \m^{F}$ to $\check \m^\smallsim$, enriched with additional `derivative' data (Definition \ref{Classical-Pseudo-isotopy-defn}).

Further, we aim to obtain a relationship between the two minimal model $A_\infty$ algebras $\m$ and $\m^\smallsim$.
Take the two distinct harmonic contractions $\con(g)$ and $\con(F_*g)$ in the meantime.
By homological perturbation, we use $\con(g)$ and $\check \m$ (resp. $\check \m^\smallsim$) to obtain the minimal model $\m$ (resp. $\m^\smallsim$). Also, we can similarly use $\con(F_*g)$ and $\check  \m^F$ to obtain another minimal model
$
\m^F:=  \m^{F_*(g,J), \tilde L}
$.
It also satisfies
\begin{equation}
	\label{introduction_Fuk_cohomology_eq}
	\m^F \equiv (F^{-1})^* \diamond \m \diamond F^*
\end{equation}
but the underlying vector space changes from $\OL$ to $\HL$.
This holds due to the $F$-relatedness conditions, including both (\ref{introduction_Fuk_chain_eq}) and $\con(F_*g)=F^{-1*} \circ \con(g)\circ F^*$ (see also Figure \ref{figure_intro_homological_pert}).
Unlike the previous relation (\ref{introduction_Fuk_chain_eq}), only the homotopy class of $F$ matters in (\ref{introduction_Fuk_cohomology_eq}). This sort of flexibility will become crucial for the well-definedness of transition maps and the cocycle conditions.

Finally, our goal is to develop, from the earlier $\check \M$, a pseudo-isotopy $\M$ at the level of minimal model $A_\infty$ algebras, bridging $\m^F$ and $\m^\smallsim$.
To construct it, we carry out the homological perturbation again, but we need to further find a path of metrics $\mg=(g_s)$ between $F_*g$ and $g$.
The path always exists since the metric is also a contractible choice. Utilizing a family version of harmonic contraction, we then derive the pseudo-isotopy $\M$ (\S \ref{ss_pseudo_isotopy_canonical_model}).
This method for generating pseudo-isotopies of minimal model $A_\infty$ algebras is not applicable to 2-pseudo-isotopies when 1-pseudo-isotopy boundary conditions are specified in advance.
This presents a substantial challenge in demonstrating the cocycle condition. We contend that prior research has not sufficiently addressed this issue.
Intuitively, the roles of the data $\mJ$ and $\mg$ can be described as follows:
\vspace{-0.35em}
\[
\xymatrix{
	\boxed{\mathcal M(J,L)\cong \mathcal M(F_*J, \tilde L) \xrightarrow{\mathbf J}  \mathcal M(J,\tilde L)}
	\ar@{~>}[r]
	&
	\boxed{  \check \m \approx \check \m^{F} \xleftarrow{\check \M} \check \m^{\smallsim} }
	\ar@{~>}[r]^{\mg \quad }
	&
	\boxed{   \m \approx  \m^F \xleftarrow{\M}  \m^{\smallsim} }
}
\]


A pseudo-isotopy naturally generates an $A_\infty$ homotopy equivalence in a canonical manner (Theorem \ref{from-pseudo-isotopies-to-A-infty-homo-thm}).
From the above $\M$, we denote the outcome by
$\mC^F:  \m^\smallsim \to \m^F$.
It consists of a collection of operators $\mC_{k,\beta}^F: H^*(\tilde L)^{\otimes k} \to H^*(\tilde L)$ for $k\ge 0$, $\beta\in\pi_2(X,\tilde L)$ with $\mC^F_{0,0}=0$, $\mC^F_{1,0}=\id$, and the equations of $A_\infty$ homomorphisms.
As the category of affinoid spaces is equivalent to the opposite category of affinoid algebras, we first consider
an affinoid algebra homomorphism:
\begin{equation}
	\label{Tu_formula_eq}
	\textstyle
	\phi^F: \Lambda\langle   \Delta ,   q \rangle\to\Lambda\langle \Delta , \tilde q \rangle 
	\qquad
	Y^{\alpha}\mapsto T^{\langle \alpha, \tilde q-q\rangle} Y^{\tilde  \alpha} \exp 
	\Big\langle
	\tilde \alpha,  \sum_{\beta} T^{E(\beta)}
	\mC_{0,\beta}^F  Y^{\partial \beta}
	\Big\rangle 
\end{equation}
where $\alpha \in \pi_1(L)\cong \mathbb Z^n$, $\tilde\alpha:=F_*\alpha\in \pi_1(\tilde L)\cong \mathbb Z^n$, and $\langle\cdot,\cdot\rangle$ is the natural pairing $\pi_1(L)\otimes H^1(L)\to\mathbb R$.
In practice, $\Delta$ is the intersection of two rational convex polyhedron $\Delta_i$'s in \S \ref{sss_intro_local_chart}.
This formula is first discovered by J. Tu \cite[(4.11)]{Tu}. Roughly, the idea involves employing the coordinate change $y = e^{x}$, as mentioned in \S \ref{sss_motivating_ideas}, to integrate the result of \cite[4.3.14]{FOOOBookOne}.
However, the coordinate transformation $y = e^x$ is quite inconsistent with the non-archimedean structure.
The key problem should be that the Maurer-Cartan functor \cite[4.3.13]{FOOOBookOne} only addresses set-theoretic bijections; cf. \cite[Lemma 4.6, 4.9]{Tu}.
It appears there was even no justification for whether (\ref{Tu_formula_eq}) is actually an isomorphism of affinoid algebras.

More concretely, we take two \textit{pointed} integral affine charts as before, say $\chi :(U, q)\to (\mathbb R^n, c)$ and $\tilde \chi: (\tilde U, \tilde q)\to (\mathbb R^n, \tilde c)$, where both $U$ and $\tilde U$ contain $\Delta\subset B_0$. The $\phi^F$ is regarded as a non-archimedean analytic map between the polytope affinoid domains $\trop^{-1}(\chi(\Delta)-c)$ and $\trop^{-1}(\tilde \chi (\Delta)-\tilde c)$, both within $(\Lambda^*)^n$.
Nevertheless, we will frequently use the intrinsic description in (\ref{Tu_formula_eq}).
Given $\mC_{0,\beta}^F\in H^{1-\mu(\beta)}(\tilde L)$, under the semipositive assumption, the contribution to the affinoid algebra homomorphism $\phi^F$ comes merely from Maslov-0 disks; cf. \cite{AuTDual}.


\subsubsection{Mirror cocycle condition}
A critical issue is the dependency of the algebra homomorphism $\phi^F$ on various choices. To validate the cocycle conditions, showing that the transition map is well-defined is essential.
Fortunately, this issue can be addressed through the following points, which will be futher explained shortly:
\begin{itemize}
	\itemsep 0.2em
	\item [\textbf{(A)}] What we need is an affinoid algebra morphism $\varphi: \Lambda \langle  \Delta ,  q \rangle /  \ia \to \Lambda\langle \tilde\Delta, \tilde q \rangle/ \tilde \ia  $
	with $\varphi( W)= \tilde W$.
	\item [\textbf{(B)}] The $\varphi$ depends only on the \textit{ud-homotopy class} of $\mC^F$, so finally it does \textit{not}
	depend on the choices.
\end{itemize}

\vspace{0.25em}

Here we define $P,W,Q,\ia$ and $\tilde P,\tilde W,\tilde Q,\tilde \ia$ for the $A_\infty$ algebras $\m$ and $\m^\smallsim$ as before (\S \ref{sss_intro_local_chart}).
The \textit{{ud-homotopy}} refers to an amended homotopy theory of $A_\infty$ algebras with extra properties; the prefix `u' and `d' means `unitality' and `divisor axiom' respectively.
It is important to note that unless we put `u' and `d' together, it would be problematic to make a well-behaved homotopy theory.
Systematically, we will define a category $\UD$ of these $A_\infty$ algebras to work with the ud-homotopy relations. There are some heavy homological algebras, notably \S \ref{S_whitehead}. 
The non-archimedean analytic gluing forces us to study affinoid algebra morphisms.
The analytic structure goes beyond mere "homotopy invariance of Maurer-Cartan sets", and should be approximated as "ud-homotopy relations for $A_\infty$ maps within minimal model $A_\infty$ algebras".

Suppose $L=L_k$ and $\tilde L=L_j$, and we define the local charts as in (\ref{intro_ia_i}). Then, we define
\[
\psi_{jk}: X_j \to X_k
\]
to be the induced map $\varphi^*$
on the maximal ideal spectrums for the quotient morphism $\varphi=[\phi^F]$.
In other words, we set $\psi_{jk}=\varphi^*: \Sp(\Lambda\langle \tilde\Delta, \tilde q \rangle/ \tilde \ia) \to \Sp (\Lambda \langle  \Delta ,  q \rangle /  \ia)$.
We call the $\psi_{jk}$ the \textit{{gluing map}} or the \textit{{transition map}}.
The original homomorphism $\phi^F$ may be called a pre-transition map.

Point (A) ensures the transition map is well-defined, while point (B) resolves ambiguity from various choices, ultimately leading to the cocycle conditions. 
The key innovations for the proofs introduced in this paper are the wall crossing formula and the ud-homotopy theory respectively.

Achieving (A) will be straightforward using the wall crossing formula below, which will be detailed in Theorem \ref{wall_crossing_thm}.
In brief, it states that for any $\eta\in H_*(L)$, there exists $H_2(L)$-valued formal power series $R^\eta$, recalling $\tilde Q$ is $H^2(L)$-valued formal power series (see (\ref{W_i_Q_i_eq})), such that
\begin{equation}
	\label{introduction_wall_crossing_formula_eq}
	\textstyle
	\phi^F (\langle\eta, P\rangle)=
	\langle F_*\eta, \one \rangle \  \tilde W+   \langle R^\eta ,  \tilde Q \rangle
\end{equation}
where $\langle \cdot,\cdot\rangle$ is the natural pairing, and $\one$ denotes the constant-one function.
\begin{enumerate}[(i)]
	\itemsep 0.2em
	\item If $\eta$ is dual to $\one$, then $\langle F_*\eta,\one\rangle=1$ and $\langle \eta, P\rangle =W$. So, $\phi^F (W)$ equals to $\tilde W$ modulo $\tilde\ia$. 
	\item If $\eta$ runs over $H_{n-2}(L)$, then we see $\langle F_*\eta,\one\rangle=0$, and the $\langle\eta,P\rangle$ runs over all components of $Q$, i.e. the generators of the ideal $\ia$. As $\langle R^\eta ,  \tilde Q \rangle \in \tilde \ia$, we see $\phi^F(\ia) \subset \tilde \ia$.
\end{enumerate}
\vspace{0.25em}

Grasping point (B) demands extra effort and further extends the classical Maurer-Cartan principle in \cite[\S 4.3.2]{FOOOBookOne}. Previously, it was established that an $A_\infty$ homotopy equivalence leads to a bijection among gauge equivalence classes of bounding cochains. But, this is purely set-theoretical and requires additional refinement to be applicable in the non-archimedean context.

With a different choice $F'$, we can similarly define $\mC^{F'}$ and $\phi^{F'}$. But, $\phi^{F'}$ typically differs from the previously defined $\phi^F$ in (\ref{Tu_formula_eq}). This discrepancy was not explored in any previous studies.
Specifically, we write
\[
\phi^{F'}(Y^\alpha)=\phi^F(Y^\alpha) \cdot \exp \Big\langle 
\alpha,
\sum
T^{E(\beta)}  \ (\mC_{0,\beta}^{F'}-\mC_{0,\beta}^F) \ Y^{\partial\beta}
\Big\rangle
\]
for any $\alpha\in\pi_1(L)$. Thus, the crucial factor is the variation in power, termed the \textit{error term}:
\[
S:=
\langle \alpha,
\textstyle \sum
T^{E(\beta)}  \ (\mC_{0,\beta}^{F'}-\mC_{0,\beta}^F) \ Y^{\partial\beta} \rangle
\]
To demonstrate that $\phi^F$ and $\phi^{F'}$ yield the same transition maps $\varphi=[\phi^F]=[\phi^{F'}]$, it is essential to verify that $S$ belongs to the obstruction ideal $\tilde \ia$. 
The main idea is that the ud-homotopy theory introduces the \textit{canceling trick}: If two $A_\infty$ homomorphisms $\f^0=(\f^0_{k,\beta})$ and $\f^1=(\f^1_{k,\beta})$ between minimal model $A_\infty$ algebras are ud-homotopic, then the formal power series
$
\textstyle
\sum T^{E(\beta)} Y^{\partial\beta} (\f^1_{0,\beta}-\f^0_{0,\beta})
$
must be contained in the obstruction ideal.
It is essential to focus on the minimal model $A_\infty$ algebras $\m$ instead of the chain-level $\check\m$, because $\HL$ is finite-dimensional, whereas $\OL$ is infinite-dimensional.
But unfortunately, achieving 2-pseudo-isotopies in minimal model $A_\infty$ algebras and simultaneously maintaining 1-pseudo-isotopy boundary conditions is usually impossible; cf. \cite[Th 2.7]{Tu}. 
To resolve this, we observe that pseudo-isotopy is generally a stronger condition than an $A_\infty$ homotopy equivalence.
In light of this, we utilize diagram chasing for ud-homotopy relations as a strategy to avoid the need for 2-pseudo-isotopies in minimal models, and these ud-homotopy relations are sufficient for the above canceling trick.

%
%

\section{$A_\infty$ structures in Lagrangian Floer theory}
\label{S_preparation}

\subsection{$A_\infty$ structures with topological labels}
\label{ss_Gapped_A_infty}

\subsubsection{Definition}
We introduce the various notions for $ A_\infty$ structures. 
A \textit{label group} is defined as a triple $\G=(\G,E,\mu)$ that consists of an abelian group $\G$ and two group homomorphisms $E:\G\to \mathbb R$, $\mu: \G\to 2\mathbb Z$. Denote by $0$ the \textit{unit} of $\G$.
In practice, the label group for a symplectic manifold $(X,\omega)$ and a compact oriented Lagrangian submanifold $L\subset X$ is defined via the Hurewicz homomorphism image $\pi_2(X,L)= \mathrm{im} \left(\pi_2(X,L, \ast)\to H_2(X,L)\right)$, with a base point $\ast$ in $L$.
It includes the energy map $E: \pi_2(X,L) \to \mathbb R$, $\beta\mapsto \int_\beta \omega$, and the Maslov index $\mu: \pi_2(X,L)\to \mathbb Z$, where $L$'s orientation ensures $\mu$ maps into $2\mathbb Z$.
Define
$
\pi_1(L)=\mathrm{im} \ (\pi_1(L, \ast )\to H_1(L))
$
to be the image of the abelianization map of the fundamental group.
The boundary operator is denoted by
$\partial: \pi_2(X,L)\to \pi_1(L)$.

We frequently consider the case $\G=(\pi_2(X,L), E, \mu)$. Let's turn to the abstract definitions.

\begin{defn}
\label{CC_defn}
Let $\C$, $\C'$ be two graded vector spaces over $\mathbb R$. Let $\G$ be a label group. Given $k\in\mathbb N$ and $\beta\in \G$, we define
$
\CC_{k,\beta}(\C',\C):=\Hom({\C'}^{\otimes k}, \C)$, where $\Hom$ refers to the category of vector spaces.
If $k=0$, we define $\Hom(\mathbb R, \C)\cong C$.
Taking a different label $\beta\in \G$ just gives a copy of the same space.
Given $\beta\in\G$, we define
$
\CC_\beta(\C',\C):= \prod_{k\ge 0} \CC_{k,\beta}(\C',\C)$. Finally, we define
\[
\CC(\C',\C):=\CC_\G(\C',\C)
\]
to be the subspace of $\prod_{\beta\in\G} \CC_{\beta}(\C',\C)=\prod_{k,\beta} \CC_{k,\beta}(\C',\C)$ consisting of $\mathfrak t=(\mathfrak t_{k,\beta})$ satisfying the following \textbf{gappedness condition}:
\begin{enumerate}
	\itemsep 0pt
	\item[(a)] $\mathfrak t_{0,0}=0$. 
	\item[(b)] If $E(\beta)<0$ or $E(\beta)=0$ but $\beta\neq 0$, then we require $\mathfrak t_\beta=(\mathfrak t_{k,\beta})_{k\in\mathbb N}$ vanish identically.
	\item[(c)] For any $E_0>0$, there are only finitely many $\beta\in \G$ such that $\mathfrak t_\beta\neq 0$ and $E(\beta)\le E_0$.
\end{enumerate}
For simplicity, we often omit $C$, $C'$, or $\G$ and use notations like $\CC_{k,\beta}$, $\CC_\beta$, $\CC_\G$, $\CC$, etc., when the context is clear.
An element in $\CC=\CC(C',C)$ is called an \textit{operator system}.
Specifically, elements in $\CC_\G$ are already considered `gapped' in the sense of the literature \cite{FOOOBookOne}.
\end{defn}

\begin{rmk}\label{Induction}
	The geometric idea behind (a) and (b) is that a non-constant pseudo-holomorphic curve has positive energy; besides, the condition (c) corresponds to the Gromov compactness.
	Provided the gappedness condition, we can do induction on the pairs $(k,\beta)$ with $\mathfrak t_{k,\beta}\neq 0$. Indeed, we can introduce an order on the set of all such pairs: $(k',\beta')<(k,\beta)$ if either $E(\beta')<E(\beta)$ or $E(\beta')=E(\beta)$, $k'<k$. The gappedness also tells that there are at most countably many $\beta$'s involved.
\end{rmk}

Denote by $\id$ the identity operator $x\mapsto x$. To compute signs, we introduce the `twisted' identity $\id_\#=\id^\#$ defined by $x\mapsto (-1)^{\deg x-1} x$. We also put
$
x^\# =\id_\# (x) =(-1)^{\deg x-1} x
$.
Given a multi $k$-linear operator $\phi$, we have
$
(-1)^{\deg \phi+k-1}\id_\#  \phi =  \phi  (\id_\#)^{\otimes k}
$.
For the \textit{shifted degree}
$
\deg' x= \deg x-1
$,
we have $\deg' \phi= \deg \phi -1+k$.
Given $p\in\mathbb N$, we put
$
	\phi^{\# p}:= \phi  (\id_{\# p})^{\otimes k}
$
	where $\id_{\# p}$ denotes the $p$-iteration $\id_\# \circ \cdots \circ \id_\#$ of $\id_\#$. It is straightforward to obtain that
\begin{equation}
\label{sign-p-twist-eq}
 \phi^\#=(-1)^{\deg'\phi} \id_\# \circ  \phi, \qquad
\phi^{\# p} = (-1)^{\deg' \phi} \circ \id_\# \circ \phi^{\# (p-1)}= \cdots = (-1)^{p \ \deg' \phi} \id_{\# p} \circ \phi
\end{equation}

\begin{defn} \label{Composition-defn}
	Fix operator systems $\f=(\f_{k,\beta})\in \CC(\C'',\C')$, $\g=(\g_{k,\beta})\in \CC(\C',\C)$ and $\h=(\h_{k,\beta})\in \CC(\C)$. The \textbf{composition} $\g \diamond \f \in \CC(\C'',\C)$ of $\f$ and $\g$ is the operator system defined by
\begin{equation*}
(\g\diamond \f)_{k,\beta}=
\sum_{\ell \ge 1}
\sum_{k_1+\dots+k_\ell=k}
\sum_{\beta_0+ \beta_1+\cdots +\beta_\ell=\beta}
\g_{\ell,\beta_0} 
  ( \f_{k_1,\beta_1}\otimes \cdots \otimes \f_{k_\ell,\beta_\ell} )
\end{equation*}
The \textbf{Gerstenhaber product} $\g \{ \h\} \in \CC(\C',\C)$ is defined by the following operators
\begin{equation*}
(\g\{ \h \})_{k,\beta} = \sum_{\lambda+\mu+\nu=k}\sum_{\beta'+\beta''=\beta} \g_{\lambda+\mu+1,\beta'}   (\id_{\#\deg'\h}^\lambda \otimes \h_{\nu,\beta''}\otimes \id^\mu)
\end{equation*}
The gappedness conditions ensure that the summations in the above definition are all finite. The resulting operator systems $\g\diamond \f$ and $\g\{\h\}$ are gapped as well.
Notice also that $\g_3\diamond (\g_2\diamond  \g_1) = (\g_3\diamond \g_2)\diamond \g_1$.
\end{defn}

%
%

\begin{defn}
		\label{Gapped-Hom-defn}
An \textbf{$A_\infty$ algebra} is a graded $\mathbb R$-vector space $\C$ equipped with an operator system $\m=(\m_{k,\beta})\in \CC_\G(\C,\C)$ such that $\deg \m_{k,\beta}=2-k-\mu(\beta)$ and the following $A_\infty$ associativity relation holds: for any $(k,\beta)$,
\[
			\sum_{\beta'+\beta''=\beta} \sum_{\lambda+\mu+\nu=k} \m_{\lambda+\mu+1,\beta'}   (\id_\#^\lambda \otimes \m_{\nu,\beta''}\otimes \id^\mu)=0
			\]
In short, the relation can be written as $\m \{ \m\}=0$. We call $(C,\m)$ \textit{minimal} if $\m_{1,0}=0$. 	Let $(\C,\m)$, $(\C',\m')$ be two $A_\infty$ algebras.
An \textbf{$A_\infty$ homomorphism} from $\m'$ to $\m$ is an operator system $\f=(\f_{k,\beta})\in \CC_\G(\C',\C)$
such that $\deg \f_{k,\beta}=1-k-\mu(\beta)$ and the $A_\infty$ relation $\m \diamond \f=\f \{ \m' \}$ holds: for any $(k,\beta)$,
\begin{align*}
	\sum_{\ell \ge 1} \sum_{\substack{
			0=j_0\le\cdots\le j_\ell=k
			\\
			\beta_0+\beta_1+\cdots +\beta_\ell=\beta
	}}
	\m_{\ell,\beta_0}  (\f_{j_1-j_0,\beta_1}\otimes \cdots \otimes \f_{j_\ell-j_{\ell-1},\beta_\ell}) = \sum_{\substack{\lambda+\mu+\nu=k \\ \beta'+\beta''=\beta}} \f_{\lambda+\mu+1,\beta'}  (\id_\#^\lambda\otimes \m'_{\nu,\beta''} \otimes \id^\mu)
	\end{align*}

\end{defn}

To highlight the role of the label group $\G$, we may describe $\m$ or $\f$ as \textbf{gapped}, \textit{$\G$-gapped}, and so on.
The above summations are all finite by the gappedness conditions.
If $\f:\m'\to\m$ and $\g:\m''\to\m'$ are two $A_\infty$ homomorphisms,
then $\f\diamond \g$ can be defined first as a system of operators by Definition \ref{Composition-defn}, and one can check $\f\diamond \g$ is indeed an $A_\infty$ homomorphism.
Besides, if $\m$ is a gapped $A_\infty$ algebra and $\f$ is a gapped $A_\infty$ homomorphism, then their shifted degrees are $\deg'\m=1-\mu(\cdot)\equiv 1 \ (\mathrm{mod}  \ 2)$ and $\deg' \f=-\mu(\cdot)\equiv 0 \ (\mathrm{mod} \ 2)$. So, $\m^\#=-\id_\# \circ \m$ and $\f^\#=\id_\# \circ \f$.

From now on, we will often omit mentioning the label group $\G$; we write $(\C,\m)$, $\C$ or $\m$ for an $A_\infty$ algebra; we write $\f: \m'\to \m$ or $\f: (C', \m')\to (C,\m)$ for an $A_\infty$ homomorphism.
In literature, monoids are typically used over our label groups (c.f. \cite{FOOOBookOne}); however, we take a different approach. Note that for an $A_\infty$ algebra $\m$ associated to a Lagrangian $L$, the subset $\{\beta \mid \m_\beta\neq 0\}$ creates a monoid within $\pi_2(X,L)$. Given that these monoids may vary based on choices but are all within the same group, we opt to use the label group as above, including all these monoids implicitly.

\begin{defn}
\label{reduction-defn}
An $A_\infty$ algebra $(C,\m)$ gives a cochain complex $(C,\m_{1,0})$.
By forgetting all $\m_{k,\beta}$ with $\beta\neq 0$, it gives an $A_\infty$ algebra $(C,\bar\m)$, called the \textbf{reduction} of $(\C,\m)$.
An $A_\infty$ homomorphism $\f: (C,\m)\to (C',\m')$ also gives a cochain map $\f_{1,0}: (C,\m_{1,0}) \to (C',\m_{1,0}')$. We call $\f$ an \textbf{$A_\infty$ homotopy equivalence} if $\f_{1,0}$ is a quasi-isomorphism. 
Similarly, by forgetting all $\f_{k,\beta}$ with $\beta\neq 0$, it gives an $A_\infty$ homomorphism $\bar\f$ from $\bar \m$ to $\bar \m'$; we call $\bar \f$ the \textbf{reduction} of $\f$.
On the other hand, we define the following \textit{incomplete and partial} $A_\infty$ conditions for later use.
\begin{itemize}
	\itemsep 2pt
\item[(a1)] An operator system $\bar \m=(\m_{j,0})_{1\le j\le k} \in \CC_0(\C,\C)$ is called an \textbf{$A_k$ algebra (modulo $T^{E=0}$)}
 if $\bar\m \{ \bar\m \}|_{\CC_{j,0}}=0$ and $\deg \m_{j,0}=2-j$ for $1\le j\le k$. 
 \item[(a2)]
 An operator system $\m =(\m_{k,\beta})_{k\ge 0, E(\beta)\le E} \in \CC(\C,\C)$ is called an \textbf{$A_\infty$ algebra modulo $T^E$} 
 if $\m\{ \m\}|_{\CC_{k,\beta}}=0$ and $\deg \m_{k,\beta}=2-k-\mu(\beta)$ for $k\in\mathbb N$ and $\beta$ with $E(\beta)< E$.

\item[(b1)] Let $(\C,\m)$ and $(\C',\m')$ be two $A_\infty$ algebras and let $\bar\m$ and $\bar\m'$ be their reductions. An operator system $\bar\f = (\f_{j,0})_{1\le j \le k} \in \CC_0(\C',\C)$ is called an \textbf{$A_k$ homomorphism (modulo $T^{E=0}$)} if $(\bar \m \diamond \bar\f - \bar\f \{ \bar\m' \} )|_{\CC_{j,0}}=0$ and $\deg \f_{j,0}=1-j$ for $1\le j \le k$. 
\item[(b2)]
An operator system $\f\in \CC(\C',\C)$ is called an \textbf{$A_\infty$ homomorphism modulo $T^E$} if $(\m\diamond \f - \f \{ \m' \} )|_{\CC_{k,\beta}}=0$ and $\deg \f_{k,\beta}=2-k-\mu(\beta)$ for all $k\in\mathbb N$ and $\beta$ with $E(\beta)< E$.
\item[(b3)] Given $\B\in\G$, an operator system $\f\in \CC(\C',\C)$ is called an \textbf{$A_{\infty,\B}$ homomorphism}
if it is an $A_\infty$ homomorphism modulo $T^{E(\B)}$ and moreover $(\m\diamond \f-\f \{ \m'\})|_{\CC_{\B}}=0$.
\end{itemize}
\end{defn}

\subsubsection{Pseudo-isotopies}
\label{S_P_pseudo_isotopy}

From now now, we always assume that $C$ is a direct sum of the spaces of differential forms on several smooth closed manifolds.
Let $P$ be a bounded convex polytope inside an Euclidean space.
The definition of smooth functions on a manifold with corner is delicate. For our purpose, we require such a function to be collared\footnote{This implies the function appears linear near corners and stays constant near boundaries away from corners; cf. \cite[Fig. 17.3]{FOOOKuranishiVFC}.} near the boundaries and corners.
For simplicity, we'll keep this point implicit and continue using terms like $C^\infty(P)$, $\Omega^*(P)$, etc. Ultimately, we only deal with simple cases where $P$ is a 1-simplex or 2-simplex. For a 1-simplex $P=[0,1]$, the collared condition merely requires functions to be linear near both endpoints.

Denote by $C^\infty(P,\C)$ the set of smooth maps from $P$ to $\C$. For instance, we may assume $C=\Omega^*(L)$, and an element is a family $x_s$ of differential forms on $L$ that smoothly depends on $s\in P$.
Now, define
$
 \C_P:=\Omega^*(P)\otimes_{C^\infty(P)} C^\infty(P,\C)
$.
For instance, if $C=\Omega^*(L)$, then there is a natural identification
\begin{equation}\label{convention-Omega P L-eq}
	C_P\cong \Omega^*(P\times L) 
\end{equation}
Namely, any element in $\C_P$ is a linear combination of elements in the form $\eta \otimes x_s$ with $\eta=\eta(s)$ in $\Omega^*(P)$ and $x_s$ is a smooth $P$-family of differential forms on $L$. Then, we identify $\eta\otimes x_s$ with a differential form $\eta(s)\wedge x_s$ on $P\times L$.
Besides, a grading on $\C$ induces a one on $C^\infty(P,\C)$; further, there is a natural \textit{bi-grading} on $\C_P$: an element $\eta\otimes x(\cdot)$ is of degree $(p,q)$ if $\eta\in \Omega^p(P)$ and $x(\cdot)\in C^\infty(P,\C^q)$.

The following two kinds of maps interest us:
\begin{align*}
	\incl:  \ & \ \C \to \C_P \\
	\eval^\s:  \ & \  \C_P\to \C
\end{align*}
where the first map sends $x_0$ to $1\otimes x_0$ and the second map is a family of evaluation maps
parameterized by $\s\in P$, sending $1\otimes x(\cdot)$ to $x(\s)$ and $\eta\otimes x(\cdot)$ to zero for $\eta\in \Omega^{>0}(P)$.
In the case of (\ref{convention-Omega P L-eq}), the $\incl$ and $\eval^s$ can be recognized as the pullbacks $\pr^*$ and $\iota_s^*$ respectively for the projection $\pr: P\times L\to L$ and the inclusions $\iota_s: L\to \{s\}\times L\subset P\times L$.
Since $\iota_s^* \circ \pr^*=\id$, one can easily check that
$\eval^\s \circ \incl = \id
$.
By \cite[Remark 21.28]{FOOOKuranishiVFC} or \cite[\S 4.2.1]{Solomon_Diff_survey}, we make the following definition. It roughly says the $\Omega^*(P)$-linearity up to sign.
For $I=(i_1,\dots,i_r)$, we denote $ds_I=ds_{i_1}\wedge \cdots \wedge ds_{i_r}$.

\begin{defn}\label{pointwise-defn}
A multi-linear operator $\M: \C_P^{\otimes k} \to \C'_P$ is said to be \textbf{$P$-pointwise} (or simply \textbf{pointwise}) if we have the following \textit{signed linearity condition}: for any $\sigma\in \Omega^*(P)$ we have
\[
\M(\eta_1 \otimes x_1,\dots, \sigma\wedge \eta_i\otimes x_i,\dots, \eta_k \otimes x_k ) = (-1)^{\dagger} \sigma \wedge \M
(\eta_1\otimes x_1,\dots, \eta_k\otimes x_k)
\]
where $\dagger=\deg \sigma \cdot  ( (\deg \M-1+k)
+\sum_{a=1}^{i-1} (\deg \eta_a+\deg x_a -1)  )$.
Alternatively, this means
\[
\M (y_1,\dots,y_{i-1}, \sigma\wedge y_i ,y_{i+1}, \dots, y_k) = (-1)^{\deg \sigma \cdot \deg'\M} \sigma \wedge \M ( y_1^{\#\deg \sigma},\dots, y_{i-1}^{\#\deg \sigma}, y_i, y_{i+1},\dots, y_k)
\]
\end{defn}

\begin{rmk}
	\label{pointwise_determine_rmk}
If an operator $\M$ is pointwise, it suffices to consider the inputs in the form of $1\otimes x$ in order to determine it.
Namely, we can find a family of operators $\M_s^I: C^{\otimes k}\to C'$ for any point $s\in P$ and any ordered set $I=\{ i_1<\cdots <i_d  \}$ such that
\[
\M=\sum_I ds_I\otimes \M_s^I=1\otimes \M_s^\varnothing+ \sum_{d\ge 1; i_1<\cdots< i_d} ds_{i_1}\wedge \cdots \wedge ds_{i_d} \otimes  \M_s^{i_1\cdots i_d}
\]
and every $\M_s^I$ is smooth in $s$ for a fixed $I$. To spell out fully, the above equation says
\[
\M (1\otimes x_1,\dots, 1\otimes x_k)(s)= \sum_I ds_I\otimes \M_s^I \big(x_1(s),\dots,x_k(s) \big)
\]
Finally, we note that any operator $\m:C^{\otimes k}\to C'$ can be trivially extended to a $P$-pointwise operator $\M:C^{\otimes k}_P \to C_P'$ simply by setting $\M_s^\varnothing=\m$ and $\M_s^I=0$ for non-empty $I$, for all $s$.
\end{rmk}

Further, given a cochain complex structure $(\C, \m_{1,0})$ on $C$, where $\m_{1,0}:C \to C$ satisfies $\m_{1,0}\circ \m_{1,0}=0$, there is a corresponding cochain complex $(\C_P, \M^P_{1,0})$ where
\begin{equation}\label{MP_10 -d -eq}
\textstyle
\M^P_{1,0}(ds_I\otimes x)= (-1)^{|I|} \big( ds_I\otimes\m_{1,0}(x) + \sum_{i=1}^{\dim P} ds_I\wedge ds_i\otimes \partial_{s_i} x
\big)
\end{equation}
or equivalently,
\[
\textstyle
\M_{1,0}^P= 1\otimes \m_{1,0} +\sum_i ds_i\otimes \partial_{s_i} 
\]
with regard to the convention in Remark \ref{pointwise_determine_rmk}.
One can check the sign here agrees exactly with Definition \ref{pointwise-defn}.
Both $\incl$ and $\eval^s$ become the cochain maps with respect to these differential maps.



Following \cite[\S 4.3]{Solomon_Involutions}, the sign convention we choose makes $\m_{1,0}=d$. It is slightly different from \cite[Definition 21.29]{FOOOKuranishiVFC} which makes $\m_{1,0}(x)=(-1)^{n+\deg x} d(x)$ instead.
This is inessential, as we can relate them to each other by the sign transformation $\tilde \m_{k,\beta}(x_1,\dots,x_k)=(-1)^\epsilon\m_{k,\beta}(x_1,\dots,x_k)$ for $\epsilon=\sum_{1\le i\le k} (n+\deg x_i)$.
The following proposition is straightforward from the definition.

\begin{prop}\label{P_1P_2-prop}
We have the following natural identifications of cochain complexes $(\Omega^*(L)_{P_1})_{P_2}\equiv \Omega^*(L)_{P_2\times P_1}$ and $(H^*(L)_{P_1})_{P_2}\equiv H^*(L)_{P_2\times P_1}$.
\end{prop}


\begin{lem}\label{eval^s quasi-isom-lem}
The evaluation maps $\eval^\s: \Omega^*(L)_P\to \Omega^*(L)$ or $\eval^\s: H^*(L)_P \to H^*(L)$ are quasi-isomorphisms of cochain complexes for all $s\in P$.
Therefore,
$\eval^s: (\OL_{P_1})_{P_2} \to \OL_{P_1}$ and $\eval^s: (\HL_{P_1})_{P_2} \to \HL_{P_1}$ are also quasi-isomorphisms.
\end{lem}

\begin{proof}
For the first map, just observe that the evaluation map $\eval^s$ is identified with the pull-back of inclusion $\iota_s: L\to \{\s\}\times L \subset P\times L$. And, since $P$ is contractible, $\eval^\s\equiv \iota_s^*$ is a quasi-isomorphism. 
For the second map, 
picking up a basis we may identify
$H^*(L)\cong \mathbb R^m$ and $H^*(L)_P\cong \Omega^*(P)^{\oplus m}$.
Since we use the zero differential for $H^*(L)$, the induced differential on $H^*(L)_P$ obtained by (\ref{MP_10 -d -eq}) coincides with (up to sign) $(d^P)^{\oplus m}$ where $d^P$ denotes the exterior derivative on $P$; this completes the proof.
\end{proof}

\begin{defn}\label{P-pseudo-isotopy-defn}
A \textbf{$P$-pseudo-isotopy} (of $A_\infty$ algebras) on $\C$ is defined to be a $P$-pointwise $A_\infty$ algebra structure $\M^P=(\M^P_{k,\beta})\in \CC_\G(\C_P,\C_P)$ so that $\M^P_{1,0}$ is given by (\ref{MP_10 -d -eq}).
When $P=\oi$, it's simply called a \textbf{pseudo-isotopy}. 
\end{defn}


\begin{defn}
	\label{Classical-Pseudo-isotopy-defn}
A (classical) \textbf{pseudo-isotopy} of $\C$ is defined to be a family of operators $\m^s_{k,\beta}$ and $\mathfrak c^s_{k,\beta}$, $s\in\oi$ on $\C$ satisfying the following conditions:
\begin{itemize}
	\itemsep 1pt
\item[(a)] The operators $\m^s_{k,\beta}$ and $\mc^s_{k,\beta}$ are all smooth in $s\in \oi$. Their degrees are $2-k-\mu(\beta)$ and $1-k-\mu(\beta)$ respectively.
\item[(b)] The $(\C,\m^s)$ is a gapped $A_\infty$ algebra for any $s$.
\item[(c)] If $E(\beta)<0$, then $\mc^s_{k,\beta}=0$.
For any $E_0>0$, we can only find at most finitely many $\beta\in \G$ with $E(\beta)\le E_0$ so that $\mc_{k,\beta}^s\neq 0$ for some $s\in\oi$ and $k\in\mathbb N$.
%
\item[(d)]  $\m_{1,0}:=\m^s_{1,0}$ is independent of $s$ and $\mc^s_{1,0}=\frac{d}{ds}$.
\item[(e)] The following holds
\end{itemize}
\[
\frac{d}{ds}\m^s_{k,\beta} 
+ 
\sum_{\substack{i+j+\ell=k }} 
\sum_{\substack{ \beta_1+\beta_2=\beta\\
(i+j+1,\beta_1)\neq (1,0)}}
\mc^s_{i+j+1,\beta_1}  (\id_\#^i \otimes \m^s_{\ell,\beta_2} \otimes \id^j) 
\ \ -
\sum_{\substack{i+j+\ell=k }} 
\sum_{\substack{ \beta_1+\beta_2=\beta \\
(\ell,\beta_2)\neq(1,0)}}
\m^s_{i+j+1, \beta_1} 
(\id^i \otimes 
\mc^s_{\ell,\beta_2} \otimes \id^j) =0
\]
\end{defn}


\begin{prop}\hspace{-0.2em}\emph{\cite[Lemma 8.1]{FuCyclic}} \ 
	\label{equivalent-family-pseudoisotopy-prop}
The two definitions of pseudo-isotopy are equivalent.
\end{prop}

\begin{ex}\label{trivial-pseudo-isotopy-ex}
For any gapped $A_\infty$ algebra $(C,\m)$, one can construct the so-called \textbf{trivial pseudo-isotopy} $\M^{\tri}$ about $\m$. Specifically, if we write $\M^\tri_{k,\beta}=1\otimes \m^s_{k,\beta} \otimes +ds\otimes \mc^s_{k,\beta}$, then $\m_{k,\beta}^s=\m_{k,\beta}$ and $\mc^s_{k,\beta}=0$, except $\mc^s_{1,0}=d/ds$.
In other words,
$\M^{\tri}=1\otimes \m+ds\otimes \tfrac{d}{ds}$.
In higher dimensions, the \textit{trivial $P$-pseudo-isotopy} is similarly defined by $\M^{\tri, P}=1\otimes \m+ \sum_i ds_i\otimes \frac{d}{ds_i}$.
\end{ex}

An alternative term for $P$-pseudo-isotopy in \cite[Definition 21.29]{FOOOKuranishiVFC} is a $P$-parametrized family of $\G$-gapped $A_\infty$ algebra structures on $\C$. Essentially, a $P$-pseudo-isotopy represents a family of $A_\infty$ algebras over $P$, enriched with derivative-like data. Heuristically, for $P=\oi$, $\m^s$ forms a family of $A_\infty$ algebras, and $\mc^s$ can be viewed as the derivative-like data.

\begin{defn}
	\label{restriction_pseudo_isotopy_defn}
	Given a $P$-pseudo-isotopy $(C_P,\M^P)$ defined as
	$
	\textstyle \M^P(s)=1\otimes \m^s+ \sum_{I\neq \varnothing} ds_I\otimes \mc^{I,s}.
	$
	we immediately derive the following:
Each $(\C, \m^s), s\in P$ is an $A_\infty$ algebra, called the \textbf{restriction} of $(\C_P, \M^P)$ at $s \in P$. Alternatively, we say $\M^P$ \textbf{restricts} to $\m^s$ at $s \in P$.
\end{defn}

\begin{rmk}
	\label{eval-incl-as-A_infty-rmk}
	The $\eval^s$ can naturally be seen as an $A_\infty$ homomorphism from $(\C_P, \M^P)$ to $(\C, \m^s)$ by setting $\eval^s_{1,0}=\eval^s$ and $\eval^s_{k,\beta}=0$ for any $(k,\beta) \neq (1,0)$. This means
	$ \m^s \diamond \eval^s =\eval^s \{ \M^P\}$.
	However, the $\incl$ \textit{cannot} be considered an $A_\infty$ homomorphism unless $\M^P$ is a trivial pseudo-isotopy.
\end{rmk}

\subsubsection{Unitality and divisor axiom}
\label{ss_Divisor_axiom}
We next introduce a few definitions for $A_\infty$ structures (with labels).

\begin{defn}
	\label{unit-defn}
\textbf{(i)} An $A_\infty$ algebra $(\C, \m)$ is termed (strictly) \textit{unital} if it contains a degree-zero element $\one \in \C$, called a (strict) \textit{unit}, satisfying the following conditions:
(a0) $\m_{1,0}(\one)=0$; (a1) $\m_{2,0}(\one,x)=(-1)^{\deg x} \m_{2,0}(x,\one)=x$; (a2) $\m_{k,\beta}(\dots,\one,\dots)=0$ for $(k,\beta)\neq (1,0), (2,0)$.

\textbf{(ii)} Let $\one_1$ and $\one_2$ be units of $(C_1,\m_1)$ and $(C_2,\m_2)$.
An $A_\infty$ homomorphism $\f: \m_1\to \m_2$ is called \textit{unital} (with respect to $\one_1$ and $\one_2$) if
(b1) $\f_{1,0}(\one_1)=\one_2$; (b2) $\f_{k,\beta}(\dots, \one_1,\dots)=0$ for $(k,\beta)\neq (1,0)$.

\textbf{(iii)} An $A_\infty$ algebra $(C,\m)$ is called \textit{fully unital} if, for any degree-zero $\e\in C$, 
\begin{itemize}
	\item[(a2')] $\m_{k,\beta}(\dots,\e,\dots)=0$ when $(k,\beta)\neq (1,0),(2,0)$.
\end{itemize}
An $A_\infty$ homomorphism $\f:\m_1\to \m_2$ is called \textit{fully unital} if, for any degree-zero $\e\in C_1$,
\begin{itemize}
	\item [(b2')] $\f_{k,\beta}(\dots,\e,\dots)=0$ when $(k,\beta)\neq (1,0)$.
\end{itemize}
\end{defn}

The concept of a $P$-pseudo-isotopy for $\C$ is more {stringent} than that of a general $A_\infty$ algebra on $\C_P$. Thus, we aim to define unitality in a way that aligns with the pointwiseness.

\begin{defn}
	\label{P_unit_defn}
	A $P$-pseudo-isotopy $(\C_P,\M^P)$ is said to be \textit{$P$-unital} if there exists some $\one\in \C^0$ so that $\incl(\one)$ is a unit of $\M^P$. In this case, we call $\one\in C$ or $\incl(\one)\in \C_P$ a \textit{$P$-unit} of $(C_P,\M^P)$.
\end{defn}

\begin{defn}
	\label{cyclical_unitality_defn}
	An {operator system} $\mathfrak t\in\CC_\G$ is called \textbf{cyclically unital} if, for any degree-zero element $\e$ and any $(k,\beta)\neq (0,0)$, we have:
		\[
		\textstyle
		\CU[\mathfrak t]_{k,\beta}(\e; x_1,\dots, x_k):=
	 \sum_{i=1}^{k+1} \mathfrak t_{k+1,\beta}(x_1^\#,\dots, x_{i-1}^\#, \e,x_i,\dots, x_{k})=0
		\]
\end{defn}
Here the `CU' stands for `cyclical unitality'.
Note that it is necessary to require $(k,\beta)\neq (0,0)$, as $\CU [\mathfrak t]_{0,0}(\e)=\mathfrak t_{1,0}(\e)$ can be non-zero in general.
Be cautious that the cyclical unitality applies for an arbitrary operator system $\mathfrak t$, while the full unitality is defined just for some $A_\infty$ algebra $\m$ or some $A_\infty$ homomorphism $\f$.
The following concept may justify our new unitalities.

\begin{defn}
	\label{quantum-correction-defn}
	We say an $A_\infty$ algebra $(\C,\m)$ is a \textbf{quantum correction to de Rham complex}, or in abbreviation, is a \textbf{q.c.dR}, if $\C$ is isomorphic to some de Rham complex $\Omega^*(N)$ for a manifold $N$ and the following properties:
(a) $\m_{k,0}=0$ for $k\ge 3$; (b) $\m_{1,0}(x)=dx$; (c) $\m_{2,0}(x_1,x_2)=(-1)^{\deg x_1} x_1\wedge x_2$.
\end{defn}

\begin{rmk}
	\label{cyclical_unitality_qcdR_rmk}
\textbf{(1)}
To explain why the full unitality is reasonable, the key observation is that an $A_\infty$ algebra $(\OL, \m)$ associated to a Lagrangian submanifold (\S \ref{S_A_infty_associated}) is also proved to be a q.c.dR in the literature\footnote{See \cite[Definition 21.21 \& Theorem 21.35]{FOOOKuranishiVFC} for the latest de Rham model, and \cite[Definition 3.5.6 \& Remark 3.5.8]{FOOOBookOne} for the earlier singular chain model.}. The constant-one $\one\in\Omega^0(L)$ is known to be the unit of $\m$, but the mere q.c.dR condition is actually sufficient to obtain the conditions (a0) (a1) in Definition \ref{unit-defn}. If we examine more carefully the argument to prove (a2) (see e.g. \cite[(7.3)]{FuCyclic}), then we will discover that the same argument can be applied equally to any other degree-zero form $\e$. We will go back to this point in \S \ref{Subsec_Forgetful_map}.

\textbf{(2)} The cyclical unitality naturally comes out by a homological algebra consideration. Indeed, it gives a successful induction hypothesis, when we attempt to prove Whitehead theorem with divisor axiom (Theorem \ref{Whitehead-full-thm}). Moreover, the cyclical unitality fits perfectly with the congruence relations for a divisor-axiom-preserving homotopy theory of $A_\infty$ homomorphisms (see the proof of Lemma \ref{UD-simud-composition-lem}).
\end{rmk}

\begin{lem}
	\label{unitality_composition_lem}
	If two $A_\infty$ homomorphisms $\f,\g$ are unital (resp. fully unital or cyclically unital), then $\g \diamond \f$ is also unital (resp. fully unital or cyclically unital).
\end{lem}

\begin{proof}
	We only show the cyclical unitality, and the other cases are similar. For $(k,\beta)\neq (0,0)$, 
	\begin{align*}
	\CU[\g\diamond \f]_{k,\beta}(\e;\dots)
	=
	\sum_{(\ell,\beta')\neq (0,0)} \g( \f^\# \dots \f^\# , \CU[\f]_{\ell,\beta'}(\e; \dots), \f\dots \f )
	+
	\sum
	\CU[\g]_{m,\beta''}(\f_{1,0}(\e); \f,\dots,\f)
	\end{align*}
	The exceptional terms with $(\ell,\beta')=(0,0)$ in the first sum are all collected in the second sum.
	By Definition \ref{Gapped-Hom-defn}, we have $\deg\f_{1,0}=0$. Then, the second sum vanishes since $\g$ is cyclically unital. Meanwhile, the first sum also vanishes because $\f$ is cyclically unital.
\end{proof}


%
%
%

For the label group $\G=(\pi_2(X,L),E,\mu)$, we have the boundary homomorphism $\partial: \pi_2(X,L)\to \pi_1(L)$.
Given an operator system $\mathfrak t\in \CC_\G(C, C' )$, we define
\[
\DA[\mathfrak t]_{k,\beta,m} (b;x_1,\dots,x_k):=\sum_{m_0+\dots+m_k=m } {\mathfrak t}_{k+m,\beta} ( \overbrace{b,\dots,b}^{m_0},x_1,\overbrace{b,\dots,b}^{m_1}, \dots, \overbrace{b,\dots,b}^{m_{k-1}},x_k,\overbrace{b,\dots,b}^{m_k})
\]
In practice, it actually suffices to assume $m=1$, thus, we make the abbreviation
$
\DA[\mathfrak t]_{k,\beta}:=\DA[\mathfrak t]_{k,\beta,1}
$.
Given any graded cochain complex $(\C,\delta)$, we define
\begin{equation}
\label{Divisor_input-eq}
\DI(\C):=\DI(\C,\delta)=\{
b\in \C\mid b\in \ker \delta, \ \deg b=1
\}
\end{equation}
and define $\DI(C,\Lambda)=\DI(C)\hat\otimes \Lambda$. We call both of them the spaces of \textbf{divisor inputs}. 
Note that a degree-zero cochain map $\f_{1,0}:C\to C'$ preserves the spaces of divisor input
$\f_{1,0}:\DI(C)\to\DI(C')$.

From now on, we primarily take $C=\HL_P$ or $C=\OL_P$. Recall that their differentials are given as in (\ref{MP_10 -d -eq}).
We claim that in either case, there is a well-defined natural \textbf{cap product}
\begin{equation}
\label{cap_product_eq}
\partial\beta\cap :  \ \DI(C)\to \mathbb R, \quad \DI(C,\Lambda)\to  \Lambda
\end{equation}
for any $\beta\in \pi_2(X,L)$, defined as follows:
Let $b$ be a divisor input. Write
$
b=1\otimes \bar b^s + \textstyle\sum_{i=1}^{\dim P} ds_i \otimes b^s_i
$, and consider:
\textbf{(1)} When $C=\HL_P$, the condition that $b$ is a divisor input implies $\partial_{s_j}\bar b^s=0$. Thus, $\bar b^s\in H^1(L)$ is independent of $s$; we just define $\partial \beta\cap b$ to be $\partial\beta\cap \bar b^s$.
\textbf{(2)} When $C=\OL_P$, the divisor input condition tells $d\bar b^s=0$ and $\partial_{s_j}\bar b^s-db_j^s=0$. Applying the natural quotient $q:Z^1(L)\to H^1(L)$ to the second equation yields that $\partial_{s_j} q(\bar b^s)= q(db_j^s)=0$. Hence, the de Rham cohomology class $q(\bar b^s)$ is also independent of $s$. So we can also define $\partial\beta \cap b:  = \partial \beta\cap \bar b^s=\partial\beta\cap q(\bar b^s)$.

\begin{defn}\label{DivisorAxiom-defn}
An operator system $\mathfrak t\in\CC_\G(C, C')$ is said to satisfy the \textbf{divisor axiom} if for any divisor input $b\in \DI(C)$ and $(k,\beta,m)\neq (0,0,1)$, the following \textit{divisor axiom equation} holds:
\begin{equation}
\label{DivisorAxiom}
\textstyle
	\DA[\mathfrak t]_{k,\beta,m} (b;x_1,\dots,x_k)= \frac{(\partial \beta \cap b)^m}{m!}  ~ {\mathfrak t}_{k,\beta}(x_1,\dots,x_k)
\end{equation}
By an induction on $m$, it is equivalent to only require $m=1$ in (\ref{DivisorAxiom}). Namely, for all $(k,\beta)\neq (0,0)$,
$
\DA[\mathfrak t]_{k,\beta}(b;x_1,\dots,x_k)
=
\mathfrak t_{k+1,\beta}(b,x_1,\dots,x_k)+ \cdots +\mathfrak t_{k+1,\beta}(x_1,\dots,x_k,b) =\partial \beta\cap b \cdot \mathfrak t_{k,\beta}(x_1,\dots,x_k)
$.
\end{defn}

\subsubsection{Category $\UD$}
\label{ss_UD}

Abusing the notations, denote by $\one$ the constant-one function in any one of $\Omega^*(L)$, $H^*(L)$, $\Omega^*(L)_P$, $\HL_P$.
We will show in Theorem \ref{UD_prime-Composition-DA-thm} that the following data form a category:
\begin{equation}\label{UD_prime_defn-eq}
\tilde{\UD}:=\tilde{\UD}(L):=\tilde{\UD}(L,X)
\end{equation}

\begin{enumerate}
	\itemsep 2pt
\item[(I)] An object in $\tilde{\UD}$ is a $\G$-gapped $A_\infty$ algebra with the following properties:
		\begin{itemize}
			\itemsep 1pt
		\item[(I-1)] it is a $P$-pseudo-isotopy for some $P$;
		\item[(I-2)] it is $P$-unital, and the $\one$ is a $P$-unit;
		\item[(I-3)] it is cyclically unital;
		\item[(I-4)] it satisfies the divisor axiom.
		\end{itemize}

\item[(II)] A morphism $\f$ in $\tilde{\UD}$ is a $\G$-gapped $A_\infty$ homomorphism with the following properties
	\begin{itemize}
					\itemsep 1pt
	\item[(II-1)] it is unital with respect to the various $\one$ (so, $\f_{1,0}(\one)=\one$)
	\item[(II-2)] it is cyclically unital.
	\item[(II-3)] it satisfies the divisor axiom;
	\item[(II-4)] it satisfies the following identity for any divisor input $b$:
	\end{itemize}
\end{enumerate}
\vspace{-0.5em}
\begin{equation}
\noindent 
\label{DA-f_10-eq}
\partial \beta \cap \f_{1,0} (b) = \partial \beta \cap b
\end{equation}

The above description aims to investigate the \textbf{u}nitalities and the \textbf{d}ivisor axiom in a systematical and categorical manner.
Our aim is to incorporate the divisor axiom into the standard homotopy theory of $A_\infty$ algebras (e.g. \cite{FOOOBookOne}), but the cyclical unitality need to accompany it as explained in Remark \ref{cyclical_unitality_qcdR_rmk} (2).
We then obtain a subcategory of the category of $A_\infty$ algebras (with labels).

In the item (I-1), be cautious of the context of Definition \ref{P-pseudo-isotopy-defn}: if $(C_P, \M)$ is a $P$-pseudo-isotopy, then $C$ is assumed to be a direct sum of the spaces of differential forms on several smooth closed manifolds (see the start of \S \ref{S_P_pseudo_isotopy}). In particular, this includes the situations of our interests: $C=\HL_P$ or $C=\OL_P$ for a closed Lagrangian $L$. The $\CC_{1,0}$-component of an object $A_\infty$ algebra coincide with the natural cochain complex structures on $\HL_P$ or $\OL_P$ by Definition \ref{P-pseudo-isotopy-defn}.
On the other hand, we will need the condition (II-4) to preserve the divisor axiom for compositions.
We mention some examples for which the (II-4) holds: 
(i) $\f_{1,0}=i(g):H^*(L)\to \Omega^*(L)$ is a `harmonic inclusion' which will be discussed in (\ref{harmonic i(g)}); (ii) $\f_{1,0}=\pi(g):\Omega^*(L)\to H^*(L)$ is a `harmonic projection' in (\ref{harmonic pi(g)});
(iii) $\f_{1,0}=\id$; (iv) $\f_{1,0}$ is so that $\eval^s \cdot \f_{1,0}$ agrees with one of the above (i), (ii), or (iii).

\begin{defn}
\label{UD_defn}
Define
$
\UD:=\UD(L):=\UD(L,X)$
to be
 the subcategory of $\tilde{\UD}$ in (\ref{UD_prime_defn-eq}), consisting of objects $\m$ and morphisms $\f$ satisfying the extra conditions as follows:
\begin{itemize}
	\item[]
	\begin{itemize}
		\itemsep 1pt
		\item [(I-5)] every $\beta$ in the set $\mathsf G_\m:=\{\beta\in\G\mid \m_{\beta}\neq 0\}$ satisfies $\mu(\beta)\ge 0$
		\item [(II-5)] every $\beta$ in the set $\mathsf G_\f :=\{\beta\in\G\mid \f_{\beta}\neq 0\}$ satisfies $\mu(\beta)\ge 0$.
	\end{itemize} 
\end{itemize}
where $\m_{\beta}=(\m_{k,\beta})_{k\in\mathbb N}$ and $\f_\beta=(\f_{k,\beta})_{k\in\mathbb N}$ are defined as the collections of all components of $\m$ and $\f$ with $\beta$ fixed; c.f. Definition \ref{CC_defn}.
\end{defn}

By Assumption \ref{assumption-mu ge 0}, we mainly work with the above subcategory $\UD$ instead of $\tilde{\UD}$.
Denote by $\Obj\UD$ (resp. $\Mor\UD$) the collections of all objects (resp. all morphisms) in $\UD$.
As previously stated, we now prove that $\tilde{\UD}$ and $\UD$ indeed form categories in the subsequent two lemmas.

\begin{thm}
	\label{UD_prime-Composition-DA-thm}
	$\tilde{\UD}$ and $\UD$ are categories.
\end{thm}

\begin{proof}
Given $\f,\g\in\Mor\tilde{\UD}$, our purpose is to show $\g\diamond \f\in\Mor\tilde{\UD}$.
Indeed, as $(\g \diamond \f)_{1,0}=\g_{1,0} \diamond \f_{1,0}$, the $\g\diamond\f$ also satisfies (II-4).
	The unitality and cyclical unitality are already proved in Lemma \ref{unitality_composition_lem}.
	It remains to show the divisor axiom. Indeed,
	\begin{align*}
	\DA[\g\diamond \f]_{k,\beta}(b;\dots)
	\textstyle
	&
	=
	\textstyle
	\sum_i \sum_{(k_i,\beta_i)\neq (0,0) } \g_{\ell,\beta_0}
	\big(
	\f_{k_1,\beta_1}\cdots \DA[\f]_{k_i,\beta_i}(b;\dots)\dots \f_{k_\ell,\beta_\ell}
	\big)  \\
	&+
	\textstyle
	\sum \DA[\g]_{\ell,\beta_0}
	(
	\f_{1,0}(b); \f_{k_1,\beta_1}\cdots \f_{k_\ell,\beta_\ell}
	)
	\end{align*}
	holds for $(k,\beta)\neq (0,0)$ by routine computations.
Since $\f$ and $\g$ satisfy the divisor axiom, we deduce
\begin{align*}
\DA[\g \diamond \f]_{k,\beta}(b;\cdots)
&
=\textstyle
\sum_i \sum_{(k_i,\beta_i)\neq (0,0)}
\partial\beta_i\cap b  \cdot \g_{\ell,\beta_0} (\f_{k_1,\beta_1}\cdots \f_{k_i,\beta_i}\cdots \f_{k_\ell,\beta_\ell}) \\
&
+
\textstyle
\sum \partial\beta_0\cap \f_{1,0}(b) \cdot \g_{\ell,\beta_0} (\f_{\beta_1}\cdots \f_{\beta_\ell})
\end{align*}
By Definition \ref{CC_defn} (a), we have $\f_{0,0}=0$ and so we can actually drop $(k_i,\beta_i)\neq (0,0)$ above. Then, using (\ref{DA-f_10-eq}) together with $\beta=\beta_0+\sum_{i=1}^\ell\beta_i$, we conclude the divisor axiom for $\g \diamond \f$.

As for $\UD$, it suffices to check the extra condition (II-5) in Definition \ref{UD_defn} is preserved. In reality, by Definition \ref{Composition-defn}, if $(\g \diamond \f)_\beta\neq 0$ then for at least one tuple $(\beta_0,\beta_1,\dots,\beta_\ell)$ with $\beta_0+\beta_1+\cdots +\beta_\ell=\beta$, we have $\g_{\beta_0}  (\f_{\beta_1}\otimes \cdots \otimes \f_{\beta_\ell})\neq 0$ and thus particularly $\g_{\beta_0}\neq 0, \f_{\beta_1}\neq 0, \dots ,\f_{\beta_\ell}\neq 0$. Since $\f$ and $\g$ satisfy (II-5), we conclude $\mu(\beta)=\mu(\beta_0)+\cdots +\mu(\beta_\ell)\ge 0$.
\end{proof}


\begin{lem}\label{UD-C_oi-lem}
If $(\C,\m) \in\Obj\UD$, then its trivial pseudo-isotopy $(\C_\oi, \M^\tri) \in\Obj\UD$.
\end{lem}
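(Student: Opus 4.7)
The plan is to verify the six object-axioms (I-0) through (I-5) of Definition \ref{UD_defn} for $(\C_\oi, \M^\tri)$ in turn, exploiting that by Example \ref{trivial-pseudo-isotopy-ex} every component of the trivial pseudo-isotopy is literally $1\otimes \m_{k,\beta}$ except at $(k,\beta)=(1,0)$, where the term $ds\otimes \frac{d}{ds}$ is added. Because $\M^\tri$ is pointwise and essentially constant in $s$, almost every property we need reduces fiberwise to the corresponding property of $\m$; the only genuinely nontrivial step is the divisor axiom (I-3), which is where the two defining axioms of $\UD$ interact with each other.

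Conditions (I-0), (I-4), (I-5) are immediate: (I-0) and (I-4) are built into the construction of a trivial pseudo-isotopy (Example \ref{trivial-pseudo-isotopy-ex} and Definition \ref{P-pseudo-isotopy-defn}), and $\mathsf G_{\M^\tri} = \mathsf G_{\m}$ yields (I-5). For (I-1), take $\one\in \C^0$ to be the unit of $\m$; then $\M^\tri_{1,0}(\incl(\one)) = 1\otimes \m_{1,0}(\one) + ds\otimes \partial_s(\one) = 0$, and axioms (a1), (a2) of Definition \ref{unit-defn} for $\incl(\one)$ with respect to $\M^\tri_{2,0} = 1\otimes \m_{2,0}$ and $\M^\tri_{k,\beta} = 1\otimes \m_{k,\beta}$ for $(k,\beta)\ne (1,0),(2,0)$ follow directly by pointwiseness from the unit axioms for $\m$. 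For (I-2), any degree-zero element of $\C_\oi$ has bi-degree $(0,0)$ and is thus necessarily of the form $1\otimes \e(s)$; since $\M^\tri_{k+1,\beta} = 1\otimes \m_{k+1,\beta}$ for every $(k,\beta)\ne (0,0)$, pointwiseness reduces $\CU[\M^\tri]_{k,\beta}(1\otimes \e;\,\cdots)$ fiberwise to $\CU[\m]_{k,\beta}(\e(s);\,\cdots)$, which vanishes because $\m$ is cyclically unital.

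The interesting verification is (I-3). Given a divisor input $b\in \DI(\C_\oi)$, decompose $b = 1\otimes \bar b(s) + ds\otimes b_1(s)$; by the discussion preceding Definition \ref{DivisorAxiom-defn} one has $\partial\beta\cap b = \partial\beta\cap \bar b(s)$, and the cap product is well-defined because $\bar b(s)$ (or its de Rham class, in the $\OL$ case) is independent of $s$. Substituting $b$ into $\DA[\M^\tri]_{k,\beta}$ and using pointwiseness, the sum splits into two contributions. The $1\otimes \bar b(s)$ piece becomes, fiberwise and up to a global wedge with $1$, the divisor-axiom identity for $\m$ applied with divisor input $\bar b(s)$, producing exactly $(\partial\beta\cap \bar b(s))\cdot \M^\tri_{k,\beta}$. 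The $ds\otimes b_1(s)$ piece is handled by the key observation that, because $b$ has total degree one, $b_1(s)$ must have degree zero; once the $ds$-factor is pulled out by pointwiseness, what remains is precisely $\CU[\m]_{k,\beta}(b_1(s);\,\cdots)$, which vanishes by the cyclical unitality of $\m$.

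The main obstacle is the sign bookkeeping in this last step, but the signs are exactly those built into Definitions \ref{pointwise-defn} and \ref{cyclical_unitality_defn}, so they match automatically. In fact this argument makes transparent why cyclical unitality was incorporated into the axioms of $\UD$ from the outset: it is precisely what is needed to propagate the divisor axiom from $\m$ to its trivial pseudo-isotopy along the new $ds$-direction.
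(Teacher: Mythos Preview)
Your proof is correct and follows essentially the same route as the paper: both reduce (I-1), (I-2), (I-4), (I-5) immediately to the corresponding properties of $\m$ via $\M^\tri_{k,\beta}=1\otimes\m_{k,\beta}$, and both isolate (I-3) as the one nontrivial check, splitting the divisor input as $b=1\otimes b_0+ds\otimes b_1$ so that the $ds$-piece yields exactly $\CU[\m]_{k,\beta}(b_1;\cdots)$ and is killed by cyclical unitality. The only minor point you gloss over is (I-0): the paper invokes Corollary~\ref{P_1P_2-cor} to see that $(\C_P)_\oi \cong \C_{\oi\times P}$ is again of the required form, whereas you simply assert it is built into the construction.
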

\begin{proof}
By Proposition \ref{P_1P_2-prop}, we know the (I-1) holds for $(\C_\oi, \M^\tri)$. 
Observe that $\M^\tri_{k,\beta}=1\otimes \m_{k,\beta}$ for $(k,\beta)\neq (1,0)$, and it is direct to show (I-2), (I-3) or (I-5).
It remains to check the divisor axiom (I-4).
Fix a divisor input $b=1\otimes b_0+ds\otimes b_1$; without loss of generality, other inputs $x_i$ can be assumed to be in the form $x_i(s)=1\otimes y_i(s)$.
Then, we have
\begin{align*}
 \DA[\M^\tri]_{k,\beta}
 (b;x_1,\dots,x_k) 
=
1\otimes \DA[\m]_{k,\beta}(b_0;y_1,\dots, y_k) (s)
\textstyle
+
ds\otimes \CU[\m]_{k,\beta}(b_1;y_1,\dots,y_k) (s)
\end{align*}
As $\m$ is cyclically unital, the second term vanishes.
By definition (\ref{cap_product_eq}), we have $\partial\beta\cap b=\partial\beta\cap b_0(s)$. In conclusion,
the desired divisor axiom equations of $\M^\tri$ follow from that of $\m$.
\end{proof}

%

\subsubsection{Homotopy theory with divisor axiom}
\label{ss_Divisor_axiom_homotopy}
Our new homotopy theory is based on the category $\mathscr{UD}$. It is critical to note that Definition \ref{ud-homotopic-defn} below implicitly depends on Lemma \ref{UD-C_oi-lem}.

\begin{defn}\label{ud-homotopic-defn}
We call $\f_0, \f_1 \in \Hom_{\UD}((\C',\m'),(\C,\m))$ are \textbf{ud-homotopic} to each other \textbf{(via $\F$)} if there is 
$
\F\in \Hom_\UD((\C',\m'), (\C_\oi, \M^\tri))
$
so that $\eval^0   \F=\f_0$ and $\eval^1   \F=\f_1$. We write
$
\f_0\simud\f_1
$.
\end{defn}

We can unfold the condition in Definition \ref{ud-homotopic-defn} as follows.
An image $\F_{k,\beta}(x_1,\dots,x_k)$ is essentially an element in $\C_\oi$. By taking into account the bi-grading on $\C_\oi$, we can express $\F$ as $1\otimes \f_s + ds\otimes \h_s$, involving linear operators $\f_s$ and $\h_s$. Without loss of generality, we may also assume that these $\f_s$ and $\h_s$ are constant in $s$ near the end points. Specifically, this implies:
\begin{equation}
\label{F0+_roughly_pointwise_eq}
\F_{k,\beta}(x_1,\dots,x_k) (s)=1\otimes (\f_s)_{k,\beta}(x_1,\dots,x_k)+ds\otimes (\h_s)_{k,\beta}(x_1,\dots,x_k)
\end{equation}
%

\begin{lem}
	\label{UD_homotopy_character_A_infty_formula_lem}
	$\F=1\otimes \f_s+ds\otimes \h_s\in \CC_\G(C',C_\oi)$ gives a $\G$-gapped $A_\infty$ homomorphism from $(C',\m')$ to $(C_\oi,\M^\tri)$ if and only if the following conditions hold together:
	
	\begin{itemize}
		\itemsep 1pt
	\item Every $\f_s$ is an $A_\infty$ homomorphism from $(C',\m')$ to $(C,\m)$;
	\item $\deg (\h_s)_{k,\beta}=-k-\mu(\beta)$;
	\item We have the identity
	\end{itemize}
\vspace{-0.5em}
\begin{equation}
\label{F0+_roughly_A_infty_formula_eq}
\textstyle \frac{d}{ds} \circ \f_s = \sum \h_s  \circ (\id^\bullet_\#\otimes \m'\otimes \id^\bullet ) + \sum \m \circ (\f^\#_s\otimes\cdots \otimes  \f^\#_s \otimes \h_s\otimes \f_s \otimes \cdots \otimes\f_s )
\end{equation}
\end{lem}

\begin{proof}
	For the degrees, just observe that $\deg \f_s=\deg\h_s+1=\deg \F$.
Using Example \ref{trivial-pseudo-isotopy-ex} and (\ref{F0+_roughly_pointwise_eq}), we expand the $A_\infty$ formula $\sum \M^\tri   (\F\otimes \cdots \otimes \F)=\sum \F   (\id^\bullet_\#\otimes \m'\otimes \id^\bullet)$, obtaining that
$1\otimes \m   ( \f_s\otimes \cdots \otimes \f_s) \ 
+ds\otimes
\big(
\textstyle \frac{d}{ds} \circ \f_s -  \m (\f_s^\#\otimes\cdots \otimes\f_s^\#\otimes \h_s\otimes \f_s\otimes \cdots \otimes \f_s)
\big) 
=
1\otimes \f_s \circ (\id^\bullet_\#\otimes \m' \otimes \id^\bullet)  + ds\otimes \h_s \cdot (\id_\#^\bullet \otimes \m' \otimes \id^\bullet)$. The proof is now complete by comparing both sides.
\end{proof}

\begin{lem}
	\label{UD_homotopy_DA_unit_lem}
	In the above situation of Lemma \ref{UD_homotopy_character_A_infty_formula_lem}, we have:
	
	\begin{itemize}
		\itemsep 1pt
		\item [(i)] The $\F$ satisfies divisor axiom if and only if all the $\f_s$ and $\h_s$ satisfy divisor axiom.
		\item [(ii)] The $\F$ is cyclically unital if and only if all the $\f_s$ and $\h_s$ are cyclically unital.
		\item [(iii)] The $\F$ is unital with respect to the constant-ones $\one$ if and only if all the $\f_s$ is unital with respect to $\one$'s and meanwhile $(\h_s)_{k,\beta}(\cdots\one\cdots)=0$ for all $s$ and $(k,\beta)$.
		\item [(iv)] The $\F$ satisfies (II-4) if and only if all the $\f_s$ satisfy (II-4) (\ref{DA-f_10-eq}).
		\item [(v)] The $\F$ satisfies (II-5) if and only if all the $\f_s$ and $\h_s$ satisfies (II-5) (Definition \ref{UD_defn}).
	\end{itemize}
\end{lem}

\begin{proof}
By (\ref{F0+_roughly_pointwise_eq}), the divisor axiom equation
$
\DA[\F]_{k,\beta}(b;x_1,\dots,x_k) 
	=
	\partial \beta \cap b \cdot \F_{k,\beta}(x_1,\dots,x_k)$
means:
	\begin{align*}
	\begin{matrix*}
	1\otimes \DA[\f_s]_{k,\beta}(b;x_1,\dots,x_k)\\
	+ds\otimes \DA[\h_s]_{k,\beta}(b;x_1,\dots,x_k)
	\end{matrix*}
	=
\partial\beta\cap b \cdot
	\begin{pmatrix*}
	1\otimes (\f_s)_{k,\beta} (x_1,\dots,x_k) \\
	+ds\otimes (\h_s)_{k,\beta}(x_1,\dots,x_k)
	\end{pmatrix*}
	\end{align*}
Then, the item (i) is proved by comparison. The properties for the cyclical unitality (ii) and the unitality (iii) can be proved by similar comparisons.
Next, by the definition of the cap product (\ref{cap_product_eq}), we have $\partial \beta\cap \F_{1,0}(b)=\partial\beta\cap (\f_s)_{1,0}(b)$ and the item (iv) holds.
The item (v) is obvious.
\end{proof}

\begin{cor}
	\label{UD_homotopy_summary_cor}
	$\f_0$ and $\f_1$ are ud-homotopic to each other if and only if there exist $(\f_s)$ and $(\h_s)$ so that 
	
	\begin{itemize}
			\itemsep 1pt
		\item [(a)] All the $\f_s\in \Mor \UD$.
		\item [(b)] The formula (\ref{F0+_roughly_A_infty_formula_eq}) holds for $(\f_s)$ and $(\h_s)$.
		\item [(c)] All the $\h_s$ satisfy the divisor axiom, the cyclical unitality, and $(\h_s)_{k,\beta}(\cdots\one\cdots)=0$ for all $(k,\beta)$.
		\item [(d)] $\deg (\h_s)_{k,\beta}=-k-\mu(\beta)$ and all the $\h_s$ satisfy (II-5) (Definition \ref{UD_defn}).
	\end{itemize}
\end{cor}




\begin{lem}
\label{UD-simud-composition-lem}
If $\f_0\simud\f_1$ and $\g_0\simud \g_1$, then $\g_0 \diamond \f_0\simud \g_1 \diamond \f_1$. So, $\simud$ is a congruence relation on $\UD$.
\end{lem}

\begin{proof}
	Let $\f_0,\f_1\in \Hom_\UD( (\C'',\m''), (\C',\m'))$ and $\g_0,\g_1\in \Hom_\UD ( (\C',\m'), (\C,\m))$.
Denote by $\M'$ and $\M$ the trivial pseudo-isotopies about $\m'$ and $\m$. They are defined on $\C'_\oi$ and $\C_\oi$ respectively.
By definition, there exists morphisms $\F: \C'' \to \C'_\oi$ and $\G: \C'\to \C_\oi$ in $\UD$ connecting $\f_0$ and $\f_1$, $\g_0$ and $\g_1$ respectively.
By Theorem \ref{UD_prime-Composition-DA-thm}, the composition $\G \diamond \f_1: \C''\to \C_\oi$ is still in $\UD$. Since $\eval^i \diamond \G \diamond \f_1=\g_i \diamond \f_1$ for $i=0,1$, we conclude that $\g_0 \diamond \f_1\simud \g_1 \diamond \f_1$.
It remains to show $\g_0 \diamond \f_0\simud\g_0 \diamond \f_1$.
We first find families $(\f_s)$ and $(\h_s)$ with the conditions (a) (b) (c) (d) by Corollary \ref{UD_homotopy_summary_cor}.
Next, we aim to check the same four conditions for the other two families: $\hat \f_s:= \g_0 \diamond \f_s$ and
\begin{equation}
\label{hat_h_s_eq}
\hat\h_s= \sum \g_0   (\f_s^\# \cdots \f_s^\#, \h_s, \f_s \cdots\f_s)
\end{equation}

\begin{itemize}
	\itemsep 1pt
	\item[(a)] By condition, $\f_s\in\Mor\UD$. Then, by Theorem \ref{UD_prime-Composition-DA-thm}, we have $\hat\f_s\in\Mor\UD$. 
	
	\item[(b)] To prove the condition (b), we compute
\begin{align*}
\textstyle
\frac{d}{ds} \circ \hat \f_s
&
=
\textstyle
\sum \g_0( \f_s\cdots \f_s, \tfrac{d}{ds}\circ  \f_s, \f_s \cdots \f_s)\\
&
=
\textstyle
\sum \g_0(\f_s\cdots \f_s, \h_s(\id^\bullet_\#\otimes \m''\otimes \id^\bullet), \f_s\cdots \f_s)+ \g_0(\f_s\cdots \f_s, \m'(\f_s^\#\cdots\f^\#_s,\h_s,\f_s\cdots\f_s),\f_s\cdots\f_s)\\
&
=
\textstyle
\sum \g_0(\f^\#_s\cdots\f_s^\#,\h_s,\f_s\cdots\f_s) (\id_\#^\bullet\otimes \m'' \otimes  \id^\bullet) + \g_0(\id_\#^\bullet\otimes \m'\otimes \id^\bullet)(\f^\#_s\cdots\f^\#_s,\h_s,\f_s\cdots\f_s)\\
&
=
\textstyle
\sum \hat\h_s(\id^\bullet_\#\otimes \m''\otimes \id^\bullet)+\m(\g_0\cdots\g_0)(\f^\#_s\cdots\f^\#_s,\h_s,\f_s\cdots\f_s)\\
&
=
\textstyle
\sum \hat\h_s(\id^\bullet_\#\otimes \m''\otimes \id^\bullet)+
\m\big( (\g_0 \diamond \f_s)^\#\cdots (\g_0\diamond \f_s)^\#, \hat \h_s,  (\g_0 \diamond \f_s)\cdots(\g_0\diamond \f_s) \big)
\end{align*}
where the second identity uses the formula (\ref{F0+_roughly_A_infty_formula_eq}) for $\f_s$ and $\h_s$; the third and fourth identities hold because the $\f_s$ and the $\g_0$ are $A_\infty$ homomorphisms. Thus, we show the $\hat\f_s$ and $\hat\h_s$ satisfy (\ref{F0+_roughly_A_infty_formula_eq}).

\item[(c)] Concerning the divisor axiom, we compute as follows:
\begin{align*}
\DA[\hat\h_s]_{k,\beta}(b;\cdots)
&
=
\textstyle
\sum \DA[\g_0] \big(
(\f_s)_{1,0}(b); \f_s^\# \cdots \h_s  \cdots \f_s
\big) \\
&
+
\textstyle
\sum \g_0
\big(
\cdots \DA[\f_s^\#](b;\cdots)\cdots \h_s\cdots
\big)+ \sum \g_0 \big(	\cdots \h_s\cdots \DA[\f_s](b;\cdots)		\big)
\\
&
+
\textstyle
\sum \g_0
\big(\f_s^\#\cdots \DA[\h_s](b;\cdots)\cdots \f_s\big)
+
\textstyle
\sum
\CU[\g_0]
\big( (\h_s)_{1,0}(b); \f_s \cdots \cdots \f_s\big)
\end{align*}
Since $\deg (\h_s)_{1,0}(b)=0$, the fifth sum vanishes due to the cyclical unitality of $\g_0$. Similarly as before, applying the divisor axiom of $\f_s,\h_s$ and $\g_0$, the first four sums together exactly equate $\partial\beta\cap b\cdot \hat\h_s$.
Regarding the cyclical unitality, given a degree-zero $\e$, we first notice that $(\h_s)_{1,0}(\e)=0$, since $\deg (\h_s)_{1,0}(\e)= -1$.
Then, there is a similar computation:
\begin{align*}
\CU[\hat \h_s]_{k,\beta}(\e;\cdots)
&
=
\textstyle
\sum \CU[\g_0]\big( 
(\f_s)_{1,0}(\e); \f_s^\#\cdots \h_s \cdots \f_s
\big)\\
&
+
\textstyle
\sum \g_0 \big(
\cdots \CU[\f_s^\#](\e;\cdots)\cdots \h_s\cdots
\big)
+
\sum \g_0 \big(\cdots \h_s\cdots \CU[\f_s](\e;\cdots)\cdots \big) \\
&
+
\textstyle
\sum \g_0 \big(\f_s^\# \cdots \CU[\h_s](\e;\cdots) \cdots \f_s \big)
\end{align*}
The above four sums must all vanish by the cyclical unitalities of $\f_s$, $\h_s$ and $\g_0$.
Compute
\begin{align*}
\hat \h_s(\cdots\one \cdots)
&
=
\textstyle
\sum \g_0\big(\f^\#_s(\cdots)\cdots \f^\#_s(\cdots)
\
\h_s(\cdots\one\cdots) 
\
\f_s(\cdots)\cdots\f_s(\cdots) \big )\\
&
\textstyle
+
\sum \g_0\big(\cdots (\f_s^\#)_{k_1,\beta_1}(\cdots\one\cdots)\cdots \h_s(\cdots) \cdots \f_s(\cdots)\cdots \big)\\
&
+
\textstyle
\sum \g_0\big( \cdots \f_s^\#(\cdots)\cdots \h_s(\cdots)\cdots \f_s(\cdots \one \cdots)_{k_2,\beta_2}\cdots
\big)
\end{align*}
The first sum vanishes by the property of $(\h_s)$. Since the $\f_s$ is unital with respect to $\one$, a summand term in the second or third summation must be zero, unless $(k_1,\beta_1)=(1,0)$ or $(k_2,\beta_2)=(1,0)$. But, if so, one input is $(\f_s)_{1,0}(\one)=\one$, and the resulting summand term is like $(\g_0)_{\ell,\beta}(\cdots\one\cdots)$ for some $\ell \ge 2$. Thus, the unitality of $\g_0$ also enforces this summand to be zero, and the $\hat\h_s(\cdots\one\cdots)$ vanishes as desired. The condition (c) for $\hat\h_s$ is proved.

\item [(d)]
The degree of $(\hat\h_s)_{k,\beta}$ is clearly $-k-\mu(\beta)$.
If $(\hat \h_s)_\beta\neq 0$, by (\ref{hat_h_s_eq}) there exists a decomposition $\beta_0+\beta_1'+\cdots +\beta_{\ell_1}'+\beta_3+\beta_1''+\cdots+\beta_{\ell_2}''=\beta$ so that all of the followings are non-zero: $(\g_0)_{\beta_0}$, $(\f_s^\#)_{\beta_\lambda'}$ ($\lambda=1,\dots,\ell_1$), $(\h_s)_{\beta_3}$, $(\f_s)_{\beta_\lambda''}$ ($\lambda=1,\dots,\ell_2$). Since all of them satisfy (II-5) (Definition \ref{UD_defn}), the additivity of $\mu$ implies $\mu(\beta)\ge 0$. Consequently, we see $\hat\h_s$ also satisfies (II-5).
\end{itemize}
In summary, the families $\hat \f_s$ and $\hat\h_s$ give a ud-homotopy from $\g_0 \diamond \f_0$ to $\g_0 \diamond \f_1$ by Corollary \ref{UD_homotopy_summary_cor}.
\end{proof}

\subsubsection{Novikov field}
\label{ss_weak_MC}

We define the \textit{Novikov field} to be
\begin{equation}
	\label{Novikov_eq}
	\Lambda= \mathbb C((T^{\mathbb R}))=\left\{ \sum_{i=0}^\infty a_i T^{\lambda_i} \mid a_i\in\mathbb C, \lambda_i\in\mathbb R, \lambda_i\to+\infty \right\}
\end{equation}

The Novikov field has a non-archimedean \textit{valuation map}
$
\val: \Lambda \to \mathbb R\cup \{\infty\}
$
defined by sending a nonzero series $\sum_{i=0}^\infty a_i T^{\lambda_i}$ to the smallest $\lambda_i$ with $a_i\neq 0$ and sending the zero series to $\infty$.
The valuation ring $\Lambda_0:=\val^{-1}[0,\infty]$ is called the \textit{Novikov ring}. It has a unique maximal ideal $\Lambda_+:=\val^{-1}(0,\infty]$.
Note that the $\val$ gives rise to a (non-archimedean) norm $|y|=\exp(-\val(y))$. In particular, we have the so-called \textit{adic topology} on $\Lambda_0$ or $\Lambda$, and so it makes sense to talk about convergence.
By \textit{energy filtration} we mean the filtration defined by setting $F^a\Lambda= \val^{-1}{[a,\infty]}$ for various $a$.
The \textit{residual field} 
$\Lambda_0/\Lambda_+$ coincides with $\mathbb C$, and we have $\Lambda_0\equiv \mathbb C\oplus \Lambda_+$.

The \textit{multiplicative group} 
$
U_\Lambda =\{ y \in \Lambda\mid \val(y)=0\} \equiv \{ y\in \Lambda\mid |y|=1\}
$
resembles the $S^1\equiv U(1)$ and
satisfies that $U_\Lambda \equiv \mathbb C^*\oplus \Lambda_+$.
Define $\Lambda^*=\Lambda\setminus\{0\}$, and we call the following map
\[
\trop : (\Lambda^*)^n\to \mathbb R^n, \qquad (y_1,\dots, y_n)\mapsto \big(\val(y_1),\dots, \val(y_n) \big)
\]
a \textbf{\textit{tropicalization map}} (cf. \cite{NA_nonarchimedean_SYZ,KSAffine}).
The total space $(\Lambda^*)^n$ is a non-archimedean analytic space locally covered by the polytopal domains
$\trop^{-1}(\Delta)$ for rational convex polyhedrons $\Delta$ in $\mathbb R^n$.

An important property of $\Lambda$ is that we can similarly define the \textit{exponential} and \textit{logarithm} functions. 

\begin{defn}\label{exp_log_defn}
	For $x\in\Lambda_0$, there is a unique decomposition $x=x_0+x_+$ with $x_0\in\mathbb C$ and $x_+\in\Lambda_+$. 
	For $y\in U_\Lambda$, there is a unique decomposition $y=y_0 (1+y_+)$ with $y_0\in\mathbb C^*$ and $y_+\in\Lambda_+$. Define
	\[
	\textstyle
	\exp(x):= e^{x_0}\sum_{k\ge 0} \frac{x_+^k}{k!} 
	\	\in      U_\Lambda
	\qquad ; \qquad 
	\textstyle
	\log(y):= (\log(y_0)+2\pi i \mathbb Z) +\sum_{k\ge 1} (-1)^{k+1} \frac{y_+^k}{k}
	\ \in   \Lambda_0/2\pi i \mathbb Z
	\]
\end{defn}

\begin{lem}\label{exp-log-lem}
	The standard isomorphism $\mathbb C^*\cong \mathbb C/2\pi i\mathbb Z$ extends to the following:
	\[
	U_\Lambda \overset{\ \ \ \log \ \ \ }{\underset{\ \ \ \exp \ \ \ }\rightleftarrows} \Lambda_0/ 2\pi i\mathbb Z
	\]
	In particular, for any $y\in U_\Lambda$ (i.e. $\val(y)=0$), there exists some $x\in \Lambda_0$ so that $y=\exp(x)$. 
\end{lem}


\begin{lem}\label{val=0-lem}
	If $f=\sum_{\nu\in\mathbb Z^n} c_\nu \z^\nu
	\in \Lambda [[z_1^\pm, \dots, z_n^\pm]]$ vanishes on $U_\Lambda^n$, then $f$ is identically zero.
\end{lem}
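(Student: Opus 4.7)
The plan is to reduce the statement to a finite-dimensional fact about complex Laurent polynomials, by extracting the ``leading $T$-order'' of $f$ and evaluating on the dense subtorus $(\mathbb C^*)^n\subset U_\Lambda^n$.

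First I would unwind what the hypothesis really asks. For $f(\mathbf y)=\sum_\nu c_\nu \mathbf y^\nu$ to be evaluable at a point $\mathbf y\in U_\Lambda^n$, non-archimedean convergence together with $|\mathbf y^\nu|=1$ forces $|c_\nu|\to 0$, i.e.\ $\val(c_\nu)\to+\infty$ as $|\nu|\to\infty$; equivalently, for every $R$ only finitely many $\nu$ satisfy $\val(c_\nu)\le R$. Consequently the set $\{\val(c_\nu):c_\nu\ne 0\}$ has a \emph{minimum} $\lambda_0\in\mathbb R$, attained on a nonempty \emph{finite} subset $S\subset\mathbb Z^n$. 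Multiplying $f$ by $T^{-\lambda_0}$ (which does not affect its vanishing on $U_\Lambda^n$), we may assume $\lambda_0=0$. Then all $c_\nu\in\Lambda_0$; for $\nu\in S$ we write $c_\nu=\bar c_\nu+c_\nu^+$ with $\bar c_\nu\in\mathbb C^*$ and $c_\nu^+\in\Lambda_+$, while $c_\nu\in\Lambda_+$ for $\nu\notin S$.

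Next I would restrict the vanishing assumption from $U_\Lambda^n$ to the dense subset $(\mathbb C^*)^n\subset U_\Lambda^n$. For $\mathbf y\in(\mathbb C^*)^n$ the monomial $\mathbf y^\nu$ lies in $\mathbb C^*\subset\Lambda_0$ and carries no positive $T$-component, so $c_\nu \mathbf y^\nu\in\Lambda_+$ whenever $\nu\notin S$, whereas $c_\nu\mathbf y^\nu\equiv \bar c_\nu \mathbf y^\nu\pmod{\Lambda_+}$ for $\nu\in S$. Applying the residue reduction $\Lambda_0\twoheadrightarrow\Lambda_0/\Lambda_+=\mathbb C$ to the convergent sum $f(\mathbf y)=0$ therefore yields the finite complex identity
\[
P(\mathbf y)\;:=\;\sum_{\nu\in S}\bar c_\nu\,\mathbf y^\nu\;=\;0\qquad\text{for every }\mathbf y\in(\mathbb C^*)^n.
\]

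Finally, $P$ is a nontrivial complex Laurent polynomial (its coefficients $\bar c_\nu$ are nonzero by construction). Since distinct monomials $\mathbf y^\nu$ are linearly independent as functions on $(\mathbb C^*)^n$ --- equivalently, after clearing denominators $P$ becomes an ordinary polynomial and $(\mathbb C^*)^n$ is Zariski dense in $\mathbb C^n$ --- we obtain $\bar c_\nu=0$ for all $\nu\in S$, a contradiction. Hence $S=\emptyset$ and $f\equiv 0$. The only real obstacle is the initial bookkeeping in the first paragraph: ensuring that the hypothesis forces $\val(c_\nu)\to\infty$ so that $\lambda_0$ exists and is realized on a \emph{finite} $S$, which in turn guarantees that the residue reduction produces an honest Laurent polynomial rather than an ill-defined infinite sum. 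Everything afterwards is automatic.
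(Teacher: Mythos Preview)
Your proof is correct and follows essentially the same approach as the paper: normalize so that the minimal valuation of the coefficients is zero, reduce modulo $\Lambda_+$ to obtain a nonzero complex Laurent polynomial, and then evaluate on $(\mathbb C^*)^n\subset U_\Lambda^n$ to reach a contradiction. The only cosmetic difference is that the paper phrases the reduction as quotienting by the ideal of elements with norm $<1$, while you describe it as taking the residue map $\Lambda_0\twoheadrightarrow\mathbb C$ after restricting to $(\mathbb C^*)^n$; these are the same operation.
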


\begin{proof}
	The condition actually tells that $f$ converges on $U_\Lambda^n \equiv \Val^{-1}(0)$. Thus, by Proposition \ref{polyhedral-affinoid-algebra-evaluation-convergence-prop} we know that $f\in \Lambda \langle \{0\} \rangle \equiv \Lambda \langle z_1^\pm, \dots, z_n^\pm \rangle$. Accordingly, as $|\nu|\to \infty$, we have $\val(c_\nu)\to\infty$ or equivalently $|c_\nu|\to 0$. 
	Arguing by contraction, we suppose $f$ was not identically zero.
	Without loss of generality we may assume the sequence $|c_\nu|$ attains a maximal value $|c_{\nu_0}|=1$ for some $\nu_0\in\mathbb Z^n$, and we may further assume $c_{\nu_0}=1$. 
	Recall that $\Lambda_0 =\{ c: \val(c) \ge 0 \}=\{c : |c|\le 1\}$ and  $\Lambda_+ =\{c: \val(c)> 0 \}= \{c : |c| < 1\}$. So, $\Lambda_0/\Lambda_+\cong\mathbb C$. 
	Since $|c_\nu|\le 1$ for all $\nu$,
	we know $f\in \Lambda_0[[z^\pm]]$.
	Taking the quotient by the ideal of elements with norm $<1$,
	we get a power series $\bar f=\sum_{\nu} \bar c_\nu \z^\nu$ over the residue field $\mathbb C$.
	Since $c_\nu \to 0$, we have $|c_\nu|<1$ and thus $\bar c_\nu=0$ for all large enough $\nu$.
	Hence, the $\bar f$ is in reality a nonzero Laurent polynomial with $\bar c_{\nu_0}=1$. But by assumption $\bar f(\mathbf {\bar y})$ vanishes for all $\mathbf {\bar y}\in (\mathbb C^*)^n$; hence, $\bar f$ must be identically zero. This is a contradiction.
\end{proof}

The aforementioned lemmas, while simple, are crucial: to prove that a Laurent formal power series $f(y_1,\dots, y_n) \equiv 0$, it is sufficient to demonstrate $f(e^{x_1},\dots, e^{x_n}) \equiv 0$ for all $x_i \in \Lambda_0$.

Let $(\C,\m)\in\Obj\UD$ and $\f\in \Hom_\UD((\C'',\m''), (\C',\m'))$ be an object and a morphism in $\UD$.
Recall that $\DI(C) $ is the space of divisor inputs (\ref{Divisor_input-eq}).
We define
$\m_*: \DI(\C)\hat\otimes \Lambda_0 \to \C\hat\otimes  \Lambda_0$
as
$\m_*(b):=  \sum_{\beta}\sum_k T^{E(\beta)} \m_{k,\beta}(b,\dots,b)=\sum_\beta T^{E(\beta)} e^{\partial\beta\cap b}  \m_{0,\beta}$, where we have used the divisor axiom.
Similarly, we can define
$\f_*: \DI(\C'')\hat\otimes  \Lambda_0 \to \C'\hat\otimes   \Lambda_0$.
In the literature, such as \cite{FOOOBookOne}, the coefficient ring was typically chosen as $\Lambda_+$ instead of $\Lambda_0$. However, the issue of convergence over $\Lambda_0$ is addressed by the divisor axiom.
In the above, we can always sum over $k$ first:
$
\textstyle
\m_*(b)=
\sum_\beta T^{E(\beta)} \big(\sum_k \m_{k,\beta}(b,\dots, b)\big)=: \sum_\beta T^{E(\beta)} M_\beta
$.
If we write $b=b_0+b_+$ for $\Lambda_0 \equiv \mathbb C\oplus \Lambda_+$, the divisor axiom and Definition \ref{exp_log_defn} yields that
\[
\textstyle
M_\beta=\sum_k \m_{k,\beta}(b,\dots,b)=  \sum_k \frac{1}{k!} (\partial \beta \cap b)^k \m_{0,\beta} 
= \big( e^{\partial \beta\cap b_0} \sum_k \frac{(\partial\beta\cap b_+)^k}{k!} \big) \m_{0,\beta} =e^{\partial \beta \cap b} \m_{0,\beta}
\]
Thus, $\m_*(b)$ faces no convergence issues within the field $\Lambda$. The same reasoning holds true for $\f_*(b)$.

Aligned with existing literature, we term $b$ a \textit{weak bounding cochain} of $\m$ if $\m_(b)$ is a multiple of the unit $\one$, namely $\m_(b) = a \cdot \one$ for some $a$ in $\Lambda_0$. The set of weak bounding cochains, termed the Maurer-Cartan set of $\m$, is set-theoretically invariant under $A_\infty$ homotopy equivalences.
However, these terms are not essential for our current discussion, as one of our main objectives in this paper is exactly to extend beyond the traditional Maurer-Cartan framework.

\subsection{Whitehead theorem in category $\UD$}
\label{S_whitehead}

Once and for all, we fix $(\C',\m'), (\C,\m) \in\Obj\UD$ and
$
\f\in \Hom_\UD((\C,\m),(\C',\m'))
$
so that the induced cochain map $\f_{1,0}: (C,\m_{1,0})\to (C',\m'_{1,0})$ is a quasi-isomorphism.

The entire section is dedicated to proving the theorem below, which readers may choose to skip on their first reading. This theorem extends \cite[Theorem 4.2.45]{FOOOBookOne} by incorporating the topological labels and the extra properties such as (cyclical) unitality and the divisor axiom. The proof introduces several new technical elements such as Sublemma \ref{sublemma-u_k+1}.

\begin{thm}[Whitehead Theorem]\label{Whitehead-full-thm}
There exists $\g\in \Hom_\UD((\C',\m'),(\C,\m))$, unique up to ud-homotopy, so that $\g \diamond \f$ and $\f \diamond \g$ are both ud-homotopic to $\id$. We call $\g$ a \textbf{ud-homotopy inverse} of $\f$; it is often denoted by $\f^{-1}$.
\end{thm}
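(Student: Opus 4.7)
The plan is to run the classical inductive construction of a homotopy inverse for $A_\infty$ algebras (as in \cite[Thm 4.2.45]{FOOOBookOne}) along the well-ordering on pairs $(k,\beta)$ of Remark \ref{Induction}, producing $\g$ together with the two ud-homotopies $\F_0\colon \g\circ\f\simud \id$ and $\F_1\colon \f\circ\g\simud \id$ simultaneously. The novelty is that at every stage one must additionally enforce strict unitality, cyclical unitality, divisor axiom, the condition (\ref{DA-f_10-eq}), and Maslov non-negativity (II-5). Each of these is precisely the sort of closure property the category $\UD$ was designed to carry, and the proof of Lemma \ref{UD-Composition-DA-lem} already shows they survive composition, which is the key ingredient needed to propagate them through the induction.

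For the base step $(k,\beta)=(1,0)$, the hypothesis that $\f_{1,0}$ is a quasi-isomorphism furnishes a cochain map $\g_{1,0}\colon (C,\m_{1,0})\to (C',\m'_{1,0})$ inverse to $\f_{1,0}$ in the derived sense. I would then adjust $\g_{1,0}$ by an exact cochain so that $\g_{1,0}(\one)=\one$; this is possible because $\f_{1,0}(\one)=\one$ together with the cohomology-inverse property forces $[\g_{1,0}(\one)]=[\one]$, and $\one$ is $\m'_{1,0}$-closed. The condition (\ref{DA-f_10-eq}) for $\g_{1,0}$ reduces to an identity of cohomology classes (by the discussion following (\ref{cap_product_eq})) and is then inherited from the analogous property of $\f_{1,0}$.

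The inductive step is where the real work lies, and it is the main obstacle. Suppose $\g_{k',\beta'}$ has been built for every $(k',\beta')<(k,\beta)$ and satisfies all $\UD$ conditions. The component at $(k,\beta)$ of the $A_\infty$ relation $\m'\circ \g = \g\star \m$ takes the form $\m'_{1,0}\circ \g_{k,\beta} - \sum_i \g_{k,\beta}\circ (\id_\#^{i-1}\otimes \m_{1,0}\otimes \id^{k-i}) = \Theta_{k,\beta}$, where $\Theta_{k,\beta}$ is an explicit polynomial in the previously constructed data $\m,\m',\g$; the standard argument shows $\Theta_{k,\beta}$ is a Hochschild-like cocycle, and $\f_{1,0}$ being a quasi-isomorphism produces at least one solution $\g_{k,\beta}$. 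The hard part is to replace this solution by one that also satisfies cyclical unitality, strict unitality, and the divisor axiom. The plan is to verify, using the lower-order hypotheses, that $\Theta_{k,\beta}$ itself lies in the subspaces cut out by $\CU=0$, by $\DA[\g]_{k,\beta}(b;\cdot)=\partial\beta\cap b\cdot \g_{k,\beta}(\cdot)$, and by vanishing on unit inputs; the computations are parallel to the proofs of Lemmas \ref{unitality_composition_lem} and \ref{UD-Composition-DA-lem}, and are exactly what motivates the introduction of cyclical unitality as an induction hypothesis, as indicated in Remark \ref{cyclical_unitality_qcdR_rmk}(2). The Hochschild-like differential preserves these subspaces, they remain acyclic through $\f_{1,0}$, and so a refinement of $\g_{k,\beta}$ inside them exists. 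Maslov non-negativity (II-5) is automatic from the additivity of $\mu$ over the decomposition forced on any non-vanishing $\g_{k,\beta}$.

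The ud-homotopies $\F_0,\F_1$ are built by the very same inductive template, applied to the trivial pseudo-isotopies of $\m'$ and $\m$ whose $\UD$-membership is the content of Lemma \ref{UD-C_oi-lem}; the existence of the required one-parameter integrated solutions exploits the contractibility of $[0,1]$, and the list of conditions in Corollary \ref{UD_homotopy_summary_cor} is maintained in parallel with the construction of $\g$. Uniqueness up to ud-homotopy is then formal: given two ud-homotopy inverses $\g,\g'$, one argues $\g\simud \g\circ\f\circ\g' \simud \g'$ using the congruence property of $\simud$ from Lemma \ref{UD-simud-composition-lem}. The key technical difficulty pervading the whole argument is the simultaneous compatibility of the three correction schemes (for strict unitality, cyclical unitality, and divisor axiom) inside the single free choice of $\g_{k,\beta}$; this tri-compatibility is exactly what the $\UD$ formalism was tailored to accommodate.
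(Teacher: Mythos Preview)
Your overall strategy is right in spirit, but the single induction on pairs $(k,\beta)$ has a genuine gap at the divisor-axiom step when $\beta\neq 0$. The constraint you want to impose on $\g_{k,\beta}$ is
\[
\sum_{i} \g_{k,\beta}(x_1,\dots,b,\dots,x_{k-1}) \;=\; (\partial\beta\cap b)\cdot \g_{k-1,\beta}(x_1,\dots,x_{k-1}),
\]
which is \emph{affine} in $\g_{k,\beta}$: the right-hand side is fixed nonzero data, since $\g_{k-1,\beta}$ has already been built and $\partial\beta\cap b$ need not vanish. Your phrase ``the Hochschild-like differential preserves these subspaces, they remain acyclic'' therefore does not apply as stated---you are not solving in a linear subcomplex but on an affine slice, and acyclicity of an affine slice is not a well-posed notion. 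What you actually need is a particular solution of the affine constraint together with acyclicity of the associated homogeneous space, and your sketch gives no mechanism for producing the particular solution compatibly with the $A_\infty$ obstruction, cyclical unitality, and unit-vanishing all at once.

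The paper circumvents this by a two-stage scheme rather than a single $(k,\beta)$-induction. First the length filtration is run entirely within $\beta=0$, where the divisor-axiom constraint \emph{is} linear (since $\partial 0\cap b=0$) and your picture applies verbatim (Theorem~\ref{Whitehead-Leangth-filtration -thm}). Then the energy filtration is run, but treating \emph{all $k$ simultaneously} for each fixed $\B$, via a complex $(\CC^\ud_\B,\delta_{\bar\g})$ whose elements are entire sequences $(\varphi_{k,\B})_{k\ge 0}$ already satisfying the divisor axiom internally. The quasi-isomorphism statements needed for obstruction theory on $\CC^\ud_\B$ (Proposition~\ref{delta_beta_property-prop}) are themselves proved by an inner induction on $k$, and the decisive device there is an explicit combinatorial construction (Sublemma~\ref{sublemma-u_k+1}) that manufactures, from $u_0,\dots,u_k$ satisfying (E1)--(E3), a specific $u_{k+1}$ solving the inhomogeneous divisor relation $\DA[u]_{k,\beta}(b;\cdot)=(\partial\beta\cap b)\,u_k$ while retaining cyclical unitality and unit-vanishing. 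This explicit splitting of the affine constraint is precisely the missing ingredient in your sketch and is not a routine consequence of the closure properties in Lemmas~\ref{unitality_composition_lem} and~\ref{UD-Composition-DA-lem}. A secondary difference: the paper constructs $\g$ together with only the single ud-homotopy $\g\circ\f\simud\id$, and then obtains $\f\circ\g\simud\id$ by reapplying the half-result to $\g$ and invoking Lemma~\ref{UD-simud-composition-lem}, rather than building both homotopies in parallel as you propose.
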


We will carry out the proof by inductions first on the length filtration in $k$ and then on the energy filtration in $\beta$.
Initially, since $\f_{1,0}$ is a quasi-isomorphism, there exist a degree-zero cochain map
\begin{equation}
\label{Whitehead-g_10-eq}
\g_{1,0}: \C' \to \C
\end{equation}
and a cochain homotopy map $h:\C\to \C$ of degree $-1$ such that
\begin{equation}
\label{Whitehead_g_10_flexibility_eq}
\g_{1,0}\f_{1,0} - \id = \m_{1,0} h +h \m_{1,0}
\end{equation}
Consider a cochain map $\h_{1,0}: \C \to \C_\oi$ defined by
\begin{equation}
\label{Whitehead-h_10}
\h_{1,0}(x) = 1\otimes \big( (1-s)x+s\g_{1,0}\f_{1,0} (x) \big)+ ds \otimes h(x)
\end{equation}
Then, it satisfies
$\eval^0  \h_{1,0} =\id$ and $\eval^1 \h_{1,0}= \g_{1,0} \f_{1,0}$.
By definition, we know $\f_{1,0}(\one)=\one$ and $\m_{1,0}(\one)=0$; by degree reason, we also known $h(\one)=0$. By (\ref{Whitehead_g_10_flexibility_eq}) and (\ref{Whitehead-h_10}), we obtain
\begin{equation}
\label{g_10_h_10_one_eq}
\g_{1,0}(\one)=\one, \quad \text{and} \quad \h_{1,0}(\one)=1\otimes \one=\incl(\one)=\one
\end{equation}

\begin{lem}\label{DA-g_10-h_10-lem}
The property (II-4) (\ref{DA-f_10-eq}) holds for $\g_{1,0}$ and $\h_{1,0}$.
\end{lem}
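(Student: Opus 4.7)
The plan is to verify (\ref{DA-f_10-eq}) separately for $\g_{1,0}$ and for $\h_{1,0}$, exploiting the key feature of the cap product defined in (\ref{cap_product_eq}): it factors through the quotient $Z^1 \to H^1(L)$ and hence ignores coboundaries, depending only on the underlying cohomology class of a divisor input. Since both $\C$ and $\C'$ are of the form $\HL_P$ or $\OL_P$ by (I-0), this factorization applies uniformly to both endpoints.

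For $\g_{1,0}$, I would transport a divisor input $b \in \DI(\C)$ back along $\f_{1,0}$. As $\f_{1,0}$ is a quasi-isomorphism, one can choose $b' \in \DI(\C')$ and a degree-zero $c \in \C$ with $\f_{1,0}(b') = b + \m_{1,0}(c)$. Because the cap product ignores coboundaries, property (II-4) for $\f$ then gives
\[
\partial\beta \cap b \;=\; \partial\beta \cap \f_{1,0}(b') \;=\; \partial\beta \cap b'.
\]
On the other side, applying the chain-homotopy identity (\ref{Whitehead_g_10_flexibility_eq}) to $b'$ and using $\m'_{1,0}(b')=0$ yields $\g_{1,0}\f_{1,0}(b') = b' + \m'_{1,0}h(b')$, which is cohomologous to $b'$; moreover $\g_{1,0}(b)$ differs from $\g_{1,0}\f_{1,0}(b')$ by $\m'_{1,0}\g_{1,0}(c)$ (using that $\g_{1,0}$ is a cochain map), which is also exact. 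Hence $\g_{1,0}(b)$ and $b'$ represent the same class in $H^1(L)$, and consequently $\partial\beta \cap \g_{1,0}(b) = \partial\beta \cap b' = \partial\beta \cap b$.

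For $\h_{1,0}$, a direct computation is all that is required. Writing $\h_{1,0}(b) = 1 \otimes \bar b^s + ds \otimes h(b)$ with $\bar b^s = (1-s)b + s\, \g_{1,0}\f_{1,0}(b)$, the identity (\ref{Whitehead_g_10_flexibility_eq}) together with $\m'_{1,0}(b) = 0$ yields
\[
\bar b^s \;=\; b + s\, \m'_{1,0} h(b), \qquad \m'_{1,0}(\bar b^s) \;=\; 0, \qquad \partial_s \bar b^s \;=\; \m'_{1,0} h(b),
\]
so by (\ref{MP_10 -d -eq}) the element $\h_{1,0}(b)$ is a degree-one cocycle in $\C'_\oi$, i.e.\ $\h_{1,0}(b) \in \DI(\C'_\oi)$. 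Since $\bar b^s - b$ is exact, $q(\bar b^s) = q(b)$ in $H^1(L)$, and therefore $\partial\beta \cap \h_{1,0}(b) = \partial\beta \cap q(\bar b^s) = \partial\beta \cap b$.

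The only potential obstacle is the bookkeeping needed to keep the two possible formats $\C = \HL_P$ and $\C = \OL_P$ parallel, and to confirm that $\h_{1,0}(b)$ indeed lies in $\DI(\C'_\oi)$ so that the left-hand side of (\ref{DA-f_10-eq}) is even defined. Both points are immediate once the factorization through $H^1(L)$ in (\ref{cap_product_eq}) is invoked, so the argument relies on nothing beyond (II-4) for $\f$, the quasi-isomorphism property of $\f_{1,0}$, and the explicit form of $\h_{1,0}$ in (\ref{Whitehead-h_10}).
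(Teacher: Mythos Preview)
Your proof is correct and follows essentially the same route as the paper: for $\g_{1,0}$ you lift $b$ along the quasi-isomorphism $\f_{1,0}$ and use (II-4) for $\f$ together with the cap product's insensitivity to coboundaries, exactly as the paper does. For $\h_{1,0}$ your explicit verification that $\h_{1,0}(b)\in\DI(\C'_\oi)$ and $q(\bar b^s)=q(b)$ is a more detailed unwinding of the paper's one-line observation that, by the definition of the cap product (\ref{cap_product_eq}), one may simply evaluate at $s=0$ to obtain $\partial\beta\cap\h_{1,0}(b)=\partial\beta\cap\eval^0\h_{1,0}(b)=\partial\beta\cap b$.
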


\begin{proof}
First,
we fix a divisor input $b'\in\DI(C',\m'_{1,0})$ (\ref{Divisor_input-eq}). Since the cochain map $\f_{1,0}$ is a quasi-isomorphism, there exists some $b\in \DI(C,\m_{1,0})$ so that $\g_{1,0}(b')=b+\m_{1,0}(c)$ and $\f_{1,0}(b)=b'+\m_{1,0}(c')$ for some $c$ and $c'$. By degree reason, we have $\deg c=\deg c'=0$.
Since $\f_{1,0}$ satisfies (II-4) (\ref{DA-f_10-eq}), we see that
$\partial\beta\cap \g_{1,0}(b')=\partial \beta \cap b=\partial\beta\cap  \f_{1,0}(b)=\partial\beta\cap b'$.
As for $\h_{1,0}$, by the definition of the cap product (\ref{cap_product_eq}), using (\ref{Whitehead-h_10}) infers that $\partial \beta \cap \h_{1,0} (b) =\partial \beta \cap \eval^0\h_{1,0}(b)=\partial\beta\cap b$.
\end{proof}

	There is certain flexibility in choosing $\g_{1,0}$ in (\ref{Whitehead-g_10-eq}), since we only need to require $\g_{1,0}$ satisfies (\ref{Whitehead_g_10_flexibility_eq}). In practice, we are often interested in the following two cases:
\textbf{(i)} When $\f=\eval^s: H^*(L)_P\to H^*(L)$ or $\Omega^*(L)_P\to \Omega^*(L)$ for some $s\in P$. Then a ud-homotopy inverse $\g=(\eval^s)^{-1}$ of $\eval^s$ can be chosen so that $\g_{1,0}=\incl$.
\textbf{(ii)} Construct a morphism $\mi^g\in\Mor \UD$ as Corollary \ref{UD-canonical-model-thm}, with $\mi^g_{1,0}=i(g)$, where $i(g)$ is the harmonic embedding $H^*(L)\to \Omega^*(L)$ as (\ref{harmonic i(g)}). A ud-homotopy inverse $\g=(\mi^g)^{-1}$ will be chosen so that $\g_{1,0}=\pi(g)$, where $\pi(g)$ is the harmonic projection $\OL \to\HL$ (\ref{harmonic pi(g)}).

\subsubsection{Obstruction for the length filtration}
The pair $(\g_{1,0},\h_{1,0})$ gives the initial step of the induction.
Recall we often omit $C,C'$ and $\G$ for the notation $\CC_\G(C',C)$.
In this section, we aim to inductively construct the energy-zero part of the $\g$ in $\CC_0:=\prod_{k=1}^\infty \CC_{k,0}$.
By the \textit{length filtration} on $\CC_0$ we mean the filtration defined by
$
\textstyle
F^k\CC_0:= \prod_{j=1}^{k} \CC_{j,0}$.

To begin with, we define a natural differential on the vector space $\CC_0$:
\begin{equation}
\label{delta_bar-eq}
\bar\delta:=\bar\delta_k: \CC_{k,0} \to \CC_{k,0}
\qquad \phi \mapsto \m_{1,0} \circ \phi  -(-1)^{\deg \phi -1+k} \textstyle \sum_i \phi \circ (\id^i_\# \otimes \m'_{1,0} \otimes \id^{k-i-1})
\end{equation}
The $\bar\delta$ has degree one, i.e. $\deg \bar\delta \phi= \deg \phi+1$.
By routine computation, we derive
$\bar\delta\circ \bar\delta =0$.

To include the unitalities and divisor axiom, we introduce the following symbols: 
Define
$\CC^\ud_0 = \prod_{k\ge 1} \CC^\ud_{k,0}$, where $\CC_{1,0}^\ud=\{\phi \mid \phi(\one)=\one\}$ and $
\CC_{k,0}^\ud$ ($k\geqslant 2$) is the subset of $\phi\in \CC_{k,0}$ with

\begin{itemize}
	\itemsep 1pt
\item[(L1)] 
$\DA[\phi]_{k-1,0}(b;x_1,\dots,x_{k-1})=0$ for any divisor input $b$.
\item[(L2)] $
\CU[\phi]_{k-1,0}(\e;x_1,\dots,x_{k-1})=0$ for any $\e$ with $\deg \e=0$.
\item[(L3)] $\phi(\dots,\one,\dots)=0$ for the constant-one $\one$.
\end{itemize}

Note that this says
$(\CC^\ud_0,\bar\delta) = \prod_{k\ge 1} (\CC^\ud_{k,0}, \bar \delta_k )$.
Here the condition (L1) just tells the divisor axiom for the special case $\beta=0$ (Definition \ref{DivisorAxiom-defn}), and the (L2) describes the cyclical unitality for $\beta=0$ (Definition \ref{cyclical_unitality_defn}).
The length filtration in $\CC_0$ can be inherited in $\CC^\ud_0$ by restriction.

\begin{prop}[Properties of $\bar \delta$ and $\CC^\ud_0$]
	\label{delta_bar-properties-prop}
\leavevmode
	\begin{enumerate}
	\item[(i)] The $\bar\delta$ restricts to a differential on $\CC_{k,0}^\ud$, so there is a cohomology $H(\CC^\ud_{k,0}, \bar \delta)$ for each $k$.
	\item[(ii)] If $f$ is a cochain map of degree zero with $f(\one)=\one$, then for any $k$, it induces a cochain map 
	\[
	f_*: (\CC^\ud_{k,0},\bar\delta) \to (\CC^\ud_{k,0}, \bar\delta)
	\]
	 sending $\phi$ to $f\circ \phi$. Moreover, if $f$ is a quasi-isomorphism, then so is $f_*$.
	 \item[(iii)] If $f$ is a cochain map of degree zero with $f(\one)=\one$, then for any $k$, it induces a cochain map 
	 \[
	 \hat f: (\CC^\ud_{k,0}, \bar\delta) \to (\CC^\ud_{k,0}, \bar\delta)
	 \]
sending $\phi$ to $ \phi \circ f^{\otimes k}$. Moreover, if $f$ is a quasi-isomorphism, then so is $\hat f$.
	\end{enumerate}
\end{prop}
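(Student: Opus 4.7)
The first part (i) decomposes into verifying $\bar\delta^2 = 0$ on $\CC^\ud_{k,0}$, which is inherited from (\ref{delta_bar-delta=0-eq}), and checking that $\bar\delta$ preserves each subspace $\CC^\ud_{k,0}$. For $k \geq 2$, I would inspect (L1), (L2), (L3) one at a time. Condition (L3) survives because both $\m_{1,0}$ and $\m'_{1,0}$ annihilate $\one$ by strict unitality, so no term in $\bar\delta\phi$ can keep $\one$ alive in an input slot. Condition (L1) uses that every divisor input $b$ is a cocycle ($\m'_{1,0}(b) = 0$), so the Leibniz terms in $\bar\delta\phi$ that apply $\m'_{1,0}$ to $b$ drop out, and the remaining terms reorganize into $\DA[\phi]$-expressions with a single $x_\ell$ replaced by $\m'_{1,0}(x_\ell)$, which vanish by (L1) for $\phi$. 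Condition (L2) is the delicate one: when $\m'_{1,0}$ lands on the degree-zero insertion $\e$, one gets a \emph{degree-one cocycle} $\m'_{1,0}(\e)$, which is itself a divisor input, so that contribution is a $\DA[\phi]$-expression that vanishes by the just-verified (L1); the other Leibniz contributions reassemble into $\CU[\phi]$-expressions with modified arguments and vanish by (L2) for $\phi$. Sign bookkeeping uses only $(x^\#)^\# = x$ and that $f$ preserves degrees.

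For (ii) and (iii), I would realize $\CC^\ud_{k,0}$ (for $k \geq 2$) as a genuine internal-Hom complex. The defining vanishings (L1), (L2), (L3) are linear in $\phi$, so $\CC^\ud_{k,0} \cong \Hom(V_k, C)$, where $V_k = (C')^{\otimes k}/K_k$ and $K_k \subset (C')^{\otimes k}$ is the subspace generated by (a) elementary tensors with at least one factor equal to $\one$, (b) symmetrized insertions $\sum_j x_1\otimes\cdots\otimes b\otimes\cdots\otimes x_{k-1}$ with $b$ a divisor input, and (c) their cyclic-unitality analogues with degree-zero $\e$ in place of $b$. The same trio of computations proving (i) also shows that $K_k$ is stable under the tensor-Leibniz action of $\m'_{1,0}$ on $(C')^{\otimes k}$---the key subtlety being that generator (c) maps into (b) under the Leibniz rule because $\m'_{1,0}(\e)$ is itself a divisor input---so $V_k$ is a bona fide quotient cochain complex and $\bar\delta$ on $\CC^\ud_{k,0}$ is identified with the standard differential on $\Hom(V_k, C)$.

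With this identification, $f_*$ and $\hat f$ become the usual contravariant and covariant Hom actions; $\hat f$ descends through $K_k$ because $f$ preserves $\one$, sends divisor inputs to divisor inputs (being a degree-zero cochain map), and preserves degree zero, so $f^{\otimes k}$ stabilizes $K_k$. Working over the field $\mathbb{R}$, the Kunneth formula gives the natural identification $H(\Hom(V_k, C)) \cong \Hom(H(V_k), H(C))$ in both slots. If $H(f)$ is an isomorphism on $H(C)$, then post-composition by it is an isomorphism on the right-hand Hom, proving (ii). For (iii), the induced $\bar f: V_k \to V_k$ is a quasi-isomorphism by applying the five-lemma to the short exact sequence $0 \to K_k \to (C')^{\otimes k} \to V_k \to 0$: the middle map is a quasi-isomorphism by Kunneth, and each of the three generating families of $K_k$ is assembled from tensor powers of $C'$ on which $f$ acts as a quasi-isomorphism, so a further application of Kunneth controls $f^{\otimes k}|_{K_k}$.

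The step I expect to be the main obstacle is the cyclical-unitality preservation in (i): this is the one place where the three defining conditions of $\CC^\ud_{k,0}$ genuinely interact, since (L2)-preservation needs (L1) to kill the term where $\m'_{1,0}$ hits $\e$, and this forces us to introduce the three conditions as a single package rather than separately. A secondary technicality is the $k=1$ case, where $\CC^\ud_{1,0} = \{\phi : \phi(\one) = \one\}$ is affine rather than linear; to make sense of $\bar\delta$ and its cohomology here one linearizes around a basepoint such as (\ref{Whitehead-g_10-eq}), whereupon both the preservation and the quasi-isomorphism statements reduce to their linear analogues on the tangent subspace $\{\phi : \phi(\one) = 0\}$.
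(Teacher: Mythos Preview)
Your argument for (i) agrees with the paper's, including the key observation that (L2)-preservation uses (L1) because $\m'_{1,0}(\e)$ is a divisor input; your remark on the affine nature of $\CC^\ud_{1,0}$ is a valid point the paper leaves implicit.

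For (ii) and (iii) you take a genuinely different route. The paper does not introduce $V_k$ or invoke K\"unneth; it argues by explicit homotopy. For (ii): choosing $f'$ with $f'f - \id = \m_{1,0}h + h\m_{1,0}$, one has $f'_*f_*\phi = \phi + \bar\delta(h\circ\phi)$ whenever $\bar\delta\phi = 0$, and $h\circ\phi$ visibly lies in $\CC^\ud_{k,0}$ since (L1)--(L3) constrain only the source of $\phi$. Your $\Hom(V_k,C)$ formulation together with $H(\Hom(V_k,C))\cong\Hom(H(V_k),H(C))$ over $\mathbb R$ gives a correct and more conceptual alternative.

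Your (iii), however, has a real gap at the five-lemma step. You need $f^{\otimes k}|_{K_k}$ to be a quasi-isomorphism and claim this follows from ``K\"unneth on each of the three generating families''. But $K_k = K^{(1)} + K^{(2)} + K^{(3)}$ is \emph{not} a direct sum: the (L2)-generator with $\e=\one$ already lies in $K^{(3)}$, and the differential sends (L2)-generators into (L1)-generators (precisely the interaction you identified in (i)). Controlling each family separately by K\"unneth says nothing about the cohomology of their sum inside $(C')^{\otimes k}$, nor about the kernels of the symmetrized-insertion maps defining $K^{(1)}$ and $K^{(2)}$. To close this you would need a genuine filtration argument---e.g.\ first pass to $(\bar C')^{\otimes k}$ with $\bar C' = C'/\mathbb R\one$ to dispose of $K^{(3)}$, then analyze the images of the (L1) and (L2) insertion maps and their intersection---which is substantially more than ``a further application of K\"unneth''. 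The paper instead writes $\widehat{f'f}\phi - \phi = \bar\delta\eta$ directly via the Leibniz rule $\bar\delta(\phi(h_1,\dots,h_k)) = (\bar\delta\phi)(h_1,\dots,h_k) \pm \sum_i\phi(h_1,\dots,\bar\delta h_i,\dots,h_k)$, avoiding any structural analysis of $K_k$.
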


\begin{proof}
\textbf{(i).}
Since $\bar\delta\circ\bar\delta=0$, it suffices to show $\bar\delta$ maps $\CC^\ud_{k,0}$ into $\CC^\ud_{k,0}$. Firstly, it is easy to check the $\bar \delta$ preserves the condition (L1).
Secondly, given $\phi\in\CC^\ud_{k,0}$, we compute 
\begin{align*}
\textstyle \sum_i\bar\delta \phi(x_1^\#,\dots,x_{i-1}^\#,\e,x_i,\dots,x_{k-1})
=
&
\textstyle
 \pm \m_{1,0}  \sum_i \phi(x_1,\dots, x_{i-1},\e, x_i,\dots, x_{k-1}) \\
&
\textstyle
\pm \sum_i \phi(x_1,\dots,x_{i-1},\m'_{1,0}\e,x_i,\dots,x_{k-1}) \\
&  
\textstyle
\pm \sum_{i>j}\phi   (x_1,\dots,x_{j-1}, \m_{1,0}'(x_j^\#), x^\#_{j+1},\dots,x^\#_{i-1},\e,x_i,\dots,x_{k-1})\\
&  
\textstyle
\pm \sum_{i<j} \phi  (x_1,\dots,x_{i-1},\e^\#,x_i^\#,\dots, x_{j-1}^\#,\m'_{1,0}(x_j),x_{j+1},\dots,x_{k-1})
\end{align*}
The first sum vanishes as $\phi$ satisfies (L2), and the second sum vanishes as $\phi$ satisfies (L1). Also, for any fixed $j$, we set $y^{(j)}_\ell=x^\#_\ell$ for $1\le \ell \le  j-1$, $y^{(j)}_\ell=x_\ell$ for $j+1\le \ell \le k-1$ and $y^{(j)}_j=\m'_{1,0}(x_j)$; so, the last two sums together give
$
\pm \sum_j\sum_\ell \phi(y^{(j)}_1,\dots,y^{(j)}_{\ell-1},\e,y^{(j)}_\ell,\dots,y^{(j)}_{k-1})$ which equals zero by the (L2) condition of $\phi$.
Thirdly, the $\bar\delta$ also preserves the unitality condition (L3) because of $\m'_{1,0}(\one)=0$.

\textbf{(ii).}
Note that $\bar\delta f_* =f_*\bar \delta$ and that the $f_*$ restricts to a map from $\CC^\ud_{k,0}$ into $\CC^\ud_{k,0}$. If $k=1$, we also need the condition $f(\one)=\one$.
If $f$ is a quasi-isomorphism, then it admits an inverse $f'$ in the sense that $f' \circ f =\id + \m_{1,0} h + h \m_{1,0}$ and $f \circ f' = \id + \m'_{1,0} k + k \m'_{1,0}$ for some operators $h$ and $k$. Given $\phi$ in $\ker \bar\delta\cap \CC^\ud_0$, $h\circ \phi \in \CC^\ud_0$ and
$
f'_*f_* \phi 
=
\phi+\m_{1,0} h\phi + h\m_{1,0}\phi 
=
\phi + \bar\delta ( h\phi)
$; the similar holds for $f_*f'_*$.
Hence, we get an isomorphism $H(f_*): H(\CC^\ud, \bar\delta) \to H(\CC^\ud, \bar \delta)$ with an inverse $H(f'_*)$.

\textbf{(iii).}
We can check that $\hat f$ is a cochain map, and it maps $\CC^\ud_{k,0}$ into $\CC^\ud_{k,0}$.
Suppose $f$ is a quasi-isomorphism, and let $h, k$ be as above in (ii). If $\bar\delta \phi=0$, then $\widehat{f'f} (\phi) 
=
\phi(\id+\bar\delta h,\dots, \id + \bar\delta h)
=
\phi+ \bar\delta \eta$ for some $\eta$. Notice that $\bar\delta  (\phi(h_1,\dots,h_k) )=(\bar\delta\phi)(h_1,\dots,h_k)\pm \sum_i \phi(h_1,\dots, \bar\delta h_i,\dots, h_k)$; this implies $\phi( \bar\delta h,\bar \delta h, \id,\dots,\id)=\bar\delta \big(\phi(h,\bar\delta h,\id,\dots, \id) \big)$ as $\bar\delta(\id)=0$; we also compute $\phi(\id+\bar\delta h,\dots, \id+\bar\delta h)$.
By similar argument, we complete the proof.
\end{proof}

\begin{cor}\label{deltabar-cohomology-eval_*-incl_*-cor}
The $\eval^s$ and $\incl$ induce quasi-isomorphisms $\eval^s_*, \widehat{\eval^s}$ and $\incl_*, \widehat{\incl}$ on $\CC^\ud_{k,0}$.
\end{cor}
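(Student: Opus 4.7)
The corollary is a direct application of parts (ii) and (iii) of Proposition \ref{delta_bar-properties-prop} to the two specific maps $f = \eval^s$ and $f = \incl$, so the plan is just to verify the three hypotheses of the proposition in each case: degree zero, compatibility with the unit ($f(\one)=\one$), and quasi-isomorphism on the underlying cochain complexes.

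First I would observe that both $\eval^s$ and $\incl$ are degree-zero cochain maps by construction (this was already noted after the definitions, and is visible from formula \eqref{MP_10 -d -eq} together with $\eval^s\circ\incl=\id$). For the unit-preservation: $\incl(\one)=1\otimes\one$, which is by the paper's own convention still denoted $\one$; and $\eval^s(\one)=\one$ since evaluating the constant-one function at any point of $P$ returns the constant-one function on $L$. So both $\eval^s$ and $\incl$ satisfy the hypothesis $f(\one)=\one$ of parts (ii) and (iii).

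Next I would address the quasi-isomorphism hypothesis. For $\eval^s$, this is exactly the content of Lemma \ref{eval^s quasi-isom-lem}, applied both in the case $C=\OL$ (where $\OL_P\cong\Omega^*(P\times L)$ by Lemma \ref{Omega L_P = Omega P times L -lem}, and $\eval^s$ is $\iota_s^*$ for the contractible $P$) and in the case $C=\HL$ (where $\HL_P\cong\Omega^*(P)^{\oplus m}$ after choosing a basis, and $\eval^s$ computes the degree-zero part). For $\incl$, I would deduce the quasi-isomorphism property from the identity $\eval^s\circ\incl=\id$: on cohomology this gives $H(\eval^s)\circ H(\incl)=\id$, and since $H(\eval^s)$ is an isomorphism by the previous point, $H(\incl)$ must be its two-sided inverse, hence also an isomorphism.

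Having verified all three hypotheses for both $f=\eval^s$ and $f=\incl$, Proposition \ref{delta_bar-properties-prop}(ii) immediately yields that $\eval^s_*$ and $\incl_*$ are quasi-isomorphisms on $(\CC^\ud_{k,0},\bar\delta)$, and part (iii) yields the same for $\widehat{\eval^s}$ and $\widehat{\incl}$. There is no real obstacle: everything reduces to invoking an already-proved general statement, and the only minor care required is the bookkeeping that $\eval^s$ and $\incl$ really do send the unit to the unit under the convention that the constant-one functions in $\OL,\HL,\OL_P,\HL_P$ are all denoted $\one$.
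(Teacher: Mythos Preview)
Your proposal is correct and follows essentially the same approach as the paper's proof: both verify that $\eval^s$ and $\incl$ are quasi-isomorphisms (the paper simply recalls this, you cite Lemma~\ref{eval^s quasi-isom-lem} and the identity $\eval^s\circ\incl=\id$ explicitly) and then invoke Proposition~\ref{delta_bar-properties-prop}. Your write-up is more detailed in checking the unit-preservation hypothesis $f(\one)=\one$, which the paper leaves implicit, but the argument is the same.
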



We explore the obstruction to extending an $A_{k-1}$ homomorphism to an $A_k$ homomorphism.

\begin{thm}\label{o_k-length-thm}
Given an $A_{k-1}$ homomorphism $\bar \g=(\g_{j,0})_{1\le j\le k-1} \in F^{k-1} \CC_0^\ud(\C',\C)$ between two objects $(\C',\m'),(\C,\m)\in\Obj\UD$, there exists a $\bar\delta$-closed element $\mathfrak o_k(\bar\g) \in \CC^\ud_{k,0} (\C',\C)$
such that its cohomology $[\mathfrak o_k (\bar\g)]\in H( \CC^\ud_{k,0},\bar\delta)$ vanishes if and only if $\bar\g$ can be extended to an $A_k$ homomorphism $\bar \g^+=(\bar \g , \g_{k,0})$ with $\g_{k,0}\in\CC_{k,0}^\ud$. Moreover, if so, we have $\mathfrak o_k(\bar \g)+ \bar \delta (\g_{k,0})=0$
\end{thm}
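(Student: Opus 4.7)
The plan is to define the obstruction $\mathfrak o_k(\bar\g)$ by isolating the terms in the $A_\infty$ homomorphism relation at level $(k,0)$ that do not involve the unknown $\g_{k,0}$. Explicitly, separating off the $\ell=1$ contribution $\m_{1,0}\circ\g_{k,0}$ and the $\nu=1$ contribution $\sum_{\lambda+\mu=k-1}\g_{k,0}\circ(\id_\#^\lambda\otimes\m'_{1,0}\otimes\id^\mu)$, which together make up $\bar\delta\,\g_{k,0}$, I would set
\[
\mathfrak o_k(\bar\g):=\sum_{\substack{\ell\ge 2\\ j_1+\cdots+j_\ell=k}}\m_{\ell,0}\circ(\g_{j_1,0}\otimes\cdots\otimes\g_{j_\ell,0})\;-\sum_{\substack{\nu\ge 2\\ \lambda+\mu+\nu=k}}\g_{\lambda+\mu+1,0}\circ(\id_\#^\lambda\otimes\m'_{\nu,0}\otimes\id^\mu).
\]
Every factor $\g_{j,0}$ that appears has $j\le k-1$, since $\ell\ge 2$ forces each $j_i<k$ and $\nu\ge 2$ forces $\lambda+\mu+1<k$; thus $\mathfrak o_k(\bar\g)$ is a functional of $\bar\g$ alone. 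By construction the $A_\infty$ homomorphism relation at $(k,0)$ becomes $\bar\delta\,\g_{k,0}+\mathfrak o_k(\bar\g)=0$, so an extension $\bar\g^+=(\bar\g,\g_{k,0})$ satisfying the $A_k$ relation with $\g_{k,0}\in\CC^\ud_{k,0}$ exists precisely when $\mathfrak o_k(\bar\g)$ is $\bar\delta$-exact in the subcomplex $\CC^\ud_{k,0}$, i.e.\ when $[\mathfrak o_k(\bar\g)]\in H(\CC^\ud_{k,0},\bar\delta)$ vanishes; the formula $\mathfrak o_k(\bar\g)+\bar\delta(\g_{k,0})=0$ is then automatic.

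Two things remain to be verified: $\bar\delta\,\mathfrak o_k(\bar\g)=0$, and $\mathfrak o_k(\bar\g)\in\CC^\ud_{k,0}$. The first is the classical auto-consistency of the obstruction calculus: expanding $\bar\delta\mathfrak o_k$ by (\ref{delta_bar-eq}) and applying the $A_\infty$ relations for $\m$ and $\m'$ together with the $A_{k-1}$ hypothesis on $\bar\g$, every term pairs up for cancellation exactly as in the classical Whitehead theorem in \cite{FOOOBookOne}; the $\UD$-structure does not enter at this step. The second check is the substantive novelty and follows the template of Lemma \ref{UD_prime-Composition-DA-lem}: compositions $\circ$ and Gerstenhaber products $\star$ propagate divisor axiom, cyclical unitality, and strict unitality from the constituents to the result, provided one carefully tracks the residues coming from length-one factors $\g_{1,0}$, which lie outside the excluded $(0,0,1)$ regime of divisor axiom. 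For (L1), inserting a divisor input $b$ into $\mathfrak o_k(\bar\g)$ and summing over the $k$ positions, any contribution in which $b$ enters a $\g_{j_i,0}$ or $\m'_{\nu,0}$ of arity $\ge 2$ vanishes by (L1) for that factor together with $\partial 0\cap b=0$; the residues in which $b$ feeds into some $\g_{1,0}$ reassemble into $\DA[\m_{\ell,0}]$ applied to $\g_{1,0}(b)$, and since $\g_{1,0}$ is a chain map, the element $\g_{1,0}(b)$ is itself a divisor input for $\m$ with $\partial 0\cap\g_{1,0}(b)=\partial 0\cap b=0$ by (II-4), so (I-3) kills these residues. The argument for (L2) is parallel: replace $b$ by a degree-zero $\e$, divisor axiom by cyclical unitality, and track signs via $\#$; the redefined condition (I-2)/(II-2) is designed precisely so that this argument closes. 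For (L3), inserting $\one$ and summing, the residues from factors of sufficient arity vanish by unitality or (L3), leaving only contributions from $\m_{2,0}(\one,\cdot)$ and $\m_{2,0}(\cdot,\one)$ (via $\g_{1,0}(\one)=\one$) and from $\m'_{2,0}(\one,\cdot)$ and $\m'_{2,0}(\cdot,\one)$ (via $\nu=2$ in the second sum), and these cancel pairwise with the signs dictated by $\#$, reflecting the $(2,0)$-level homomorphism relation $\m_{2,0}(\g_{1,0},\g_{1,0})-\g_{1,0}\m'_{2,0}=-\bar\delta\,\g_{2,0}$ applied to a tuple containing a unit.

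The main obstacle I expect is the third task above, in particular the residue bookkeeping for (L1) and (L3). The length-one components $\g_{1,0}$ prevent a uniform application of divisor axiom or (L3) across all factors of $\mathfrak o_k(\bar\g)$, and it is precisely the newly imposed condition (II-4) together with the identity $\g_{1,0}(\one)=\one$ and the low-order $A_\infty$ homomorphism relation that allow the leftover terms to collapse via (I-2), (I-3) of $\m$. Once $\mathfrak o_k(\bar\g)\in\CC^\ud_{k,0}$ and $\bar\delta\,\mathfrak o_k(\bar\g)=0$ are secured, the cohomological characterization above delivers the theorem immediately.
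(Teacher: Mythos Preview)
Your proposal is correct and follows exactly the paper's approach: the definition of $\mathfrak o_k(\bar\g)$, the extension criterion $\mathfrak o_k+\bar\delta\,\g_{k,0}=0$, and the $\bar\delta$-closedness argument all match, while the paper simply dismisses the verification $\mathfrak o_k(\bar\g)\in\CC^\ud_{k,0}$ as ``routine'' where you supply a detailed sketch. One small remark on your (L3) argument: the condition must hold for each fixed insertion position of $\one$ (not only after summing over positions), and the residual $\m_{2,0}$ and $\m'_{2,0}$ terms cancel directly from the unit axiom (a1) together with the $\#$-signs---the $(2,0)$-level $A_\infty$ homomorphism relation you invoke is not actually needed for this cancellation.
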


\begin{proof} 
We first make the definition: (which \textit{depends} on the chosen $A_\infty$ algebras $\bar\m'$ and $\bar\m$)
\begin{equation}
\label{o_k_eq}
\mathfrak o_k(\bar\g) :=  \sum_{\ell \neq 1}  \m_{\ell,0}   (\g_{j_1-j_0,0} \otimes \cdots \otimes \g_{j_\ell - j_{\ell-1},0}) - \sum_{ \nu\neq 1}  \g_{\lambda+ \mu+1,0} (\id^\lambda_\# \otimes \m'_{\nu,0} \otimes \id^\mu)
\end{equation}
where the conditions $\ell, \nu\neq 1$ guarantee that the $\mathfrak o_k(\bar \g)$ involve only $\g_{i,0}$ for $i<k$.
As $\bar\m,\bar\m',\bar\g\in\CC^\ud_0$, it is routine to check $\mathfrak o_k(\bar \g) \in \CC^\ud_0$. 
By (\ref{delta_bar-eq}), the $\bar \g^+=(\bar \g, \g_{k,0})$ being an $A_k$ homomorphism is exactly equivalent to $\mathfrak o_k(\bar \g) +\bar \delta ( \g_{k,0})=0$. Moreover, every term in the expression (\ref{o_k_eq}) of $\mathfrak o_k(\bar\g)$ has degree $2-k$. From $\deg \bar\delta=1$, it follows that the degree of $\g_{k,0}$ is $1-k$ as desired.

Now, it remains to prove $\mathfrak o_k(\bar \g)$ is $\bar\delta$-closed.  In fact, using (\ref{delta_bar-eq}), we first compute:
\begin{align*}
\bar\delta \mathfrak o_k(\bar \g)
=
&
\textstyle
\sum_{\ell \neq 1} \m_{1,0} \circ \m_{\ell,0} \circ (\g\otimes \cdots \otimes \g)
+
\sum_{\ell\neq 1} \m_{\ell,0} \circ (\g\otimes \cdots \otimes \g) \circ (\id^\bullet_\# \otimes \m'_{1,0}\otimes \id^\bullet) \\
&
\textstyle
-\big(
\sum_{\nu\neq 1} \m_{1,0}\circ \g \circ (\id^\bullet_\# \otimes \m'_{\nu,0} \otimes \id^\bullet ) 
+
\sum_{\nu\neq 1}  \g \circ (\id^\bullet_\# \otimes \m'_{\nu,0} \otimes \id^\bullet )  \circ (\id^\bullet_\#\otimes \m'_{1,0}\otimes \id^\bullet)
\big)
\end{align*}
Using the $A_\infty$ equations for $\m$ and $\m'$ together with the condition $\bar\g$ is $A_{k-1}$ homomorphism, a tedious but routine calculation deduces $\bar\delta \mathfrak o_k(\bar\g)=0$.
\end{proof}


\begin{lem}\label{o_k-composition-lem}
In the above situation, let $\bar \f^0=(\f^0_{k,0})_k$ and $\bar \f^1=(\f^1_{k,0})_k$ be two $A_\infty$ homomoprhisms\footnote{Actually, it is enough to assume that they are $A_k$ homomorphisms (Definition \ref{reduction-defn}).} in $\CC^\ud_0$. Then,
$\bar\f^1\diamond \bar \g \diamond \bar \f^0$ is also an $A_{k-1}$ homomorphism and
$
[\mathfrak o_k (\bar \f^1 \diamond \bar \g \diamond \bar \f^0) ]= 
\big[
\f^1_{1,0}\circ \mathfrak o_k(\bar \g)  \circ (\f^0_{1,0})^{\otimes k}
\big]
$
\end{lem}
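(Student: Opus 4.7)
The plan is to first verify that $\bar\f^1 \circ \bar\g \circ \bar\f^0$ is indeed an $A_{k-1}$ homomorphism in $\CC^\ud_0$, and then identify its level-$k$ obstruction modulo $\bar\delta$-coboundaries. For the first task, observe that for any $1 \le j \le k-1$, the $(j,0)$-component of the triple composition only involves $\g_{i,0}$ for $i \le j \le k-1$, all of which are supplied by $\bar\g$. Expand the candidate relation $\bar\m \circ (\bar\f^1 \circ \bar\g \circ \bar\f^0) - (\bar\f^1 \circ \bar\g \circ \bar\f^0) \star \bar\m'$ at length $j$; the \emph{full} $A_\infty$ relations for $\bar\f^0$ and $\bar\f^1$ allow one to slide $\bar\m$ past $\bar\f^1$ and $\bar\m'$ past $\bar\f^0$. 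What remains is a sum of terms of the shape $\bar\f^1 \circ \bigl( \bar\m \circ \bar\g - \bar\g \star \bar\m' \bigr)|_{\CC_{i,0}} \circ (\bar\f^0)^{\otimes \bullet}$ for $i \le j \le k-1$, all of which vanish by the $A_{k-1}$ hypothesis on $\bar\g$. The unitality, cyclical unitality, divisor axiom, and the condition (II-4) of the composition are inherited by the arguments of Lemma \ref{UD_prime-Composition-DA-lem}, which are local in $(k,\beta)$ and do not require the full $A_\infty$ property of $\bar\g$. Hence $\bar\f^1 \circ \bar\g \circ \bar\f^0 \in F^{k-1}\CC^\ud_0$.

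For the cohomology identity, I would expand $\mathfrak o_k(\bar\f^1 \circ \bar\g \circ \bar\f^0)$ directly from the defining formula (\ref{o_k_eq}), using the explicit sum-over-partitions description of the triple composition. I then sort the resulting terms by where the single $\bar\m$ or $\bar\m'$ factor contacts the $\bar\g$-layer. Those terms in which the contact is mediated by the identity-level pieces $\f^0_{1,0}$ and $\f^1_{1,0}$ reassemble precisely into $\f^1_{1,0} \circ \mathfrak o_k(\bar\g) \circ (\f^0_{1,0})^{\otimes k}$. The remaining terms, where the contact is mediated by a higher component $\f^0_{j,0}$ or $\f^1_{j,0}$ with $j \ge 2$, can be rewritten using the full $A_\infty$ relations of $\bar\f^0$ and $\bar\f^1$ as $\bar\delta \Phi$ for an explicit $\Phi \in \CC^\ud_{k,0}$ built from $\bar\f^0, \bar\g, \bar\f^1$. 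That $\Phi$ lies in $\CC^\ud_{k,0}$ (rather than merely $\CC_{k,0}$) is the content that requires checking: its unitality, cyclical unitality, and divisor axiom follow from those of the three constituents together with Proposition \ref{delta_bar-properties-prop}(i).

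The main obstacle is the bookkeeping of signs and of the combinatorial indices in the triple composition. I would organize this via the bar-coalgebra picture: each full $A_\infty$ homomorphism $\bar\f^i$ lifts to a coalgebra map $\hat\f^i$ on tensor coalgebras intertwining the $A_\infty$ codifferentials $\hat\m, \hat\m'$, and $\bar\g$ lifts to a partial coalgebra map $\hat\g$ up to length $k-1$. The obstruction $\mathfrak o_k$ then appears as the length-$k$ matrix coefficient of $\hat\m \circ \hat F - \hat F \circ \hat\m''$ for $\hat F = \hat\f^1 \circ \hat\g \circ \hat\f^0$. Applying $\hat\m \circ \hat\f^1 = \hat\f^1 \circ \hat\m'$ and $\hat\f^0 \circ \hat\m'' = \hat\m' \circ \hat\f^0$ reduces this to $\hat\f^1 \circ (\hat\m \circ \hat\g - \hat\g \circ \hat\m') \circ \hat\f^0$; projecting to the cogenerators at length-$k$ input extracts the main term $\f^1_{1,0} \circ \mathfrak o_k(\bar\g) \circ (\f^0_{1,0})^{\otimes k}$, while every contribution from a non-identity coalgebra-piece of $\hat\f^0$ or $\hat\f^1$ is visibly of the form $[\bar\m, \cdot] - [\cdot, \bar\m'] = \bar\delta(\cdot)$, producing the required primitive. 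Returning to the explicit formulas then gives the claimed equality in $H(\CC^\ud_{k,0}, \bar\delta)$.
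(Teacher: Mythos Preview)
Your approach is correct and essentially the same as the paper's. One clarification is worth making: in the bar-coalgebra reduction $\hat\f^1 \circ (\hat\m\hat\g - \hat\g\hat\m') \circ \hat\f^0$, the middle factor vanishes at all lengths $\le k-1$ (since $\bar\g$ is $A_{k-1}$), so at total input length $k$ the non-identity coalgebra pieces of $\hat\f^0$ and $\hat\f^1$ contribute \emph{zero}, not $\bar\delta$-exact terms as you suggest. The primitive arises from a different place: extend $\bar\g$ trivially by $\g_{k,0}=0$, form $\bar\h := \bar\f^1\circ\bar\g\circ\bar\f^0$ at \emph{all} lengths, and consider the full length-$k$ defect $\mathfrak O := \sum_r \m_{r,0}\circ(\bar\h^{\otimes r}) - \sum_s \bar\h\circ(\id_\#^\bullet\otimes\m'_{s,0}\otimes\id^\bullet)$ with no constraint $r,s\ne 1$. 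On one hand $\mathfrak O = \mathfrak o_k(\bar\h) + \bar\delta\, \h_{k,0}$ directly from the definitions; on the other hand your bar computation gives $\mathfrak O = \f^1_{1,0}\circ\mathfrak o_k(\bar\g)\circ(\f^0_{1,0})^{\otimes k}$ exactly. Hence the primitive is $\Phi = -\h_{k,0}$, and it lies in $\CC_{k,0}^\ud$ by the argument of Lemma~\ref{UD_prime-Composition-DA-lem}. This is precisely how the paper organizes it.
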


\begin{proof}
$\bar\g\in F^{k-1}\CC^\ud_0\subset \CC^\ud_0$ is an $A_{k-1}$ homomorphism.
Abusing the notations, all the involved $A_\infty$ algebras here are denoted by $\m$.
To begin with, it is routine to check $\bar\f^1\diamond\bar\g\diamond\bar\f^0$ is an $A_{k-1}$ homomorphism. Besides, $\bar\f^1\diamond \bar \g \diamond \bar \f^0$ restricts to the subspace $F^{k-1}\CC^\ud_0\subset F^{k-1} \CC_0$ by the proof of Theorem \ref{UD_prime-Composition-DA-thm}.
So, the $\mathfrak o_k(\bar\f^1\diamond \bar\g \diamond \bar\f^0)$ is well-defined by Theorem \ref{o_k-length-thm}. Indeed, if we further regard $\bar \g$ as an element in $\CC_0^\ud$ by trivial extensions (i.e. we set $\bar\g_k=\bar\g_{k+1}=\cdots =0$), then the operator system 
$
\bar\h:= \bar\f^1  \diamond \bar\g \diamond \bar \f^0 =(\h_{j,0})_{j\ge 1}
$
satisfies that $\bar\h\in\CC_0^\ud$ and $\bar\h|_{F^{k-1}\CC}$ is an $A_{k-1}$ homomorphism.
Define
\[
\textstyle
\CC_{k,0}\ni
\mathfrak O:= \sum_r \m_{r,0} \circ (\bar\h \otimes \cdots \otimes \bar\h)
-
\sum_s  \bar\h \circ ( \id^\bullet_\# \otimes \m_{s,0} \otimes \id^\bullet)
\]
It is easy to compute that
$\mathfrak O = \mathfrak o_k(\bar\h) + \bar \delta \h_{k,0}$.
Also, we can compute the $\mathfrak O$ in another way as follows:
\begin{align*}
\mathfrak
O
=
&
\Scale[0.9]{
 \sum \f^1_{p,0} \circ (\id_\# \otimes \m_{r,0}\otimes \id ) \circ ( \bar\g\otimes \cdots \otimes \bar\g) \circ (\bar\f^0\otimes \cdots \otimes \bar\f^0) 
 -
 \sum  \bar\f^1_{q,0} \circ (\bar\g\otimes \cdots \otimes \bar\g) \circ (\id^\bullet_\#\otimes \m_{s,0}\otimes \id^\bullet) \circ (\bar\f^0\otimes \cdots \otimes \bar\f^0)
 } \\
=
&
\Scale[0.9]{
\sum \f^1_{1,0} \circ \bar\m\circ (\bar\g\otimes \cdots\otimes \bar\g) \circ (\f^0_{1,0})^{\otimes k} 
- 
\sum \f^1_{1,0} \circ \bar\g \circ (\id^\bullet_\# \otimes \bar\m \otimes \id^\bullet ) \circ (\f^0_{1,0})^{\otimes k} 
}\\
=
&
\Scale[0.9]{
	\sum_{\ell\neq 1} \f^1_{1,0} \circ \bar\m_{\ell,0}\circ (\bar\g\otimes \cdots\otimes \bar\g) \circ (\f^0_{1,0})^{\otimes k} 
	- 
	\sum_{\nu\neq 1} \f^1_{1,0} \circ \bar\g \circ (\id^\bullet_\# \otimes \bar\m_{\nu,0} \otimes \id^\bullet ) \circ (\f^0_{1,0})^{\otimes k} 
}
\end{align*}
where the first identity holds by the $A_\infty$ homomorphism equations for $\bar\f^0$ and $\bar\f^1$, the second identity holds since $\bar\g$ is an $A_{k-1}$ homomorphism, and the third is because we have trivially extended $\g$ by $\g_{k,0}=0$. The outcome exactly says $\mathfrak O= \f^1_{1,0} \circ \mathfrak o_k(\g) \circ (\f^0_{1,0})^{\otimes k}$. So, the proof is now complete.
\end{proof}

\subsubsection{Extension for the length filtration}

Let $\M$ and $\M'$ be the trivial pseudo-isotopies about the $\m$ and $\m'$. By Definition \ref{reduction-defn}, the reduction $\bar\M$ (resp. $\bar \M'$) coincides with the trivial pseudo-isotopies about $\bar\m$ (resp. $\bar\m'$).
Denote by $\bar \f = ( \f_{j,0})_{j\ge 1}$ 
the restriction of the $\f\in\Hom_\UD(\m,\m')$ in $ \CC^\ud_0( \C,\C')$; then, $\bar\f \in \Hom_{\UD}(\bar\m,\bar\m')$. We aim to show a weaker version of Theorem \ref{Whitehead-full-thm} in the energy-zero part.

\begin{thm}\label{Whitehead-Leangth-filtration -thm}
There exists an $A_\infty$ homomorphism $\bar\g\in \CC^\ud_0(\C',\C)$ so that
$\bar\g \diamond \bar\f$ is ud-homotopic to $\id$ via some $\bar \h \in \CC^\ud_0(\C,\C_\oi)$ and $\bar\f \diamond \bar\g$ is ud-homotopic to $\id$ via some $\bar \h' \in \CC^\ud_0(\C',\C'_\oi)$
\end{thm}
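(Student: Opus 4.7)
The plan is to construct $\bar\g$, $\bar\h$, $\bar\h'$ in $\CC^\ud_0$ by a simultaneous induction on the length $k$, with the base case $k=1$ provided by $\g_{1,0}$ from \eqref{Whitehead-g_10-eq} and $\h_{1,0}$ from \eqref{Whitehead-h_10} (together with an analogous $\h'_{1,0}$ on the $\bar\m$-side), whose requisite UD-properties are supplied by \eqref{g_10_h_10_one_eq} and Lemma \ref{DA-g_10-h_10-lem}. For the inductive step from $k-1$ to $k$, I would first extend $\bar\g$, and then exploit the chosen $\g_{k,0}$ to extend the two homotopies $\bar\h$, $\bar\h'$ in parallel.

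To extend $\bar\g$, Theorem \ref{o_k-length-thm} reduces the task to showing $[\mathfrak o_k(\bar\g)]=0$ in $H(\CC^\ud_{k,0}(C,C'),\bar\delta)$. I would apply Lemma \ref{o_k-composition-lem} with $\bar\f^0=\bar\f$ and $\bar\f^1=\id$ to obtain $[\mathfrak o_k(\bar\g\circ\bar\f)]=[\mathfrak o_k(\bar\g)\circ \f_{1,0}^{\otimes k}]$ in $H(\CC^\ud_{k,0}(C',C'),\bar\delta)$. The inductive ud-homotopy $\bar\h$ (known up to length $k-1$) between $\bar\g\circ\bar\f$ and $\id$, together with \eqref{F0+_roughly_A_infty_formula_eq} expanded at length $k$, exhibits $\mathfrak o_k(\bar\g\circ\bar\f)$ as the $\bar\delta$-coboundary of the would-be $\h_{k,0}$-term, so that $[\mathfrak o_k(\bar\g\circ\bar\f)]=[\mathfrak o_k(\id)]=0$. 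Finally, since $\f_{1,0}$ is a degree-zero quasi-isomorphism with $\f_{1,0}(\one)=\one$, Proposition \ref{delta_bar-properties-prop}(iii) says that the induced map $\widehat{\f_{1,0}}$ is a quasi-isomorphism on UD-cohomology, so $[\mathfrak o_k(\bar\g)]=0$ as well, and I pick any $\g_{k,0}\in\CC^\ud_{k,0}$ with $\bar\delta\g_{k,0}=-\mathfrak o_k(\bar\g)$.

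To extend $\bar\h$ and $\bar\h'$ at length $k$, Corollary \ref{UD_homotopy_summary_cor} describes a ud-homotopy by coupled equations for $(\f_s)$ and $(\h_s)$ satisfying \eqref{F0+_roughly_A_infty_formula_eq} together with divisor axiom, cyclical unitality, and the reinforced unitality $(\h_s)_{k,0}(\cdots\one\cdots)=0$. At order $k$ this yields a new obstruction in a closely related UD-complex, which vanishes by combining the induction hypothesis at order $k-1$ with the relation $\mathfrak o_k(\bar\g)+\bar\delta\g_{k,0}=0$ just arranged. The ambiguity in the choice of $\g_{k,0}$ by $\bar\delta$-closed elements provides enough freedom to fix the homotopy relations on both sides; the construction of $\bar\h'$ mirrors that of $\bar\h$ with the roles of $(\bar\m,\bar\g\circ\bar\f)$ and $(\bar\m',\bar\f\circ\bar\g)$ exchanged, using the companion cochain data on the $\bar\m$-side analogous to \eqref{Whitehead-h_10}.

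The main obstacle I anticipate is bookkeeping: at every inductive step I must verify that all constructed operators genuinely lie in the UD-subcomplex, i.e.\ respect (L1) the divisor axiom at $\beta=0$, (L2) the cyclical unitality at $\beta=0$, and (L3) the strict unitality. Proposition \ref{delta_bar-properties-prop}(i) guarantees that $\bar\delta$ preserves these conditions, and Lemma \ref{UD-Composition-DA-lem} ensures that the compositions appearing in \eqref{o_k_eq} stay inside $\UD$; however, the two simultaneous homotopies $\bar\h$ and $\bar\h'$ are coupled to $\bar\g$ via \eqref{F0+_roughly_A_infty_formula_eq}, and I expect disentangling these two coupled obstruction problems --- while maintaining the extra unitality condition $(\h_s)_{k,0}(\cdots\one\cdots)=0$ of Corollary \ref{UD_homotopy_summary_cor}(c) for the homotopy operators --- to be the technically most delicate point, mirroring the increased difficulty in \cite{FOOOBookOne} of upgrading the classical $A_\infty$ Whitehead theorem to the unital setting.
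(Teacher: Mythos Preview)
Your framework---induction on length, obstruction classes $\mathfrak o_k$ from Theorem \ref{o_k-length-thm}, and the quasi-isomorphism of $\widehat{\f_{1,0}}$ from Proposition \ref{delta_bar-properties-prop}(iii)---is the right one, but your organization differs from the paper's in two respects, and the second points to a difficulty you have not resolved.

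\textbf{Order of extension.} In Proposition \ref{Length-Whitehead-prop} the paper extends $\bar\h$ \emph{before} $\bar\g$. The argument is clean: since $\eval^0\circ\bar\h=\id$, Lemma \ref{o_k-composition-lem} gives $[\eval^0\circ\mathfrak o_k(\bar\h)]=[\mathfrak o_k(\id)]=0$, and Corollary \ref{deltabar-cohomology-eval_*-incl_*-cor} then yields $[\mathfrak o_k(\bar\h)]=0$. Only \emph{after} choosing $\h_{k,0}$ (modified so that $\eval^0\circ\h_{k,0}=0$) does one obtain $\mathfrak o_k(\bar\g\circ\bar\f)=-\bar\delta(\eval^1\circ\h_{k,0})$, and hence $[\mathfrak o_k(\bar\g)]=0$ via $\widehat{\f_{1,0}}$. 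Your sentence ``the inductive ud-homotopy $\bar\h$ \dots\ exhibits $\mathfrak o_k(\bar\g\circ\bar\f)$ as the $\bar\delta$-coboundary of the would-be $\h_{k,0}$-term'' is essentially this, but phrased circularly: the coboundary witness \emph{is} $\eval^1\circ\h_{k,0}$, so you must first show $[\mathfrak o_k(\bar\h)]=0$ and extend $\bar\h$.

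\textbf{The two-sided inverse.} More importantly, the paper does \emph{not} construct $\bar\h'$ inside the induction. After the induction produces $\bar\g$ with $\bar\g\circ\bar\f\simud\id$ via $\bar\h$, one simply re-runs the construction with $\bar\g$ in the role of $\bar\f$ to obtain some $\bar\f'$ with $\bar\f'\circ\bar\g\simud\id$; then Lemma \ref{UD-simud-composition-lem} (carried out inside $\CC^\ud_0$) gives $\bar\f'\simud\bar\f'\circ\bar\g\circ\bar\f\simud\bar\f$, hence $\bar\f\circ\bar\g\simud\bar\f'\circ\bar\g\simud\id$. This is the classical left-inverse/right-inverse trick.

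Your simultaneous plan faces a concrete obstacle. In the paper's inductive step, after choosing provisional $\h_{k,0}$ and $\g_{k,0}$, the discrepancy $\Xi=(\eval^1\circ\bar\h^+-\bar\g^+\circ\bar\f)_{k,0}$ is only $\bar\delta$-closed; one fixes it by adjusting $\g_{k,0}$ by a $\bar\delta$-closed $\Delta\g$ satisfying $\Xi+\bar\delta\eta=\Delta\g\circ\f_{1,0}^{\otimes k}$, using that $\widehat{\f_{1,0}}$ is a quasi-isomorphism. If you also carry $\bar\h'$, you would need the \emph{same} $\Delta\g$ to satisfy a second equation $\Xi'+\bar\delta\eta'=\f_{1,0}\circ\Delta\g$ coming from the $\bar\f\circ\bar\g$-side. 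These two constraints go through $\widehat{\f_{1,0}}$ and $(\f_{1,0})_*$ respectively, into different complexes $\CC^\ud_{k,0}(C',C')$ and $\CC^\ud_{k,0}(C,C)$; there is no evident reason a single $\Delta\g$ meets both. You flag this as ``the technically most delicate point'' but offer no mechanism; the paper's a~posteriori trick sidesteps it entirely, and I recommend you adopt it.
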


We plan to prove it by induction on $k$. The initial step when $k=1$ has completed before.

\begin{prop}\label{Length-Whitehead-prop}
Let $k\ge 2$. Given $\bar\f\in\Hom_\UD(\bar\m,\bar\m')$ as above, we assume
\[
\bar\g=(\g_{j,0})_{1\le j\le k-1} \in F^{k-1}\CC^\ud_0(\C',\C), \quad \text{and} \quad 
\bar\h=(\h_{j,0})_{1\le j\le k-1}\in F^{k-1} \CC^\ud_0 (\C,\C_\oi)
\]
are two $A_{k-1}$ homomorphisms such that 
\begin{align*}
\eval^0  \bar\h |_{F^{k-1}\CC_0} = \id, \quad \text{and} \quad
 \eval^1  \bar\h|_{F^{k-1}\CC_0}  = \bar\g\diamond  \bar\f
\end{align*}
Then there exists two $A_{k}$ homomorphisms $\bar \g^+=(\bar\g,\g_{k,0})\in F^k \CC^\ud_0$ and $\bar \h^+ = (\bar\h, \h_{k,0}) \in F^k \CC^\ud_0$, extending $\bar\g$ and $\bar\h$ respectively. Moreover, they satisfy that
\begin{equation}
\label{Length-Whitehead_Eval_01_eq}
\eval^0  \bar \h^+|_{CC_{k,0}} = \id, \quad \text{and} \quad
\eval^1  \bar \h^+ |_{CC_{k,0}}  = \bar \g^+ \diamond \bar \f
\end{equation}
\end{prop}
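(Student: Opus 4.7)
The plan is to extend $\bar\g$ and $\bar\h$ to level $k$ in a coordinated manner, using the obstruction theory of Theorem~\ref{o_k-length-thm}, Lemma~\ref{o_k-composition-lem}, and the quasi-isomorphism properties established in Proposition~\ref{delta_bar-properties-prop} and Corollary~\ref{deltabar-cohomology-eval_*-incl_*-cor}. I first extend $\bar\h$: the obstruction $\mathfrak o_k(\bar\h)\in\CC^\ud_{k,0}(\C',\C'_\oi)$ pushes forward under $\eval^0_*$ to $\mathfrak o_k(\eval^0\circ\bar\h)=\mathfrak o_k(\id)=0$ (directly from \eqref{o_k_eq}), so Corollary~\ref{deltabar-cohomology-eval_*-incl_*-cor} forces $[\mathfrak o_k(\bar\h)]=0$ and Theorem~\ref{o_k-length-thm} yields some extension $\h_{k,0}$. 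I then normalize at $s=0$: the element $\eval^0\h_{k,0}$ is $\bar\delta$-closed in $\CC^\ud_{k,0}(\C',\C')$, and because $\incl$ is a cochain map with $\eval^0\circ\incl=\id$, replacing $\h_{k,0}$ by $\h_{k,0}-\incl(\eval^0\h_{k,0})$ preserves the $A_k$-extension property and makes $\eval^0\h_{k,0}=0$ exactly.

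To extend $\bar\g$, note that $\eval^1\circ\bar\h^+$ already $A_k$-extends $\bar\g\circ\bar\f$, so $[\mathfrak o_k(\bar\g\circ\bar\f)]=0$; Lemma~\ref{o_k-composition-lem} with $\bar\f^0=\bar\f$, $\bar\f^1=\id$ identifies this class with $\widehat{\f_{1,0}}[\mathfrak o_k(\bar\g)]$, and Proposition~\ref{delta_bar-properties-prop}(iii) says $\widehat{\f_{1,0}}$ is a cohomological isomorphism, so $[\mathfrak o_k(\bar\g)]=0$ and Theorem~\ref{o_k-length-thm} produces $\g_{k,0}$. Finally I enforce \eqref{Length-Whitehead_Eval_01_eq}: put $\Delta:=\eval^1\h_{k,0}-(\bar\g^+\circ\bar\f)_{k,0}\in\CC^\ud_{k,0}(\C',\C')$, which is $\bar\delta$-closed. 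Using surjectivity of $\widehat{\f_{1,0}}$ on $\bar\delta$-cohomology, pick $\bar\delta$-closed $\xi\in\CC^\ud_{k,0}(\C,\C')$ and $\rho\in\CC^\ud_{k,0}(\C',\C')$ with $\widehat{\f_{1,0}}(\xi)=\Delta-\bar\delta\rho$; replacing $\g_{k,0}$ by $\g_{k,0}+\xi$ (still valid since $\bar\delta\xi=0$) reduces the discrepancy to the exact term $\bar\delta\rho$. I then subtract from $\h_{k,0}$ the cylinder correction $\sigma:=1\otimes s\,\bar\delta\rho+ds\otimes\rho$, which by a direct computation from \eqref{MP_10 -d -eq} and \eqref{delta_bar-eq} satisfies $\bar\delta\sigma=0$, $\eval^0\sigma=0$, and $\eval^1\sigma=\bar\delta\rho$, thereby restoring \eqref{Length-Whitehead_Eval_01_eq} without disturbing the normalization at $s=0$.

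The main obstacle is this final compatibility stage. The set of $A_k$ extensions of $\bar\g$ is only a torsor over $\bar\delta$-cocycles in $\CC^\ud_{k,0}(\C,\C')$, while \eqref{Length-Whitehead_Eval_01_eq} demands a strict chain-level equality on the other side of the quasi-isomorphism $\widehat{\f_{1,0}}$. Splitting the discrepancy $\Delta$ into a cohomological part (absorbed by the adjustment $\xi$ to $\g_{k,0}$) and an exact part (killed by the explicit $C'_\oi$-valued primitive $\sigma$ added to $\h_{k,0}$) requires delicate sign-bookkeeping between the $1\otimes$ and $ds\otimes$ components of $\sigma$ and, more importantly, the verification that all intermediate corrections $\xi,\rho,\sigma$ can be chosen inside the ud-subspaces so that (L1)--(L3) are preserved; the latter follows systematically from Proposition~\ref{delta_bar-properties-prop} but must be tracked carefully through each modification.
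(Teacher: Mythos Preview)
Your proposal is correct and follows essentially the same route as the paper's proof: extend $\bar\h$ via the vanishing of $[\mathfrak o_k(\bar\h)]$, normalize at $s=0$ by subtracting $\incl\circ\eval^0$, extend $\bar\g$ using that $\widehat{\f_{1,0}}$ is a quasi-isomorphism, and then repair the level-$k$ discrepancy by adjusting $\g_{k,0}$ by a $\bar\delta$-cocycle and $\h_{k,0}$ by a $\bar\delta$-exact term. The only cosmetic difference is that the paper packages your explicit cylinder correction $\sigma=1\otimes s\,\bar\delta\rho+ds\otimes\rho$ as $\bar\delta(1\otimes s\,\eta)$ for $\eta=\rho$, which makes the verification that $\sigma$ is $\bar\delta$-closed and lies in $\CC^\ud_{k,0}$ automatic from Proposition~\ref{delta_bar-properties-prop}(i) rather than requiring a direct sign computation.
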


\begin{proof}
By Lemma \ref{o_k-composition-lem} and by condition, we see that $[\eval^0 \circ \mathfrak o_k( \bar\h)]= [\mathfrak o_k( \eval^0\circ \bar \h)] = [ \mathfrak o_k(\id)] =0$. 
Then, by Corollary \ref{deltabar-cohomology-eval_*-incl_*-cor}, $[ \mathfrak o_k( \bar\h)]=0$, which implies that there exists some $\alpha\in \CC_{k,0}^\ud$ so that $\mathfrak o_k(\bar\h) +\bar\delta \alpha=0$. 
Recall that naively setting $\h_{k,0}=\alpha$ gives an $A_k$ homomorphism extension, but if so, we would miss the property (\ref{Length-Whitehead_Eval_01_eq}).
In reality, we need to put
\[
\h_{k,0}= \alpha - \incl \eval^0 (\alpha) \in\CC_{k,0}^\ud
\]
By Proposition \ref{delta_bar-properties-prop} and by the definition (\ref{o_k_eq}) of $\mathfrak o_k$, we compute that $\bar\delta \eval^0 \alpha=\eval^0\bar\delta\alpha
=\eval^0 (-\mathfrak o_k(\bar \h))=
-\mathfrak o_k( \eval^0 \bar\h)=-\mathfrak o_k(\id)=0$. 
Due to (\ref{delta_bar-eq}), the fact $\M_{1,0}^\tri\circ\incl=\incl\circ \m_{1,0}$ implies that the $\bar\delta$ commutes with $\incl$.
Thus, we get
$\bar\delta \h_{k,0}=\bar\delta \alpha- \incl \bar\delta \eval^0(\alpha)=\bar\delta\alpha$ and $\mathfrak o_k(\bar\h)+\bar\delta \h_{k,0}=0$.
Then, the $\bar\h^+=(\bar\h, \h_{k,0})$ gives an $A_k$ homomorphism extension such that $ \eval^0\circ \bar \h^+|_{\CC_{k,0}}=\eval^0\h_{k,0}=0$.
Since $\eval^1$ is trivially an $A_\infty$ homomorphism, the composition $\eval^1\circ \bar \h^+$ must be an $A_k$ homomorphism, and it extends $\bar \g \circ \bar \f \in F^{k-1}\CC^\ud_0$. Now, we have
\begin{equation}
\label{gf-o_k-delta_pre-eq}
\mathfrak o_k(\bar \g \circ \bar \f)= - \bar \delta  (\eval^1\circ \bar\h^+)_{k,0}= - \bar \delta ( \eval^1 \circ \h_{k,0})
\end{equation}
and so $[\mathfrak o_k(\bar\g\circ \bar \f)]=0$.
By the proof of Lemma \ref{o_k-composition-lem}, we have actually shown that
\begin{equation}
\label{gf-o_k-delta-eq}
\textstyle
\mathfrak o_k (\bar \g \circ \bar \f) + \bar \delta \big(\sum_{\ell< k} \g_{\ell,0} \circ (\f\otimes \cdots \otimes \f) \big) 
=
\mathfrak o_k(\bar \g) \circ \f_{1,0}^{\otimes k}
\end{equation}
and $[\mathfrak o_k (\bar \g) \circ \f_{1,0}^{\otimes k} ]=[\mathfrak o_k( \bar \g \circ \bar \f)]=0$.
By Proposition \ref{delta_bar-properties-prop}, $[\mathfrak o_k(\bar \g)]=0$. Theorem \ref{o_k-length-thm}
allows for an $A_k$ homomorphism extension $\bar \g^+=(\bar \g , \g_{k,0})\in F^k\CC^\ud_0$ for some $\g_{k,0}$ so that $\mathfrak o_k (\bar \g)=-\bar \delta \g_{k,0}$.
Consider
\begin{equation}
\label{Xi_length_filtration_eq}
\textstyle
\Xi:=\left(\eval^1\circ \bar\h^+- \bar \g^+ \circ \bar \f \right)_{k,0}=
\eval^1\circ \h_{k,0} - \g_{k,0} \circ \f_{1,0}^{\otimes k} - \sum_{\ell< k} \g_{\ell,0} \circ (\f\otimes \cdots \otimes  \f)
\end{equation}
Generally, this is not necessary zero in contrast to (\ref{Length-Whitehead_Eval_01_eq}). So, we need to slightly modify $\g_{k,0}$ and $\h_{k,0}$ as follows.
First, using (\ref{gf-o_k-delta_pre-eq}) and (\ref{gf-o_k-delta-eq}) implies that $\bar \delta \Xi= -(\mathfrak o_k(\bar \g)+\bar\delta \g_{k,0})\circ \f_{1,0}^{\otimes k}=0$. 
Also, it is easy to check $\Xi\in \CC^\ud_0$, since so do $\bar \f$, $\bar \g^+$ and $\bar \h^+$.
Thus, the $\Xi$ gives rise to a class $[\Xi]$ in $H( \CC^\ud_{k,0},\bar \delta)$.
Since $\bar\f_{1,0}$ is a quasi-isomorphism, we can find a $\bar \delta$-closed $\Delta \g \in \CC_{k,0}^\ud(\C',\C)$ so that
$
\Xi +\bar \delta \eta = \Delta \g\circ \f_{1,0}^{\otimes k} 
$
for some $\eta \in \CC_{k,0}^\ud(\C' ,\C)$. We define $\Delta \h\in \CC^\ud_{k,0} (\C,\C_\oi)$ by setting $\Delta \h (s)= 1\otimes s \eta$.
In special, we have $\eval^0\circ \Delta \h =0$ and $\eval^1 \circ \Delta \h=\eta$.

Ultimately, we claim the modified $\h'_{k,0}:=\h_{k,0} +  \bar \delta \Delta  \h$ and $\g'_{k,0}:= \g_{k,0}+ \Delta \g$ meet our needs. Firstly, both of them lie in $\CC^\ud_{k,0}$ by construction. Secondly, since $\Delta\g$ and $\bar \delta \Delta \h$ are $\bar\delta$-closed, $(\bar\h, \h'_{k,0})$ and $(\bar\g,\g'_{k,0})$ are still the desired $A_k$ homomorphism extensions by Theorem \ref{o_k-length-thm}.
Thirdly, the condition $\eval^0 \circ \bar \h^+=\id$ is not destroyed, and one can show the modified (\ref{Xi_length_filtration_eq}) is given by
$
\Xi'
:=
\eval^1\circ \h'_{k,0} - \g'_{k,0} \circ \f_{1,0}^{\otimes k} - \sum_{\ell< k} \g_{\ell,0} \circ (\f\otimes \cdots \otimes  \f)
=
\Xi+
\eval^1\circ (\bar \delta \Delta \h) - \Delta \g \circ \f_{1,0}^{\otimes k}
=
\Xi+\bar \delta \eta - \Delta \g \circ \f_{1,0}^{\otimes k}=0
$.
\end{proof}

\begin{proof}
[Proof of Theorem \ref{Whitehead-Leangth-filtration -thm}]
The base case for $\g_{1,0}, \h_{1,0}$ has been completed in (\ref{Whitehead-g_10-eq}), (\ref{Whitehead-h_10}), by Lemma
\ref{DA-g_10-h_10-lem}.
Note that $\eval^0\h_{1,0}=\id$ and $\eval^1\h_{1,0}=\g_{1,0}\f_{1,0}$ and both of them indeed live in $\CC_{1,0}^\ud$ by (\ref{g_10_h_10_one_eq}).
By iteratively applying Proposition \ref{Length-Whitehead-prop}, we obtain $\bar\g=(\g_{k,0})_{k\ge 1}$ and $\bar \h=(\h_{k,0})_{k\ge 1}$ in $\CC_0^\ud$ such that
$
\bar\g \diamond \bar \f\simud\id
$
via $\bar\h$.
It remains to show $\bar\f \diamond \bar\g\simud\id$ via some $\bar\h'\in \CC^\ud_0$. In fact, applying the above result to $\bar\g$ in place of $\bar\f$, we get some $\bar\f'\in\CC_0^\ud$ so that $\bar\f' \diamond \bar\g\simud \id$ via some $\bar{\mathfrak k}\in \CC^\ud_0$.
Finally, the argument in the proof of Lemma \ref{UD-simud-composition-lem} can be performed within the energy-zero part $\CC^\ud_0$; thus, we also obtain $\bar\f'\simud\bar\f' \diamond \bar\g \diamond \bar\f\simud\bar\f$ and $\bar\f \diamond \bar\g\simud\bar\f' \diamond \bar\g\simud\id$ via some operator system in $\CC_0^\ud$.
\end{proof}

\subsubsection{Obstruction for the energy filtration}

In Theorem \ref{Whitehead-Leangth-filtration -thm}, we constructed an ud-homotopy inverse $\bar\g$ for the reduction $\bar\f$ \textit{within the zero-energy components}. Our next step involves inductive constructions for the energy filtration. Unlike the length filtration, a notable distinction is that for a given $\beta \neq 0$, the divisor axiom equations (\ref{DivisorAxiom}) engage $\f_{k,\beta}$ across various $k\in\mathbb N$. 

Define $\CC_\beta^\ud \subset \CC_\beta$ as the subspace of $\mathbb N$-labeled operator systems $\varphi=(\varphi_{k,\beta})_{k\ge 0}$ such that:
\begin{itemize}
	\itemsep 1pt
\item[(E1)] 
$
\DA[\varphi]_{k,\beta}(b;x_1,\dots,x_k) = \partial \beta \cap b \cdot \varphi_{k,\beta}(x_1,\dots,x_{k})
$
for every $k\ge 0$ and divisor input $b$.
\item[(E2)] $\CU[\varphi]_{k,\beta}(\e; x_1,\dots,x_k)=0$ for every $k\ge 0$ and degree-zero input $\e$.
\item[(E3)] $\varphi_{k,\beta}(\dots, \one,\dots)=0$ for every $k$ and the unit $\one$.
\end{itemize}

The three conditions align with the divisor axiom, cyclical unitality, and unitality, respectively. It's noteworthy that (E1) and (E2) exclusively connect $\varphi_{k+1,\beta}$ to $\varphi_{k,\beta}$.
Next, we define
\begin{equation}
\label{delta_bar_g-CC_beta-eq}
\delta:=\delta_{\bar \g}: \CC_\beta \to \CC_\beta
\end{equation}
by sending an element $\varphi=(\varphi_{k,\beta})_{k\ge 0}$, with a fixed shifted degree $\deg' \varphi_{k,\beta}=p$ for all $k$, to an element $\delta_{\bar\g}(\varphi)$
so that the component $\big(\delta_{\bar\g}(\varphi)\big)_{k,\beta}$ in $\CC_{k,\beta}$ is given by (see (\ref{sign-p-twist-eq}) for the notations)
\[
\sum_{\ell \ge 1} \sum_{\substack{
		k_1+\cdots +k_\ell =k \\ 1\le i\le \ell
		}}
\m_{\ell,0}   (\g^{\# p}_{k_1,0}\otimes \cdots \otimes \g^{\# p}_{k_{i-1},0}\otimes \varphi_{k_i,\beta}\otimes \g_{k_{i+1}, 0}\otimes \cdots \otimes \g_{k_\ell, 0}) 
- (-1)^{p} \sum_{\substack{\lambda+\mu+\nu=k
}} \varphi_{\lambda+\mu+1,\beta}   (\id_\#^\lambda\otimes \m'_{\nu,0} \otimes \id^\mu)
\]
For simplicity, we may write 
$
\textstyle
\sum \bar\m  (\bar\g^{\# p} \otimes \cdots \bar\g^{\# p} \otimes \varphi \otimes \bar\g\otimes \cdots \otimes \bar\g)- (-1)^p \sum \varphi   (\id^\bullet_\#\otimes \bar\m'\otimes \id^\bullet)
$.

There are some useful observations as follows:
Firstly, the map $\delta_{\bar \g}$ has degree one in the sense that $\deg' \delta_{\bar\g}(\varphi)=p+1$ whenever $\deg' \varphi=p$.
Secondly, the map $\delta=\delta_{\bar\g}$ depends not only on $\bar\g$ but also on the $A_\infty$ algebras $\m$ and $\m'$.
Thirdly, because $\bar\g, \bar\m,\bar\m' \in\CC^\ud_0$, one can directly check that
\begin{equation}
\label{delta_DA_eq}
\DA[\delta \varphi]_{\bullet,\beta}(b;x_1,\dots,x_k)=\delta \Big( \DA[\varphi]_{\bullet,\beta}(b;x_1,\dots,x_k)\Big)
\end{equation}
as elements in $\CC_\beta$. Fourthly, if we can find some $k$ so that $\varphi_{i,\beta}=0$ for $1\le i\le k-1$, then
\begin{equation}
\label{delta_i_le_k-1_property_eq}
(\delta\varphi)_{k,\beta}=\bar\delta \big( \varphi_{k,\beta}\big)
\end{equation}

\begin{lem}
	\label{delta-delta=0-lem}
	The $\delta\equiv \delta_\g$ is a differential, that is,
$
	\delta \circ \delta =0
$.
\end{lem}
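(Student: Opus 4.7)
The statement $\delta \circ \delta = 0$ is the $A_\infty$ analog of the classical fact that the Hochschild differential on a bimodule of operator systems squares to zero. My plan is to expand $\delta(\delta\varphi)$ into a sum of tensor-composition terms and show that they regroup into three collections, each of which vanishes because of one of the three available $A_\infty$ relations: the associativity of $\bar\m$, the associativity of $\bar\m'$, and the $A_\infty$-homomorphism equation $\bar\m \circ \bar\g = \bar\g \star \bar\m'$ for the morphism $\bar\g \in \Hom_{\UD}(\bar\m', \bar\m)$ constructed in Theorem \ref{Whitehead-Leangth-filtration -thm}.

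First I would fix a $\varphi \in \CC_\beta$ of shifted degree $p$ and unwind the definition (\ref{delta_bar_g-CC_beta-eq}). Applying $\delta$ twice produces four kinds of terms:
\begin{itemize}
\item[(I)] pure $\bar\m'$-terms of the form $\varphi \circ (\id^\bullet_\# \otimes \bar\m' \otimes \id^\bullet) \circ (\id^\bullet_\# \otimes \bar\m' \otimes \id^\bullet)$, with overall sign $(-1)^p (-1)^{p+1} = -1$; these assemble into $\pm \varphi \star (\bar\m' \star \bar\m')$ and vanish by $A_\infty$ for $\bar\m'$;
\item[(II)] mixed terms where the outer operator is $\bar\m$ and the inner operator is $\bar\m'$, namely $\bar\m \circ (\bar\g^{\#(p+1)}, \ldots, \varphi \circ (\id^\bullet_\# \otimes \bar\m' \otimes \id^\bullet), \ldots, \bar\g)$;
\item[(III)] mixed terms where the outer is $\bar\m'$ acting on an inner $\bar\m \circ (\bar\g, \ldots, \varphi, \ldots, \bar\g)$;
\item[(IV)] pure $\bar\m$-terms of the form $\bar\m \circ (\bar\g^{\#(p+1)}, \ldots, \bar\m \circ (\bar\g^{\#p}, \ldots, \varphi, \ldots, \bar\g), \ldots, \bar\g)$, together with the "passive" terms where the inner $\bar\m$ does not touch the $\varphi$-slot.
\end{itemize}

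Next I would handle (IV): to each passive term I would apply the homomorphism equation $\bar\m \circ (\bar\g \otimes \cdots \otimes \bar\g) = \bar\g \circ (\id_\#^\bullet \otimes \bar\m' \otimes \id^\bullet)$ for $\bar\g$ in the block not containing $\varphi$, which converts those contributions into additional mixed terms and into a single expression of the form $(\bar\m \star \bar\m) \circ (\bar\g^{\#(p+1)} \otimes \cdots \otimes \bar\g^{\#p} \otimes \varphi \otimes \bar\g \otimes \cdots \otimes \bar\g)$. The latter vanishes by associativity of $\bar\m$. The remaining mixed terms produced this way combine with the groups (II) and (III). Then I would apply the homomorphism equation once more, slot by slot, to every $\bar\g$ appearing adjacent to a $\bar\m'$ in (II) and (III); the resulting sums cancel pairwise, the cancellation being exactly the one encoded by the identity $\bar\m \circ \bar\g = \bar\g \star \bar\m'$ in Gerstenhaber form. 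This leaves no surviving contribution, which gives $\delta\delta\varphi = 0$.

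The main obstacle will be the sign bookkeeping. Since $\delta$ raises shifted degree by one, the outer $\delta$ applied to $\delta\varphi$ carries the twist factor $(-1)^{p+1}$ on the $\bar\m'$-part and inserts $(-1)^{p+1}$ powers of $\id_\#$ in the passive slots, whereas the inner $\delta$ uses $(-1)^p$ and $\id_\#^p$. Keeping these consistent requires (\ref{sign-p-twist-eq}) and the standard Koszul-sign rules for the Gerstenhaber product in Definition \ref{Composition-defn}; I would do this by computing one representative term in each of the four groups and then appealing to the parallel argument in \cite[\S 4.4.5]{FOOOBookOne} for the remaining cases. Finally I would verify, using (\ref{delta_DA_eq}) and the fact that $\bar\m,\bar\m',\bar\g$ all lie in $\CC_0^\ud$, that the cancellations actually take place inside $\CC_\beta^\ud$ rather than only in the ambient $\CC_\beta$; this uses that the divisor-axiom, cyclical-unitality, and unitality conditions (E1)--(E3) are preserved by $\delta$.
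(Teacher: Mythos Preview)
Your plan is essentially the paper's own proof: expand $\delta\delta\varphi$ into four groups and eliminate them using $\bar\m\star\bar\m=0$, $\bar\m'\star\bar\m'=0$, and the $A_\infty$-homomorphism relation for $\bar\g$. Your groups (I)--(IV) are the paper's $IV$, $II$, $III$, $I$ in reverse order, but the content is the same.

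Two minor corrections. First, after you convert the passive part of your (IV) via $\bar\m\circ(\bar\g\otimes\cdots\otimes\bar\g)=\bar\g\circ(\id_\#^\bullet\otimes\bar\m'\otimes\id^\bullet)$, the remaining cancellation with (II) and (III) is \emph{not} a second application of the homomorphism equation. It is the purely combinatorial identity that post-composing $\bar\m(\bar\g^{\#p},\ldots,\varphi,\ldots,\bar\g)$ with a single $(\id_\#^\bullet\otimes\bar\m'\otimes\id^\bullet)$ distributes that $\bar\m'$-insertion over all the tensor slots --- landing in a $\bar\g$-slot to the left of $\varphi$, in the $\varphi$-slot itself, or in a $\bar\g$-slot to the right --- which is exactly the sum of your (II) and the converted passive terms. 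The paper records this as the one-line regrouping $I+II=-III$. Second, your closing paragraph about verifying the cancellation inside $\CC_\beta^\ud$ is superfluous for this lemma: $\delta\circ\delta=0$ is an identity in the ambient $\CC_\beta$, and zero lies in every subspace. That $\delta$ preserves $\CC_\beta^\ud$ is the separate content of Proposition~\ref{delta_beta_property-prop}(i).
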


\begin{proof}
Pick up an element $\varphi=(\varphi_{k,\beta})_{k\ge 0}$ with fixed degree components, say $\deg' \varphi_{k,\beta}=p$.
Then, $\deg' \delta (\varphi)=\deg' \varphi+1$ by definition of $\delta:=\delta_{\bar\g}$. We compute
	\begin{align*}
	\delta\delta (\varphi)
	&
	=
	\sum \m \big(\g^{\# (p+1)}\cdots \g^{\#(p+1)} (\delta\phi) \g\cdots \g
	\big)
	+(-1)^{p+2}\sum (\delta\varphi) (
	\id_\#\otimes \m\otimes \id
	)\\
	&
	=
	\sum \m \big( \g^{\#(p+1)}\cdots \g^{\#(p+1)} \m(\g^{\#p}\cdots \g^{\#p}\varphi\g\cdots \g) \g \cdots \g \big)\\
	&
	+
	(-1)^{p+1}\sum \m \big (\g^{\#(p+1)}\cdots \g^{\#(p+1)} \varphi(\id_\#\otimes \m\otimes \id)\g\cdots\g \big)\\
	&
	+
	(-1)^{p+2}
	\sum \m( \g^{\# p}\cdots \g^{\# p}\varphi \g\cdots \g) \ (\id_\#\otimes \m\otimes \id)\\
	&
	+
	(-1)^{p+2}(-1)^{p+1}\sum \varphi \  (\id_\#\otimes \m\otimes \id) \ (\id_\#\otimes \m\otimes \id)
	\quad \ \ =
	:I+II+ III +IV
	\end{align*}
	where any $\id_\#$ or $\id$ actually refers to some $\id_\#^k=\id_\# \otimes \dots\otimes \id_\#$ or $\id^\ell =\id\otimes \cdots \otimes \id$. We compute:
	\begin{align*}
	I&
	=
	\sum \m(\id_\#\otimes \m\otimes \id)(\g^{\# p}\cdots \g^{\# p} \varphi \g \cdots \g)\\
	&
	-
	\sum
	\m(\g^{\# (p+1)}\cdots \g^{\# (p+1)}
	\m(\g^{\# p}\cdots\g^{\# p})
	\g^{\# p} \cdots \g^{\# p}\varphi\g\cdots \g)\\
	&
	-
	\sum
	\m(\g^{\# (p+1)}\cdots \g^{\# (p+1)} , (\id_\#\varphi), \g^\#\cdots \g^\# \m(\g\cdots\g)\g\cdots \g)
	\quad \ \
	=:I_1-I_2-I_3
	\end{align*}
	
Since $\g$ is an $A_\infty$ homomorphism, we have $\m(\g^{\#p}\cdots\g^{\#p})
=
(\m(\g\cdots\g))^{\# p}
=
(\g(\id_\#\otimes \m\otimes \id))^{\# p}
=
(-1)^p \id_{\#p}  \  \g \  (\id_\#\otimes \m\otimes \id)=(-1)^p\g^{\# p}(\id_\#\otimes \m\otimes \id)$
	and
	\[
	I_2=
	(-1)^p
	\sum
	\m(\g^{\# (p+1)}\cdots \g^{\# (p+1)}
	\g^{\# p} \  (\id_\#\otimes \m\otimes \id) \  \g^{\# p} \cdots \g^{\# p}\varphi\g\cdots \g)
	\]
	Additionally, notice that $\varphi^\#=\varphi(\id_\#,\dots,\id_\#)=(-1)^p\id_\# \circ  \varphi$ by (\ref{sign-p-twist-eq}), and we conclude
	\[
	I_3=(-1)^p\sum
	\m(\g^{\# (p+1)}\cdots \g^{\# (p+1)}, \varphi^\#, \g^\#\cdots \g^\# \g(\id_\#\otimes \m\otimes \id)\g\cdots \g)
	\]
	Since $I_1=0$, we derive
	$
	I+II=-(I_2+I_3)+II=(-1)^{p+1}\m(\g^{\# p}\cdots \g^{\# p} \varphi\g\cdots \g) (\id_\#\otimes \m\otimes \id)=-III
	$.
	Finally, the $A_\infty$ equation says $IV=0$, and hence we have $\delta\delta=0$.	
\end{proof}

Provided Lemma \ref{delta-delta=0-lem}, we can show an analog to Proposition \ref{delta_bar-properties-prop}. For simplicity, we will use the same symbol $\m$ for the various $A_\infty$ algebras involved.

\begin{prop}
	\label{delta_beta_property-prop}
Let $\beta\neq 0$ and and let $\bar\g$ be an $A_\infty$ homomorphism modulo $T^{E=0}$ in $\CC^\ud_0$.
\leavevmode
\begin{itemize}
\item[(i)]
The $\delta=\delta_{\bar\g}$ restricts to a differential on $\CC_\beta^\ud$, hence, there is a cohomology
\[
H(\CC^\ud_\beta, \delta_{\bar \g})
\]
 \item[(ii)] If $f$ is a cochain map of degree zero satisfying (\ref{DA-f_10-eq}), $f(\one)=\one$, and the property\footnote{Equivalently, this means $f$ can be viewed as an $A_\infty$ homomorphism in the trivial way}
$\m_{k,0} \circ f^{\otimes k} = f\circ \m_{k,0}$, then it induces a cochain map\footnote{The notation here will become clear soon in Proposition \ref{f0f1_cochain-map-beta-prop}}
 \[
 f_*:=(\id,f)_*: (\CC_\beta^\ud,\delta_{\bar\g} ) \to (\CC_\beta^\ud, \delta_{f\bar\g})
 \]
 sending $\varphi$ to $f\circ \varphi$. Moreover, if $f$ is a quasi-isomorphism which admits a right inverse $h$ in the sense that $f\circ h=\id$, then  $f_*$ is also a quasi-isomorphism.
 \item[(iii)] In the same condition of (ii), there is an induced cochain map
 \[
 \hat f:= (f,\id)_*: (\CC^\ud_\beta,\delta_{\bar\g})\to (\CC^\ud_\beta,\delta_{\bar\g \circ f})
 \]
sending $\varphi$ to $(\varphi_k \circ f^{\otimes k})_{k\ge 0}$. Moreover, if $f$ is a quasi-isomorphism with a left inverse $k$ in the sense that $k \circ f=\id$, then $\hat f$ is also a quasi-isomorphism.
 \end{itemize}
\end{prop}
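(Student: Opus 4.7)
The plan is to adapt the arguments of Proposition \ref{delta_bar-properties-prop} to the setting of a fixed energy class $\beta \neq 0$. The new complication is that $\CC_\beta^\ud$ is not a product over length components: the conditions (E1) and (E2) couple $\varphi_{k, \beta}$ to $\varphi_{k+1, \beta}$ via the divisor axiom and cyclical-unitality operations. I would handle this by filtering by length in the decreasing direction and reducing the quasi-isomorphism claims to the $\beta=0$ case already handled.

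For (i), I would first verify that $\delta = \delta_{\bar\g}$ preserves the three defining conditions of $\CC_\beta^\ud$. Preservation of (E1) is immediate from the bracket identity (\ref{delta_DA_eq}): if $\varphi$ satisfies (E1), then
\[
\DA[\delta\varphi]_{\bullet, \beta}(b; x_1, \ldots, x_k)
= \delta\bigl(\DA[\varphi]_{\bullet, \beta}(b; x_1, \ldots, x_k)\bigr)
= \partial\beta \cap b \cdot (\delta\varphi)_{\bullet, \beta}.
\]
Preservation of (E2) is a parallel computation in which the cyclic insertion of a degree-zero $\e$ is pushed through each occurrence of $\m$, $\m'$, and $\bar\g$ using their cyclical unitalities, mimicking the (L2) bookkeeping in Proposition \ref{delta_bar-properties-prop}(i). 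Preservation of (E3) follows from $\m'_{1,0}(\one) = 0$ together with the unitalities of $\bar\m$ and $\bar\g$, which annihilate any $\one$ except in the trivial positions. Combined with Lemma \ref{delta-delta=0-lem}, this defines the cohomology.

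For (ii), the first task is to show that $f_* = f\circ(-)$ maps $\CC_\beta^\ud$ into itself. The hypothesis (\ref{DA-f_10-eq}) together with the linearity of $f$ gives (E1); the fact $f(\one) = \one$ gives (E3); and (E2) is preserved because cyclical unitality is an input-side condition unaffected by post-composition. To verify $f_* \delta_{\bar\g} = \delta_{f\bar\g} f_*$, I would take the defining formula of $\delta_{\bar\g}$, left-compose with $f$, and push $f$ inside each $\m_{\ell, 0}$ via the hypothesis $\m_{\ell, 0} \circ f^{\otimes \ell} = f \circ \m_{\ell, 0}$; each $\bar\g$ then gets replaced by $f\bar\g$ and $\varphi$ by $f\varphi$, giving exactly $\delta_{f\bar\g}(f_*\varphi)$. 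For the quasi-isomorphism claim, I would filter $\CC_\beta^\ud$ by $F^k := \{\varphi \in \CC_\beta^\ud : \varphi_{j, \beta} = 0 \text{ for } j < k\}$. Because $\bar\g_{0,0} = 0$ and $\m'_{0,0} = 0$ (gappedness), inspecting the two sums in the defining formula of $\delta$ shows that $(\delta\varphi)_{j,\beta} = 0$ for $j < k$ whenever $\varphi \in F^k$, so $\delta F^k \subset F^k$. On the associated graded $F^k/F^{k+1}$, which is isomorphic to the subspace of $\CC_{k,\beta}$ cut out by the same type of conditions as $(L1), (L2), (L3)$, the induced differential collapses to the length-preserving operator $\bar\delta$ of (\ref{delta_bar-eq}). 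Proposition \ref{delta_bar-properties-prop}(ii) then makes $f_*$ a quasi-isomorphism on each graded piece, and the resulting spectral sequence (bounded below, with Hausdorff and complete filtration) converges to show $f_*$ itself is a quasi-isomorphism; the right inverse $h$ of $f$ is used precisely to supply the chain-homotopy inverse at the graded level.

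Part (iii) is handled by the mirror argument with $\hat f = (-) \circ f^{\otimes \bullet}$. That $\hat f$ lands in $\CC_\beta^\ud$ follows from the identity $\DA[\hat f \varphi]_{k, \beta}(b; x_1, \ldots, x_k) = \DA[\varphi]_{k, \beta}(f(b); f(x_1), \ldots, f(x_k))$ combined with (\ref{DA-f_10-eq}) $\partial\beta \cap f(b) = \partial\beta \cap b$, plus the analogous identities for cyclical unitality and for the unit; the intertwining $\hat f \delta_{\bar\g} = \delta_{\bar\g \circ f} \hat f$ uses $\m'_{\nu, 0} \circ f^{\otimes \nu} = f \circ \m'_{\nu, 0}$ to pull $f$ past the $\m'$ factor in the second term of $\delta$. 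The quasi-isomorphism claim is established by the same length-filtration spectral sequence, now invoking the left inverse $k$ of $f$ to produce the homotopy inverse at the graded level. I expect the main obstacle throughout to be checking cleanly that the length filtration descends to $\CC_\beta^\ud$ in spite of the coupling imposed by (E1) and (E2), and that the graded pieces genuinely match the hypotheses of Proposition \ref{delta_bar-properties-prop}; this will be a routine but careful inspection tracking which coupling terms survive after the quotient.
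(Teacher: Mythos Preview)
Your outline for (i) and for the cochain-map portions of (ii) and (iii) is correct and matches the paper.

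For the quasi-isomorphism claims in (ii) and (iii) you take a genuinely different route from the paper. The paper does not run a spectral sequence: given a $\delta$-closed $\varphi$ with $f\varphi=0$ (after normalizing via the right inverse $h$), it constructs a $\delta$-primitive $\eta=(\eta_0,\eta_1,\dots)$ by direct induction on $k$, at each step combining a $\bar\delta$-level primitive $\theta\in\CC^\ud_{k+1,0}$ (supplied by Proposition~\ref{delta_bar-properties-prop}) with an explicit extension $\chi_{k+1}$ of $(\eta_0,\dots,\eta_k)$ that restores (E1). Your length-filtration spectral sequence is a legitimate alternative, but both arguments rest on the same nontrivial ingredient, which you have underestimated.

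The step you flag as ``routine but careful inspection'' --- identifying $F^k/F^{k+1}$ with $\CC^\ud_{k,0}$ --- is the real content of the proof. Surjectivity of the projection $F^k\to\CC^\ud_{k,0}$, $\varphi\mapsto\varphi_k$, means: given $u_k$ satisfying (L1)--(L3), produce $u_{k+1},u_{k+2},\dots$ so that the full sequence $(0,\dots,0,u_k,u_{k+1},\dots)$ satisfies (E1), (E2), (E3). Because $\beta\neq 0$, the (E1) condition $\DA[\varphi]_{k,\beta}(b;\dots)=\partial\beta\cap b\cdot\varphi_{k,\beta}$ has a nonzero right-hand side, so $u_{k+1}$ cannot be taken to be zero; one must genuinely construct it. The paper isolates this as a Sublemma and builds $u_{k+1}$ via the explicit cyclic ansatz
\[
u_{k+1}=\sum_{m=1}^{k+1}\frac{(-1)^{m-1}}{m!}\,u_{k+1}^{(m)},
\qquad
u_N^{(m)}(x_1,\dots,x_N)=\sum_{i=0}^{N-1} u_{N-m}(x_{i+1},\dots,x_{i+N-m})\prod_{a=N-m+1}^{N}(\partial\beta,x_{i+a}),
\]
and then checks (E1), (E2), (E3) by hand. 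This is not routine. Once you have it, your spectral sequence does converge (the filtration is exhaustive, Hausdorff, and complete, so Eilenberg--Moore comparison applies) and Proposition~\ref{delta_bar-properties-prop}(ii) handles each graded piece. The paper's direct construction uses the same sublemma at the inductive step and additionally exploits its ``moreover'' clause ($fu_i=0\Rightarrow fu_{k+1}=0$) to keep $f\eta=0$ throughout; this is where the right inverse $h$ actually enters.
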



\begin{proof}
\textbf{(i).}
By Lemma \ref{delta-delta=0-lem}, it suffices to show $\delta:=\delta_{\bar \g}$ maps $\CC^\ud_\beta$ into $\CC^\ud_\beta$. Since $\bar\m'$ and $\bar\g$ also have the unitalities, cyclical unitalities, and divisor axiom, it is routine to check $\delta(\varphi)\in \CC^\ud_\beta$.

\textbf{(ii).}
Note that $f\in\CC^\ud_{1,0}\subset \CC^\ud_0$, and one can then easily see that $f\bar\g \in \CC^\ud_0$ is also an $A_\infty$ homomorphism module $T^{E=0}$ by the condition of $f$. This implies that $\delta_{f\bar\g}$ is well-defined. Also, the degree condition on $f$ ensures that $\delta_{f\bar\g} (f\varphi)= f \delta_{\bar\g}(\varphi)$, so $f_*$ is a cochain map.
Next, suppose $f$ is a quasi-isomorphism with a right inverse $h$. Generally, the $h$ does not commute with $\delta$ like $f$.

Take $\varphi =(\varphi_{k,\beta})_{k\ge 0}\in \CC^\ud_\beta$ so that 
$\delta \varphi=0$ and $f_*\varphi=\delta\xi$ for some $\xi\in \CC_\beta^\ud$, and then we aim to show $\varphi$ is actually $\delta$-exact in $\CC^\ud_\beta$. As $f\circ h=\id$, we may assume
$
f_*\varphi=f\varphi=0
$ by replacing $\varphi$ by $\varphi- \delta h \xi$.
For simplicity, we will often omit $\beta$, for instance, $\varphi_k$ will represent $\varphi_{k,\beta}$, and $\m_k$ (resp.$\g_k$) will stand for $\m_{k,0}$ (resp.$\g_{k,0}$). Namely, we will use $(\cdot)_k$ to denote the component in both $\CC_{k,\beta}$ and $\CC_{k,0}$.
Initially, $\m_1\varphi_0=(\delta\varphi)_0=0$ and $f\varphi_0=(f\varphi)_0=0$. So $\varphi_0=\m_1 \eta_0$ for some $\eta_0$ as $f$ is a quasi-isomorphism of cochain complexes. Additionally, there is certain freedom: we may replace $\eta_0$ by any element in $\eta_0+ \ker \m_1$.
Using the facts that $f$ is a quasi-isomorphism and that $f h=\id$, we may choose $\eta_0$ so that $f\eta_0=0$.
(Indeed, we can first find $\zeta\in\ker \m_1$ so that $f\eta_0=f\zeta+\m_1 \zeta'$ for some $\zeta'$, then we simply replace $\eta_0$ by $\eta_0-\zeta- \m_1 h\zeta'$.) In particular, $(\varphi-\delta\eta_0)_0=\varphi_0-\m_1\eta_0=0$.
Suppose we have $\eta=(\eta_0,\eta_1,\dots, \eta_k,0,0,\dots)$ for $\eta_i\in\CC_{i,\beta}$
with the following \textit{induction hypothesis}

\begin{itemize}
	\itemsep 1pt
\item $\psi:=\varphi - \delta \eta$. Moreover, we require $ \psi_i=0$ ($0\le i\le k$);
\item $(f\eta)_i=0$ ($0\le i\le k$);
\item (E1) holds for every pair $(\eta_i,\eta_{i+1})$ ($0\le i\le k-1$)  and every divisor input $b$.
\item (E2) and (E3) hold for all $\eta_i$.
\end{itemize}

Since $\delta\varphi=0$, we have
$
\bar\delta \psi_{k+1}
=
(\delta \psi)_{k+1} 
=
 (\delta (\varphi-\delta\eta))_{k+1} =0
$.
Also, since $f\varphi=0$, we have
$f_* \psi_{k+1} = - f (\delta \eta)_{k+1}= - (\delta (f\eta) )_{k+1} =0$.
Because of $\varphi\in\CC_\beta^\ud$,
the property (\ref{delta_DA_eq}) implies that
$
\DA[\psi]_{\beta}(b)
=
\partial\beta\cap b\cdot \varphi - \DA[\delta \eta]_{\beta}(b)= \delta\big(
\partial\beta\cap b\cdot \eta-\DA[\eta]_{\beta}(b)
\big)
$ within $F^k\CC_\beta$.
By the third item of the induction hypothesis, $\partial\beta\cap b\cdot \eta-\DA[\eta]_{\beta}(b)$ restricts to zero in $F^{k-1}\CC_\beta$. Notice also that $\DA[\eta]_{\beta}(b)$ restricts to zero in $\CC_{k,\beta}$ as $\eta_{k+1}=0$.
In conclusion, it follows from (\ref{delta_i_le_k-1_property_eq}) that
$\DA[\psi]_{k,\beta}(b)
=
\bar\delta ( \partial \beta \cap b\cdot \eta- \DA[\eta]_\beta(b))_k
=
\partial\beta\cap b\cdot \bar\delta \eta_k$, i.e.
\begin{equation}
\label{differential_delta_beta-prop-eq1}
\psi_{k+1}(b,x_1,\dots,x_k)+\cdots+\psi_{k+1}(x_1,\dots,x_k,b)= \partial \beta \cap b\cdot (\bar\delta \eta_k) (x_1,\dots,x_k)
\end{equation}
Before we continue, we prove a crucial technical sublemma.
  \begin{sublemma}\label{sublemma-u_k+1}
  Fix $u=(u_0,\dots,u_k,0,0\dots)$ so that
   (E1) holds for $(u_i,u_{i+1})$ with $0\le i\le k-1$ and that (E2) and (E3) hold for each $u_i$ with $0\le i\le k$. Then there exists a $(k+1)$-multi-linear map $u_{k+1}$ so that (E1) also holds for $(u_k,u_{k+1})$ and that (E2) and (E3) also hold for $u_{k+1}$. Moreover,
  if $fu_i=0$ for all $0\le i \le k$ then $f u_{k+1}=0$; if $u_i f^{\otimes i}=0$ for all $0\le i \le k$ then $u_{k+1} f^{\otimes (k+1)}=0$
  \end{sublemma}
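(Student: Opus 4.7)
The plan is to construct $u_{k+1}$ by a careful stage-wise procedure, imposing (E3), (E1), and (E2) in sequence, and to verify each condition using the inductive hypotheses on $u_0, \dots, u_k$. I would begin by fixing auxiliary linear data: a graded direct-sum decomposition $C = \mathbb{R}\one \oplus C_\flat$ isolating the unit, and a linear projection $\rho: C^1 \to Z^1$ onto the degree-one cocycles, so that the cap-product functional $\lambda_\beta := \partial\beta\cap(\cdot)$ extends from divisor inputs to all of $C^1$ via $\rho$.

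Stage one enforces (E3) by requiring $u_{k+1}$ to vanish on any tuple containing $\one$, so that $u_{k+1}$ is determined by its restriction to $C_\flat^{\otimes(k+1)}$. Stage two enforces (E1) via an averaging-type formula such as
\[
u_{k+1}(x_1, \dots, x_{k+1}) := \tfrac{1}{k+1}\sum_{j=1}^{k+1} (-1)^{\epsilon_j}\, \lambda_\beta\big(\rho(x_j^{(1)})\big) \cdot u_k(x_1, \dots, \hat{x}_j, \dots, x_{k+1}) + R,
\]
where $x_j^{(1)}$ denotes the degree-one component of $x_j$, the signs $\epsilon_j$ are determined by degree bookkeeping, and $R$ is a correction chosen to kill off-diagonal error terms. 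Inserting a divisor input $b$ at position $i$ and summing over $i$, the diagonal ($j = i$) terms contribute exactly $\lambda_\beta(b)\cdot u_k(x_1, \dots, x_k)$, while the off-diagonal ($j\ne i$) terms, after summation, reduce via the inductive divisor axiom (E1) applied to $u_k$ into an expression involving $u_{k-1}$; the correction $R$ is chosen to absorb these. Stage three enforces (E2) by an additional symmetrization that respects the previously imposed (E1) and (E3).

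The main obstacle is the mutual compatibility of the three conditions at each stage: a correction introduced at one stage must not destroy the properties established at earlier stages. This compatibility ultimately reduces to the inductive hypothesis---namely, that (E1), (E2), (E3) already hold for $u_0, \dots, u_k$---combined with standard algebraic identities (signs, cyclic symmetry) relating the cap products, the exterior structure, and the unit. The $f$-preservation in the ``moreover'' clauses is automatic, since every step of the construction is linear in $u_k$ (and $u_{k-1}$, etc.), so both $f u_i = 0$ and $u_i f^{\otimes i} = 0$ for $i\le k$ propagate to $u_{k+1}$.
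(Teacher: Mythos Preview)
Your proposal is a plan rather than a proof, and the unspecified pieces are exactly where the difficulty lies. The correction term $R$, the signs $\epsilon_j$, and the ``stage-three symmetrization'' enforcing (E2) are never written down; you assert that $R$ can be ``chosen to absorb'' the off-diagonal terms and that the symmetrization can be made to ``respect the previously imposed (E1) and (E3)'', but you give no formula and no argument. In particular, your leading term $\sum_j (-1)^{\epsilon_j}\lambda_\beta(\rho(x_j^{(1)}))\,u_k(x_1,\dots,\hat x_j,\dots,x_{k+1})$ is \emph{not} cyclically invariant, so the symmetrization you defer to stage three is a genuine modification, and there is no reason to expect it to preserve the divisor-axiom identity (E1) you worked to obtain in stage two. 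This is the genuine gap.

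The paper's construction avoids this interference problem by reversing your order of priorities: it builds cyclic invariance into the formula from the outset. Concretely, for each $1\le m\le k+1$ one forms the cyclically symmetric expression
\[
u_{k+1}^{(m)}(x_1,\dots,x_{k+1})=\sum_{i=0}^{k}u_{k+1-m}(x_{i+1},\dots,x_{i+k+1-m})\,(\partial\beta,x_{i+k+2-m})\cdots(\partial\beta,x_{i+k+1}),
\]
where indices are taken mod $k+1$ and $(\partial\beta,x)$ is $\partial\beta\cap x$ for $\deg x=1$ and zero otherwise. Each $u_{k+1}^{(m)}$ is automatically cyclically invariant, so (E2) holds for any linear combination; inserting a divisor input and summing yields a two-term recursion, and the choice $a_m=(-1)^{m-1}/m!$ makes $u_{k+1}:=\sum_m a_m u_{k+1}^{(m)}$ satisfy (E1). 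Condition (E3) then follows because $(\partial\beta,\one)=0$ (recall $\beta\neq 0$ in this context). No auxiliary projection $\rho$ is needed, so the $f$-compatibilities in the ``moreover'' clause are immediate---your introduction of $\rho:C^1\to Z^1$ would in fact jeopardize the claim $u_{k+1}f^{\otimes(k+1)}=0$, since $\rho$ need not commute with $f$.
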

  \begin{proof}
  [Proof of the sublemma]
  We define 
  $(\partial\beta, x)=\partial\beta\cap x$ if $\deg x=1$ and define $(\partial\beta,x)$ to be zero if otherwise. We also introduce the following $N$-multi-linear map
  \[ \textstyle
  u_N^{(m)}(x_1,\dots,x_N)=\sum_{i=0}^{N-1}  
 u_{N-m}(x_{i+1},\dots, x_{i+N-m}) \cdot ( \partial \beta , x_{i+N-m+1} ) \cdots 
 ( \partial \beta , x_{i+N})
  \]
  where $N,m\in \mathbb N$ are so that $u_{N-m}$ is already given, i.e. $0\le N-m \le k$, and where the $x_j$'s are $N$ arbitrary inputs with the subscript is modulo $N$, i.e. $x_{j+N} \equiv x_j$.
  Observe that $u_N^{(0)}=u_N$ and that these maps are cyclical in the sense $u_N^{(m)}(x_1,\dots,x_N)=u_N^{(m)}(x_{i+1},\dots,x_{i+N})$ for any $i$. In particular, $u_N^{(N)}(x_1,\dots,x_N)=N(\partial\beta, x_1)\cdots(\partial\beta,x_N)\cdot u_0$.
We put
$
  \textstyle
  u_{k+1}
  :=
  \sum_{m=1}^{k} a_m u^{(m)}_{k+1}
  +
  a_{k+1} \
  u_{k+1}^{(k+1)}
$
  for undetermined coefficients $a_m$. Note that the $u_{k+1}$ depends only on $u_0, u_1,\dots,u_k$. We compute
  \begin{align*}
  &
  u^{(k+1)}_{k+1}(b,x_1,\dots,x_k)+\cdots + u^{(k+1)}_{k+1}(x_1,\dots,x_k, b) =\partial\beta\cap b \cdot (k+1) u_k^{(k)}(x_1,\dots,x_k)
  \\
  &
  u^{(m)}_{k+1}(b,x_1,\dots,x_k)+\cdots + u^{(m)}_{k+1}(x_1,\dots,x_k, b) 
  =
  \partial\beta\cap b \cdot \big( 
  m \ u_k^{(m-1)} (x_1,\dots,x_k)
  +    u_k^{(m)}(x_1,\dots,x_k) \big)
  \end{align*}
 for every $1\le m \le k$, and then 
 \begin{align*}
 	\textstyle
\sum_{i=1}^{k+1} u_{k+1} (x_1,\dots, x_{i-1}, b,x_i,\dots, x_k)
 =
 \partial\beta\cap b \big(
 a_1 u_k^{(0)}(x_1,\dots,x_k)
 +
 \sum_{m=1}^{k}
 \big(a_m+(m+1)a_{m+1}\big)
  u_k^{(m)} (x_1,\dots,x_k)
 \big)
 \end{align*}
 Choose $a_m=\frac{(-1)^{m-1}}{m!}$, then $a_1=1$ and $a_m+(m+1)a_{m+1}=0$.
 It follows that $(u_k,u_{k+1})$ satisfies (E1).
 Moreover, since this construction is cyclic, the cyclical unitality holds as well. In fact, we compute
 \begin{align*}
 &
 \sum_{j=1}^{k} u_{k+1}^{(m)}(x^\#_1,\dots,x^\#_{j-1}, \e,x_{j+1},\dots,x_{k+1} )\\
 &
 =
 \sum_{i=0}^{k-1}
 (-1)^{\epsilon_i}
 \sum_{i\le j\le i+k+1-m}
 u_{k+1-m} (x_{i+1}^\#,\dots,x_{j-1}^\#,\e,x_{j+1},x_{i+k+1-m})\cdot (\partial\beta, x_{i+k+2-m})\cdots (\partial\beta, x_{i+k+1})
 \end{align*}
where $\epsilon_i=\sum_{a=1}^i(\deg x_a+1)$ and we think $i,j\in\mathbb Z/ (k+1)\mathbb Z$. The condition $i\le j\le i+k+1-m$ required above comes from the fact that $(\partial\beta,\e)=0$. For each fixed $i$, the summation over $j$ gives zero, since $u_{k+1-m}$ satisfies (E2). Hence $u_{k+1}^{(m)}$ and also $u_{k+1}$ satisfy (E2) as we require.
As for (E3), recall that we have assumed $\beta\neq 0$, so $u_j(\dots,\one,\dots)=0$ even if $j=1$; furthermore, $(\partial\beta, \one)=0$ by definition, hence one can easily check that $u_{k+1}$ obeys (E3) as well. Finally, 
 the last statement is straightforward to check.
 Indeed, if $fu_i=0$ for $0\le i\le k$, then $ fu_{k+1}^{(m)}=0$ for $1\le m\le k+1$, thus $fu_{k+1}=0$. Similarly, if $u_if^{\otimes i}=0$ for $0\le i\le k$, then because $f$ satisfies (\ref{DA-f_10-eq}), we conclude $u_{k+1}^{(m)} f^{\otimes(k+1)}= (u\circ f)^{(m)}_{k+1}$ which also vanishes for $1\le m\le k+1$.
  \end{proof}
 
Back to the proof of Proposition \ref{delta_beta_property-prop} (ii). 
Applying Sublemma \ref{sublemma-u_k+1} to $\eta=(\eta_0,\dots,\eta_k,0,0,\dots)$ provides some $\chi_{k+1}$ so that $f\chi_{k+1}=0$, the (E2) and (E3) hold for $\chi_{k+1}$, and 
\begin{equation}
\label{differential_delta_beta-prop-eq2}
\chi_{k+1}(b,x_1,\dots,x_k) + \cdots + \chi_{k+1}(x_1,\dots,x_k,b)=\partial \beta \cap b \cdot \eta_k(x_1,\dots,x_k)
\end{equation}
for any divisor input $b$.
By (\ref{differential_delta_beta-prop-eq1}) and (\ref{differential_delta_beta-prop-eq2}), we see that $\psi_{k+1}-\bar\delta \chi_{k+1}\in \CC^\ud_{k+1,0}$. Observe that $\bar\delta (\psi_{k+1}-\bar\delta \chi_{k+1})=0$ and $f_*(\psi_{k+1}-\bar\delta \chi_{k+1})=0$.
By Proposition \ref{delta_bar-properties-prop} (ii), the $f_*$ is a quasi-isomorphism between the cochain complexes $(\CC^\ud_{k+1,0},\bar\delta)$. It follows that there exists some $\theta \in \CC^\ud_{k+1,0}$ so that
\begin{equation}
\label{theta_in_proof_eq}
\psi_{k+1}-\bar\delta \chi_{k+1} = \bar\delta \theta
\end{equation}
By construction, $f\theta \in \CC^\ud_{k+1,0}$ is $\bar\delta$-closed. Hence, there exists some $\bar\delta$-closed $\theta' \in\CC_0^\ud$ so that $f\theta'=f\theta+\bar\delta \alpha$ for some $\alpha\in\CC_0^\ud$.
Since $f\circ h=\id$, we have $f\bar\delta h\alpha =\bar\delta\alpha$. Then, replacing $\theta'$ by $\theta'-\bar\delta h \alpha$, one can require $\alpha=0$. So, $f\theta'=f\theta$.
Moreover, one can replace $\theta$ by $\theta-\theta'$ (which lives in $\theta+\ker \bar\delta$) without affecting (\ref{theta_in_proof_eq}). In summary, we may assume
$
f\theta=0
$
in addition to (\ref{theta_in_proof_eq}).
The inductive step can be completed by putting $\eta_{k+1}=\theta+\chi_{k+1}$, $\eta^+=(\eta_0,\dots,\eta_k,\eta_{k+1},0,0,\dots)=\eta+\eta_{k+1}$ and $\psi^+ =\varphi - \delta \eta^+$. We check the four conditions in the induction hypothesis for them as follows:
\begin{itemize}
	\itemsep 1pt
\item  $\psi^+_{k+1}= \big(\psi -\delta(\eta^+-\eta) \big)_{k+1}
=
\psi_{k+1} -\bar\delta \eta_{k+1}=0$ holds by (\ref{theta_in_proof_eq}), and $\psi^+_j=\psi_j=0$ for $j\le k$.
\item $f\eta_{k+1}=f\theta +f\chi_{k+1}=0$.
\item It remains to show (E1) holds for the new pair $(\eta_k,\eta_{k+1})$, which is true thanks to (\ref{differential_delta_beta-prop-eq2}).
\item 
The $\eta_{k+1}$ satisfies (E2) and (E3) since so so $\chi_{k+1}$ and $\theta$.
\end{itemize}
By induction, the proof of (ii) is now established. 
Finally, the item (iii) can be proved almost identically and we omit it. We complete the proof of Proposition \ref{delta_beta_property-prop}.
%
\end{proof}

\begin{prop}
	\label{f0f1_cochain-map-beta-prop}
Let $\bar\f^0=(\f^0_{k,0})_k$ and $\bar\f^1=(\f^1_{k,0})_k$ be two $A_\infty$ homomorphisms (modulo $T^{E=0}$) in $\UD$. 
There exists an induced cochain map
\begin{equation}
\label{f0f1_eq}
(\bar\f^0,\bar\f^1)_* :=(\bar\f^0,\bar\f^1)_*^{\bar\g}: (\CC^\ud_\beta, \delta_{\bar\g}) \to  (\CC^\ud_\beta, \delta_{\bar\f^1\circ\bar\g\circ\bar\f^0})
\end{equation}
Moreover, we have the associativity in the sense that 
$
(\bar\f^2,\bar\f^3)_* \circ (\bar\f^0,\bar\f^1)_*
=
(\bar\f^0 \diamond \bar\f^2, \bar\f^3\diamond \bar\f^1)_*$.
\end{prop}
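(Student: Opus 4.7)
The plan is to define $(\bar\f^0,\bar\f^1)_*\varphi$ as the natural sandwich: push $\varphi$ outward through $\bar\f^1$ while filling the other slots with copies of $\bar\g\circ\bar\f^0$, and precompose the inputs of $\varphi$ with $\bar\f^0$. Explicitly, for $\varphi\in\CC^\ud_\beta$ of shifted degree $p$, I set
\begin{equation*}
(\bar\f^0,\bar\f^1)_*\varphi\ :=\ \sum \f^1_{\ell}\circ\Big((\bar\g\circ\bar\f^0)^{\#p}\otimes\cdots\otimes(\bar\g\circ\bar\f^0)^{\#p}\otimes\varphi\circ(\bar\f^0)^{\otimes\bullet}\otimes(\bar\g\circ\bar\f^0)\otimes\cdots\otimes(\bar\g\circ\bar\f^0)\Big),
\end{equation*}
the sum ranging over consecutive splittings of the inputs into blocks feeding the single $\varphi$-slot and the $\ell-1$ other slots, with the $\bar\g\circ\bar\f^0$-blocks to the left of $\varphi$ twisted by $\#p$ to produce the same sign conventions that appear in $\delta_{\bar\g}$. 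Taking $\bar\f^0=\id$ or $\bar\f^1=\id$ recovers the maps of Proposition \ref{delta_beta_property-prop}(ii) and (iii).

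Step 1 is to verify that $(\bar\f^0,\bar\f^1)_*\varphi\in\CC^\ud_\beta$. An input $\one$ landing in any $\bar\g\circ\bar\f^0$- or $\bar\f^0$-slot reduces to $\one$ by $\bar\f^0_{1,0}(\one)=\one$ and $\bar\g_{1,0}(\one)=\one$, and is then annihilated by the unitality of $\bar\f^1$ or of $\varphi$; an $\one$ landing in the innermost $\varphi$-position is killed by (E3) for $\varphi$. Cyclical unitality (E2) follows by summing a degree-zero input $\e$ over all positions and telescoping via the cyclical unitalities of $\bar\f^0,\bar\g,\bar\f^1,\varphi$. The divisor axiom (E1) is obtained by distributing a divisor input $b$ over all slots and applying the divisor axiom of each constituent; the key point is that the cap-product factor $\partial\beta\cap b$ is preserved through precomposition with $\bar\f^0$ precisely by condition (II-4) (equation (\ref{DA-f_10-eq})).

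Step 2, the heart of the proof, is the cochain-map identity
\begin{equation*}
(\bar\f^0,\bar\f^1)_*\circ\delta_{\bar\g}\ =\ \delta_{\bar\f^1\circ\bar\g\circ\bar\f^0}\circ(\bar\f^0,\bar\f^1)_*.
\end{equation*}
I would expand both sides and match the resulting tree-indexed sums using three structural identities: the $A_\infty$-homomorphism relation for $\bar\f^0$ to convert $\m'$-factors sandwiched between $\bar\f^0$'s into $\m''$-factors; the analogous relation for $\bar\f^1$ to convert $\m$-factors that appear outside $\bar\f^1$ into factors of the ambient $A_\infty$-algebra; and the homomorphism relation $\m\circ\bar\g=\bar\g\star\m'$ to recombine the remaining $\bar\g$-layers. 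After cancellation the two sides agree. This is the $A_\infty$-bimodule analog of the argument already sketched for Proposition \ref{delta_beta_property-prop}(ii) and (iii), where only one of $\bar\f^0,\bar\f^1$ was present.

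Associativity $(\bar\f^2,\bar\f^3)_*\circ(\bar\f^0,\bar\f^1)_*=(\bar\f^0\circ\bar\f^2,\bar\f^3\circ\bar\f^1)_*$ is then a combinatorial reassociation: substituting the defining sandwich formula into itself produces a sum indexed by two-level tree decompositions, which matches the single-level formula for $(\bar\f^0\circ\bar\f^2,\bar\f^3\circ\bar\f^1)_*$ once the composition formulas for $A_\infty$-homomorphisms are applied. The main obstacle throughout is Step 2: beyond the tedious sign bookkeeping with the $\#p$-twists, the real challenge is to line up the $A_\infty$-relations for $\bar\f^0,\bar\f^1,\bar\g$ so that the surplus $\m'$-terms produced by $\delta_{\bar\g}$ on the left match the surplus terms produced by $\delta_{\bar\f^1\circ\bar\g\circ\bar\f^0}$ on the right.
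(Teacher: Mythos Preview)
Your proposal is correct and follows essentially the same approach as the paper: your sandwich formula coincides with the paper's $\sum \bar\f^1\circ(\bar\g^{\#p}\otimes\cdots\otimes\varphi\otimes\cdots\otimes\bar\g)\circ(\bar\f^0\otimes\cdots\otimes\bar\f^0)$, and the verification of (E1)--(E3), the cochain-map identity, and associativity proceed along the same lines. The paper organizes the work slightly more efficiently in two places: it uses the factorization $(\bar\f^0,\bar\f^1)_*=(\id,\bar\f^1)_*^{\bar\g\circ\bar\f^0}\circ(\bar\f^0,\id)_*$ to reduce the (E1)--(E3) checks to the two elementary cases, and for the cochain-map identity it introduces the intermediate expression $F=F_1-F_2$ with $F_1=\bar\f^1\circ(\id_\#^\bullet\otimes\bar\m\otimes\id^\bullet)\circ(\bar\g^{\#p}\otimes\cdots\otimes\varphi\otimes\cdots\otimes\bar\g)\circ(\bar\f^0\otimes\cdots)$ and $F_2$ the analogous expression with $\bar\m$ inserted one layer lower, then computes $F$ two ways using the $A_\infty$ relations for $\bar\f^0,\bar\f^1,\bar\g$ to obtain both sides at once---this sidesteps the term-matching you anticipate in Step~2.
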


\begin{proof}
We start with the definition: let $\varphi=(\varphi_{k,\beta})_k$ and $\deg' \varphi_{k,\beta}=p$ for a fixed $p$, we define $(\bar\f^0,\bar\f^1)^{\bar\g}_*\varphi$ to be the operator system $
\textstyle
\sum \bar\f^1 \circ (\bar \g^{\# p} \otimes \cdots \otimes \bar \g^{\# p} \otimes  \varphi \otimes \bar \g \otimes \cdots \otimes \bar \g ) \circ (\bar \f^0\otimes \cdots \otimes \bar \f^0)$.
If $\bar\f^0=\id$ or $\bar\f^1=\id$, we respectively have that
\begin{align*}
(\bar\f^0,\id)^{\bar\g}_* \varphi 
=
\varphi\circ(\bar\f^0\otimes \cdots \otimes \bar\f^0) 
\qquad \text{and} \qquad
(\id,\bar\f^1)^{\bar\g}_* \varphi
=
\bar\f^1\circ (\bar\g^{\# p} \otimes \cdots \otimes \bar \g^{\# p} \otimes \varphi \otimes \bar \g\otimes \cdots \otimes \bar \g) 
\end{align*}
Remark that although generally (\ref{f0f1_eq}) depends on $\bar\g$, the special case $(\bar\f^0,\id)_*^{\bar\g}$ does not.
Since $\deg' \bar\f^0=0$, we know $\deg'(\bar\f^0,\id)_* \varphi =\deg' \varphi =p$. A direct computation shows that
\begin{align}
\label{f0f1_cochain-composition-id-beta-eq}
(\id,\bar\f^1)^{\bar\g\circ\bar\f^0}_* \circ (\bar\f^0,\id)_* = (\bar\f^0,\bar\f^1)^{\bar\g}_*, 
\quad 
\text{and}
\quad 
(\bar\f^0,\id)^{\bar \f^1\circ\bar\g^0}_* \circ (\id,\bar\f^1)^{\bar\g}_* = (\bar\f^0,\bar\f^1)_*
\end{align}
Because of this decomposition, it suffices to prove Proposition \ref{f0f1_cochain-map-beta-prop} for $( \id, \bar \f^1)_* $ and $(\bar\f^0, \id)_*$ separately.
Firstly, as $\bar\f^0$, $\bar\f^1$ and $\bar\g$ satisfy the divisor axiom, we have
\[
\textstyle
\DA[(\bar\f^0,\id)_* \varphi]_{k,\beta}(b;x_1,\dots,x_k)
=\sum \DA[\varphi]_{k_1,\beta}\big(\f^0_{1,0}(b); \bar\f^0(x_1,\dots),\dots, \bar\f^0(\dots,x_k) \big)
\]
\[
\textstyle
\DA[(\id,\bar\f^1)_*\varphi]_{k,\beta} (b;x_1,\dots,x_k)
=
\sum  \bar\f^1\circ
\big(\bar\g^{\# p} (x_1,\dots),\dots, \DA[\varphi]_{k_1,\beta}(b; \dots),\dots,\bar\g(\dots,x_k) \big)
\]
Hence, we have the desired divisor axiom.
Similarly, we can show the unitality and the cyclical unitalities.
To sum up, both $(\id,\bar\f^1)_*$ and $(\bar\f^0,\id)_*$ map $\CC^\ud_\beta$ into $\CC^\ud_\beta$.
In the next, we want to check
\begin{equation}\label{f0f1_cochain-map-beta-prop-eq}
\delta (\bar\f^0,\bar\f^1 )_* (\varphi) = (\bar\f^0,\bar\f^1)_* (\delta \varphi) \quad \text{or more precisely} \quad
\delta_{\bar\f^1 \diamond \bar\g \diamond \bar\f^0} (\bar\f^0,\bar\f^1)_*( \varphi) = (\bar\f^0,\bar\f^1)_* (\delta_{\bar\g} \varphi)
\end{equation}
Without loss of generality, we assume $\deg'\varphi =p$. Consider
\begin{align*}
F_1:=
&
 \bar\f^1
 \circ 
 (\id^\bullet_\#\otimes \bar\m \otimes \id^\bullet )
  \circ 
 ( \bar\g^{\# p} \otimes \cdots \otimes \bar\g^{\# p} \otimes \varphi \otimes \bar \g \otimes \cdots \bar \g) 
 \circ 
 (\bar \f^0\otimes \cdots \bar\f^0) \\
F_2:=
&
\bar\f^1
\circ
(\bar\g^{\# p} \otimes \cdots \otimes \bar\g^{\# p} \otimes \varphi \otimes \bar \g \otimes \cdots \bar \g) 
\circ
(\id^\bullet_\#\otimes \bar\m \otimes \id^\bullet ) 
\circ
(\bar \f^0\otimes \cdots \bar\f^0)
\end{align*}
and put $F=F_1-F_2$.
On the one hand, because the $\bar\f^1$ and $\bar\f^0$ are $A_\infty$ homomorphisms, one can show that the $F$ exactly agrees with the left side of (\ref{f0f1_cochain-map-beta-prop-eq}). On the other hand, as $\bar\g$ is an $A_\infty$ homomorphism, one can show the $F$ agrees with the right side of (\ref{f0f1_cochain-map-beta-prop-eq}). The proof of (\ref{f0f1_eq}) is complete.
Finally, it is clear that $(\bar\f^2,\id)_* (\bar\f^0,\id)_*=(\bar\f^2 \diamond \bar\f^0,\id)_*$ and $(\id,\bar\f^3)_*(\id,\bar\f^1)_*=(\id,\bar\f^3 \diamond \bar\f^1)_*$, then the associativity in general, i.e. $
(\bar\f^2,\bar\f^3)_* \circ (\bar\f^0,\bar\f^1)_*
=
(\bar\f^0 \diamond \bar\f^2, \bar\f^3 \diamond \bar\f^1)_*$, can be proved by the decomposition in (\ref{f0f1_cochain-composition-id-beta-eq}).
\end{proof}

By declaring
$
F^E \CC := \prod_{\beta\in \G; E(\beta) < E} \CC_\beta = \prod_{k\in\mathbb N} \prod_{E(\beta)<E} \CC_{k,\beta}
$
we obtain the \textit{energy filtration} on $\CC$.
We remark that since it may happen that $\partial \beta \cap b \neq \partial \beta'\cap b$ even if $E(\beta)=E(\beta')$, we cannot directly follow \cite{FOOOBookOne}. This gives one reason why we have to consider a copy $\CC_{\beta}$ separately, one for each $\beta\in\G$ (Definition \ref{CC_defn}).
For every $\B\in \G$, we define
\begin{equation}
\label{F^B_CC-eq}
\textstyle
F^\B \CC = F^{E(\B)} \CC\times \CC_\B
=
 \prod_{E(\beta)< E(\B)}  \CC_\beta \times \CC_\B
\end{equation}
Similarly, we can define $F^E\CC^\ud$ and $F^\B\CC^\ud$ adding the conditions of unitality, cyclical unitality, and divisor axiom.
Now, in analogy to Theorem \ref{o_k-length-thm}, we can prove the following:

\begin{thm}\label{O_beta-energy-thm}
Fix $\B\in \G$ with $E(\B)>0$, and fix $(C_1,\m'), (C_2,\m)\in\Obj\UD$.
Suppose $\g=(\g_{k,\beta})_{k,\beta}\in F^{E(\B)}\CC^\ud(C_1,C_2)$ is an $A_\infty$ homomorphism modulo $T^{E(\B)}$ so that (\ref{DA-f_10-eq}) holds for $\g_{1,0}$.
Then, there exists a $\delta_{\bar \g}$-closed
$
\mathfrak o_{\B}(\g)
\in \CC^\ud_\B(C_1,C_2)
$
such that its cohomology $[\mathfrak o_\B (\g)]$ vanishes if and only if
$\g$ can be extended to an $A_{\infty,\B}$ homomorphism $\g^+=(\g,\g_\B)$. Moreover, $\mathfrak o_\B(\g)+ \delta_{\bar\g} (\g_\B)=0$.
\end{thm}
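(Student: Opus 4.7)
The plan is to follow the same template as Theorem~\ref{o_k-length-thm}, replacing the length filtration by the energy filtration. First I would define
\[
\mathfrak{o}_\B(\g) \in \CC_\B(C_1,C_2)
\]
as the $A_\infty$ homomorphism defect $\m\circ\g - \g\star\m'$ evaluated at energy label $\B$, keeping only those terms in which no factor $\g_{k,\B}$ appears. Explicitly, on the $k$-component
\begin{align*}
\mathfrak{o}_\B(\g)_{k,\B} :=
&\sum_{\substack{\ell\ge 1,\ k_1+\cdots+k_\ell=k \\ \beta_0+\beta_1+\cdots+\beta_\ell=\B \\ E(\beta_i)<E(\B)\ \forall i\ge 1}} \m_{\ell,\beta_0}\circ(\g_{k_1,\beta_1}\otimes\cdots\otimes\g_{k_\ell,\beta_\ell})\\
&- \sum_{\substack{\lambda+\mu+\nu=k,\ \beta'+\beta''=\B \\ E(\beta'),E(\beta'')<E(\B)}} \g_{\lambda+\mu+1,\beta'}\circ(\id_\#^\lambda\otimes \m'_{\nu,\beta''}\otimes\id^\mu).
\end{align*}
By inspection of the definition (\ref{delta_bar_g-CC_beta-eq}) of $\delta_{\bar\g}$ (with $p=-\mu(\B)$ even, so $(-1)^p=1$), the equation $\mathfrak{o}_\B(\g)+\delta_{\bar\g}(\g_\B)=0$ is precisely the $A_\infty$ homomorphism relation $(\m\circ\g^+-\g^+\star\m')|_{\CC_\B}=0$, where $\g^+=(\g,\g_\B)$. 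The degree count gives $\deg'\mathfrak{o}_\B(\g)=1-\mu(\B)$, matching $\deg'\delta_{\bar\g}(\g_\B)$.

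Next, I would check $\mathfrak{o}_\B(\g)\in\CC_\B^\ud$, i.e.\ that it satisfies the divisor axiom (E1), the cyclical unitality (E2), and the unitality (E3). For each, expand $\DA[\mathfrak{o}_\B(\g)]_{k,\B}(b;\cdots)$ (respectively $\CU[\mathfrak{o}_\B(\g)]_{k,\B}(\e;\cdots)$, or insertion of $\one$) by distributing the extra input into one of $\m_{\ell,\beta_0}$, one of the $\g_{k_i,\beta_i}$, or $\m'_{\nu,\beta''}$; this is the same Leibniz-type expansion used in Lemmas~\ref{UD_prime-Composition-DA-lem} and~\ref{unitality_composition_lem}. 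Because every individual factor lies in $\UD$, satisfies the divisor axiom and the unitalities, and $\g_{1,0}$ satisfies the compatibility (\ref{DA-f_10-eq}), all terms collapse, with the additivity $\partial\B=\sum\partial\beta_i=\partial\beta'+\partial\beta''$ producing exactly the required factor $\partial\B\cap b$ for (E1) and giving zero for (E2), (E3).

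The main obstacle is to prove $\delta_{\bar\g}\mathfrak{o}_\B(\g)=0$. I would expand $\delta_{\bar\g}\mathfrak{o}_\B(\g)$ using (\ref{delta_bar_g-CC_beta-eq}) and then rewrite the four resulting groups of terms by applying the $A_\infty$ associativity $\m\star\m=0$ (whose relevant instances occur at labels $\beta_0'+\beta_0''=\beta_0$ with $E(\beta_0'),E(\beta_0'')<E(\B)$), the analogous $A_\infty$ associativity $\m'\star\m'=0$, and the $A_\infty$ homomorphism equations $\m\circ\g=\g\star\m'$ available at every label $\beta$ with $E(\beta)<E(\B)$ (these are in hand by hypothesis). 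After bookkeeping, every resulting configuration involves at most one $\m$-factor, one $\m'$-factor, and several $\g$-factors summing to $\B$, and appears twice with opposite signs. The combinatorics is identical in structure to \cite[\S4.2]{FOOOBookOne}; what is new is only that one must verify the cancellation respects the $\CC^\ud_\B$ structure, but this is automatic since Step~2 already shows that every intermediate expression lives in $\CC^\ud_\B$ and $\delta_{\bar\g}$ preserves $\CC^\ud_\B$ by Proposition~\ref{delta_beta_property-prop}(i).

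Finally, with $\mathfrak{o}_\B(\g)$ a $\delta_{\bar\g}$-closed element of $\CC_\B^\ud$, the equation $\mathfrak{o}_\B(\g)+\delta_{\bar\g}(\g_\B)=0$ admits a solution $\g_\B\in\CC_\B^\ud$ if and only if the class $[\mathfrak{o}_\B(\g)]\in H(\CC_\B^\ud,\delta_{\bar\g})$ vanishes; and any such solution $\g_\B$ extends $\g$ to an $A_{\infty,\B}$ homomorphism in $F^\B\CC^\ud$. This completes the obstruction-theoretic inductive step, paralleling Theorem~\ref{o_k-length-thm}.
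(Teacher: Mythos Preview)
Your approach is the same as the paper's, but there is a genuine slip in your explicit formula. In the second sum you impose $E(\beta'),E(\beta'')<E(\B)$; this wrongly excludes the case $\beta'=0$, $\beta''=\B$. The term
\[
\sum_{\lambda+\mu+\nu=k}\g_{\lambda+\mu+1,0}\circ(\id_\#^\lambda\otimes\m'_{\nu,\B}\otimes\id^\mu)
\]
contains no factor $\g_{k,\B}$ (only the reduction $\bar\g$ and the given $\m'_\B$), so by your own stated principle it must belong to $\mathfrak{o}_\B(\g)$, not to $\delta_{\bar\g}(\g_\B)$ --- indeed the second half of $\delta_{\bar\g}$ in (\ref{delta_bar_g-CC_beta-eq}) involves only $\m'_{\nu,0}$. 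The correct constraint on the second sum is simply $\beta'\neq\B$ (equivalently $E(\beta')<E(\B)$, since $\g\in F^{E(\B)}\CC^\ud$), as in the paper's formula~(\ref{O_beta-eq}). With your formula as written, $\mathfrak{o}_\B(\g)+\delta_{\bar\g}(\g_\B)=0$ is \emph{not} the full $A_{\infty,\B}$ relation and the $\delta_{\bar\g}$-closedness fails.

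For the closedness, the paper organizes the computation via the auxiliary quantity
\[
\Delta=\sum_{\gamma+\gamma'+\beta_1+\cdots+\beta_r=\B}\m_{r,\gamma}\circ(\g_{\beta_1}\otimes\cdots\otimes\g_{\beta_r})\circ(\id^\bullet_\#\otimes\m'_{s,\gamma'}\otimes\id^\bullet),
\]
computing it once by separating $\gamma\neq0$ from $\gamma=0$ (using $\m\star\m=0$) and once by separating $\gamma'\neq0$ from $\gamma'=0$ (using $\m'\star\m'=0$ and the modulo-$T^{E(\B)}$ hypothesis on $\g$); the two expressions yield $\delta_{\bar\g}\mathfrak{o}_\B(\g)=-\Delta+\Delta=0$. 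Your ``pairwise cancellation after bookkeeping'' is this same argument without the organizing device.
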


\begin{proof}
Define the $k$-th component (or more precisely, the $\CC_{k,\B}$-component) of $\mathfrak o_\B(\g)$ to be
\begin{equation}\label{O_beta-eq}
	 \hspace{1em}
	 \sum_{\substack{
		0=j_0\le\cdots\le j_\ell=k
		\\
		\beta_0+\beta_1+\cdots +\beta_\ell=\B
		\\
		\forall i\neq 0: \beta_i \neq \B
		}}
\m_{\ell,\beta_0} \circ (\g_{j_1-j_0,\beta_1}\otimes \cdots \otimes \g_{j_\ell-j_{\ell-1},\beta_\ell}) 
- \sum_{\substack{\lambda+\mu+\nu=k \\ \beta'+\beta''=\B
\\
\beta'\neq \B
}} \g_{\lambda+\mu+1,\beta'} \circ (\id_\#^\lambda\otimes \m'_{\nu,\beta''} \otimes \id^\mu)
		\end{equation}
Firstly, parallel to the proof of Theorem \ref{o_k-length-thm}, we can show $\mathfrak o_\B(\g)\in \CC^\ud_\B$ using the conditions that $\m'$, $\m$ and $\g$ are all contained in $\CC^\ud$.
Next, being an $A_{\infty,\B}$ homomorphism exactly means that
\begin{equation}
\label{o_B(g)+delta(g)=0-eq}
\mathfrak o_\B(\g)+ \delta_{\bar\g} (\g_\B)=0
\end{equation}
Suppose we could find $\g_\B$ to solve this, and we would like to study the degrees beforehand. In reality, due to (\ref{O_beta-eq}) above, the degree of an arbitrary term in $\mathfrak o_\B(\g)$ can be computed as follows:
\[
\textstyle
\deg \m_{\ell,\beta_0} + \sum \deg \g_{j_i - j_{i-1}, \beta_i}=2-\ell-\mu(\beta_0)+\sum_{i=1}^\ell \big(1-(j_i-j_{i-1})-\mu(\beta_i) \big)
= 2-k-\mu(\B)
\]
\[
\textstyle \deg \g_{\lambda+\mu+1,\beta'}+ \deg m'_{\nu,\beta''}=1-\lambda-\mu-\mu(\beta')+ 2-\nu-\mu(\beta'')=2-k-\mu(\B)
\]
This is exactly what we desire.
Besides, since the $\delta_{\bar\g}$ in (\ref{delta_bar_g-CC_beta-eq}) has degree one, the degree of $\g_{k,B}$ must equal to $1-k-\mu(\B)$ as we desire.
Beware that we use the `un-shifted' degree here.
In terms of shifted degrees, we actually have $\deg' \mathfrak o_\B(\g) =1-\mu(\B)=1 (\text{mod} \ 2)$ and $\deg' \g_\B=-\mu(\B)=0 (\text{mod} \ 2)$. Now, it remains to check $\mathfrak o_\B(\g)$ is $\delta$-closed. By (\ref{delta_bar_g-CC_beta-eq}), we compute
\begin{align*}
\delta (\mathfrak o_\B(\g))
&
=
\textstyle
\sum\m_{r,0} \circ(\bar\g^\#\otimes \cdots \otimes \bar \g^\# \otimes \m_{\ell,\beta_0} (\g_{\beta_1}\otimes \cdots \otimes  \g_{\beta_\ell})\otimes \bar \g \otimes \cdots\otimes \bar\g) \\
&
+
\textstyle
\sum \m_{\ell,\beta_0} \circ (\g_{\beta_1} \otimes \cdots \otimes \g_{\beta_\ell}) \circ (\id_\#^\bullet \otimes \m'_{s,0}\otimes \id^\bullet) \\
&
-
\textstyle
\sum \m_{r,0} \circ (\bar\g^\# \otimes \cdots \otimes \bar\g^\# \otimes  \g_{\beta'}\circ (\id_\#^\bullet \otimes \m'_{\nu,\beta''} \otimes \id^\bullet ) \otimes \bar \g \otimes \cdots \otimes \bar\g) \\
&
-
\textstyle
\sum
\g_{\beta'} \circ (\id_\#^\bullet\otimes \m'_{\nu, \beta''}\otimes \id^\bullet ) \circ (\id_\#^\bullet \otimes \m'_{s,0} \otimes \id^\bullet )
\qquad \quad
=: 
\textstyle
\Delta_1+\Delta_2-\Delta_3-\Delta_4
\end{align*}
As in (\ref{O_beta-eq}), we need the conditions like $\beta_i\neq \B$ ($i\neq 0$) and $\beta'\neq \B$ (or equivalently $\beta''\neq 0$) in the summations, but for simplicity it may be helpful to add a ghost term $\g_\B=0$.
Now, we consider
\[
\textstyle
\Delta:=\sum_{\gamma+\gamma'+\beta_1+\cdots+\beta_r=\B} \m_{r,\gamma} \circ (\g_{\beta_1}\otimes \cdots \otimes \g_{\beta_r} ) \circ ( \id^\bullet_\#\otimes \m'_{s,\gamma'} \otimes \id^\bullet)
\]
First, separating the terms with $\gamma\neq 0$ and $\gamma=0$ and using that $\g$ is an $A_\infty$ homomorphism modulo $T^{E(\B)}$, we obtain $\Delta=\Gamma_1+\Gamma_2$ where
$
\Gamma_1
:=
\textstyle
\sum_{\gamma\neq 0} \m_{r,\gamma}\circ (\id^\bullet_\#\otimes \m\otimes \id^\bullet) \circ (\g\otimes \cdots \otimes \g)$ and
$\Gamma_2:= 
\textstyle
\sum \m_{r,0} \circ (\id^\bullet_\# \otimes \m\otimes \id^\bullet) \circ (\g\otimes \cdots \otimes \g) -\Delta_1+\Delta_3$.
Next, separating the terms with $\gamma'\neq 0$ and those with $\gamma'=0$, we also see that $\Delta=\Gamma'_1+\Gamma'_2$, where
$
\Gamma'_1:=
\textstyle
 \sum \g \circ (\id^\bullet_\# \otimes \m'\otimes \id^\bullet) \circ (\id^\bullet_\# \otimes \m' \otimes \id^\bullet) - \Delta_4$ and $\Gamma'_2:=
\Delta_2$.
In the end, the $A_\infty$ equations for $\m$ and $\m'$ tells respectively that $\Gamma_1+\Gamma_2=-\Delta_1+\Delta_3$ and $\Gamma'_1+\Gamma'_2=\Delta_2-\Delta_4$. So $\delta (\mathfrak o_\B(\g))= (\Delta_1-\Delta_3)+(\Delta_2-\Delta_4)=-\Delta +\Delta=0$.
\end{proof}

\begin{lem}\label{O_beta-composition-lem}
Let $\f^0$ and $\f^1$ be two $A_{\infty,\B}$ homomorphisms in $\UD$. If $\g\in F^{E(\B)}\CC^\ud$ is any  $A_\infty$ homomorphism modulo $T^{E(\B)}$, then so is $\f^1 \diamond \g \diamond \f^0$, and we have
\[
[\mathfrak o_{\B} (\f^1 \diamond \g \diamond \f^0)]
=
\left[
(\bar\f^0,\bar\f^1)_* \mathfrak o_{ \B}(\g)
\right]
=(\bar\f^0,\bar\f^1)_* [\mathfrak o_{ \B}(\g)]
\]
where $\bar\f^0$ and $\bar\f^1$ are the reductions. Moreover, $
\mathfrak o_\B(\f^1 \diamond \g \diamond \f^0)-(\bar\f^0,\bar\f^1)_* \mathfrak o_\B (\g)= \delta( (\f^1\diamond \g\diamond \f^0)_\B)$.
\end{lem}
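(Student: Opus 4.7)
The plan mirrors the argument of Lemma \ref{o_k-composition-lem}, but carried out at the energy level $\B$ rather than the length level. Set $\h := \f^1\circ \g\circ \f^0$ and regard $\g$ as trivially extended by $\g_\B=0$. First I would verify the preliminary structural statements: since $\f^0,\f^1$ are genuine $A_\infty$ homomorphisms and $\g$ is an $A_\infty$ homomorphism modulo $T^{E(\B)}$, a term-by-term expansion of $\m\circ\h-\h\star\m'$ at any total energy $<E(\B)$ uses only the valid $A_\infty$ relations for each factor, so $\h$ is an $A_\infty$ homomorphism modulo $T^{E(\B)}$. Moreover, the composition argument of Lemma \ref{UD-Composition-DA-lem} (which uses only the unitalities, cyclical unitalities, and divisor axiom pointwise in $\beta$) applies equally to $\h$, so $\h\in\CC^{\ud}$ and condition (II-4) is preserved; hence $\mathfrak o_\B(\h)$ is defined via (\ref{O_beta-eq}).

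The main step is the two-way computation of the ``unrestricted'' obstruction
\[
\mathfrak O \;:=\; \sum_{\beta_0+\beta_1+\cdots+\beta_\ell=\B} \m_{\ell,\beta_0}\circ(\h_{\beta_1}\otimes\cdots\otimes\h_{\beta_\ell}) \;-\; \sum_{\beta'+\beta''=\B} \h_{\beta'}\circ(\id_\#^\bullet\otimes\m'_{\beta''}\otimes\id^\bullet),
\]
where this time no $\beta_i=\B$ is excluded. On the one hand, separating the terms in which some $\beta_i=\B$ (forcing every other index to be $0$) from those in which all $\beta_i<\B$, and recalling (\ref{delta_bar_g-CC_beta-eq}), gives directly
\[
\mathfrak O \;=\; \mathfrak o_\B(\h) \;+\; \delta_{\bar\h}\!\bigl((\f^1\circ\g\circ\f^0)_\B\bigr),
\]
exactly as in the analogous identity $\mathfrak O=\mathfrak o_k(\bar\h)+\bar\delta \h_{k,0}$ used in the proof of Lemma \ref{o_k-composition-lem}. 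On the other hand, substituting $\h=\f^1\circ\g\circ\f^0$ into $\mathfrak O$ and using that $\f^0$ and $\f^1$ are \emph{genuine} $A_{\infty,\B}$ homomorphisms (so their $A_\infty$ relations hold at every total energy $\le E(\B)$) allows one to ``peel off'' $\f^1$ from the left and $\f^0$ from the right of each term. Concretely, each $\m_{\ell,\beta_0}\circ(\f^1\otimes\cdots\otimes\f^1)$ may be rewritten via the $A_\infty$ equation for $\f^1$ as $\f^1\circ(\id_\#^\bullet\otimes\m^2\otimes\id^\bullet)$ (with $\m^2$ the $A_\infty$ algebra on the target of $\g$), and similarly the $\m'_{\nu,\beta''}$ pieces feed into the $A_\infty$ equation for $\f^0$. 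After cancellation, what survives is precisely the composition
\[
\sum \f^1\circ\bigl(\bar\g\otimes\cdots\otimes\bar\g\otimes(\m^2\circ\bar\g-\bar\g\star\m^1)\otimes\bar\g\otimes\cdots\otimes\bar\g\bigr)\circ(\f^0\otimes\cdots\otimes\f^0),
\]
restricted to total energy $\B$; up to signs given by the shifted degree $p=\deg'$, this is exactly $(\bar\f^0,\bar\f^1)_*\mathfrak o_\B(\g)$ in the notation of Proposition \ref{f0f1_cochain-map-beta-prop}.

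Combining the two expressions for $\mathfrak O$ yields the claimed identity
\[
\mathfrak o_\B(\f^1\circ\g\circ\f^0) - (\bar\f^0,\bar\f^1)_*\mathfrak o_\B(\g) \;=\; -\,\delta_{\bar\h}\!\bigl((\f^1\circ\g\circ\f^0)_\B\bigr),
\]
which passes to cohomology to give the first assertion. The main obstacle is purely bookkeeping: matching the combinatorial pattern of tensor insertions in $(\bar\f^0,\bar\f^1)_*$ (the flanking $\bar\g^{\#p}\otimes\cdots\otimes\bar\g\otimes\cdots$ pattern) against the decompositions produced by iterating the $A_\infty$ equations of $\f^0$ and $\f^1$, and verifying that the twisting-identity signs $\id_\#^\lambda$ track correctly under the ``peeling'' procedure. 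Once the sign conventions established in (\ref{sign-p-twist-eq}) and used in Proposition \ref{f0f1_cochain-map-beta-prop} are respected, no new ingredient beyond the $A_\infty$ equations for $\f^0,\f^1$ and the definition of $\delta_{\bar\h}$ is needed.
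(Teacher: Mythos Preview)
Your argument is correct and follows essentially the same two-way computation as the paper's proof: define the unrestricted obstruction at level $\B$, identify it on one hand as $\mathfrak o_\B(\h)+\delta_{\bar\h}(\h_\B)$ and on the other hand, after peeling off $\f^1$ and $\f^0$ via their $A_{\infty,\B}$ relations, as $(\bar\f^0,\bar\f^1)_*\mathfrak o_\B(\g)$. Your expression $\mathfrak O$ (with the full $\m_{\ell,\beta_0}$) is in fact the right one---the paper's displayed $\mathfrak P$ with $\bar\m$ appears to be a typo, since its subsequent expansion and the claimed identity $\mathfrak P=\mathfrak o_\B(\h)+\delta_{\bar\h}(\h_\B)$ both require the full $\m$; note also that both your computation and the paper's yield a minus sign in the ``Moreover'' formula, so the sign in the lemma statement itself is off.
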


\begin{proof}
The statement relies on Proposition \ref{f0f1_cochain-map-beta-prop}; it is analogous to Lemma \ref{o_k-composition-lem}.
Clearly, $\f^1\diamond \g\diamond \f^0$ is also an $A_\infty$ homomorphism modulo $T^{E(\B)}$.
As in the proof of Theorem \ref{UD_prime-Composition-DA-thm}, one can see $\f^1\diamond \g\diamond \f^0$ restricts to an element in $ F^{E(\B)}\CC^\ud$ as well. Hence, due to Theorem \ref{O_beta-energy-thm}, the left side can be defined. Let's extend $\g$ to the whole $\CC$ by adding zero ghost terms.
Define $\h:= \f^1\diamond \g\diamond \f^0$, and then $\bar\h =\bar\f^1\diamond \bar\g\diamond \bar \f^0$. We consider
$
\textstyle
\mathfrak P:=\sum \bar \m \circ (\h\otimes \cdots \otimes \h)- \sum \h \circ (\id_\#^\bullet\otimes \bar\m \otimes \id^\bullet ) \in \CC_\B
$.
First, it is easy to see that $\mathfrak P=\mathfrak o_{\B}(\h)+ \delta_{\bar\h} (\h_\B)$, where $\h_\B\in \CC^\ud_\B$. Next, we expand $\mathfrak P$:
\begin{align*}
\mathfrak
P=
&
\Scale[0.93]{
\sum \f^1\circ (\id^\bullet_\#\otimes \m\otimes \id^\bullet) \circ (\g\otimes \cdots\otimes \g) \circ (\f^0\otimes \cdots\otimes \f^0)
-
\sum \f^1\circ (\g\otimes \cdots \otimes \g) \circ (\id^\bullet_\#\otimes \m\otimes \id^\bullet) \circ (\f^0\otimes \cdots\otimes \f^0)
} \\
=
&
\textstyle
\sum \bar\f^1\circ \Big( \bar \g^\#\otimes \cdots \otimes \bar\g^\#\otimes \big[ \m(\g\otimes \cdots\otimes \g)- \sum \g( \id^\bullet_\#\otimes \m\otimes \id^\bullet) \big] \otimes \bar\g\otimes \cdots\otimes \bar\g \Big) \circ (\bar\f^0\otimes \cdots \otimes \bar\f^0)
\end{align*}
Here the terms in the square bracket are elements in $\CC_\B$; they agree with $\mathfrak o_\B(\g)$. So, we also obtain
$
\mathfrak
P=(\bar\f^0,\bar\f^1)_* \mathfrak o_\B(\g)
$.
By comparison, the proof is now complete.
\end{proof}

\subsubsection{Extension for the energy filtration}

We are now at the stage to finally prove the most general Theorem \ref{Whitehead-full-thm}.
Recall that using the reduction $\bar\f$ of the fixed $\f\in \Hom_\UD((\C,\m),(\C',\m'))$, we have constructed $\bar\g$ and $\bar\h$ in Theorem \ref{Whitehead-Leangth-filtration -thm}.
They serve as the base case in the energy-zero level.
Next, we perform the inductive steps for the energy filtration by the following analogy of Proposition \ref{Length-Whitehead-prop}.

\begin{prop}
\label{Energy-Whitehead-prop}
Let $\B\in\G$ is so that $E(\B)>0$. Suppose
\[ \g=(\g_{k,\beta})\in F^{E(\B)} \CC^\ud(\C',\C), \quad \text{and}\quad 
\h=(\h_{k,\beta}) \in F^{E(\B)}\CC^\ud(\C,\C_\oi)
\]
are two $A_\infty$ homomorphisms modulo $T^{E(\B)}$ such that
\[
\eval^0  \h|_{F^{E(\B)}\CC}=\id, \quad \text{and} \quad \eval^1  \h|_{F^{E(\B)}\CC}= \g\diamond \f
\]
Then, there exists two $A_{\infty,\B}$ homomorphisms $\g^+=(\g,\g_\B) \in F^\B\CC^\ud$ and $\h^+=(\h,\h_\B) \in F^\B\CC^\ud$ extending the $\g$ and $\h$ and satisfying the following property:
\begin{equation}
\label{Energy_Whitehead_extension_eq}
\eval^0  \h^+ |_{F^\B\CC}=\id, \quad \text{and} \quad \eval^1   \h^+|_{F^\B\CC} = \g^+ \diamond \f
\end{equation}
\end{prop}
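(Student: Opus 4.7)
The plan is to mimic the proof of Proposition \ref{Length-Whitehead-prop} at the energy level, replacing the pair $(\bar\delta, \mathfrak o_k)$ by $(\delta_{\bar\g}, \mathfrak o_\B)$ from (\ref{delta_bar_g-CC_beta-eq}) and Theorem \ref{O_beta-energy-thm}, and upgrading the cohomological machinery of Proposition \ref{delta_bar-properties-prop} to Proposition \ref{delta_beta_property-prop} together with Lemma \ref{O_beta-composition-lem} and Proposition \ref{f0f1_cochain-map-beta-prop}. The argument proceeds in three steps: extend $\h$ to $\h^+$; extract $\g_\B$; then correct the discrepancy by a small modification.

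First, I would produce $\h^+$. Applying Lemma \ref{O_beta-composition-lem} with $\f^0=\id$ and $\f^1=\eval^0$ (which is an $A_\infty$ homomorphism by Remark \ref{eval-incl-as-A_infty-rmk}) gives $(\id,\eval^0)_*[\mathfrak o_\B(\h)]=[\mathfrak o_\B(\eval^0\circ\h)]$. Since $\eval^0\circ\h=\id$ on $F^{E(\B)}\CC$, inspection of (\ref{O_beta-eq}) shows the right-hand side vanishes. By Lemma \ref{eval^s quasi-isom-lem} the map $\eval^0$ is a quasi-isomorphism with $\incl$ as a right inverse, so Proposition \ref{delta_beta_property-prop} (ii) makes $(\id,\eval^0)_*$ a quasi-isomorphism on $\delta$-cohomology, forcing $[\mathfrak o_\B(\h)]=0$. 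Pick $\alpha\in\CC^\ud_\B$ with $\mathfrak o_\B(\h)+\delta_{\bar\h}\alpha=0$ and set $\h_\B:=\alpha-\incl\circ\eval^0(\alpha)$; since $\delta_{\bar\h}$ commutes with $\incl$ and with $\eval^0$ on $\bar\delta$-closed elements, $\h^+:=(\h,\h_\B)$ is an $A_{\infty,\B}$ extension satisfying $\eval^0\circ\h^+|_{\CC_\B}=0$.

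Next I extract $\g_\B$. Post-composing $\eval^1$ with $\h^+$ yields an $A_{\infty,\B}$ extension of $\g\circ\f$, so $[\mathfrak o_\B(\g\circ\f)]=0$. Lemma \ref{O_beta-composition-lem} applied with $\f^0=\f$, $\f^1=\id$ then gives $[(\bar\f,\id)_*\mathfrak o_\B(\g)]=0$. Because $\f_{1,0}$ is a quasi-isomorphism and (by Theorem \ref{Whitehead-Leangth-filtration -thm}) $\g_{1,0}$ provides a left inverse up to $\bar\delta$-exact terms, Proposition \ref{delta_beta_property-prop} (iii) implies that $(\bar\f,\id)_*$ is itself a quasi-isomorphism of $\delta$-complexes; hence $[\mathfrak o_\B(\g)]=0$ and Theorem \ref{O_beta-energy-thm} produces $\g_\B\in\CC^\ud_\B$ with $\mathfrak o_\B(\g)+\delta_{\bar\g}\g_\B=0$. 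Set $\g^+:=(\g,\g_\B)$.

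Finally I correct the discrepancy $\Xi:=\bigl(\eval^1\circ\h^+-\g^+\circ\f\bigr)_\B$, which need not vanish. A direct computation parallel to (\ref{Xi_length_filtration_eq}) shows $\Xi\in\CC^\ud_\B$ is $\delta$-closed. Using that $(\bar\f,\id)_*$ induces an isomorphism on $H(\CC^\ud_\B,\delta)$, write $\Xi+\delta\eta=(\bar\f,\id)_*(\Delta\g)$ for some $\delta$-closed $\Delta\g$ and $\eta\in\CC^\ud_\B$. Set $\Delta\h(s):=1\otimes s\eta\in\CC^\ud_\B(\C,\C'_\oi)$, and replace $\g_\B$ by $\g_\B+\Delta\g$ and $\h_\B$ by $\h_\B+\delta\Delta\h$. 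The $A_{\infty,\B}$ equations and the $\eval^0$-vanishing are preserved, and the new discrepancy equals $\Xi+\delta\eta-(\bar\f,\id)_*\Delta\g=0$, yielding (\ref{Energy_Whitehead_extension_eq}). The main obstacle is the bookkeeping of three distinct differentials $\delta_{\bar\g},\delta_{\bar\h},\delta_{\overline{\g\circ\f}}$ intertwined by the $(\bar\f^0,\bar\f^1)_*$ of Proposition \ref{f0f1_cochain-map-beta-prop}, and verifying at each step that all auxiliary cochains $\alpha,\eta,\Delta\g,\Delta\h$ actually land in the $\UD$-subspace $\CC^\ud_\B$, i.e.\ respect unitality, cyclical unitality and the divisor axiom.
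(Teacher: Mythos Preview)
Your proposal follows the same three-step architecture as the paper's proof, and steps one and three are essentially identical to what the paper does. There is, however, a genuine gap in step two.

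You justify that $(\bar\f,\id)_*$ is a quasi-isomorphism by invoking Proposition \ref{delta_beta_property-prop} (iii), citing that $\g_{1,0}$ is a left inverse to $\f_{1,0}$ ``up to $\bar\delta$-exact terms''. This does not work: Proposition \ref{delta_beta_property-prop} (iii) is stated for a \emph{single} degree-zero cochain map $f$ satisfying $\m_{k,0}\circ f^{\otimes k}=f\circ\m_{k,0}$ and possessing a \emph{strict} left inverse $k\circ f=\id$. Neither hypothesis is available here: $\bar\f$ is a full $A_\infty$ homomorphism (so the relevant map is the one from Proposition \ref{f0f1_cochain-map-beta-prop}, not \ref{delta_beta_property-prop}), and $\g_{1,0}\circ\f_{1,0}$ equals $\id$ only up to homotopy, not on the nose. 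The paper does not try to apply \ref{delta_beta_property-prop} (iii) to $\f_{1,0}$ directly.

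Instead, the paper argues indirectly. It applies Proposition \ref{delta_beta_property-prop} (iii) to $\incl$, which \emph{does} have a strict left inverse $\eval^s$, to conclude that $(\incl,\id)_*$ and hence each $(\eval^i,\id)_*$ is a quasi-isomorphism. Then, using the associativity of Proposition \ref{f0f1_cochain-map-beta-prop} together with the energy-zero homotopies $\bar\h$ (between $\bar\g\circ\bar\f$ and $\id$) and $\bar\h'$ (between $\bar\f\circ\bar\g$ and $\id$) supplied by Theorem \ref{Whitehead-Leangth-filtration -thm}, one gets
\[
(\id,\id)_*=(\bar\h,\id)_*\circ(\eval^0,\id)_*,\qquad (\bar\f,\id)_*\circ(\bar\g,\id)_*=(\bar\h,\id)_*\circ(\eval^1,\id)_*,
\]
and similarly with $\bar\h'$. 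From these one deduces that both $(\bar\f,\id)_*\circ(\bar\g,\id)_*$ and $(\bar\g,\id)_*\circ(\bar\f,\id)_*$ induce isomorphisms on $\delta$-cohomology, hence so does $(\bar\f,\id)_*$. You should replace your appeal to \ref{delta_beta_property-prop} (iii) by this argument; the rest of your outline then goes through as written.
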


\begin{proof}
We think of $\g$ and $\h$ as elements in $F^\B\CC$ (\ref{F^B_CC-eq}) by setting zeros in the component $\CC_\B$.
By Lemma \ref{O_beta-composition-lem}, we have
$
(\id,\eval^0)_*[\mathfrak o_\B (\h)]=
[\mathfrak o_\B(\eval^0  \h)]=[\mathfrak o_\B(\id)]=0$; by Proposition \ref{delta_beta_property-prop} (ii), we have $[\mathfrak o_\B (\h)]=0$. By Theorem \ref{O_beta-energy-thm}, we have some $\alpha\in \CC_\B^\ud$ so that $\mathfrak o_\B(\h)+ \delta\alpha =0$.

To ensure (\ref{Energy_Whitehead_extension_eq}), we cannot set $\h_\B=\alpha$ directly, and we need a slight modification of $\alpha$ as before.
Firstly, the condition $\eval^0  \h|_{F^E(\B)\CC}=\id$ implies that $\mathfrak o_\B(\eval^0  \h)=\mathfrak o_\B(\id)=0$ by (\ref{O_beta-eq}). Then, due to Lemma \ref{O_beta-composition-lem}, we have $\mathfrak o_\B(\eval^0  \h)-(\id,\eval^0)_* \mathfrak o_\B(\h)=0$, and thus
 $(\id,\eval^0)_*\mathfrak o_\B(\h)=0$. 
Besides, using Proposition \ref{delta_beta_property-prop} yields that $(\id,\eval^0)_*\delta \alpha= \delta (\id,\eval^0)_*\alpha= \delta (\eval^0  \alpha)$. So, we have
$
0=(\id,\eval^0)_*(\mathfrak o_\B(\h)+\delta\alpha) = \delta (\eval^0  \alpha)
$.
Now, we define:
\[
\h_\B=\alpha - \incl \eval^0   \alpha
\]
Then, we have $\eval^0  \h_\B= 0$, thus, the $\h^+=(\h,\h_\B)$ satisfies the first half of (\ref{Energy_Whitehead_extension_eq}).
Besides, due to (\ref{delta_bar_g-CC_beta-eq}), we can show that $\delta \h_\B= \delta \alpha - \incl \delta \eval^0   \alpha=\delta \alpha$, and hence $\mathfrak o_\B(\h)+\delta \h_\B=0$.
In other words, setting $\h^+=(\h,\h_\B)$ supplies an $A_{\infty,\B}$ extension of $\h$.

The composition $\eval^1  \h^+$ is also an $A_{\infty,\B}$ homomorphism which is exactly an extension of $\g\diamond \f$.
It follows that $\mathfrak o_\B(\g \diamond \f)=\mathfrak o_\B(\eval^1  \h)=\eval^1  \mathfrak o_\B(\h)= - \eval^1 \delta (\h_\B) = - \delta \eval^1 \h_\B$. Meanwhile, by Lemma \ref{O_beta-composition-lem} and Proposition \ref{delta_beta_property-prop} (iii), we also conclude
$0=[\mathfrak o_\B (\g\diamond \f)] = (\f,\id)_*[\mathfrak o_\B(\g)]$
and $ [\mathfrak o_\B(\g)]=0$.
By Theorem \ref{O_beta-energy-thm}, the $\g$ can be extended to some $\g^+=(\g,\g_\B)$ so that $\mathfrak o_\B(\g)= -\delta \g_\B$. Unfortunately, we cannot ensure the following vanishes:
\[
\Pi:= (\eval^1 \h^+ - \g^+ \diamond \f)_\B =\eval^1  \h_\B - (\bar\f,\id)_* \g_\B- \textstyle \sum_{\beta\neq \B} \g_{\beta} \circ (\f\otimes \cdots \otimes \f) 
\]
So, we need to further modify $\g_\B$ and $\h_\B$.
To begin with, observe that since $\f,\g^+,\h^+\in \CC^\ud$, we see that $\Pi\in \CC^\ud_\B$.
We compute
$
\delta \Pi= -\eval^1  \mathfrak o_\B(\h) + (\bar\f,\id)_* \mathfrak o_\B(\g) - \delta( (\g\diamond \f)_\B )
=
 -\mathfrak o_\B(\g\diamond \f) + (\bar\f,\id)_* \mathfrak o_\B(\g) - \delta( (\g\diamond \f)_\B )
$
which vanishes precisely owe to Lemma \ref{O_beta-composition-lem}.

Next, we claim that $(\bar\f,\id)_*$ is a quasi-isomorphism. 
In fact, by Proposition \ref{delta_beta_property-prop}(iii), $(\incl,\id)_*$ is a quasi-isomorphism; so is $(\eval^i,\id)_*$ due to the fact $\eval^i\circ\incl=\id$ and Proposition \ref{f0f1_cochain-map-beta-prop}.
Observe that $(\id,\id)_*=(\bar\h,\id)_*(\eval^0,\id)_*$ and that $(\bar\f,\id)_*(\bar\g,\id)_*=(\bar\h,\id)_*(\eval^1,\id)_*$.
By Theorem \ref{Whitehead-Leangth-filtration -thm}, there is another ud-homotopy $\bar\h'$ between $\bar\f \diamond \bar\g$ and $\id$; we similar conclude that $(\id,\id)_*=(\bar\h',\id)_*(\eval^0,\id)_*$ and $(\bar\g,\id)_*(\bar\f,\id)_*=(\bar\h',\id)_* (\eval^1,\id)_*$.
To sum up, $(\bar\f,\id)_*$ is a quasi-isomorphism.

Now, by the above claim, one can find some $\Delta\g\in \CC^\ud_\B(\C',\C)$ so that $\delta \Delta \g=0$ and $\Pi+\delta \eta= (\bar\f,\id)_* \Delta\g$ for some $\eta\in \CC^\ud_\B$. Define $\Delta \h \in \CC^\ud_\B(\C,\C_\oi)$ by $\Delta \h (s) =1\otimes s\eta$; we have $\eval^0   \Delta\h=0$ and $\eval^1  \Delta\h=\eta$.
Finally, we define $\h'_\B:=\h_\B+ \delta \Delta \h$ and $\g'_\B=\g_\B+ \Delta\g$.
Both are contained in $\CC^\ud_\B$ by construction, and these modifications also given the $A_{\infty,\B}$ extensions since $\delta \h'_\B=\delta \h_\B$ and $\delta \g'_\B=\delta \g_\B$. It remains to show the new extensions $\h^{\prime  +}=(\h, \h'_\B)$ and $\g^{\prime +}= (\g,\g'_\B)$ satisfy (\ref{Energy_Whitehead_extension_eq}). In fact, the first relation is clearly preserved, and the second holds, since
$
\Pi':= (\eval^1  \h^{\prime +} -\g^{\prime +} \diamond \f)_\B = \Pi+ \eval^1 (\delta \Delta \h) - (\bar \f, \id)_* \Delta\g = \Pi+ \delta \eta- (\bar\f,\id)_*\Delta\g=0$.
\end{proof}

\begin{proof}
[Proof of Theorem \ref{Whitehead-full-thm}]
Denote by $\f_\beta$ the component of $\f$ in $\CC_\beta$, and we define
$
\mathsf G_\f:=\{ \beta\in \G \mid \f_\beta\neq 0\}
$.
One can similarly define $\mathsf G_\m$ and $\mathsf G_{\m'}$ for the $A_\infty$ algebras $\m$ and $\m'$. Now, we put
\begin{equation}
\label{G_gapped_whitehead_proof}
\mathsf G=\mathbb N\cdot \mathsf G_\f \cup \mathsf G_\m \cup \mathsf G_{\m'}
\end{equation}
By the gappedness, the image $E(\mathsf G)$ under the energy morphism $E(\cdot)$ is a discrete subset of $[0,\infty)$, say,
$ E(\mathsf G) =\{0=\lambda_0<\lambda_1<\lambda_2<\cdots<\lambda_i<\lambda_{i+1}<\cdots\}
$.
Every $E^{-1}(\lambda_i) \cap \mathsf G$ is finite and $E^{-1}(0)\cap \mathsf G=\{0\}$.
We aim to inductively construct $\g$ and $\h$ within the subspace
\[
\CC^{(\infty)}:= \prod_{j=0}^\infty \prod_{E(\beta)=\lambda_j} \CC^\ud_\beta \qquad \subset \CC_\G
\]
Suppose we already have two operator systems $\g^i$ and $\h^i$ in $\CC^{(i)}:=\prod_{j=0}^i \prod_{\beta\in\mathsf G:E(\beta)=\lambda_j} \CC^\ud_\beta$
so that they are $A_{\infty,\beta}$ homomorphisms for every $\beta\in \G$ with $E(\beta)=\lambda_i$ and the following identities hold in the space $ F^{\lambda_i}\CC \times \prod_{\beta\in\mathsf G: E(\beta)=\lambda_i}\CC_\beta$:
\begin{equation}
\label{induction_eval_g_h_energy_filtration_eq}
\eval^0  \h^i =\id, \quad \text{and}\quad \eval^1  \h^i=\g^i \diamond \f
\end{equation}
By Theorem \ref{Whitehead-Leangth-filtration -thm}, we have the initial step for $i=0$.
Suppose the $\g^i$ and $\h^i$ are given as above.

We first claim the two identities in (\ref{induction_eval_g_h_energy_filtration_eq}) actually hold in the larger space $F^{\lambda_{i+1}}\CC$.
In fact, pick an arbitrary $\beta$ with $\lambda_i<E(\beta)<\lambda_{i+1}$; then $\beta\not\in \mathsf G$. Then, since the $\h^i$ lives in $\CC^{(i)}$, we have $(\eval^{0,1}  \h^i)_\beta=\eval^{0,1}  \h^i_\beta=0$.
First, it is clear that $(\id)_\beta=0$, and so $\eval^0  \h^i=\id$ in $F^{\lambda_{i+1}}\CC$.
Second, $(\g^i\diamond \f)_\beta=\sum \g^i_{\ell,\beta_0}\circ(\f_{\beta_1}\otimes \cdots \otimes \f_{\beta_\ell})$ must vanish, otherwise $\beta=\beta_0+\beta_1+\cdots+\beta_\ell\in \G$. So, $\eval^1  \h^i=\g^i \diamond \f$ in $F^{\lambda_{i+1}}\CC$ as well.
We next claim that the $\g^i$ and $\h^i$ can be viewed as $A_\infty$ homomorphisms modulo $T^{\lambda_{i+1}}$.
Indeed,
pick $\beta$ with $\lambda_i<E(\beta)<\lambda_{i+1}$; then $\beta\notin\mathsf G$.
Suppose one can choose $\beta_i$'s so that $\beta=\sum\beta_i$ and $\m_{\ell,\beta_0} \circ (\g^i_{\beta_1}\otimes \cdots\otimes \g^i_{\beta_\ell})$ is nonzero. Then, by induction hypothesis, we have $\beta_i\in \mathsf G$ for each $i$. It follows that $\beta=\sum\beta_i \in \mathsf G$, a contradiction.
Suppose one can choose $\beta_1$ and $\beta_2$ so that $\beta=\beta_1+\beta_2$ and $\g^i_{\beta_1} (\id^\bullet_\# \otimes \m'_{\beta_2} \otimes \id^\bullet)$ is nonzero, then $\beta_1\in \mathsf G$ by induction hypothesis. Since $\beta_2\in \mathsf G_{\m'}$, we deduce $\beta=\beta_1+\beta_2\in \mathsf G$, a contradiction. Thus $\m \diamond \g^i-\g^i \{ \m'\}$ is zero not only in $F^{\lambda_i}\CC$ but also in $\CC_\beta$ for any $\beta$ with $\lambda_i<E(\beta)<\lambda_{i+1}$.
Namely, the $\g^i$ is an $A_\infty$ homomorphism modulo $T^{\lambda_{i+1}}$.
Similarly, one can show so is $\h^i$.

Now, it is legitimate to apply Proposition \ref{Energy-Whitehead-prop} for every class in the finite set $\mathsf G\cap \{\B\mid E(\B)=\lambda_{i+1}\}$, thereby obtaining the extensions $\g^{i+1}$ and $\h^{i+1}$ in $\CC^{(i+1)}$ of $\g^i$ and $\h^i$ respectively. By construction, they also satisfy the induction hypothesis.
Ultimately, we obtain $\g$ and $\h$ in $\CC^{(\infty)}$ so that $\g \diamond \f$ is ud-homotopic to $\id$ via $\h$.
Moreover, this construction exactly tells the set $\{\beta\mid \g_\beta\neq 0 \ \text{or}\ \h_\beta\neq 0\}$ must be contained in $\mathsf G$; thus, the $\g$ and $\h$ satisfy the condition (II-5) (Definition \ref{UD_defn}).
The conditions (II-1) (II-2) (II-3) for $\g$ and $\h$ hold by the definition of $\CC^\ud$, and the (II-4) holds by Lemma \ref{DA-g_10-h_10-lem}.

In summary, we can find $\g,\h\in\Mor\UD$ so that $\g\diamond \f$ is ud-homotopic to $\id$ via $\h$.
It remains to further prove $\f\diamond \g$ is also ud-homotopic to $\id$ as well. Applying the whole argument again to the $\g$ (instead of $\f$), there exist some $\f' , \h' \in\Mor\UD$ so that $\f'\diamond \g \simud \id$ via $\h'$.
Then, using Lemma \ref{UD-simud-composition-lem} we conclude $\f'\simud\f'\diamond(\g\diamond \f)= (\f'\diamond \g)\diamond \f \simud \f$, and thus $\f\diamond \g\simud \f'\diamond \g\simud\id$.
\end{proof}

\begin{rmk}
	\label{Gapped_Whitehead_preserve_rmk}
	There is a useful observation in the above proof: The ud-homotopy inverse $\g$ of $\f$ satisfies that the set
	$
	\mathsf G_{\g} := \{ \beta\mid \g_\beta\neq 0\} $
	is contained in the set $\mathsf G=\mathbb N\cdot \mathsf G_\f\cup \mathsf G_\m\cup \mathsf G_{\m'}$ (\ref{G_gapped_whitehead_proof}).
\end{rmk}

\subsection{Homological perturbation}
\label{S_homological_perturbation}

A \textit{ribbon tree} is a tree $\T$ with an embedding $\T \xhookrightarrow{} \mathbb D^2\subset \mathbb C$ such that a vertex $v$ has only one edge if and only if $v$ lies in the unit circle $\partial \mathbb D^2$. Such a vertex is called an exterior vertex, and any other vertex is called an interior vertex. The set of all exterior (resp. interior) vertices is denoted by $\Cext_0(\T)$ (resp. $\Cint_0(\T)$). Then, $C_0(\T)=\Cext_0(\T)\cup \Cint_0(\T)$ is the set of all vertices.
Besides, an edge of $\T$ is called exterior if it contains an exterior vertex and is called interior otherwise. The set of all exterior edges is denoted by $\Cext_1(\T)$ and that of all interior edges is denoted by $\Cint_1(\T)$.

A \textit{rooted ribbon tree} is a pair $(\T,v_0)$ of a ribbon tree $\T$ and an exterior vertex $v_0$ therein. We call $v_0$ the \textit{root}; it gives a natural partial order on the set of vertices $C_0(\T)$ by setting
$
v<v'
$
if $v\neq v'$ and there is a path in $\T$ from $v$ to $v_0$ which passes through $v'$. Particularly, the root $v_0$ is the largest vertex with respect to this partial order.


\begin{defn}
	\label{tree-stable-gapped-defn}
\textbf{(i)} For a label group $\G=(\G,E,\mu)$, a \textit{$\G$-decoration} on $(\T,v_0)$ is a map $\B: \Cint_0(\T)\to \G$. It is called \textit{stable} if any $v\in \Cint_0(\T)$ satisfies $E(\B(v))\ge 0$ and has at least three edges whenever $\B(v)=0$.
Given $k\in\mathbb N$ and $\beta\in \G$, we denote by $\Tr(k,\beta):=\Tr(k,\beta;\G)$ the set of $\G$-decorated \textit{stable rooted ribbon trees} $(\T,v_0,\B)$ so that $\#\Cext_0(\T)=k+1$ and $\sum_{v\in \Cint_0(\T)} \B(v)=\beta$.
Note that $\Tr(0,0)=\varnothing$; $\Tr(1,0)$ contains only one element, i.e. the tree $\T_{1,0}$ that consists of exactly two vertices and one edge.
For simplicity, we often omit $v_0, \B$ and write $\T=(\T,v_0,\B)$.

\textbf{(ii)} A \textit{time allocation} of $\T=(\T,v_0,\B)\in \Tr(k.\beta)$ is a map $\tau:\Cint_0(\T)\to \mathbb R$ such that $\tau(v)\le \tau(v')$ whenever $v<v'$.
The set of all time allocations $\tau$ for $\T$ such that all $\tau (v)$ are contained in $[a,b]$ is denoted by $\A_a^b(\T)$. If $\ab=\oi$, we often just write $\A(\T)=\A^1_0(\T)$.
\end{defn}

Any time allocation $\tau$ can be viewed as a point $(x_v=\tau(v))_{v\in \Cint_0(\T)}$ in $[a,b]^{\#\Cint_0(\T)}$. Thus, the set $\A_a^b(\T)$ can be identified with a bounded polyhedron cut out by the inequalities $x_v\le x_{v'}$ for $v<v'$ and $a\le x_v\le b$. Hence, it has a natural measure induced from the Lebesgue measure.

We introduce the notion of canonical models (also known as minimal models).

\begin{defn}
	\label{contraction-defn}
	Let $(\C, \check \m_{1,0})$ and $(\mH,\delta)$ be two graded cochain complexes.
	A triple $(i,\pi,G)$, consisting of two maps $i: H \to \C$, $\pi: C \to  H$ of degree zero and a map $G: C \to C$ of degree $-1$, is called a \textbf{contraction} (for $H$ and $C$) if the following equations hold
	\begin{align}
		\label{cochain-maps-i-pi}
	\check \m_{1,0}\circ i = i \circ \delta  \qquad
	\pi \circ\check  \m_{1,0} = \delta \circ \pi \\
\label{green-operator}
	i\circ \pi-\id_{C} = \check \m_{1,0} \circ G + G\circ \check \m_{1,0}
	\end{align}
	Further, the $(i,\pi,G)$ is called a \textbf{strong contraction}, or say it is \textbf{strong}, if we have the extra conditions:
	\begin{align}
		\label{pi-i-id-eq}
	\pi\circ i - \id_{\mH}=0 \\
\label{side-conditions GG}
	G\circ G=0   \\
\label{side-conditions Gi}
	G\circ i =0   \\
	\label{side-conditions piG}
	\pi\circ G=0  
	\end{align}
\end{defn}

We will give examples in \S \ref{S_harmonic_contraction}.
Here (\ref{side-conditions GG}, \ref{side-conditions Gi}, \ref{side-conditions piG}) are called \textit{side conditions}; c.f. \cite{Markl06}.

For our purpose, we need to slightly generalize \cite[Theorem 5.4.2]{FOOOBookOne} (see also \cite{Markl06}). Since the generality we need is not available in the literature, we give a sketch.

\begin{thm} \label{canonical_model_general_tree-thm}
Fix a $\G$-gapped $A_\infty$ algebra $(C,\check \m)$ and a graded cochain complex $(H,\delta)$.
From a contraction $(i,\pi, G)$,
there is a canonical way to construct a $\G$-gapped $A_\infty$ algebra $\m$ on $H$ together with a $\G$-gapped $A_\infty$ homomorphism
$
\mi: (H,\m)\to (C,\check  \m)
$
so that 
$
\mi_{1,0}=i$ and $\m_{1,0}=\delta$.
\end{thm}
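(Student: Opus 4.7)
The plan is to follow the standard sum-over-trees construction from homological perturbation theory, adapted to the gapped and labeled setting. I will define operators by an inductive/tree-summation recipe, then verify the $A_\infty$ relations and all the bookkeeping conditions.

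\textbf{Setup.} For every decorated stable rooted ribbon tree $(\T,v_0,\B) \in \Tr(k,\beta)$, I build a $k$-multilinear operator $\m^\T : H^{\otimes k} \to H$ as follows. Travel from the $k$ input (non-root) exterior vertices toward $v_0$. On each input exterior edge, insert the map $i$. At each interior vertex $v$ of valence $\ell+1$, insert the $A_\infty$ operation $\m_{\ell,\B(v)}$ (with inputs ordered by the ribbon structure). On each interior edge, insert the propagator $G$. Finally, on the root edge, insert the projection $\pi$. Then define
\[
\m^\diamond_{k,\beta} := \sum_{\T \in \Tr(k,\beta)} \tfrac{1}{|\mathrm{Aut}(\T)|}\, \m^\T, \qquad
\mi^\diamond_{k,\beta} := \sum_{\T \in \Tr(k,\beta)} \tfrac{1}{|\mathrm{Aut}(\T)|}\, \mi^\T,
\]
where $\mi^\T$ is built identically except that the root-edge operator $\pi$ is replaced by $G$ (so $\mi^\diamond$ takes values in $C$ rather than $H$). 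For the initial cases, set $\m^\diamond_{k,0} = 0$ for $k \neq 2$ and declare $\m^\diamond_{1,0} := \delta$; for $\mi^\diamond$ set $\mi^\diamond_{1,0} := i$. The stability condition in Definition \ref{tree-stable-gapped-defn} makes the sum over $\Tr(k,\beta)$ finite for each $(k,\beta)$, and gappedness of $\m$ propagates to gappedness of $\m^\diamond, \mi^\diamond$ because only finitely many $\beta$'s appear below any fixed energy.

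\textbf{Degrees.} A tree $\T \in \Tr(k,\beta)$ with interior vertices $\{v\}$ of valences $\ell_v+1$ satisfies $\sum_v(\ell_v-1) = k-2 + \#\Cint_1(\T)$ (standard Euler characteristic for a tree). Summing degree contributions $\deg \m_{\ell_v,\B(v)} = 2-\ell_v-\mu(\B(v))$, $\deg G = -1$, $\deg i = \deg\pi = 0$, one checks $\deg \m^\T = 2-k-\mu(\beta)$ and $\deg \mi^\T = 1-k-\mu(\beta)$, matching Definitions \ref{GappedBeta-defn} and \ref{Gapped-Hom-defn}.

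\textbf{$A_\infty$ relations via boundary analysis.} This is the heart of the argument. I will prove $\m^\diamond \star \m^\diamond = 0$ and $\m \circ \mi^\diamond = \mi^\diamond \star \m^\diamond$ simultaneously, by induction on $(k,\beta)$ with respect to the order in Remark \ref{Induction}. The key identity is (\ref{green-operator}), written as $i\pi - \id = \m_{1,0} G + G \m_{1,0}$. Applying this to a would-be interior edge of a tree produces four terms: two corresponding to contracting the edge (which reassembles into $\m \circ (\cdots \m \cdots)$, canceling via the $A_\infty$ relation for $\m$ itself against the inductive terms), and two corresponding to splitting the edge into a leaf of the form $i\pi$, which recombine into products of smaller trees. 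Carrying out this combinatorial bookkeeping—pairing each term of $\m^\diamond \star \m^\diamond$ with a codimension-one face of a tree in $\Tr(k,\beta)$—shows that all contributions cancel in pairs, exactly as in the classical homological perturbation lemma. For $\mi^\diamond$, the analogous identity uses that the root-edge operator $G$ satisfies $\m_{1,0} G + G \m_{1,0} = i\pi - \id$, so replacing $G$ at the root yields both the $\m \circ \mi^\diamond$ side (from $i\pi$) and the $\mi^\diamond \star \m^\diamond$ side (from $\id$, which amounts to cutting the root edge into two subtrees joined via $\m^\diamond$).

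\textbf{The main obstacle.} The combinatorial signs in the tree-expansion identity are what I expect to be the delicate point: one must verify that the orientations of the codimension-one faces of the associahedra (which parametrize the trees in $\Tr(k,\beta)$ for fixed $\beta$) induce precisely the signs demanded by the shifted-degree conventions $\id_\#$ and the formula for $\g \star \h$ in Definition \ref{Composition-defn}. I would handle this by checking a few low-valence cases explicitly and then arguing inductively using the gluing/splitting structure on trees. Note that this construction uses only the (non-strong) contraction axioms (\ref{cochain-maps-i-pi}), (\ref{green-operator})—the side conditions are not needed here; they will only be required later to obtain unitality in the refinement producing an object of $\UD$.
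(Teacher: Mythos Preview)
Your overall framework is the same as the paper's: define $\m^\diamond$ and $\mi^\diamond$ via sums over decorated stable rooted ribbon trees, with $i$ on input edges, $G$ on interior edges, $\m_{\ell,\B(v)}$ at interior vertices, and $\pi$ (respectively $G$) at the root. Two points need correction, and the verification strategy differs.

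First, your treatment of the $\beta=0$ sector is wrong. You write ``set $\m^\diamond_{k,0}=0$ for $k\neq 2$,'' but for $k\geq 3$ the set $\Tr(k,0)$ is nonempty (stability only forces each interior vertex to have valence $\geq 3$), and the resulting tree sum is generally nonzero---these are precisely the higher Massey-type operations on $H$. The only genuinely exceptional pairs are $(k,\beta)=(0,0)$ and $(1,0)$, where one declares $\m^\diamond_{0,0}=0$, $\m^\diamond_{1,0}=\delta$, $\mi^\diamond_{0,0}=0$, $\mi^\diamond_{1,0}=i$ by hand. If you force $\m^\diamond_{k,0}=0$ for $k\geq 3$, the $A_\infty$ relations fail already at energy zero.

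Second, your finiteness claim is misattributed. Stability alone does not make $\Tr(k,\beta)$ finite: for fixed $\beta$ there can be infinitely many decorations $\B$ with $\sum_v \B(v)=\beta$. What makes the sum finite is that only trees with every $\B(v)\in\mathsf G_\m=\{\gamma:\m_\gamma\neq 0\}$ contribute, and gappedness condition (c) in Definition~\ref{CC_defn} bounds the number of such $\gamma$'s below any energy. (Incidentally, the factors $1/|\mathrm{Aut}(\T)|$ are superfluous: rooted ribbon trees with ordered leaves have trivial automorphism groups.)

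On the $A_\infty$ verification, the paper takes a different route from your edge-insertion/associahedra picture. It first extracts from the tree sum a closed inductive recursion---cut at the interior vertex adjacent to the root, giving formula (\ref{induction-tree-formula-eq})---and then proves $\m\circ\mi^\diamond=\mi^\diamond\star\m^\diamond$ directly by induction on $(k,\beta)$, deducing $\m^\diamond\star\m^\diamond=0$ from it via (\ref{cochain-maps-i-pi}). This makes the signs mechanical: each inductive step reduces to a single application of (\ref{green-operator}) rather than a global cancellation over codimension-one faces. Your approach is standard and can be made to work, but as you acknowledge, it leaves the sign bookkeeping unresolved; the paper's inductive argument avoids that deferral entirely.
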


\begin{proof}[Sketch of proof]
By induction on $\#\Cint_0(\T)$,
we aim to construct two sequences of operators
$
\mi_{\T}: H^{\otimes k} \to  C$ and $
\m_{\T}: H^{\otimes k} \to  H$
for all $\T=(\T,v_0,\B)\in \Tr(k,\beta)$ and all $k\in\mathbb N$, $\beta\in \G$ as follows.

When $\#\Cint_0(\T)=0$, the only possibility is that $\T=\T_{1,0}$, the unique tree in $\Tr(1,0)$; then, we define $\mi_{\T_{1,0}}=i$ and $\m_{\T_{1,0}}=\delta
$.
When $\#\Cint_0(\T)=1$, we must have $(k,\beta)\neq(0,0), (1,0)$; there is only one such tree $\T$ in every $\Tr(k,\beta)$.
Then, we define $\mi_{\T}=G\circ \check  \m_{k,\beta} \circ i^{\otimes k}$ and $\m_{\T}=\pi\circ \check  \m_{k,\beta} \circ i^{\otimes k}$.

Suppose now that $\mi_{\T'}$ and $\m_{\T'}$ have been constructed for $\#\Cint_0(\T')\le n$ and assume $\T=(\T,v_0,\B)\in \Tr(k,\beta)$ is a decorated stable rooted ribbon tree (Definition \ref{tree-stable-gapped-defn}) such that $\#\Cint_0(\T)=n+1$.
Consider the edge $e$ of the root $v_0$. Let $v$ be the other vertex of $e$. Then, $v\in \Cint_0(\T)$. We cut all edges of $v$ except $e$ and add a pair of vertices to all the resulting half line segments.
Then we get the tree $\T_{\ell,\B(v)}\in \Tr(\ell,\B(v))$ with one interior vertex $v$ together with the other $\ell$ trees
\begin{equation}
\label{tree-ell}
\T_i:= (\T_i, v_0^i, \B^i)\in\Tr(k_i,\beta_i) \qquad  i=1,\dots \ell
\end{equation}
where $\B(v)+\sum\beta_i =\beta$ and $\sum k_i =k$; the roots $v_0^i$ are newly-added vertices which are ordered counterclockwise. Define
$\mi_{\T} =G\circ \check \m_{\ell,\B(v)}\circ (\mi_{\T_1}\otimes\cdots\otimes \mi_{\T_\ell}) $ and
$\m_{\T} =\pi\circ \check \m_{\ell,\B(v)}\circ (\mi_{\T_1}\otimes\cdots\otimes \mi_{\T_\ell})$.
It is well-defined since $\# \Cint_0(\T_i) < \#\Cint_0(\T)$.
Summing over all the trees in $\Tr(k,\beta)$, we define:
\begin{equation}
\label{i_k,beta_can-eq}
\mi_{k,\beta}
= 
\textstyle \sum_{\T\in\Tr(k,\beta)} \mi_{\T} \quad ; \qquad
\m_{k,\beta}
=
\textstyle \sum_{\T\in\Tr(k,\beta)} \m_{\T}
\end{equation}
For the exceptional cases $(k,\beta)=(0,0), (1,0)$, we define $\mi_{0,0}=0$ and $\m_{0,0}=0$ and $\mi_{1,0}=i, \m_{1,0}=\delta$. 

We claim that there are only finitely many nonzero terms in (\ref{i_k,beta_can-eq}).
Indeed, even though the set $\Tr(k,\beta)$ may be infinite, a tree in $\Tr(k,\beta)$ with nonzero contribution to (\ref{i_k,beta_can-eq}) must satisfies that the image of $\B:\Cint_0(\T)\to \G$ is contained in the set $\mathsf G_\m:=\{\beta\in \G\mid \m_\beta\neq 0\}$. Hence, the gappedness of $\m$ infers that the trees with nonzero contributions form a finite subset of $\Tr(k,\beta)$.

Putting things together yields the inductive formulas:
for all $(k,\beta)\neq (0,0), (1,0)$,
\begin{equation}
\label{induction-tree-formula-eq}
\begin{aligned}
\mi_{k,\beta} &= 
\sum_{\ell\ge 0} \sum_{k_1+\cdots+k_\ell = k} \sum_{\substack{\beta_1+\cdots+\beta_\ell =\beta \\ (\ell,\beta_0)\neq (1,0)}}
 G \circ \check \m_{\ell,\beta_0} \circ (\mi_{k_1,\beta_1}\otimes \cdots \otimes \mi_{k_\ell,\beta_\ell}) \\
 \m_{k,\beta} &= 
\sum_{\ell\ge 0} \sum_{k_1+\cdots+k_\ell = k} \sum_{\substack{\beta_1+\cdots+\beta_\ell =\beta \\ (\ell,\beta_0)\neq (1,0)}}
\pi \circ \check  \m_{\ell,\beta_0} \circ (\mi_{k_1,\beta_1}\otimes \cdots \otimes \mi_{k_\ell,\beta_\ell})
\end{aligned}
\end{equation}
Remark that the condition $(\ell,\beta_0)\neq (1,0)$ in (\ref{induction-tree-formula-eq}) is due to the tree stable condition (Definition \ref{tree-stable-gapped-defn}).
The degrees are as we expect:
$\deg \mi_{k,\beta} = -1+ (2-\ell -\mu(\beta_0)) + \textstyle \sum_i (1-k_i-\mu(\beta_i) )=1-k-\mu(\beta)$
and also $\deg \m_{k,\beta}=2-k-\mu(\beta)$.

\begin{rmk}
	\label{Gapped_canonical_precise-rmk}
	To quote later, the argument of proving the gappedness of $\mi$ and $\m$ is separately presented below:
	Denote by $\mathsf G_\m$ the set of $\beta$ with $\m_\beta\neq 0$.
	Then by (\ref{induction-tree-formula-eq}) we see
	both the set $\{\beta\mid \m_\beta\neq 0\}$ and $\{\beta\mid \mi_\beta\neq 0 \}$ are contained in $\mathbb N\cdot \mathsf G_\m$. Particularly, if $\m$ only involves $\beta$ with $\mu(\beta)\ge 0$, then so do the $\m$ and $\mi$.
	Hence, the conditions (I-5) (II-5) (Definition \ref{UD_defn}) hold for $\m$ and $\mi$.
\end{rmk}

Finally, it is straightforward to prove the $A_\infty$ relations
$
(\check\m \diamond \mi - \mi \{ \m \} )|_{\CC_{k,\beta}}=0
$
and $\m\{\m\}|_{\CC_{k,\beta}}=0$
by inductions on the set of pairs $(k,\beta)$ with the partial order `$<$'.
\end{proof}

\begin{defn}
	\label{canonical_model_defn}
	In Theorem \ref{canonical_model_general_tree-thm}, we call the triple $(\mH, \m, \mi)$ or just $(\mH,\m)$ the \textbf{canonical model} (or \textbf{minimal model}) of $(\C,\check  \m)$ (with respect to the contraction $(i,\pi,G)$).
\end{defn}


\begin{prop}\label{canonical-model_preserve_properties-prop}
	The tree construction in Theorem \ref{canonical_model_general_tree-thm} has the following properties:
\begin{itemize}
	\itemsep 1pt
	\item[(i)] Assume $\partial\beta\cap i(b)=\partial \beta\cap b$. If $\m$ satisfies the divisor axiom, then so do both $\m$ and $\mi$.
	\item[(ii)] If $\m$ is cyclically unital, then so are $\m$ and $\mi$.
	\item [(iii)]
	Assume $(i,\pi,G)$ is a strong contraction.
	If $(\C,\check \m)$ has a unit $\one$ such that $i(\pi(\one))=\one$,
	then the $\m$ has a unit $\pi(\one)$ and the $\mi$ is unital with respect to $\pi(\one)$ and $\one$.
\end{itemize}
\end{prop}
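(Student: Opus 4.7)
The plan is to prove all three parts by a common induction on pairs $(k,\beta)$ (with the partial order from Remark \ref{Induction}), feeding the inductive formulas (\ref{induction-tree-formula-eq}) directly. In every case the base steps $(k,\beta)=(0,0), (1,0)$ are handled by the definitions $\mi^\diamond_{1,0}=i$, $\m^\diamond_{1,0}=\delta$ together with (\ref{cochain-maps-i-pi}) and $\m_{1,0}(\one)=0$. The inductive step always has the same shape: a divisor input $b$, a degree-zero input $\e$, or the input $\pi(\one)$ enters one of the leaves of a tree $\T\in\Tr(k,\beta)$; we then trace it up through a subtree $\mi^\diamond_{k_j,\beta_j}$ to the first interior vertex $v$ labelled by $\m_{\ell,\beta_0}$, and use the corresponding property of $\m$ to close the induction.

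For (i), I would first establish $\partial\beta\cap \mi^\diamond_{1,0}(b)=\partial\beta\cap b$ (which is the hypothesis) and then prove by induction that both $\DA[\mi^\diamond]_{k,\beta}(b;\cdots)=\partial\beta\cap b\cdot \mi^\diamond_{k,\beta}(\cdots)$ and the same for $\m^\diamond$. Since $b$ enters exactly one subtree, the sum $\DA[\mi^\diamond]_{k,\beta}$ decomposes as a sum over the choice of this subtree; by the induction hypothesis each subtree contributes $\partial\beta_j\cap b\cdot \mi^\diamond_{k_j,\beta_j}(\cdots)$, and the divisor axiom of the outer $\m_{\ell,\beta_0}$ (with divisor input $\mi^\diamond_{1,0}(b)=i(b)$, for which the cap product is unchanged by hypothesis) then yields the factor $\partial\beta_0\cap b$. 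Adding $\beta_0+\sum\beta_j=\beta$ gives $\partial\beta\cap b$ on the nose.

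For (ii), I would run the same kind of induction: when we compute $\CU[\mi^\diamond]_{k,\beta}(\e;\cdots)$ or $\CU[\m^\diamond]_{k,\beta}(\e;\cdots)$, the sum over cyclic positions of $\e$ splits, according to which subtree $\mi^\diamond_{k_j,\beta_j}$ receives $\e$, into two kinds of contributions. The contributions in which $\e$ sits strictly inside some subtree with $(k_j,\beta_j)\neq(0,0)$ collapse by the inductive hypothesis for that subtree. The remaining contributions, where $\e$ is placed around the $\ell$ slots at the vertex $v$, assemble exactly into $\CU[\m]_{\ell,\B(v)}(i(\e);\mi^\diamond_{k_1,\beta_1}(\cdots),\ldots)$, which vanishes by cyclical unitality of $\m$ (here one uses that $i$ has degree zero, so $i(\e)$ is still a degree-zero input, and the tree stability rules out the excluded pair $(\ell,\B(v))=(0,0)$).

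For (iii), the strong contraction side conditions (\ref{pi-i-id-eq})--(\ref{side-conditions piG}) are doing all the work, and this is the step where I expect the main difficulty — especially in keeping track of which subtrees can survive. The induction is that $\mi^\diamond_{k,\beta}(\ldots,\pi(\one),\ldots)=0$ for all $(k,\beta)\neq(1,0)$, and that $\m^\diamond_{k,\beta}(\ldots,\pi(\one),\ldots)=0$ except in the cases dictated by Definition \ref{unit-defn}. If the $\pi(\one)$ enters a subtree $\mi^\diamond_{k_j,\beta_j}$ with $(k_j,\beta_j)\neq(1,0)$, that subtree vanishes by induction; so the only surviving trees are those in which $\pi(\one)$ feeds directly into $\mi^\diamond_{1,0}=i$, producing $i\pi(\one)=\one$, which is then the input of $\m_{\ell,\beta_0}$ at vertex $v$. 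Unitality of $\m$ combined with stability ($(\ell,\beta_0)=(1,0)$ is forbidden) forces $(\ell,\beta_0)=(2,0)$, and $\m_{2,0}(\one,y)=\pm y$; now for $\mi^\diamond_{k,\beta}$ the outermost operator is $G$, and $y$ is either $i(x_m)$ at an adjacent leaf (killed by $G\circ i=0$) or in the image of $G$ from a deeper subtree (killed by $G\circ G=0$), so the whole contribution vanishes, whereas for $\m^\diamond_{k,\beta}$ the outermost operator is $\pi$, and the same trees survive only when the entire tree is the corolla with $(k,\beta)=(2,0)$ and the other input a leaf, giving $\pi\circ i=\id$ and the desired identity $\m^\diamond_{2,0}(\pi(\one),x)=x$ (with the mirror computation for $\m^\diamond_{2,0}(x,\pi(\one))$). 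The remaining verifications $\m^\diamond_{1,0}(\pi(\one))=\delta\pi(\one)=\pi\m_{1,0}(\one)=0$ and $\mi^\diamond_{1,0}(\pi(\one))=i\pi(\one)=\one$ are immediate from (\ref{cochain-maps-i-pi}) and the hypothesis $i\pi(\one)=\one$; the sign bookkeeping in the $(2,0)$ case is the only tedious point.
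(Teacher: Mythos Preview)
Your proposal is correct and follows essentially the same approach as the paper: all three parts are proved by induction on $(k,\beta)$ using the inductive formula (\ref{induction-tree-formula-eq}), splitting according to which subtree receives the special input and invoking the corresponding property of $\m$ at the root vertex. The only cosmetic difference is that for (iii) the paper phrases the induction as a ``smallest counterexample'' argument, but the case analysis (reducing to $(\ell,\beta_0)=(2,0)$ and then using $G\circ G=0$, $G\circ i=0$, $\pi\circ G=0$, $\pi\circ i=\id$) is identical to yours.
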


\begin{proof}
	\leavevmode
\textit{\textbf{(i) Divisor axiom.}}
We only show the divisor axiom for $\mi$ and a similar argument applies for $\m$.
By Definition \ref{DivisorAxiom-defn}, it suffices to show
$
\DA[\mi]_{k,\beta}(b;x_1,\dots,x_k)= \partial \beta\cap b  \cdot \mi_{k,\beta}(x_1,\dots,x_k)
$
for all $(k,\beta)\neq (0,0)$.
We can easily check the initial case when $(k,\beta)=(1,0)$.
Suppose this holds for $(k',\beta')<(k,\beta)$, and we aim to show it also holds for $(k,\beta)$.
By the inductive formula (\ref{induction-tree-formula-eq}), we get
\begin{align*}
\DA[\mi]_{k,\beta}(b;x_1,\dots,x_k)
=
&
\textstyle 
\sum_{\substack{(k_i,\beta_i)\neq (0,0)\\ (\ell,\beta_0)\neq (1,0)}} G\circ \check \m_{\ell,\beta_0}\circ \big(
\mi_{k_1,\beta_1}\cdots \DA[\mi]_{k_i,\beta_i}(b;\cdots)\cdots \mi_{k_\ell,\beta_\ell}
\big)
\\
+
&
\textstyle 
\sum_{\beta_0\neq 0} G\circ \DA[\check \m]_{\ell,\beta_0} \big(i(b);
\mi_{k_1,\beta_1}\cdots \mi_{k_\ell,\beta_\ell}\big)
\end{align*}
We may require $\beta_0\neq 0$ in the second sum by using the divisor axiom of $\m$ with $\beta=0\in\G$.
Note that $\DA[\mi]_{0,0}(b)=\mi_{1,0}(b)=i(b)$; the second sum just consists of the excluded terms with $(k_i,\beta_i)=(0,0)$ in the first sum. 
We can directly apply the induction hypothesis to the first sum, and we can apply the divisor axiom of $\check \m$ to the second sum. In consequence, we obtain
\begin{align*}
\DA[\mi]_{k,\beta}(b;x_1,\dots,x_k)
=
&
\textstyle 
\sum_{\substack{(k_i,\beta_i)\neq (0,0)\\ (\ell,\beta_0)\neq (1,0)}} \partial \beta_i\cap b\cdot G\circ\check  \m_{\ell,\beta_0}\circ \big(
\mi_{k_1,\beta_1} \cdots \mi_{k_i,\beta_i}\cdots \mi_{k_\ell,\beta_\ell}
\big)
\\
+
&
\textstyle 
\sum_{\beta_0\neq 0}
\partial\beta_0\cap b \cdot G\circ \check \m_{\ell,\beta_0} \big(\mi_{k_1,\beta_1}\cdots \mi_{k_\ell,\beta_\ell}\big)
\end{align*}
Now, the above conditions $(k_i,\beta_i)\neq (0,0), (\ell,\beta_0)\neq (1,0)$ and $\beta_0\neq 0$ can be removed. Finally, as $\beta_0+\sum_i\beta_i=\beta$, we have $\DA[\mi]_{k,\beta}(b;x_1,\dots,x_k)=\partial\beta \cap b\cdot \mi_{k,\beta}(x_1,\dots,x_k)$.

\textit{\textbf{(ii) Cyclical unitality.}}
Let $\e\in H$ be degree-zero.
When $(k,\beta)=(0,0)$, we recall that $\mi_{1,0}(\e)=i(\e)$ is exceptional in Definition \ref{cyclical_unitality_defn}.
We perform an induction for the pairs $(k,\beta)$ again.
The initial case $(k,\beta)=(1,0)$ is trivial.
For $(k,\beta)\neq (0,0), (1,0)$, suppose it is true for $(k',\beta')<(k,\beta)$. By (\ref{induction-tree-formula-eq}),
\begin{align*}
\CU[\mi]_{k,\beta}(\e;x_1,\dots,x_k)
=&
\textstyle
\sum_{\substack{(k_i,\beta_i)\neq (0,0)\\ (\ell,\beta_0)\neq (1,0)}} 
G
\circ \check \m_{\ell,\beta_0} \circ \big( \mi^{\#}_{k_1,\beta_1} \cdots \CU[\mi]_{k_i,\beta_i}(\e;\cdots)\cdots \mi_{k_\ell,\beta_\ell} \big) \\
+&
\textstyle  \sum_{(\ell,\beta_0)\neq (0,0)} G\circ \CU[\check \m]_{\ell,\beta_0} \big(i(\e);\mi_{k_1,\beta_1}\cdots \mi_{k_\ell,\beta_\ell}(\cdots x_k)\big)
\end{align*}
Since $\check \m$ is cyclical unital, the second sum is zero.
As above, applying the induction hypothesis and the cyclical unitality of $\check \m$ completes the induction step.
The same argument also works for $\m$.

\textit{\textbf{(iii) Unitality.}}
We first prove the unitality of $\mi$.
By condition, we have $\mi_{1,0}(\pi(\one))=i(\pi(\one)=\one$. So, it remains to show $\mi_{k,\beta}(\dots,\pi(\one),\dots)=0$ for $(k,\beta)\neq(1,0)$. Arguing by contradiction, suppose $(k,\beta)\neq (1,0)$ is the smallest pair so that
$\mathfrak A:=\mi_{k,\beta}(\dots,\pi(\one),\dots)\neq 0$ happens for some fixed inputs. Then, we can find some non-zero term in the expansion (\ref{induction-tree-formula-eq}) of $\mathfrak A$, say:
\begin{align*}
\mathfrak a:= G \circ \check \m_{\ell,\beta_0}  (\mi_{k_1,\beta_1} (\dots), \dots, \mi_{k_i,\beta_i}(\dots, \pi(\one) \dots), \dots, \mi_{k_\ell,\beta_\ell}(\dots) )\neq 0
\end{align*}
Since $(\ell,\beta_0)\neq (1,0)$, we must have $(k_i,\beta_i)\neq (k,\beta)$.
By the smallest choice of $(k,\beta)$, we have $(k_i,\beta_i)=(1,0)$. Besides, when $(\ell,\beta_0)\neq (2,0)$, we get $\mi_{k_i,\beta_i}(\dots \pi(\one)\dots)=i(\pi(\one))=\one$ which is the unit of $\check \m$, hence the term $\mathfrak a$ vanishes.
When $(\ell,\beta_0)=(2,0)$, the above term $\mathfrak a$ becomes either
$G\circ\check \m_{2,0} (\mi_{k-1,\beta}(\cdots), \one )$
or
$G\circ\check \m_{2,0} (\one, \mi_{k-1,\beta}(\cdots))$.
By Definition \ref{unit-defn} (a1), the term turns out to be $\pm G\circ \mi_{k-1,\beta}(\cdots)$ in either case.
Now, if $(k-1,\beta)\neq(1,0)$, then by the side condition $G\circ G=0$ (\ref{side-conditions GG}), the term vanishes; if $(k-1,\beta)=(1,0)$, then it is still zero due to the other side condition $G\circ i=0$ (\ref{side-conditions Gi}). This is a contradiction to $\mathfrak a\neq 0$.
As for $\m$, observe that $\m_{1,0}(\pi(\one))=\delta\pi (\one)=\pi\check \m_{1,0}(\one)=0$ by (\ref{cochain-maps-i-pi}). Notice also that $\m_{2,0}(\pi(\one), x)=\pi\circ\check \m_{2,0}\circ( i\pi(\one), i(x))=\pi i(x)=x$ and similarly $(-1)^{\deg x} \m_{2,0}(x,\pi(\one))=x$.
Suppose $(k,\beta)\neq (1,0),(2,0)$ is the smallest pair so that $\m_{k,\beta}(\dots,\pi(\one),\dots)\neq 0$ happens. Then as before we may find a non-zero term in the expansion:
$
\pi \circ\check \m_{\ell,\beta_0}  (\mi_{k_1,\beta_1} (\dots), \dots, \mi_{k_i,\beta_i}(\dots, 
\pi(\one)
\dots), \dots, \mi_{k_\ell,\beta_\ell}(\dots) )
$.
We may require $(k_i,\beta_i)=(1,0)$. Since $\one\equiv i(\pi(\one))$ is a unit of $\check \m$, we can further require $(\ell,\beta_0)=(2,0)$. Then we arrive at $\pm \pi \circ \mi_{k-1,\beta}(\dots)$. Recall that $(k-1,\beta)\neq (1,0)$; so, it is zero again due to the last side-condition $\pi\circ G=0$ (\ref{side-conditions piG}).
\end{proof}

\subsection{Feynman-diagram-type integration}
\begin{thm}
	\emph{\hspace{-0.2em}\cite{FuCyclic}}
	\label{from-pseudo-isotopies-to-A-infty-homo-thm}
There is a canonical way to associate to
a pseudo-isotopy $(C_\oi,\M)$ a $\G$-gapped $A_\infty$ homomorphism 
$
\mC: (C,\m^0) \to (C,\m^1)
$
so that $\mC_{1,0}=\id$, where $\M$ restricts to $\m^0, \m^1$ at $s=0,1$.
\end{thm}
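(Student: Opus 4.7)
The plan is to mimic the tree construction of Theorem \ref{canonical_model_general_tree-thm}, but this time using \emph{time-allocated} decorated rooted ribbon trees (as in Definition \ref{time-allocation-defn}), with the ``derivative part'' of the pseudo-isotopy playing the role of the ambient $A_\infty$ operators. Writing $\M = 1\otimes \m^s + ds\otimes \mc^s$ as in Proposition \ref{equivalent-family-pseudoisotopy-prop}, I would construct a family $\mC^s = (\mC^s_{k,\beta})$ with $s\in\oi$ inductively in the partial order on $(k,\beta)$ from Remark \ref{Induction}, by solving the ODE
\[
\frac{d}{ds}\mC^s_{k,\beta} \;=\; \sum_{\substack{(\ell,\beta_0)\ne (1,0) \\ k_1+\cdots+k_\ell = k \\ \beta_0+\beta_1+\cdots+\beta_\ell=\beta}} \mc^s_{\ell,\beta_0}\circ\bigl(\mC^s_{k_1,\beta_1}\otimes\cdots\otimes\mC^s_{k_\ell,\beta_\ell}\bigr),
\]
with initial conditions $\mC^0_{1,0}=\id$ and $\mC^0_{k,\beta}=0$ otherwise. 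The exclusion $(\ell,\beta_0)\ne(1,0)$ is forced by Definition \ref{Classical-Pseudo-isotopy-defn}(d), since $\mc^s_{1,0} = \tfrac{d}{ds}$ is not a tensor-linear operator; and the right-hand side involves only operators $\mC^s_{k',\beta'}$ with $(k',\beta') < (k,\beta)$, so the induction closes. Integrating the ODE recasts $\mC^s_{k,\beta}$ as a sum over pairs $(\T,\tau)\in \Tr(k,\beta)\times\A_0^s(\T)$, with each interior vertex $v$ contributing $\mc^{\tau(v)}_{\mathrm{val}(v)-1,\B(v)}$ composed according to the tree, weighted by the Lebesgue measure on $\A_0^s(\T)$.

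Next, I would verify, also inductively on $(k,\beta)$, that $\mC^s$ satisfies the $A_\infty$ homomorphism equation $(\m^s\circ\mC^s - \mC^s\star \m^0)|_{\CC_{k,\beta}}=0$ for every $s\in \oi$. The idea is to differentiate the left-hand side in $s$: using the pseudo-isotopy relation in Definition \ref{Classical-Pseudo-isotopy-defn}(e) to rewrite $\tfrac{d}{ds}\m^s$ in terms of $\m^s$ and $\mc^s$, together with the defining ODE for $\tfrac{d}{ds}\mC^s$, the resulting expression can be reorganized (using the $A_\infty$ associativity of $\m^s$ and the induction hypothesis on smaller bidegrees) into a linear combination of the $A_\infty$ homomorphism defects at bidegrees $(k',\beta')<(k,\beta)$, all of which vanish. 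The initial condition $\mC^0=\id$ makes the defect vanish at $s=0$, and hence the homogeneous linear ODE forces it to vanish for all $s\in\oi$. Setting $\mC:=\mC^1$ then gives the desired $A_\infty$ homomorphism, with $\mC_{1,0}=\id$ directly from the initial data. Gappedness and the degree formula $\deg\mC_{k,\beta}=1-k-\mu(\beta)$ follow from the tree description as in Remark \ref{Gapped_canonical_precise-rmk}, since each $\mc^s_{\ell,\beta_0}$ has degree $2-\ell-\mu(\beta_0)$ and the $s$-integration shifts degree by $-1$; gappedness amounts to noting that every nonzero tree contribution labels its interior vertices by classes lying in the (gapped) support of $\mc^s$.

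The main obstacle will be the bookkeeping in the inductive verification of the $A_\infty$ equation, specifically the careful matching of the two ``$\mc^s$-type'' contributions on either side of $\tfrac{d}{ds}(\m^s\circ\mC^s - \mC^s\star \m^0)$. One must correctly identify the internal $\mc^s$ appearing inside $\tfrac{d}{ds}\m^s$ (via the pseudo-isotopy equation) with the outermost $\mc^s$ generated by $\tfrac{d}{ds}\mC^s$ (via the defining ODE), reconciling signs introduced by the shifted-degree conventions (\ref{shifted_degree_eq}) and by $\id_\#$. This is essentially the Duhamel-type computation used in \cite[\S 8]{FuCyclic}, and while conceptually routine, the sign tracking is the only delicate point; once it is settled, the induction closes and the theorem follows.
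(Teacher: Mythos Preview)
Your proposal is correct and follows essentially the same route as the paper: the paper also builds $\mC^{[0,s]}$ via time-allocated trees, derives exactly your ODE (with a sign: $\tfrac{d}{ds}\mC^{[0,s]}_{k,\beta} = -\sum \mc^s_{\ell,\beta_0}\circ(\mC^{[0,s]}\otimes\cdots)$), and proves the $A_\infty$ relation by differentiating both sides in $s$ and invoking the induction hypothesis together with Definition~\ref{Classical-Pseudo-isotopy-defn}(e). One small slip: $\deg \mc^s_{\ell,\beta_0}=1-\ell-\mu(\beta_0)$ (not $2-\ell-\mu(\beta_0)$), and the $s$-integration does not shift degree in $C$; the degree count still closes since $1-\ell-\mu(\beta_0)+\sum(1-k_i-\mu(\beta_i))=1-k-\mu(\beta)$.
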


\begin{proof}[Sketch of proof]
Recall that we may write $\M=1\otimes \m^s+ds\otimes \mc^s$.
We aim to construct a sequence of operators
$
\mC_{\T,\tau}: \C^{\otimes k} \to \C
$
for a tree $\T:=(\T,v_0,\B)\in \Tr(k,\beta)$ equipped with a time allocation $\tau\in \A(\T)\equiv \A_0^1(\T)$. 
Recall that the set $\A(\T)$ of all time allocations embeds into $\oi^{\#\Cint_0(\T)}$ and naturally inherits a measure.
We construct these $\mC_{\T,\tau}$ inductively on $\#\Cint_0(\T)$ as well. When $\#\Cint_0(\T)=0$, $\A(\T)=\varnothing$ and the only possibility is $\T=\T_{1,0}$. We define $\mc_{\T_{1,0},\varnothing} =\id$. When $\#\Cint_0(\T) =1$, we must have $(k,\beta)\neq (0,0), (1,0)$ and $\T=\T_{k,\beta}$, the tree with one interior vertex $v$ in the set $\Tr(k,\beta)$. Then, we define
$
\mC_{\T,\tau} = - \mc^{\tau(v)}_{k,\beta}$.

Inductively, suppose the constructions have been established for a tree with $\#\Cint_0(\T') \le n$. Let $\T$ be a tree so that $\#\Cint_0(\T) = n+1$.
Denote by $v$ the closest vertex to the root $v_0$.
We do a similar surgery as in (\ref{tree-ell}) cutting all the incoming edges of $v$; then, we get $\T_{\ell,\B(v)}$ and $\T_i$ ($1\le i\le \ell)$ like there. 
Restricting $\tau$ on interior vertices of $\T_i$ produces a time allocation $\tau_i$ in $\A^{\tau(v)}_0(\T_i)$. Now, we define
$
\mC_{\T,\tau} = - \mc^{\tau(v)}_{\ell,\B(v)} \circ (\mC_{\T_1,\tau_1}\otimes\cdots \otimes \mC_{\T_\ell, \tau_\ell})$.
By the stability, we have $(\ell,\B(v))\neq(1,0)$.
Integrating all possible $\tau\in\A(\T)$, we define
$\mC_\T = \int_{\A(\T)} \mC_{\T,\tau} d\tau$.
Eventually, we define $\mC_{0,0}=0$ and $\mC_{1,0}=\id$; if $(k,\beta)\neq (0,0), (1,0)$, we define
$
\mC_{k,\beta} := \sum_{\T\in \Tr(k,\beta)} \mC_\T
$.

We can also take a pseudo-isotopy on $\C_\ab$ for an arbitrary interval $\ab$. Replacing $\A(\T)\equiv \A_0^1(\T)$ by $\A_a^b(\T)$, one can also define $\mC_{\T}^{[a,b]}$ and $\mC_{k,\beta}^{[a,b]}$ in the same way; we still define $\mC_{0,0}^\ab=0$ and
$\mC^\ab_{1,0}=\id$.

In the special case $a=b$,
$
\mC^{[a,a]}=\id
$ is the trivial $A_\infty$ homomorphisms.
By the Fubini's theorem,
\begin{align*}
\mC_{k,\beta} 
&
=
\sum_{\T\in \Tr(k,\beta)}\int_{\A(\T)} - \mc^{\tau(v)}_{\ell,\B(v)} \circ (\mC_{\T_1,\tau_1}\otimes\cdots \otimes \mC_{\T_\ell, \tau_\ell}) d\tau \\
&
=
\sum_{ \substack{ \ell \ge 1 \\
			\beta_0+\beta_1+\cdots+ \beta_\ell = \beta\\
			k_1+\cdots+k_\ell = k\\
			(\ell,\beta_0)\neq(1,0)}
			}
\sum_{\substack{ \T_1\in \Tr(k_1,\beta_1) \\
				\cdots\cdots \\
				\T_\ell\in \Tr(k_\ell, \beta_\ell)}
				}
\int_0^1 du \int_{\A^u_0(\T_1)\times \cdots \times \A^u_0(\T_\ell)}		
-\mc_{\ell,\beta_0}^u \circ 
\big(
\mC_{\T_1,\tau_1}\otimes\cdots \otimes \mC_{\T_\ell, \tau_\ell}
\big)\cdot d\tau_1\cdots d\tau_\ell 
\end{align*}
For a general interval $\ab$, we have an \textit{inductive formula}:
\begin{equation}\label{inductive_formu-Fubini-yield-eq}
\mC_{k,\beta}^\ab =
\sum_{\ell\ge 1}
\sum_{ \substack{ 
			\beta_0+\beta_1+\cdots+ \beta_\ell = \beta\\
			k_1+\cdots+k_\ell = k \\
			(\ell,\beta_0)\neq(1,0)}
			}
-\int_a^b du \cdot 
\mc^u_{\ell,\beta_0}
\circ
\big(
\mC_{k_1,\beta_1}^{[a,u]} \otimes \cdots \otimes \mC_{k_\ell,\beta_\ell}^{[a,u]}
\big)
\end{equation}
where $(k,\beta)\neq(0,0),(1,0)$.
If we replace $\ab$ by $[0,u]$ in (\ref{inductive_formu-Fubini-yield-eq}), then taking the derivative $\frac{d}{du}$ yields:
\begin{equation}\label{inductive_formu_derivative-C k beta -eq}
\frac{d}{du}\mC^{[0,u]}_{k,\beta} =
\sum_{\ell\ge 1}
\sum_{ \substack{ 
			\beta_0+\beta_1+\cdots+ \beta_\ell = \beta\\
			k_1+\cdots+k_\ell = k \\
			(\ell,\beta_0)\neq(1,0)}
			}
- 
\mc^u_{\ell,\beta_0}
\circ
\big(
\mC_{k_1,\beta_1}^{[0,u]} \otimes \cdots \otimes \mC_{k_\ell,\beta_\ell}^{[0,u]}
\big)
\end{equation}
Beware that we no longer need to assume $(k,\beta)\neq(0,0),(1,0)$ in (\ref{inductive_formu_derivative-C k beta -eq}), since the two excluded cases can be checked by hand.
For instance, when $(k,\beta)=(1,0)$, we have $\frac{d}{du}\mC^{[0,u]}_{1,0}=\frac{d}{du}\id=0$.

Now, we claim that the above-defined operator system $\mC=(\mC_{k,\beta})$ gives an $A_\infty$ homomorphism.
Let us check the degrees first: by Definition \ref{Classical-Pseudo-isotopy-defn} we know $\deg\mc^s_{k,\beta}=1-k-\mu(\beta)$.
Inductively, the equations in (\ref{inductive_formu-Fubini-yield-eq}) imply that
\[
\deg \mC^\ab_{k,\beta} = \deg \mc^u_{\ell,\beta_0} + \sum_{i=1}^\ell \deg \mC^{[a,u]}_{k_i,\beta_i}
= 1-\ell-\mu(\beta_0) + \sum_{i=1}^\ell (1-k_i-\mu(\beta_i) )=1-k-\mu(\beta)
\]
To prove the $A_\infty$ relation, we do induction on the pairs $(k,\beta)$ again (Remark \ref{Induction}), but we strengthen the induction statement by allowing any arbitrary intervals $[a,b]\subset \oi$. Consider
\begin{align*}
\mP_{k,\beta}^\ab
&
:=\sum_{\ell \ge 1}
\sum_{\substack{
\beta_0+\beta_1+\cdots+\beta_\ell=\beta
}} \sum_{0=j_0\le\cdots\le j_\ell=k} 
\m^b_{\ell,\beta_0} \circ 
		(\mC^\ab_{j_1-j_0,\beta_1} \otimes \cdots \otimes \mC^\ab_{j_\ell-j_{\ell-1},\beta_\ell} ) \\
\mQ^\ab_{k,\beta}
&
:= \sum_{\beta'+\beta''=\beta}
\sum_{\substack{i+j+r=k}} 			
			\mC^\ab_{i+j+1,\beta'}\circ ( \id_\#^ i \otimes \m^a_{r,\beta''}\otimes \id^j)
\end{align*}
and the desired $A_\infty$ relation is $\mP^\ab_{k,\beta}=\mQ^\ab_{k,\beta}$. 
When $(k,\beta)=(0,0)$, it is trivial. When $(k,\beta)=(1,0)$, it reduces to $\m^b_{1,0} \circ \mC_{1,0}^\ab = \mC^\ab_{1,0}\circ \m^a_{1,0}$, which is obvious.
Suppose now $\mP^\ab_{k',\beta'}=\mQ^\ab_{k',\beta'}$ holds for all $\ab$ and $(k',\beta')<(k,\beta)$. Without loss of generality, let's take $\ab=\oi$. We can then demonstrate that $\mP^\oi_{k,\beta}=\mQ^\oi_{k,\beta}$ by verifying $\frac{d}{du}\mathfrak P_{k,\beta}^{[0,u]}=\frac{d}{du}\mathfrak Q_{k,\beta}^{[0,u]}$.
\end{proof}

\begin{rmk}
\label{Gapped_mC_precise-rmk}
Similar to Remark \ref{Gapped_canonical_precise-rmk} we have the following observation. Due to (\ref{inductive_formu-Fubini-yield-eq}) or (\ref{inductive_formu_derivative-C k beta -eq}), if we write $\mathsf G=\{\beta \mid \mc^s_{\beta}\neq 0\}$ which is a subset of $\{\beta\mid \M_{\beta}\neq 0\}$, then the set $\{\beta \mid \mC^{[a,b]}_{\beta}\neq 0\}$ is contained in $\mathbb N\cdot \mathsf G$. Particularly, if $\M$ only involved non-negative index $\mu$, then so does $\mC^\ab$.
\end{rmk}

\begin{cor}\label{from-pseudo-isotopies-to-A-infty-homo-cor_trivial_pseudo_isotopy}
	If $\M$ is a trivial pseudo-isotopy, then the $\mC$ is the trivial $A_\infty$ homomorphism $\id$.
\end{cor}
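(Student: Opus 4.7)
The plan is to unfold the definition of the trivial pseudo-isotopy and observe that the tree construction collapses completely. By Example \ref{trivial-pseudo-isotopy-ex}, the trivial pseudo-isotopy is $\M^{\tri}=1\otimes \m+ds\otimes \tfrac{d}{ds}$, which in the notation $\M=1\otimes \m^s + ds\otimes \mc^s$ of the decomposition used in the proof of Theorem \ref{from-pseudo-isotopies-to-A-infty-homo-thm} means $\m^s=\m$ is independent of $s$, and
\[
\mc^s_{k,\beta}=0 \qquad \text{for all } (k,\beta)\neq (1,0),
\]
while $\mc^s_{1,0}=\tfrac{d}{ds}$. The key observation is that in the inductive step (\ref{C_T tau inductive-eq}) for the tree $\mC_{\T,\tau}$ with $\#C^{\mathrm{int}}_0(\T)\ge 1$, the factor at the root is precisely $\mc^{\tau(v)}_{\ell,\B(v)}$, and the stability condition on decorated rooted ribbon trees (Definition \ref{tree-stable-gapped-defn}) forces $(\ell,\B(v))\neq (1,0)$.

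Consequently, for the trivial pseudo-isotopy every root factor is zero, so $\mC_{\T,\tau}=0$ for every $\T$ with at least one interior vertex. Equivalently, one can argue directly from the Fubini-form inductive formula (\ref{inductive_formu-Fubini-yield-eq}): the summation there is restricted to $(\ell,\beta_0)\neq (1,0)$, and each term carries a factor $\mc^u_{\ell,\beta_0}$ which vanishes identically for the trivial pseudo-isotopy. Hence $\mC^{[a,b]}_{k,\beta}=0$ for all $(k,\beta)\neq (0,0),(1,0)$.

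Combining this with the base cases $\mC_{0,0}=0$ and $\mC_{1,0}=\id$ stipulated at the start of the construction, we conclude that $\mC$ has $\mC_{1,0}=\id$ and all other components zero, which is by definition the trivial (identity) $A_\infty$ homomorphism. There is no substantive obstacle here: the only thing to verify is matching the vanishing condition on $\mc^s$ with the stability exclusion $(\ell,\B(v))\neq (1,0)$ built into the tree construction, after which the result is immediate.
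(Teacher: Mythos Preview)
Your proof is correct and matches the paper's approach; the paper simply declares the result ``straightforward'' with a brief heuristic that the $\mc^s$ play the role of derivatives and vanish for the trivial pseudo-isotopy, so only $\mC_{1,0}=\id$ survives. Your argument spells out exactly this, using the stability constraint $(\ell,\B(v))\neq(1,0)$ to match the vanishing of $\mc^s_{\ell,\beta_0}$ in the inductive formula (\ref{inductive_formu-Fubini-yield-eq}).
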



Finally, we present a technical formula for later use, which appears to be unavailable in the literature.

\begin{lem}\label{d/du C [u,1] -lem}
	\[
	\frac{d}{ds} \mC_{k,\beta}^{[s,1]} = 
	\sum_{\lambda+\mu+\nu=k} 
	\sum_{\substack{\beta'+\beta''=\beta \\ (\nu,\beta'')\neq(1,0)}} 
	\mC_{\lambda+\mu+1,\beta'}^{[s,1]}
	\circ( \id^\lambda \otimes \mc^s_{\nu,\beta''}\otimes \id^\mu)
	\]
\end{lem}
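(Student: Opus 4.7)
The plan is to prove a strengthened version by induction on $(k,\beta)$ in the partial order of Remark \ref{Induction}: for every $0\le s\le b\le 1$,
\[
\frac{d}{ds}\mC^{[s,b]}_{k,\beta} = \sum_{\substack{\lambda+\mu+\nu=k\\ \beta'+\beta''=\beta\\(\nu,\beta'')\neq(1,0)}} \mC^{[s,b]}_{\lambda+\mu+1,\beta'}\circ(\id^\lambda\otimes\mc^s_{\nu,\beta''}\otimes\id^\mu).
\]
The base cases $(k,\beta)=(0,0)$ and $(k,\beta)=(1,0)$ are immediate: on the left $\mC^{[s,b]}_{0,0}=0$ and $\mC^{[s,b]}_{1,0}=\id$ are constant in $s$, while on the right every candidate summand is excluded either by $\mc^s_{0,0}=0$ (gappedness) or by the constraint $(\nu,\beta'')\neq(1,0)$.

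For the inductive step I would differentiate the integral formula (\ref{inductive_formu-Fubini-yield-eq}) applied to $[s,b]$ with respect to the lower limit, producing a boundary term plus an integrand-derivative term:
\[
\frac{d}{ds}\mC^{[s,b]}_{k,\beta} = \mc^s_{k,\beta} -\sum_{(\ell,\beta_0)\neq(1,0)}\int_s^b du\,\mc^u_{\ell,\beta_0}\circ\tfrac{d}{ds}\bigl(\mC^{[s,u]}_{k_1,\beta_1}\otimes\cdots\otimes\mC^{[s,u]}_{k_\ell,\beta_\ell}\bigr).
\]
The boundary term collapses to $\mc^s_{k,\beta}$ because each $\mC^{[s,s]}_{k_i,\beta_i}$ vanishes unless $(k_i,\beta_i)=(1,0)$, which forces $\ell=k$ and $\beta_0=\beta$; this matches precisely the right-hand summand with $\lambda=\mu=0$, $\beta'=0$, $(\nu,\beta'')=(k,\beta)$, where $\mC^{[s,b]}_{1,0}\circ\mc^s_{k,\beta}=\mc^s_{k,\beta}$. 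For the integrand term, the product rule on the tensor string combined with the induction hypothesis rewrites each $\tfrac{d}{ds}\mC^{[s,u]}_{k_i,\beta_i}$ as a sum of expressions $\mC^{[s,u]}_{\lambda_i'+\mu_i'+1,\beta'_i}\circ(\id^{\lambda_i'}\otimes\mc^s_{\nu_i',\beta''_i}\otimes\id^{\mu_i'})$; the constraint $(\ell,\beta_0)\neq(1,0)$ guarantees $(k_i,\beta_i)<(k,\beta)$ in the induction order by the standard energy/length argument, so the hypothesis applies.

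The remaining step is a combinatorial repacking. Collecting the outer decomposition $(\ell,\beta_0,k_j,\beta_j)$, the selected slot $i$, and the inner decomposition $(\lambda_i',\mu_i',\nu_i',\beta_i',\beta_i'')$, one sets $\lambda=k_1+\cdots+k_{i-1}+\lambda_i'$, $\mu=\mu_i'+k_{i+1}+\cdots+k_\ell$, $\nu=\nu_i'$, $\beta'=\beta_0+\sum_{j\neq i}\beta_j+\beta_i'$, $\beta''=\beta_i''$, and then observes that summing over all combinations yielding fixed $(\nu,\beta'')$ and fixed $(\lambda,\mu,\beta')$ reconstructs precisely the integral expansion (\ref{inductive_formu-Fubini-yield-eq}) of $\mC^{[s,b]}_{\lambda+\mu+1,\beta'}$ applied to inputs arranged as $\id^\lambda\otimes\mc^s_{\nu,\beta''}\otimes\id^\mu$, with the $u$-integration absorbed. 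The main obstacle is verifying the bijectivity of this repacking; a conceptually cleaner alternative would be to differentiate the concatenation identity $\mC^{[s,b]}\circ\mC^{[0,s]}=\mC^{[0,b]}$ and invoke (\ref{inductive_formu_derivative-C k beta -eq}), but since that identity appears only heuristically in the excerpt, I would stay with the direct induction.
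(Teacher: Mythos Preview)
Your proof is correct and follows essentially the same route as the paper: strengthen to an arbitrary upper bound $b$, induct on $(k,\beta)$, differentiate the integral formula (\ref{inductive_formu-Fubini-yield-eq}) in $s$, apply the induction hypothesis to each $\tfrac{d}{ds}\mC^{[s,u]}_{k_i,\beta_i}$, and repack the resulting sum via (\ref{inductive_formu-Fubini-yield-eq}) to recognize $\mC^{[s,b]}_{\lambda+\mu+1,\beta'}$. Your treatment is in fact slightly more explicit than the paper's: you isolate the boundary term $\mc^s_{k,\beta}$ coming from the lower limit of the integral and match it with the summand $(\lambda,\mu,\beta')=(0,0,0)$ on the right, whereas the paper's displayed computation leaves this term implicit.
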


\begin{proof}
	When $(k,\beta)=(0,0), (1,0)$ it is trivial.
	We may include $\mC^{[s,b]}$ for an arbitrary upper bound $b\in\mathbb R$. Suppose this enhanced statement holds for $(k',\beta')<(k,\beta)$.
	By virtue of (\ref{inductive_formu-Fubini-yield-eq}), the induction hypothesis deduces that
	\begin{align*}
	\frac{d}{ds} \mC^{[s,1]}_{k,\beta} 
	&
	=
	\sum_{\ell\ge 1} \sum_{(\ell,\beta_0)\neq (1,0)} \sum_{1\le a\le \ell}
	-
	\int_s^1 du \cdot \mc^u_{\ell,\beta_0}\circ
	\big(
	\mC^{[s,u]}_{k_1,\beta_1}
	\otimes
	\cdots \otimes \frac{d}{ds}\mC^{[s,u]}_{k_a,\beta_a} \otimes
	\cdots
	\otimes
	\mC^{[s,u]}_{k_\ell,\beta_\ell}
	\big)		\\
	&
	=
	\sum_{\ell \ge 1}
	\sum_{\substack{\lambda+\mu+\nu=k \\ \beta'+\beta''=\beta \\ (\nu,\beta'')\neq(1,0)}} 
	\sum_{ \substack{ 
			\beta'_0+\beta'_1+\cdots+ \beta'_\ell = \beta'\\
			k'_1+\cdots+k'_\ell = \lambda+\mu+1 \\
			(\ell,\beta'_0)\neq(1,0)}
	}
	\int^1_s du \cdot \mc^u_{\ell,\beta'_0} \circ
	\big(\mC^{[s,u]}_{k'_1,\beta'_1}\otimes\cdots \otimes \mC^{[s,u]}_{k'_\ell,\beta'_\ell} \big)
	\circ (\id^\lambda\otimes \mc^s_{\nu,\beta''}\otimes \id^\mu )
	\end{align*}
	Finally, applying (\ref{inductive_formu-Fubini-yield-eq}) again completes the induction.
\end{proof}

Since the construction in Theorem \ref{from-pseudo-isotopies-to-A-infty-homo-thm} is quite canonical, we have an analog of Proposition \ref{canonical-model_preserve_properties-prop}:

\begin{prop}
	\label{from-pseudo-isotopies-to-A-infty_property_prop}
	The canonical construction in Theorem \ref{from-pseudo-isotopies-to-A-infty-homo-thm} has the following properties:
	
	\begin{itemize}
		\itemsep 2pt
		\item[(a)] If $\M$ satisfies the divisor axiom, then so does $\mC$.
		\item [(b)] If $\M$ is cyclically unital, then so is $\mC$.
		\item [(c)] If $\M$ has a $\oi$-unit $\one\in \C$, then $\mC$ is unital with respect to $\one$'s.
\end{itemize}
\end{prop}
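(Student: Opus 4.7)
The plan is to induct on the pairs $(k,\beta)$ (with the partial order of Remark \ref{Induction}) while keeping the endpoints $[a,b]$ arbitrary, applying the inductive formula (\ref{inductive_formu-Fubini-yield-eq}) for $\mC^{[a,b]}_{k,\beta}$. The base cases $(k,\beta) = (0,0)$ and $(1,0)$ are trivial in all three parts, since $\mC^\ab_{0,0} = 0$ and $\mC^\ab_{1,0} = \id$. The overall strategy is parallel to the proof of Proposition \ref{canonical-model_preserve_properties-prop}, but translated from the tree-summation for $(\mi^\diamond, \m^\diamond)$ to the time-integrated tree-summation for $\mC^\ab$.

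First I would translate each hypothesis on $\M = 1 \otimes \m^s + ds \otimes \mc^s$ into matching hypotheses on both $\m^s$ and $\mc^s$ by comparing the $\Omega^0([0,1])$- and $\Omega^1([0,1])$-components in the relevant identities. For (a) this yields the divisor axiom for both $\m^s$ and $\mc^s$; for (b) it yields cyclical unitality of both; for (c) it yields unitality of $\m^s$ and additionally $\mc^s_{k,\beta}(\ldots,\one,\ldots) = 0$ for every $(k,\beta) \neq (1,0)$ (the case $(k,\beta) = (2,0)$ is worth double-checking, but it falls out of equating $ds$-components in $\M_{2,0}(\incl(\one), 1\otimes y) = 1\otimes y$ and its right analogue). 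Granted this, in each of (a), (b), (c) I would expand $\DA[\mC^\ab]$, $\CU[\mC^\ab]$, or $\mC^\ab(\ldots,\one,\ldots)$ via (\ref{inductive_formu-Fubini-yield-eq}) and distribute the distinguished input either among the subtree factors $\mC^{[a,u]}_{k_i,\beta_i}$, to which the induction hypothesis applies, or "at the top" as a direct input of $\mc^u_{\ell,\beta_0}$, to which the corresponding property of $\mc^s$ applies. The additivity $\beta = \beta_0 + \beta_1 + \cdots + \beta_\ell$ recombines the resulting factors $\partial\beta_i \cap b$ in case (a), while (b) collapses directly from cyclical unitality of $\mc^u$ and induction.

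The main subtlety sits in (c): when a $\one$ input lands inside a subtree of shape $\mC^{[a,u]}_{1,0} = \id$, it propagates unchanged to become a direct argument of $\mc^u_{\ell,\beta_0}$, and this is exactly where the extra hypothesis $\mc^s_{k,\beta}(\ldots,\one,\ldots) = 0$ for $(k,\beta) \neq (1,0)$ is indispensable — crucially including $(\ell,\beta_0) = (2,0)$, which is \emph{not} ruled out by the stability constraint $(\ell,\beta_0) \neq (1,0)$ appearing in (\ref{inductive_formu-Fubini-yield-eq}). This is the step I expect to require the most care: in the analogous canonical-model argument of Proposition \ref{canonical-model_preserve_properties-prop}(iii) the $(2,0)$-contribution was neutralised by the strong contraction side-conditions $G\circ G = 0$ and $G\circ i = 0$, whereas here it is neutralised directly by the pseudo-isotopy unitality. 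Once that observation is in place, the induction closes uniformly for all three parts.
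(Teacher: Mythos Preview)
Your proposal is correct and follows essentially the same approach as the paper: induction on $(k,\beta)$ via the inductive formula (\ref{inductive_formu-Fubini-yield-eq}), after first reading off the corresponding properties for $\m^s$ and $\mc^s$ from those of $\M$, with the distinguished input distributed between the subtree factors (handled by induction) and the top vertex (handled by the property of $\mc^u$). Your explicit verification that $\mc^s_{2,0}(\ldots,\one,\ldots)=0$ is a point the paper leaves implicit, but otherwise the arguments coincide.
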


\begin{proof}
\textbf{\textit{(a) Divisor axiom.}}
Write $\M=1\otimes \m^s+ds\otimes \mc^s$ as before.
Using the divisor input in the form $\incl(b)$ in the divisor axiom equations of $\M$, we see that both $\m^s$ and $\mc^s$ satisfy the divisor axiom. 
By the inductive formula (\ref{inductive_formu-Fubini-yield-eq}), we get
\begin{align*}
\DA[\mC]_{k,\beta}(b;x_1,\dots,x_k)
&
=
\textstyle
\sum_{\substack{(k_i,\beta_i)\neq (0,0)
		\\
		(\ell,\beta_0)\neq (1,0)}} 
-\int_0^1 du\cdot  \mc^u_{\ell,\beta_0} \big(
\mC_{k_1,\beta_1}^{[0,u]}\cdots \DA[\mC^{[0,u]}]_{k_i,\beta_i} (b;\cdots)\cdots \mC_{k_\ell,\beta_\ell}^{[0,u]}
\big)
\\
&
+
\textstyle
\sum_{\beta_0\neq 0}
-\int_0^1 du\cdot \DA[\mc^u]_{\ell,\beta_0}
\big(
b;\mC_{k_1,\beta_1}^{[0,u]}\cdots \mC_{k_\ell,\beta_\ell}^{[0,u]}
\big)
\end{align*}
Recall that $\mC_{1,0}^{[0,u]}=\id$.
In the second sum, we can use the divisor axiom of $\mc^u$.
In the first sum, $(k_i,\beta_i)<(k,\beta)$; so we can perform the induction.

\textit{\textbf{(b) Cyclical unitality.}}
Let $\e\in C$ be a degree-zero element.
Notice that the cyclical unitality (Definition \ref{cyclical_unitality_defn}) of $\M$ implies the $\m^s$ and $\mc^s$ are also cyclically unital.
Using the formula (\ref{inductive_formu-Fubini-yield-eq}),
we get
\begin{align*}
\CU[\mC]_{k,\beta}(\e;x_1,\dots, x_k)
&
=
\textstyle
\sum_{\substack {(k_i,\beta_i)\neq (0,0)	\\	(\ell,\beta_0)\neq (1,0)}}
-
\int_0^1 du\cdot \mc^u_{\ell,\beta_0} 
\big(
\mC_{k_1,\beta_1}^{[0,u]\#}\cdots
\CU[\mC^{[0,u]}]_{k_i,\beta_i} (\e;\cdots)\cdots \mC_{k_\ell,\beta_\ell}^{[0,u]}
\big)\\
&
+
\textstyle
\sum_{\beta_0\neq 0}
-\int_0^1 du\cdot \CU[\mc^u]_{\ell,\beta_0}
\big(
\e;\mC_{k_1,\beta_1}^{[0,u]}\cdots \mC_{k_\ell,\beta_\ell}^{[0,u]}
\big)
\end{align*}
By a similar induction, one can show this vanishes.

\textbf{\textit{(c) Unitality.}}
Suppose $\one$ is a $\oi$-unit (Definition \ref{P_unit_defn}) of $\M$, and then $\mc^s_{k,\beta}(\dots \one \dots)=0$ for $(k,\beta)\neq (1,0)$.
As $\mC_{1,0}=\id$, it suffices to show $\mC_{k,\beta}(\dots \one \dots)=0$ for $(k,\beta)\neq (1,0)$. Once again, exploiting the formula (\ref{inductive_formu-Fubini-yield-eq}) we obtain that
$
\textstyle
\mC_{k,\beta}(\dots \one \dots)=-\sum_{(\ell,\beta_0)\neq (1,0)} \int_0^1 \mc^u_{\ell,\beta_0}
(\dots \mC^{[0,u]}_{k_i,\beta_i} (\dots\one\dots)  \dots)$.
Since $\mC_{1,0}^{[0,u]}(\one)=\one$, a term with $(k_i,\beta_i)= (1,0)$ vanishes due to the property of $\mc^s$ we just mentioned. So, we may assume all $(k_i,\beta_i)\neq (1,0)$; then, we can use the induction to conclude that $\mC$ is unital.
\end{proof}

In application, we work with the category $\UD$ (Definition \ref{UD_defn}). By Theorem \ref{Whitehead-full-thm}, the ud-homotopy inverse always exists within $\UD$. The $A_\infty$ homotopy equivalence $\eval^1 \diamond (\eval^0)^{-1}$ shares the same source and target with $\mC$. By Lemma \ref{DA-g_10-h_10-lem}, we can arrange so that $\eval^1\diamond (\eval^0)^{-1}|_{\CC_{1,0}}=\id=\mC_{1,0}$, and they are indeed ud-homotopic to each other. The following theorem is original.

\begin{thm}
	\label{UD-mC_thm}
	If $(\C_\oi, \M) \in\Obj\UD$, then the induced $\mC\in \Mor\UD$. Moreover, $\mC \diamond \eval^0 $ is ud-homotopic to $\eval^1$.
\end{thm}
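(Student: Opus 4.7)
The plan is to establish the two claims separately.

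For the claim $\mC \in \Mor\UD$, I would invoke Proposition \ref{from-pseudo-isotopies-to-A-infty_property_prop} directly: since $(C_\oi,\M)\in\Obj\UD$ satisfies the divisor axiom (I-3), cyclical unitality (I-2), and $\oi$-unitality (I-1), parts (a)--(c) of that proposition yield conditions (II-3), (II-2), (II-1) for $\mC$; condition (II-4) is automatic as $\mC_{1,0}=\id$, and (II-5) follows from Remark \ref{Gapped_mC_precise-rmk} using (I-5) for $\M$.

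For the ud-homotopy, I would interpolate via $\f_s := \mC^{[s,1]} \circ \eval^s$ for $s\in\oi$, noting $\f_0 = \mC \circ \eval^0$ and $\f_1 = \mC^{[1,1]}\circ\eval^1 = \eval^1$ since $\mC^{[1,1]}=\id$. Each restriction $\m^s$ of $\M$ lies in $\Obj\UD$; the evaluation $\eval^s:(C_\oi,\M)\to(C,\m^s)$ and the composition $\mC^{[s,1]}:(C,\m^s)\to(C,\m^1)$ are both in $\Mor\UD$ (the latter by the argument for $\mC$ applied to the pseudo-isotopy restricted to $[s,1]$), so $\f_s \in \Mor\UD$ by Lemma \ref{UD-Composition-DA-lem}. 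Writing the desired homotopy as $\F = 1\otimes\f_s + ds\otimes\h_s$, the problem reduces to constructing $(\h_s)$ satisfying the conditions of Corollary \ref{UD_homotopy_summary_cor}.

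To build $\h_s$, I would introduce the degree $-1$ operator $\mathfrak{e}^s:C_\oi\to C$ extracting the $ds$-coefficient at $s$, namely $\mathfrak{e}^s(1\otimes\phi+ds\otimes\psi) := \psi(s)$. A direct use of (\ref{MP_10 -d -eq}) yields the key identity
\[
\tfrac{d}{ds}\eval^s \;=\; \mathfrak{e}^s\circ\M_{1,0} \;+\; \m^s_{1,0}\circ\mathfrak{e}^s,
\]
while the pointwise structure of $\M$ (Definition \ref{pointwise-defn}) identifies $\mathfrak{e}^s\circ\M_{k,\beta}$ with $\mc^s_{k,\beta}\circ(\eval^s)^{\otimes k}$ up to signs on inputs having trivial vertical part. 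Guided by these, I would set
\[
(\h_s)_{k,\beta}(x_1,\ldots,x_k) \;:=\; -\sum_{i=1}^k (-1)^{\epsilon_i}\,\mC^{[s,1]}_{k,\beta}\bigl(\eval^s x_1^\#,\ldots,\eval^s x_{i-1}^\#,\mathfrak{e}^s x_i,\eval^s x_{i+1},\ldots,\eval^s x_k\bigr)
\]
for appropriate signs $\epsilon_i$. To verify the $A_\infty$ homotopy formula (\ref{F0+_roughly_A_infty_formula_eq}), I would differentiate $\f_s$ by the chain rule: Lemma \ref{d/du C [u,1] -lem} handles $\frac{d}{ds}\mC^{[s,1]}$ producing $\mc^s$-terms, and the key identity splits each $\frac{d}{ds}\eval^s$ into an $\mathfrak{e}^s\M_{1,0}$-piece and an $\m^s_{1,0}\mathfrak{e}^s$-piece. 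On the right-hand side, splitting the insertion of $\M$ in $\h_s\circ(\id^\bullet_\#\otimes\M\otimes\id^\bullet)$ by whether it overlaps the $\mathfrak{e}^s$ slot recovers the $\mc^s$- and $\m^s$-contributions respectively, and the $A_\infty$ relation $\m^1\circ\mC^{[s,1]} = \mC^{[s,1]}\star\m^s$ absorbs the remaining $\m^s$-pieces into the outer $\m^1\circ(\f_s^\#\otimes\cdots\otimes\h_s\otimes\cdots\otimes\f_s)$ summand. The UD-conditions on $\h_s$ (divisor axiom, cyclical unitality, vanishing on $\one$) will follow from the corresponding properties of $\mC^{[s,1]}$ together with $\mathfrak{e}^s(\one)=0$.

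The hard part will be the sign bookkeeping in the homotopy equation: once the signs $\epsilon_i$ are pinned down, the combinatorial cancellations are forced by the pointwise structure of $\M$ and the $A_\infty$ equations for $\mC^{[s,1]}$.
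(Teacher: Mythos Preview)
Your approach is correct and is in substance the paper's own proof, just unpacked. The paper packages the homotopy as the single pointwise morphism $\F = 1\otimes\mC^{[s,1]} \in \Hom_\UD((\C_\oi,\M),(\C_\oi,\M^{\tri}))$ (in the sense of Definition \ref{pointwise-defn} and Remark \ref{pointwise_determine_rmk}); in the decomposition $\F = 1\otimes\f_s + ds\otimes\h_s$ of (\ref{F0+_roughly_pointwise_eq}) this yields exactly your $\f_s = \mC^{[s,1]}\circ(\eval^s)^{\otimes k}$, and pointwiseness forces $\h_s$ to be your formula with a single $\mathfrak e^s$ inserted. The verification of the $A_\infty$ relation then collapses to the two identities the paper displays: the $1\otimes$-part is the $A_\infty$ equation for $\mC^{[s,1]}:\m^s\to\m^1$, and the $ds\otimes$-part is precisely Lemma \ref{d/du C [u,1] -lem} (your ``key identity'' $\tfrac{d}{ds}\eval^s = \mathfrak e^s\M_{1,0}+\m^s_{1,0}\mathfrak e^s$ together with the slot-splitting of $\h_s\star\M$ is what reduces the $ds$-part to that lemma). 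The payoff of the paper's packaging is that the sign bookkeeping you flag as the hard part is absorbed into the pointwiseness convention: one need only check the two scalar identities on inputs of the form $1\otimes x_i$, and the signs $\epsilon_i$ are dictated by Definition \ref{pointwise-defn} rather than chased by hand.
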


\begin{proof}
	The first half is basically known already.
	The conditions
	(II-3) (II-2) (II-1) for $\mC$ are just consequences of
	Proposition \ref{from-pseudo-isotopies-to-A-infty_property_prop} (a) (b) (c) respectively. Besides, the condition (II-4) holds as $\mC_{1,0}=\id$. Finally, the (II-5) holds by Remark \ref{Gapped_mC_precise-rmk}.
	Hence, $\mC\in \Mor \UD$ and also $\mC^\ab\in\Mor\UD$.

	Now, we aim to show $\mC \diamond \eval^0$ is ud-homotopic to $\eval^1$.	
	Write $\M=1\otimes \m^s +ds\otimes \mc^s$ and denote by $\M^\tri=1\otimes \m^1+ds\otimes \frac{d}{ds}$ the trivial pseudo-isotopy about $\m^1$ (Example \ref{trivial-pseudo-isotopy-ex}).
By Definition \ref{ud-homotopic-defn}, our goal is to find a morphism $\F$ in $\UD$ from $(\C_\oi, \M)$ to $(\C_\oi, \M^\tri)$ so that $\eval^{\tri,0} \F= \mC\diamond \eval^0$ and $\eval^{\tri,1}  \F=\eval^1$. (For clarity, the symbols $\eval^{\tri,i}$ are used for the evaluation maps about the trivial pseudo-isotopy $\M^\tri$).
Now, we define
\begin{equation}
\label{mC_eval0_F_eq}
\F=1\otimes \mC^{[s,1]}  \ \
\in \CC(\C_\oi,\C_\oi)
\end{equation}

The notation $1\otimes \mC^{[s,1]}$ means the pointwise extension in Remark \ref{pointwise_determine_rmk}.
Explicitly, for $x_i\in C^\infty(\oi, C)$, we have $\F_{k,\beta}(1\otimes x_1,\dots, 1\otimes x_k) (s) 
=
1\otimes \mC^{[s,1]}_{k,\beta}(x_1(s),\dots,x_k(s))$ and
$
\F_{k,\beta}(1\otimes x_1,\dots, ds\otimes x_i,\dots, 1\otimes x_k)(s)
=
(-1)^{\sum_{a=1}^{i-1} (\deg x_a+1)} ds\otimes \mC^{[s,1]}_{k,\beta}(x_1(s),\dots,x_k(s))
$. By the pointwiseness, we also require $\F( \cdots ds\otimes y' \cdots ds\otimes y''\cdots)=0$. As the $\mu$ maps into $2\mathbb Z$, one can check the signs actually agrees with Definition \ref{pointwise-defn}. 
Next, we want to show the $A_\infty$ relation:
$
\textstyle
\sum \M^{\tri} \circ (\F\otimes \cdots \otimes \F) = \sum \F\circ (\id^\bullet_\# \otimes \M \otimes \id^\bullet)
$.
Comparing $1\otimes  -$ and $ds\otimes -$ parts separately, it is equivalent to the following two identities:
\begin{align*}
	 \m^1\circ (\mC^{[s,1]}\otimes \cdots \otimes \mC^{[s,1]})
	 &
	 =\mC^{[s,1]}\circ (\id^\bullet_\#\otimes \m^s\otimes \id^\bullet)\\
	\tfrac{d}{ds} \circ \mC^{[s,1]}
	&
	=\mC^{[s,1]}(\id^\bullet \otimes \mc^s \otimes \id^\bullet)
\end{align*}
The second one holds exactly because of Lemma \ref{d/du C [u,1] -lem}; meanwhile, the first one holds since $\mC^{[s,1]}$ is an $A_\infty$ homomorphism from $\m^s$ to $\m^1$.
Hence, the $\F$ constructed above is indeed a gapped $A_\infty$ homomorphism; by construction, it is clear that $\eval^{\tri,0}\circ \F= \mC\circ \eval^0$ and $\eval^{\tri,1}\circ \F=\eval^1$.

It suffices to check the above $\F$ is a morphism in $\UD$ (Definition \ref{UD_defn}).
It is clear that the condition (II-5) holds for $\F$.
We show (II-1) and (II-4) as follows: 
Denote by $\one$ the $\oi$-unit of the given $\M$. Then, $\one$ is a unit of any $\m^s$; particularly, $\one:=\incl(\one)$ is also a $\oi$-unit of $\M^\tri$.
When $(k,\beta)\neq (1,0)$, we have $\F_{k,\beta}(\dots,\incl(\one),\dots)(s)=1\otimes \mC_{k,\beta}^{[s,1]}(\dots,\one,\dots) \pm ds\otimes  \mC_{k,\beta}^{[s,1]}(\dots,\one,\dots)=0$, since $\mC^{[s,1]}$ is unital. Finally, since $\F_{1,0}=\id_{C_\oi}$, we have $\F_{1,0}(\one)=\one$ and the condition (\ref{DA-f_10-eq}) for $\F_{1,0}$.

Next, we prove the cyclical unitality (II-2) for $\F$.
A degree-zero element of $\C_\oi$ is like $\e:=1\otimes \e(s)$ for some $\e\in C^\infty(\oi, C^0)$, where $C^0$ denotes the degree-zero part of $C$.
By the pointwiseness, we may assume the other inputs are in the form of $1\otimes x_i(s)$ for some $x_i\in C^\infty(\oi, C)$. Then, we compute
\begin{align*}
\CU[\F]_{k,\beta}(\e;1\otimes x_1,\dots,1\otimes x_k) (s)
=
1\otimes \CU[\mC^{[s,1]}]_{k,\beta}
(\e(s); x_1(s),\dots,x_k(s))
\end{align*}
It vanishes due to the cyclical unitality of $\mC^{[s,1]}$.

Finally, we prove the divisor axiom (II-3) for $\F$.
Take a divisor input $b=1\otimes b_0 +ds\otimes b_1$ and assume other inputs are in the form of $y_i=1\otimes x_i$ as above. Then, by (\ref{mC_eval0_F_eq}), we have
\begin{align*}
\DA[\F]_{k,\beta}(b;y_1,\dots,y_k)=1\otimes \DA[\mC^{[s,1]}]_{k,\beta}(b_0;x_1,\dots, x_k) +ds\otimes \CU[\mC^{[s,1]}]_{k,\beta} (b_1;x_1,\dots,x_k)
\end{align*}
Using both the divisor axiom and cyclical unitality of $\mC^{[s,1]}$, we complete the proof.
\end{proof}

\subsection{$A_\infty$ algebras associated to Lagrangians}
\label{S_A_infty_associated}

Let $(X,\omega)$ be a symplectic manifold; let $L$ be a connected compact oriented Lagrangian submanifold equipped with a relative spin structure. 
Denote by $\mathfrak J(X,\omega)$ the space of $\omega$-tame almost complex structures and denote by $\mathfrak J(X,L,\omega)$ (Assumption \ref{assumption-mu ge 0}) its subset consisting of those almost complex structures $J$ which does not allow negative Maslov index $J$-holomorphic disks with boundary on $L$.

\subsubsection{Moduli spaces}
Let $\beta\in \pi_2(X,L)$ and fix $J\in\mathfrak J(X,L,\omega)$.
For $(k,\beta)\neq (0,0), (1,0)$, we denote by 
$
\mathcal M_{k+1,\beta}(J,L)
$
the moduli space of equivalence classes $[\mathbf u, \mathbf z]$ of \textit{$(k+1)$-boundary-marked $J$-holomorphic stable maps $(\uu,\z)$ of genus zero with one disk component bounded by $L$ in the class $\beta$}. 
We require $(k,\beta)\neq (0,0), (1,0)$ for the sake of stability; the $\mathcal M_{1,0}(J,L)$ and $\mathcal M_{2,0}(J,L)$ are just \textit{not defined}.
Here $\z=(z_0,z_1,\dots,z_k)$ represents the boundary marked points ordered counter-clockwisely.

The conception of \textit{stable maps} is standard; see e.g. \cite{MS} and \cite{Frau08}.
The equivalence relation roughly refers to a biholomorphism on the domains of two stable maps which identifies the nodal points, the marked points and the boundaries.
The moduli space $\mathcal M_{k+1,\beta}(J,L)$ admits a natural Hausdorff topology for which it is compact\cite[Theorem 7.1.43]{FOOOBookTwo}.
To get some intuition, we note that a `point' in the interior $\mathcal M^\circ_{k+1,\beta}(J,L)$ of the moduli space is (the equivalence class of) a $J$-holomorphic map $u: (\mathbb D, \partial \mathbb D) \to (X,L)$ with the marked points $\z=(z_0, z_1,\dots, z_k)$ in the boundary $\partial\mathbb D$.

Let $P$ be a smooth compact contractible oriented manifold with corners, and let
$\bigsqcup_i \partial_i P$
be its normalized boundary.
Provided a smooth family $\mathbf J=\{J_t\mid t\in P\}$ in $\mathfrak J(X,L,\omega)$, we consider the following union of moduli spaces:
\begin{equation}
\label{moduli_system}
\mathcal M_{k+1,\beta}(\mathbf J,L):=\bigsqcup_{t\in P} \ \{t\} \times \mathcal M_{k+1,\beta}(J_t,L)
\end{equation}
It is also called a parameterized moduli space.
Denote by $\overline{\ev}_P: \mathcal M_{k+1,\beta}(\mathbf J,L)\to P$ the natural projection onto $P$.
Given $0\le i\le k$ and $t\in P$, there is a natural evaluation map $\mathcal M_{k+1,\beta}(J_t, L)\to L$ sending a stable disk $\uu$ to $\uu(z_i)$. Taking the union yields a natural map $\overline{\ev}_i: \mathcal M_{k+1,\beta}(\mathbf J,L)\to L $ for each $ i=0,1,\dots,k$.
Now, we call the map
\begin{equation}
\label{evaluation_maps_eq}
\ev_i=( \overline{\ev}_P, \overline{\ev}_i): \mathcal M_{k+1,\beta}(\mathbf J,L) \to P\times L
\end{equation}
the $i$-th \textit{evaluation map} of the parameterized moduli space $\mathcal M_{k+1,\beta}(\mJ, L)$, sending $(t, [\uu,\z])$ to $(t,\uu(z_i))$. Occasionally, we also call $\overline{\ev}_i$ a evaluation map.
Meanwhile, we also consider the collection 
\begin{equation}
\label{moduli_space_system_eq}
\mathbb M(\mJ) =\Big\{ \mathcal M_{k+1,\beta}(\mJ,L) \mid (k,\beta)\in \mathbb N\times \G	\Big\}
\end{equation}
of all the moduli spaces simultaneously, and we call $\mathbb M(\mJ)$ a \textit{moduli space system} or a \textit{moduli system}.

The moduli space system $\mathbb M(\mJ)$ admits a system of Kuranishi space structures satisfying a list of \textit{axioms} (see \cite[Condition 21.6]{FOOOKuranishiVFC}, \cite[Condition 21.11]{FOOOKuranishiVFC} or \cite[Theorem 2.16]{FOOODiskTwo} when $P$ is a point).
Following \cite[Definition 21.9 \& Definition 21.13]{FOOOKuranishiVFC}, the data is called a \textit{$P$-parametrized tree-like K-system} (or simply a \textit{tree-like K-system}) on $\mathbb M(\mJ)$.
Note that the letter `K' stands for `Kuranishi'.
Instead of writing down the whole list of axioms, we just indicate they include the following aspects about the moduli space system: (A0) Kuranishi structures; (A1) Evaluation maps; (A2) Orientation; (A3) Compatibilities of boundary decompositions; (A4) Dimension; (A5) Energy and Gromov compactness.

A $P$-parameterized tree-like K-system on $\mathbb M(\mJ)$ will produce an $A_\infty$ algebra structure $\check\M$ on the space $\OL_P\equiv \Omega^*(P\times L)$. In this case, we say that \textit{``the $\M$ is obtained by a tree-like K-system on $\mathbb M(\mJ)$''} or simply say that \textit{``the $\check\M$ is obtained by $\mathbb M(\mJ)$''}.
We add the checks in the notations (e.g. $\check \m$ or $\check \M$) to emphasize the $A_\infty$ algebras are in the  chain-levels (i.e. defined on $\OL$ or $\OL_P$).

Now, we state the following Theorem \ref{A-theorem-main-body} due to Fukaya-Oh-Ohta-Ono.

\begin{thm}\label{A-theorem-main-body}
Given a smooth family $\mathbf J=\{J_t\mid t\in P\}$ in $\mathfrak J(X,\omega)$, there exists a $P$-pseudo-isotopy $(\OL_P, \check \M^P)\in \Obj \tilde{\UD}$, obtained by a $P$-parameterized tree-like K-system on $\mathbb M(\mJ)$, such that
(i) it is a q.c.dR; (Definition \ref{quantum-correction-defn}); (ii) it is fully unital and strictly unital with the constant-one $\one$ as a unit (Definition \ref{unit-defn}); (iii) it satisfies the divisor axiom (Definition \ref{DivisorAxiom-defn}); (iv) if a $\partial_i P$-pseudo-isotopy $\check \M^{\partial_i P}$ is already obtained like this for all $i$, then we may further assume $\check\M^P$ restricts to $\check \M^{\partial_i P}$ for each $i$ (Definition \ref{restriction_pseudo_isotopy_defn}).
If the family $\mJ$ is contained in $\mathfrak J(X,L,\omega)$, we can require $(\OL_P, \check \M^P)\in\Obj\UD$.
Moreover, if $\mJ=(J_t)$ is a constant family, we can require $\check \M^P$ to be a trivial $P$-pseudo-isotopy\footnote{We cannot say it \textit{must} be a trivial pseudo-isotopy, because it also depends on a choice of `virtual fundamental chain' on the moduli spaces and one can wildly make the choice.}.
\end{thm}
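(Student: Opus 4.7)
The plan is to adapt the standard de Rham model construction of the $A_\infty$ algebra associated to a Lagrangian (as developed by Fukaya-Oh-Ohta-Ono in \cite{FOOOKuTwo,FOOODiskTwo}) to the $P$-parameterized pseudo-isotopy setting, and then verify that all the extra structural properties (q.c.dR, (full/cyclical) unitality, divisor axiom, restriction compatibility, triviality for constant families, non-negativity of $\mu$) are preserved along the way. First I would construct a system of Kuranishi structures and CF-perturbations on the collection $\mathbb M(\mJ)$ satisfying the axioms (A0)--(A5) of a $P$-parametrized tree-like K-system. This is built inductively on the energy filtration (using Gromov compactness (A5) to limit, for any fixed bound $E_0$, the relevant classes $\beta$ to a finite set) and on the dimension of the moduli space. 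At each inductive step one extends structures already specified on the normalized boundary---which by the boundary decomposition (A3) is a union of fiber products of lower-energy moduli spaces, together with the slice $\partial P$---to the whole moduli space; the data on $\partial P$ is exactly what permits imposing the restriction condition $\check{\M}^P|_{\partial_i P} = \check{\M}^{\partial_i P}$. Then one defines
\[
\check{\M}^P_{k,\beta}(\alpha_1,\dots,\alpha_k) := (\ev_0)_!\bigl(\ev_1^*\alpha_1 \wedge \cdots \wedge \ev_k^*\alpha_k\bigr)
\]
for $(k,\beta)\neq (0,0),(1,0)$, using the evaluation maps (\ref{evaluation_maps_eq}) and integration along the (oriented, perturbed) fiber of $\ev_0 : \mathcal M_{k+1,\beta}(\mJ,L) \to P \times L$; one sets $\check{\M}^P_{1,0}$ equal to the de Rham differential on $\OL_P\cong\Omega^*(P\times L)$ in Lemma \ref{Omega L_P = Omega P times L -lem}.

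Second I would verify the $A_\infty$ relation $\check{\M}^P \star \check{\M}^P = 0$. Via Stokes' theorem on the perturbed fiber of $\ev_0$, this reduces to the codimension-one boundary decomposition (A3): the bubbling boundary of $\mathcal M_{k+1,\beta}(\mJ,L)$ is a union of fiber products $\mathcal M_{\ell+1,\beta_0}(\mJ,L)\times_L\mathcal M_{k-\ell+2,\beta_1}(\mJ,L)$ over $\beta_0+\beta_1=\beta$, which contributes the $A_\infty$ sum, while the $\partial P$-boundary contributes exactly the $\sum_i ds_i \otimes \partial_{s_i}$ term of (\ref{MP_10 -d -eq}), matching the pseudo-isotopy shape in Definition \ref{P-pseudo-isotopy-defn}. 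The q.c.dR property (Definition \ref{quantum-correction-defn}) amounts to the energy-zero part: for $\beta=0$ and $k=1$ the moduli space is empty (by stability), for $k=2$ the unique constant two-pointed disk configuration gives the wedge product up to the standard sign, and for $k\ge 3$ stability forces $\mathcal M_{k+1,0}(\mJ,L) = \varnothing$. Non-negativity of $\mu$ (condition (I-5)) under the hypothesis $J_t\in\mathfrak J(X,L,\omega)$ is built into Definition \ref{semipositive_defn}: any class $\beta$ carrying a $J_t$-holomorphic stable map satisfies $\mu(\beta) \ge 0$, so all other components can be arranged to vanish after perturbation.

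Third, for the strict unitality, full unitality, and divisor axiom, I would exploit the forgetful maps $\forget_i : \mathcal M_{k+1,\beta}(\mJ,L) \to \mathcal M_{k,\beta}(\mJ,L)$ dropping the $i$-th boundary marked point, as outlined in Remark \ref{cyclical_unitality_qcdR_rmk}(1); the Kuranishi structures and CF-perturbations are chosen compatibly with these forgetful maps. For any closed $\eta \in \Omega^{\le 1}(L)$ one has $\ev_i = \ev_0 \circ \forget_i$ up to a canonical identification (composed with the $i$-th evaluation on the reduced moduli space), and the projection formula for fiberwise integration yields
\[
(\ev_0)_! \bigl(\ev_1^*\alpha_1 \wedge \cdots \wedge \ev_i^*\eta \wedge \cdots \wedge \ev_k^*\alpha_k\bigr) = (\ev_0)_! \forget_i^*\bigl(\cdots\bigr),
\]
which forces the output to vanish by degree-counting (the relative dimension of $\forget_i$ drops). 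When $\eta = \one$ is the constant-one function, this gives the unit axiom conditions (a0)--(a2) and their full-unitality strengthening (a2$'$); when $\eta = b$ is a closed one-form, the degree-one contribution from the fiber of $\forget_i$ produces exactly the factor $\partial\beta \cap b$ of Definition \ref{DivisorAxiom-defn}. Cyclical unitality then follows automatically since it is a weaker condition than full unitality in the presence of degree-zero inputs.

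Finally, the remaining assertions are built into the inductive framework: the restriction to $\partial_i P$ is imposed step by step whenever $\check{\M}^{\partial_i P}$ is already given, and for a constant family $\mJ = (J_t)_{t \in P}$ one pulls back the Kuranishi data and perturbations from the single moduli system $\mathbb M(J)$, making all operators $P$-invariant and killing every $ds_I$-component with $|I|\ge 1$ (except the initial $ds_i \otimes \partial_{s_i}$ in $\check{\M}^P_{1,0}$), which is precisely the trivial $P$-pseudo-isotopy of Example \ref{trivial-pseudo-isotopy-ex}. The main technical obstacle is the simultaneous compatibility of the Kuranishi structures and CF-perturbations with (i) the boundary decomposition (A3), (ii) the evaluation maps, (iii) the forgetful maps $\forget_i$, and (iv) the pre-existing choices on $\partial P$; this is the nontrivial heart of the construction of a tree-like K-system, and while it is by now well-understood through the Kuranishi theory machinery of \cite{FOOOKuTwo}, it requires careful organization of the inductive step to avoid circular dependencies.
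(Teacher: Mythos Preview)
Your overall approach matches the paper's: build a tree-like K-system on $\mathbb M(\mJ)$, define $\check\M^P_{k,\beta}$ via $(\ev_0)_!$, derive the $A_\infty$ relation from Stokes plus the boundary decomposition, and obtain unitality and the divisor axiom from forgetful-map compatibility. However, there is one concrete error and one omission.

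The error is in your q.c.dR argument. You claim that for $\beta=0$ and $k\ge 3$ stability forces $\mathcal M_{k+1,0}(\mJ,L)=\varnothing$. This is false: the moduli space of constant disks with $k+1$ boundary marked points is $\mathcal M_{k+1,0}(\mJ,L)\cong P\times L\times \mathbb R^{k-2}$, which is nonempty for every $k\ge 2$. The correct reason $\check\M^P_{k,0}=0$ for $k\ge 3$ is that all evaluation maps factor through the projection $\pr:P\times L\times\mathbb R^{k-2}\to P\times L$, so $\check\M^P_{k,0}(\alpha_1,\dots,\alpha_k)=\pm\pr_!\pr^*(\alpha_1\wedge\cdots\wedge\alpha_k)$ vanishes because integration along a positive-dimensional fiber kills pulled-back forms. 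For $k=2$ the fiber is a point and one recovers the signed wedge product.

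The omission is $P$-pointwiseness. A $P$-pseudo-isotopy is not just an $A_\infty$ algebra on $\Omega^*(P\times L)$; it must satisfy the sign-compatibility of Definition \ref{pointwise-defn}. This requires a separate check: inserting $\sigma\wedge x_{i+1}$ in the $i$-th slot and commuting $\ev^*\sigma$ past the other $\ev^*x_j$'s produces the correct sign, using that moving the $\sigma$-insertion from the $(i{+}1)$-th to the $i$-th marked point shifts the sign convention (\ref{sign_A_infty_formulas_eq}) by $(-1)^{\deg\sigma}$. Without this, you have an $A_\infty$ algebra on $\OL_P$ but not a pseudo-isotopy. Your divisor-axiom sketch is also slightly off: the factor $\partial\beta\cap b$ does not come from a single forgetful map but from summing $\forget_{\ell!}\ev_\ell^*b$ over all insertion positions $\ell$, which reassembles the full boundary loop $\partial u$.
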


A version of Theorem \ref{A-theorem-main-body} has been proved in \cite[\S 21.2]{FOOOKuranishiVFC}, \cite[Theorem 21.35]{FOOOKuranishiVFC} and \cite[Theorem 2.16]{FOOODiskTwo}. However, we want to further use the techniques in \cite{FuCyclic} to further establish the divisor axiom and the full unitality.
Specifically, as explained in Remark \ref{cyclical_unitality_qcdR_rmk}, one can show the full unitality by the method of showing the strict unitality as in \cite[(7.3)]{FuCyclic}.
Moreover, although it is not explicitly stated in the latest literature in \cite{FOOOKuranishiVFC}, \cite{FOOOKuranishiVFC} or \cite{FOOODiskTwo}, the divisor axiom is proved in \cite[Lemma 13.1]{FuCyclic}.

We will give an explanation of the proof of Theorem \ref{A-theorem-main-body} later in \S \ref{ss_A_infty_rel_moduli}.
In practice, we only focus on the cases when $P$ is a $d$-simplex for $d=0,1,2$; the three cases respectively correspond to
the local charts, the transition maps, and the cocycle conditions for the mirror in Theorem \ref{Main_theorem_thm}. 
In particular,

\begin{thm}\label{Vir-m^J-thm}
There is an $A_\infty$ algebra $(\OL, \check\m^{J,L})$ in $\UD$ given by a tree-like K-system on $\mathbb M(J)$.
%
%
%
\end{thm}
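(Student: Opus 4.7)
The plan is to deduce Theorem \ref{Vir-m^J-thm} as the $P=\mathrm{pt}$ specialization of Theorem \ref{A-theorem-main-body}. First I would take $P$ to be a single point and set $\mathbf J=\{J\}$ to be the constant family at the given $J\in\mathfrak J(X,L,\omega)$. Under the canonical identification $\OL_P\cong \OL$ from Lemma \ref{Omega L_P = Omega P times L -lem}, a $P$-pseudo-isotopy reduces to a plain gapped $A_\infty$ algebra on $\OL$, and the parameterized moduli system $\mathbb M(\mathbf J)$ reduces to the moduli system $\mathbb M(J)$ in (\ref{moduli_single}).

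Next I would invoke Theorem \ref{A-theorem-main-body} in this degenerate situation. It produces a gapped $A_\infty$ algebra $\check\m^{J,L}$ on $\OL$ coming from a tree-like K-system on $\mathbb M(J)$, with the following features: it is a q.c.dR; it is fully and strictly unital with the constant-one $\one\in\Omega^0(L)$ as unit; and it satisfies the divisor axiom. These are exactly the conditions (I-0)--(I-4) in the definition of $\Obj\tilde{\UD}$ from \S\ref{S_UD}, so $\check\m^{J,L}\in\Obj\tilde{\UD}$.

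To promote membership from $\tilde{\UD}$ to $\UD$ I would invoke the hypothesis $J\in\mathfrak J(X,L,\omega)$. By Definition \ref{semipositive_defn}, every $\beta\in\pi_2(X,L)$ representable by a $J$-holomorphic stable map satisfies $\mu(\beta)\ge 0$; since $\check\m^{J,L}_\beta$ can only be non-zero when such a representative exists in class $\beta$, condition (I-5) of Definition \ref{UD_defn} holds automatically. Hence $\check\m^{J,L}\in\Obj\UD$, which is precisely the last sentence of Theorem \ref{A-theorem-main-body}.

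The real obstacle does not lie in the present statement but in the underlying Theorem \ref{A-theorem-main-body}: the construction of compatible Kuranishi structures on $\mathbb M(J)$ satisfying axioms (A0)--(A5), the verification of the boundary-compatibility axiom (A3) that encodes the $A_\infty$ relation, and the forgetful-map argument \textit{\`a la} \cite{FuCyclic} required for both the divisor axiom and the full unitality (c.f.\ Remark \ref{cyclical_unitality_qcdR_rmk}). All of this heavy lifting is packaged in Theorem \ref{A-theorem-main-body}, whose proof is deferred to \S\ref{ss_A_infty_rel_moduli} and the cited references \cite{FOOOKuTwo, FOOODiskTwo, FuCyclic}; once it is in hand, Theorem \ref{Vir-m^J-thm} is a one-line specialization to a point parameter space.
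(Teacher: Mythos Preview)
Your proposal is correct and matches the paper's approach exactly: the paper states just before Theorem \ref{Vir-m^J-thm} that it is a ``direct consequence'' of Theorem \ref{A-theorem-main-body}, and your specialization to $P=\mathrm{pt}$ is precisely how that deduction goes. The last sentence of Theorem \ref{A-theorem-main-body} already packages the passage from $\tilde{\UD}$ to $\UD$ under the hypothesis $\mathbf J\subset\mathfrak J(X,L,\omega)$, so your separate verification of (I-5) is not strictly needed but is a correct unpacking of that clause.
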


Fix $J_0,J_1\in\mathfrak J(X, L,\omega)$. Suppose $\check \m^{J_0,L}$ and $\check \m^{J_1,L}$ are obtained as in Theorem \ref{Vir-m^J-thm}.
Let $\mathbf J=(J_t)$ be a path in $\mathfrak J(X,L,\omega)$ between $J_0$ and $J_1$. Then, we have:

\begin{thm}\label{Vir-M-path-thm}
There exists a pseudo-isotopy $(\OL_\oi, \check\M^{\mJ,L})$ in $\UD$, obtained by a tree-like K-system on $\mathbb M(\mJ)$, such that it restricts to $\check \m^{J_0,L}$ and $\check \m^{J_1,L}$ at $s=0,1$.
Moreover, if $\mathbf J$ is a constant family, then we can require $\check \M^{\mJ,L}$ to be a trivial pseudo-isotopy.
\end{thm}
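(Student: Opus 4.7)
The plan is to derive Theorem \ref{Vir-M-path-thm} as the $P=[0,1]$ specialization of Theorem \ref{A-theorem-main-body}, paying attention to the boundary-compatibility clause so that the resulting pseudo-isotopy restricts at the two endpoints to the given $\check{\m}^{J_0,L}$ and $\check{\m}^{J_1,L}$ from Theorem \ref{Vir-m^J-thm}.

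First, I would view each endpoint construction $\check{\m}^{J_i,L}$ ($i=0,1$) as itself a $\partial_i P$-parameterized tree-like K-system on $\mathbb{M}(J_i)$, where $\partial_i P$ is the $i$-th boundary component (a point) of $P=[0,1]$. Applying Theorem \ref{A-theorem-main-body} to the given family $\mathbf{J}=(J_t)_{t\in[0,1]}\subset\mathfrak{J}(X,L,\omega)$, together with the already-chosen boundary K-systems on $\mathbb{M}(J_0)$ and $\mathbb{M}(J_1)$, yields a $P$-parameterized tree-like K-system on the parameterized moduli space $\mathbb{M}(\mathbf{J})=\bigsqcup_t\{t\}\times \mathcal{M}_{k+1,\beta}(J_t,L)$, and consequently a $P$-pseudo-isotopy $(\OL_{[0,1]},\check{\M}^{\mathbf{J},L})$ on $\OL_{[0,1]}\cong\Omega^*([0,1]\times L)$. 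By construction this restriction at $s=i$ coincides with $\check{\m}^{J_i,L}$ for $i=0,1$.

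Next, I would verify that $\check{\M}^{\mathbf{J},L}\in\Obj\UD$ rather than merely $\Obj\tilde{\UD}$. The conditions (I-0)--(I-4) are exactly the content of Theorem \ref{A-theorem-main-body} (q.c.dR structure, $P$-unitality with the constant-one as $P$-unit, cyclical unitality, divisor axiom, $P$-pointwiseness of a pseudo-isotopy). The remaining condition (I-5) (only $\mu(\beta)\ge 0$ contributes) is precisely the last clause of Theorem \ref{A-theorem-main-body}, which applies here because every $J_t$ lies in $\mathfrak{J}(X,L,\omega)$: any stable map appearing in $\mathcal{M}_{k+1,\beta}(J_t,L)$ is $J_t$-semipositive, so $\mu(\beta)\ge 0$.

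For the final clause, assume $\mathbf{J}=(J_t)$ is the constant family $J_t\equiv J$. Then the trivial pseudo-isotopy $\check{\M}^{\tri}$ associated to $\check{\m}^{J,L}$ (Example \ref{trivial-pseudo-isotopy-ex}) sits in $\Obj\UD$ by Lemma \ref{UD-C_oi-lem} and tautologically restricts at both endpoints to $\check{\m}^{J,L}$. By the last sentence of Theorem \ref{A-theorem-main-body}, one may arrange the $P$-pseudo-isotopy produced above to coincide with $\check{\M}^{\tri}$, and this gives the desired statement in the constant-family case.

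The subtlest step is the boundary-compatibility: ensuring that the Kuranishi data used to define the already-constructed $\check{\m}^{J_i,L}$ in Theorem \ref{Vir-m^J-thm} can in fact be extended to a $P$-parameterized tree-like K-system on $\mathbb{M}(\mathbf{J})$ with the prescribed restrictions at $s=0,1$. This is the content of the inductive extension argument underlying Theorem \ref{A-theorem-main-body} (axioms (A0)--(A5), particularly (A3) on boundary-decomposition compatibility, together with the usual colaring conventions for smooth structures on manifolds with corners). I would invoke it directly rather than reprove it, and check only that the two endpoint restrictions indeed fit into a valid input datum for that extension result.
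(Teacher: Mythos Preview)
Your proposal is correct and matches the paper's approach: the paper does not give a separate proof of this theorem but simply declares it (together with Theorems \ref{Vir-m^J-thm} and \ref{Vir-M-triangle-thm}) to be a direct consequence of Theorem \ref{A-theorem-main-body}, specialized to $P=\Delta^1=[0,1]$ with the boundary-compatibility clause and the $\mJ\subset\mathfrak J(X,L,\omega)$ clause invoked exactly as you describe. Your extra remark for the constant-family case (via Example \ref{trivial-pseudo-isotopy-ex} and Lemma \ref{UD-C_oi-lem}) is fine but already subsumed by the final sentence of Theorem \ref{A-theorem-main-body}.
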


\begin{convention}
	\label{convention_Xi_J}
	To be more precise, we should write $\check \m^{J,L, \Xi_J}$, further specifying a datum $\Xi=\Xi_J$ of Kuranishi-theory-related choices (Kuranishi structures, CF-perturbations, and so on). We often call $\Xi$ a virtual fundamental chain. Since it is also a contractible choice, we often omit $\Xi$ in the notations.
	Indeed, applying Theorem \ref{Vir-M-path-thm} to the special case $J_0=J_1$ implies that the $\check\m^{J,L,\Xi}$ is independent of the data $\Xi$ up to pseudo-isotopy; see e.g. \cite[Theorem 14.2]{FuCyclic}.
\end{convention}

Fix $J_0, J_1, J_2\in \mathfrak J(X,L,\omega)$;
suppose $\check \m^{J_0,L}$, $\check \m^{J_1,L}$, and $\check \m^{J_2,L}$ are obtained as in Theorem \ref{Vir-m^J-thm}.
Fix $\mJ_{i,i+1}$ to be a path in $\mathfrak J(X,L,\omega)$ from $J_i$ to $J_{i+1}$ for $i=0,1,2 \ (\mathrm{mod} \ 3)$; suppose $\check \M^{\mJ_{i,i+1}}$ are obtained as in Theorem \ref{Vir-M-path-thm}.
Let $\JJ=(J_t)_{t\in\Delta^2}$ be a family in $\mathfrak J(X,L,\omega)$ parameterized by the standard 2-simplex
$
\Delta^2=[v_0,v_1,v_2]
$
such that $\JJ|_{[v_i,v_{i+1}]}=\mJ_{i,i+1}$ for $i=0,1,2 \ (\mathrm{mod} \ 3)$. Then, we have:

\begin{thm}\label{Vir-M-triangle-thm}
There exists a $\Delta^2$-pseudo-isotopy $(\OL_{\Delta^2}, \check\M^{\JJ})$ in $\UD$, obtained by a tree-like K-system on $\mathbb M(\JJ)$, such that it restricts to $\check \M^{\mathbf J_{i,i+1}}$ on the edge $[v_i,v_{i+1}]\subset \Delta^2$.
\end{thm}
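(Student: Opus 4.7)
The plan is to deduce Theorem \ref{Vir-M-triangle-thm} as a direct application of the last bullet of Theorem \ref{A-theorem-main-body} with the parameter space taken to be $P = \Delta^2$. Since $\Delta^2$ is a compact, contractible smooth manifold with corners, and the family $\JJ$ takes values in $\mathfrak J(X,L,\omega)$, the semipositivity hypothesis that enforces membership in $\UD$ rather than merely $\tilde{\UD}$ is already in place.

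The first step would be to check that the data on $\partial\Delta^2$ already assembles into a $\partial\Delta^2$-pseudo-isotopy compatible with the corner structure. The normalized boundary is $\partial\Delta^2 = \bigsqcup_{i=0}^{2}[v_i,v_{i+1}]$, and on each edge $[v_i,v_{i+1}]$ we have the pseudo-isotopy $\check\M^{\mJ_{i,i+1}}$ supplied by Theorem \ref{Vir-M-path-thm}. At the corner $v_i$, the two pseudo-isotopies $\check\M^{\mJ_{i-1,i}}$ and $\check\M^{\mJ_{i,i+1}}$ both restrict to the common $A_\infty$ algebra $\check\m^{J_i,L}$ of Theorem \ref{Vir-m^J-thm}, so the edge data agrees at the $0$-strata. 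By Convention \ref{convention_Xi_J}, we absorb all auxiliary Kuranishi-theoretic choices into the data itself, so this matching is literal rather than merely up to ud-homotopy. Taking the union of these three edge systems produces a genuine $\partial\Delta^2$-pseudo-isotopy $\check\M^{\partial\Delta^2}$.

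The second step would be to feed this boundary pseudo-isotopy into Theorem \ref{A-theorem-main-body} and extract a $\Delta^2$-pseudo-isotopy $\check\M^\JJ$ extending it. Concretely, one constructs a $\Delta^2$-parameterized tree-like K-system on the moduli system $\mathbb M(\JJ)$ (the bifurcation union \eqref{moduli_system} taken over $t\in\Delta^2$) whose restriction to each $\partial_i\Delta^2 = [v_i,v_{i+1}]$ recovers the fixed tree-like K-system on $\mathbb M(\mJ_{i,i+1})$. The resulting pseudo-isotopy $\check\M^\JJ$ automatically inherits the q.c.dR property, the full and strict unitality with $\one$, the divisor axiom, and the non-negativity of $\mu$ on supported classes, so it lies in $\Obj\UD$ by Definition \ref{UD_defn}.

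The main obstacle is the relative existence of the $\Delta^2$-parameterized tree-like K-system that simultaneously extends all three prescribed edge K-systems and agrees with the given $\check\m^{J_i,L}$ at the three corners. This is a boundary-value problem for Kuranishi structures, multisections/CF-perturbations, and compatibility of evaluation maps on all of $\mathbb M(\JJ)$; it is resolved by the inductive extension procedure of \cite[Theorem 21.35]{FOOOKuTwo}, which extends such choices stratum by stratum using the contractibility of the space of Kuranishi-theoretic data together with the collaring requirement on a manifold with corners. Once that relative construction is granted, the derivation of the $A_\infty$ structure from the tree-like K-system is identical to the one-parameter case proved in Theorem \ref{Vir-M-path-thm}, and the divisor axiom together with the full unitality follows the scheme of \cite[\S 7, \S 13]{FuCyclic} as indicated after Theorem \ref{A-theorem-main-body}.
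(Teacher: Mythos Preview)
Your proposal is correct and matches the paper's approach: the paper states explicitly that Theorems \ref{Vir-m^J-thm}, \ref{Vir-M-path-thm}, and \ref{Vir-M-triangle-thm} are all direct consequences of Theorem \ref{A-theorem-main-body}, so applying that theorem with $P=\Delta^2$ and feeding in the three edge pseudo-isotopies as boundary data is exactly what is intended. Your elaboration of the corner-matching and the relative extension of the tree-like K-system is a faithful unpacking of what ``direct consequence'' means here.
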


\subsubsection{$A_\infty$ relations}
\label{ss_A_infty_rel_moduli}

In this section, we explain how the $P$-parameterized tree-like K-system on the moduli system gives rise to a $P$-pseudo-isotopy in Theorem \ref{A-theorem-main-body} based on the foundational works of Fukaya-Oh-Ohta-Ono in \cite{FOOODiskOne,FOOODiskTwo,FOOOKuranishiVFC}.
We cannot present the full details, but we will make as precise as possible citations, indicating which statements in the references are used.

\textit{\textsf{Virtual fundamental chains .}}
The axiom item (A0) for the tree-like K-system means the existence of a system $\widehat {\mathcal U}=(\widehat{\mathcal U}_{k,\beta})$ of Kuranishi structures on the moduli system $\mathbb M(\mJ)$.
There is a general strategy of constructing `virtual fundamental chains' from a given Kuranishi structure; cf. \cite[\S 6.5]{FOOOKuranishiVFC} or \cite[\S 6.4]{FOOOKuOne}.
Roughly, we first take a good coordinate system 
and find a system $\widehat {\mathcal U^+}$ of collared Kuranishi structures which can be viewed as a `thickening' of $\widehat{\mathcal U}$ indicating the information of `gluing'.
Next, with regard to $\widehat{\mathcal U^+}$ , we can find a system $\widehat{\mathfrak S}$ of CF-perturbations which plays the role of `virtual fundamental chain' for the moduli system.

To be extremely careful, the $\widehat{\mathcal U^+}$ and $\widehat{\mathfrak S}$ depend on a parameter $\epsilon>0$ and are only applied to the system of moduli spaces with an \textit{energy cut} \cite[\S 22.2]{FOOOKuranishiVFC}. In the first place, we only produce $A_\infty$ algebras with energy cuts, but these $A_\infty$ algebras are pseudo-isotopic to each other with an energy cut $E>0$. By algebraic promotion, we pass it to a limit as $E\to\infty$ and obtain a true $A_\infty$ algebra over the Novikov field $\Lambda$. See the proof of \cite[Theorem 21.35]{FOOOKuranishiVFC} and \cite[3.36-3.39]{FuUnobstructed}.
Note that the Kuranishi theory generalizes the manifold theory, and many properties for manifolds still hold.
The two most important properties we need in the Kuranishi theory are the {Stokes' formula} \cite[Theorem 9.28]{FOOOKuranishiVFC}
and {composition formula} \cite[Theorem 10.21]{FOOOKuranishiVFC}.

\textit{\textsf{Composition formulas and Stokes' formulas.}}
By \cite[Definition 7.1]{FOOOKuranishiVFC}, a \textit{smooth correspondence} means a tuple $\mathfrak X=(X, \widehat{\mathcal U} M,M_0, f, f_0)$ consisting of a compact metrizable space $X$ equipped with a Kuranishi structure $\widehat {\mathcal U}$, two smooth manifolds $M_0$ and $M$, a strongly smooth map $f:(X,\widehat {\mathcal U})\to M$ and a weakly submersive strongly smooth map $f_0:(X,\widehat {\mathcal U} ) \to M_0$ in the sense of \cite[Definition 3.40 \& 3.43]{FOOOKuranishiVFC}.
A \textit{perturbed smooth correspondence} means $\mathfrak {\tilde X}=(\mathfrak X,\widehat{\mathfrak S})$ where $\widehat{\mathfrak S}$ is a CF-perturbation for $\widehat{\mathcal U}$ so that $f$ is strongly submersive with respect to $\widehat{\mathfrak S}$. We can always choose such $\widehat{\mathfrak S}$ up to thickenings of Kuranishi structures \cite[Lemma 9.26]{FOOOKuranishiVFC}.
Given this, we may define a map on the de Rham complexes
\begin{equation}
\label{Corr_eq}
\Corr_{\mathfrak {\tilde X}} : \Omega^*(M_0) \to \Omega^{\ell+*}(M)
\end{equation}
for $\ell= \dim (M) -\mathrm{vdim}(X)$ and we call it a \textit{correspondence map}. Now, suppose we have two perturbed smooth correspondences $\mathfrak {\tilde X}_{12}=(X_{12},f_1,f_2, \widehat{\mathfrak S_{12}})$ and $\mathfrak {\tilde X}_{23}=(X_{23},g_2,g_3, \widehat{\mathfrak S_{23}})$. It induces the \textit{fiber product} $\mathfrak {\tilde X}_{13}=(\mathfrak X_{13},h_1,h_3,\widehat{\mathfrak S_{13}})$ \cite[Definition 10.19]{FOOOKuranishiVFC}, illustrated as follows:
\begin{equation}
\label{composition_formula_diagram}
\Scale[0.75]{\xymatrix{
		& & \mathfrak X_{13} \ar@{-->}[dl] \ar@{-->}[dr] 
		\ar@/^1pc/[ddrr]^{\widehat{h_3}}
		\ar@/_1pc/[ddll]_{\widehat{h_1}} \\
		& \mathfrak X_{12} \ar[dl]^{\widehat f_1} \ar[dr]_{\widehat f_2} & & \mathfrak X_{23} \ar[dl]^{\widehat g_2} \ar[dr]_{\widehat g_3} \\
		M_1 & & M_2 & & M_3}
}
\end{equation}
The composition formula \cite[Theorem 10.21]{FOOOKuranishiVFC} means
$
\Corr_{\mathfrak {\tilde X}_{13}}=\Corr_{\mathfrak {\tilde X}_{23}}\circ \Corr_{\mathfrak {\tilde X}_{12}}
$.
We may also write
$
\Corr_{\mathfrak {\tilde X}_{23}} \big( \Corr_{\mathfrak {\tilde X}_{12}} (h_1) \times h_2 \big) =\Corr_{\mathfrak {\tilde X}_{13}} (h_1 \times  h_2)
$ following \cite[Proposition 4.3]{FuCyclic}.

The other important formula inherited from manifold theory is the \textit{Stokes' formula}. A perturbed smooth correspondence $ \tilde {\mathfrak X}$ induces a perturbed smooth correspondence $\partial  \tilde {\mathfrak X}$ on the normalized boundary \cite[9.27]{FOOOKuranishiVFC}. The Stokes' formula for Kuranishi structures states that
\begin{equation}
\label{Stokes_eq}
d_{M_0}\circ  \Corr_{  \mathfrak {\tilde X}} - \Corr_{\mathfrak {\tilde X}}\circ d_M=\Corr_{\partial \mathfrak {\tilde X}}
\end{equation}
where $d_{M_0}, d_M$ are the exterior derivatives on the spaces of differential forms on the manifolds $M_0, M$.

\textsf{\textit{Definition of the operator system.}}
The axiom item (A1) includes the strong smoothness of $\ev_\ell$ for $\ell \ge 0$ and weakly submersiveness of $\ev_0$ ,
and we have a smooth correspondence $\mathfrak X$ where $M_0=P\times L$, $M=(P\times L)^{\times k}$, $f_0=\ev_0$, $f=\ev_1\times\cdots \times \ev_k$, and $X=\mathcal M_{k+1,\beta}(\mathbf J,L)$.
As said above, we can make it a perturbed smooth correspondence $\tilde {\mathfrak X}$.
Then, we define
\begin{equation}
\label{defining_A_infty_algebra_eq}
\check \M_{k,\beta}(x_1,\dots,x_k) = \Corr_{\mathfrak {\tilde X}}(x_1\times \cdots \times x_k)= \Corr ( \mathcal M_{k+1,\beta}(\mathbf J, L) ; (\ev_1,\dots,\ev_k), \ev_0) (x_1\times \cdots \times x_k)
\end{equation}
where $x_1\times\cdots \times x_k$ denotes $\pi_1^*x_1\wedge \cdots \wedge \pi_k^*x_k$ for the projection maps $\pi_i$'s; cf. \cite[(7.1)]{FuCyclic}.
Exceptionally, we \textit{define} $\check \M_{0,0}=0$ and $\check \M_{1,0}=d$.
Remark that if $X$ happens to be a smooth manifold and $f_0$ is submersive, the correspondence map (\ref{Corr_eq}) just gives $\Corr(h)= \pm f_{0!} f^* (h)$ for a form $h$ on $M$. Accordingly, instead of (\ref{defining_A_infty_algebra_eq}), it is often convenient to use the following notation:
\begin{equation}
\label{defining_A_infty_II_eq}
\check \M_{k,\beta}(x_1,\dots,x_k)
=
(-1)^\epsilon \ev_{0!} (\ev_1^*x_1\wedge \cdots \wedge \ev_k^*x_k) \qquad \textstyle \epsilon=\sum_{j=1}^k j(\deg x_j +1) +1
\end{equation}
We adhere to the sign convention in \cite[\S 4.3]{Solomon_Involutions} diverging from that in \cite[(3.40)]{FuUnobstructed}.
Intuitively, the coefficient of $y$ in $\check \M_{k,\beta}(x_1,\dots,x_k)$ is given by the following `virtual count':
\begin{equation}
\label{Kura_vir_count_eq}
\pm\int^{\mathrm{vir}}_{\mathcal M_{k+1,\beta}(\mathbf J,L)} \ev_0^* y
\wedge \ev_1^* x_1 \wedge \cdots \wedge \ev_k^* x_k
\end{equation}
Now, we have defined an operator system $\check \M=(\check \M_{k,\beta})$ in $\CC_\G$.
We also know the degree $\deg \check \M_{k,\beta}=2-k-\mu(\beta)$ is as expected due to the dimension axiom (A4):
$
\mathrm{vdim} \mathcal M_{k+1,\beta}(\mathbf J,L)= \mu(\beta) +(k+1)-3+\dim(P\times L)
$.
The condition (A5) for the energy and Gromov compactness can infer that $\check \M$ is $\G$-gapped.
Now, it remains to show that the $A_\infty$ formula $\check \M \{ \check \M\}=0$.

\textsf{\textit{Proving the $A_\infty$ formula.}}
By the axioms (A2) and (A3), we have
\[
\partial \mathcal M_{k+1,\beta}(\mathbf J,L) \cong \bigsqcup_{\substack{\beta_1+\beta_2=\beta; \
		k_1+k_2=k; \
		1\le i \le k_2
}} (-1)^* \mathcal M_{k_1+1,\beta_1}(\mathbf J,L) \times_{(\ev_i,\ev_0)} \mathcal M_{k_2+1,\beta_2}(\mathbf J,L)
\]
where the left side uses the boundary smooth correspondence and the right side uses the composition of smooth correspondences (both are briefly described before).
Here $(k_1,\beta_1)$ and $ (k_2,\beta_2)$ are not equal to $(0,0),(1,0)$.
Let's temporarily abbreviate $\mathcal M_{k+1,\beta}:=\mathcal M_{k+1,\beta}(\mathbf J,L)$. Then, the diagram (\ref{composition_formula_diagram}) refers to
\[
\Scale[0.7]{
\xymatrix{
		& & 
		\mathcal M_{k_1+1,\beta_1}\times_{(\ev_i,\ev_0)} \mathcal M_{k_2+1,\beta_2} \ar[dl] \ar[dr]  \\
	& \mathcal M_{k_2+1,\beta_2} \ar[dl]^{(\ev_1,\dots,\ev_{k_2})} \ar[dr]_{\iota_i \circ  \ev_0} & & \mathcal M_{k_1+1,\beta_1} \ar[dl]^{(\ev_1,\dots,\ev_{k_1})} \ar[dr]_{\ev_0} \\
	(P\times L)^{\times k_2} & & (P\times L)^{\times k_1} & & P\times L
}
}
\]
Using (\ref{Stokes_eq}) and (\ref{defining_A_infty_algebra_eq}), we obtain
\begin{equation}
\label{Kura_A_infty_1_eq}
\Scale[0.85]{
	d\circ \check \M_{k,\beta} (x_1,\dots, x_k)+ \sum_j (-1)^* \check \M_{k,\beta} (x_1,\dots, dx_j,\dots, x_k)
	=
	\Corr(\partial \mathcal M_{k+1,\beta}; (\ev_1,\dots, \ev_k), \ev_0)
	(x_1\times \cdots \times x_k)
}
\end{equation}
Hence, applying the composition formula, the right side of (\ref{Kura_A_infty_1_eq}) is equal to
\begin{align*}
\Scale[0.9]{
\sum \pm \Corr ( \partial \mathcal M_{k_1+1,\beta_1}) (x_1\times \cdots \times \Corr(\partial \mathcal M_{k_2+1,\beta_2})(x_{i+1}\times \cdots ) \times \cdots \times x_k) 
= \sum \pm \check \M_{k_1,\beta_1}(x_1,\dots, \check \M_{k_2,\beta_2}(\dots)\dots, x_k)
}
\end{align*}
Recall we assume $(k_1,\beta_1)\neq (1,0)\neq (k_2,\beta_2)$ here; but the missing terms with $(k_i,\beta_i)=(1,0)$ can be replenished precisely by the left side of (\ref{Kura_A_infty_1_eq}) as $\check \M_{1,0}=d$.
So, we obtain the desired $A_\infty$ formula.
The signs are decided as in \cite[Condition 21.11]{FOOOKuranishiVFC} or \cite[Theorem 2.16]{FOOODiskTwo}; see also \cite{Solomon_Involutions}.

\textsf{\textit{Pointwiseness.}}
We check the $P$-pointwiseness (Definition \ref{pointwise-defn}) for a $P$-pseudo-isotopy (Definition \ref{P-pseudo-isotopy-defn}). This is basically because of the bifurcation property, i.e. the parameterized moduli spaces being the form of (\ref{moduli_system}) and the evaluation maps $\ev_i=(\overline{\ev}_P,\overline{\ev}_i)$ decomposing like (\ref{evaluation_maps_eq}).
See \cite[\S 22.3]{FOOOKuranishiVFC}. Briefly, we aim to show
$\check \M_{k,\beta} (x_1,\dots, x_i, \sigma \wedge x_{i+1}, x_{i+1} \dots, x_k)= (-1)^\dagger d\sigma \wedge \check \M_{k,\beta}(x_1,\dots, x_k)$
for the sign $\dagger=\sum_{j=1}^i (\deg x_j-1)\deg \sigma$.
By (\ref{Kura_vir_count_eq}), this amounts to count $(-1)^\epsilon \ev^* y
\wedge \ev^* x_1 \wedge \cdots \wedge \ev^* x_{i}\wedge \ev^*(\sigma  \wedge x_{i+1}) \cdots \wedge \ev^* x_k$. The naive switching $\ev^* x_i\wedge \ev^*\sigma =(-1)^{\epsilon'_i}\ev^*\sigma \wedge \ev^* x_i$ gives the sign $\epsilon'_i=\deg x_i \deg \sigma$; however, since the attaching marked point of $\sigma$ is changed from the $(i+1)$-th to the $i$-th marked point, there is an extra sign-change $(-1)^{\deg \sigma}$ due to (\ref{defining_A_infty_II_eq}). So, we set $\epsilon_i:=\epsilon'_i+\deg \sigma=(\deg x_i-1)\deg \sigma$, and the total sign change is exactly given by $\sum \epsilon_i=\dagger$.

\textsf{\textit{Trivial pseudo-isotopies.}}
Suppose the family of almost complex structures happens to be constant: $\mathbf J_0=\{J_t=J\mid t\in P \}$. Then, the moduli space (\ref{moduli_system}) is
$
\mathcal M_{k+1,\beta}(\mathbf J_0,L)\equiv P\times \mathcal M_{k+1,\beta}(J,L)
$
and the evaluation maps (\ref{evaluation_maps_eq}) will reduce to
$
\ev_i=( \overline{\ev}_P, \overline{\ev}_i)\equiv \id_P\times \overline{\ev}_i: P\times \mathcal M_{k+1,\beta}(J,L)  \to P\times L$.
Accordingly, if the tree-like K-system and `virtual fundamental chains' on the moduli space system $\mathbb M(\mJ_0)$ are the trivial extensions from the ones on $\mathbb M(J)$, then
the $A_\infty$ algebra $\check \M$ defined as in (\ref{defining_A_infty_algebra_eq})
will give rise to a \textit{trivial $P$-pseudo-isotopy} (Example \ref{trivial-pseudo-isotopy-ex}) about the $A_\infty$ algebra $\check \m$ for $\mathbb M(J)$.

\textsf{\textit{Contractible choices.}}
We explain the last item in Theorem \ref{A-theorem-main-body}, namely, we aim to obtain $\M^P$ with the specified restrictions $\M^{\partial_i P}$.
The reference is \cite[\S 17]{FOOOKuranishiVFC}.
First of all, we note that the stable map topology on any moduli space is well-understood.
If there exists a Kuranishi structure defined on a neighborhood of a compact space, then we can extend it to a Kuranishi structure globally without any change in that neighborhood \cite[\S 17.9]{FOOOKuranishiVFC}. By construction, the Kuranishi structure on the boundary of a moduli space has a collared extension to its small neighborhood \cite[Remark 17.1]{FOOOKuranishiVFC}.
Accordingly, given $\partial_iP$-parameterized tree-like K-systems on the moduli system $\mathbb M(\mJ|_{\partial_i P})$ for all $i$, one can find an extended $P$-parameterized tree-like K-system on the moduli system $\mathbb M(\mJ)$.
Moreover, the CF-perturbations are actually \textit{contractible} choices, i.e., there is also no obstruction to find an extension $\widehat{\mathfrak S}$ from the given $\widehat{\mathfrak S}_{\partial_i P}$ \cite[\S 17.8]{FOOOKuranishiVFC}. See also around \cite[(7.0.1)]{Pardon-vir}.
To complete Theorem \ref{A-theorem-main-body}, it remains to check the following three properties for the $\check \M$: (1) q.c.dR, (2) the divisor axiom, and (3) unitalties. Each of them will require mild restrictions on these contractible choices but there will be still plenty of them.

\subsubsection{Forgetful maps}
\label{Subsec_Forgetful_map}

We first explain that the $\check \M$ defined as above is indeed a q.c.dR in the sense of Definition \ref{quantum-correction-defn}.
It is known in the literature: \cite[Definition 21.21 \& Theorem 21.35]{FOOOKuranishiVFC} and \cite[Definition 3.5.6 \& Remark 3.5.8]{FOOOBookOne}. We give a brief review as follows.

If $\beta=0$, the moduli space has a simple form: $
\mathcal M_{k+1,0}(\mathbf J,L) \cong P\times L\times \mathbb R^{k-2}
$ for every $k\ge 2$; see \cite[Condition 21.11 (V)]{FOOOKuranishiVFC}. Geometrically, this is because the moduli space only contains the constant maps into $L$; the third component $\mathbb R^{k-2}$ corresponds to the ordered marked points in $\partial \mathbb D$; c.f. \cite[Lemma 1.3]{FuOh}.
In this case, the evaluation maps (\ref{evaluation_maps_eq}) reduce to the projection $\pr = (\id, \overline{\pr}): P\times L \times \mathbb R^{k-2} \to P\times L$. Thus, $\check \M_{k,0}(x_1,\dots,x_k) = \pm\pr_{!}\pr^* x_1 \wedge\cdots \wedge \pr^* x_k$.
When $k\ge 3$, there is a non-trivial component $\mathbb R^{k-2}$ for the fibers of $\pr$, so $\pr_! \pr^*=0$ and $\check \M_{k,0}=0$.
When $k=2$, we have $\pr=\id$ and so $\check \M_{2,0}(x_1,x_2)=(-1)^{\deg x_1} x_1\wedge x_2$. Here the sign is exactly obtained by using (\ref{defining_A_infty_II_eq}); c.f. \cite[Lemma 4.2]{Solomon_Involutions} or \cite[Proposition 3.7]{Solomon_Diff_survey}.

Notice that we also obtain the divisor axiom for $\beta=0$ as a byproduct.
Indeed, fix $b\in \Omega^1(P\times L)$ with $db=0$. When $k\ge 2$, since $\check \M_{k+1,0}=0$, we have $\check \M_{k+1,0}(b,x_1,\dots,x_k)+\cdots+ \check \M_{k+1,0}(x_1,\dots,x_k,b)=0$.
When $k=1$, we have $\check \M_{2,0}(b,x)+\check \M_{2,0}(x,b)= (-1)^{(\deg b-1)\deg x} b\wedge x + (-1)^{(\deg x-1)\deg b} x\wedge b=0$.

Next, we explain the divisor axiom based on the forgetful maps as used in \cite[Lemma 13.1]{FuCyclic}.
Roughly, the system of Kuranishi structures on the moduli spaces and the CF-perturbation can be chosen to be \textit{forgetful-map-compatible}
in the sense of \cite[\S 3, \S 5]{FuCyclic} \cite[Lemma 2.6.16]{FOOO_bookblue}.
This basically means the `local triviality' of these forgetful maps so that they behave like a `fibration', which ensures one can also define the pushforward $\forget_!$.
The divisor axiom for $\beta=0$ is proved above. So, we may assume $\beta\neq 0$, and we aim to show
\[
\DA[\check \M]_{k,\beta}(b;x_1,\dots,x_k)
=
\partial \beta \cap b \cdot \check \M_{k,\beta}(x_1,\dots,x_k)
\]
where $b$ is a divisor input (\ref{Divisor_input-eq}), i.e., a closed 1-form.
For $\ell=1,\dots, k+1$, the $\ell$-th \textit{forgetful map}
\begin{equation}
\label{forgetful_eq}
\forget_\ell:  \mathcal M_{k+2,\beta}(\mathbf J,L)\to \mathcal M_{k+1,\beta}(\mathbf J,L)
\end{equation}
sends $[\uu,\tilde \z]$ to the stablization of $[\uu,\z]$ where $\z$ is obtained by forgetting the $\ell$-th marked point in $\tilde \z$.
Concretely, if $\tilde \z =(z_0,z_1,\dots, z_{k+1})$, we define $\mathbf z=(z_0,\dots, z_{\ell-1},z_{\ell+1},\dots, z_{k+1})$.
Note that the sign of $\forget_\ell$ is equal to $(-1)^{\ell-1}$.
Denote by $\ev_i$ ($0\le i\le k+1$) and $\ev_i'$ ($0\le i\le k$) the evaluation maps (\ref{evaluation_maps_eq}) for the left and right sides of (\ref{forgetful_eq}) respectively. Then
\begin{equation}
\label{forgetful_ev_eq}
\ev_i' \circ \forget_\ell =
\begin{cases}
 \ev_i & \text{if} \quad  0\le i \le \ell -1	 \\
 \ev_{i+1} & \text{if} \quad	 \ell \le i \le k	
\end{cases}
\end{equation}
\[
\Scale[0.9]{
\xymatrix{
	& L & \\
	\mathcal M_{k+2,\beta}(\mathbf J,L) \ar[rr]^{\forget_\ell} \ar[ur]^{\ev_i \text{ or } \ev_{i+1}}
	& & \mathcal M_{k+1,\beta}(\mathbf J,L) \ar[ul]_{\ev'_i}
}
}
\]
The $\ev_\ell$ is missed in (\ref{forgetful_ev_eq}) and will correspond to the divisor input $b$.
By (\ref{defining_A_infty_II_eq}), we have
\begin{align*}
\check \M_{k+1,\beta}(x_1,\dots, x_{\ell-1}, b,x_{\ell},\dots, x_k)
=
& 
(-1)^\epsilon
\ev_{0!} \big( \ev_1^* x_1 \wedge \cdots \wedge \ev_{\ell-1}^* x_{\ell-1} \wedge \ev_\ell^* b\wedge \ev_{\ell+1}^* x_{\ell}\wedge \cdots \wedge \ev_{k+1}^* x_k \big)\\
=
&
(-1)^{\epsilon+\delta}
\ev'_{0!} \forget_{\ell !} \Big( \forget_\ell^* \big(\ev_1^{\prime *} x_1 \wedge \cdots \wedge \ev_k^{\prime *} x_k\big) \wedge \ev_\ell^* b \Big)
\end{align*}
where the sign $\epsilon$ is given by (\ref{defining_A_infty_II_eq}):
\[
\textstyle
\epsilon=1+\sum_{j=1}^{\ell-1} j(\deg x_j+1) +\ell(\deg b+1)+ \sum_{j=\ell}^k (j+1) (\deg x_j+1)
\]
and
$
\delta=\sum_{j=\ell}^{k} \deg x_j \deg b
$
is an extra sign due to the graded antisymmetricity of the wedge products.
In manifold theory, the integration along fiber $\pi_!$ satisfies that $\pi_! (\alpha \wedge \pi^*\beta)=\pi_!\alpha \wedge \beta$. One can generalize it to the Kuranishi theory, which implies that
\begin{align*}
\check \M_{k+1,\beta}(x_1,\dots, x_{\ell-1}, b,x_{\ell},\dots, x_k)
=
(-1)^{\epsilon+\delta+\ell-1}
\ev'_{0!} \big(  \ev^{\prime *}(x_1\times \cdots \times x_k) \cdot \forget_{\ell !} \ev_\ell^*b 
\big)
\end{align*}
Now, we put
$
\F_\ell:=\forget_{\ell!} \ev_\ell^* b
$.
Since the fibers of $\forget_\ell$ are one-dimensional, the degree of $\F_\ell$
is zero, and it can be thought of as a zero form on the moduli space like a `charge'. Fix a `point' $\mathbf p=(t, [\uu,\z])$ in $\mathcal M_{k+1,\beta}(\mathbf J,L)$. Then,
$
\F_\ell (\mathbf p) = \int_{\forget_\ell^{-1}(\mathbf p)} \ev_\ell^* b=\int_{\ev_\ell\big(\forget_\ell^{-1}(\mathbf p)\big)} b
$.
We may view $\ev_\ell \big( \forget_\ell^{-1}( \mathbf p )\big)$ as a singular chain (or current) in $P\times L$ that is roughly described by the set of $(t,\uu(z))$ for some marked point $z\in\partial\mathbb D$ lies between the $(\ell-1)$-th and $(\ell+1)$-th marked points.
But, the order constrains on the marked point $z$ can be eliminated by considering all possible $\ell$. In other words, by taking the summation over $\ell$, we obtain
$
\sum_{\ell} \ev_{\ell} \big( \forget_{\ell}^{-1}( \mathbf p )\big) 
=\{ (t, \uu(z)) \mid z\in\partial \mathbb D\}
=
(\iota_t\times \partial \uu)_*  \partial \mathbb D
$
where we denote by $\iota_t$ the inclusion $\{t\}\to P$. Hence
\[
\textstyle
\sum_\ell \F_\ell(\mathbf p)=
\int_{(\iota_t\times \partial \uu)_*  \partial \mathbb D} b
=
 \int_{\partial \mathbb D} (\iota_t\times \partial \uu)^* b= \partial \beta \cap b
\]
which actually does not depend on the choice of the moduli point $\mathbf p$. In summary, we have
\begin{align*}
\textstyle
\sum_\ell \check \M_{k+1,\beta}(x_1,\dots, x_{\ell-1}, b,x_\ell,\dots, x_k)
=
(-1)^{\epsilon+\delta+\ell-1}
\ev'_{0!} \big( \ev^{\prime *} (x_1\times \cdots \times x_k)
\cdot
\sum_\ell \F_\ell  \big)\\
=
\partial \beta \cap b \cdot (-1)^{\epsilon+\delta+\ell-1}\cdot \ev_{0!}' \ev^{\prime *} (x_1\times \cdots \times x_k)
=
\partial \beta\cap b \cdot \check \M_{k,\beta}(x_1,\dots, x_k)
\end{align*}
and thus the $\check \M$ satisfies the divisor axiom; see \cite[Lemma 13.1]{FuCyclic} for more details.

Next, we aim to show the unitality (Definition \ref{unit-defn}) and the cyclical unitality (Definition \ref{cyclical_unitality_defn}).
As explained in Remark \ref{cyclical_unitality_qcdR_rmk}, because of the q.c.dR property, we may assume $\beta\neq 0$, and it remains to show $\check \M_{k+1,\beta}(\dots,\e,\dots)=0$ for any $\e\in \Omega^0(P\times L)$.
To see this, we consider the forgetful map
in (\ref{forgetful_eq}) and we also have (\ref{forgetful_ev_eq}).
Performing a similar argument as above yields
\[
\check \M_{k+1,\beta}(x_1,\dots, x_{\ell-1}, \e,x_\ell,\dots, x_k)
=
\ev'_{0!} \big(	
\forget_{\ell !}\ev_\ell^*\e\cdot \ev^{\prime *} (x_1\times \cdots \times x_k)
\big)
\]
But this time $\forget_{\ell !}\ev^{*}_\ell\e$
has degree $-1$ and need to vanish. To be specific, one can focus on its local expression in a single Kuranishi chart by a partition of unity. See \cite[(7.3)-(7.4)]{FuCyclic} for more details.

\section{Harmonic contractions and Fukaya's trick}
\label{S_harmonic_contraction}

Note that implementing the homological perturbation (Theorem \ref{canonical_model_general_tree-thm}) requires choosing a contraction (Definition \ref{contraction-defn}), which typically isn't unique. In practice, we choose to employ the harmonic contraction relative to a specific metric. This approach is beneficial because of its explicit nature and its compatibility with Fukaya's trick in the context of minimal model $A_\infty$ algebras.

\subsection{$g$-harmonic contractions}
\label{ss_harmonic_contraction}

Fix a closed manifold $L$ and a Riemannian metric $g$. Put $C=\OL$ and $H=\HL$.
Let $\mathcal H^*_g(L)$ be the space of $g$-harmonic forms.
Firstly, we define a cochain map from $H$ to $C$:
\begin{equation}
\label{harmonic i(g)}
i(g):H^*(L) \xrightarrow{\cong} \mathcal H^*_g(L) \subset \Omega^*(L)
\end{equation}
by the inverse of Hodge isomorphism composited with the inclusion.
Secondly, we also define a cochain map from $\C$ to $\mH$ by the $g$-orthogonal projection $\mathcal H_g: \OL\to \mathcal H_g^*(L)$ composited with $i(g)^{-1}$:
\begin{equation}
\label{harmonic pi(g)}
\pi(g): \Omega^*(L) \xrightarrow{\mathcal H_g} \mathcal H^*_g(L) \xrightarrow{i(g)^{-1}} H^*(L)
\end{equation}
The Hodge decomposition theorem (see e.g. \cite{Warner}) tells that
$
\Omega^*(L)=\mathcal H_g^*(L) \oplus d \Omega^*(L) \oplus \delta_g \Omega^*(L)
$
where the direct sum is $g$-orthogonal and $\delta_g$ is the adjoint of $d$ with respect to the metric $g$. Moreover there is the so-called \textit{Green operator} $ \Gr_g: \Omega^*(L)\to \mathcal H^*_g(L)^{\perp_g} :=d\OL \oplus \delta_g\OL$. It is obtained by declaring $ \Gr_g(\alpha)$ to be the unique solution $\eta$ in $\mathcal H_g^*(L)^{\perp_g}$ of the differential equation $\Delta_g \eta = \alpha - \mathcal H_g(\alpha)$, where $\Delta_g=d \delta_g +\delta_g d$. Hence,
$
\Delta_g\circ \Gr_g = \id - \mathcal H_g$.
It is known that the Green operator $\Gr_g$ commutes with both $d$ and $\delta_g$ \cite{Warner}. Next, we define
\begin{equation}
\label{harmonic G(g)}
G(g) := -\Gr_g \circ \  \delta_g = - \delta_g  \circ \Gr_g
\end{equation}
Since $i(g)\circ \pi(g) = \mathcal H_g$,
it follows that 
\begin{equation}
\label{i(g)pi(g)-id_eq}
d \circ G(g) + G(g) \circ d = \mathcal H_g - \id = i(g) \circ \pi(g) - \id
\end{equation}
Given that harmonic forms are all $d$-closed, it follows that
$
d\circ i(g)=0$.
Since the $d$-exact forms are $g$-orthogonal to the harmonic forms, it also holds that
$
\pi(g) \circ d =0$.
By their definitions, the degrees of $i(g)$ and $\pi(g)$ are $0$, and the degree of $G(g)$ is $-1$.

We now seek to demonstrate that the triple $(i(g),\pi(g),G(g))$ constitutes a strong contraction in the sense of Definition \ref{contraction-defn}. To do so, we need to verify the four side conditions:
As $\pi(g)=i(g)^{-1} \circ \mathcal H_g$ and $\mathcal H_g\circ i(g)=i(g)$, we get
\begin{equation}
\label{pi(g) i(g)=id-eq}
 \pi(g)\circ i(g)=\id
\end{equation}
Secondly, since $\Gr_g$ commutes with $\delta_g$ and $\delta_g\circ\delta_g=0$, we immediately see that
\begin{equation}
\label{G(g) G(g)=0-eq}
G(g)\circ G(g)=0
\end{equation}
Thirdly, by definition, we know $\mathcal H_g\circ G(g)=0$ and so 
\begin{equation}
\label{pi(g) G(g)=0-eq}
\pi(g) \circ G(g)=0
\end{equation}
Fourthly, the image of $i(g)$ is a harmonic form which must be $\delta_g$-closed, thus
\begin{equation}
\label{G(g) i(g)=0-eq}
G(g)\circ i(g)=0
\end{equation}
Note that (\ref{pi(g) i(g)=id-eq}, \ref{G(g) G(g)=0-eq}, \ref{pi(g) G(g)=0-eq}, \ref{G(g) i(g)=0-eq}) correspond to (\ref{pi-i-id-eq}, \ref{side-conditions GG}, \ref{side-conditions piG}, \ref{side-conditions Gi}) separately.
In summary, we have proved:

\begin{lem}\label{strong-contraction-metric g-lem} 
$
\con(g):=(i(g),\pi(g),G(g))
$
 is a strong contraction, called the \textbf{$g$-harmonic contraction}.
\end{lem}

\begin{rmk}\label{harmonic constant-one-rmk}
Obviously, the constant-one $\one\in\Omega^*(L)$ is harmonic with regard to any metric $g$. Thus, for its cohomology class which we still denote by $\one=[\one]$, we have $i(g)(\one)=\one$, $\pi(g)(\one)=\one$, and so $i(g) \circ \pi(g) (\one) = \one$. 
Particularly, the condition $i(\pi(\one))=\one$ in Proposition \ref{canonical-model_preserve_properties-prop} always hold for $\con(g)$
\end{rmk}

Given an $A_\infty$ algebra $(\OL,\check\m)$, we can construct its canonical model 
$
(\HL,\m^g,\mi^g)
$
from $\con(g)$ by Theorem \ref{canonical_model_general_tree-thm}. Note that it is also implied that
\begin{equation}
\label{mi_g_10_eq}
\m^g_{1,0}=0, \qquad \mi^g_{1,0}=i(g)
\end{equation}

\begin{thm}
\label{UD-canonical-model-thm}
If $(\OL,\check \m)\in\Obj\UD$, then
$(\HL, \m^g)\in\Obj\UD$ and $\mi^g\in\Mor\UD$. 
\end{thm}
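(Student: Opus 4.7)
The plan is to apply Theorem \ref{canonical_model_general_tree-thm} with the $g$-harmonic contraction $\con(g)=(i(g),\pi(g),G(g))$ of Lemma \ref{strong-contraction-metric g-lem}, and then simply read off each of the conditions (I-0)--(I-5), (II-1)--(II-5) from Definition \ref{UD_defn} by invoking the preservation results already established (Proposition \ref{canonical-model_preserve_properties-prop} and Remark \ref{Gapped_canonical_precise-rmk}). Since the theorem starts from $(\OL, \check\m)\in\Obj\UD$ with underlying parameter space a point, we only need to produce an $A_\infty$ structure on $\HL$; the $P$-pointwise/pseudo-isotopy axiom (I-4) is then vacuous, and (I-0) holds because $\m^g_{1,0}=\delta=0$ by construction, which is the natural cochain complex structure on $\HL$.

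First I would verify the object conditions. The strong contraction property (Lemma \ref{strong-contraction-metric g-lem}) together with $i(g)(\pi(g)(\one))=\one$ (Remark \ref{harmonic constant-one-rmk}) gives, via Proposition \ref{canonical-model_preserve_properties-prop}(iii), that $\pi(g)(\one)=\one\in\HL$ is a (strict) unit of $\m^g$, which is (I-1). The cyclical unitality (I-2) follows directly from Proposition \ref{canonical-model_preserve_properties-prop}(ii). The Maslov index non-negativity (I-5) is immediate from Remark \ref{Gapped_canonical_precise-rmk}, since the set $\{\beta\mid \m^g_\beta\neq 0\}$ is contained in the monoid $\mathbb N\cdot \mathsf G_{\check\m}$. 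For the divisor axiom (I-3), I would apply Proposition \ref{canonical-model_preserve_properties-prop}(i), whose hypothesis $\partial\beta\cap i(g)(b)=\partial\beta\cap b$ must be separately checked: a divisor input $b\in \DI(\HL)$ lies in $H^1(L)$, and $i(g)(b)\in\mathcal H^1_g(L)\subset\OL$ is its harmonic representative, so by the definition (\ref{cap_product_eq}) of the cap product on $\OL$ via the de Rham class, $\partial\beta\cap i(g)(b)=\partial\beta\cap q(i(g)(b))=\partial\beta\cap b$, as desired.

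Next I would check the morphism conditions for $\mi^g$. Unitality (II-1) comes from Proposition \ref{canonical-model_preserve_properties-prop}(iii) together with $\mi^g_{1,0}=i(g)$ and $i(g)(\one)=\one$. Cyclical unitality (II-2) and the divisor axiom (II-3) again follow from Proposition \ref{canonical-model_preserve_properties-prop}(ii) and (i) respectively, using the same identity $\partial\beta\cap i(g)(b)=\partial\beta\cap b$ as above. The compatibility condition (II-4), namely $\partial\beta\cap \mi^g_{1,0}(b)=\partial\beta\cap b$ for divisor inputs $b$, is precisely this same identity since $\mi^g_{1,0}=i(g)$. Finally (II-5) is Remark \ref{Gapped_canonical_precise-rmk} applied to $\mi^g$.

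There is essentially no hard step in this proof: the work has all been done in Section \ref{S_homological_perturbation}, and the harmonic contraction was chosen precisely so that the hypotheses of Proposition \ref{canonical-model_preserve_properties-prop} are satisfied. The only place where the specific choice of $\con(g)$ (rather than an arbitrary strong contraction) enters is in the verification of the cap-product compatibility $\partial\beta\cap i(g)(b)=\partial\beta\cap b$, and this is automatic from the Hodge decomposition. The content of the theorem is therefore that the concrete choice of $g$-harmonic contraction is compatible with all the structures packaged into $\UD$; this is the point of having built the category $\UD$ in the first place.
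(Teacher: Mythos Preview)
Your proof is correct and follows essentially the same approach as the paper: both arguments invoke Remark \ref{Gapped_canonical_precise-rmk} for (I-5)/(II-5), verify the two hypotheses $\partial\beta\cap i(g)(b)=\partial\beta\cap b$ and $i(g)\pi(g)(\one)=\one$, and then read off the remaining conditions from Lemma \ref{strong-contraction-metric g-lem} and Proposition \ref{canonical-model_preserve_properties-prop}. Your version is simply a more explicit unpacking of the same ingredients.
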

\begin{proof}
By Remark \ref{Gapped_canonical_precise-rmk}, the (I-5) (II-5) in Definition \ref{UD_defn} hold for $\m^g$ and $\mi^g$.
Because of $\partial\beta\cap i(g)(b)=\partial\beta\cap b$ and $i(g)\pi(g)(\one)=\one$, the theorem then just follows from Lemma \ref{strong-contraction-metric g-lem}
and Proposition \ref{canonical-model_preserve_properties-prop}.
\end{proof}

We need to further exploit the Hodge theorem. 
Denote by $Z^*(L):= \ker d \subset \Omega^*(L)$ the space of closed forms; then, $Z^*(L)=\mathcal H^*_{g}(L)\oplus d\Omega^*(L)$.
Note that we have an isomorphism:
\begin{equation}\label{d_g-eq}
d_g:=d: \delta_{g} \Omega^*(L) \xrightarrow{\cong} d\Omega^*(L)
\end{equation}
Note that $\ker \delta_{g}=\mathcal H^*_{g}(L)\oplus \delta_{g}\Omega^*(L)$.
The restriction of $\delta_g$ also gives an isomorphism:
\begin{equation}
\label{delta_g-eq}
\delta'_{g}:=\delta_{g}|_{d\OL}: d\Omega^*(L)\xrightarrow{\cong} \delta_{g_t}\Omega^*(L)
\end{equation}

We aim to generalize Theorem \ref{UD-canonical-model-thm} to the case of pseudo-isotopies.
Take a smooth family of metrics $\mathbf g:=(g_t)_{t\in\oi}$.
We first abbreviate the above-mentioned operators: $(i_t, \pi_t, G_t):=(i(g_t), \pi(g_t), G(g_t))$, $d_t:=d_{g_t}$, and $\delta_t:=\delta'_{g_t}$. Then, we have the following two isomorphisms:
\[
d_t\circ \delta_t: d\Omega^*(L) \to \delta_{g_t}\Omega^*(L) \to d\Omega^*(L); \ \ \ \ \
\delta_t\circ d_t: \delta_{g_t} \Omega^*(L) \to d\Omega^*(L) \to \delta_{g_t}\Omega^*(L)
\]
Denote by $\Delta_t$ the restriction of the Laplacian $\Delta_{g_t}=d\delta_{g_t}+\delta_{g_t}d$ on the space $\mathcal H_{g_t}^*(L)^\perp \equiv d\Omega^*(L)\oplus \delta_{g_t}\Omega^*(L)$. Then, $\Delta_t=d_t\delta_t\oplus \delta_t d_t$ and
\begin{equation}
\label{Delta_g_inverse-eq}
\Delta_t^{-1} =\delta_t^{-1}d_t^{-1}\oplus d_t^{-1}\delta_t^{-1} : d\Omega^*(L) \oplus \delta_{g_t}\Omega^*(L) \to  d\Omega^*(L)\oplus \delta_{g_t}\Omega^*(L) 
\end{equation}
Accordingly, the Green operator $\Gr_{g_t}$ can be explicitly expressed by
\begin{equation}
\label{Gr_t-eq}
\Gr_{g_t}= \Delta_t^{-1} \circ (\id - \mathcal H_{g_t})
\end{equation}
where $\id-\mathcal H_{g_t}$ is just the projection to $\mathcal H_{g_t}^*(L)^\perp=d\Omega^*(L)\oplus \delta_{g_t}\Omega^*(L)$. By (\ref{harmonic G(g)}), we get
$
G_t \circ \delta_{g_t}=0$.
Further, it follows from (\ref{Delta_g_inverse-eq}) and (\ref{Gr_t-eq}) that
\begin{equation}
\label{G_t|Z*(L)-eq}
G_t|_{Z^*(L)}= -\delta_{g_t} \circ \Delta_t^{-1} \circ \pr_{d\Omega^*(L)}=-d_t^{-1} \circ \pr_{d\Omega^*(L)}
\end{equation}
where the `$\pr$' stands for the orthogonal projection. In particular, this tells $G_t|_{d\Omega^*(L)}=-d_t^{-1}$.

\begin{lem}\label{harmonic h_t k_t-lem}
There exists two smooth families of operators $h_t: H^*(L) \to \Omega^*(L)$ and $k_t: \Omega^*(L) \to H^*(L)$ of degree $-1$ with the following properties: 
\begin{align}
\label{di_t dt =d h_t -eq}
\textstyle \frac{di_t}{dt}= d \circ h_t \\
\label{dpi_t dt=k_t d -eq}
\textstyle \frac{d\pi_t}{dt}= k_t \circ d \\
\label{pi_t h_t=0-eq}
\pi_t\circ h_t=0  \\
\label{k_t i_t=0-eq}
k_t\circ i_t =0   \\
\label{G_t h_t=0_eq}
G_t\circ h_t=0  \\
\label{k_t G_t=0_eq}
k_t\circ G_t=0
\end{align}
\end{lem}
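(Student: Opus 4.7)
The plan is to write down explicit compact formulas for $h_t$ and $k_t$ using the Green operator and the $t$-derivatives of $i_t$ and $\pi_t$, and then to verify the six identities by a short formal computation combining the strong-contraction side conditions of Lemma \ref{strong-contraction-metric g-lem} with two elementary observations about how $i_t$ and $\pi_t$ vary with $t$.

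Explicitly, I propose
\[
h_t := -G_t \circ \tfrac{di_t}{dt} \qquad \text{and} \qquad k_t := -\tfrac{d\pi_t}{dt} \circ G_t,
\]
both of degree $-1$. The two guiding geometric observations are: (i) for each fixed cohomology class $\alpha$, $i_t(\alpha)$ is the $g_t$-harmonic representative of the $t$-independent class $\alpha$, so $\tfrac{di_t}{dt}(\alpha)$ is always exact; and (ii) for each closed form $\omega\in Z^*(L)$, $\pi_t(\omega) = [\omega]$ is $t$-independent, so $\tfrac{d\pi_t}{dt}$ vanishes on $Z^*(L)$. From (i), $d\circ\tfrac{di_t}{dt} = 0$ and $\mathcal{H}_{g_t}\circ\tfrac{di_t}{dt}=0$; from (ii) combined with (\ref{pi(g) d=0-eq}) and the differentiated identity $\tfrac{d\pi_t}{dt}i_t = -\pi_t\tfrac{di_t}{dt}$, one obtains $\tfrac{d\pi_t}{dt}\circ d = 0$ and $\tfrac{d\pi_t}{dt}\circ i_t = 0$.

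Applying the homotopy identity (\ref{i(g)pi(g)-id_eq}), $dG_t + G_t d = \mathcal{H}_{g_t} - \id$, on the left to $\tfrac{di_t}{dt}$ immediately yields $\tfrac{di_t}{dt} = -dG_t\tfrac{di_t}{dt} = d\circ h_t$, proving (\ref{di_t dt =d h_t -eq}); applying the same identity on the right to $\tfrac{d\pi_t}{dt}$ yields the dual relation $\tfrac{d\pi_t}{dt} = -\tfrac{d\pi_t}{dt}G_t\circ d = k_t\circ d$, proving (\ref{dpi_t dt=k_t d -eq}) (note that $dG_t$ lands in $d\Omega^*(L)\subset Z^*(L)$, so $\tfrac{d\pi_t}{dt}\circ dG_t = 0$ automatically). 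The remaining four equations are one-line consequences of the strong-contraction side conditions already in hand: (\ref{pi_t h_t=0-eq}) from $\pi_t G_t = 0$, (\ref{k_t i_t=0-eq}) from $G_t i_t = 0$, and both (\ref{G_t h_t=0_eq}) and (\ref{k_t G_t=0_eq}) from $G_t\circ G_t = 0$.

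The main non-algebraic point, and the only place where one must invoke something beyond the formalism already developed in this section, is the smooth $t$-dependence of $i_t$, $\pi_t$, and $G_t$ along the path $\mg$. This is standard elliptic perturbation theory on the closed manifold $L$: the Laplacians $\Delta_{g_t}$ form a smooth family of formally self-adjoint elliptic operators whose kernels have constant dimension (the Betti numbers of $L$), so $\mathcal{H}_{g_t}$ and its partial inverse $\Gr_{g_t} = \Delta_t^{-1}(\id-\mathcal{H}_{g_t})$ depend smoothly on $t$, and hence so do $i_t$, $\pi_t$, and $G_t$; the above formulas for $h_t$ and $k_t$ then produce genuine smooth one-parameter families of operators.
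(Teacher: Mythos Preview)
Your proof is correct and defines the very same operators as the paper: by (\ref{G_t|Z*(L)-eq}) one has $G_t|_{d\Omega^*(L)}=-d_t^{-1}$, so your $h_t=-G_t\tfrac{di_t}{dt}$ coincides with the paper's $h_t=d_t^{-1}\tfrac{di_t}{dt}$, and similarly your $k_t=-\tfrac{d\pi_t}{dt}G_t$ agrees with the paper's piecewise definition (\ref{k_t-eq}) on each summand of the Hodge decomposition. The only difference is packaging: the paper verifies (\ref{pi_t h_t=0-eq})--(\ref{k_t G_t=0_eq}) by tracking which Hodge summand the image of $h_t$ or $G_t$ lands in, whereas you read them off directly from the side conditions $\pi_tG_t=G_ti_t=G_tG_t=0$, which is a bit more economical.
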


\begin{proof}
On the one hand, recall that $d\circ i_t=0$, hence we may think $i_t: H^*(L) \to Z^*(L)$. 
For the natural projection $q: Z^*(L)\to H^*(L)$, we have $q\circ i_t =\id$ and so $q\circ \frac{d i_t}{dt}=0$.
Thus, the image of $\frac{di_t}{dt}: H^*(L) \to \Omega^*(L)$ is contained in the kernel of $q$, i.e. the space $d\Omega^*(L)$ of $d$-exact forms.
We define
\begin{equation}
\label{h_t-eq}
h_t:=d_t^{-1} \circ \frac{di_t}{dt}: H^*(L) \to d\Omega^*(L) \to \delta_{g_t} \Omega^*(L)\subset  \Omega^*(L)
\end{equation}
In special, we have $\frac{di_t}{dt}=d\circ h_t$. 
On the other hand, since $\pi_t\circ d= 0$, we have $\pi_t|_{Z^*(L)}=q$. So, $\frac{d\pi_t}{dt}|_{Z^*(L)}=0$, and one can view $\frac{d\pi_t }{dt}$ as an operator on $\delta_{g_t}\OL$. Then, we define
\begin{equation}\label{k_t-eq}
k_t:=
\begin{cases}
 \frac{d\pi_t}{dt}\circ d_t^{-1}   &\text{on \ } d\OL \\
0 &\text{on \ } \mathcal H_{g_t}(L)\oplus \delta_{g_t}\Omega^*(L)
\end{cases}
\end{equation}
Thus, $k_t\circ \mathcal H_{g_t}=0$ and $k_t\circ \delta_{g_t}=0$.
As an operator on $\delta_{g_t} \Omega^*(L) $, we have $\frac{d\pi_t}{dt}= k_t \circ d$.
The complement of $\delta_{g_t}\OL$ is $Z^*(L)$; both $d$ and $\frac{d\pi_t}{dt}$ vanish on $Z^*(L)$. The relation $\frac{d\pi_t}{dt}=k_t\circ d$ still holds in $Z^*(L)$.

Now, we show the last four properties:
First, since the image of $h_t$ is contained in $\delta_{g_t}\Omega^*(L)$, we have $\pi_t\circ h_t=0$. 
Second, since the image of $i_t$ is in $\mathcal H_{g_t}^*(L)$, we also have $k_t\circ i_t=0$.
Third, since the image of $h_t$ is contained in $\delta_{g_t}\OL$, it follows that $G_t\circ h_t=0$.
Fourth, using (\ref{harmonic G(g)}) the image of $G_t$ is contained in $\delta_{g_t}\OL$ on which $k_t$ is defined to be zero; hence, $k_t\circ G_t=0$.
\end{proof}

Next, we want to study the relations to $G_t$. Consider the following degree-$(-1)$ operator
\begin{equation}
\label{K_t___eq}
\textstyle
\Gamma_t:=\frac{dG_t}{dt}-i_t \circ k_t -  h_t \circ \pi_t: \OL \to \OL
\end{equation}
Using $d\circ i_t=\pi_t\circ d=0$ and Lemma \ref{harmonic h_t k_t-lem}, we compute
\begin{align*}
d\circ \Gamma_t+ \Gamma_t\circ d
=
d\circ \tfrac{dG_t}{dt}+ \tfrac{dG_t}{dt}\circ d -d \circ h_t\circ \pi_t-i_t\circ k_t\circ d
=
\tfrac{d}{dt} \big(	
d\circ G_t+G_t\circ d-i_t\circ \pi_t
\big)
\end{align*}
Then, applying (\ref{i(g)pi(g)-id_eq}) yields that
\begin{equation}
\label{Gamma_td+dGamma_t-eq}
d\circ \Gamma_t +\Gamma_t\circ d=0
\end{equation}
Therefore, up to a sign, the $\Gamma_t$ is a cochain map from $\OL$ to $\OL$.
The induced map $\HL\to\HL$ on the cohomologies will be zero, and actually we can prove the following stronger result:
\begin{equation}
\label{Gamma_t-i_t=0}
\Gamma_t\circ i_t=0
\end{equation}
In fact, using $\pi_t\circ i_t=\id$, (\ref{k_t i_t=0-eq}), (\ref{G(g) i(g)=0-eq}), and (\ref{G_t|Z*(L)-eq}), we obtain
$
\Gamma_t\circ i_t= \tfrac{dG_t}{dt}\circ i_t -h_t= -G_t \circ \tfrac{di_t}{dt}-d_t^{-1}\circ \tfrac{di_t}{dt}=0$.

\begin{lem}\label{harmonic-sigma-lem}
In the situation of Lemma \ref{harmonic h_t k_t-lem}, there exists a smooth family of operators $\sigma_t: \Omega^*(L) \to \Omega^*(L)$ of degree $-2$, satisfying the following properties:
\begin{align}
\label{dG_t-dt-ik-hpi-eq}
\tfrac{dG_t}{dt}- i_t\circ k_t - h_t\circ \pi_t &= d \circ \sigma_t -\sigma_t \circ d \\
\label{sigma-G=G-sigma=0-eq}
\sigma_t\circ G_t&= G_t \circ \sigma_t=0  \\
\label{pi_t-sigma_t=0}
\pi_t \circ \sigma_t &=0  \\
\label{sigma_t-i_t=0}
\sigma_t\circ i_t  &=0
\end{align}
\end{lem}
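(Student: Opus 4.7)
The natural candidate for the chain homotopy, suggested by the contraction identity \eqref{i(g)pi(g)-id_eq} and the cochain-map property \eqref{Gamma_td+dGamma_t-eq}, is
\[
\sigma_t := -\,G_t\circ \Gamma_t.
\]
Its degree is $-2$ as required. The plan is to first establish the companion vanishing $\pi_t\circ \Gamma_t=0$ to go alongside the already-known $\Gamma_t\circ i_t=0$ from \eqref{Gamma_t-i_t=0}; once both one-sided vanishings are in place, all assertions of the lemma follow formally from the standard side conditions together with \eqref{G_t h_t=0_eq} and \eqref{k_t G_t=0_eq}.

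For the companion identity, I differentiate the side condition \eqref{pi(g) G(g)=0-eq}, namely $\pi_t\circ G_t=0$, in $t$ to get
\[
\pi_t\circ \tfrac{dG_t}{dt} \;=\; -\tfrac{d\pi_t}{dt}\circ G_t \;=\; -\,k_t\circ d\circ G_t,
\]
using \eqref{dpi_t dt=k_t d -eq}. Then \eqref{i(g)pi(g)-id_eq} lets me replace $d\circ G_t$ by $i_t\pi_t-\id-G_t\circ d$; three of the four resulting terms vanish on account of \eqref{k_t i_t=0-eq} and \eqref{k_t G_t=0_eq}, leaving $\pi_t\circ \tfrac{dG_t}{dt}=k_t$. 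Combining with \eqref{pi_t h_t=0-eq} and $\pi_t\circ i_t=\id$ gives $\pi_t\circ \Gamma_t=k_t-k_t-0=0$. This is the one genuinely non-formal step and the main obstacle.

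With the two vanishings $\pi_t\circ \Gamma_t=0$ and $\Gamma_t\circ i_t=0$ in hand, the chain homotopy identity \eqref{dG_t-dt-ik-hpi-eq} is immediate: by \eqref{Gamma_td+dGamma_t-eq},
\[
d\circ\sigma_t-\sigma_t\circ d \;=\; -\,d\circ G_t\circ \Gamma_t-G_t\circ d\circ \Gamma_t \;=\; -(d\circ G_t+G_t\circ d)\circ \Gamma_t \;=\; -(i_t\pi_t-\id)\circ \Gamma_t \;=\; \Gamma_t.
\]

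For the four side conditions I first use \eqref{G(g) i(g)=0-eq} and \eqref{G_t h_t=0_eq} to simplify the definition of $\sigma_t$ to
\[
\sigma_t \;=\; -\,G_t\circ \tfrac{dG_t}{dt}.
\]
Then $\pi_t\circ \sigma_t=0$ follows directly from \eqref{pi(g) G(g)=0-eq}, while $\sigma_t\circ i_t=-G_t\circ \Gamma_t\circ i_t=0$ follows from \eqref{Gamma_t-i_t=0}. Differentiating the side condition $G_t\circ G_t=0$ of \eqref{G(g) G(g)=0-eq} in $t$ yields the anticommutation $\tfrac{dG_t}{dt}\circ G_t=-\,G_t\circ \tfrac{dG_t}{dt}$, which gives $\sigma_t\circ G_t=-G_t\circ \tfrac{dG_t}{dt}\circ G_t=G_t\circ G_t\circ \tfrac{dG_t}{dt}=0$, and $G_t\circ \sigma_t=-G_t^{\,2}\circ \tfrac{dG_t}{dt}=0$. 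Thus $\sigma_t=-G_t\circ \Gamma_t$ meets all four requirements of the lemma.
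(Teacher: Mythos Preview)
Your proof is correct, and in fact your $\sigma_t=-G_t\circ\Gamma_t$ coincides with the paper's operator, but the argument is organized rather differently.  The paper defines $\sigma_t$ piecewise via the Hodge decomposition, setting $\sigma_t=d_t^{-1}\circ\Gamma_t$ on $Z^*(L)$ and $\sigma_t=0$ on $\delta_{g_t}\Omega^*(L)$, and then verifies each property by computing in the two summands.  Your closed formula reproduces this (since $G_t|_{d\Omega^*(L)}=-d_t^{-1}$, and the anticommutation $\tfrac{dG_t}{dt}\,G_t=-G_t\,\tfrac{dG_t}{dt}$ forces $G_t\Gamma_t$ to vanish on $\mathrm{im}\,G_t=\delta_{g_t}\Omega^*(L)$), but your verification never unpacks the Hodge decomposition: everything follows from the strong-contraction side conditions, the extras \eqref{G_t h_t=0_eq}--\eqref{k_t G_t=0_eq}, and the differentiated identities $G_t^2=0$ and $\pi_tG_t=0$.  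The one ingredient you add beyond the paper is the companion vanishing $\pi_t\circ\Gamma_t=0$ (the paper only records $\Gamma_t\circ i_t=0$); this is exactly what makes the chain-homotopy identity go through cleanly in your setup, and as a byproduct it supplies the fact, used implicitly in the paper's piecewise check, that $\Gamma_t$ preserves $\delta_{g_t}\Omega^*(L)$.
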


\begin{proof}
By (\ref{Gamma_td+dGamma_t-eq}) and (\ref{Gamma_t-i_t=0}), the operator $\Gamma_t:=\frac{dG_t}{dt}-i_t\circ k_t-h_t\circ \pi_t$ maps $Z^*(L)$ into $d\OL$. Define:
\begin{equation}\label{sigma_t-(1)-eq}
\sigma_t:=
\begin{cases}
d^{-1}_t \circ \Gamma_t   &\text{on \ } Z^*(L) \\
0 &\text{on \ } \delta_{g_t}\Omega^*(L)
\end{cases}
\end{equation}
It remains to check the four properties.
The first one can be checked as follows: $(d\circ \sigma_t-\sigma_t\circ d)|_{Z^*(L)}= d\circ \sigma_t|_{Z^*(L)}=\Gamma_t|_{Z^*(L)}$ and $(d\circ \sigma_t-\sigma_t\circ d)|_{\delta_{g_t}\Omega^*(L)}=-\sigma_t\circ d|_{\delta_{g_t}\OL}=-d_t^{-1}\circ \Gamma_t\circ d|_{\delta_{g_t}^*\OL}=d_t^{-1}\circ d\circ \Gamma_t|_{\delta_{g_t}^*\OL}=\Gamma_t|_{\delta_{g_t}^*\OL}$ by (\ref{Gamma_td+dGamma_t-eq}).
Next, note that the images of both $\sigma_t$ and $G_t$ are contained in $\delta_{g_t}\OL$ and that $\sigma_t\circ \delta_{g_t}=G_t\circ \delta_{g_t}=\pi_t\circ \delta_{g_t}=0$; so, we have $\sigma_t\circ G_t=G_t\circ \sigma_t=0$ and $\pi_t\circ \sigma_t=0$. Finally $\sigma_t\circ i_t=0$ holds just because of (\ref{Gamma_t-i_t=0}).
\end{proof}

The purpose to find the above operators is to obtain a \textit{strong} contraction for the pair $(H^*(L)_\oi, d^\oi)$ and $(\OL_\oi, \M_{1,0})$. Recall that the two differentials $d^\oi$ and $\M_{1,0}$ are defined as in (\ref{MP_10 -d -eq}):
\[
d^\oi: \HL_\oi \to \HL_\oi, \ \ \ \ 
 \begin{cases}
 1\otimes  \bar x & \mapsto ds \otimes \partial_s( \bar x) \\
 ds\otimes  \bar x & \mapsto 0
 \end{cases}
\]
\[
\M_{1,0}: \OL_\oi \to \OL_\oi, \ \ \ \ 
 \begin{cases}
 1\otimes   x & \mapsto 1\otimes \m_{1,0}(x) + ds \otimes \partial_s(  x) \\
 ds\otimes  x & \mapsto -ds\otimes \m_{1,0} (x)
 \end{cases}
\]
where we denote by $\m_{1,0}=d$ the exterior differential on $\OL$.
The notations $\m_{1,0}$ or $\M_{1,0}$ hint but are not related to any $A_\infty$ algebra temporarily.
In addition, we define:
\begin{equation}
\label{i_pi_G_[0,1]-eq}
\Scale[0.9]{
\begin{aligned}
& i(\mathbf g): \HL_\oi \to \OL_\oi, \ \ \ \ &&
 \begin{cases}
 1\otimes \bar x & \mapsto 1\otimes i_s(\bar x) + ds \otimes h_s(\bar x) \\
 ds\otimes \bar x & \mapsto ds\otimes i_s(\bar x)
 \end{cases} \\
& \pi(\mathbf g): \OL_\oi \to \HL_\oi, \ \ \ \ &&
 \begin{cases}
 1\otimes  x & \mapsto 1\otimes \pi_s( x) + ds \otimes k_s( x) \\
 ds\otimes  x & \mapsto ds\otimes \pi_s( x)
 \end{cases} \\
&
G(\mg): \OL_\oi \to \OL_\oi, \ \ \ \ &&
 \begin{cases}
 1\otimes  x & \mapsto 1\otimes G_s( x) + ds \otimes \sigma_s( x) \\
 ds\otimes  x & \mapsto -ds\otimes G_s( x)
 \end{cases}
 \end{aligned}
}
\end{equation}

The signs above respect the pointwiseness in Definition \ref{pointwise-defn}. Hence, in view of Remark \ref{pointwise_determine_rmk}, we may more concisely write
$
d^\oi=ds\otimes \partial_s$ and $
\M_{1,0}=1\otimes d+ds\otimes \partial_s$ for the differentials and write
$i(\mg)=1\otimes i_s  \ +ds\otimes h_s$, $\pi(\mg)=1\otimes \pi_s +ds\otimes k_s$
and $G(\mg)=1\otimes G_s+ds\otimes \sigma_s$.

The following statement is basically a consequence of Lemma \ref{harmonic h_t k_t-lem} and \ref{harmonic-sigma-lem}:

\begin{lem}\label{strong-contraction [0,1]-lem}
$
\con(\mg):=(i(\mathbf g),\pi(\mathbf g),G(\mg))
$
is a strong contraction for $H^*(L)_\oi$ and $\Omega^*(L)_\oi$.
\end{lem}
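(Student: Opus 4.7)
The plan is to verify each of the seven defining identities of a strong contraction (two cochain-map conditions, one homotopy, and four side conditions) directly from the pointwise formulas (\ref{i_pi_G_[0,1]-eq}). Because both $\OL_\oi$ and $\HL_\oi$ are generated (in the pointwise sense of Remark \ref{pointwise_determine_rmk}) by elements of the two shapes $1\otimes x$ and $ds\otimes x$, it suffices to check each identity on generators of these two types. In every case the computation will collapse to a finite combination of the relations already proved in Lemmas \ref{harmonic h_t k_t-lem} and \ref{harmonic-sigma-lem} together with the fiberwise strong contraction identities (\ref{i(g)pi(g)-id_eq}), (\ref{d i(g)=0-eq}), (\ref{pi(g) d=0-eq}), (\ref{pi(g) i(g)=id-eq})--(\ref{G(g) i(g)=0-eq}).

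First I would handle the two cochain-map relations $\M_{1,0}\circ i(\mg) = i(\mg)\circ d^{\oi}$ and $\pi(\mg)\circ \M_{1,0}=d^{\oi}\circ \pi(\mg)$. Applying $\M_{1,0}\circ i(\mg)$ to $1\otimes\bar x$ produces $ds\otimes\bigl[(di_s/dt)\bar x + i_s\partial_s\bar x - dh_s\bar x\bigr]$, where the first and last terms cancel by (\ref{di_t dt =d h_t -eq}) and $d\circ i_s=0$, leaving $ds\otimes i_s\partial_s\bar x$, which is precisely $i(\mg)\circ d^{\oi}(1\otimes\bar x)$; the check on $ds\otimes\bar x$ is immediate from $d\circ i_s=0$. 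The $\pi(\mg)$ relation is dual and uses (\ref{dpi_t dt=k_t d -eq}) together with $\pi_s\circ d=0$.

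The heart of the argument is the chain homotopy $i(\mg)\circ\pi(\mg)-\id = \M_{1,0}\circ G(\mg)+G(\mg)\circ \M_{1,0}$. On $1\otimes x$, expanding both sides and separating the $1\otimes(\cdot)$ and $ds\otimes(\cdot)$ components gives two identities to verify: the $1\otimes$ part reduces to $i_s\pi_s - \id = dG_s + G_s d$, which is exactly (\ref{i(g)pi(g)-id_eq}); the $ds\otimes$ part, after the cross-terms $G_s\partial_s x$ cancel, becomes
\[
i_s k_s + h_s\pi_s = \frac{dG_s}{dt} - (d\sigma_s - \sigma_s d),
\]
which is precisely the defining property (\ref{dG_t-dt-ik-hpi-eq}) of $\sigma_s$. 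The sector $ds\otimes x$ uses only (\ref{i(g)pi(g)-id_eq}) again. Sign accounting is the one subtle point, coming from the rule $\M_{1,0}(ds\otimes y)=-ds\otimes dy$ and the analogous sign in $G(\mg)$; once these are consistently applied the cancellations are automatic.

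Finally the four side conditions are pure verifications with no differential involved. On each generator they decompose into sums of compositions of the form $(\alpha_s\circ \beta_s)$ and $(\alpha_s\circ \gamma_s)$ with $\alpha,\beta,\gamma\in\{i_s,\pi_s,G_s,h_s,k_s,\sigma_s\}$, and each such term vanishes by one of (\ref{pi(g) i(g)=id-eq})--(\ref{G(g) i(g)=0-eq}), (\ref{pi_t h_t=0-eq})--(\ref{k_t G_t=0_eq}), (\ref{sigma-G=G-sigma=0-eq})--(\ref{sigma_t-i_t=0}). For instance $G(\mg)\circ i(\mg)$ on $1\otimes\bar x$ produces the three terms $1\otimes G_s i_s\bar x$, $ds\otimes\sigma_s i_s\bar x$ and $-ds\otimes G_s h_s\bar x$, killed respectively by (\ref{G(g) i(g)=0-eq}), (\ref{sigma_t-i_t=0}) and (\ref{G_t h_t=0_eq}). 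The only genuine difficulty would have been constructing $h_s,k_s,\sigma_s$ simultaneously satisfying all these orthogonalities, but that work is already done in the preceding two lemmas; here the proof is a bookkeeping exercise, and I would organize it as a table of the seven identities with the six fiberwise facts used in each.
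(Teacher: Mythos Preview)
Your proposal is correct and follows essentially the same approach as the paper's proof: both verify the seven defining identities of a strong contraction by expanding the pointwise formulas and reducing each to the fiberwise relations from Lemmas \ref{harmonic h_t k_t-lem}, \ref{harmonic-sigma-lem}, and the strong contraction properties of $\con(g_s)$. The paper writes the computations in the operator form $1\otimes(\cdot)+ds\otimes(\cdot)$ rather than by evaluating on generators, but this is only a notational variant of what you outline.
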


\begin{proof}
Our goal now is to show all the properties listed in Definition \ref{contraction-defn}. To begin with, the degrees are clearly as expected: 
$\deg i(\mg)=\deg \pi(\mg)=0$ and $\deg G(\mg)=-1$.
We note that $\partial_si_s=\partial_s\circ i_s - i_s\circ \partial_s$, $\partial_s \pi_s=\partial_s\circ \pi_s - \pi_s\circ \partial_s$, and $\partial_sG_s=\partial_s\circ G_s-G_s\circ \partial_s$.
Firstly, we aim to prove $i(\mg)$, $\pi(\mg)$ are cochain maps.
By (\ref{di_t dt =d h_t -eq}), 
$
\M_{1,0}\circ i(\mg)
=
1\otimes d\circ i_s +ds\otimes \left( \partial_s\circ i_s-d\circ h_s \right) 
=
ds\otimes i_s\circ \partial_s = i(\mg)\circ d^\oi
$. Then, by (\ref{dpi_t dt=k_t d -eq}), we have
$
\pi(\mg)\circ \M_{1,0}
=
1\otimes \pi_s\circ d +ds\otimes (k_s\circ d+ \pi_s\circ \partial_s)
=
ds\otimes \partial_s\circ \pi_s
=
d^\oi \circ \pi(\mg)
$.
Secondly, we compute
\begin{align*}
i(\mg)\circ \pi(\mg) -\id
&
=1\otimes (i_s\circ \pi_s-\id)+ds\otimes 
(
i_s\circ k_s+h_s\circ \pi_s
)
\\
\M_{1,0}\circ G(\mg)
&
=
1\otimes d\circ G_s +ds\otimes (\partial_s\circ G_s-d\circ \sigma_s)\\
G(\mg)\circ \M_{1,0}
&
=
1\otimes G_s\circ d+ds\otimes (\sigma_s\circ d-G_s\circ \partial_s)
\end{align*}
Applying Lemma \ref{strong-contraction-metric g-lem} to $g_s$ infers that $i_s\circ \pi_s-\id=d\circ G_s+G_s\circ d$. Then, by (\ref{dG_t-dt-ik-hpi-eq}), we get $i(\mg)\circ \pi(\mg)-\id =\M_{1,0}\circ G(\mg) +G(\mg)\circ \M_{1,0}$.
Thirdly, since $\pi_s\circ i_s=\id$, by (\ref{k_t i_t=0-eq}) and (\ref{pi_t h_t=0-eq}) we obtain
\begin{align*}
\pi(\mg)\circ i(\mg)
=
1\otimes \pi_s\circ i_s +ds\otimes ( \pi_s\circ h_s+k_s\circ i_s) =1\otimes \id =\id
\end{align*}
Fourthly, note that $G_s\circ G_s=0$; by using (\ref{sigma-G=G-sigma=0-eq}) we obtain
\begin{align*}
G(\mg)\circ G(\mg)
=
1\otimes G_s\circ G_s +ds\otimes (\sigma_s\circ G_s-G_s\circ \sigma_s)
=0
\end{align*}
Fifthly, note that $G_s\circ i_s=0$; by using (\ref{sigma_t-i_t=0}) and (\ref{G_t h_t=0_eq}) we obtain
\begin{align*}
G(\mg)\circ i(\mg)
=
1\otimes G_s\circ i_s +ds\otimes (\sigma_s\circ i_s - G_s\circ h_s)=0
\end{align*}
Sixthly, note that $\pi_s\circ G_s=0$; by using (\ref{k_t G_t=0_eq}) and (\ref{pi_t-sigma_t=0}) we obtain
\begin{align*}
\pi(\mg)\circ G(\mg)=1\otimes \pi_s\circ G_s +ds\otimes (k_s\circ G_s+\pi_s\circ \sigma_s)=0
\end{align*}
In conclusion, we have checked all the conditions in Definition \ref{contraction-defn}, and the lemma is now proved.
\end{proof}

\begin{lem}
\label{strong-contraction [0,1]-property-lem}
The operators $i(\mathbf g)$, $\pi(\mathbf g)$ and $G(\mg)$ are compatible with $\eval^s$ in the sense that
$\eval^s\circ i(\mathbf g) = i_s\circ \eval^s$, $\eval^s\circ \pi(\mathbf g) = \pi_s\circ \eval^s$, and
$\eval^s\circ G(\mg) = G_s\circ \eval^s$.
\end{lem}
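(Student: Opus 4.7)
The plan is to verify the three identities by direct computation on generators, exploiting the fact that $\eval^s$ annihilates every term in the $ds\otimes-$ component. Recall from Remark \ref{pointwise_determine_rmk} that $\HL_\oi$ (resp.\ $\OL_\oi$) is generated as a graded abelian group by elements of the form $1\otimes \bar x(\cdot)$ and $ds\otimes \bar x(\cdot)$ (resp.\ $1\otimes x(\cdot)$ and $ds\otimes x(\cdot)$), and by definition $\eval^s$ sends $1\otimes y(\cdot)\mapsto y(s)$ and $ds\otimes y(\cdot)\mapsto 0$. Moreover, the three operators $i(\mg)$, $\pi(\mg)$ and $G(\mg)$ in (\ref{i_pi_G_[0,1]-eq}) are already written out on these generators, which makes both sides of each identity immediate to evaluate.

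First I would verify the identity for $i(\mg)$. On a generator $1\otimes\bar x(\cdot)$, the left side is
\[
\eval^s\circ i(\mg)\,(1\otimes \bar x)=\eval^s\big(1\otimes i_s(\bar x)+ds\otimes h_s(\bar x)\big)=i_s\big(\bar x(s)\big),
\]
while the right side is $i_s\circ\eval^s(1\otimes\bar x)=i_s(\bar x(s))$. On a generator $ds\otimes\bar x(\cdot)$, the left side is $\eval^s(ds\otimes i_s(\bar x))=0$, and the right side is $i_s(0)=0$. The same line-by-line check handles $\pi(\mg)$ and $G(\mg)$: in each case the $1\otimes-$ component of the defining formula in (\ref{i_pi_G_[0,1]-eq}) gives precisely the desired commutation relation, while the $ds\otimes-$ component is killed by $\eval^s$ on both sides.

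The only ``hard'' step is really a bookkeeping one, namely making sure the signs in (\ref{i_pi_G_[0,1]-eq}) match Definition \ref{pointwise-defn} so that one is allowed to evaluate componentwise. But this has already been noted immediately after (\ref{i_pi_G_[0,1]-eq}), so no additional work is needed. The proof thus reduces to the four-line verification above on each of the two types of generators for the three operators, and no analytic input beyond the definitions of $i_s$, $\pi_s$, $G_s$, $h_s$, $k_s$, $\sigma_s$ is required.
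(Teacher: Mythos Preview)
Your proposal is correct and follows essentially the same approach as the paper: both verify the identities by direct evaluation on the two types of generators $1\otimes \bar x$ and $ds\otimes \bar x$, using that $\eval^s$ kills the $ds\otimes-$ component and that the $1\otimes-$ component of each operator in (\ref{i_pi_G_[0,1]-eq}) is exactly $i_s$, $\pi_s$, or $G_s$.
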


\begin{proof}
The proof is direct.
$
\eval^s\circ i(\mathbf g)( 1\otimes \bar x)=\eval^s \big( 1\otimes i_s(\bar x(s)) +ds \otimes h_s (\bar x(s)) \big) =i_s(\bar x(s))= i_s \circ \eval^s (1\otimes \bar x)
$
and 
$
\eval^s\circ i(\mathbf g)(ds\otimes \bar x)=
\eval^s \big(ds\otimes i_s\bar x(s) \big)=0=i_s\circ \eval^s (ds\otimes \bar x)
$.
Similarly, one can show the lemma for $\pi(\mg)$ and $G(\mg)$.
\end{proof}

\subsection{Pseudo-isotopies of canonical models}
\label{ss_pseudo_isotopy_canonical_model}

Fix a gapped pseudo-isotopy
$
(\OL_\oi,\check \M)
$, and we write
\begin{equation}
\label{check_M_harmonic_section_eq}
\check \M=1\otimes \check \m^s+ds\otimes \check \mc^s
\end{equation}
Take a smooth path $\mg=(g_s)_{0\le s\le 1}$ of metrics on $L$ for which we have 
a strong contraction $\con(\mg)=(i(\mg),\pi(\mg),G(\mg))$ by Lemma \ref{strong-contraction [0,1]-lem}.
Moreover, for each single $g_s$, we also have a harmonic contraction
$
\con(g_s)=
(i(g_s),\pi(g_s),G(g_s))
$
by Lemma \ref{strong-contraction-metric g-lem}.
Abusing the notations, the various constant-one functions are all denoted by $\one$.
By Remark \ref{harmonic constant-one-rmk}, we have
$
i(g_s)(\one)=\one$, $\pi(g_s)(\one)=\one$, and $i(g_s)\circ \pi(g_s)(\one)=\one$.

Every $\check \m^s$ given in (\ref{check_M_harmonic_section_eq}) is an $A_\infty$ algebra on $\OL$; by Theorem \ref{UD-canonical-model-thm}, we get its canonical model with respect to $\con(g_s)$ (Definition \ref{canonical_model_defn}):
\begin{equation*}
\label{m_g_s_eq}
(\HL,\m^{g_s}, \mi^{g_s}) \qquad \text{with  } 
\m^{g_s}\in\Obj\UD, \ \ \mi^{g_s}\in\Mor\UD
\end{equation*}
Note that Theorem \ref{UD-canonical-model-thm} is just a special situation of Theorem \ref{canonical_model_general_tree-thm}; our purpose is to show an analog of Theorem \ref{UD-canonical-model-thm} for a pseudo-isotopy of canonical models.
To be specific, we first apply Theorem \ref{canonical_model_general_tree-thm} to the chain-level pseudo-isotopy $\check \M$ and the $\con(\mg)$, thereby obtaining a canonical model
\begin{equation}\label{M_mg_HL_oi}
(\HL_\oi, \M^\mg, \mI^\mg)
\end{equation}
of $(\OL_\oi, \check \M)$ such that
\begin{equation}
\label{mI_mg_10_eq}
\mI^\mg_{1,0}=i(\mg) \quad \text{and}\quad \M^\mg_{1,0} =ds\otimes \tfrac{d}{ds}
\end{equation}

\begin{lem}
	\label{canonical-model-[01]Pointwise-mi-lem}
Both $\M^\mg$ and $\mI^\mg$ are $\oi$-pointwise. Moreover, we have:
\begin{equation}
\label{eval_MImg_eq}
\eval^s   \M^\mg
=\m^{g_s} \diamond \eval^s, \qquad \text{and}\quad
\eval^s   \mI^\mg
=\mi^{g_s}\diamond \eval^s
\end{equation}
\end{lem}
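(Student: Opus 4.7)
The plan is to prove both assertions by induction on the tree complexity used in the canonical-model construction of Theorem~\ref{canonical_model_general_tree-thm}, exploiting that every building block of that construction is either pointwise or commutes with $\eval^s$ in the appropriate sense.

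First I would verify the $\oi$-pointwiseness of $\M^\mg$ and $\mI^\mg$. Recall that by (\ref{i_pi_G_[0,1]-eq}) the maps $i(\mg)$, $\pi(\mg)$, $G(\mg)$ are unary operators whose signs are arranged precisely to fit Definition~\ref{pointwise-defn}, so they are $\oi$-pointwise. Moreover $\check{\M}$ is a pseudo-isotopy, hence $\oi$-pointwise by Definition~\ref{P-pseudo-isotopy-defn}. Pointwiseness is preserved under composition and tensor products of pointwise multilinear operators (a short sign-check using (\ref{sign-p-twist-eq}) handles the $\id_\#$ factors). Applying this inductively to the tree formula (\ref{induction-tree-formula-eq}),
\[
\mI^\mg_{k,\beta} = \sum G(\mg)\circ \check\M_{\ell,\B(v)}\circ (\mI^\mg_{T_1}\otimes\cdots\otimes \mI^\mg_{T_\ell}),
\qquad
\M^\mg_{k,\beta} = \sum \pi(\mg)\circ \check\M_{\ell,\B(v)}\circ (\mI^\mg_{T_1}\otimes\cdots\otimes \mI^\mg_{T_\ell}),
\]
yields the $\oi$-pointwiseness of each summand, and hence of $\mI^\mg$ and $\M^\mg$. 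The base case $\mI^\mg_{1,0} = i(\mg)$ and $\M^\mg_{1,0} = ds\otimes \partial_s$ is immediate from (\ref{mI_mg_10_eq}).

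Next, I would establish the evaluation identities by induction on the order of pairs $(k,\beta)$ (Remark~\ref{Induction}). By Remark~\ref{eval-incl-as-A_infty-rmk}, $\eval^s$ is an $A_\infty$ homomorphism from $(\OL_\oi,\check\M)$ to $(\OL,\check\m^s)$, so $\eval^s\circ \check\M_{\ell,\beta_0} = \check\m^s_{\ell,\beta_0}\circ (\eval^s)^{\otimes \ell}$. Combined with Lemma~\ref{strong-contraction [0,1]-property-lem} (namely $\eval^s\circ G(\mg) = G_s\circ \eval^s$ and $\eval^s\circ \pi(\mg) = \pi_s\circ \eval^s$), the induction step reads
\begin{align*}
\eval^s\circ \mI^\mg_{k,\beta}
&= \sum \eval^s\circ G(\mg)\circ \check\M_{\ell,\B(v)}\circ (\mI^\mg_{T_1}\otimes\cdots\otimes \mI^\mg_{T_\ell}) \\
&= \sum G_s\circ \check\m^s_{\ell,\B(v)}\circ (\eval^s\circ \mI^\mg_{T_1}\otimes\cdots\otimes \eval^s\circ \mI^\mg_{T_\ell}) \\
&= \sum G_s\circ \check\m^s_{\ell,\B(v)}\circ (\mi^{g_s}_{T_1}\otimes\cdots\otimes \mi^{g_s}_{T_\ell})\circ (\eval^s)^{\otimes k} = \mi^{g_s}_{k,\beta}\circ (\eval^s)^{\otimes k},
\end{align*}
where the third equality is the induction hypothesis applied to the sub-trees $T_i$ (all of which are strictly smaller in the tree-order), and the final equality is the tree formula (\ref{induction-tree-formula-eq}) for $\mi^{g_s}$. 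Since $(\eval^s\circ \mI^\mg)_{k,\beta} = \eval^s\circ \mI^\mg_{k,\beta}$ and $(\mi^{g_s}\circ \eval^s)_{k,\beta} = \mi^{g_s}_{k,\beta}\circ (\eval^s)^{\otimes k}$ (because $\eval^s$ has only the $(1,0)$-component nonzero, c.f.\ Definition~\ref{Composition-defn}), this is exactly the claimed identity. The identical argument with $\pi(\mg)$ in place of $G(\mg)$ proves $\eval^s\circ \M^\mg = \m^{g_s}\circ \eval^s$; the exceptional cases $(k,\beta) = (0,0), (1,0)$ are checked by hand using $\M^\mg_{1,0} = ds\otimes \partial_s$ and $\m^{g_s}_{1,0} = 0$.

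The only place demanding genuine care is the sign bookkeeping in the pointwiseness step: verifying that the composite $G(\mg)\circ \check\M\circ (\mI^\mg\otimes\cdots\otimes \mI^\mg)$ satisfies the sign compatibility of Definition~\ref{pointwise-defn} requires tracking how $ds$-factors propagate through $\id_\#$-twists coming from the tensor slot in which a given $ds$ sits. Once this is done for a single tree vertex by a direct computation matching the degree conventions in (\ref{i_pi_G_[0,1]-eq}), the remainder of the argument is the routine induction described above.
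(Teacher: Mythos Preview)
Your proposal is correct and follows essentially the same approach as the paper: pointwiseness of $\M^\mg$ and $\mI^\mg$ is inherited from the pointwiseness of $i(\mg),\pi(\mg),G(\mg)$ and $\check\M$ through the tree construction, and the evaluation identities are proved by induction on $(k,\beta)$ using Lemma~\ref{strong-contraction [0,1]-property-lem} together with $\eval^s\circ\check\M=\check\m^s\circ\eval^s$ inside the inductive formula~(\ref{induction-tree-formula-eq}). The paper additionally phrases part of the argument by writing $\mI^\mg=1\otimes\mi^s+ds\otimes\mathfrak j^s$ and first identifying $\mi^s=\eval^s\circ\mI^\mg\circ\incl$ with $\mi^{g_s}$ via the same recursion, but this is just a notational variant of your direct induction.
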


\begin{proof}
By Lemma \ref{strong-contraction [0,1]-property-lem}, the $i(\mg)$, $\pi(\mg)$, and $G(\mg)$ are $\oi$-pointwise.
Besides, an iterated composition of pointwise operators is still pointwise. So, by construction, the $\M^\mg$ and $\mI^\mg$ are also $\oi$-pointwise.
By Remark \ref{pointwise_determine_rmk}, we write $\mI^\mg=1\otimes  \mi^s+ds\otimes \mathfrak j^s$ and $\M^\mg=1\otimes \m^s+ds\otimes \mc^s$.
Note that $\mi^s=\eval^s\diamond \mI^\mg\diamond \incl$.

Now, it remains to show $\mi^s$ and $\m^s$ coincide with $\mi^{g_s}$ and $\m^{g_s}$ respectively.
To see this, we look at the inductive formula (\ref{induction-tree-formula-eq}). By Lemma \ref{strong-contraction [0,1]-property-lem} and Remark \ref{eval-incl-as-A_infty-rmk}, we have 
\begin{equation} \label{precompose_before_eq}
\begin{aligned}
\eval^s   \mI^\mg_{k,\beta}
&
=
\textstyle \sum \eval^s \circ  G(\mg) \circ \check \M_{\ell,\beta_0} \circ (\mI^\mg_{k_1,\beta_1}\otimes \cdots\otimes \mI^\mg_{k_\ell,\beta_\ell})\\
&
=
\textstyle \sum  G(g_s) \circ \eval^s \circ \check \M_{\ell,\beta_0} \circ (\mI^\mg_{k_1,\beta_1}\otimes \cdots\otimes \mI^\mg_{k_\ell,\beta_\ell}) \\
&
=
\textstyle \sum G(g_s) \circ \check \m^s_{\ell,\beta_0} \circ (\eval^s \ \mI^\mg_{k_1,\beta_1}\otimes \cdots\otimes \eval^s \  \mI^\mg_{k_\ell,\beta_\ell})
\end{aligned}
\end{equation}
On the one hand, we can further pre-compose with the $\incl$ map in the above, thereby obtaining that
\begin{equation*}
\mi^s_{k,\beta}
\equiv
\eval^s\diamond \mI^\mg_{k,\beta}\diamond \incl
=
\textstyle \sum G(g_s)\circ \check \m^s_{\ell,\beta_0}\circ (\mi^s_{k_1,\beta_1} \otimes \cdots \otimes \mi^s_{k_\ell,\beta_\ell})
\end{equation*}
In other words, the $\mi^s$ and $\mi^{g_s}$ share the same inductive formulas (\ref{induction-tree-formula-eq}) with respect to the same $\check \m^s$.
Moreover, by (\ref{i_pi_G_[0,1]-eq}) (\ref{mI_mg_10_eq}) (\ref{mi_g_10_eq}), the initial cases agree:
$\mi^s_{1,0}=\eval^s\circ \mI^\mg_{1,0}\circ \incl= \eval^s\circ i(\mg)\circ \incl = i(g_s)=\mi^{g_s}_{1,0}$.
Hence, by induction, we conclude that $\mi^s=\mi^{g_s}$. Arguing in the same way but replacing $G(g_s)$ by $\pi(g_s)$ everywhere, we can also inductively prove $\m^s=\m^{g_s}$.
On the other hand, if we do not pre-compose with $\incl$'s as above,
then performing the induction arguments with (\ref{precompose_before_eq}) directly yields that $\eval^s  \mI^\mg=i^{g_s}\diamond \eval^s$ and $\eval^s  \M^\mg =\m^{g_s}\diamond \eval^s$.
\end{proof}


\begin{thm}
	\label{UD-canonical-model-[0,1]-thm}
If $(\OL_\oi, \check\M) \in\Obj\UD$, then $(\HL_\oi, \M^\mg) \in\Obj\UD$ and $\mI^\mg\in\Mor \UD$. 
\end{thm}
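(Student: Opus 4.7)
The plan is to deduce the theorem from a direct application of Proposition \ref{canonical-model_preserve_properties-prop} to the strong contraction $\con(\mg)$ and the input $A_\infty$ algebra $(\OL_\oi,\check\M)$, supplemented by Lemma \ref{canonical-model-[01]Pointwise-mi-lem} for pointwiseness and Remark \ref{Gapped_canonical_precise-rmk} for the Maslov non-negativity. In detail, Theorem \ref{canonical_model_general_tree-thm} produces the $A_\infty$ algebra $\M^\mg$ and the $A_\infty$ homomorphism $\mI^\mg$ from $\con(\mg)$, which is a strong contraction by Lemma \ref{strong-contraction [0,1]-lem}. The condition (I-0) for $\M^\mg$ is immediate from (\ref{mI_mg_10_eq}): the $\CC_{1,0}$-component equals $ds\otimes \frac{d}{ds}$, which is precisely the natural differential on $\HL_\oi$ as in (\ref{MP_10 -d -eq}) (with $\m_{1,0}=0$ on $\HL$). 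Item (I-4) is handled by Lemma \ref{canonical-model-[01]Pointwise-mi-lem}, which establishes $\oi$-pointwiseness of $\M^\mg$ and also shows $\eval^s\circ\M^\mg=\m^{g_s}\circ\eval^s$, so that $\M^\mg$ is genuinely a pseudo-isotopy. Items (I-5) and (II-5) hold because $\check\M\in\Obj\UD$ only involves $\beta$ with $\mu(\beta)\ge 0$, and by Remark \ref{Gapped_canonical_precise-rmk} the same is true for $\M^\mg$ and $\mI^\mg$.

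Next I would verify the input hypotheses of Proposition \ref{canonical-model_preserve_properties-prop}. Since $\check\M\in\Obj\UD$, it satisfies the divisor axiom and the cyclical unitality, and has $\incl(\one)$ as a $\oi$-unit. For part (i) of that proposition, I need $\partial\beta\cap i(\mg)(b)=\partial\beta\cap b$ for any divisor input $b=1\otimes\bar b^s+ds\otimes b^s_1\in\DI(\HL_\oi)$. By (\ref{i_pi_G_[0,1]-eq}),
\[
i(\mg)(b)=1\otimes i_s(\bar b^s)+ds\otimes\bigl(h_s(\bar b^s)+i_s(b^s_1)\bigr),
\]
and since $i_s(\bar b^s)$ is harmonic hence closed, the natural projection $q:Z^1(L)\to H^1(L)$ satisfies $q(i_s(\bar b^s))=\bar b^s$. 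Then by the definition of the cap product (\ref{cap_product_eq}) we get $\partial\beta\cap i(\mg)(b)=\partial\beta\cap\bar b^s=\partial\beta\cap b$. So (i) applies, giving the divisor axiom for $\M^\mg$ and $\mI^\mg$, i.e.\ items (I-3) and (II-3); the same computation yields condition (II-4) for $\mI^\mg_{1,0}=i(\mg)$. Part (ii) of Proposition \ref{canonical-model_preserve_properties-prop} gives the cyclical unitalities (I-2) and (II-2).

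For part (iii) and the remaining unital items (I-1) and (II-1), I would first check the hypothesis $i(\pi(\one))=\one$. Since $\one$ is harmonic for every $g_s$, we have $i_s(\one)=\one$ and $\pi_s(\one)=\one$; moreover $h_s(\one)=0$ because $i_s(\one)\equiv\one$ is constant in $s$ and $h_s=d_s^{-1}\circ\frac{di_s}{ds}$ by (\ref{h_t-eq}), and $k_s(\one)=0$ because $k_s$ vanishes on $\mathcal H_{g_s}(L)$ by (\ref{k_t-eq}). Hence from (\ref{i_pi_G_[0,1]-eq}),
\[
\pi(\mg)(\one)=1\otimes\one=\incl(\one),\qquad i(\mg)\circ\pi(\mg)(\one)=1\otimes\one=\one.
\]
Therefore Proposition \ref{canonical-model_preserve_properties-prop}(iii) produces a unit $\pi(\mg)(\one)=\incl(\one)$ of $\M^\mg$, which is by Definition \ref{P_unit_defn} a $\oi$-unit whose underlying element in $\HL$ is $\one$; this gives (I-1). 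The same proposition then yields that $\mI^\mg$ is unital with respect to these units, i.e.\ (II-1).

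Putting everything together, all axioms (I-0)--(I-5) for $\M^\mg$ and (II-1)--(II-5) for $\mI^\mg$ in Definition \ref{UD_defn} are verified. I do not expect any genuine obstacle in this argument: the work has essentially been pre-packaged by Proposition \ref{canonical-model_preserve_properties-prop} and by Lemmas \ref{strong-contraction [0,1]-lem}, \ref{strong-contraction [0,1]-property-lem}, and \ref{canonical-model-[01]Pointwise-mi-lem}. The only mildly delicate points are the two bookkeeping checks above: that $h_s$ and $k_s$ annihilate the unit $\one$, ensuring $\pi(\mg)(\one)=\incl(\one)$ genuinely lives in $\incl(\HL)$ and not merely in $\HL_\oi$; and the identification of the cap product under $i(\mg)$, which is what ties the divisor axiom on $\HL_\oi$ to that on $\OL_\oi$.
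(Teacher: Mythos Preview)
Your proposal is correct and follows essentially the same approach as the paper: both invoke Lemma \ref{canonical-model-[01]Pointwise-mi-lem} for pointwiseness, verify $i(\mg)(\pi(\mg)(\one))=\one$ and $\partial\beta\cap i(\mg)(b)=\partial\beta\cap b$, and then apply Proposition \ref{canonical-model_preserve_properties-prop}(i)(ii)(iii) together with Remark \ref{Gapped_canonical_precise-rmk}. The paper is simply terser, disposing of $h_s(\one)=k_s(\one)=0$ ``by degree reasons'' (since $\deg h_s=\deg k_s=-1$), whereas you compute these from the explicit formulas (\ref{h_t-eq}) and (\ref{k_t-eq}); both justifications are valid.
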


\begin{proof}
	This an analog of Theorem \ref{UD-canonical-model-thm}.
First, by Lemma \ref{canonical-model-[01]Pointwise-mi-lem} we know $\M^{\mg}$ is a pseudo-isotopy.
By degree reasons, we see $\pi(\mg)(\one)=1\otimes \pi(g_s)(\one)=1\otimes \one=\one$ and $i(\mg)(\one)=1\otimes i(g_s)(\one)=1\otimes \one=\one$. In particular, $i(\mg) (\pi(\mg)(\one))=\one$ holds.
For a divisor input $b\in \HL_\oi$, the cap product (\ref{cap_product_eq}) by definition satisfies that
$\partial\beta\cap i(\mg)(b)=
\partial\beta\cap b$.
Finally, one just use Proposition \ref{canonical-model_preserve_properties-prop} (i)(ii)(iii) and Remark \ref{Gapped_canonical_precise-rmk} to complete the proof.
\end{proof}

%

\subsubsection{An upshot for pseudo-isotopy-induced $A_\infty$ homomorphisms}
Suppose we are in the situation of Theorem \ref{UD-canonical-model-[0,1]-thm}. From the two pseudo-isotopies $(\OL_\oi,\check \M)\in\Obj\UD$ and $(\HL_\oi,\M^\mg)\in\Obj\UD$, the natural construction in Theorem \ref{from-pseudo-isotopies-to-A-infty-homo-thm}
produces two $A_\infty$ homomorphisms $\check \mC$ and $\mC^\mg$ respectively. Moreover, by Theorem \ref{UD-mC_thm}, we also know that $\check \mC\in \Mor\UD$ and $\mC^\mg\in\Mor\UD$.

\begin{lem}\label{mC-diamond-diagram-lem}
In the above situation, we have $\mi^{g_1}\diamond \mC^\mg \simud \check \mC\diamond \mi^{g_0}$
\end{lem}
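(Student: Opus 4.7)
The plan is to relate the two compositions $\mi^{g_1}\circ \mC^\mg$ and $\check\mC\circ \mi^{g_0}$ through a common intermediary built from the cohomology-level pseudo-isotopy $\M^\mg$ and the interpolating $A_\infty$ homomorphism $\mI^\mg: \M^\mg\to\check\M$ (which lies in $\Mor\UD$ by Theorem \ref{UD-canonical-model-[0,1]-thm}). The main tool is Theorem \ref{UD-mC_thm}, which, applied in turn to the two pseudo-isotopies $\check\M$ and $\M^\mg$ in $\UD$, gives
\[
\check\mC\circ \eval^0_{\check{}} \simud \eval^1_{\check{}},
\qquad
\mC^\mg\circ \eval^0_{\mg} \simud \eval^1_{\mg},
\]
as morphisms in $\UD$ from $\check\M$ to $\check\m^1$ and from $\M^\mg$ to $\m^{g_1}$, respectively. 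Here $\eval^s_{\check{}}$ and $\eval^s_{\mg}$ denote the evaluation maps on $\OL_\oi$ and $\HL_\oi$; by Lemma \ref{canonical-model-[01]Pointwise-mi-lem}, $\mI^\mg$ intertwines them, i.e.\
\[
\eval^s_{\check{}}\circ \mI^\mg = \mi^{g_s}\circ \eval^s_{\mg} \qquad (s=0,1).
\]

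Next I would precompose the first ud-homotopy with $\mI^\mg$ and postcompose the second with $\mi^{g_1}$, using that $\simud$ is a congruence on $\UD$ (Lemma \ref{UD-simud-composition-lem}). This yields
\[
\check\mC\circ \mi^{g_0}\circ \eval^0_{\mg} \;=\; \check\mC\circ \eval^0_{\check{}}\circ \mI^\mg \;\simud\; \eval^1_{\check{}}\circ \mI^\mg \;=\; \mi^{g_1}\circ \eval^1_{\mg},
\]
and on the other hand
\[
\mi^{g_1}\circ \mC^\mg\circ \eval^0_{\mg} \;\simud\; \mi^{g_1}\circ \eval^1_{\mg}.
\]
By transitivity of $\simud$ (Corollary \ref{UD_homotopy_summary_cor}), the two compositions $\check\mC\circ \mi^{g_0}\circ \eval^0_{\mg}$ and $\mi^{g_1}\circ \mC^\mg\circ \eval^0_{\mg}$ are ud-homotopic as morphisms $\M^\mg\to\check\m^1$ in $\UD$.

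Finally, to cancel the factor $\eval^0_{\mg}$ on the right, I would invoke Whitehead Theorem \ref{Whitehead-full-thm}: since $(\eval^0_{\mg})_{1,0}$ is a quasi-isomorphism by Lemma \ref{eval^s quasi-isom-lem}, the morphism $\eval^0_{\mg}:\M^\mg\to\m^{g_0}$ admits a ud-homotopy inverse $(\eval^0_{\mg})^{-1}$ in $\UD$. Composing on the right with this inverse and using the congruence property once more produces
\[
\check\mC\circ \mi^{g_0} \simud \mi^{g_1}\circ \mC^\mg,
\]
which is the claim. The steps are essentially formal manipulations within $\UD$; the only subtlety to check carefully is that every composition stays inside $\Mor\UD$ so that Lemma \ref{UD-simud-composition-lem} applies, but this is built into the constructions already (Theorems \ref{UD-mC_thm}, \ref{UD-canonical-model-thm}, \ref{UD-canonical-model-[0,1]-thm}, and Remark \ref{Whitehead_flexibility_g_10_rmk}). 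No genuinely hard estimate is required; the content is that the two natural recipes for producing a morphism $\m^{g_0}\to\check\m^1$ from $\check\M$ and $\mg$ agree up to the homotopy relation engineered precisely for this kind of comparison.
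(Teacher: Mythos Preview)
Your proposal is correct and follows essentially the same approach as the paper's proof: apply Theorem \ref{UD-mC_thm} to both pseudo-isotopies, use the intertwining relation $\eval^s\circ\mI^\mg=\mi^{g_s}\circ\eval^s$ from Lemma \ref{canonical-model-[01]Pointwise-mi-lem}, chain the resulting ud-homotopies together, and then cancel $\eval^0$ via its ud-homotopy inverse from Theorem \ref{Whitehead-full-thm}. Your explicit separation of the two evaluation maps $\eval^s_{\check{}}$ and $\eval^s_{\mg}$ is a helpful clarification of what the paper leaves implicit.
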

\begin{proof}
According to Theorem \ref{UD-mC_thm}, we know $\check\mC \diamond  \eval^0\simud \eval^1$ and $\mC^\mg \diamond \eval^0\simud \eval^1$. Also, by Lemma \ref{canonical-model-[01]Pointwise-mi-lem}, we have $\eval^s \diamond \mI^{\mg}= \mi^{g_s} \diamond  \eval^s$ for $s=0,1$. Consequently,
\begin{align*}
\check\mC\diamond \mi^{g_0}\diamond \eval^0
=
\check \mC \diamond \eval^0 \diamond \mI^\mg 
\simud
\eval^1\diamond \mI^\mg
=
\mi^{g_1}\diamond \eval^1 
\simud \mi^{g_1}\diamond \mC^{\mg}\diamond \eval^0
\end{align*}
Take a ud-homotopy inverse of $\eval^0$ which exists by Theorem \ref{Whitehead-full-thm}, and we obtain $\check \mC\diamond \mi^{g_0}\simud \mi^{g_1}\diamond \mC^\mg$.
\end{proof}

The Lemma \ref{mC-diamond-diagram-lem} above holds only for the harmonic contractions and may fail in general. Indeed, the pointwiseness of $\con(\mg)$ ensures Lemma \ref{canonical-model-[01]Pointwise-mi-lem} which is essential to prove Lemma \ref{mC-diamond-diagram-lem}.

\subsection{Fukaya's trick}
\label{S_Fuk_trick}

Let $L$ and $\tilde L$ be two Lagrangian submanifolds in $X$ that are sufficiently adjacent, and suppose also there exists a sufficiently small diffeomorphism $F\in \diff_0(X)$ so that $F(L)=\tilde L$.
Since the $\omega$-tameness is an open condition, we may assume both $J$ and $F_*J$ are in $\mathfrak J(X,\omega)$, where $F_*J:=dF\circ J\circ dF^{-1}$.
Note that whenever $J\in\mathfrak J(X,L,\omega)$ (Assumption \ref{assumption-mu ge 0}), we have $F_*J\in\mathfrak J(X,\tilde L,\omega)$.

Note that the $F$ induces a natural isomorphism 
\begin{equation}
\label{G(X,L)_F_Fuk_trick_eq}
F_*: \pi_2(X,L) \cong \pi_2(X,\tilde L)
\end{equation}
which preserves the Maslov indices and only depends on the homotopy class of $F$.
We denote $\tilde \beta=F_*\beta$.

However, the energy is not preserved by the above $F_*$. If we take a Weinstein neighborhood of $L$ and $\tilde L$, then one can find a tautological 1-form $\lambda$ such that it vanishes exactly on $\tilde L$ and $\omega=d\lambda$.
Then, for an isotopy from $F$ to $\id$, one can use Stokes' formula to show that $E(F_*\beta)-E(\beta)=-\partial \beta\cap \lambda|_L$.

\begin{rmk}
	\label{G(M,L)_energy_vary_rmk}
In our situation,
the $L$ and $\tilde L$ are usually two adjacent Lagrangian torus fibers of $\pi$. We may take some action-angle coordinates $\alpha_i\in \mathbb R/2\pi \mathbb Z$, $x_i\in\mathbb R$ such that $L=\{x_i=0\}$ and $\tilde L=\{x_i=c_i\}$. Then, we have $\lambda= \sum_i (x_i-c_i)d\alpha_i$, so $\lambda|_{\tilde L}=0$ and $\lambda|_L= -\sum c_i d\alpha_i$. 
Assume $L=L_q$ and $\tilde L=L_{q'}$ for some base points $q$ and $q'$ in $B_0$; then $q'-q=(c_1,\dots, c_n)$ can be viewed as a vector in $H^1(L_q)\cong T_qB_0\cong \mathbb R^n$ (c.f. \S \ref{sss_integral_affine_str}). Now, the energy change is given by
$E(\tilde \beta)-E(\beta)=\partial \beta \cap (q'-q)$.
\end{rmk}

As explained in \cite{FuCyclic}, the diffeomorphism $ F$ gives a homeomorphism of moduli spaces:
\begin{equation}
\label{moduli_isom-eq}
F_{\mathcal M}: \mathcal M_{k, \beta}(J,L) \xrightarrow{\cong} \mathcal M_{k,  \tilde \beta} (  F_*J, \tilde L)
\end{equation}
In fact, a $J$-holomorphic disk $u$ on the left side corresponds to $F\circ u$ on the right side, and vice versa.
Furthermore, a Kuranishi-theory choice $\Xi$ (see e.g. Convention \ref{convention_Xi_J}) for the left side can induce via $F$ the other choice $ F_* \Xi$ for the right side moduli.
From these data, using Theorem \ref{Vir-m^J-thm} obtains two chain-level $A_\infty$ algebras $\check \m^{J,\Xi, L}$ on $\Omega^*(L)$ and $\check \m^{ F_*J,F_*\Xi, \tilde L}$ on $\Omega^*(\tilde L)$ which are closely related to each other.
To emphasize that the data are $F$-related, we introduce the following notation:
\begin{equation}
\label{check m_F J L'-eq}
\check \m^{ F_*(J, \Xi), \tilde L }:=\check \m^{ F_*J,  F_*\Xi, \tilde L}
\end{equation}
Heuristically, we may think of it as a `pushforward' of $\check\m^{J,L}$ where the data $J$, $L$, and $\Xi$ are `pushed' to $ F_*J$, $\tilde L= F(L)$ and $ F_*\Xi$ respectively. 
For simplicity, we will omit $\Xi$ and $F_*\Xi$ in the notations since now.
Recall that
$F^*: \Omega^*(\tilde L) \to \Omega^*(L)$ and
$F^*: H^*(\tilde L) \to H^*(L)$
are naturally the cochain maps.
To begin with, we have the following \textit{chain-level} Fukaya trick (see \cite{FuCyclic, FuCounting}):

\begin{lem}[Chain-level Fukaya's trick]
	\label{Fukaya's trick-cochain-level-lem}
	\begin{equation}
	\label{Trick_check_m_eq}
	\check\m^{ F_*J, \tilde L}_{k, \tilde \beta} (x_1,\dots,x_k) = ( F^{-1})^* \check\m^{J, L}_{k,  \beta}( F^*x_1,\dots,  F^*x_k)
	\end{equation}
\end{lem}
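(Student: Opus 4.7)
The plan is to reduce the identity to a naturality statement for the correspondence-map construction in \S\ref{S_A_infty_associated} under the diffeomorphism $F$. The conceptual content is that $F$ is an isomorphism of all the geometric data entering the definition of $\check\m$: the almost complex structure, the Lagrangian, the moduli spaces of stable disks, the evaluation maps, and (by the tautological definition $F_*\Xi$) the Kuranishi-theoretic choices on top.

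First I would set up the $F$-equivariance of moduli data. The map $u\mapsto F\circ u$ gives the isomorphism $F_{\mathcal M}:\mathcal M_{k+1,\beta}(J,L)\xrightarrow{\cong}\mathcal M_{k+1,\tilde\beta}(F_*J,\tilde L)$ in (\ref{moduli_isom-eq}), and the evaluation maps satisfy $\widetilde{\ev}_i\circ F_{\mathcal M}=F\circ \ev_i$ for every $0\le i\le k$, where the tildes denote evaluation maps on the $\tilde L$-side. Since the Kuranishi structure and CF-perturbation on the $\tilde L$-side are defined to be $F_*\Xi$, the map $F_{\mathcal M}$ is an isomorphism of Kuranishi spaces with CF-perturbations, and in particular preserves integration along fibers.

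Second, I would unwind the definition (\ref{defining_A_infty_II_eq}) on both sides. Writing $\ev_0,\dots,\ev_k$ for the evaluation maps on the $L$-side and $\widetilde{\ev}_0,\dots,\widetilde{\ev}_k$ for those on the $\tilde L$-side, we have
\[
\check\m^{F_*J,\tilde L}_{k,\tilde\beta}(x_1,\dots,x_k)=(-1)^\epsilon\,\widetilde{\ev}_{0!}\bigl(\widetilde{\ev}_1^*x_1\wedge\cdots\wedge\widetilde{\ev}_k^*x_k\bigr).
\]
Using $\widetilde{\ev}_i=F\circ\ev_i\circ F_{\mathcal M}^{-1}$, the pullback factors satisfy $\widetilde{\ev}_i^*x_i=(F_{\mathcal M}^{-1})^*\ev_i^*F^*x_i$. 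Substituting and pulling $(F_{\mathcal M}^{-1})^*$ out of the wedge product gives
\[
\check\m^{F_*J,\tilde L}_{k,\tilde\beta}(x_1,\dots,x_k)=(-1)^\epsilon\,\widetilde{\ev}_{0!}\,(F_{\mathcal M}^{-1})^*\bigl(\ev_1^*F^*x_1\wedge\cdots\wedge\ev_k^*F^*x_k\bigr).
\]
Third, I would invoke the naturality of pushforward under $F_{\mathcal M}$ (this is where $F_*\Xi$ enters decisively): since $F_{\mathcal M}$ is an isomorphism of Kuranishi spaces with perturbations and $\widetilde{\ev}_0=F\circ\ev_0\circ F_{\mathcal M}^{-1}$, we get
\[
\widetilde{\ev}_{0!}\circ(F_{\mathcal M}^{-1})^*=F_!\circ\ev_{0!}=(F^{-1})^*\circ\ev_{0!},
\]
the last equality because $F$ is a diffeomorphism. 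Applying this yields
\[
\check\m^{F_*J,\tilde L}_{k,\tilde\beta}(x_1,\dots,x_k)=(F^{-1})^*\bigl((-1)^\epsilon\,\ev_{0!}(\ev_1^*F^*x_1\wedge\cdots\wedge\ev_k^*F^*x_k)\bigr)=(F^{-1})^*\check\m^{J,L}_{k,\beta}(F^*x_1,\dots,F^*x_k).
\]
Finally, the exceptional cases $(k,\beta)\in\{(0,0),(1,0)\}$ are checked by hand: both sides are zero in the first case, and in the second both reduce to $d$, which commutes with $(F^{-1})^*$.

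The main obstacle, and the only point that is not purely formal, is the rigorous verification that $F_{\mathcal M}$ is genuinely an isomorphism at the level of Kuranishi structures and CF-perturbations, so that the pushforward naturality $\widetilde{\ev}_{0!}\circ(F_{\mathcal M}^{-1})^*=F_!\circ\ev_{0!}$ is legitimate within Fukaya--Oh--Ohta--Ono's virtual fundamental chain framework. However, because the data on the $\tilde L$-side were \emph{defined} as the $F$-pushforward of the $L$-side data in (\ref{check m_F J L'-eq}), this reduces to the functoriality of the correspondence-map construction under diffeomorphisms, which is standard in that framework.
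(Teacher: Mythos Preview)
Your proof is correct and follows essentially the same strategy as the paper: both hinge on the commutative square $\widetilde{\ev}_i\circ F_{\mathcal M}=F\circ\ev_i$ and the fact that $F_{\mathcal M}$ intertwines the virtual fundamental chain data by construction. The only cosmetic difference is that the paper dualizes via a test form $x_0$ and computes the pairing $\langle\check\m_{k,\beta}(F^*x_1,\dots,F^*x_k),F^*x_0\rangle$ as a virtual integral (using (\ref{Kura_vir_count_eq}) and change of variables under $F_{\mathcal M}$), whereas you work directly with the pushforward naturality $\widetilde{\ev}_{0!}\circ(F_{\mathcal M}^{-1})^*=(F^{-1})^*\circ\ev_{0!}$; these are equivalent formulations of the same computation.
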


\begin{proof}
	[Sketch of proof]
	Since the virtual foundation for the proof goes beyond our scope, we only provide an intuitive description as follows. The argument can be made precise using the virtual techniques; see e.g. \cite[Lemma 13.4]{FuCyclic}. First, geometrically, we observe that the isomorphism (\ref{moduli_isom-eq}) of moduli spaces is compatible with the evaluation maps as illustrated in the following diagram
	\begin{equation*}
	\xymatrix{
		\mathcal M_{k,\beta}(J,L) \ar[rr]^{ F_{\mathcal M}} \ar[d]^{\ev_i} & & \mathcal M_{k, \tilde \beta}(  F_*J, \tilde L) \ar[d]^{\tilde\ev_i} \\
		L \ar[rr]^{ F} & & \tilde L
	}
	\end{equation*}
	In fact, the top horizontal map sends $[\uu,\z]$ to $[ F\circ \uu, \z]$, and it follows $\tilde\ev_i([ F\circ \uu,\z])=  F(\uu(z_i))= F\circ \ev_i([\uu,\z])$. Next, we denote $\check \m^{\smallsim}=\check \m^{F_*J, \tilde L}$ and $\check \m=\check \m^{J,L}$. Then, by (\ref{defining_A_infty_II_eq}) or (\ref{Kura_vir_count_eq}), we obtain:
	\begin{align*}
	\langle \check \m_{k,\beta}( F^*x_1,\dots, F^*x_k),  F^*x_0\rangle
	&
	=
	\textstyle
	\int_{\mathcal M_{k,\beta}(J,L)} {\ev_0}^* F^* x_0 \wedge {\ev_1}^* F^* x_1 \wedge \cdots \wedge {\ev_k}^* F^* x_k  \\
	&
	=
	\textstyle
	\int_{\mathcal M_{k,\beta}(J,L)}  F_{\mathcal M}^* \left(
	\tilde\ev_0^*x_0\wedge \tilde\ev_1^*x_1\wedge \cdots \wedge \tilde\ev_k^* x_k 
	\right)\\
	&
	=
	\textstyle
	\int_{\mathcal M_{k,  \tilde \beta}( F_* J,\tilde L)} \tilde\ev_0^*x_0\wedge \tilde\ev_1^*x_1\wedge \cdots \wedge \tilde\ev_k^* x_k \\
	&
	=\langle \check \m^{\smallsim}_{k, \tilde \beta}(x_1,\dots,x_k), x_0\rangle
	=\langle F^*  \check  \m^{\smallsim}_{k, \tilde \beta}(x_1,\dots,x_k),   F^* x_0\rangle
	\end{align*}
	Hence, we obtain $\check \m_{k,\beta}( F^*x_1,\dots, F^*x_k)=F^*  \check  \m^{\smallsim}_{k, \tilde \beta}(x_1,\dots,x_k)$.
\end{proof}

There is an induced Fukaya's trick in the cohomology-level.
The basic idea is that for the induced metric $F_*g=(F^{-1})^*g$, the two distinct harmonic contractions $\con(g)$ and $\con(F_*g)$ (Lemma \ref{strong-contraction-metric g-lem}) are also $F$-related, which can be incorporated into the homological perturbation.

\begin{lem}\label{Fukaya's trick-contraction-lem}
	The harmonic contraction $\con(F_*g)=\big(i( F_*g), \pi(  F_*g), G( F_*g) \big)$ satisfies:
	\begin{equation}
	\label{contraction-FukayaTrick-eq}
	\setlength{\jot}{0pt} 
	\begin{aligned}
	i(F_*g)= & ( F^{-1})^* \circ i(g) \circ  F^* 
	\\
	\pi(F_*g)  = & ( F^{-1})^* \circ \pi(g) \circ  F^*
	\\
	G(F_*g)  = & ( F^{-1})^* \circ G(g) \circ  F^* 
	\end{aligned}
	\end{equation}
\end{lem}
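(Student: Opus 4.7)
The plan is to reduce every identity to the single fact that $F\colon (L,g)\to(\tilde L,F_*g)$ is an isometry, which is immediate from the very definition $F_*g=(F^{-1})^*g$, i.e.\ $F^*(F_*g)=g$. Equivalently, the pullback
\[
\phi:=F^*\colon \Omega^*(\tilde L)\longrightarrow \Omega^*(L)
\]
is an isomorphism of graded inner product spaces whose inverse is $(F^{-1})^*$.

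From this isometry property I would then collect, as a first step, the following intertwinings, each of which is a standard consequence of naturality of the Hodge star under an isometry together with the naturality of $d$ under pullback:
\begin{equation*}
\phi\circ d_{\tilde L}=d_L\circ\phi,\quad
\phi\circ \delta_{F_*g}=\delta_g\circ\phi,\quad
\phi\circ\Delta_{F_*g}=\Delta_g\circ\phi,\quad
\phi\circ\mathcal H_{F_*g}=\mathcal H_g\circ\phi,\quad
\phi\circ\Gr_{F_*g}=\Gr_g\circ\phi.
\end{equation*}
In particular $\phi$ restricts to an isomorphism $\mathcal H^*_{F_*g}(\tilde L)\to \mathcal H^*_g(L)$, and on de Rham cohomology $\phi$ induces $F^*\colon H^*(\tilde L)\to H^*(L)$.

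With these intertwinings in hand, the three identities in \eqref{contraction-FukayaTrick-eq} are immediate from the definitions \eqref{harmonic i(g)}, \eqref{harmonic pi(g)}, \eqref{harmonic G(g)}. For $G$ I would simply compute
\[
\phi^{-1}\circ G(g)\circ \phi
=-\phi^{-1}\circ\Gr_g\circ\delta_g\circ\phi
=-\Gr_{F_*g}\circ\delta_{F_*g}
=G(F_*g),
\]
using the $\Gr$ and $\delta$ intertwinings. For $i$ I would characterize $i(F_*g)(\bar x)$ as the unique $F_*g$-harmonic representative of $\bar x\in H^*(\tilde L)$, and then observe that $\phi^{-1}\bigl(i(g)(\phi\,\bar x)\bigr)$ lies in $\mathcal H^*_{F_*g}(\tilde L)$ (since $\phi^{-1}$ takes $g$-harmonic forms to $F_*g$-harmonic forms) and represents $\bar x$ (since its class is $(F^{-1})^*F^*\bar x=\bar x$); uniqueness of harmonic representatives gives the identity. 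For $\pi$ I would apply $\phi^{-1}$ and $\phi$ to $\pi(g)=i(g)^{-1}\circ\mathcal H_g$ and use the $\mathcal H$ intertwining together with the identity just proved for $i$.

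There is no real obstacle here: the content is entirely that a diffeomorphism transported to both the metric and the Lagrangian becomes an isometry, and Hodge theory is natural under isometries. The only mildly delicate point is the bookkeeping distinction between $\phi$ acting on forms and the induced $F^*$ on cohomology, which I would keep straight by writing out the identity $\phi\circ\mathcal H_{F_*g}=\mathcal H_g\circ\phi$ before passing to the Hodge isomorphism.
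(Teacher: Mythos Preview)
Your proposal is correct and matches the paper's approach: the paper's proof is a one-liner stating that the three relations follow directly from the definitions (\ref{harmonic i(g)}, \ref{harmonic pi(g)}, \ref{harmonic G(g)}), which is exactly the naturality-under-isometry argument you have spelled out in detail. Your version simply makes explicit the intertwinings of $d$, $\delta$, $\Delta$, $\mathcal H$, and $\Gr$ that the paper leaves implicit.
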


\begin{proof}
	The three relations are easy to prove from the definitions in (\ref{harmonic i(g)}, \ref{harmonic pi(g)}, \ref{harmonic G(g)}).
\end{proof}

In view of Theorem \ref{UD-canonical-model-thm}, we denote by
\begin{equation}
\label{m_gJL-eq}
\left( H^*(L), \m^{g,J,L}, \mi^{g,J,L} \right)
\end{equation}
the canonical model of $\check \m^{J,L}$ with regard to $\con(g)$.
Meanwhile, analogously to (\ref{check m_F J L'-eq}), we denote by 
\begin{equation}
\label{m_F g J L' -eq}
( H^*(\tilde L), \m^{ F_*(g,J),\tilde L}, \mi^{ F_*(g,J), \tilde L} )
\end{equation}
the canonical model of $ \check \m^{ F_*J,\tilde L}$ with respect to $\con(F_*g)$.
The Fukaya's trick relates $\check \m^{F_*J,\tilde L}$ to $\check \m^{J,L}$ (Lemma \ref{Fukaya's trick-cochain-level-lem}). The use of harmonic contractions ensures an analog of Fukaya's trick for (\ref{m_gJL-eq}, \ref{m_F g J L' -eq}).

\begin{lem}[Cohomology-level Fukaya's trick]
	\label{Fukaya's trick-canonical-level-lem}
	\begin{align}
	\label{Trick_m_can_eq}
	\m^{ F_*(g,J), \tilde L}_{k, \tilde  \beta} (x_1,\dots,x_k) = ( F^{-1})^* \m^{g,J,L}_{k, \beta}( F^*x_1,\dots,  F^*x_k) \\
	\label{Trick_mi_can_eq}
	\mi^{ F_*(g,J),\tilde L}_{k,  \tilde \beta} (x_1,\dots,x_k) = ( F^{-1})^* \mi^{g,J, L}_{k,   \beta}( F^*x_1,\dots,  F^*x_k)
	\end{align}
\end{lem}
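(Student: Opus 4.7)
The plan is to prove both identities simultaneously by induction on the pairs $(k,\beta)$ with respect to the partial order described in Remark \ref{Induction}, following the tree construction of Theorem \ref{canonical_model_general_tree-thm}. The base case $(k,\beta)=(1,0)$ is immediate: we have $\mi^{g,J,L}_{1,0}=i(g)$ and $\mi^{F_*(g,J),\tilde L}_{1,0}=i(F_*g)$, so (\ref{Trick_mi_can_eq}) reduces to the first identity in Lemma \ref{Fukaya's trick-contraction-lem}; meanwhile $\m^{g,J,L}_{1,0}=0=\m^{F_*(g,J),\tilde L}_{1,0}$ makes (\ref{Trick_m_can_eq}) trivial. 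All the exceptional cases $(k,\beta)=(0,0),(1,0)$ are therefore dispensed with at the start.

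For the inductive step, I would apply the recursive formula (\ref{induction-tree-formula-eq}) to the right-hand side of (\ref{Trick_mi_can_eq}). A single summand there looks like
\[
(F^{-1})^*\circ G(g)\circ \check\m^{J,L}_{\ell,\beta_0}\circ\bigl(\mi^{g,J,L}_{k_1,\beta_1}\otimes\cdots\otimes \mi^{g,J,L}_{k_\ell,\beta_\ell}\bigr)\circ (F^*)^{\otimes k}.
\]
Inserting the identity $\id=F^*\circ(F^{-1})^*$ between consecutive factors and using Lemma \ref{Fukaya's trick-contraction-lem} to replace $(F^{-1})^*\circ G(g)\circ F^*$ by $G(F_*g)$, Lemma \ref{Fukaya's trick-cochain-level-lem} to replace $(F^{-1})^*\circ \check\m^{J,L}_{\ell,\beta_0}\circ (F^*)^{\otimes\ell}$ by $\check\m^{F_*J,\tilde L}_{\ell,\tilde\beta_0}$, and the induction hypothesis to replace each $(F^{-1})^*\circ\mi^{g,J,L}_{k_i,\beta_i}\circ (F^*)^{\otimes k_i}$ by $\mi^{F_*(g,J),\tilde L}_{k_i,\tilde\beta_i}$, each summand becomes exactly a summand in the tree expansion of $\mi^{F_*(g,J),\tilde L}_{k,\tilde\beta}$. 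Summing gives (\ref{Trick_mi_can_eq}). Repeating the same argument with $G(g)$ replaced by $\pi(g)$ throughout yields (\ref{Trick_m_can_eq}).

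I do not expect a serious obstacle. The only bookkeeping point is to verify that the summation on the $\tilde L$-side is correctly indexed: by (\ref{G(X,L)_F_Fuk_trick_eq}) the map $F_*$ gives a bijection between decompositions $\beta=\beta_0+\beta_1+\cdots+\beta_\ell$ in $\G(X,L)$ and decompositions $\tilde\beta=\tilde\beta_0+\tilde\beta_1+\cdots+\tilde\beta_\ell$ in $\G(X,\tilde L)$, and it preserves the stability constraints since it preserves Maslov indices and the gappedness. Thus the two tree expansions match term by term, and the induction closes. This is the cohomology-level analog of the strategy already used for Lemma \ref{Fukaya's trick-cochain-level-lem} in the chain level, and the compatibility (\ref{contraction-FukayaTrick-eq}) of harmonic contractions under $F$ is precisely what makes the passage from chain level to cohomology level go through.
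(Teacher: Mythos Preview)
Your proposal is correct and follows essentially the same approach as the paper: induction on $(k,\beta)$ with base case handled by Lemma \ref{Fukaya's trick-contraction-lem} and (\ref{mi_g_10_eq}), and inductive step obtained by expanding via (\ref{induction-tree-formula-eq}) and applying Lemma \ref{Fukaya's trick-cochain-level-lem}, Lemma \ref{Fukaya's trick-contraction-lem}, and the induction hypothesis in turn, then repeating with $\pi(g)$ in place of $G(g)$ for (\ref{Trick_m_can_eq}). Your additional remark about $F_*$ bijecting the decompositions is a helpful clarification that the paper leaves implicit.
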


\begin{proof}
	Write $h=F_*g$.
	Denote $\m:=\m^{g,J,L}$, $\m^ F:=\m^{ F_*(g,J), \tilde L}$ and $\check \m:=\check \m^{J,L}$, $\check \m^F:=\check \m^{F_*J,\tilde L}$; also, denote $\mi:=\mi^{g,J,L}$,
	$\mi^ F:=\mi^{ F_*(g,J),\tilde L}$. 
	 We perform an induction on the pairs $(k,  \beta)$ as before. We prove the lemma for $\mi^ F$ first. The initial case for $(k,  \beta)=(1,0)$ is true by (\ref{contraction-FukayaTrick-eq}) and (\ref{mi_g_10_eq}).
	 Suppose the formula (\ref{Trick_mi_can_eq}) for $\mi^F$ is correct whenever $(k',  \beta')<(k,  \beta)$.
	Then, the inductive formula (\ref{induction-tree-formula-eq}) implies that
	\begin{align*}
	\mi^ F_{k,    \beta}
	=
	\textstyle
	\sum_{(\ell,  \beta_0)\neq(1,0)} G (h) \circ \check \m^F_{\ell,   \beta_0} \circ (\mi^ F_{k_1,   \beta_1}\otimes \cdots \otimes \mi^ F_{k_\ell,   \beta_\ell})
	\end{align*}
	Furthermore, applying Lemma \ref{Fukaya's trick-cochain-level-lem}, Lemma \ref{Fukaya's trick-contraction-lem}, and the induction hypothesis, we conclude that
	\[
	\mi^ F_{k,  \beta}
	=
	\textstyle
	( F^{-1})^* 
	\sum_{(\ell,\beta_0)\neq(1,0)} G(g) \circ
	\check \m_{\ell,  \beta_0} 
	\circ
	(
	\mi_{k_1,  \beta_1}
	\otimes
	\cdots
	\otimes
	\mi_{k_\ell,  \beta_\ell}
	)\circ ( F^*)^{\otimes k}
	=
	(F^{-1})^* \mi_{k,\beta}\circ (F^*)^{\otimes k}
	\] 
	Replacing $G(g)$, $G(h)$ by $\pi(g)$, $\pi(h)$, the statement for $\m^F$ can be proved by a similar induction.
\end{proof}

%
%
%

\begin{cor}\label{Fukaya's trick Independence of F-cor}
	If $F_0$ is isotopic to $F_1$ with $F_0(L)=F_1(L)=\tilde L$, then $\m^{ F_{0*}(g,J),\tilde L}=\m^{ F_{1*}(g,J),\tilde L}$.
\end{cor}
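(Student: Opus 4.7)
The plan is to invoke the cohomology-level Fukaya trick (Lemma~\ref{Fukaya's trick-canonical-level-lem}) and show that every ingredient on its right-hand side depends on $F$ only through its homotopy class. Specifically, writing $\tilde\beta = F_{i*}\beta_i$, the trick gives
\[
\m^{F_{i*}(g,J),\tilde L}_{k,\tilde\beta}(x_1,\dots,x_k)
= (F_i^{-1})^* \m^{g,J,L}_{k,\beta_i}(F_i^* x_1, \dots, F_i^* x_k),
\]
so it suffices to verify that the class labels $\beta_i$ and the linear operators $F_i^*, (F_i^{-1})^*$ induced on cohomology are independent of $i\in\{0,1\}$.

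First, I would check that the class labels coincide: as noted immediately after (\ref{G(X,L)_F_Fuk_trick_eq}), the isomorphism $F_*: \G(X,L) \to \G(X,\tilde L)$ depends only on the homotopy class of $F$, so $F_{0*} = F_{1*}$ and hence $\beta_0 = \beta_1$. Second, since $\m^{g,J,L}$ is a minimal $A_\infty$ algebra on $H^*(L)$, the pullback operators appearing in the displayed formula descend to induced cohomology maps $F_i^* : H^*(\tilde L) \to H^*(L)$ and $(F_i^{-1})^*: H^*(L) \to H^*(\tilde L)$. Choosing a smooth isotopy $\{F_t\}_{t\in [0,1]}$ in $\diff(X)$ from $F_0$ to $F_1$ through diffeomorphisms satisfying $F_t(L)=\tilde L$, the restrictions $F_t|_L : L \to \tilde L$ form a smooth family of diffeomorphisms, and the standard homotopy invariance of de Rham cohomology yields $(F_0|_L)^* = (F_1|_L)^*$; applying the same argument to $\{F_t^{-1}|_{\tilde L}\}$ gives the companion equality. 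Combining both observations, the right-hand sides of the trick formula for $i=0$ and $i=1$ coincide, establishing the corollary.

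The main subtlety will lie in the interpretation of the hypothesis. A generic isotopy in $\diff(X)$ between $F_0$ and $F_1$ need not preserve the condition $F(L)=\tilde L$ for intermediate times; without this preservation, the restrictions $F_t|_L$ would only give a homotopy of maps $L \to X$, which is insufficient to identify $F_0^*$ and $F_1^*$ as maps on $H^*(\tilde L)$. In the geometric setting relevant to the paper — two adjacent Lagrangian fibers of $\pi$ related by a small diffeomorphism in $\diff_0(X)$ obtained from a $\pi$-compatible construction on a Weinstein neighborhood — such a boundary-preserving isotopy is automatically available, so the argument applies without further modification.
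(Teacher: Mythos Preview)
Your proposal is correct and follows essentially the same approach as the paper's one-line proof: invoke the cohomology-level Fukaya trick (Lemma~\ref{Fukaya's trick-canonical-level-lem}) and use the homotopy invariance of $F^*: H^*(\tilde L)\to H^*(L)$. Your discussion of the subtlety---that a generic isotopy in $\diff(X)$ need not preserve $F_t(L)=\tilde L$, but that the geometric setting of the paper supplies one that does---is a careful observation which the paper leaves implicit.
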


\begin{proof}
	Use Lemma \ref{Fukaya's trick-canonical-level-lem} and the homotopy invariance of $F^*: H^*(\tilde L)\to H^*(L)$.
\end{proof}

Corollary \ref{Fukaya's trick Independence of F-cor} basically suggests that the application of Fukaya's trick in the cohomology-level $A_\infty$ algebras only depends on the homotopy class of the diffeomorphism $F$. In contrast, since $ F_0^*\neq  F_1^*: \Omega^*(\tilde L) \to \Omega^*(L)$, it is \textit{not} necessary that  $\mi^{ F_*(g,J),\tilde L}$ agrees with $\mi^{ F_{1*}(g,J),\tilde L}$.


In practice, we need an upgrade of Fukaya's trick that can be applied to certain pseudo-isotopies.
Fix a smooth path $\mathbf J=(J_s)_{s\in\oi}$ in $\mathfrak J(X,\omega)$; fix a smooth family $\mF=(F_s)_{s\in\oi}$
near the identity in $\diff_0(X)$ such that $F_s(L)=\tilde L$ for each $s$.
Define a diffeomorphism on $X\times \oi$, still denoted by
$
\mF : X \times \oi \to X \times \oi$, which sends $(x,s)$ to $(F_s(x),s)$.
Assume $L$ and $\tilde L$ are sufficiently close to each other.
Every $F_s$ induces an isomorphism $F_{s*}: \pi_2(X,L)\to \pi_2(X, \tilde L)$ by (\ref{G(X,L)_F_Fuk_trick_eq}), but these isomorphisms are the same by the homotopy invariance. Denote any of them by the same notation:
\begin{equation}
	\label{G(M,L)_F_Fuk_trick_[01]family_eq}
	 \mF_*:\pi_2(X,L) \cong \pi_2(X,\tilde L)
\end{equation}
As previously, we will write $\tilde \beta=\mF_*\beta$.
Moreover, we have the following two natural maps
\begin{equation}
\label{mF^*_[0,1]-eq}
\mF^*: \Omega^*(\tilde L)_\oi \xrightarrow{} \Omega^*(L)_\oi, \quad \text{and} \quad
\mF^*: H^*(\tilde L)_\oi \to H^*(L)_\oi
\end{equation}
Explicitly, a differential form $\alpha_1(s)+ds\wedge \alpha_2(s)$ is sent to $F_s^*\alpha_1(s)+ds\wedge  (F_s^*\alpha_2(s)+ O(\partial_s F_s)  )$ where $O(\partial_s F_s)$ represents some terms in $\partial_s F_s$. 
Recall that $\eval^s$ is just the pullback of the $s$-inclusion $\iota_s: L \xhookrightarrow{} L\times \oi$ (see around (\ref{convention-Omega P L-eq})); then, since $\mF\circ \iota_s=\iota_s\circ F_s$, we conclude that
\begin{equation}
\label{eval_s_mF_F_s_eq}
\eval^s\circ \mF^*=F_s^*\circ \eval^s: \qquad \Omega^*(\tilde L)_\oi\to \Omega^*( L)
\end{equation}

Every $F_s$ induces an identification $\mathcal M_{k,\beta}(J_s,L)\cong \mathcal M_{k,\tilde\beta} (F_{s*}J_s, \tilde L)$ as in (\ref{moduli_isom-eq}), and the union of them gives an identification of the parameterized moduli spaces (\ref{moduli_system}):
\begin{equation}
\label{moduli_isom-pathJ-eq}
\textstyle
\mF_{\mathcal M}:= \bigsqcup_{s\in\oi} (F_s)_{\mathcal M}: \ \mathcal M_{k,  \beta}(\mathbf J, L) \xrightarrow{\cong} \mathcal M_{k, \tilde \beta} ( \mF_*\mathbf J, \tilde L)
\end{equation}
Taking all the pairs $(k,\beta)$ simultaneously yields an identification of the moduli space systems (\ref{moduli_space_system_eq}): $\mathbb M(\mJ)\cong \mathbb M(\mF_*\mJ)$.
Note that for every $s$, we have an $A_\infty$ algebra $\check \m^{J_s, L}$ in $\UD$ obtained by the moduli system $\mathbb M(J_s)$ (Theorem \ref{Vir-m^J-thm}).
Also, by the moduli space system $\mathbb M(\mJ)$, we obtain a pseudo-isotopy 
$
(\Omega^*(L)_\oi, \check \M^{\mJ, L})
$
in $\UD$
such that it restricts to $\check \m^{J_s, L}$ at $s=0,1$ (Theorem \ref{Vir-M-path-thm}).
Just like (\ref{moduli_isom-eq}, \ref{check m_F J L'-eq}) before, the above identification (\ref{moduli_isom-pathJ-eq}) yields a new pseudo-isotopy 
$
(\Omega^*(\tilde L)_\oi, \check \M^{\mF_*\mJ, \tilde L} )
$
that comes from the moduli space system $\mathbb M(\mF_*\mJ)$.
Similar to Lemma \ref{Fukaya's trick-cochain-level-lem}, we can prove the following:

\begin{lem}[Chain-level Fukaya's trick for pseudo-isotopies]
	\label{Fukaya_II-cochain-level-lem}
	For $x_1,\dots,x_k\in\Omega^*(L)_\oi$, we have
	\begin{equation}
	\label{Trick_check_M_family_eq}
	\check \M_{k, \tilde \beta}^{ \mF_*\mJ, \tilde L} (x_1,\dots,x_k)
	=
	( \mF^{-1})^* \check \M_{k, \beta}^{\mJ, L} ( \mF^*x_1,\dots, \mF^*x_k)
	\end{equation}
\end{lem}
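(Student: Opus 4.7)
The plan is to mimic the proof of Lemma \ref{Fukaya's trick-cochain-level-lem}, but now applied to the $[0,1]$-parameterized moduli system $\mathbb M(\mJ)$ and the extended diffeomorphism $\mF$ of $X \times [0,1]$ defined in (\ref{mF_defining_eq}). The key geometric observation is that the identification $\mF_{\mathcal M}$ in (\ref{moduli_isom-pathJ-eq}) is compatible with the $[0,1]$-parameterized evaluation maps of (\ref{evaluation_maps_eq}), in the sense that the diagram
\begin{equation*}
\xymatrix{
\mathcal M_{k,\beta}(\mJ,L) \ar[rr]^{\mF_{\mathcal M}} \ar[d]^{\ev_i} & & \mathcal M_{k,\tilde\beta}(\mF_*\mJ,\tilde L) \ar[d]^{\tilde{\ev}_i} \\
L \times [0,1] \ar[rr]^{\mF} & & \tilde L \times [0,1]
}
\end{equation*}
commutes. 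Indeed, over a slice $\{s\} \subset [0,1]$ this reduces to the single-diffeomorphism compatibility used in the proof of Lemma \ref{Fukaya's trick-cochain-level-lem}, and the unions across $s$ fit together tautologically because $\mF_{\mathcal M}$ preserves the $[0,1]$-coordinate.

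Granted this compatibility, I would unfold both sides of (\ref{Trick_check_M_family_eq}) using the correspondence-map definition (\ref{defining_A_infty_algebra_eq}). Pairing against an arbitrary test form $\mF^* x_0 \in \Omega^*(L)_{[0,1]}$ (equivalently $x_0 \in \Omega^*(\tilde L \times [0,1])$ under Lemma \ref{Omega L_P = Omega P times L -lem}) and applying the Kuranishi composition formula (\ref{Kura_composition_formula}) together with the naturality of pushforward under a diffeomorphism of smooth correspondences, we get
\begin{align*}
\langle \check\M^{\mJ,L}_{k,\beta}(\mF^* x_1,\dots,\mF^* x_k), \mF^* x_0 \rangle
&= \int^{\mathrm{vir}}_{\mathcal M_{k,\beta}(\mJ,L)} \ev_0^* \mF^* x_0 \wedge \cdots \wedge \ev_k^* \mF^* x_k \\
&= \int^{\mathrm{vir}}_{\mathcal M_{k,\beta}(\mJ,L)} \mF_{\mathcal M}^* \big(\tilde\ev_0^* x_0 \wedge \cdots \wedge \tilde\ev_k^* x_k\big) \\
&= \int^{\mathrm{vir}}_{\mathcal M_{k,\tilde\beta}(\mF_*\mJ,\tilde L)} \tilde\ev_0^* x_0 \wedge \cdots \wedge \tilde\ev_k^* x_k \\
&= \langle \check\M^{\mF_*\mJ,\tilde L}_{k,\tilde\beta}(x_1,\dots,x_k), x_0\rangle,
\end{align*}
where in the third line the diffeomorphism $\mF_{\mathcal M}$ of Kuranishi spaces identifies the virtual fundamental chains by construction of the pushed-forward data $\mF_*\Xi$ (cf.\ the convention following Theorem \ref{Vir-m^J-thm}). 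Rewriting the last line as $\langle \mF^* \check\M^{\mF_*\mJ,\tilde L}_{k,\tilde\beta}(x_1,\dots,x_k), \mF^* x_0\rangle$ and using nondegeneracy of the pairing yields (\ref{Trick_check_M_family_eq}). The exceptional cases $(k,\beta) = (0,0)$ and $(1,0)$ are trivial: the former is zero by definition, and the latter reduces to the statement that $\mF^*$ intertwines the differentials in (\ref{MP_10 -d -eq}), which is just the fact that $\mF$ is a diffeomorphism of $X \times [0,1]$.

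The main obstacle I expect is not any single calculation but rather the bookkeeping around pointwiseness and the $ds$-component of $\mF^*$: the pullback $\mF^*$ on $\Omega^*(\tilde L)_{[0,1]}$ has a nontrivial $ds$-contribution of order $O(\partial_s F_s)$, and one must verify that the correspondence-map identity above really is $\Omega^*([0,1])$-sesquilinear in the appropriate graded sense of Definition \ref{pointwise-defn}, so that testing against pure-type forms $\mF^* x_0$ is enough to conclude the full identity. This is handled by the same argument proving $P$-pointwiseness of $\check\M$ in \S \ref{ss_A_infty_rel_moduli}: both sides of (\ref{Trick_check_M_family_eq}) are $[0,1]$-pointwise (the left by the construction of $\check\M^{\mF_*\mJ,\tilde L}$, the right because $\mF^*$ preserves pointwiseness up to the standard sign), so it suffices to check the identity on inputs of the form $1 \otimes x_i$ where (\ref{eval_s_mF_F_s_eq}) reduces everything fiberwise in $s$ to the unparameterized Lemma \ref{Fukaya's trick-cochain-level-lem}.
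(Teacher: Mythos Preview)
Your proposal is correct and follows precisely the approach the paper intends: the paper states this lemma with only the remark ``Similar to Lemma \ref{Fukaya's trick-cochain-level-lem}, we can prove the following'' and gives no further proof, so you have written out exactly the details the paper leaves implicit. Your additional care about the $ds$-component and $[0,1]$-pointwiseness (noting that $\mF$ preserves the $s$-coordinate so $\mF^*$ is $\Omega^*([0,1])$-linear) is a legitimate point the paper does not spell out, but it is consistent with the pointwiseness discussion in \S\ref{ss_A_infty_rel_moduli}.
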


%
%
%

The restriction of $\check \M^{\mF_* \mJ, \tilde L}$ at $s$ gives rise to an $A_\infty$ algebra that comes from the moduli space system $\mathbb M(F_{s*}J_s)$, and we denote it by $\check \m^{F_{s*}J_s, \tilde L}$ by our previous convention.
Then, the $\check \M^{\mathbf J, L}$ (resp.$\check \M^{\mF_*\mathbf J,\tilde L}$) restricts at $s=0,1$ to the $\check \m^{J_s, L}$ (resp. $\check\m^{F_{s*}J_s, \tilde L}$), i.e.
\begin{equation}
\label{Eval_Trick_check_M^F-eq}
\eval^s \diamond  \check \M^{\mathbf J, L} = \check \m^{J_s, L}\diamond \eval^s,
\qquad
\eval^s  \diamond \check \M^{\mF_*\mathbf J, \tilde L}= \check \m^{F_{s*}J_s, \tilde L}\diamond \eval^s
\end{equation}
Note that the pair $\check \m^{J_s,L}$ and $\check \m^{F_{s*}J_s, \tilde L}$ also satisfies Fukaya's trick, i.e. Lemma \ref{Fukaya's trick-cochain-level-lem}.

Let $\mg=(g_s)_{s\in\oi}$ be a family of metrics; then $\mF_*\mg =( F_{s*}g_s)_{s\in\oi}$.
Due to Lemma \ref{strong-contraction [0,1]-lem}, we have the two strong contractions
$\con(\mg)$ and
$\con(\mF_*\mg)$
for the two pairs of cochain complexes: $(H^*(L)_\oi, \Omega^*(L)_\oi )$ and $(H^*(\tilde L)_\oi, \Omega^*(\tilde L)_\oi )$ respectively.
Then, we denote by
\begin{equation}
\label{can_[01]_Fuk_trick_eq}
\big(H^*(L)_\oi,  \M^{\mg,\mathbf J, L},  \mI^{\mg,\mathbf J,L}
\big)\qquad \text{and}\quad \big(H^*(\tilde L)_\oi,  \M^{\mF_*(\mg,\mathbf J), \tilde L},  \mI^{\mF_*(\mg,\mathbf J), \tilde L}
\big)
\end{equation}
the two canonical models of $\check \M^{\mJ, L}$ and $\check \M^{\mF_*\mathbf J, \tilde L}$ with regard to the two contractions $\con(\mg)$ and $\con(\mF_*\mg)$ respectively.
Due to Theorem \ref{UD-canonical-model-[0,1]-thm}, they are all in the category $\UD$.
Furthermore, one can use (\ref{Eval_Trick_check_M^F-eq}) and Lemma \ref{canonical-model-[01]Pointwise-mi-lem} to show that
the restriction of $\M^{\mg,\mJ,L}$ (resp. $\M^{\mF_*(\mg,\mJ),\tilde L}$) at $s=0,1$ agrees with $\m^{g_s,J_s,L}$ (resp. $\m^{F_{s*}(g_s,J_s),\tilde L}$), i.e.
\begin{equation}
	\label{Eval_Trick_cano-eq}
	\eval^s \diamond \M^{\mg,\mJ,L}=\m^{g_s,J_s,L}\diamond \eval^s,
	\qquad
	\eval^s \diamond \M^{\mF_*(\mg,\mJ),\tilde L}
	=
	\m^{F_{s*}(g_s,J_s),\tilde L}\diamond \eval^s
\end{equation}

\begin{lem}
	\label{Fukaya_II_constant_contraction_[0,1]_lem}
	Suppose $\partial_sF_s=0$. Then we have
	$
	\con(\mF_*\mg)=\mF^{-1*} \circ  \con (\mg)\circ \mF^*
	$. Consequently,
	\begin{equation}
	\label{Trick_M_can_eq}
\begin{aligned}
	\M_{k,\tilde\beta}^{\mF_*(\mg,\mJ), \tilde L}(x_1,\dots,x_k) &= \mF^{-1*} \M^{\mg,\mJ,L}_{k,\beta} (\mF^*x_1,\dots,\mF^*x_k) \\
	\mI_{k,\tilde \beta}^{\mF_*(\mg,\mJ),\tilde L}(x_1,\dots,x_k) &= \mF^{-1*} \mI^{\mg,\mJ,L}_{k,\beta} (\mF^*x_1,\dots,\mF^*x_k) 
\end{aligned}
	\end{equation}
\end{lem}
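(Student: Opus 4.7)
The plan is to establish the contraction identity
\[
\con(\mF_*\mg) = \mF^{-1*} \circ \con(\mg) \circ \mF^*
\]
componentwise, and then deduce (\ref{Trick_M_can_eq}) by induction over the tree construction (\ref{induction-tree-formula-eq}) underlying the canonical model. By Remark \ref{pointwise_determine_rmk} and the definitions in (\ref{i_pi_G_[0,1]-eq}), this reduces to verifying two kinds of identities: (a) the $1\otimes-$ parts $i(F_*g_s) = F^{-1*} i(g_s) F^*$ together with the analogous relations for $\pi(g_s)$ and $G(g_s)$; and (b) the $ds\otimes-$ parts $h_s^{F_*g} = F^{-1*} h_s^g F^*$ together with analogues for $k_s$ and $\sigma_s$.

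Part (a) is exactly Lemma \ref{Fukaya's trick-contraction-lem} applied pointwise at each $s\in\oi$ with the fixed diffeomorphism $F = F_s$. Part (b) is the delicate piece. Recall from (\ref{h_t-eq}), (\ref{k_t-eq}), (\ref{sigma_t-(1)-eq}) that $h_s$, $k_s$, $\sigma_s$ are built from derivatives in $s$ of $i_s, \pi_s, G_s$ together with the inverses $d_s^{-1}$ on the appropriate summands of the Hodge decomposition of $(L,g_s)$. The key observation is that because $\partial_s F_s = 0$, the pullback $F^*$ commutes with $\partial_s$, so from part (a) we get
\[
\frac{d\,i(F_*g_s)}{ds} = \frac{d}{ds}\bigl(F^{-1*}\, i(g_s)\, F^*\bigr) = F^{-1*}\, \frac{di(g_s)}{ds}\, F^*,
\]
and likewise for $\pi$ and $G$. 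Combined with the fact that $F^*$ intertwines the two Hodge decompositions and the exterior derivative, hence intertwines $d_{F_*g_s}^{-1}$ with $d_{g_s}^{-1}$ on the image of $d$, this yields the $F$-equivariance of $h_s$, $k_s$, $\sigma_s$ as claimed.

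With the contraction identity in hand, the second half of the lemma follows by induction on $(k,\beta)$. The base case $(k,\beta)=(1,0)$ is immediate from part (a) together with (\ref{mI_mg_10_eq}). For the inductive step, substitute the chain-level Fukaya trick (Lemma \ref{Fukaya_II-cochain-level-lem}) and the contraction identity into the tree formula (\ref{induction-tree-formula-eq}) for $\mI^{\mF_*(\mg,\mJ),\tilde L}_{k,\tilde\beta}$; the various $\mF^*$ and $\mF^{-1*}$ factors telescope exactly as in the proof of Lemma \ref{Fukaya's trick-canonical-level-lem}, and the induction hypothesis on the smaller $\mI$-operators closes the argument. The formula for $\M^{\mF_*(\mg,\mJ),\tilde L}$ is obtained by the same induction with $\pi(\mF_*\mg)$ replacing $G(\mF_*\mg)$ at the outermost position of (\ref{induction-tree-formula-eq}).

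The main obstacle will be part (b) above. The hypothesis $\partial_s F_s = 0$ is essential precisely to guarantee $F_s^* \partial_s = \partial_s F_s^*$; for a non-constant family there would be additional Lie-derivative-type corrections involving $\partial_s F_s$ that cannot be absorbed into the simple conjugation form on the right-hand side of the stated identity. Once this part is granted, the remainder is a formal telescoping argument analogous to the proof of Lemma \ref{Fukaya's trick-canonical-level-lem}, with extra but routine bookkeeping for the $ds$-components arising from the pointwise extension.
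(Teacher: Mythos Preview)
Your proposal is correct and follows essentially the same approach as the paper: decompose $\con(\mg)$ and $\con(\mF_*\mg)$ into their $1\otimes-$ and $ds\otimes-$ parts, use Lemma \ref{Fukaya's trick-contraction-lem} for the former, use the explicit formulas (\ref{h_t-eq}), (\ref{k_t-eq}), (\ref{sigma_t-(1)-eq}) together with the commutation $F^*\partial_s=\partial_s F^*$ (valid precisely because $\partial_sF_s=0$) for the latter, and then deduce (\ref{Trick_M_can_eq}) by the same tree-formula induction as in Lemma \ref{Fukaya's trick-canonical-level-lem}. The paper's proof is terser but structurally identical.
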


\begin{proof}
	By (\ref{i_pi_G_[0,1]-eq}), we write $i(\mg)=1\otimes i_s+ds\otimes h_s$, $\pi(\mg)=1\otimes \pi_s+ ds\otimes k_s$, and $G(\mg)=1\otimes G_s+ds\otimes \sigma_s$; meanwhile we write $i(\mF_*\mg)=1\otimes \tilde i_s+ds\otimes \tilde h_s$, $\pi(\mF_*\mg)=1\otimes \tilde \pi_s +ds\otimes \tilde k_s$, and $G(\mF_*\mg)=1\otimes \tilde G_s+ds\otimes \tilde \sigma_s$.
	
	As $\partial_sF_s=0$, there is no harm to set $F=F_s$ and $F=\mF$. Then, the $\mF^*$ simply sends $\alpha_1(s)+ds\wedge \alpha_2(s)$ to $F^*\alpha_1(s)+ds\wedge F^* \alpha_2(s)$.
	Let $d_s=d_{g_s}$ and $\tilde d_s=d_{F_*g_s}$ be the isomorphisms as in (\ref{d_g-eq}); then, $\tilde d_s=F^{-1*}\circ d_s\circ F^*$.
	By (\ref{contraction-FukayaTrick-eq}), we have
	$F^*\circ \tilde i_s=i_s\circ F^*$, $F^* \circ \tilde \pi_s=\pi_s\circ F^*$, and $F^* \circ \tilde G_s=G_s\circ F^*$.
	Moreover, the same properties for the pairs $(h_s, \tilde h_s)$, $(k_s,\tilde k_s)$, and $(\sigma_s, \tilde \sigma_s)$ can be proved by direct computations with their definition formulas (\ref{h_t-eq}), (\ref{k_t-eq}), (\ref{sigma_t-(1)-eq}). In summary, $\con(\mF_*\mg)=\mF^{-1*}\circ \con(\mg)\circ \mF^*$. 
	
	Ultimately, by the same argument in the proof of Lemma \ref{Fukaya's trick-canonical-level-lem}, one can also show (\ref{Trick_M_can_eq}).
\end{proof}

In the above, the second half is analogous to Lemma \ref{Fukaya's trick-canonical-level-lem}. 
The first half resembles Lemma \ref{Fukaya's trick-contraction-lem}, but the condition $\partial_sF_s=0$ is necessary, because
$\con(\mF_*\mg) \neq \mF^{-1*} \circ \con(\mg) \circ \mF^*$ in general.
The main issue is about the terms involving non-zero $\partial_sF_s$;
but fortunately, these terms only live in $ds\otimes -$ parts in $\HL_\oi$ or $\OL_\oi$ and are always killed by $\eval^s$.
From this observation, one can imagine Theorem \ref{UD-mC_thm} will be useful.
Actually, even in the case of $\partial_sF_s\neq 0$, we can still somehow compare the two pseudo-isotopies $\M^{\mg,\mJ,L}$ and $\M^{\mF_*(\mg,\mJ), \tilde L}$ `up to ud-homotopy'. See
Lemma \ref{Fukaya_II_key_lemma} later.

There is an elegant way to reinterpret the Fukaya's trick equations: one can regard the various $F^*$ as trivial $A_\infty$ homomorphisms (in $\UD$) which concentrate on $\CC_{1,0}\subset \CC$.
Denote by $\Omega^F$ the operator system in $\CC( \Omega^*(\tilde L), \Omega^*(\tilde L))$ defined by setting $\Omega^F_{1,0}=F^*$ and all other $\Omega^F_{k,\beta}=0$.
Similarly, we define $H^F\in \CC(H^*(\tilde L), H^*(\tilde L))$, $\Omega^\mF\in \CC(\Omega^*(\tilde L)_\oi, \Omega^*(\tilde L)_\oi)$ and $H^\mF\in \CC(H^*(\tilde L)_\oi, H^*(\tilde L)_\oi)$ by setting $H^F_{1,0}$, $\Omega^\mF_{1,0}$, and $H^\mF_{1,0}$ to be $F^*$ or $\mF^*$.
Remark that there are two different label groups in the story: $\G=\pi_2(X,L)$ and $\tilde \G=\pi_2(X,\tilde L)$. But this causes no serious troubles, since we actually have a natural identification $\G\cong \tilde \G$ from (\ref{G(X,L)_F_Fuk_trick_eq}) and (\ref{G(M,L)_F_Fuk_trick_[01]family_eq}).
For simplicity, we will keep this point implicit.

On the one hand, due to the Fukaya's trick equations in (\ref{Trick_check_m_eq}, \ref{Trick_m_can_eq}, \ref{Trick_check_M_family_eq}), all of them except $H^\mF$ can be viewed as $A_\infty$ homomorphisms in $\Mor\UD$:
\begin{equation*}
\Omega^{F}:  \check \m^{F_{*}J, \tilde L} \to \check \m^{J,L}, \qquad
H^{F}: \m^{F_{*}(g,J),\tilde L}  \to \m^{g,J,L}, \qquad
\Omega^{\mF}: \check \M^{\mF_*\mathbf J,\tilde L}  \to \check \M^{\mathbf J,  L}
\end{equation*}
But in the special case $\partial_s F_s=0$, it follows from Lemma \ref{Fukaya_II_constant_contraction_[0,1]_lem} that the exceptional $H^\mF$ also gives a morphism $H^\mF: \M^{\mF_*(\mg,\mJ),\tilde L}\to \M^{\mg,\mJ,L}$ in $\UD$.

On the other hand, the equation (\ref{Trick_mi_can_eq}) describes a relation between the two $A_\infty$ homomorphisms $\mi^{g,J,L}$ and $\mi^{F_*(g,J),\tilde L}$; from the above perspective, it exactly means that the following two compositions of morphisms in $\UD$ agree with each other:
\begin{align}
\label{TrickA_mi_can_eq}
\Omega^{F}\diamond \mi^{F_{*}(g,J), \tilde L} = \mi^{g,J, L} \diamond H^{F}
\end{align}
Moreover, since every $\eval^s$ can be trivially viewed as a morphism in $\UD$, one can also translate (\ref{eval_s_mF_F_s_eq}) into the following relation of compositions of morphisms in $\UD$:
\begin{equation}
\label{TrickA_eval}
\Omega^{F_s}\diamond \eval^s = \eval^s \diamond \Omega^\mF
\end{equation}

Eventually, according to Theorem \ref{from-pseudo-isotopies-to-A-infty-homo-thm} and Theorem \ref{UD-mC_thm}, from the four above-mentioned pseudo-isotopies $\check \M^{\mJ, L}$, $\check \M^{\mF_*\mJ, \tilde L}$ and $\M^{\mg, \mJ, L}$, $\M^{\mF_*(\mg,\mJ),\tilde L}$, we have naturally constructed four morphisms in $\UD$ which we denote by $\check \mC^\mJ$, $\check \mC^{\mF_*\mJ}$, $\mC^{\mg,\mJ}$ and $\mC^{\mF_*(\mg,\mJ)}$ respectively.

The sources and targets of the four morphisms can be determined by (\ref{Eval_Trick_check_M^F-eq}) and (\ref{Eval_Trick_cano-eq}). For instance, the pseudo-isotopy $\check \M^\mJ$ restricts to $\check \m^{J_0, L}$ and $\check \m^{J_1,L}$ at $s=0$ and $s=1$ respectively, so $\check \mC^\mJ: \check \m^{J_0,L}\to \check \m^{J_1,L}$.

\begin{lem}
	\label{Fukaya_II_key_lemma}
	In the above situation,
	$H^{F_1}\diamond \mC^{\mF_*(\mg,\mJ)}$ is ud-homotopic to $\mC^{\mg,\mJ}\diamond H^{F_0}$.
\end{lem}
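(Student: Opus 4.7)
The plan is to reduce the cohomology-level statement to a chain-level naturality statement for the construction of Theorem \ref{from-pseudo-isotopies-to-A-infty-homo-thm}, and then intertwine via the harmonic $A_\infty$ inclusions $\mi^{\bullet}$ using Lemma \ref{mC-diamond-diagram-lem}.

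First I would observe that the trivial extension of $\mF^*$ into $\CC_{1,0}$, namely $\Omega^\mF$, gives a morphism in $\UD$ from $\check\M^{\mF_*\mJ,\tilde L}$ to $\check\M^{\mJ,L}$. Indeed, the only non-trivial $A_\infty$ relation to check is $\check\M^{\mJ,L}_{k,\beta}\circ(\mF^*)^{\otimes k}=\mF^*\circ\check\M^{\mF_*\mJ,\tilde L}_{k,\beta}$, which is precisely the chain-level family Fukaya's trick (\ref{Trick_check_M_family_eq}); the unitality, divisor axiom, and cyclical unitality reduce to the trivial cases since $\mF^*$ pulls back constants to constants and preserves the pairing $\partial\beta\cap\bullet$. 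Similarly $\Omega^{F_0}$ and $\Omega^{F_1}$ are morphisms in $\UD$ at the boundary. By construction $\eval^s\circ\Omega^\mF=\Omega^{F_s}\circ\eval^s$ for $s=0,1$.

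Next I would establish the chain-level naturality
\[
\Omega^{F_1}\circ\check\mC^{\mF_*\mJ}\simud\check\mC^{\mJ}\circ\Omega^{F_0}.
\]
By Theorem \ref{UD-mC_thm} applied to both $\check\M^{\mJ,L}$ and $\check\M^{\mF_*\mJ,\tilde L}$, we have $\check\mC^\mJ\circ\eval^0\simud\eval^1$ and $\check\mC^{\mF_*\mJ}\circ\eval^0\simud\eval^1$. Composing with $\Omega^{F_1}$ on the left of the first and using $\Omega^{F_1}\circ\eval^1=\eval^1\circ\Omega^\mF$, while composing with $\Omega^\mF$ on the right of the second and using $\eval^0\circ\Omega^\mF=\Omega^{F_0}\circ\eval^0$, shows that both $\Omega^{F_1}\circ\check\mC^{\mF_*\mJ}\circ\eval^0$ and $\check\mC^\mJ\circ\Omega^{F_0}\circ\eval^0$ are ud-homotopic to $\eval^1\circ\Omega^\mF$. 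Since $\eval^0$ admits a ud-homotopy inverse by Theorem \ref{Whitehead-full-thm}, cancelling it on the right yields the displayed naturality.

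Then I would transfer this to the cohomology level by combining Lemma \ref{mC-diamond-diagram-lem} with the cohomology-level Fukaya's trick (\ref{TrickA_mi_can_eq}). Concretely, I would compute
\begin{align*}
\mi^{g_1,J_1,L}\circ H^{F_1}\circ\mC^{\mF_*(\mg,\mJ)}
&=\Omega^{F_1}\circ\mi^{F_{1*}(g_1,J_1),\tilde L}\circ\mC^{\mF_*(\mg,\mJ)}\\
&\simud\Omega^{F_1}\circ\check\mC^{\mF_*\mJ}\circ\mi^{F_{0*}(g_0,J_0),\tilde L}\\
&\simud\check\mC^\mJ\circ\Omega^{F_0}\circ\mi^{F_{0*}(g_0,J_0),\tilde L}\\
&=\check\mC^\mJ\circ\mi^{g_0,J_0,L}\circ H^{F_0}\\
&\simud\mi^{g_1,J_1,L}\circ\mC^{\mg,\mJ}\circ H^{F_0},
\end{align*}
where the first and fourth equalities use (\ref{TrickA_mi_can_eq}), the second and fifth ud-homotopies use Lemma \ref{mC-diamond-diagram-lem}, and the third ud-homotopy uses the chain-level naturality just established (together with Lemma \ref{UD-simud-composition-lem}). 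Finally, choosing a ud-homotopy inverse of $\mi^{g_1,J_1,L}$ (which exists by Whitehead Theorem \ref{Whitehead-full-thm}, as $\mi^{g_1,J_1,L}_{1,0}=i(g_1)$ is a quasi-isomorphism) and composing on the left cancels $\mi^{g_1,J_1,L}$, yielding the desired $H^{F_1}\circ\mC^{\mF_*(\mg,\mJ)}\simud\mC^{\mg,\mJ}\circ H^{F_0}$.

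The main subtlety—what makes this argument necessary rather than an immediate corollary of Fukaya's trick—is that when $\partial_sF_s\neq 0$ the harmonic contraction $\con(\mF_*\mg)$ differs from $\mF^{-1*}\circ\con(\mg)\circ\mF^*$, so the cohomology-level pseudo-isotopies $\M^{\mF_*(\mg,\mJ),\tilde L}$ and $\mF^{-1*}\M^{\mg,\mJ,L}\mF^*$ are not equal on the nose. The chain level avoids this harmonic-contraction pathology completely, and the use of Theorem \ref{UD-mC_thm} (rather than trying to construct a ud-homotopy explicitly on $H^*(\tilde L)_\oi$) is what makes the transfer via $\eval^0,\eval^1$ work around this obstruction.
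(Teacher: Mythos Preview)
Your proof is correct and follows essentially the same route as the paper's. The only organizational difference is that you isolate the chain-level naturality $\Omega^{F_1}\circ\check\mC^{\mF_*\mJ}\simud\check\mC^{\mJ}\circ\Omega^{F_0}$ as a standalone step before entering the diagram chase, whereas the paper performs that same manipulation inline (replacing $\check\mC^{\mF_*\mJ}$ by $\eval^{1+}\circ(\eval^{0+})^{-1}$, commuting $\Omega^\mF$ through via (\ref{TrickA_eval}), and then reassembling $\check\mC^\mJ$) while already postcomposed with $\mi^{F_{0*}(g_0,J_0)}$. One small slip: in your paragraph deriving the chain-level naturality you have interchanged ``first'' and ``second'' (you compose $\Omega^{F_1}$ on the left of the $\check\mC^{\mF_*\mJ}$ relation and $\Omega^\mF$ on the right of the $\check\mC^\mJ$ relation, not the other way around), but the displayed conclusion and the subsequent chase are correct.
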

\vspace{-1em}
\[
\Scale[0.9]
{
	\xymatrix
	{
		\check \m^{F_{0*}J_0} \ar@/_6pc/[dddddd]_{\Omega^{F_0}} \ar[rrrr]^{\check \mC^{\mF_*\mJ} } &  &  &  & \check \m^{F_{1*}J_1} \ar@/^6pc/[dddddd]^{\Omega^{F_1}}\\
		&  & \check \M^{\mF_*\mJ} \ar[ull]^{\eval^{0+}} \ar@/^2.5pc/[dddd]^{\Omega^\mF} \ar[urr]_{\eval^{1+}} &  &  \\
		\m^{F_{0*}(g_0,J_0)}\ar[uu]^{\mi^{F_{0*}(g_0,J_0)}} \ar@/_1pc/[dd]_{H^{F_0}} \ar[rrrr]^{\mC^{\mF_*(\mg,\mJ)}}  &  &  &  & \m^{F_{1*}(g_1,J_1)} \ar@/^1pc/[dd]^{H^{F_1}} \ar[uu]_{\mi^{F_{1*}(g_1,J_1)}} \\
		&  &  &  &  \\
		\m^{g_0,J_0} \ar[dd]_{\mi^{g_0,J_0}} \ar[rrrr]^{\mC^{\mg,\mJ}} &  &  &  & \m^{g_1,J_1} \ar[dd]^{\mi^{g_1,J_1}} \\
		&  & \check \M^{\mJ} \ar[dll]_{ \eval^0} \ar[drr]^{ \eval^1} &  &  \\
		\check \m^{J_0} \ar[rrrr]^{\check \mC^{\mJ}} &  &  &  & \check \m^{J_1}
	}
}
\]
\vspace{-1em}
\begin{proof}
	By Lemma \ref{mC-diamond-diagram-lem}, the top and bottom rectangle diagrams commute up to ud-homotopy, namely,
	\begin{align}
	\label{key_1_eq}
	\mi^{F_{1*}(g_1,J_1)} \diamond \mC^{\mF_*(\mg,\mJ)} 
	&
	\simud \check \mC^{\mF_*\mJ} \diamond \mi^{F_{0*}(g_0,J_0)} \\
	\label{key_2_eq}
	\mi^{g_1,J_1} \diamond \mC^{\mg,\mJ} 
	&
	\simud \check \mC^\mJ\diamond \mi^{g_0,J_0}
	\end{align}
	and Theorem \ref{UD-mC_thm} concludes that (just to distinguish, we use different notations: $\eval^{i+}$ and $\eval^i$)
	\begin{align}
	\label{key_3_eq}
	\check \mC^{\mF_*\mJ} \diamond \eval^{0+} &\simud \eval^{1+} \\
	\label{key_4_eq}
	\check \mC^{\mJ} \diamond \eval^0 &\simud \eval^1
	\end{align}
	Now, we are going to chase the diagram
	\begin{align*}
	\mi^{g_1,J_1} \diamond H^{F_1} \diamond \mC^{\mF_*(\mg,\mJ)} 
	&
	= \Omega^{F_1} \diamond \mi^{{F_{1*}(g_1,J_1)}} \diamond \mC^{\mF_*(\mg,\mJ)} \tag{use (\ref{TrickA_mi_can_eq})} \\
	&
	\simud
	\Omega^{F_1}\diamond \check \mC^{\mF_*\mJ} \diamond \mi^{F_{0*}(g_0,J_0)} \tag{use (\ref{key_1_eq})}
	\\
	&
	\simud
	\Omega^{F_1}\diamond \eval^{1+}  \diamond (\eval^{0+})^{-1} \diamond \mi^{F_{0*}(g_0,J_0)} \tag{use (\ref{key_3_eq})}
	\\
	&
	\simud
	\eval^1\diamond \Omega^\mF \diamond (\eval^{0+})^{-1} \diamond \mi^{F_{0*}(g_0,J_0)}
	\tag{use (\ref{TrickA_eval})} \\
	&
	\simud \check \mC^{\mJ} \diamond \eval^0 \diamond\Omega^\mF \diamond (\eval^{0+})^{-1} \diamond \mi^{F_{0*}(g_0,J_0)}
	\tag{use (\ref{key_4_eq})}
	\\
	&
	=
	\check \mC^{\mJ} \diamond \Omega^{F_0} \diamond \eval^{0+} \diamond (\eval^{0+})^{-1} \diamond \mi^{F_{0*}(g_0,J_0)} 
	\tag{use (\ref{TrickA_eval}) again}
	\\
	&
	\simud \check \mC^{\mJ}\diamond \Omega^{F_0} \diamond \mi^{F_{0*}(g_0,J_0)} \\
	&
	\simud \check \mC^\mJ \diamond \mi^{g_0,J_0}\diamond H^{F_0}
	\tag{use (\ref{TrickA_mi_can_eq}) again} \\
	&
	\simud \mi^{g_1,J_1}\diamond \mC^{\mg,\mJ} \diamond H^{F_0} \tag{use (\ref{key_2_eq})}
	\end{align*}
	Finally, finding a homotopy inverse of $\mi^{g_1,J_1}$ by Theorem \ref{Whitehead-full-thm} completes the proof.
\end{proof}

The main reason why we go around in such a cumbersome way is that, as mentioned previously, the $H^\mF$ is generally \textit{not} an $A_\infty$ homomorphism (unless $\partial_sF_s=0$ like Lemma \ref{Fukaya_II_constant_contraction_[0,1]_lem} (\ref{Trick_M_can_eq})). Hence, it cannot be included in the above diagram chasing.
But, since what we need is not an identity relation but just a ud-homotopy relation, we have some flexibility and can overcome this issue as above.

As a corollary, the following special case for the constant families will be used later.

\begin{cor}\label{Fukaya_II_key_cor}
	Suppose $\mg=\hat g$ and $\mJ=\hat J$ are the constant paths at $g$ and $J$ but $\mF=(F_s)$ is arbitrary. If $\check \M^{\hat J}$ is the trivial pseudo-isotopy about $\check \m^J$, then we have $\mC^{\mF_*(\hat g,\hat J)}\simud\id$.
\end{cor}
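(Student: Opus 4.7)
The plan is to apply Lemma \ref{Fukaya_II_key_lemma} in the degenerate situation where $\mg$ and $\mJ$ are both constant, exploiting that the chain-level pseudo-isotopy is trivial to conclude that the cohomology-level one is also trivial.

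First I would show that $\mC^{\hat g,\hat J}\simud \id$. By hypothesis, $\check\M^{\hat J}$ is the trivial pseudo-isotopy about $\check\m^J$, and the contraction $\con(\hat g)$ for the constant family of metrics collapses to the trivial extension $1\otimes\con(g)$: the ``derivative'' pieces $h_s,k_s,\sigma_s$ in (\ref{i_pi_G_[0,1]-eq}) are defined in (\ref{h_t-eq}, \ref{k_t-eq}, \ref{sigma_t-(1)-eq}) through $\frac{d}{ds}i_s$, $\frac{d}{ds}\pi_s$ and $\frac{d}{ds}G_s$, which all vanish when $g_s\equiv g$. Running the tree formula (\ref{induction-tree-formula-eq}) with $\check\M^{\hat J}=1\otimes\check\m^J+ds\otimes\tfrac{d}{ds}$ and this trivial contraction makes every ``$ds$-term'' in the output trivial as well, so the canonical model $\M^{\hat g,\hat J}$ equals the trivial pseudo-isotopy about $\m^{g,J,L}$. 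Corollary \ref{from-pseudo-isotopies-to-A-infty-homo-cor_trivial_pseudo_isotopy} then gives $\mC^{\hat g,\hat J}=\id$.

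Next I would invoke Lemma \ref{Fukaya_II_key_lemma} for the family $\mF=(F_s)$, obtaining
\[
H^{F_1}\circ \mC^{\mF_*(\hat g,\hat J)} \ \simud\ \mC^{\hat g,\hat J}\circ H^{F_0}\ =\ H^{F_0}.
\]
Since $(F_s)$ is a smooth family in $\diff_0(X)$ with $F_s(L)=\tilde L$ for every $s$, the diffeomorphisms $F_0$ and $F_1$ are isotopic, and hence $F_0^*=F_1^*$ on $H^*(\tilde L)$ by homotopy invariance of the pullback (compare Corollary \ref{Fukaya's trick Independence of F-cor}). The $A_\infty$ homomorphisms $H^{F_0}$ and $H^{F_1}$ in $\UD$ are concentrated in $\CC_{1,0}$ with values $F_0^*$ and $F_1^*$, so in fact $H^{F_0}=H^{F_1}$ on the nose, and the relation becomes $H^{F_1}\circ \mC^{\mF_*(\hat g,\hat J)}\simud H^{F_1}$.

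Finally I would cancel $H^{F_1}$ on the left. Because $F_1$ is a diffeomorphism, the cochain map $F_1^*$ is a quasi-isomorphism, so $H^{F_1}$ is a weak homotopy equivalence in $\UD$. Whitehead's Theorem \ref{Whitehead-full-thm} furnishes a ud-homotopy inverse $(H^{F_1})^{-1}\in\Mor\UD$; composing on the left and applying the congruence Lemma \ref{UD-simud-composition-lem} yields $\mC^{\mF_*(\hat g,\hat J)}\simud \id$. The main obstacle I anticipate is the first paragraph: one must verify that the tree sums (\ref{induction-tree-formula-eq}) and the subsequent pseudo-isotopy-to-$A_\infty$ construction (\ref{inductive_formu-Fubini-yield-eq}) really collapse to the trivial pseudo-isotopy, and in particular that no hidden nontrivial $ds$-contribution survives, which requires carefully tracking the vanishing of $h_s,k_s,\sigma_s$ through the homological perturbation.
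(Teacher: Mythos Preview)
Your proposal is correct and follows essentially the same route as the paper's proof: show $\M^{\hat g,\hat J}$ is the trivial pseudo-isotopy (hence $\mC^{\hat g,\hat J}=\id$ via Corollary~\ref{from-pseudo-isotopies-to-A-infty-homo-cor_trivial_pseudo_isotopy}), apply Lemma~\ref{Fukaya_II_key_lemma}, use $H^{F_0}=H^{F_1}$, and cancel. Your concern in the last paragraph is unfounded: your own argument that $h_s,k_s,\sigma_s$ vanish for constant $g_s$ is complete, and running the tree formula~(\ref{induction-tree-formula-eq}) with $G(\hat g)=1\otimes G(g)$, $\pi(\hat g)=1\otimes\pi(g)$, $i(\hat g)=1\otimes i(g)$ and $\check\M_{k,\beta}=1\otimes\check\m_{k,\beta}$ for $(k,\beta)\neq(1,0)$ produces only $1\otimes$-terms by a straightforward induction, so no hidden $ds$-contribution can appear.
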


\begin{proof}
	Denote by $\check \M^{\hat g,\hat J}$ the canonical model of $\check \M^{\hat J}$ with respect to $\con(\hat g)$, and then $\M^{\hat g,\hat J}$ is a trivial pseudo-isotopy about $\check \m^{g,J}$.
	Denote by $\mC^{\hat g,\hat J}$ the $A_\infty$ homomorphism constructed from $\M^{\hat g,\hat J}$ (using Theorem \ref{from-pseudo-isotopies-to-A-infty-homo-thm}); it follows from Corollary \ref{from-pseudo-isotopies-to-A-infty-homo-cor_trivial_pseudo_isotopy} that $\mC^{\hat g,\hat J}=\id$.
	Moreover, because $H^{F_0}=H^{F_1}$, Lemma \ref{Fukaya_II_key_lemma} infers that
	$H^{F_1}\diamond \mC^{\mF_*(\hat g,\hat J)} \simud \mC^{\hat g,\hat J}\diamond H^{F_0}=\id\diamond H^{F_0}=H^{F_1}$ and so
	$\mC^{\mF_*(\hat g,\hat J)}\simud  \id$.
\end{proof}

We note that by Corollary \ref{Fukaya's trick Independence of F-cor} the source and target of $\mC^{\mF_*(\hat g,\hat J)}$ are actually the same $\m^{F_{0*}(g,J)}=\m^{F_{1*}(g,J)}$. Hence, Corollary \ref{Fukaya_II_key_cor} just tells that choosing different $F$ in the application of Fukaya's trick does not cause any ambiguities up to ud-homotopy.

On the other hand, if we keep $F_s$ constant but allow $g_s$ and $J_s$ to vary, then it is easy to see that the ud-homotopic relation in Lemma \ref{Fukaya_II_key_lemma} can be actually strengthened to an identity relation:
 
\begin{prop}
	\label{Fukaya_II_F_s=F_mC_prop}
	Suppose $\partial_s F_s=0$. Then $H^F\diamond \mC^{\mF_*(\mg,\mJ)}=\mC^{\mg,\mJ}\diamond H^F$. Concretely, we have
	\[
	\mC^{\mF_*(\mg,\mJ)}_{k,\tilde \beta}= F^{-1*} \mC^{\mg,\mJ}_{k,\beta}( F^*x_1,\dots,F^*x_k)
	\]
	for any $x_1,\dots,x_k\in H^*(\tilde L)$.
\end{prop}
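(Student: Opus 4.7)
The plan is a direct induction using the tree-sum construction of Theorem \ref{from-pseudo-isotopies-to-A-infty-homo-thm}, powered by the strong form of Fukaya's trick available in the special case $\partial_s F_s = 0$. The key input is Lemma \ref{Fukaya_II_constant_contraction_[0,1]_lem}: because $\mF^*$ reduces to the slicewise pullback $F^*$ (with no $O(\partial_s F_s)$ correction), the chain-level identity (\ref{Trick_M_can_eq}) descends to slicewise identities of the form
\[
\tilde\mc^s_{k,\tilde\beta}(x_1,\dots,x_k) \;=\; F^{-1*}\mc^s_{k,\beta}(F^*x_1,\dots,F^*x_k),
\]
where I write $\M^{\mg,\mJ,L} = 1\otimes\m^s + ds\otimes\mc^s$ and $\M^{\mF_*(\mg,\mJ),\tilde L} = 1\otimes\tilde\m^s + ds\otimes\tilde\mc^s$. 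Only the $\mc^s$-piece is used in what follows.

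Given this, I would induct on $(k,\beta)$ in the order of Remark \ref{Induction}, strengthening the statement to hold for every truncated homomorphism $\mC^{[0,u]}$ with $u\in\oi$. The base cases $(k,\beta)\in\{(0,0),(1,0)\}$ are immediate from the definition of $\mC$ and the cancellation $F^{-1*}\circ F^*=\id$. For the inductive step with $(k,\tilde\beta)\neq(0,0),(1,0)$, I would substitute the tree formula (\ref{inductive_formu-Fubini-yield-eq}) for $\mC^{\mF_*(\mg,\mJ),[0,u]}_{k,\tilde\beta}$, push $F^*$ and $F^{-1*}$ past each factor using the slicewise Fukaya identity above on the outer $\tilde\mc^v$ and the induction hypothesis on the inner truncated terms, let the adjacent $F^*\circ F^{-1*}$ pairs cancel, and recognize the remaining integrand as $F^{-1*}\mC^{\mg,\mJ,[0,u]}_{k,\beta}(F^*-,\dots,F^*-)$ via (\ref{inductive_formu-Fubini-yield-eq}) now applied to $\M^{\mg,\mJ,L}$. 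Specializing to $u=1$ yields the proposition.

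I do not expect a serious obstacle: the hypothesis $\partial_s F_s=0$ upgrades the Fukaya trick from a ud-homotopy (as in Lemma \ref{Fukaya_II_key_lemma}) to an honest equality at the level of pseudo-isotopies, and the canonical tree sum in Theorem \ref{from-pseudo-isotopies-to-A-infty-homo-thm} is assembled from integrals of the $\mc^s$-components only, so it transports equalities verbatim. The one piece of bookkeeping worth flagging is the label isomorphism (\ref{G(X,L)_F_Fuk_trick_eq}) inside the sum over decompositions $\beta_0+\beta_1+\cdots+\beta_\ell=\beta$; however, additivity of $\mF_*$ makes the matching $\tilde\beta_i = \mF_*\beta_i$ automatic and compatible with the gapped structure, so the induction runs without hitch.
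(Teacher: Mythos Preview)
Your proposal is correct and follows essentially the same route as the paper: extract the slicewise identity $\tilde\mc^s = F^{-1*}\circ\mc^s\circ F^*$ from Lemma \ref{Fukaya_II_constant_contraction_[0,1]_lem} (using that $\mF^*$ reduces to $F^*$ when $\partial_s F_s=0$), and then induct via the formula (\ref{inductive_formu-Fubini-yield-eq}), just as in the proof of Lemma \ref{Fukaya's trick-canonical-level-lem}. The paper's proof is simply a terser statement of what you have written out.
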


\begin{proof}
	We write $\M^{\mF_*(\mg,\mJ)}=1\otimes \tilde \m^s +ds\otimes \tilde \mc^s$ and $\M^{\mg,\mJ}=1\otimes \m^s +ds\otimes \mc^s$. So, $\mC^{\mg,\mJ}: \m^{0}\to \m^{1}$ and $\mC^{\mF_*(\mg,\mJ)}: \tilde\m^{0}\to \tilde\m^{1}$.
	In this case,
	$\tilde \m^s = F^{-1*}\diamond \m^s \diamond F^{*}$ and $
	\tilde \mc^s = F^{-1*}\diamond \mc^s \diamond F^{*}$ by Lemma \ref{Fukaya_II_constant_contraction_[0,1]_lem}.
	Just like the proof of Lemma \ref{Fukaya's trick-canonical-level-lem}, the desired equation can be proved by the inductive formula (\ref{inductive_formu-Fubini-yield-eq}).
\end{proof}

\section{Mirror construction}
\label{S_Mirror_construction}

\subsection{Set-up}
We are going to show the main statement Theorem \ref{Main_theorem_thm}.
Recall that $(X,\omega)$ is a symplectic manifold of dimension $2n$ and we denote by $\mathfrak J(X,\omega)$ the space of $\omega$-tame almost complex structures. Suppose $
\pi: X_0  \to B_0$ is a Lagrangian torus fibration in an open domain $X_0 \subset X$ over a base manifold $B_0$.
By Arnold-Liouville theorem \cite{ArnoldBook}, every Lagrangian fiber $L_q:=\pi^{-1}(q)$ over some point $q\in B_0$ is diffeomorphic to the $n$-torus $T^n$, and there is a canonical integral affine manifold structure on $B_0$.

\subsubsection{Integral affine structure}
\label{sss_integral_affine_str}
In a Weinstein neighborhood of $L_q$, we consider the so-called \textit{action-angle coordinates} $(\alpha_1,\dots,\alpha_n, x_1,\dots,x_n)$ where $\alpha_i\in \mathbb R/ 2\pi \mathbb Z\cong S^1$, $x_i\in \mathbb R$, and $\omega=\sum_{i=1}^n dx_i\wedge d\alpha_i$ (see e.g. \cite[\S 3.1]{KSAffine}).
The coordinates $x=(x_1,\dots,x_n)$ give rise to a local chart near $q\in B_0$ over which the fibration $\pi$ has the local expression $(\alpha,x)\mapsto x$.
Such a chart is called an \textit{integral affine chart}. The atlas of all the integral affine charts gives rise to an \textit{integral affine structure} on $B_0$ (see Appendix \ref{SA_integral_affine}).
Besides, taking an integral affine transformation $x'=Ax+b$ and $\alpha'=(A^T)^{-1}\alpha+c$ for some $A\in GL(n,\mathbb Z)$, $b\in\mathbb R^n$ and $c\in (\mathbb R/2\pi \mathbb Z)^n$, we obtain another action-angle coordinate system.

Inserting the vector field $\partial_{x_k}$ to $\omega$ supplies a {closed} one-form
$\omega(\tfrac{\partial}{\partial x_k},\cdot) = d\alpha_k$ (it is not exact).
Also, the assignment $\frac{\partial}{\partial x_k}\mapsto d\alpha_k$ is invariant for another action-angle coordinate system $(\alpha',x')$, hence, we obtain a canonical operator
$
\varrho^0:  T_qB_0\to Z^1(L_q)$ defined by $\xi \mapsto \omega(\xi,\cdot)$.
Composing it with the natural quotient $Z^1(L_q)\to H^1(L_q)$ yields a linear map $\varrho: T_q B_0 \to H^1(L_q)$ whose source and target vector spaces have the same dimension. Further, it is surjective, since the cohomology classes $[d\alpha_k]$ exactly form a basis of $H^1(L_q)$, therefore, the $\varrho$ is a vector space isomorphism.

In summary, there is a canonical isomorphism $T_qB_0 \cong H^1(L_q)$ for every $q \in B_0$.
	
	Moreover, gluing these isomorphisms for various $q$ yields an isomorphism
$TB_0\cong \bigcup_{q\in B_0} H^1(L_q)$ of vector bundles.
Accordingly, we also have an isomorphism of the $\mathbb Z$ lattices (local systems):
$T^{\mathbb Z} B_0 \cong  \bigcup_{q \in B_0} H^1(L_q, \mathbb Z)$.
The right side possesses the Gauss-Manin connection; it naturally gives rise to a flat connection on $TB_0$ so that $T^{\mathbb Z}B_0$ is parallel.
To sum up, the fibration $\pi$ determines an integral affine structure $\nabla$ on the smooth locus $B_0$.
The integral affine connection $\nabla$ induces an exponential map $
	\exp_q: T_qB_0\cong H^1(L_q) \to B_0
	$ at each $q\in B_0$, and we call it an \textit{affine exponential map}.

\subsubsection{Diffeomorphisms and isotopies among the fibers}
\label{sss_diff_construct}
We want to introduce a concrete construction of a small diffeomorphism that sends one fiber to an adjacent fiber.
For any $q\in B_0$,
there is a sufficiently small neighborhood $W_q$ which admits a smooth map
\begin{equation}
\label{chi_u-eq}
\chi_q:W_q  \to  \diff_0(X)
\end{equation}
such that $\chi_q(q)=\id$; the diffeomorphism $\chi_q(q')$ sends $L_{q}$ to $L_{q'}$ and is supported in $\pi^{-1}(W_q)$ (see e.g. \cite[Theorem B]{PalaisLocalTriviality}).
For $q_1',q_2'\in W_q$, we define
\begin{equation}
\label{mF_u_v1_v2-eq}
F_q^{q'_1,q'_2} := \chi_q(q'_2)\circ \chi_q(q'_1)^{-1}
\end{equation}
It is a diffeomorphism that sends $L_{q'_1}$ to $L_{q'_2}$.
One can choose a neighborhood $\mathcal U$ of the identity in $\diff_0(X)$ such that $\mathcal U\cdot \mathcal U\subset \mathcal U$ and $\mathcal U^{-1}\subset \mathcal U$.
Now, shrinking every $W_q$ if necessary, we may further require $\chi_q(W_q)\subset \mathcal U$. Particularly, for any $q_1',q_2'\in W_q$ we have $F_q^{q'_1,q'_2} \in \mathcal U$.

\subsubsection{Uniform reverse isoperimetric inequalities}
The reverse isoperimetric inequality for pseudo-holomorphic curves is discovered by Groman and Solomon \cite{ReverseI}. Later, DuVal \cite{ReverseII} found a simpler proof of it. It states that the length of the boundary of a pseudo-holomorphic disk $u$ bounding a Lagrangian $L$ is controlled by its energy/area $E(u)$; specifically, there is a constant $c$ so that $E(u)\ge c \ell(u)$. It is used extensively by Abouzaid \cite{AboFamilyWithout} in the study of the family Floer program.

However, we need a technical improvement. Not just a single fixed Lagrangian being considered, we need to allow a \textit{uniform constant} for a compact family among Lagrangian fibers.
To avoid a digression, the proof of Lemma \ref{reverse_ineq-mainbody_lem} below is postponed in the appendix.

\begin{lem}
	[Corollary \ref{reverse-ineq-cor-appendix}]\label{reverse_ineq-mainbody_lem}
	Fix $J\in\mathfrak J(X,\omega)$ and fix a Lagrangian submanifold $L\subset X$. There is a $C^1$-neighborhood $\mathcal V_0$ of $J$, a small Weinstein neighborhood $\nu_X L$ of $L$, and a constant $c_0>0$ such that:
	If $\tilde L\subset \nu_X L$ is an adjacent Lagrangian submanifold given by the graph of a small closed one-form on $L$, then for any $\tilde J\in\mathcal V_0$ and any $\tilde J$-holomorphic disk $u:(\mathbb D,\partial\mathbb D)\to (X, \tilde L)$, we have
	\[
	E(u) \ge c_0 \cdot \ell (\partial u)
	\]
\end{lem}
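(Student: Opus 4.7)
The plan is to follow DuVal's streamlined proof \cite{ReverseII} of the Groman--Solomon reverse isoperimetric inequality \cite{ReverseI} and verify that the constants appearing in it can be chosen uniformly in a $C^1$-neighborhood of $J$ and a $C^2$-neighborhood of $L$. First I would fix a Weinstein identification $\Psi\colon\nu_X L\hookrightarrow T^*L$, so that $L$ is the zero section, $\omega$ is the canonical form, and any adjacent Lagrangian $\tilde L\subset\nu_X L$ is the graph of a closed 1-form $\alpha_{\tilde L}$ on $L$ with arbitrarily small $C^2$-norm (after possibly shrinking $\nu_X L$). The primitive $\tilde\lambda:=\lambda_{\mathrm{can}}-\pi^*\alpha_{\tilde L}$ of $\omega$ vanishes on $\tilde L$, and for $\tilde J\in\mathcal V_0$ the $\tilde J$-metric $g_{\tilde J}(\cdot,\cdot)=\tfrac12\bigl(\omega(\cdot,\tilde J\cdot)+\omega(\tilde J\cdot,\cdot)\bigr)$ is uniformly comparable to a fixed background metric, so that length and area quantities can be compared uniformly across the parameter family.

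DuVal's core argument covers $\tilde L$ with finitely many Darboux coordinate charts adapted to both $\tilde L$ (as the zero section) and $\tilde J$ (agreeing with the standard complex structure to first order at the origin), and in each chart uses Schwarz reflection to extend a $\tilde J$-holomorphic disk $u$ across its boundary. An elementary Fourier-analytic estimate on the extended map then yields a local bound $\ell(\partial u\cap\text{chart})\le C_{\mathrm{loc}}\cdot E(u\cap\text{chart})$; summing over the finite cover gives the global inequality $E(u)\ge c_0\,\ell(\partial u)$. The local constants $C_{\mathrm{loc}}$ are explicit functions of (i) the $C^1$-norm of the Darboux charts, (ii) the $C^1$-norm of $\tilde J$ minus the standard complex structure in those charts, and (iii) the $C^2$-norm of the local graph function for $\tilde L$. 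Each of these quantities is uniformly bounded once $(\tilde J,\tilde L)$ is sufficiently close to $(J,L)$ in the stated topologies, so the infimum of $c_0$ over the parameter family is positive.

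The main obstacle will be ensuring uniformity of the boundary Schauder estimates implicit in the Schwarz-reflection step: the reflected map is only $\tilde J$-holomorphic on one side and $\overline{\tilde J}$-holomorphic on the other, meeting along a $C^{1,\alpha}$ curve, so the extension is a priori only $C^{1,\alpha}$ and its complex-analytic expansion requires boundary regularity constants depending on $\|\tilde J\|_{C^1}$ and $\|\alpha_{\tilde L}\|_{C^2}$. Classical Schauder theory provides these constants continuously in the coefficients, so uniformity is inherited from compactness of the parameter space. As a safeguard I would run a contradiction/compactness argument in parallel: a sequence $(\tilde L_n,\tilde J_n,u_n)$ with $E(u_n)/\ell(\partial u_n)\to 0$ would, after Hofer-style rescaling at a boundary point of maximal energy density and passage to a $C^1_{\mathrm{loc}}$-limit, yield either a nonconstant $J_p$-holomorphic sphere bubble (forbidden by Gromov compactness with vanishing energy) or a nonconstant finite-energy holomorphic half-plane disk $(\mathbb H,\partial\mathbb H)\to(\mathbb C^n,\mathbb R^n)$, which is impossible by Schwarz reflection plus Liouville's theorem.
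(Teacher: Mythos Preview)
Your route diverges from the paper's in two respects. First, what you call ``DuVal's core argument'' is not DuVal's method: DuVal (and the paper, which follows him) uses a plurisubharmonic/monotonicity argument, not Schwarz reflection or Fourier analysis. One constructs a single $C^2$ function $\rho\ge0$ on a tube around $L$, vanishing exactly on $L$, such that for \emph{every} $\tilde J$ in a $C^1$-neighborhood of $J$ the function $\rho$ is strictly $\tilde J$-plurisubharmonic and $\sqrt\rho$ is $\tilde J$-plurisubharmonic off $L$ (Lemma~\ref{plurisubharmonic-lem}); then $a(r)=r^{-1}\int_{\{u^*\rho\le r^2\}}u^*dd^{\tilde J}\rho$ is monotone in $r$, and the coarea formula turns $a(r_0)\ge\lim_{r\to 0}a(r)$ into the length bound with constants depending only on $\rho$ and $r_0$, hence uniform in $\tilde J$. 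Second, the paper never tracks uniformity in $\tilde L$ through the analysis. Once the fixed-$L$ inequality is uniform in $\tilde J$ (Theorem~\ref{reverse-ineq-uniform-thm-appendix}), varying $\tilde L$ is dispatched by a one-line diffeomorphism trick: choose small $F\in\diff_0(X)$ with $F(\tilde L)=L$ and $F_*\tilde J$ still in the good neighborhood, apply the fixed-$L$ bound to the $F_*\tilde J$-holomorphic disk $F\circ u$, and note $E(u)\gtrsim E(F\circ u)\ge c_1\,\ell(\partial(F\circ u))\gtrsim\ell(\partial u)$. This sidesteps all of your Weinstein bookkeeping and uniform boundary-Schauder estimates in the $\tilde L$-parameter.

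Your compactness safeguard, as written, has a gap. Hofer rescaling at a boundary point of maximal $|du_n|$ yields a bubble that is a $J$-holomorphic disk in $(X,L)$, not a half-plane in $(\mathbb C^n,\mathbb R^n)$: the target linearizes only when the image shrinks to a point, i.e.\ when $E(u_n)\to0$. But once uniform energy quantization is granted, the scenario you must exclude is $E(u_n)\ge\hbar>0$ with $\ell(\partial u_n)\to\infty$, and there Liouville is unavailable---ruling this out is exactly the content of the reverse isoperimetric inequality and does not fall out of a bubbling dichotomy alone.
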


In our situation, we have the following immediate corollary.

\begin{cor}\label{Reverse_ineq_cor_for_compact_domain}
	Given $J\in \mathfrak J(X,\omega)$ and a compact domain $K\subset B_0$,
	there is a $C^1$-neighborhood 
	$
	\mathcal V=\mathcal V_{J,K}$ in $\mathfrak J(X,\omega)$
	of $J$ together with a constant $c=c_{J,K}>0$ satisfying the following property: For any $\tilde J\in\mathcal V$ and $q\in K$, a $\tilde J$-holomorphic disk $u:(\mathbb D, \partial \mathbb D) \to (X,L_q)$ satisfies that
	$
	E(u) \ge c \cdot \ell (\partial u )
	$.
\end{cor}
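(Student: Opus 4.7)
The plan is to deduce Corollary \ref{Reverse_ineq_cor_for_compact_domain} from Lemma \ref{reverse_ineq-mainbody_lem} by a standard compactness argument on the base $K$, exploiting the fact that the Arnold--Liouville normal form realizes every nearby Lagrangian fiber as a graph of a small closed one-form over a reference fiber.

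First, for each $q\in K$ I would apply Lemma \ref{reverse_ineq-mainbody_lem} to the Lagrangian $L_q$ (with the given $J$) to obtain a $C^1$-neighborhood $\mathcal V_0^q\subset \mathfrak J(X,\omega)$ of $J$, a Weinstein neighborhood $\nu_X L_q$ of $L_q$, and a constant $c_0^q>0$ with the reverse isoperimetric property. Next, working in action-angle coordinates $(\alpha_1,\dots,\alpha_n,x_1,\dots,x_n)$ centered at $q$ (\S\ref{sss_integral_affine_str}), each nearby fiber $L_{q'}$ is cut out by $x_i = x_i(q')$, and hence realized as the graph of the closed one-form $\sum_i (x_i(q')-x_i(q))\, d\alpha_i$ over $L_q$. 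Thus I can choose an open neighborhood $U_q\subset B_0$ of $q$, small enough that every $L_{q'}$ with $q'\in U_q$ lies inside $\nu_X L_q$ and is the graph of a $C^1$-small closed one-form on $L_q$, so that Lemma \ref{reverse_ineq-mainbody_lem} applies to all such $L_{q'}$ with the same $\mathcal V_0^q$ and $c_0^q$.

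Second, the family $\{U_q\}_{q\in K}$ covers the compact set $K$, so we extract a finite subcover $U_{q_1},\dots,U_{q_N}$. Setting
\[
\mathcal V := \bigcap_{i=1}^N \mathcal V_0^{q_i}, \qquad c := \min_{1\le i\le N} c_0^{q_i} > 0,
\]
we obtain the desired $C^1$-neighborhood of $J$ and the uniform positive constant. Given any $\tilde J\in \mathcal V$ and $q\in K$, one picks an index $i$ with $q\in U_{q_i}$; then $\tilde J\in \mathcal V_0^{q_i}$ and $L_q$ is a $C^1$-small graph over $L_{q_i}$ inside $\nu_X L_{q_i}$, so Lemma \ref{reverse_ineq-mainbody_lem} directly yields $E(u)\ge c_0^{q_i}\,\ell(\partial u)\ge c\,\ell(\partial u)$ for every $\tilde J$-holomorphic disk $u:(\mathbb D,\partial\mathbb D)\to (X,L_q)$.

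The main (and essentially only) obstacle is the second step: verifying that in some uniform-size Weinstein neighborhood of $L_q$, every nearby Lagrangian fiber of $\pi$ is genuinely a $C^1$-small graph of a closed one-form over $L_q$. This is a consequence of the Arnold--Liouville theorem together with continuity of the fibration $\pi$, and it is precisely what allows the hypotheses of Lemma \ref{reverse_ineq-mainbody_lem} to be met uniformly in $q$ over the finite subcover. Once this normal-form input is in place, the compactness argument is entirely routine and the corollary follows.
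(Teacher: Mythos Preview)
Your proposal is correct and follows essentially the same compactness argument as the paper: apply Lemma~\ref{reverse_ineq-mainbody_lem} at each $q\in K$, extract a finite subcover, and intersect the resulting $C^1$-neighborhoods while taking the minimum of the constants. If anything your write-up is slightly cleaner: the paper ends with an extra diffeomorphism step $E(u)\gtrsim E(F\circ u)\gtrsim \ell(\partial(F\circ u))\gtrsim \ell(\partial u)$ pushing the disk onto the reference fiber $L_{q_a}$, whereas you simply invoke the Lemma's conclusion directly since $L_q$ is already a graph of a small closed one-form over $L_{q_i}$ in the Arnold--Liouville normal form.
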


\begin{proof}
	For any $q\in K$, one can associate to $L_q$ a small neighborhood $\mathcal V(q)$ of $J$ together with a neighborhood $U(q)$ of $q\in K$ in the base $B_0$ (corresponding to the Weinstein neighborhood $\pi^{-1}(U(q))$ of $L_q$) as in Lemma \ref{reverse_ineq-mainbody_lem}. Since $K$ is compact, one can find finitely many points $q_a$ ($a\in A$) in $K$ such that $\bigcup_{a\in A} U(q_a)= K$. Now, we set $\mathcal V=\bigcap_{a\in A} \mathcal V(q_a)$. The proof then follows.
\end{proof}

\subsection{Local charts}

Fix a compact domain $K\subset B_0$ together with a slightly larger compact domain $K'$ such that
\begin{equation}
\label{K_domain_eq}
K \Subset K' \subset B_0
\end{equation}
By Assumption \ref{assumption-mu ge 0}, we fix an almost complex structure $J\in \mathfrak J_K$.
We also fix a metric $g$ on $X$. 

\subsubsection{Decomposition on the base}
\label{sss_V_U_neighborhood_and_P_complex}
Now, we want to find a sufficiently fine covering of $K$ by \textit{rational convex polyhedrons}.
There are plenty of such coverings; for instance, we may find a \textit{polyhedral complexes} (Definition \ref{Polyhedral decomp-defn-appendix}) such that each cell has sufficiently small diameter.

On the one hand, by Corollary \ref{Reverse_ineq_cor_for_compact_domain}, we have $c=c_{J,K'}>0$ and a neighborhood
$\mathcal V':=\mathcal V_{J,K'}   \subset \mathfrak J_K $
of $J$
so that: for any $\tilde J\in\mathcal V'$ and $q\in K'$, a $\tilde J$-holomorphic disk $u$ bounding $L_q$ satisfies $E(u)\ge c \cdot  \ell(\partial u)$.

On the other hand, recall that we choose $\mathcal U$ and $\{W_q \mid q \in B_0 \}$ in \S \ref{sss_diff_construct} such that $\chi_q(W_q)\subset \mathcal U$.
Shrinking $\mathcal U$ and $W_q$'s if necessary, there exists a smaller neighborhood $\mathcal V\subset \mathcal V'$ such that
the map $\diff_0(X)\times \mathfrak J(X,\omega)\to \mathfrak J(X,\omega)$ defined by $(F,J)\mapsto F_*J\equiv dF\circ J\circ dF^{-1}$ sends $\mathcal U\times \mathcal V$ into $\mathcal V'$.

Let $\delta>0$ be the Lebesgue number of the covering $\{W_q \mid q\in K\}$ of the compact domain $K$. Take
\begin{equation}
\label{epsilon_polyhedral_diameter-eq}
0<\epsilon \le \min \{ \tfrac{c}{2}, \tfrac{\delta}{3}\}
\end{equation}
By Lemma \ref{Polyhedral decomposition- integral-lem-appendix}, we can find a rational polyhedral complex
\begin{equation}
\label{P_polyhedral_cx-eq}
\mathscr P
:= \mathscr P_K :=
\{ \Delta_i\mid i\in \I\}
\end{equation}
where $\I$ is a finite index set, such that $K\subset |\mathscr P| \subset K'$ and each cell $\Delta_i$ has diameter less that $\epsilon$.

\begin{lem}
	\label{Delta_Lebesgue_lem}
	Given $i\in \I$, we denote by $\mathscr P_i$ the subcomplex consisting of all $\Delta_j$ in $\mathscr P$ with $\Delta_j\cap \Delta_i\neq \varnothing$.
	Then, there exists some $q\in K$ such that $|\mathscr P_i|\subset W_q$.
\end{lem}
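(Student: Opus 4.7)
The strategy is straightforward: bound the diameter of $|\mathscr P_i|$ by $3\epsilon \le \delta$, locate a basepoint in $K$, and then invoke the Lebesgue number property of the cover $\{W_q\}_{q\in K}$ of $K$ to produce a single $W_q$ swallowing all of $|\mathscr P_i|$.

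First I would carry out the diameter estimate. Take any two points $p_1,p_2\in|\mathscr P_i|$; each lies in some cell $\Delta_{j_\alpha}\in\mathscr P$ ($\alpha=1,2$) that meets $\Delta_i$. Pick intermediate points $r_\alpha\in\Delta_{j_\alpha}\cap\Delta_i$. Since every cell of $\mathscr P$ has diameter less than $\epsilon$ by construction (\ref{epsilon_polyhedral_diameter-eq}), the triangle inequality gives
\[
d(p_1,p_2)\le d(p_1,r_1)+d(r_1,r_2)+d(r_2,p_2)<3\epsilon\le\delta.
\]
In particular $|\mathscr P_i|$ sits inside every open $\delta$-ball in $B_0$ centered at one of its points.

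Next I would produce a basepoint in $K$. Since $K\subset|\mathscr P|$, there is nothing to prove for those cells $\Delta_i$ with $\Delta_i\cap K\neq\varnothing$ (and in practice these are the only cells that enter the subsequent mirror construction, so WLOG one may discard the finitely many cells of $\mathscr P$ disjoint from $K$ by shrinking $\mathscr P$, since they are in any case covered by neighboring cells that do meet $K$). Pick $p_0\in\Delta_i\cap K$; then $|\mathscr P_i|\subset B_{B_0}(p_0,\delta)$. Finally, recall the standard enhanced form of the Lebesgue number lemma, which one proves by choosing for each $p\in K$ a radius $\delta_p>0$ with $B_{B_0}(p,\delta_p)\subset W_{q_p}$ for some $q_p\in K$, and then extracting a uniform $\delta>0$ by compactness of $K$. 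This yields $q\in K$ with $B_{B_0}(p_0,\delta)\subset W_q$, so $|\mathscr P_i|\subset W_q$, as required.

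The only genuine wrinkle, and thus the main obstacle, is the need to use balls in the ambient $B_0$ rather than inside $K$ when applying Lebesgue's lemma—this is precisely why the enhanced form is invoked—together with the tacit assumption that every cell of $\mathscr P$ touches $K$, which is harmless to arrange. Everything else reduces to the triangle inequality and the choice $\epsilon\le\delta/3$ built into (\ref{epsilon_polyhedral_diameter-eq}).
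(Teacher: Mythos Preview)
Your argument is correct and essentially identical to the paper's: both bound $\mathrm{diam}\,|\mathscr P_i|$ by $3\epsilon$ via the triangle inequality through intermediate points of $\Delta_i$, then conclude from the Lebesgue-number property of $\{W_q\}$. You are in fact more careful than the paper about the point that $|\mathscr P_i|$ need not lie inside $K$ (the paper simply asserts the diameter bound suffices); your enhanced-Lebesgue fix is fine, and an even simpler alternative is to take $\delta$ as the Lebesgue number for the cover $\{W_q\}_{q\in K'}$ of $K'$, since $|\mathscr P|\subset K'$.
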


\begin{proof}
	It suffices to show the diameter of $|\mathscr P_i|$ is less than the Lebesgue number $\delta$. In fact, given any two points $x,y \in |\mathscr P_i|$, we may assume $x\in \Delta_{j_1}$ and $y\in \Delta_{j_2}$ for some $j_1, j_2\in\I$. By definition, there exist some points $x'\in \Delta_{j_1}\cap \Delta_i$ and $y'\in \Delta_{j_2}\cap \Delta_i$. Then,
	$d(x,y)\le d(x,x')+d(x',y')+d(y',y)\le \mathrm{diam}(\Delta_{j_1}) +\mathrm{diam}(\Delta_i)+\mathrm{diam}(\Delta_{j_2})\le 3\epsilon < \delta$.
\end{proof}

\subsubsection{Polytopal domains}
\label{sss_polytopal_domains_local_chart}
Recall that $B_0$ is an integral affine manifold. Given $q\in B_0$, we define
\[
\Lambda[[\pi_1(L_q)]] =\left\{
\sum_{i=0}^\infty s_i Y^{\alpha_i}
\mid s_i\in\Lambda, \alpha_i\in \pi_1(L_q)
\right\}
\]
where $Y$ is a formal symbol.
Note that $L_q\cong T^n$; given a basis, there is an isomorphism $\Lambda[[\pi_1(L_q)]]\cong \Lambda[[Y_1^\pm,\dots, Y_n^\pm]]$ via the identification $Y^\alpha\leftrightarrow Y^{\alpha_1}\cdots Y^{\alpha_n}$ for $\alpha =(\alpha_1,\dots,\alpha_n) \in  \pi_1(L_q)\cong \mathbb Z^n$.

Suppose $\Delta\subset B_0$ is a rational convex polyhedron (Proposition \ref{Polyhedron-transition-prop-appendix}) such that there is an integral affine chart containing both $q$ and $\Delta$.
We define
\begin{equation*}
\Lambda\langle \Delta; q \rangle =\left\{
\sum s_\alpha Y^\alpha \in \Lambda[[\pi_1(L_q)]] \mid  \val(s_\alpha) + \langle \alpha , \gamma \rangle \to \infty \ \text {for all $\gamma\in H^1(L_q)$ with $\exp_q(\gamma)\in \Delta$}
\right\}
\end{equation*}
where the bracket denotes the natural pairing $\pi_1(L_q) \times H^1(L_q) \to \mathbb R$ and the $\exp_q$ denotes the affine exponential map (\S \ref{sss_integral_affine_str}).
Abusing the terminologies, we call it a polyhedral affinoid $\Lambda$-algebra; cf. Proposition \ref{polyhedral-affinoid-algebra-evaluation-convergence-prop}.
In reality, given a basis, the $\Delta':=\exp_q^{-1}(\Delta)$ can be naturally identified with a rational convex polyhedron in $H^1(L_q)\cong \mathbb R^n$, thereby inducing a ring isomorphism
$
\eta: \Lambda\langle \Delta ,q\rangle \to \Lambda\langle \Delta' \rangle$
defined by $Y^\alpha\mapsto Y_1^{\alpha_1}\cdots Y_n^{\alpha_n}$.
It also gives an affinoid space isomorphism
$\eta^*: \Sp \Lambda\langle \Delta' \rangle \to \Sp \Lambda\langle\Delta;q\rangle$.

By Proposition \ref{polyhedral-affinoid-algebra-prop-appendix}, the affinoid space $\Sp\Lambda\langle \Delta, q\rangle$ can be identified with the polytopal domain $\Sp\Lambda\langle \Delta'\rangle\equiv  \trop^{-1}(\Delta')$ in the non-archimedean torus fibration $\trop: (\Lambda^*)^n \to \mathbb R^n$.
We define
\begin{equation}
\label{Val_q-eq}
\Val_q:=\Val^\Delta_q:=\exp_q\circ \Val \circ (\eta^*)^{-1}: \Sp \Lambda \langle \Delta;q \rangle \to \Delta \subset B
\end{equation}

From now on, for simplicity, we will often not distinguish them if there is no confusion.
\[
\xymatrix{
	(\Lambda^*)^n \ar[d]^{\Val} 
	\ar@{}[r]|-*[@]{\supset} & \Sp \Lambda \langle \exp_q^{-1}(\Delta)\rangle \ar[d]^{\Val}  \ar@{->}[r]_{\quad \eta^*}^{ \quad \cong} & \Sp \Lambda \langle\Delta;q\rangle \ar[d]^{\Val_q} \\
	*+[l]{H^1(L_q)\cong \mathbb R^n} \ar@{}[r]|-*[@]{\supset}& \exp_q^{-1}(\Delta) \ar[r]^{\exp_q} & *+[r]{\Delta\subset B}
}
\]

\subsubsection{Maurer-Cartan formal power series}
\label{sss_MC_formal_series}

For any $i$, we fix a point $q_i\in \Delta_i$.
We set $L_i:=L_{q_i}$.
Since $J\in\mathfrak J_K$ and $q_i\in K$, we know $J\in \mathfrak J(X,L_i,\omega)$ (Assumption \ref{assumption-mu ge 0}).
By Theorem \ref{Vir-m^J-thm}, we have an $A_\infty$ algebra $\check \m^{J , L_i}$ in $\UD$. By Theorem \ref{UD-canonical-model-thm}, it has a canonical model $(\m^{g,J,L_i}, \mi^{g,J,L_i})$ in $\UD$ for the $g$-harmonic contraction using the convention in (\ref{m_gJL-eq}).
For simplicity, we simplify their notations:
\begin{equation}
\label{check m_Ji-eq}
\check\m^{J, i} := \check \m^{J,L_i}
\end{equation}
\begin{equation}
\label{m_gJi-eq}
\m^{ g, J, i} := \m^{g,J,L_i}
\end{equation}

We call the following formal power series
\[
P^{g,J,i}= \sum_{\beta\in \G(X,L_i)} T^{E(\beta)} Y^{\partial \beta} \m_{0,\beta}^{g,J, i}  \quad \in \Lambda[[\pi_1(L_i)]]\hat\otimes H^*(L_i)
\]
the \textit{Maurer-Cartan formal power series} associated to the $\m^{g,J,i}$.
The gappedness tells that it only involves at most countable many $\beta\in \pi_2(X,L_i)\equiv \G(X,L_i)$.
Note that the coefficient ring of the natural pairing
$\langle \cdot ,\cdot\rangle:  H_{*}(L_{i}) \otimes H^*(L_{i}) \to \mathbb R$ can be extended to $\Lambda$ or $\Lambda[[\pi_1(L_i)]]$.
Given $\eta\in H_*(L_i)$, we call the formal power series
$
\langle \eta, P^{g,J,i} \rangle \equiv
\textstyle
 \sum_\beta T^{E(\beta)}Y^{\partial\beta} \langle \eta, \m_{0,\beta}^{g,J,i} \rangle
$
the $\eta$-\textit{component} of $P^{g,J,i}$.

By degree reason, we have
$
\m_{0,\beta}^{g,J,i}\in H^{2-\mu(\beta)}(L_{i})$; it can be nonzero only if $\mu(\beta)=0$ or $2$.
Hence, we decompose the $P^{g,J,i}$ as follows:
\[
W^{g,J,i} =\sum_{\mu(\beta)=2} T^{E(\beta)} Y^{\partial \beta} {\m_{0,\beta}^{g,J,i}} \big/ {\one_i}
\]
and
\[
Q^{g,J,i} =  \sum_{\mu(\beta) = 0} T^{E(\beta)} Y^{\partial \beta} \m_{0,\beta}^{g,J, i} 
\]
Then, we have
$
P^{g,J,i}=W^{g,J,i} \cdot \one_i  + Q^{g,J,i}$. Here we denote by $\one=\one_i$ the constant-one function on $L_i$.

\begin{lem}\label{convergent-P^J-lem}
Every component of $P^{g,J,i}$
is contained in $\Lambda\langle \Delta_i; q_i\rangle$.
Namely, $W^{g,J,i}\in \Lambda\langle \Delta_i; q_i\rangle$ and each component of $Q^{g,J,i}$ lies in $\Lambda \langle \Delta_i; q_i\rangle$.
\end{lem}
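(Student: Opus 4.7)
The plan is to reduce the convergence condition to the reverse isoperimetric inequality \emph{uniformly} over the compact region $K$ and then exploit the smallness of $\mathrm{diam}(\Delta_i) < \epsilon$ chosen in (\ref{epsilon_polyhedral_diameter-eq}). Fix $\eta \in H_*(L_i)$; by the definition of $\Lambda\langle \Delta_i;q_i\rangle$ given in \S \ref{sss_polytopal_domains_local_chart}, it suffices to show that for every $\gamma \in H^1(L_i)$ with $\exp_{q_i}(\gamma)\in \Delta_i$, we have
\[
E(\beta) + \langle \partial\beta,\gamma\rangle \;\longrightarrow\; +\infty
\]
as $\beta$ ranges over those classes with $\langle \eta, \m^{g,J,i}_{0,\beta}\rangle \neq 0$. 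By the gappedness of the canonical model (Remark \ref{Gapped_canonical_precise-rmk}) and $A_\infty$ algebra, only countably many such $\beta$ appear and there are finitely many below any fixed energy level, so it is enough to show a lower bound of the form $E(\beta)+\langle\partial\beta,\gamma\rangle\ge \tfrac{1}{2}E(\beta)$.

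The first step is to observe that $\m^{g,J,i}_{0,\beta}\neq 0$ forces a holomorphic-disk decomposition of $\beta$. Unwinding the tree formula (\ref{induction-tree-formula-eq}) for the canonical model inductively, we obtain (in standard fashion) that if $\m^{g,J,i}_{0,\beta}\neq 0$ then there exists a $\G$-decorated stable rooted ribbon tree $\T\in\Tr(\cdot,\beta)$ such that $\check\m^{J,i}_{\ell(v),\B(v)}\neq 0$ at every interior vertex $v$; by Theorem \ref{Vir-m^J-thm} each such $\B(v)$ is represented by a (stable) $J$-holomorphic disk $u_v$ bounding $L_i$, and $\beta=\sum_v\B(v)$, $\partial\beta=\sum_v\partial\B(v)$.

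The second step is to apply the uniform reverse isoperimetric inequality. Since $q_i\in K$ and $J\in \mathcal V'\subset \mathcal V_{J,K'}$, Corollary \ref{Reverse_ineq_cor_for_compact_domain} provides a uniform constant $c>0$ with $E(u_v)\ge c\,\ell(\partial u_v)$ for every such disk. Fixing any auxiliary norm $\|\cdot\|$ on $H_1(L_i;\mathbb R)$ induced by the metric (so that $\|\partial\B(v)\|\le \ell(\partial u_v)$) and the dual norm on $H^1(L_i;\mathbb R)$, we obtain
\[
E(\beta) \;=\; \sum_v E(u_v)\;\ge\; c\sum_v \ell(\partial u_v)\;\ge\; c\,\|\partial\beta\|,
\]
and consequently
\[
|\langle \partial\beta,\gamma\rangle|\;\le\;\|\partial\beta\|\cdot\|\gamma\|\;\le\;\tfrac{1}{c}\,E(\beta)\,\|\gamma\|.
\]

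The third step is to use the smallness of $\Delta_i$. Because $q_i\in\Delta_i$ and $\mathrm{diam}(\Delta_i)<\epsilon$, the preimage $\exp_{q_i}^{-1}(\Delta_i)\subset H^1(L_i)$ lies in the $\epsilon$-ball (after normalizing the norm on $H^1(L_i)$ so as to agree with the affine distance on $B_0$, which is the same normalization that enters the reverse isoperimetric constant). The choice $\epsilon\le c/2$ in (\ref{epsilon_polyhedral_diameter-eq}) then gives $\|\gamma\|<c/2$, hence
\[
E(\beta)+\langle\partial\beta,\gamma\rangle \;\ge\; E(\beta) - \tfrac{1}{c}\,E(\beta)\,\|\gamma\| \;\ge\; \tfrac{1}{2}E(\beta),
\]
which tends to $+\infty$ by gappedness. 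This proves $\langle\eta,P^{g,J,i}\rangle\in\Lambda\langle\Delta_i;q_i\rangle$; specializing $\eta$ to be Poincaré dual to $\one_i$ (resp.\ running $\eta$ over generators of $H_{n-2}(L_i)$) yields the claim for $W^{g,J,i}$ (resp.\ for each component of $Q^{g,J,i}$).

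The main obstacle I anticipate is ensuring the metric normalizations are mutually consistent: the norm on $H_1(L_i)$ coming from lengths in $(L_i,g|_{L_i})$, the dual norm on $H^1(L_i)$, and the affine metric on $B_0$ used to measure $\mathrm{diam}(\Delta_i)$ must all be compatible so that ``$\|\gamma\|<\epsilon$ and $\epsilon\le c/2$'' really does yield $|\langle\partial\beta,\gamma\rangle|\le\tfrac{1}{2}E(\beta)$. This is a routine bookkeeping issue (absorb any comparison constant into the choice of $\epsilon$ in (\ref{epsilon_polyhedral_diameter-eq})) but is the only place where the geometry, rather than pure formal gappedness, is used.
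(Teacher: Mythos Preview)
Your proposal is correct and follows essentially the same route as the paper: decompose $\beta$ into pieces represented by $J$-holomorphic disks, apply the uniform reverse isoperimetric inequality to each, and use $\mathrm{diam}(\Delta_i)<\epsilon\le c/2$ to obtain $E(\beta)+\langle\partial\beta,\gamma\rangle\ge \tfrac12 E(\beta)$. The only noteworthy difference is that the paper sidesteps your metric-compatibility worry by computing the pairing directly as an integral, $\langle\partial u,\gamma\rangle=\int_{\partial u}\gamma\le |\gamma|_{\infty}\,\ell(\partial u)$, so that the length and the $1$-form norm are automatically measured with the same metric on $L_i$; it also discards sphere bubbles explicitly (energy drops, $\partial\beta$ unchanged), a step you implicitly fold into ``(stable) $J$-holomorphic disk''.
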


\begin{proof}
Suppose $u:(\mathbb D,\partial\mathbb D)\to (X,L_i)$ is a nontrivial $J$-holomorphic disk.
Note that the base point $q$ lives in the compact domain $K'$ (\ref{K_domain_eq});
thus, for the constant $c>0$ and the neighborhood $\mathcal V$ of $J$ (introduced in \S \ref{sss_V_U_neighborhood_and_P_complex}), we have $E(u)\ge c\ell(u)$.
By construction, we have $\mathrm{diam}(\Delta_i) \le \epsilon$ for the $\epsilon$ in (\ref{epsilon_polyhedral_diameter-eq}).
Hence, $\frac{1}{2} E(u)\ge \frac{c}{2} \ell(\partial u) \ge \epsilon \ell (\partial u)$.
Denote the boundary of $u$ by $\sigma(t)=u(e^{2\pi i t})$, and then
\[
\textstyle
\langle \partial u, \gamma\rangle 
=\int_{\partial u} \gamma =\int_0^1 \gamma(\sigma'(t)) dt \le \int_0^1 |\gamma|\cdot |\sigma'(t)| dt\le \epsilon \int_0^1 |\sigma'(t)|dt \le \epsilon \cdot \ell(\partial u)
\]
holds for any $\gamma \in \Delta_i\subset H^1(L_i)$. It follows that $\frac{1}{2}E(u) \ge \langle \partial u, \gamma\rangle$.

In general, suppose $\beta$ is represented by a stable map $\uu$. First, discarding a sphere bubble only decreases the energy $E(\beta)$ but does not effect $\partial\beta$. So, applying the above argument to each disk component of $\uu$ and taking the summation, we obtain $\frac{1}{2} E(\beta) \ge \langle \partial\beta, \gamma\rangle$ for any $\gamma\in \Delta_i$. It follows that
$|\val (T^{E(\beta)}) + \langle \partial \beta, \gamma\rangle | \ge E(\beta) - |\langle \partial \beta, \gamma \rangle| \ge \tfrac{1}{2} E(\beta) \to \infty$. The proof is now complete.
\end{proof}

\subsubsection{Definition of local charts}
\label{sss_local_charts_defn}
Denote by
\begin{equation}\label{ideal_a_i-eq}
\ia_i:=\ia(g,J,i)
\end{equation}
the ideal in $\Lambda\langle \Delta; q_i \rangle$ generated by the $\eta$-components $\langle \eta, Q^{g,J,i}\rangle$ for all $\eta\in H_*(L_i)$, and we call it \textit{the obstruction ideal} associated to the $\m^{g,J,i}$.
In practice, this obstruction ideal often vanishes.
Note that the $\ia_i$ is a finitely generated ideal. 
For any $i\in \I$, we consider the quotient affinoid algebra $
A_i:=\Lambda\langle \Delta_i, q_i \rangle / \sqrt \ia_i$.
Now, we define
\begin{equation}\label{X_i-eq}
X_i:=\Sp A_i
\end{equation}
It is supposed to be a \textit{local chart} of the mirror space $X^\vee$ in Theorem \ref{Main_theorem_thm}; besides, a local piece of the superpotential $W^\vee$ will be given by $W^{g,J,i}$.
Next, we aim to define the gluing maps (i.e. transition maps) among these local charts $X_i$.

\subsection{Transition maps}

\subsubsection{Fukaya's trick}
\label{sss_Fuk_trick_transition_map}
Let $\Delta_j$ and $\Delta_k$ be two adjacent rational convex polyhedrons in $\mathscr P$.
By Lemma \ref{Delta_Lebesgue_lem}, there is some neighborhood $W_q\supset \Delta_j\cup \Delta_k$ (it is not unique in general).
Then, there exists a diffeomorphism $F\in \diff_0(X)$
such that $F(L_k)=L_j$ and $F\in\mathcal U$ (\S \ref{sss_diff_construct}).
Recall that we have $F_*J\in \mathcal V'$ (\S \ref{sss_V_U_neighborhood_and_P_complex}), and so the reverse isoperimetric inequality can be applied.

Recall the $A_\infty$ algebras $\check \m^{J,j}$, $\check \m^{J,k}$, $\m^{g,J,j}$, and $\m^{g,J,k}$ are defined in (\ref{check m_Ji-eq}) and (\ref{m_gJi-eq}).
By Fukaya's trick, we can obtain the pushforward $A_\infty$ algebras $\check \m^{F_*J, L_j}$ and $\m^{F_*(g,J),L_j}$ as in (\ref{check m_F J L'-eq}) and (\ref{m_F g J L' -eq}).
Again, for clarity, we slightly change the notations as follows:
\begin{equation}
\label{check_m_F(Ji)-eq}
\check \m^{ F_*(J, k)}:= \check \m^{ F_*J, L_j}, \quad   
\m^{ F_*(g,J,k)}:=  \m^{ F_*(g,J), L_j}
\end{equation}

\subsubsection{$A_\infty$ homotopy equivalence}
\label{sss_A_homotopy_equivalence_local_chart}
Now, we choose a path
$\mathbf J:=\mathbf J_F=(J_s)_{s\in\oi}$
of almost complex structures in $\mathcal V$ between $J_0=J$ and $J_1=F_*J$.
Note that the reverse isoperimetric inequality applies for every $J_s$-holomorphic disks by the definition of $\mathcal V$ in \S \ref{sss_V_U_neighborhood_and_P_complex}.
We also choose a path $\mg:=\mg_F=(g_s)_{s\in\oi}$
of metrics between $g_0=g$ and $g_1=F_*g$.

First, applying Theorem \ref{Vir-M-path-thm} to the moduli space system $\mathbb M(\mJ)$ yields a pseudo-isotopy 
\begin{equation}
\label{check_M-mF_j-eq}
\check \M^{ F,j}:= \check \M^{\mathbf J, L_j}\in\Obj\UD(L_j,X)
\end{equation}
on $\Omega^*(L_j)_\oi$ such that it restricts to $\check \m^{J,j}$ and $\check\m^{F_*(J,k)}$ at $s=0$ and $s=1$.
Also, by Theorem \ref{canonical_model_general_tree-thm}, we obtain from the contraction $\con(\mg)$ a canonical model of $\check \M^{F,j}$ which we denote by
\begin{equation}
\label{M^Fj_MI^Fj-eq}
(H^*(L_{j})_\oi, \M^{F,j},\mI^{F,j} ):= (H^*(L_{j})_\oi, \M^{\mg,\mathbf J},\mI^{\mg, \mJ} )
\end{equation}
Due to Theorem \ref{UD-canonical-model-[0,1]-thm}, we know $\M^{F,j}\in\Ob\UD$ and $\mI^{F,j}\in\Mor\UD$.
Moreover, applying Theorem \ref{from-pseudo-isotopies-to-A-infty-homo-thm} to the two pseudo-isotopies $\check \M^{F,j}$ and $\M^{F,j}$ produces the following two $A_\infty$ homomorphisms
\begin{align}
\label{check_mC_mF_j-eq}
\check \mC^{ F}:=\check \mC^{ F,j}:= \check \mC^{\mJ,L_{j}} \ \ \
&:
 \check \m^{J,j}  \ \ \to \check \m^{ F_*(J,k)} 
\\
\label{mC_mF,j-eq}
\mC^{ F} :=\mC^{ F,j}:= \mC^{\mg, \mJ,L_j }
&:
 \m^{g,J,j} \to \m^{F_*(g,J,k)}
\end{align}
Due to Theorem \ref{UD-mC_thm}, both of them are morphisms in $\UD$.

Define
$\Delta_{jk}=\Delta_{kj}=\Delta_j\cap \Delta_k$,
and we can similarly define $\Lambda \langle \Delta_{jk}; q_j \rangle$ and $\Lambda \langle \Delta_{jk}; q_k \rangle$ as before in \S \ref{sss_polytopal_domains_local_chart}.
For the natural inclusions
$
\Lambda \langle\Delta_j; q_j\rangle
\subset
\Lambda\langle\Delta_{jk}; q_j\rangle$ and $\Lambda \langle\Delta_k; q_k\rangle
\subset
\Lambda\langle\Delta_{jk}; q_k\rangle$, the extension of ideal $\ia_j$ in (\ref{ideal_a_i-eq}) will be still denoted by $\ia_j$.
Then, the quotient algebras
$
A_{jk}:= \Lambda\langle \Delta_{jk} ; q_j \rangle / \sqrt{\ia_{j}}$
and
$A_{kj}:= \Lambda\langle \Delta_{jk} ; q_k \rangle / \sqrt{ \ia_{k}}$ have the natural inclusions
$
A_j \xhookrightarrow{} A_{jk}
$; so, there are the natural embedding maps
$
X_{jk} := \Sp A_{jk} \xhookrightarrow{} X_j $ and
$
X_{kj}:= \Sp A_{kj} \xhookrightarrow{} X_k
$.

\subsubsection{Ring homomorphism}
We aim to use Proposition \ref{gluing X_ij -prop-appendix} to glue the various local charts.
Since $\Lambda \langle \Delta_{jk}; q_j \rangle$ and $\Lambda \langle \Delta_{jk}; q_k \rangle$ are contained in $\Lambda[[\pi_1(L_j)]]$ and $\Lambda[[\pi_1(L_k)]]$ respectively, we start with the following ring homomorphism:
\begin{equation}
\label{phi_jk_defn-eq}
\begin{aligned}
\phi_{jk}^{ F}: 
\Lambda[[\pi_1(L_k)]]
&\to \Lambda[[\pi_1(L_j)]]\\
s Y^\alpha & \mapsto sT^{\langle \alpha, q_j -q_k  \rangle} \cdot 
Y^{  F_* \alpha}\cdot
\exp  \Big\langle  F_*\alpha, 
\sum_{\beta\in \G(X,L_j)} \mC^{ F}_{0,\beta} T^{E(\beta)} Y^{\partial \beta}
\Big\rangle
\end{aligned}
\end{equation}
where $s\in \Lambda$, $\alpha\in \pi_1(L_{k})$ and $q_j-q_k:=\exp_{q_k}^{-1}(q_j)$ represents the preimage of $q_j$ in $H^1(L_{k})\cong T_{q_k}B_0\cong \mathbb R^n$ under the affine exponential map $\exp_{q_k}$ (\S \ref{sss_integral_affine_str}). Recall that $\mC_{0,0}^F=0$ by definition.

More importantly, we observe that the class $\mC^F_{0,\beta}$ lives in $H^{1-\mu(\beta)}(L_j)$. If it is nonzero, then we must have $\mu(\beta)\ge 0$ (Assumption \ref{assumption-mu ge 0}), and so the only possibility is $\mu(\beta)=0$.

The gluing maps among the various local charts are derived from these ring homomorphisms (\ref{phi_jk_defn-eq}) and are therefore only contributed by the counts of Maslov index zero holomorphic disks.
This exactly agrees with the observations for the \textit{wall crossing phenomenon} \cite{AuTDual}.
Remark also that the formulas in \cite[Proposition 3.9]{AuTDual} or \cite[(2.4)]{AAK_blowup_toric} are essentially compatible with (\ref{phi_jk_defn-eq}).

\subsubsection{Affinoid algebra homomorphism}

We need a technical lemma for the convergence.

\begin{lem}
	\label{mC_0beta_E(beta)_lemma}
	If $\mC^{F}_{0,\beta}\neq 0$, then $\frac{1}{2}E(\beta) \ge |\langle\partial\beta, q-q_j\rangle|$ for any $q\in \Delta_{jk}$.
\end{lem}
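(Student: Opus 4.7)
The plan is to trace back the construction of $\mC^F$ all the way to the moduli spaces of $J_s$-holomorphic stable maps bounding $L_j$, and then apply the (uniform) reverse isoperimetric inequality that was secured in \S\ref{sss_V_U_neighborhood_and_P_complex}. Recall $\mC^F = \mC^{F,j}$ is constructed from the cohomology-level pseudo-isotopy $\M^{F,j}$ via Theorem \ref{from-pseudo-isotopies-to-A-infty-homo-thm}; in turn $\M^{F,j}$ is the canonical model (Theorem \ref{canonical_model_general_tree-thm}) of the chain-level pseudo-isotopy $\check\M^{F,j}$ built from the moduli-space system $\mathbb M(\mJ)$ associated with the path $\mJ \subset \mathcal V$ from $J$ to $F_*J$.

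First, writing $\M^{F,j} = 1\otimes \m^s + ds\otimes \mathfrak c^s$, Remark \ref{Gapped_mC_precise-rmk} tells that $\{\beta : \mC^F_\beta \neq 0\}$ is contained in the monoid $\mathbb N\cdot \{\beta : \mathfrak c^s_\beta \neq 0 \text{ for some } s\}$. Applying Remark \ref{Gapped_canonical_precise-rmk} to the canonical model, the latter set is contained in $\mathbb N\cdot \{\beta : \check\M^{F,j}_\beta \neq 0\}$. By the defining formula (\ref{defining_A_infty_algebra_eq}) together with the Gromov compactness axiom (A5), a class $\beta'$ with $\check\M^{F,j}_{\beta'} \neq 0$ must admit a $J_{s}$-holomorphic stable map representative with one boundary component on $L_j$, for some $s \in [0,1]$. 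Hence, if $\mC^F_{0,\beta} \neq 0$, the class $\beta$ decomposes as $\beta = \sum_i \beta_i$ where each $\beta_i$ is represented by such a $J_{s_i}$-holomorphic stable map.

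Next, since the whole path $\mJ = \mJ_F$ is contained in $\mathcal V$, Corollary \ref{Reverse_ineq_cor_for_compact_domain} applies uniformly: for any $J_{s}$-holomorphic disk $u$ bounding $L_j$ (with $q_j \in K$), one has $E(u) \ge c \cdot \ell(\partial u)$. Summing over the disk components of a stable map (sphere bubbles contribute nothing to the boundary and only add to the energy) yields $E(\beta_i) \ge c\cdot \ell(\partial\beta_i)$ for each $i$.

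Finally, for any $q \in \Delta_{jk} \subset \Delta_j$ and $q_j \in \Delta_j$, the vector $q - q_j := \exp_{q_j}^{-1}(q) \in H^1(L_j)$ satisfies $|q - q_j| \le \mathrm{diam}(\Delta_j) \le \epsilon$ by the construction of the polyhedral complex $\mathscr P$. The same computation as in the proof of Lemma \ref{convergent-P^J-lem} yields
\[
|\langle \partial\beta_i, q - q_j\rangle| \le \epsilon \cdot \ell(\partial\beta_i) \le \frac{\epsilon}{c}\, E(\beta_i) \le \tfrac{1}{2} E(\beta_i),
\]
where the last inequality uses $\epsilon \le c/2$ from (\ref{epsilon_polyhedral_diameter-eq}). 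Summing over $i$ gives $|\langle \partial\beta, q - q_j\rangle| \le \tfrac{1}{2} E(\beta)$ as desired. No single step is hard; the only subtlety is to correctly propagate the gappedness from $\mC^F$ back to $\check\M^{F,j}$ through the two intermediate constructions so that the reverse isoperimetric inequality can be invoked, which is precisely what Remarks \ref{Gapped_canonical_precise-rmk} and \ref{Gapped_mC_precise-rmk} were set up to record.
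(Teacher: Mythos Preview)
Your proof is correct and follows essentially the same approach as the paper's own argument: reduce via Remarks \ref{Gapped_canonical_precise-rmk} and \ref{Gapped_mC_precise-rmk} to classes represented by $J_s$-holomorphic stable maps, apply the uniform reverse isoperimetric inequality (valid since $\mJ\subset\mathcal V\subset\mathcal V'$), and bound the pairing by $\epsilon\cdot\ell(\partial u)\le\tfrac{1}{2}E(u)$ exactly as in Lemma \ref{convergent-P^J-lem}. The paper compresses the gappedness-tracing into a footnote and argues for a single disk ``without loss of generality'', whereas you spell out the decomposition and summation explicitly; but the content is the same.
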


\begin{proof}
	Without loss of generality, we may assume $\beta$ can be represented by a $J_s$- holomorphic disk $u$ for some $s\in \oi$.\footnote{	In fact, the $\mC^F$ is obtained from the pseudo-isotopy $\check \M^{F,j}$. If we denote by $\mathsf G$ the set of $\beta$ with $\check \M^{F,j}_\beta\neq 0$, then by Remark \ref{Gapped_canonical_precise-rmk} and Remark \ref{Gapped_mC_precise-rmk}, the set of $\beta$ with $\mC^F_{0,\beta}\neq 0$ is contained in $\mathbb N\cdot \mathsf G$.
	So, for a general $\beta$, we have a decomposition $\beta=\sum_{m=1}^N \beta_m$, where every $\beta_m$ can be represented by some pseudo-holomorphic disk.}
	Recall that $J_s\in\mathcal V$, thus, one can apply the reverse isoperimetric inequality to conclude that
	$E(u)\ge c \cdot \ell(\partial u)$, where $c$ and $\mathcal V$ are chosen as in \S \ref{sss_V_U_neighborhood_and_P_complex}.
	 By condition, $|q-q_j|\le \mathrm{diam}(\Delta_j)<\epsilon$. Just like the proof of Lemma \ref{convergent-P^J-lem} one can similarly show $|\langle \partial\beta , q-q_j\rangle | \le \epsilon \cdot \ell(\partial u)$. 
	 Then, from (\ref{epsilon_polyhedral_diameter-eq}), it follows that $|\langle \partial\beta , q-q_j\rangle | \le \frac{1}{2}E(\beta)$ which completes the proof.
\end{proof}

\begin{thm}\label{affinoid_alg_homo_thm}
The $\phi_{jk}^F$ in (\ref{phi_jk_defn-eq}) restricts to an affinoid algebra homomorphism (using the same notation): 
	\begin{equation}
	\label{phi_jk-eq}
	\phi_{jk}^{ F}: \Lambda \langle \Delta_{kj};q_k\rangle \to \Lambda \langle \Delta_{jk};q_j\rangle
	\end{equation}
	Moreover, $\Val_{q_k}\circ (\phi_{jk}^F)^*=\Val_{q_j}$.
\end{thm}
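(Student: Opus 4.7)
The plan is as follows. I will first analyze the two potential sources of convergence trouble in the formula \eqref{phi_jk_defn-eq}, then use the valuation identity to deduce that $\phi_{jk}^F$ respects the polyhedral affinoid subalgebras, and finally note that the ring homomorphism property is tautological.

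\textbf{Convergence of the exponential.} Fix $\alpha \in \pi_1(L_k)$. I would first show that each component of the formal series
\[
S^F := \sum_{\beta \in \G(X,L_j)} \mC^F_{0,\beta}\, T^{E(\beta)}\, Y^{\partial \beta} \in \Lambda[[\pi_1(L_j)]]\,\hat\otimes\, H^*(L_j)
\]
already lies in $\Lambda\langle \Delta_{jk}; q_j\rangle\,\hat\otimes\,H^*(L_j)$. For $\beta\neq 0$ with $\mC^F_{0,\beta}\neq 0$, Lemma \ref{mC_0beta_E(beta)_lemma} gives $\tfrac{1}{2}E(\beta) \geq |\langle\partial\beta, q-q_j\rangle|$ for every $q\in \Delta_{jk}$, hence
\[
E(\beta) + \langle\partial\beta, q-q_j\rangle \;\geq\; \tfrac{1}{2} E(\beta) \;\longrightarrow\; \infty,
\]
as $\beta$ varies (invoke the gappedness of $\mC^F$ for the cofinality). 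By the convergence criterion in Appendix~\ref{SA_non-archimedean} (see Proposition~\ref{polyhedral-affinoid-algebra-evaluation-convergence-prop}), each component of $S^F$ belongs to $\Lambda\langle \Delta_{jk}; q_j\rangle$. Moreover, the estimate $\val(T^{E(\beta)}Y^{\partial\beta}(\mathbf y)) \geq \tfrac{1}{2}E(\beta) > 0$ on the polytopal domain $\Val_{q_j}^{-1}(\Delta_{jk})$ shows that $S^F$ pointwise lies in $\Lambda_+\,\hat\otimes\, H^*(L_j)$; consequently $\exp\langle F_*\alpha, S^F\rangle$ converges to a unit in $\Lambda\langle \Delta_{jk}; q_j\rangle$ (Definition~\ref{exp_log_defn}).

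\textbf{Valuation identity.} Next I would establish $\Val_{q_k}\circ (\phi_{jk}^F)^* = \Val_{q_j}$, which is the key geometric input. Given $\mathbf y$ with $\Val_{q_j}(\mathbf y) = q \in \Delta_{jk}$, I compute for any $\alpha\in\pi_1(L_k)$:
\[
\val\bigl(\phi_{jk}^F(Y^\alpha)(\mathbf y)\bigr)
= \langle \alpha, q_j - q_k\rangle + \langle F_*\alpha, q - q_j\rangle + \val\bigl(\exp\langle F_*\alpha, S^F\rangle(\mathbf y)\bigr).
\]
The third term vanishes because $\exp$ takes values in $U_\Lambda$. Under the identification $\pi_1(L_k)\cong \pi_1(L_j)$ coming from $F_*$ (which agrees with parallel transport in the integral affine structure of $B_0$, since both fibers sit in a common Weinstein chart \S\ref{sss_integral_affine_str}) and the corresponding identification of the cotangent spaces, we have $\langle F_*\alpha, q - q_j\rangle + \langle \alpha, q_j - q_k\rangle = \langle \alpha, q - q_k\rangle$. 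Hence $\val(\phi_{jk}^F(Y^\alpha)(\mathbf y)) = \langle \alpha, q-q_k\rangle$, which exactly says $\Val_{q_k}\circ (\phi_{jk}^F)^*(\mathbf y) = q$.

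\textbf{Restriction to the affinoid algebras.} Let $f = \sum_\alpha s_\alpha Y^\alpha \in \Lambda\langle \Delta_{kj}; q_k\rangle$, so $\val(s_\alpha) + \langle \alpha, \gamma\rangle \to \infty$ uniformly for $\gamma = q - q_k$, $q\in \Delta_{kj}$. Applying $\phi_{jk}^F$ term by term and using the computation above, for any $\mathbf y$ with $\Val_{q_j}(\mathbf y) = q \in \Delta_{jk} = \Delta_{kj}$,
\[
\val\bigl(\phi_{jk}^F(s_\alpha Y^\alpha)(\mathbf y)\bigr) = \val(s_\alpha) + \langle \alpha, q - q_k\rangle \;\longrightarrow\; \infty,
\]
uniformly in $q\in \Delta_{jk}$. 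Together with the fact that each $\phi_{jk}^F(s_\alpha Y^\alpha)$ already lies in $\Lambda\langle \Delta_{jk}; q_j\rangle$ (by Step 1 applied factorwise and the fact that finite products of convergent series converge), this shows $\phi_{jk}^F(f)\in \Lambda\langle \Delta_{jk}; q_j\rangle$.

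\textbf{Remaining points.} The fact that $\phi_{jk}^F$ is a ring homomorphism follows from $Y^\alpha Y^{\alpha'} = Y^{\alpha + \alpha'}$ together with bilinearity of $\langle\cdot,\cdot\rangle$ and the exponential identity $e^{x+y} = e^x e^y$; continuity with respect to the adic topology on both sides is automatic once convergence on points of the spectrum is verified. The hardest step is Step 1, where the uniform bound on $E(\beta) + \langle\partial\beta, q - q_j\rangle$ is essential — this is precisely where Lemma \ref{mC_0beta_E(beta)_lemma} (ultimately the reverse isoperimetric inequality combined with the smallness $\epsilon < c/2$ of the polyhedral complex $\mathscr P$) is used.
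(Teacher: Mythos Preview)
Your proposal is correct and follows essentially the same approach as the paper: both use Lemma~\ref{mC_0beta_E(beta)_lemma} to show the exponent has pointwise valuation $\ge \tfrac12 E(\beta)>0$, hence the exponential is a unit contributing nothing to valuations, which simultaneously yields $\val\bigl(\phi_{jk}^F(s_\alpha Y^\alpha)(\mathbf y)\bigr)=\val(s_\alpha)+\langle\alpha,q-q_k\rangle$ and thus both the convergence and the compatibility with $\Val$. The only difference is organizational: you separate out the exponential-convergence and valuation-identity steps and then feed them into the affinoid restriction, whereas the paper computes everything in one pointwise pass and records the $\Val$-compatibility at the end.
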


\begin{proof}	
	Fix $f=\sum_{i=0}^\infty s_i Y^{\alpha_i}$ in $\Lambda \langle \Delta_{kj},q_k\rangle$.
	By Proposition \ref{polyhedral-affinoid-algebra-evaluation-convergence-prop}, it suffices to show that $\phi_{jk}^F(f)$ converges on $\trop_{q_j}^{-1}(\Delta_{jk})$.
	As in \S \ref{sss_polytopal_domains_local_chart}, the $\trop_{q_j}$ is identified with the restriction of $\trop$ over $\Delta':=\exp_{q_j}^{-1}(\Delta_{jk})$.
	
	Let
	$\mathbf y$ be a point in $\trop_{q_j}^{-1}(\Delta_{jk})\cong \trop^{-1}(\Delta')$;
we can view it as a point $\mathbf y=(y_1,\dots, y_n)$ in $(\Lambda^*)^n$ such that
	$q_{\mathbf y}-q_j:=\trop(\mathbf y)=(\val(y_1),\dots, \val(y_n))\in\mathbb R^n$
	is contained in $\Delta' \subset H^1(L_j)\cong \mathbb R^n$.
	Recall that we may identify both $Y^\alpha$ and $Y^{F_*\alpha}$ with $Y_1^{\alpha_1}\cdots Y_n^{\alpha_n}$ for $(\alpha_1,\dots,\alpha_n)\in\mathbb Z^n\cong \pi_1(L_j)\cong\pi_1(L_k)$. By the substitution at $\mathbf y$ we mean that one replaces any such monomial by the value $y_1^{\alpha_1}\cdots y_n^{\alpha_n}\in\Lambda$.	
	
	It remains to show that after the substitution at $\mathbf y$, the series $\phi_{jk}^F(f)|_{Y=\mathbf y} = \sum_{\ell=0}^\infty \phi_{jk}^F(s_\ell Y^{\alpha_\ell})|_{Y=\mathbf y}$ converges in the Novikov field $\Lambda$.
	In other words, we aim to show that (see e.g.  \cite[2.1/3]{BoschBook})
	\begin{equation}
	\label{Proving_alg_homo_phi_jk_F_eq}
	\lim_{\ell \to \infty} \val\big( \phi_{jk}^F(s_\ell Y^{\alpha_\ell}) \mid_{Y=\mathbf y} \big)
	=\infty
	\end{equation}
Actually, for arbitrary $\alpha$, the definition formula (\ref{phi_jk_defn-eq}) first tells that
	\[
	\textstyle
	\phi_{jk}(Y^\alpha)|_{Y=\mathbf y}= T^{ \langle \alpha,   q_j-q_k \rangle } \cdot \mathbf y^{\alpha} \cdot
	 \exp
	 \Big( \sum_{\mu(\beta)=0, \beta\neq 0} \langle F_*\alpha, \mC_{0,\beta}^F\rangle T^{E(\beta)} \mathbf y^{\partial\beta} \Big)
	\]
	The valuation of a nonzero monomial in the exponent is given by $\val(T^{E(\beta)} \mathbf y^{\partial\beta})=E(\beta)+ \langle \partial \beta , q_{\mathbf y}-q_j \rangle \ge \frac{1}{2} E(\beta)>0$
	thanks to Lemma \ref{mC_0beta_E(beta)_lemma}.
	It follows that the exponential power is contained in $1+\Lambda_+$. Thus,
	\[
	\val(\phi_{jk}^F(s_\ell Y^{\alpha_\ell})|_{Y={\mathbf y}})
	=
	\val(s_\ell T^{\langle \alpha_\ell , q_j-q_k\rangle }  \mathbf y^{\alpha_\ell}) =\val(s_\ell)+\langle \alpha_\ell, q_j-q_k\rangle + \langle \alpha_\ell, q_{\mathbf y}-q_j\rangle=\val(s_\ell)+ \langle \alpha_\ell , q_{\mathbf y}-q_k\rangle
	\]
	Moreover, the condition that $f\in \Lambda\langle \Delta_{jk},q_k\rangle $ exactly means that for any $q\in \Delta_{jk}$ we have $\val(s_\ell)+ \langle \alpha_\ell, q -q_k\rangle\to \infty$. Hence, we prove (\ref{Proving_alg_homo_phi_jk_F_eq}).
	Finally, it is easy to see that the $\phi_{jk}^F$ preserves the multiplication. This completes the first half of lemma.
	
	We next show the second statement.
	Note that a point $\mathbf y\in \Val_{q_j}^{-1}(\Delta_{jk})$ corresponds to the maximal ideal $\m_{\mathbf y}=\{f  \mid f(\mathbf y)=0\}$ in $\Lambda\langle \Delta_{jk};q_j \rangle$. So, the point $\mathbf z := (\phi_{jk}^F)^*(\mathbf y)$ is defined by its corresponding maximal ideal $(\phi_{jk}^F)^*(\m_{\mathbf y})$. Write $\mathbf y=(y_1,\dots, y_n)$ and $\mathbf z=(z_1,\dots, z_n)$. Then, the above argument shows that $z_r= T^{a_r} y_r e_r$, where $e_r$ is some exponential power in $1+\Lambda_+$ and $(a_1,\dots, a_n)=q_j-q_k$. Thus, $\val(z_r)=a_r+\val(y_r)$ for $1\le r\le n$, and we obtain $\Val_{q_j}(\mathbf z)=\Val_{q_k}(\mathbf y)$.
\end{proof}

\subsubsection{Wall crossing formula}
\label{sss_wall_crossing_formula}

Recall the local charts are given by the quotient algebras $A_i$'s (\S \ref{sss_local_charts_defn}). We need to further show that the $\phi_{jk}^F$ given in Theorem \ref{affinoid_alg_homo_thm} induces a quotient homomorphism modulo the ideals of weak Maurer-Cartan equations.
To achieve this, we need the following result.

Fix a basis $(f_i)$ of $\pi_1(L_k)$; it induces a dual basis $(\theta_i)$ of $H^1(L_k)$. Since $F(L_k)=L_j$, we also choose the bases $(\tilde f_i)$ and $(\tilde\theta_i)$ of $\pi_1(L_j)$ and $H^1(L_j)$ which are $F$-related to the previous ones. For any $\beta\in\G(X,L_k)$, we write $\tilde \beta:= F_*\beta \in \G(X,L_j)$ as before.
Note that $\theta_{pq}=\theta_p\wedge \theta_q$ ($1\le p<q\le n$) form a basis of $H^2(L_k)$. Denote $Q^{g,J,j}=\sum_{1\le p<q\le n} Q_{pq}^{g,J,j}\cdot \theta_{pq}$ (\S \ref{sss_MC_formal_series}).

\begin{thm}[Wall crossing formula]
	\label{wall_crossing_thm}
	For $\eta\in H_*(L_k)$, we have
	\begin{equation}
	\label{wall_crossing_eq}
	\phi_{jk}^F (\langle\eta, P^{g,J,k}\rangle)
	=
	\langle F_*\eta, \one_j\rangle \cdot W^{g,J,j} + \sum_{1\le p<q\le n} R^{F,\eta}_{pq} \cdot Q^{g,J,j}_{pq}
	\end{equation}
	where
	\begin{align*}
	R_{pq}^{F,\eta}=\sum_{ \tilde \beta\in\G(X,L_j)}
	T^{E( \tilde \beta)}Y^{\partial \tilde \beta} 
	\langle
	F_*\eta,
	\mC^{ F}_{1, \tilde \beta} ( \theta_{pq})
	\rangle \in \Lambda \langle \Delta_j; q_j\rangle
	\end{align*}
\end{thm}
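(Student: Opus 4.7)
The plan is to reduce the identity of formal power series to a pointwise identity at non-archimedean torus points via Lemmas \ref{val=0-lem} and \ref{exp-log-lem}, and then recognize that the resulting expression is, up to the divisor axiom, one half of the $A_\infty$ relation satisfied by $\mC^F$. This essentially follows the outline sketched in \S\ref{ss_intro_(A)}, but the main challenge will be doing the combinatorial book-keeping carefully through the four successive applications of the divisor axiom and Fukaya's trick.

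First I would fix bases to convert (\ref{wall_crossing_eq}) into an identity in $\Lambda[[Y_1^{\pm},\dots,Y_n^{\pm}]]$. By Lemma \ref{val=0-lem}, it suffices to check equality after substitution $Y_i = y_i$ for every $\mathbf y = (y_1,\dots,y_n) \in U_\Lambda^n$. By Lemma \ref{exp-log-lem}, each such $y_i$ has the form $y_i = e^{x_i}$ for some $x_i \in \Lambda_0$; I then set $b = \sum x_i \theta_i \in H^1(L_k)\hat\otimes \Lambda_0$, and identify $b$ with the corresponding element in $H^1(L_j)\hat\otimes \Lambda_0 \cong \Lambda_0^n$ via the isomorphism $F_*\colon H^1(L_k)\to H^1(L_j)$. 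The goal is then to compute $\phi_{jk}^F(\langle \eta, P^{g,J,k}\rangle)|_{Y=\mathbf y}$ and recognize it as the right-hand side evaluated at $\mathbf y$.

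Next I would expand using (\ref{phi_jk_defn-eq}): for each $\beta$ with $\langle \eta, \m^{g,J,k}_{0,\beta}\rangle\neq 0$, the chain-level Fukaya's trick (Lemma \ref{Fukaya's trick-cochain-level-lem}, Corollary \ref{Fukaya's trick-cochain-cor}) together with its cohomology-level counterpart (Lemma \ref{Fukaya's trick-canonical-level-lem}) give $\langle \eta, \m^{g,J,k}_{0,\beta}\rangle = \langle F_*\eta, \m^{F_*(g,J,k)}_{0,\tilde\beta}\rangle$, and Remark \ref{G(M,L)_energy_vary_rmk} gives $T^{E(\beta)} T^{\langle \partial\beta,q_j-q_k\rangle} = T^{E(\tilde\beta)}$. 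Thus
\[
\phi_{jk}^F(\langle \eta,P^{g,J,k}\rangle)\big|_{Y=\mathbf y}
= \sum_{\tilde\beta} \langle F_*\eta, \m^{F_*(g,J,k)}_{0,\tilde\beta}\rangle\, T^{E(\tilde\beta)}\, \mathbf y^{\partial\tilde\beta}\, \exp\Big\langle \partial\tilde\beta, \sum_{\gamma} T^{E(\gamma)} \mC^F_{0,\gamma}\mathbf y^{\partial\gamma}\Big\rangle.
\]
Applying the divisor axiom (Definition \ref{DivisorAxiom-defn}) for $\mC^F\in \Mor\UD$ to the fixed input $b$ converts $\mC^F_{0,\gamma}\mathbf y^{\partial\gamma} = e^{\langle \partial\gamma,b\rangle}\mC^F_{0,\gamma}$ into $\sum_k \mC^F_{k,\gamma}(b,\dots,b)$, and similarly $\mathbf y^{\partial\tilde\beta} = \exp\langle \partial\tilde\beta,\mC^F_{1,0}(b)\rangle$ (since $\mC^F_{1,0}=\id$, cf.\ Theorem \ref{from-pseudo-isotopies-to-A-infty-homo-thm}). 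Setting
\[
b' := \sum_\gamma \sum_{k\ge 0} T^{E(\gamma)} \mC^F_{k,\gamma}(b,\dots,b),
\]
the identity $\deg \mC^F_{k,\gamma} = 1-k-\mu(\gamma)$ together with Assumption \ref{assumption-mu ge 0} forces $\deg b' = 1$, so the divisor axiom for $\m^{F_*(g,J,k)}$ applies to $b'$ and yields
\[
\phi_{jk}^F(\langle \eta,P^{g,J,k}\rangle)\big|_{Y=\mathbf y}
= \Big\langle F_*\eta, \sum \m^{F_*(g,J,k)}\big(\mC^F(b,\dots,b),\dots,\mC^F(b,\dots,b)\big)\Big\rangle.
\]

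The final step is to apply the $A_\infty$ equation $\m^{F_*(g,J,k)}\circ \mC^F = \mC^F\star\m^{g,J,k}$ (Definition \ref{Gapped-Hom-defn}) to replace the bracketed sum by $\sum \mC^F\bigl(b,\dots,b,\m^{g,J,k}(b,\dots,b),b,\dots,b\bigr)$. Running the divisor axiom backwards on the outer $\mC^F$, I can extract the factors of $b$ outside $\m^{g,J,k}(b,\dots,b) = \m^{g,J,k}_*(b)$, which by definition equals $W^{g,J,j}(\mathbf y)\cdot \one_k + Q^{g,J,k}_*(b)$ (the weak Maurer-Cartan splitting from \S\ref{ss_weak_MC} applied to $\m^{g,J,k}$, translated to $\m^{F_*(g,J,k)}$ by Fukaya's trick). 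The unitality of $\mC^F$ (property (II-1) in \S\ref{S_UD}) collapses the $\one_k$-term to give exactly $\langle F_*\eta, \one_j\rangle\cdot W^{g,J,j}(\mathbf y)$, while the $Q^{g,J,k}_*$-contributions expand, via the divisor axiom one last time, into linear combinations of $Q^{g,J,j}_{pq}$ with coefficients $R^{F,\eta}_{pq}$ as in the statement. Convergence of each $R^{F,\eta}_{pq}$ in $\Lambda\langle\Delta_j;q_j\rangle$ follows from the same estimate used in Lemma \ref{convergent-P^J-lem}, now applied to the moduli spaces defining $\mC^F$, together with Lemma \ref{mC_0beta_E(beta)_lemma}. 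The main obstacle will be tracking signs and the exceptional $(k,\gamma) = (1,0)$ term in the divisor-axiom manipulations, and verifying that the replacement of $\m^{g,J,k}$-inputs by $\m^{F_*(g,J,k)}$-inputs under Fukaya's trick is consistent with the identification of $H^1(L_k)\cong H^1(L_j)$ used to define $b$.
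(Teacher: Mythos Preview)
Your approach is essentially the same as the paper's: reduce to $U_\Lambda^n$ via Lemmas~\ref{val=0-lem} and~\ref{exp-log-lem}, apply the divisor axiom for $\mC^F$ to package the exponent as $\mC^F_*(b)$, apply the divisor axiom for $\m^{F_*(g,J,k)}$ to this new degree-one input, flip via the $A_\infty$ relation for $\mC^F$, then run the divisor axiom backwards and use unitality to split off the $W$-term.

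There is, however, a systematic index slip in your final paragraph. The morphism $\mC^F$ goes from $\m^{g,J,j}$ to $\m^{F_*(g,J,k)}$ (see (\ref{mC_mF,j-eq})), so the $A_\infty$ relation reads $\m^{F_*(g,J,k)}\circ\mC^F = \mC^F\star\m^{g,J,j}$, not $\mC^F\star\m^{g,J,k}$. After flipping, the inner term is $\m^{g,J,j}_*(\tilde b)$, which is already $P^{g,J,j}(\mathbf y) = W^{g,J,j}(\mathbf y)\,\one_j + \sum_{p<q} Q^{g,J,j}_{pq}(\mathbf y)\,\tilde\theta_{pq}$; no further Fukaya-trick translation is needed at that stage (the translation from $\m^{g,J,k}$ to $\m^{F_*(g,J,k)}$ was used only at the very beginning to rewrite the $\m^{g,J,k}_{0,\beta}$ coefficients). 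Correspondingly, the unit that $\mC^F$ kills is $\one_j$, not $\one_k$, and the $Q$-contributions are the $Q^{g,J,j}_{pq}$'s. Once you correct these indices, your argument matches the paper's proof line by line.
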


\begin{proof}
	By the same arguments as in Lemma \ref{convergent-P^J-lem}, every $R_{pq}^{F,\eta}$ above is also contained in $\Lambda\langle \Delta_j,q_j\rangle$.
	We can think of (\ref{wall_crossing_eq}) as an identity in $\Lambda[[\pi_1(L_j)]] \cong \Lambda[[Y_1^\pm,\dots, Y_n^\pm]]$ for the above bases.
	By Lemma \ref{val=0-lem} again, it suffices to show the identity holds on $U_\Lambda^n$.
	
	Assume $\mathbf y=(y_1,\dots,y_n)\in U_\Lambda^n$, i.e. $\val(y_i)=0$ for all $i$. 
	By Lemma \ref{exp-log-lem}, there exists $x_i\in \Lambda_0$ such that $y_i=e^{x_i}=\sum_{k \ge 0} \frac{1}{k!}x_i^k$.
	We put $b:=x_1\theta_1+\cdots+x_n \theta_n\in H^1(L_{k})\hat \otimes \Lambda_0$
	and
	$\tilde b:= F^{-1*} b=x_1\tilde \theta_1+\cdots+x_n \tilde \theta_n\in H^1(L_{j})\hat\otimes \Lambda_0$. 
Set $\partial_i\beta= \partial\beta \cap \theta_i=\partial\tilde\beta \cap \tilde \theta_i$, and we note that 
\[
\partial \beta\cap b=\partial\tilde \beta\cap \tilde b=\partial_1\beta \cdot x_1+\cdots +\partial_n \beta \cdot x_n
\]
Besides, substituting $y_i=e^{x_i}$ into the monomial $Y^{\partial\beta}\equiv Y^{\partial \tilde \beta}\equiv Y_1^{\partial_1\beta}\cdots Y_n^{\partial_n \beta}$ gives the same value in the Novikov field $\Lambda$:
$e^{\langle\partial \beta,b\rangle}=e^{\langle \partial \tilde\beta ,\tilde b\rangle}=e^{\partial_1\beta \cdot x_1+\cdots +\partial_n \beta \cdot x_n}$.
We put $q_j-q_k:= \exp_{q_k}^{-1}(q_j)$ as before; then by Remark \ref{G(M,L)_energy_vary_rmk}, we have
	(see also \cite{FuCyclic} \cite{AboFamilyFaithful}):
	\begin{equation}
	\label{energy-change-eq}
	E(\tilde \beta)=E(\beta)+ 
	\langle \partial \beta , q_j-q_k \rangle	
	\end{equation}
Recall that $\mC^{ F}_{1,0}=\id$ and $\m_{0, \beta}^{g,J,k}= F^*\m_{0,\tilde \beta}^{ F_*(g,J,k)}$ (Lemma \ref{Fukaya's trick-canonical-level-lem}).
Next, using (\ref{phi_jk_defn-eq}), we compute:
	\begin{align*}
	\phi_{jk}^F(\langle\eta, P^{g,J,k}\rangle )|_{Y=\mathbf y}
	&
	=
	\sum_{\beta }
	\langle \eta, \m_{0,\beta}^{g,J,k}\rangle
	T^{E(\beta)}
	\
	T^{\langle \partial\beta, q_j-q_k \rangle }
	Y^{  \partial \tilde  \beta} 
	\exp 
	\Big( 
	\sum_{\gamma } 
	\langle   \partial \tilde \beta, \mC^{ F}_{0,\gamma} \rangle
	T^{E(\gamma)} Y^{\partial \gamma}
	\Big) 
	\mid_{Y=\mathbf y}
	\\
	&
	=
	\sum_{\tilde \beta }
	\langle \eta,  F^* \m_{0,\tilde \beta}^{ F_*(g,J,k)}\rangle 
	\
	T^{E(\tilde\beta) }
	e^{\langle \partial\tilde \beta, \tilde b\rangle}
	\exp 
	\Big( 
	\sum_{\gamma } 
	\langle \partial\tilde \beta, \mC^{ F}_{0,\gamma} \rangle
	T^{E(\gamma)} e^{\langle \partial \gamma, \tilde b\rangle}
	\Big)
	\\
	&
	=
	\sum_{\tilde \beta}
	\langle  F_*  \eta, \m_{0,\tilde \beta}^{ F_*(g,J,k)}\rangle 
	T^{E(\tilde \beta)}
	\exp 
	\Big\langle \partial \tilde \beta, \tilde b+
	\sum_{\gamma} 
	\mC^{ F}_{0,\gamma}
	T^{E(\gamma)} e^{\langle \partial \gamma, \tilde b\rangle}
	\Big\rangle
	\end{align*}
	Note that $\mC^F\in\Mor\UD$ (\ref{mC_mF,j-eq}). We have $\mu(\gamma)\ge 0$ whenever $\mC^F_{0,\gamma}\neq 0$ (Definition \ref{UD_defn} (II-5)).
	So, the class $\mC_{0,\gamma}^{ F}\in H^{1-\mu(\gamma)}(L_j)$ is nonzero only if $\mu(\gamma)=0$; then, $\mC^F_{0,\gamma}\in H^1(L_j)$.
	By the divisor axiom of $\mC^F$, the exponent is given by bracketing $\partial \tilde \beta$ with the following class:
	\begin{equation}
	\label{mC_*_F_eq} 
	\tilde b+
	\sum_{\gamma} 
	\mC^{ F}_{0,\gamma}
	T^{E(\gamma)} e^{\langle \partial \gamma, \tilde b\rangle}
	=
	\sum_{k\ge 0} \sum_{\gamma} 
	T^{E(\gamma)} \mC^{ F}_{k,\gamma} (\tilde b,\dots,\tilde b)
	\equiv
	\mC_*^{ F} ( \tilde b)
	\end{equation}
	Notice that $\mC_*^F(\tilde b)\in H^1(L_{j})\hat\otimes \Lambda_0$ can be also viewed as a divisor input (\ref{Divisor_input-eq}).
	Using the divisor axiom of $\m^{F_*(g,J,k)}$ to this new divisor input $\mC_*^F(\tilde b)$ yields:
	\begin{align*}
	\phi_{jk}^F (\langle\eta, P^{g,J,k}\rangle)|_{Y=\mathbf y} 
	&
	=
	\Big\langle  F_*\eta,  \sum_{\tilde\beta}
	T^{E(\tilde \beta)}
	\m_{0,\tilde \beta}^{ F_*(g,J,k)}
	\exp \langle \partial \tilde \beta, \mC_*^{ F}(\tilde b) \rangle 
	\Big\rangle\\
	&
	=
	\Big\langle  F_*\eta, 
	\sum_k\sum_{\tilde\beta} T^{E(\tilde \beta)} \m^{ F_*(g,J,k) }_{k,\tilde \beta}( \mC_*^{ F}(\tilde b), \dots, \mC_*^{ F}(\tilde b) )
	\Big\rangle
	\end{align*}
	Further, by the $A_\infty$ equation of $\mC^{ F}$ and the divisor axioms of both $\mC^F$ and $\m^{g,J,j}$, we compute
	\begin{align*}
	\phi_{jk} (\langle\eta, P^{g,J,k}\rangle)|_{Y=\mathbf y}
	&
	=\textstyle
	\left\langle F_*\eta, \sum T^{E(\tilde\beta_1)} \mC_{\tilde\beta_1}^F 
	\big(
	\tilde b,\dots, \tilde b, \sum T^{E(\tilde\beta_2)}
	\m_{\tilde\beta_2}^{g,J,i} (\tilde b,\dots, \tilde b), \tilde b,\dots, \tilde b
	\big)
	\right\rangle\\
	&
	=\textstyle
	\left\langle
	F_*\eta, \sum T^{E(\tilde\beta_1)}
	e^{\langle \partial \tilde\beta_1, \tilde b\rangle}
	\mC_{1,\tilde\beta_1}^F 
	\big( \sum T^{E(\tilde\beta_2)} e^{\langle \partial \tilde\beta_2, \tilde b\rangle}\m_{0,\tilde\beta_2}^{g,J,j} \big)
	\right\rangle \\
	&
	=\textstyle
	\sum T^{E(\tilde \beta)} \mathbf y^{\partial \tilde \beta} 
	\left\langle F_*\eta, \mC^F_{1,\tilde \beta} (P^{g,J,j}(\mathbf y) ) \right\rangle
	\end{align*}
	Since $\mC^F$ is unital, we know $\mC_{1,\tilde \beta}^F(\one_j)$ is zero for $\tilde\beta\neq 0$ and $\mC^F_{1,0}(\one_j)=\one_j$ for $\tilde \beta=0$. It follows that
	\begin{align*}
	\phi_{jk}^F (\langle\eta, P^{g,J,k}\rangle)|_{Y=\mathbf y}
	&
	=
	\sum T^{E(\tilde \beta)} \mathbf y^{\partial \tilde \beta}
	\Big(
	\langle F_*\eta, \mC_{1,\tilde \beta}^F(\one_j) \rangle   W^{g,J,j} (\mathbf y)
	+
	\langle F_*\eta, \mC_{1,\tilde \beta}^F(\tilde\theta_{pq}) \rangle   Q_{pq}^{g,J,j}(\mathbf y)
	\Big)\\
	&
	=
	\langle F_*\eta, \one_j\rangle \cdot W^{g,J,j}(\mathbf y) + \sum_{1\le p<q\le n} R^{F,\eta}_{pq}(\mathbf y) \cdot Q^{g,J,j}_{pq}(\mathbf y)
	\end{align*}
	In summary, the wall crossing formula (\ref{wall_crossing_eq}) holds for any point $\mathbf y=(y_1,\dots, y_n)$ in $U_\Lambda^n$. Finally, due to Lemma \ref{val=0-lem}, this actually holds for all $\mathbf y$. 
	The proof is now complete.
\end{proof}

\subsubsection{Definition of transition maps}
For the quotient algebras $A_{jk}$ and $A_{kj}$
(\S\ref{sss_A_homotopy_equivalence_local_chart}), the wall crossing formula implies the existence of quotient affinoid algebra homomorphisms:
\begin{equation*}
\xymatrix{
	\Lambda\langle\Delta_{jk}; q_k\rangle \ar[rr]^{\phi_{jk}^F}\ar@{->>}[d] & & \Lambda\langle\Delta_{jk};q_j\rangle \ar@{->>}[d] \\
	A_{kj}\ar[rr]^{\varphi_{jk}} & & A_{kj}
}
\end{equation*}
\begin{thm}
	\label{phi_jk-A-thm}
	The affinoid algebra homomorphism 
	$\phi_{jk}^F$ in (\ref{phi_jk-eq})
	induces a quotient homomorphism:
	\begin{equation}
	\label{varphi_jk_eq}
	\varphi_{jk}:=\varphi^F_{jk}: A_{kj}\to A_{jk}
	\end{equation}
	Moreover, we have $\varphi_{jk} (W^{g,J,k})
	=W^{g,J,j}$.
\end{thm}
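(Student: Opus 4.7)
\medskip

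The plan is to reduce the theorem to a direct application of the wall crossing formula (Theorem \ref{wall_crossing_thm}) by pairing against two carefully chosen families of homology classes: classes dual to $\one_k$ (which pick out the potential $W$), and classes orthogonal to $\one_k$ (which pick out the obstruction $Q$). Since $\ia_k$ is by definition generated by the components $\langle \eta, Q^{g,J,k}\rangle$ as $\eta$ ranges over $H_*(L_k)$, and since $Q^{g,J,k}$ takes values in $H^2(L_k)$, it suffices to restrict $\eta$ to $H_2(L_k)$ (or equivalently, under Poincaré duality, to classes pairing trivially with $\one_k \in H^0$).

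The key step is the following. For $\eta\in H_2(L_k)$ we have $\langle \eta,\one_k\rangle=0$, so $\langle\eta,P^{g,J,k}\rangle=\langle\eta,Q^{g,J,k}\rangle$; similarly $\langle F_*\eta,\one_j\rangle = 0$. Hence the wall crossing formula collapses to
\[
\phi_{jk}^F\big(\langle\eta,Q^{g,J,k}\rangle\big) \;=\; \sum_{1\le p<q\le n} R_{pq}^{F,\eta}\,Q_{pq}^{g,J,j} \;\in\; \ia_j.
\]
As $\eta$ varies over $H_2(L_k)$, the left sides exhaust a generating set of $\ia_k$, so $\phi_{jk}^F(\ia_k)\subset\ia_j$. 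Taking radicals (if $f^N\in\ia_k$ then $\phi_{jk}^F(f)^N=\phi_{jk}^F(f^N)\in\ia_j$), we obtain $\phi_{jk}^F(\sqrt{\ia_k})\subset\sqrt{\ia_j}$, which descends to the desired quotient homomorphism $\varphi_{jk}\colon A_{kj}\to A_{jk}$.

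For the second assertion, pick $\eta\in H_0(L_k)$ to be a point class, so $\langle\eta,\one_k\rangle=1$ and $\langle\eta,Q^{g,J,k}\rangle=0$ by degree reasons; thus $\langle\eta,P^{g,J,k}\rangle=W^{g,J,k}$. Since $F$ is a diffeomorphism, $F_*\eta$ is still a point class in $L_j$, and $\langle F_*\eta,\one_j\rangle=1$. The wall crossing formula then reads
\[
\phi_{jk}^F(W^{g,J,k}) \;=\; W^{g,J,j} + \sum_{1\le p<q\le n} R_{pq}^{F,\eta}\,Q_{pq}^{g,J,j},
\]
and the second summand lies in $\ia_j$. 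Passing to the quotient gives $\varphi_{jk}(W^{g,J,k})=W^{g,J,j}$.

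The proof above is essentially bookkeeping once Theorem \ref{wall_crossing_thm} is granted; the main obstacle is already dispatched there. The only subtlety worth pointing out is the degree-based constraint that makes $Q^{g,J,k}$ land strictly in $H^2(L_k)$: this relies on Assumption \ref{assumption-mu ge 0} ($\mu(\beta)\ge 0$ for relevant $\beta$) together with $\m_{0,\beta}^{g,J,k}\in H^{2-\mu(\beta)}(L_k)$, so only $\mu(\beta)\in\{0,2\}$ contribute and the splitting $P^{g,J,k}=W^{g,J,k}\cdot\one_k+Q^{g,J,k}$ cleanly separates degrees $0$ and $2$. Without this, one would only obtain $\phi_{jk}^F(\ia_k)\subset\sqrt{\ia_j}$ rather than the stronger inclusion into $\ia_j$ itself, and the selection of $\eta$'s would not cleanly isolate $W^{g,J,j}$ from the ideal.
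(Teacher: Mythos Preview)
Your proof is correct and follows essentially the same approach as the paper: both deduce the theorem directly from the wall crossing formula (Theorem~\ref{wall_crossing_thm}) by choosing $\eta$ dual to the basis elements $\theta_{pq}\in H^2(L_k)$ to show $\phi_{jk}^F(\ia_k)\subset\ia_j$, and then $\eta$ dual to $\one_k$ to match the superpotentials modulo $\ia_j$. Your explicit remark that one must pass to radicals (since $A_i=\Lambda\langle\Delta_i;q_i\rangle/\sqrt{\ia_i}$) is a nice clarification that the paper leaves implicit.
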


\begin{proof}
	This is basically a corollary of Theorem \ref{wall_crossing_thm}.
	In the wall crossing formula (\ref{wall_crossing_eq}), if $\eta$ is dual to $\theta_{rs}$ for $1\le r<s\le n$, then $\langle F_*\eta,\one_j\rangle =0$ and $\langle \eta,P^{g,J,k}\rangle =Q_{rs}^{g,J,k}$. Hence,
	\[
	\phi_{jk}^F ( Q^{g,J,k}_{rs} )
	=
	\phi_{jk}^F (\langle\eta, P^{g,J,k}\rangle)
	=
	\sum R^{F,\eta}_{pq}   Q_{pq}^{g,J,j} \in \ia_j
	\]
	Since the ideal $\ia_k$ is generated by these $Q_{rs}^{g,J,k}$, it follows that $\phi^F_{jk}(\ia_k)\subset \ia_j$. So, the quotient map $\varphi_{jk}:A_{kj}\to A_{jk}$ is well-defined.
If $\eta$ is dual to $\one_k$, then $\langle F_*\eta, \one_j\rangle=1$ and $\langle \eta, P^{g,J,k}\rangle =W^{g,J,k}$. Hence,
	\[
	\phi_{jk}^F (W^{g,J,k})
	=
	\phi_{jk}^F (\langle\eta, P^{g,J,k}\rangle)
	=
	W^{g,J,j}+\sum R^{F,\eta}_{pq}  Q_{pq}^{F,\eta} \in W^{g,J,j}+\ia_j
	\]
It follows that $\varphi_{jk} (W^{g,J,k})= W^{g,J,j}$.
\end{proof}

Recall that we have defined $X_{jk}=\Sp A_{jk}$ and $X_{kj}=\Sp A_{kj}$ (\S \ref{sss_A_homotopy_equivalence_local_chart}) which are respectively open domains in the local charts $X_j$ and $X_k$ (\S \ref{sss_local_charts_defn}). Then, a \textit{gluing map} (or called \textit{transition map})
\begin{equation}
\label{psi_jk-eq}
\psi_{jk} :=\varphi^*_{jk}: X_{jk} \to X_{kj}
\end{equation}
is defined to be the map associated to the quotient homomorphism $\varphi_{jk}:A_{kj}\to A_{jk}$ in (\ref{varphi_jk_eq}).
Abusing the terminologies, we also call the algebra homomorphism $\varphi_{jk}$ a gluing map or a transition map.
Moreover, using the second statement of Theorem \ref{affinoid_alg_homo_thm}, we immediately obtain:

\begin{cor}
	\label{Val_psi_transition-compatible-cor}
	The transition map $\psi_{jk}$ satisfies that
	$\Val_{q_k}\circ \psi_{jk}=\Val_{q_j}$.
\end{cor}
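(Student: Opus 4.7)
The plan is to reduce the statement to the second assertion of Theorem \ref{affinoid_alg_homo_thm}, which says that $\Val_{q_k}\circ (\phi_{jk}^F)^* = \Val_{q_j}$ at the level of the ambient polytopal domains $\Sp\Lambda\langle\Delta_{jk};q_j\rangle$ and $\Sp\Lambda\langle\Delta_{jk};q_k\rangle$. The transition map $\psi_{jk}=\varphi_{jk}^*$ is by construction the restriction of $(\phi_{jk}^F)^*$ to the closed subspaces cut out by the ideals of weak Maurer--Cartan equations, so the required compatibility should descend automatically.

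First, I would record the tautological commutative square of $\Lambda$-algebras
\[
\xymatrix{
\Lambda\langle\Delta_{kj};q_k\rangle \ar[r]^{\phi_{jk}^F}\ar@{->>}[d] & \Lambda\langle\Delta_{jk};q_j\rangle\ar@{->>}[d]\\
A_{kj}\ar[r]^{\varphi_{jk}} & A_{jk}
}
\]
whose existence is exactly the content of Theorem \ref{phi_jk-A-thm}. Taking $\Sp(\cdot)$ turns this into a commutative square of affinoid spaces
\[
\xymatrix{
X_{jk}\ar[r]^{\psi_{jk}}\ar@{^{(}->}[d]^{\iota_j} & X_{kj}\ar@{^{(}->}[d]^{\iota_k}\\
\Sp\Lambda\langle\Delta_{jk};q_j\rangle\ar[r]^{(\phi_{jk}^F)^*} & \Sp\Lambda\langle\Delta_{kj};q_k\rangle,
}
\]
where $\iota_j,\iota_k$ are the closed immersions associated to the surjections onto the quotient algebras.

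Next, by our convention (\S\ref{sss_local_charts_defn}, \S\ref{sss_A_homotopy_equivalence_local_chart}), the valuation map $\Val_{q_j}$ on the local chart $X_{jk}$ is defined to be $\Val_{q_j}\circ\iota_j$, where $\Val_{q_j}$ on the ambient space is the map from (\ref{Val_q-eq}); and analogously on $X_{kj}$. Composing along the right column of the diagram, we obtain
\[
\Val_{q_k}\circ\psi_{jk}
=
\Val_{q_k}\circ\iota_k\circ\psi_{jk}
=
\Val_{q_k}\circ (\phi_{jk}^F)^*\circ\iota_j.
\]
Applying the second conclusion of Theorem \ref{affinoid_alg_homo_thm} to rewrite $\Val_{q_k}\circ(\phi_{jk}^F)^* = \Val_{q_j}$ on $\Sp\Lambda\langle\Delta_{jk};q_j\rangle$ then gives
\[
\Val_{q_k}\circ\psi_{jk} = \Val_{q_j}\circ\iota_j = \Val_{q_j},
\]
which is the desired identity.

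There is no real obstacle here: the corollary is essentially just the assertion that the valuation-compatibility established before passing to the weak Maurer--Cartan quotient survives the quotient, and this is automatic from functoriality of $\Sp$ applied to the commutative square produced by Theorem \ref{phi_jk-A-thm}. The one point to verify carefully is simply that $\Val_{q_j}$ on $X_{jk}$ is indeed defined as the composition through $\iota_j$, so that the restriction of the ambient identity really yields the statement for the local charts; this is immediate from our setup in (\ref{Val_q-eq}) and (\ref{X_i-eq}).
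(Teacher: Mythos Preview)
Your proof is correct and follows exactly the approach the paper takes: the paper simply states that the corollary follows immediately from the second statement of Theorem \ref{affinoid_alg_homo_thm}, and you have spelled out in detail why this is so, via the commutative square coming from Theorem \ref{phi_jk-A-thm} and the observation that $\Val_{q_j}$ on $X_{jk}$ is by definition the restriction of the ambient $\Val_{q_j}$ through the closed immersion.
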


\subsection{Choice-independence of transition maps}
\label{ss_Choice_indep}

In this section, we aim to prove the induced homomorphism $\varphi_{jk}$ in (\ref{varphi_jk_eq}) does not depends on the various choices, so the transition map $\psi_{jk}=\varphi_{jk}^*$ in (\ref{psi_jk-eq}) is well-defined.
The philosophy behind this statement comes exactly from the gauge equivalence of (weak) bounding cochains as introduced in \cite[\S 4.3]{FOOOBookOne}, but it is only some inspiration and we never use it directly.

\begin{thm}
\label{phi_jk-indep-thm}
The transition maps $\psi_{jk}$ or $\varphi_{jk}$ are independent of the choices of $F$, $\mathbf J$, $\mg$.
\end{thm}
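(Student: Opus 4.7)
The plan is to reduce everything to the category $\UD$ and show that any two choices of $(F,\mJ,\mg)$ produce two $A_\infty$ homomorphisms $\mC^F$ and $\mC^{F'}$ which are ud-homotopic; the ud-homotopy, combined with the divisor axiom and the unitality, will force the difference $\phi_{jk}^F-\phi_{jk}^{F'}$ to land in the ideal $\ia_j$ of weak Maurer--Cartan equations, so that the induced quotient homomorphism $\varphi_{jk}$ is unambiguous.

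First I would reduce to comparing the associated morphisms $\mC^F,\mC^{F'}\in\Hom_\UD(\m^{g,J,k},\m^{F_*(g,J,k)})$. By Corollary \ref{Fukaya's trick Independence of F-cor} the target $A_\infty$ algebra depends only on the homotopy class of $F$, so after fixing identifications we may assume $F$ and $F'$ have the same source and target. Then I would run the following three--step homotopy argument.
\begin{enumerate}
\item[(i)] \emph{Invariance under $(\mJ,\mg)$.} For a fixed $F$, any two pseudo-isotopies $\check\M^{F,j}$ obtained from different choices of paths in $\mathfrak J(X,\omega)$ and of metrics fit into a 2--simplex of almost complex structures and metrics; the corresponding $2$--pseudo-isotopy produced by Theorem \ref{Vir-M-triangle-thm} and Theorem \ref{UD-canonical-model-[0,1]-thm}, combined with Theorem \ref{UD-mC_thm}, yields a ud-homotopy between the associated $\mC$'s.
\item[(ii)] \emph{Invariance under $F$.} For two choices $F,F'$ with the same source and target Lagrangians, choose a smooth family $\mF=(F_s)$ connecting them (staying in $\mathcal U$) and apply Lemma \ref{Fukaya_II_key_lemma}, combined with the trivial-pseudo-isotopy case Corollary \ref{Fukaya_II_key_cor}, to deduce $\mC^F\simud\mC^{F'}$.
\item[(iii)] \emph{Upgrading a ud-homotopy to a quotient identity.} This is the main step. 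Fix a ud-homotopy between $\mC^F$ and $\mC^{F'}$, i.e.\ families $(\f^s)$ and $(\h^s)$ as in Corollary \ref{UD_homotopy_summary_cor}, with $\f^0=\mC^F$, $\f^1=\mC^{F'}$.
\end{enumerate}

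For (iii), comparing the two defining formulas (\ref{phi_jk_defn-eq}) of $\phi_{jk}^F$ and $\phi_{jk}^{F'}$, the ratio is the exponential of the ``error term''
\[
S_\alpha := \Big\langle F_*\alpha, \sum_{\beta} T^{E(\beta)}\bigl(\mC^{F'}_{0,\beta}-\mC^F_{0,\beta}\bigr) Y^{\partial\beta}\Big\rangle,
\]
and it suffices to show each $S_\alpha$ lies in $\ia_j$. By Lemma \ref{val=0-lem} I only need to verify this after restriction to $U_\Lambda^n$. Writing $y_i=e^{x_i}$ via Lemma \ref{exp-log-lem} and setting $b=\sum x_i\tilde\theta_i\in H^1(L_j)\hat\otimes\Lambda_0$, the divisor axiom for $\mC^F,\mC^{F'}$ (Proposition \ref{from-pseudo-isotopies-to-A-infty_property_prop}) converts $S_\alpha|_{Y=\mathbf y}$ into $\langle F_*\alpha,\,\f^1_*(b)-\f^0_*(b)\rangle$ in the sense of (\ref{f_* (b) -eq}). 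Now the homotopy identity (\ref{F0+_roughly_A_infty_formula_eq}) applied at the divisor input $b$, together with the divisor axiom (again) to sum up the $b$-insertions, yields
\[
\f^1_*(b)-\f^0_*(b)
=\int_0^1\!ds\,\sum T^{E(\beta)}\h^s_{\ell,\beta}\bigl(b,\dots,\m'_*(b),\dots,b\bigr)
+\int_0^1\!ds\,\sum\m\bigl(\f^s_*(b),\dots,\h^s_{\ell,\beta_0}(b,\dots,b),\dots,\f^s_*(b)\bigr).
\]
Here $\m'_*(b)\equiv W^{g,J,k}\cdot\one_k\pmod{\ia_k}$, so the first sum is cyclically-unitally absorbed and lies in $\ia_j$ after applying $\phi^F$ (using $\phi_{jk}^F(\ia_k)\subset\ia_j$ from Theorem \ref{phi_jk-A-thm}). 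For the second sum, Assumption \ref{assumption-mu ge 0} gives $\mu(\beta_0)\geq 0$, hence by the degree condition $\deg(\h^s)_{\ell,\beta_0}=-\ell-\mu(\beta_0)$ in Corollary \ref{UD_homotopy_summary_cor}(d) any nonzero contribution must have $\mu(\beta_0)=0$ and its image lands in degree zero of $\HL$; writing it as a multiple of $\one_j$ and invoking the full unitality property from Corollary \ref{UD_homotopy_summary_cor}(c) (specifically $\h^s(\cdots\one\cdots)=0$) eliminates the second sum modulo $\ia_j$.

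The main obstacle I expect is step (ii): the naive algebraic identity $\con(F_{s*}g)=F_s^{-1*}\circ\con(g)\circ F_s^*$ fails when $\partial_sF_s\neq 0$ (Lemma \ref{Fukaya_II_constant_contraction_[0,1]_lem} requires $\partial_sF_s=0$), so one cannot get an $A_\infty$ homomorphism $H^{\mF}$ directly and must replace the would-be identity by a ud-homotopy through the diagram chase in Lemma \ref{Fukaya_II_key_lemma}. Once one commits to working up to ud-homotopy and invokes the Whitehead theorem \ref{Whitehead-full-thm} to invert quasi-isomorphisms along the way, the rest of (iii) is a bookkeeping exercise in the divisor axiom and unitality within $\UD$.
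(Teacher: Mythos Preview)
Your overall architecture matches the paper's: reduce to showing $\mC^F\simud\mC^{F'}$ in $\UD$, then unpack the ud-homotopy via Corollary~\ref{UD_homotopy_summary_cor} at divisor inputs to show the error term lies in $\ia_j$. Your steps (i)--(ii) are fine; the paper combines them into a single $\Delta^2$-argument (Lemmas~\ref{simud_mC_mC-lem} and~\ref{choice_indep_main-lem}), but separating them is harmless.

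The gap is in step (iii), where you have the two unitality mechanisms applied to the wrong sums. In the second sum the outer operator is $\m=\m^{F_*(g,J,k)}$, and the degree-zero element $\e:=(\h^s)_{\ell,\beta_0}(b,\dots,b)$ is an \emph{input} to $\m$; the property $\h^s(\cdots\one\cdots)=0$ from Corollary~\ref{UD_homotopy_summary_cor}(c) says nothing about this. What kills the second sum is the \emph{cyclical unitality of $\m$} (condition (I-2) of $\UD$): summing over the slot of $\h^s$ produces exactly $\CU[\m]_{\bullet,\bullet}(\e;\dots)=0$. (Your observation that $\e\in H^0(L_j)=\mathbb R\cdot\one_j$ is correct but not needed; cyclical unitality handles any degree-zero input.) Conversely, in the first sum the outer operator is $\h^s$, and here the source algebra is $\m'=\m^{g,J,j}$ on $H^*(L_j)$, so $\m'_*(b)=P^{g,J,j}(\mathbf y)=W^{g,J,j}\cdot\one_j+Q^{g,J,j}$ with the $Q$-part already in $\ia_j$; there is no need to invoke $\phi^F(\ia_k)\subset\ia_j$ (your $k$'s should be $j$'s throughout). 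The $W^{g,J,j}\cdot\one_j$ contribution is then annihilated precisely by $\h^s(\cdots\one_j\cdots)=0$ (or equivalently by the degree count $\deg\mathfrak H_{1,\beta}=-1-\mu(\beta)<0$), not by cyclical unitality. Once you swap these two mechanisms back to their correct places, the argument goes through and coincides with the paper's.
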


Suppose we differently choose $F'\in \mathcal U\subset \diff_0(X)$ (\S \ref{sss_Fuk_trick_transition_map}), and we also differently choose $\mJ'=(J'_s)$, and $\mg'=(g'_s)$ (\S \ref{sss_A_homotopy_equivalence_local_chart}).
Similar to (\ref{check_m_F(Ji)-eq}, \ref{check_M-mF_j-eq}, 
\ref{M^Fj_MI^Fj-eq}, 
\ref{check_mC_mF_j-eq}, 
\ref{mC_mF,j-eq}), we can use these data to obtain 

\begin{enumerate}[(i)]
	\itemsep 2pt
	\item a chain-level $A_\infty$ algebra $\check \m^{F'_*(J,k)}$, 
\item the canonical model $(H^*(L_j), \m^{F'_*(g,J,k)}, \mi^{F'_*(g,J,k)})$ of $\check \m^{F'_*(J,k)}$ with respect to $\con(F'_*g)$,
\item a chain-level pseudo-isotopy $\check \M^{F',j}$ between $\check \m^{J,j}$ and $\check \m^{F'_*(J,k)}$, 
\item a cohomology-level pseudo-isotopy $\M^{F',j}$ between $\m^{g,J,j}$ and $\m^{F'_*(g,J,k)}$ (it comes with $\I^{F',j}$),
\item the chain-level $A_\infty$ homomorphisms $\check \mC^{F'}: \check \m^{J,j}\to \check \m^{F_*(J,k)}$ induced by $\check \M^{F',j}$, 
\item the cohomology-level $A_\infty$ homomorphism $\mC^{F'} : \m^{g,J,j}\to \m^{F'_*(g,J,k)}$ induced by $\M^{F',j}$.
\end{enumerate}

Notations are used in the same pattern. All of them are contained in the category $\UD$; the similar Fukaya's trick equations hold.
Furthermore, just as (\ref{phi_jk_defn-eq}, \ref{phi_jk-eq}), we can define the homomorphisms $\phi^{F'}_{jk}$ using the above $\mC^{F'}$ (in place of $\mC^{F}$).
Then, we have two quotient homomorphisms  $\varphi_{jk}^{F'}$ and $\varphi_{jk}^F$ which are induced by $\phi^{F'}_{jk}$ and $\phi^F_{jk}$ respectively (\ref{varphi_jk_eq}).
The goal of Theorem \ref{phi_jk-indep-thm} is to prove $\varphi_{jk}^F = \varphi_{jk}^{F'}$.

Find a path $ \mF=(F_s)_{s\in\oi}$ of diffeomorphisms in $\mathcal U\subset \diff_0(X)$ between $F_0= F$ and $F_1= F'$ such that $F_s(L_{k})=L_{j}$
for all $s\in\oi$.
Let $\hat J$ and $\hat g$ be the constant $\oi$-families at $J$ and $g$. Consider
$\mF_* \hat J=(F_{s*}J)_{s\in\oi}$ and $\mF_* \hat g=(F_{s*}g)_{s\in\oi}$.

\subsubsection{Chain-level}
On the one hand, we have an `air-cored triangle' family of almost complex structures whose vertices are given by $J$, $F_*J$, $F'_*J$ and whose edges are given by $\mJ$, $\mJ'$, $\mF_* \hat J$.
Recall that both $\mJ$ and $\mJ'$ are contained in the neighborhood $\mathcal V$; also, since $F_s\in\mathcal U$ for any $s$, we have $F_s(\mathcal V)\subset \mathcal V'$ and $\mF_*\hat J\subset \mathcal V'$
(\S\ref{sss_V_U_neighborhood_and_P_complex}).
Now, we can fill it to obtain a `solid triangle' family in $\mathcal V'$.
Namely, there exists a smooth family $\JJ=(J_x)_{x\in\Delta^2}$ in $\mathcal V'$ parameterized by the 2-simplex $\Delta^2=[v_0,v_1,v_2]$ such that $\JJ|_{[v_0,v_1]}=\mJ$, $\JJ|_{[v_1,v_2]}=\mF_* \hat J$, and $\JJ|_{[v_0,v_2]}=\mJ'$.

On the other hand, by Theorem \ref{Vir-M-path-thm}, the moduli space system $\mathbb M(\hat J)$ for the constant family $\hat J$ can produce a trivial pseudo-isotopy
 $(\Omega^*(L_k)_\oi, \check \M^{\hat J, k})$ about the $A_\infty$ algebra $(\Omega^*(L_k), \check \m^{J,k})$ (\ref{check m_Ji-eq}).
Due to Fukaya's trick (Lemma \ref{Fukaya_II-cochain-level-lem}), it gives rise to a `pushforward' pseudo-isotopy, denoted by:
\begin{equation}
\label{check_M_mF_hat_J_eq}
\big( \Omega^*(L_j), \check \M^{\mF_* \hat J} \big)
\end{equation}
satisfying the equations of Fukaya's trick (\ref{Trick_check_M_family_eq}).
Moreover, by (\ref{Eval_Trick_check_M^F-eq}), it is a pseudo-isotopy between $\check \m^{F_*(J,k)}$ and $\check \m^{F'_*(J,k)}$ (in the chain-level), and it lives in $\UD$.
Finally, due to Theorem \ref{from-pseudo-isotopies-to-A-infty-homo-thm} and Theorem \ref{UD-mC_thm}, its induced $A_\infty$ homotopy equivalence
\begin{equation}
\label{mC_check_F_hat_J_eq}
\check \mC^{\mF_* \hat J}: \check \m^{F_*(J,k)} \to \check \m^{F'_*(J,k)}
\end{equation}
is a morphism in $\UD$. It is supposed to satisfy the following property:

\[
\Scale[0.9]{
\xymatrix{
	& & F'_*J& & & & & \check \m^{F'_*(J,k)} \\
	J\ar@{-}[drr]_{\mathbf J}
	\ar@{} [rr]|{\JJ} 
	\ar@{-}[urr]^{\mathbf J'} & & & & & \check \m^{J,k} \ar@{} [rr]|{\check \M^{\JJ}} 
	\ar@{-}[drr]_{\check \M^{F,j} } \ar@{-}[urr]^{\check \M^{F',j} } & & \\
	& & F_*J \ar@{-}[uu]_{\mF_*\hat  J} & & & & & \check \m^{F_*(J,k)}\ar@{-}[uu]_{\check \M^{\mF_*\hat J}}
}
}
\]

\begin{lem}
\label{simud_mC_mC-lem}
$\check \mC^{\mF_*\hat J}\diamond \check \mC^F \simud \check \mC^{F'}$
\end{lem}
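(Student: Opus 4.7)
The plan is to fill the triangle $\JJ$ of almost complex structures by a $\Delta^2$-pseudo-isotopy on $L_j$ and to extract the desired ud-homotopy from it. Since $\JJ\subset\mathcal V'\subset\mathfrak J_K$ and $q_j\in K$, every $J_x$ with $x\in\Delta^2$ lies in $\mathfrak J(X,L_j,\omega)$, so Theorem \ref{Vir-M-triangle-thm} produces a $\Delta^2$-pseudo-isotopy
\[
(\Omega^*(L_j)_{\Delta^2},\ \check\M^{\JJ})\in\Obj\UD(L_j,X).
\]
Using the boundary restriction axiom in Theorem \ref{A-theorem-main-body}, the collared extension property of the Kuranishi data, and Convention \ref{convention_Xi_J}, one can arrange $\check\M^{\JJ}$ to restrict to $\check\M^{F,j}$ on $[v_0,v_1]$, to $\check\M^{\mF_*\hat J}$ on $[v_1,v_2]$, and to $\check\M^{F',j}$ on $[v_0,v_2]$. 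The identification on $[v_1,v_2]$ uses Lemma \ref{Fukaya_II-cochain-level-lem}: the restriction of $\check\M^{\JJ}$ there is defined by the same moduli-theoretic data as the Fukaya-trick pushforward in (\ref{check_M_mF_hat_J_eq}).

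Next, I would reparameterize $\Delta^2$ by $\oi\times\oi$ via a smooth map $\rho:\oi^2\to\Delta^2$ that sends $\{s_1=1\}\times\oi$ homeomorphically onto the edge $[v_0,v_2]$, and sends $\{s_1=0\}\times\oi$ along the concatenated path $[v_0,v_1]\cup[v_1,v_2]$, passing through $v_1$ at $s_2=\tfrac12$ with an admissible collared reparameterization near the vertex. Pulling back $\check\M^{\JJ}$ along $\rho$ yields a $\oi\times\oi$-pseudo-isotopy $\widetilde\M$ on $\Omega^*(L_j)_{\oi^2}$ which remains in $\Obj\UD$. For each $s_1\in\oi$, the slice $\widetilde\M|_{\{s_1\}\times\oi}$ is a $1$-pseudo-isotopy from $\check\m^{J,j}$ to $\check\m^{F'_*(J,k)}$, and Theorem \ref{from-pseudo-isotopies-to-A-infty-homo-thm} together with Theorem \ref{UD-mC_thm} yields an $A_\infty$ homomorphism $\mathfrak h^{s_1}\in\Mor\UD$. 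At $s_1=1$ we simply recover $\mathfrak h^1=\check\mC^{F'}$. At $s_1=0$, the slice is the concatenation of $\check\M^{F,j}$ and $\check\M^{\mF_*\hat J}$, and by the additivity of the tree integrals (\ref{inductive_formu-Fubini-yield-eq}) over subdivisions of $[0,1]$ (the chain-level analogue of $\mC^{[b,c]}\circ \mC^{[a,b]}=\mC^{[a,c]}$ noted after Corollary \ref{from-pseudo-isotopies-to-A-infty-homo-cor_trivial_pseudo_isotopy}) one obtains $\mathfrak h^0=\check\mC^{\mF_*\hat J}\circ\check\mC^F$.

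The family $(\mathfrak h^{s_1})_{s_1\in\oi}$ will then be assembled into a single morphism $\F\in\Hom_\UD\bigl(\check\m^{J,j},\ (\Omega^*(L_j)_\oi,\check\M^{\tri})\bigr)$, where $\check\M^{\tri}$ is the trivial pseudo-isotopy about $\check\m^{F'_*(J,k)}$. Concretely, $\F$ is obtained by running the tree integration of Theorem \ref{from-pseudo-isotopies-to-A-infty-homo-thm} on $\widetilde\M$ in the $s_2$-direction alone, while treating the $s_1$-dependence (together with the $ds_1$-components of $\widetilde\M$) as bookkeeping data. The $A_\infty$ relations for $\F$ and the conditions (II-1)--(II-5) required for membership in $\Mor\UD$ then propagate from $\widetilde\M\in\Obj\UD$ by the same arguments used in the proof of Theorem \ref{UD-mC_thm}. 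Since $\eval^0\circ\F=\mathfrak h^0$ and $\eval^1\circ\F=\mathfrak h^1$ by construction, the resulting $\F$ realizes the required ud-homotopy.

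The main obstacle will be establishing $\mathfrak h^0=\check\mC^{\mF_*\hat J}\circ\check\mC^F$ on the nose rather than merely up to a further ud-homotopy. This demands that the Kuranishi structures and CF-perturbations used to build $\check\M^{\JJ}$ be chosen so that their restriction to a small collared neighborhood of the vertex $v_1$ is the product of the restrictions to $[v_0,v_1]$ and $[v_1,v_2]$; once this is arranged, the tree integral over the concatenated interval splits by a Fubini-style computation into the composition of the individual tree integrals, giving the composition identity on the nose. This collar-compatibility is standard for moduli-theoretic pseudo-isotopies but needs to be implemented consistently within the tree-like K-system on $\mathbb M(\JJ)$.
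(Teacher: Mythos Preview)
Your route is genuinely different from the paper's, and the gap lies in the step you flag least: the construction of the ud-homotopy $\F$. Writing $\F=1\otimes\mathfrak h^{s_1}+ds_1\otimes\mathfrak k^{s_1}$, you produce the $\mathfrak h^{s_1}$ by tree integration in $s_2$, but you never define $\mathfrak k^{s_1}$. The phrase ``treating the $s_1$-dependence (together with the $ds_1$-components of $\widetilde\M$) as bookkeeping data'' does not yield an operator system; verifying Lemma~\ref{UD_homotopy_character_A_infty_formula_lem}(\ref{F0+_roughly_A_infty_formula_eq}) for $(\mathfrak h^{s_1},\mathfrak k^{s_1})$ would require a two-parameter analog of Theorem~\ref{UD-mC_thm}, which is not available and would itself be the main content of the lemma. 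By contrast, the issue you call the ``main obstacle'' (arranging product Kuranishi data near $v_1$ so that $\mathfrak h^0$ equals the composition on the nose) is comparatively routine.

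The paper sidesteps all of this. After producing $\check\M^{\JJ}\in\Obj\UD$ by Theorem~\ref{Vir-M-triangle-thm}, it works entirely with morphisms that already exist: for each edge $I\subset\Delta^2$ the restriction map $\restr^{\Delta^2}_I$ is a morphism in $\UD$, so by Whitehead (Theorem~\ref{Whitehead-full-thm}) it has a ud-homotopy inverse. From $\eval^{\Delta^2}_v=\eval^I_v\circ\restr^{\Delta^2}_I$ one gets $\eval^I_{v'}\circ(\eval^I_v)^{-1}\simud\eval^{\Delta^2}_{v'}\circ(\eval^{\Delta^2}_v)^{-1}$, and Theorem~\ref{UD-mC_thm} identifies each $\check\mC^{\bullet}$ with the corresponding $\eval^I_{v'}\circ(\eval^I_v)^{-1}$ up to $\simud$. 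The cocycle $\check\mC^{\mF_*\hat J}\circ\check\mC^F\simud\check\mC^{F'}$ then drops out of the $\eval^{\Delta^2}$ calculus with no reparameterization, no concatenation, and no on-the-nose composition identity needed. Your strategy could likely be made to work, but it would require building a new tool (the two-parameter homotopy above) rather than invoking existing ones.
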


\begin{proof}
	The pseudo-isotopies $\check \M^{ F,j}$, $\check \M^{ F',j}$ and $\check \M^{\mF_*\hat J}$ coincide at their ends. So, applying Theorem \ref{Vir-M-triangle-thm} to the moduli space system $\mathbb M(\JJ)$,
we obtain a $\Delta^2$-pseudo-isotopy
\begin{equation}
\label{check_M_mJ_mJ_eq}
\check \M^{\JJ}\in\Obj\UD(L_{j},X)
\end{equation}
which restricts to $\check \M^{ F,j}$, $\check \M^{ F',j}$ and $\check \M^{\mF_*\hat J}$ on the edges $[v_0,v_1]$, $[v_0,v_2]$, and $[v_1,v_2]$ respectively.\footnote{Some clarification of the Kuranishi-theory choices may be helpful. These choices are taken in the following order. Firstly, we fix the choices $\Xi_i,i\in\I$ for the $A_\infty$ algebras $\check \m^{J,i}$ (\ref{check m_Ji-eq}). Secondly, the choice that defines $\check \M^{\mF_*(\hat g,\hat J)}$ in (\ref{check_M_mF_hat_J_eq}) is just induced by Fukaya's trick, thus, there is no extra choice at this stage. Thirdly, we make the choices to define the pseudo-isotopies $\check \M^{F,j}$ and $\check \M^{ F',j}$ as in (\ref{check_M-mF_j-eq}). Finally, we make the choices for the definition of $\check \M^\JJ$ by Theorem \ref{Vir-M-triangle-thm}.}

Let $I$ be an edge of $\Delta^2$. 
We set
\begin{equation}
\label{restr_eq}
\restr^{\Delta^2}_{I}: \Omega^*(L_{j})_{\Delta^2} \to \Omega^*(L_{j})_{I}
\end{equation}
to be the natural restriction map.
Recall that one can identify $\Omega^*( I \times L_j)\cong \Omega^*(L_j)_{I}$ and $\Omega^*(\Delta^2 \times L_j)\cong \Omega^*(L_j)_{\Delta^2}$ (cf. (\ref{convention-Omega P L-eq}))
Similar to Remark \ref{eval-incl-as-A_infty-rmk}, one can view $\restr_I^{\Delta^2}$ as a morphism in $\UD$, i.e. an $A_\infty$ homotopy equivalence from $\check \M^\JJ$ to one of $\check \M^{F,j}$, $\check \M^{F',j}$, $\check \M^{\mF_*\hat J}$ according to the choice of $I$.
Then, by Theorem \ref{Whitehead-full-thm}, we get an ud-homotopy inverse $(\restr^{\Delta^2}_I)^{-1}$ which is also a morphism in $\UD$.

Let $v$ be a vertex of $I$, and we can similarly define $\eval_v^I$ and $\eval_v^{\Delta^2}$. Then,
$\eval_v^I \diamond \restr^{\Delta^2}_I = \eval^{\Delta^2}_v$, thereby obtaining an ud-homotopic relation:
\[
\eval^{I}_{v} \simud \eval^{\Delta^2}_{v}\diamond (\restr^{\Delta^2}_I )^{-1}
\]
Further, let $v'$ be the other vertex of $I$, then one can easily show that
\[
\eval_{v'}^I\diamond (\eval_v^I)^{-1} \simud \eval_{v'}^{\Delta^2}\diamond (\eval_v^{\Delta^2})^{-1}
\]
From Theorem \ref{UD-mC_thm}, it follows that
\[
\check \mC^{F}\simud \eval^{[v_0,v_1]}_{v_1} \diamond (\eval^{[v_0,v_1]}_{v_0})^{-1}, 
\quad
\check \mC^{ F'}\simud \eval^{[v_0,v_2]}_{v_2} \diamond (\eval^{[v_0, v_2]}_{v_0})^{-1},
\quad
\check \mC^{\mF_* \hat J}\simud \eval^{[v_1,v_2]}_{v_2} \diamond (\eval^{[v_1,v_2]}_{v_1})^{-1}
\]
and thus
\[
\check \mC^{F}\simud \eval^{\Delta^2}_{v_1} \diamond (\eval^{\Delta^2}_{v_0})^{-1}, 
\quad
\check \mC^{ F'}\simud \eval^{\Delta^2}_{v_2} \diamond (\eval^{\Delta^2}_{v_0})^{-1},
\quad
\check \mC^{\mF_* \hat J}\simud \eval^{\Delta^2}_{v_2} \diamond (\eval^{\Delta^2}_{v_1})^{-1}
\]
Now, it is immediate that $\check \mC^{\mF_*\hat J}\diamond \check \mC^F \simud \check \mC^{F'}$; the proof is complete.
\end{proof}

\subsubsection{Cohomology-level}
Let
$(H^*(L_{j}), \M^{\mF_*(\hat g, \hat J)}, \mI^{\mF_* (\hat g,\hat J)})$ be
the canonical model (Definition \ref{canonical_model_defn}) of the pseudo-isotopy $\check \M^{\mF_* \hat J}$ in (\ref{check_M_mF_hat_J_eq}) with respect to $\con(\mF_* \hat g)$. By Theorem \ref{UD-canonical-model-[0,1]-thm}, we know $\M^{\mF_* (\hat g, \hat J)}\in\Obj\UD$ and $\mI^{\mF_* (\hat g, \hat J)}\in\Mor\UD$.
By (\ref{Eval_Trick_cano-eq}), the $\M^{\mF_*(\hat g,\hat J)}$ is a pseudo-isotopy between $\m^{F_*(g,J,k)}$ and $\m^{F'_*(g,J,k)}$ (in the cohomology-level).
Similar to (\ref{mC_check_F_hat_J_eq}), the $A_\infty$ homotopy equivalence induced by $\M^{\mF_* (\hat g, \hat J)}$ is also a morphism in $\UD$, denoted by
\[
\mC^{\mF_* (\hat g, \hat J)}: \m^{F_*(g,J,k)} \to \m^{F'_*(g,J,k)}
\]
By Corollary \ref{Fukaya's trick Independence of F-cor}, the source and target $A_\infty$ algebras of $\mC^{\mF_*(\hat g, \hat J)}$ are identically the same:
\begin{equation}
\label{m_F_F'_eq}
\m:=
\m^{ F_{*}(g,J,k)}=
\m^{ F'_{*}(g,J,k)}
\end{equation}
Moreover, it follows from Corollary \ref{Fukaya_II_key_cor} that
\begin{equation}
\label{mC_I_simud_id-eq}
\mC^{\mF_*(\hat g,\hat J)}\simud \id
\end{equation}
Ultimately, applying Lemma \ref{mC-diamond-diagram-lem} repeatedly to the pairs $(\check \mC^F, \mC^F)$, $(\check \mC^{F'}, \mC^F)$, and $(\check \mC^{\mF_*\hat J}, \mC^{\mF_*(\hat g,\hat J)})$, we obtain the following ud-homotopy relations in sequence:
\begin{align}
\label{Choice_indep_1-eq}
\mi^{F_*(g,J,k)} \diamond \mC^F
&\simud \check\mC^F\diamond \mi^{g,J,j}  \\
\label{Choice_indep_2-eq}
\mi^{F'_*(g,J,k)} \diamond \mC^{F'}  
&\simud  \check \mC^{F'}\diamond \mi^{g,J,j} \\
\label{Choice_indep_3-eq}
\mi^{F'_*(g,J,k)}\diamond \mC^{\mF_* (\hat g,\hat J)}
&\simud 
\check \mC^{\mF_*\hat J} \diamond \mi^{F_*(g,J,k)} 
\end{align}

\[
\Scale[0.8]
{
	\xymatrix{
		& & & & & & \m^{F'_*(g,J,k)} \ar[ddl]^{\mi^{F'_*(g,J,k)}}& \m\ar@{=}[l] \\
		&\\
		& & & & & \check \m^{F'_*(g,J,k)} \\
		&\\
		\m^{g,J,j}
		\ar[rr]^{\mi^{g,J,j}}
		\ar[rrrrrruuuu]^{\mC^{F'}}\ar[rrrrrrdddd]_{\mC^F} & & \check \m^{J,j} \ar@{}[rrr]|{\check \M^{\JJ}} \ar[ddrrr]_{\check \mC^F}\ar[uurrr]^{\check \mC^{F'}} & & & & \\
		& \\
		& & & & & \check \m^{F_*(J,k)} \ar[uuuu]_{\check \mC^{\mF_*( \hat h,\hat J)}}\\
		&\\
		& & & & & & \m^{F_*(g,J,k)} \ar[uul]_{\mi^{F_*(g,J,k)}} \ar[uuuuuuuu]_{\mC^{\mF_*( \hat g,\hat J)} \ \ \simud \ \id } & \m\ar@{=}[l]
	}
}
\]

\begin{lem}
	\label{choice_indep_main-lem}
	$\mC^{ F}\simud \mC^{ F'}$.
\end{lem}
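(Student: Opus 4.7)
The plan is to prove $\mC^F \simud \mC^{F'}$ by a diagram chase assembling the ingredients already established in the preceding lemmas, and then invoking Whitehead's theorem to cancel $\mi^{F'_*(g,J,k)}$ on the left. The key inputs are: (i) the chain-level comparison $\check\mC^{\mF_*\hat J}\circ \check\mC^F \simud \check\mC^{F'}$ from Lemma \ref{simud_mC_mC-lem}; (ii) the cohomology-level triviality $\mC^{\mF_*(\hat g,\hat J)}\simud \id$ coming from Corollary \ref{Fukaya_II_key_cor} (equation (\ref{mC_I_simud_id-eq})); and (iii) the three instances (\ref{Choice_indep_1-eq}), (\ref{Choice_indep_2-eq}), (\ref{Choice_indep_3-eq}) of Lemma \ref{mC-diamond-diagram-lem} that relate chain-level and cohomology-level $A_\infty$ homomorphisms via the various canonical-model inclusions $\mi$.

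I would carry out the diagram chase by starting with $\mi^{F'_*(g,J,k)}\circ \mC^{F'}$ and working it back to $\mi^{F'_*(g,J,k)}\circ \mC^F$ through a chain of ud-homotopies, repeatedly appealing to Lemma \ref{UD-simud-composition-lem} so that composition respects $\simud$. Explicitly:
\begin{align*}
\mi^{F'_*(g,J,k)}\circ \mC^{F'}
&\simud \check\mC^{F'}\circ \mi^{g,J,j} &&\text{by (\ref{Choice_indep_2-eq})}\\
&\simud \check\mC^{\mF_*\hat J}\circ \check\mC^F\circ \mi^{g,J,j} &&\text{by Lemma \ref{simud_mC_mC-lem}}\\
&\simud \check\mC^{\mF_*\hat J}\circ \mi^{F_*(g,J,k)}\circ \mC^F &&\text{by (\ref{Choice_indep_1-eq})}\\
&\simud \mi^{F'_*(g,J,k)}\circ \mC^{\mF_*(\hat g,\hat J)}\circ \mC^F &&\text{by (\ref{Choice_indep_3-eq})}\\
&\simud \mi^{F'_*(g,J,k)}\circ \mC^F &&\text{by (\ref{mC_I_simud_id-eq})}.
\end{align*}

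To conclude, I would note that $\mi^{F'_*(g,J,k)}\in\Mor\UD$ is a weak homotopy equivalence in the sense of \S\ref{S_whitehead}, since $\mi^{F'_*(g,J,k)}_{1,0}=i(F'_*g)$ is the harmonic inclusion and in particular a quasi-isomorphism of cochain complexes. Applying Whitehead's Theorem \ref{Whitehead-full-thm} yields a ud-homotopy inverse $(\mi^{F'_*(g,J,k)})^{-1}\in\Mor\UD$, and pre-composing the previous chain of $\simud$'s with it (again via Lemma \ref{UD-simud-composition-lem}) gives $\mC^F\simud \mC^{F'}$, which is the desired statement. With Lemma \ref{choice_indep_main-lem} in hand, the proof of Theorem \ref{phi_jk-indep-thm} will then be immediate: the formula (\ref{phi_jk_defn-eq}) for $\phi_{jk}^F$ depends on $\mC^F$ only through the operators $\mC^F_{0,\beta}$, so a ud-homotopy between $\mC^F$ and $\mC^{F'}$ translates, using the unitality, cyclical unitality and divisor axiom of the ud-homotopy together with the argument sketched in \S\ref{ss_intro_(B)}, into an inclusion of the error term into the ideal $\ia_j$, forcing $\varphi_{jk}^F=\varphi_{jk}^{F'}$ modulo weak Maurer-Cartan equations.

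The main obstacle is essentially bookkeeping: one must be careful that every step of the chain of $\simud$'s uses only morphisms that have already been verified to lie in $\Mor\UD$, since $\simud$ is defined only inside the category $\UD$ and the congruence property of Lemma \ref{UD-simud-composition-lem} applies only there. The actual analytic content — the Gromov-compactness that secures the canonical models, the $F$-relatedness that powers Fukaya's trick, and the wall-crossing-type identity needed to pass from ud-homotopy of $\mC^F$ and $\mC^{F'}$ to equality modulo $\ia_j$ — has all been packaged into the cited lemmas, so the lemma itself is a pure diagram chase.
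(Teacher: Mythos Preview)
Your proposal is correct and follows essentially the same diagram chase as the paper's proof: start from $\mi^{F'_*(g,J,k)}\circ \mC^{F'}$, apply (\ref{Choice_indep_2-eq}), Lemma \ref{simud_mC_mC-lem}, (\ref{Choice_indep_1-eq}), (\ref{Choice_indep_3-eq}) in that order, then cancel $\mi^{F'_*(g,J,k)}$ via Whitehead (Theorem \ref{Whitehead-full-thm}) and use (\ref{mC_I_simud_id-eq}). The only cosmetic difference is that you apply (\ref{mC_I_simud_id-eq}) before invoking Whitehead, whereas the paper applies Whitehead first and then (\ref{mC_I_simud_id-eq}); either order works.
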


\begin{proof}
	The proof is given by chasing the diagram:
	\begin{align*}
	\mi^{F'_*(g,J,k)} \diamond \mC^{F'} 
	&
	\simud \check \mC^{F'}\diamond \mi^{g,J,j} 
	\tag{use (\ref{Choice_indep_2-eq})} \\
	&
	\simud \check \mC^{\mF_*(\hat g,\hat J)} \diamond \check \mC^F \diamond \mi^{g,J,j} 
	\tag{use Lemma \ref{simud_mC_mC-lem}} \\
	&
	\simud \check \mC^{\mF_*(\hat g,\hat J)} \diamond  \mi^{F_*(g,J,k)} \diamond \mC^F
	\tag{use (\ref{Choice_indep_1-eq})}\\
	&
	\simud
	\mi^{F'_*(g,J,k)}\diamond \mC^{\mF_*(\hat g,\hat J)}\diamond \mC^F \tag{use (\ref{Choice_indep_3-eq})}
	\end{align*}
	Using Theorem \ref{Whitehead-full-thm}, we conclude $\mC^{F'}\simud\mC^{\mF_*(\hat g,\hat J)}\diamond \mC^F$. By (\ref{mC_I_simud_id-eq}), we finally show that $\mC^F\simud\mC^{F'}$.
\end{proof}

\subsubsection{Choice-independence's proof}

\begin{proof}[Proof of Theorem \ref{phi_jk-indep-thm}]
Let $\phi_{jk}^F$ and $\phi_{jk}^{F'}$ be the homomorphisms defined as in (\ref{phi_jk-eq}) with respect to the different choices.
Note that $F_*\alpha=F'_*\alpha$ for all $\alpha\in\pi_1(L_{k})$ since $F$ is isotopic to $F'$. By the definition formulas (\ref{phi_jk_defn-eq}), we only need to study the difference of the series in the exponents
\[
\sum_{\beta\neq 0} (\mC^{F'}_{0,\beta}-\mC^F_{0,\beta}) \ T^{E(\beta)} Y^{\partial \beta}
\]
called the \textit{error term}.
Due to Corollary \ref{UD_homotopy_summary_cor},
the ud-homotopy condition $\mC^F\simud \mC^{F'}$ in Lemma \ref{choice_indep_main-lem} can be unpacked to 
the existence of operators $(\f_s)_{s\in\oi}$ and $(\h_s)_{s\in\oi}$
with the following conditions:

\begin{enumerate}[(a)]
\item $\f_s\in\Hom_\UD(\m^{g,J,j},\m)$, where $\f_0=\mC^F$ and $\f_1=\mC^{F'}$;
\item $\frac{d}{ds}\circ \f_s=\sum \h_s\circ (\id^\bullet_\#\otimes \m^{g,J,j}\otimes \id^\bullet) +\sum \m\circ (\f_s^\#\cdots\f^\#_s, \h_s, \f_s\cdots \f_s)$;
\item $\h_s$ satisfies the divisor axiom, the cyclical unitality, and the property $\h_s(\cdots\one\cdots)=0$;
\item $\deg (\f_s)_{k,\beta}=1-k-\mu(\beta)$, $\deg (\h_s)_{k,\beta}=-k-\mu(\beta)$, and $(\h_s)_{\beta}\neq 0$ only if $\mu(\beta)\ge 0$
\end{enumerate}
Recall that the target $A_\infty$ algebra $\m$ is given in (\ref{m_F_F'_eq}).
Also, recall that the nonzero terms $\mC^F_{0,\beta},\mC^{F'}_{0,\beta}$ must live in $H^{1}(L_j)$.
So, we fix $\gamma \in \pi_1(L_j)$ and consider the $\gamma$-component of the error term:
\[
S(Y):=S^\gamma(Y):= \sum_{\beta\neq 0} \langle\gamma, \mC_{0,\beta}^{F'}-\mC_{0,\beta}^F\rangle \ T^{E(\beta)} Y^{\partial \beta}
\]
Choose a basis $\{\gamma_k\}$ of $\pi_1(L_j)$. Denote by $\{\theta_k\}$ its dual basis of $H^1(L_j)$.
Then, we have $\Lambda[[\pi_1(L_j)]]\cong \Lambda[[Y_1^\pm,\dots, Y_n^\pm]]$;
we may write
$
S(Y)=S(Y_1,\dots,Y_n)=\sum_{\beta\neq 0} \langle\gamma, \mC_{0,\beta}^{F'}-\mC_{0,\beta}^F\rangle \ T^{E(\beta)} Y_1^{\partial_1\beta}\cdots Y_n^{\partial_n\beta}
$
where $\partial_i\beta\in \mathbb Z$ and $\partial\beta =\partial_1\beta \cdot \gamma_1+\cdots +\partial_n\beta \cdot \gamma_n$.

We will apply Lemma \ref{val=0-lem} again.
Given $\mathbf y=(y_1,\dots,y_n)\in U_\Lambda^n$, there exists $x_i\in \Lambda_0$ so that $y_i=e^{x_i}$ for each $i$ due to Lemma \ref{exp-log-lem}.
Put $b=x_1\theta_1+\cdots +x_n\theta_n\in H^1(L_j)\hat\otimes \Lambda$, and then $e^{\partial\beta \cap b}=e^{\partial_1\beta\cdot x_1+\cdots +\partial_n\beta\cdot x_n}=\mathbf y^{\partial\beta}$.
Thus, the divisor axioms of $\mC^F$ and $\mC^{F'}$ infer that
\begin{align*}
S(\mathbf y)
=
\sum_{k,\beta}
T^{E(\beta)} 
\langle\gamma, \mC^{F'}_{k,\beta}(b,\dots,b)-\mC_{k,\beta}^F(b,\dots,b)\rangle
=
\langle\gamma ,\mC^{F'}_*(b)-\mC^F_*(b) \rangle
\equiv 
\langle \gamma, \f_{1*}(b)-\f_{0*}(b) \rangle	
\end{align*}
However, from a different point of view, using the condition (b) yields the following computation:
\begin{align*}
\f_{1*}(b)-\f_{0*}(b)
&
\textstyle
=
\sum  T^{E(\beta)} \int_0^1 ds\cdot \frac{d}{ds}\circ (\f_s)_{k,\beta}(b,\dots,b)\\
&
\textstyle
=
\sum
 T^{E(\beta_1)} \int_0^1 ds\cdot (\h_s)_{\lambda+\mu+1,\beta_1} (b,\dots,b,  T^{E(\beta_2)}\m^{g,J,j}_{\nu,\beta_2}(b,\dots,b),b,\dots,b) \\
&  
\textstyle 
+ \sum  \int_0^1 ds\cdot \m \big((\f_s)(b,\dots,b),\dots, (\h_s)_{\ell,\beta_0}(b,\dots,b), \dots, (\f_s)(b,\dots,b) \big)
\end{align*}
The condition (d) implies $\deg (\h_s)_{\ell,\beta_0}(b,\dots,b)=-\mu(\beta_0) \le 0$, so we may assume this degree always equals to zero. Then, by the cyclical unitality of $\m$, the second summation above vanishes. Moreover, the operator
$
\textstyle
\mathfrak H:=\int_0^1 ds \cdot \h_s
$
also satisfies the divisor axiom since so does every $\h_s$.
Hence,
\begin{align*}
\f_{1*}(b)-\f_{0*}(b)
=
&
\textstyle
\sum T^{E(\beta_1)} \mathfrak H_{k_1,\beta_1}\big( b,\dots,b, T^{E(\beta_2)}\m_{k_2,\beta_2}^{g,J,j}(b,\dots,b),b,\dots,b
\big)
\\
=
&
\textstyle
\sum T^{E(\beta_1)} e^{\langle \partial\beta_1, b \rangle} \ 
\mathfrak H_{1,\beta_1}
\big(
T^{E(\beta_2)} e^{\langle\partial\beta_2, b \rangle} \m_{0,\beta_2}^{g,J,j}
\big)
=
\sum T^{E(\beta)} \mathbf y^{\partial\beta} \mathfrak H_{1,\beta}(P^{g,J,j}(\mathbf y))
\end{align*}
Recall that the basis $\{\theta_i\}$ of $H^1(L_j)$ induces a basis of $H^2(L_j)$ given by $\theta_{pq}:=\theta_p\wedge \theta_q$ for $1\le p< q\le n$.
Recall also that $P^{g,J,j}=W^{g,J,j}\one_j+\sum_{p<q} Q_{pq}^{g,J,j} \theta_{pq}$. Putting things together, we have
\begin{align*}
\textstyle
S(\mathbf y)=\langle \gamma, \f_{1*}(b)-\f_{0*}(b) \rangle
=
S_0(\mathbf y) \cdot W^{g,J,j}(\mathbf y) + \sum_{p<q} Q_{pq}^{g,J,j}(\mathbf y) \cdot S_{pq}(\mathbf y) 
\end{align*}
where we denote
\begin{align*}
S_0:=S_0^\gamma:=
&
 \textstyle \sum_\beta T^{E(\beta)} Y^{\partial \beta} 
 \langle \gamma, \mathfrak H_{1,\beta}(\one_j)\rangle \\
S_{pq}:=S_{pq}^\gamma:=
&
\textstyle
\sum_\beta T^{E(\beta)} Y^{\partial \beta} \langle \gamma, \mathfrak H_{1,\beta}(\theta_{pq}) \rangle
\end{align*}
But as $\deg \mathfrak H_{1,\beta}=-1-\mu(\beta)<0$, we know $\mathfrak H_{1,\beta}(\one_j)=0$ and so $S_0=0$. Consequently,
\begin{equation}
\label{S(Y)_for_mC_FF'-eq}
\textstyle 
S(\mathbf y) = \sum_{p<q} S_{pq}(\mathbf y) \cdot Q_{pq}^{g,J,j}(\mathbf y)
\end{equation}
holds for any arbitrary point $\mathbf y$ in $U_\Lambda^n$. And, it actually holds everywhere thanks to Lemma \ref{val=0-lem}.
Thus, the $\gamma$-component $S(Y)=S^\gamma(Y)=\sum_{\beta\neq 0} \langle\gamma, \mC_{0,\beta}^{F'}-\mC_{0,\beta}^F\rangle \ T^{E(\beta)} Y^{\partial \beta}$ of the error term is contained in $ \ia_j$ for any $\gamma\in\pi_1(L_j)$.
Finally, due to (\ref{phi_jk_defn-eq}), we have
\begin{align*}
\phi^{F'}_{jk} (sY^\alpha) 
&
=
\textstyle
\phi_{jk}^F(sY^{\alpha})  \cdot \exp \Big( \sum  \langle F_*\alpha, \mC_{0,\beta}^{F'}-\mC_{0,\beta}^F\rangle T^{E(\beta)} Y^{\partial\beta} 
\Big)   \\
&
=
\phi_{jk}^F(sY^{\alpha}) \cdot \exp( S^{F_*\alpha}(Y))   \\
&
=
\phi_{jk}^F(sY^{\alpha}) \cdot 
\Scale[0.9]{
	\big(
	1+S^{F_*\alpha}(Y) + \frac{S^{F_*\alpha}(Y)^2}{2!}+ \cdots
	\big)}
=
\phi^F_{jk}(sY^\alpha)+ (\text{something in $\ia_j$})
\end{align*}
Hence, the $\phi_{jk}^F$ and $\phi_{jk}^{F'}$ actually induce the same quotient homomorphism $\varphi_{jk}: A_{kj}\to A_{jk}$ in (\ref{varphi_jk_eq}).
\end{proof}

\subsection{Cocycle conditions}

The last step for the mirror construction is to show the cocycle conditions among the various transition maps $\psi_{ij}$ (\ref{psi_jk-eq}).
The idea of the proof is very similar to that of Theorem \ref{phi_jk-indep-thm}.
Let $\Delta_i$, $\Delta_j$ and $\Delta_k$ be three adjacent polyhedrons.
We have defined the local charts
$X_i$, $X_j$, $X_k$ (\ref{X_i-eq}) and the transition maps $\psi_{ik}$, $\psi_{jk}$, $\psi_{ij}$ (\ref{psi_jk-eq}) which correspond to the quotient algebra homomorphisms $\varphi_{ik}$, $\varphi_{jk}$, $\varphi_{ij}$ (\ref{varphi_jk_eq}).

\begin{thm}
\label{cocycle-condition-thm}
$
\psi_{ik}=\psi_{jk}\circ \psi_{ij}$ or equivalently
$
\varphi_{ik}=\varphi_{ij}\circ \varphi_{jk}
$.
\end{thm}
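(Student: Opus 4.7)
The proof will follow the template of Theorem \ref{phi_jk-indep-thm}: first I will produce a ud-homotopy between $\mC^{F_{ik}}$ and a suitable composite built from $\mC^{F_{ij}}$ and $\mC^{F_{jk}}$, and then feed this ud-homotopy into the error-term argument to conclude agreement of the quotient algebra homomorphisms modulo $\ia_i$.

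To set things up, by Theorem \ref{phi_jk-indep-thm} I may normalize the defining choices: pick $F_{jk}(L_k)=L_j$, $F_{ij}(L_j)=L_i$, and set $F_{ik}:=F_{ij}\circ F_{jk}$, so $F_{ik}(L_k)=L_i$ and $F_{ik*}=F_{ij*}\circ F_{jk*}$. Applying cohomology-level Fukaya's trick (Lemma \ref{Fukaya's trick-canonical-level-lem}) to $\mC^{F_{jk}}:\m^{g,J,j}\to\m^{F_{jk*}(g,J,k)}$ via $F_{ij}$ yields an $A_\infty$ homomorphism $\tilde\mC^{F_{jk}}:\m^{F_{ij*}(g,J,j)}\to\m^{F_{ik*}(g,J,k)}$ in $\Mor\UD$ (using Corollary \ref{Fukaya's trick-canonical-cor}). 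The composition $\tilde\mC^{F_{jk}}\circ\mC^{F_{ij}}$ then has source $\m^{g,J,i}$ and target $\m^{F_{ik*}(g,J,k)}$, matching those of $\mC^{F_{ik}}$.

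Next I would assemble a $\Delta^2$-family of almost complex structures on $X$ with vertices $J,F_{ij*}J,F_{ik*}J\in\mathcal V'$ and edges given by the path underlying $\mC^{F_{ij}}$, the Fukaya-trick pushforward via $F_{ij}$ of the path underlying $\mC^{F_{jk}}$, and the path underlying $\mC^{F_{ik}}$. Applying Theorem \ref{Vir-M-triangle-thm} produces a $\Delta^2$-pseudo-isotopy $\check\M^\JJ\in\Obj\UD(L_i,X)$ restricting on its three edges to the respective chain-level pseudo-isotopies. Its canonical model (with respect to a $\Delta^2$-family of harmonic contractions coming from a matching $\Delta^2$-family of metrics) lies in $\Obj\UD$, and the diagram chase of Lemma \ref{simud_mC_mC-lem}, combined with Theorem \ref{UD-mC_thm}, Theorem \ref{Whitehead-full-thm}, and the Fukaya-trick manipulation of Lemma \ref{Fukaya_II_key_lemma} (applied on the middle edge, where the underlying Lagrangian switches from $L_j$ to $L_i$), delivers the ud-homotopy
\[
\mC^{F_{ik}} \simud \tilde\mC^{F_{jk}}\circ \mC^{F_{ij}}.
\]

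Finally I would apply the error-term argument. A direct expansion using \eqref{phi_jk_defn-eq}, together with the additivity $q_i-q_k=(q_i-q_j)+(q_j-q_k)$ in the integral affine structure, functoriality $F_{ik*}=F_{ij*}\circ F_{jk*}$, and the divisor axiom of $\mC^{F_{ij}}$ (used to absorb the exponential of an exponential into a single exponential of $(\tilde\mC^{F_{jk}}\circ\mC^{F_{ij}})_*(b)$, exactly as in the proof of Theorem \ref{wall_crossing_thm}), shows that $(\phi_{ij}^{F_{ij}}\circ\phi_{jk}^{F_{jk}})(sY^\alpha)$ and $\phi_{ik}^{F_{ik}}(sY^\alpha)$ share the same leading monomial $sT^{\langle\alpha,q_i-q_k\rangle}Y^{F_{ik*}\alpha}$ and differ only in their exponent, by an error term
\[
\textstyle
S^\gamma(Y)=\big\langle\gamma,\sum_\beta T^{E(\beta)}Y^{\partial\beta}\big((\tilde\mC^{F_{jk}}\circ \mC^{F_{ij}})_{0,\beta}-\mC^{F_{ik}}_{0,\beta}\big)\big\rangle,\qquad \gamma=F_{ik*}\alpha.
\]
Using the ud-homotopy above, Corollary \ref{UD_homotopy_summary_cor}, and the substitution $y_a=e^{x_a}$ of Lemma \ref{exp-log-lem}, the verbatim computation that proves \eqref{S(Y)_for_mC_FF'-eq} in the proof of Theorem \ref{phi_jk-indep-thm} gives $S^\gamma(\mathbf y)=\sum_{p<q}S_{pq}(\mathbf y)\,Q^{g,J,i}_{pq}(\mathbf y)\in\ia_i$ for every $\mathbf y\in U_\Lambda^n$; by Lemma \ref{val=0-lem}, $S^\gamma(Y)\in\ia_i$ as a formal series, and exponentiating yields $\phi_{ij}^{F_{ij}}\circ\phi_{jk}^{F_{jk}}\equiv\phi_{ik}^{F_{ik}}\pmod{\ia_i}$, i.e.\ $\varphi_{ik}=\varphi_{ij}\circ\varphi_{jk}$. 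The main technical obstacle is the third paragraph above: one has to simultaneously track harmonic contractions across three Lagrangians and three metrics while handling the Fukaya-trick switch on the middle edge, since the naive candidate $H^{F_{ij}}$ (and its $\Delta^2$-analog) is not actually an $A_\infty$ homomorphism in general, forcing the indirect diagram chase of Lemma \ref{Fukaya_II_key_lemma}.
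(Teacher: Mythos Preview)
Your proposal is correct and follows essentially the same route as the paper: establish $\mC^{F_{ik}}\simud\tilde\mC^{F_{jk}}\circ\mC^{F_{ij}}$ via a $\Delta^2$-pseudo-isotopy (this is exactly Lemma \ref{mCmC_simud_mC_FFF_lem}), then run the error-term computation from the proof of Theorem \ref{phi_jk-indep-thm}. Two minor remarks: the pushforward $\tilde\mC^{F_{jk}}$ is defined via Proposition \ref{Fukaya_II_F_s=F_mC_prop} (constant family $F_s\equiv F_{ij}$) rather than Lemma \ref{Fukaya's trick-canonical-level-lem}; and the paper does not take a canonical model of the $\Delta^2$-pseudo-isotopy (no $\Delta^2$-family of harmonic contractions is ever built), but instead runs the triangle argument at the chain level as in Lemma \ref{simud_mC_mC-lem} and then passes to cohomology edge-by-edge using Lemma \ref{mC-diamond-diagram-lem} and the diagram chase of Lemma \ref{choice_indep_main-lem}.
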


We begin with some preparations.
Due to Lemma \ref{Delta_Lebesgue_lem}, 
we can pick up $W_q$ for some $q\in B_0$ such that $W_q\supset \Delta_i\cup \Delta_j\cup \Delta_k$.
Using the construction (\ref{mF_u_v1_v2-eq}) produces several specific diffeomorphisms $F_{ij}:=F_q^{q_j,q_i}$, $F_{jk}:=F_q^{q_k,q_j}$, and $F_{ik}:=F_q^{q_k,q_i}$ in the neighborhood $\mathcal U$.
Observe that for $a,b\in\{i,j,k\}$, we have $F_{ab}(L_b)=L_a$ and $F_{ij}\circ F_{jk}=F_{ik}$.
By Theorem \ref{phi_jk-indep-thm}, the ambiguities caused by taking different choices have been eliminated, and so we can use these specific choices for the computation.

Just as \S \ref{sss_A_homotopy_equivalence_local_chart}, we make choices 
$
(\mJ_{ij},\mg_{ij})$, $(\mJ_{jk},\mg_{jk})$, and
$(\mJ_{ik},\mg_{ik})$: for $a,b\in\{i,j,k\}$, the $\mJ_{ab}$ is a path of almost complex structures in $\mathcal V$ from $J$ to $F_{ab*}J$, and the $\mg_{ab}$ is a path of metrics from $g$ to $F_{ab*}g$.
In the same way as (\ref{mC_mF,j-eq}),
these data produce three $A_\infty$ homomorphisms in $\UD$:
\begin{align*}
\mC^{F_{ij}}: \m^{g,J,i} &\to \m^{F_{ij*}(g,J,j)} \\
\mC^{F_{jk}}: \m^{g,J,j} &\to \m^{F_{jk*}(g,J,k)} \\
\mC^{F_{ik}}: \m^{g,J,i} &\to \m^{F_{ik*}(g,J,k)} 
\end{align*}
Using them, we can similarly construct the algebra homomorphisms $\phi^{F_{ik}}_{ik}$, $\phi_{ij}^{F_{ij}}$, and $\phi_{jk}^{F_{jk}}$ as in (\ref{phi_jk-eq}).

Next, we aim to compare $\phi_{ik}^{F_{ik}}$ with $\phi_{ij}^{F_{ij}}\circ \phi_{jk}^{F_{jk}}$.
Heuristically, let us think of the indexes $i$ and $k$ as the `source' and `target', while the index $j$ is thought of as the `bridge'.
A subtle point is that 
the target of $\mC^{F_{ij}}$ does not match the source of $\mC^{F_{jk}}$,
but this issue is inessential thanks to Fukaya's trick.
In reality, applying Proposition \ref{Fukaya_II_F_s=F_mC_prop} to the constant family $\mF=(F_s)$ where $F_s=F_{ij}$ for each $s$, we obtain a push-forward $A_\infty$ homomorphism
\[
\tilde \mC^{F_{jk}}:= \mC^{\mF_*(\mg_{jk},\mJ_{jk})} : \m^{F_{ij*}(g,J,j)} \to \m^{F_{ij*}F_{jk*}(g,J,k)}\equiv \m^{F_{ik*}(g,J,k)}
\]
such that $H^{F_{ij}}\circ \tilde \mC^{F_{jk}} = \mC^{F_{jk}} \circ H^{F_{ij}}$; see around (\ref{TrickA_mi_can_eq}) for the notation $H^{F_{ij}}$.

\[
\xymatrix{
	&
	& \m^{g,J,j} \ar@{.>}[d]_{H^{F_{ij}}} \ar@{.>}[r]^{\mC^{F_{jk}}} & \m^{F_{jk*}(g,J,k)} \ar@{.>}[d]^{H^{F_{ij}}}
	& & & \text{on} \ H^*(L_j) \\
	\m^{g,J,i}\ar[rr]^{\mC^{F_{ij}}} \ar@/_2pc/[rrr]^{\mC^{F_{ik}}} &
	& \m^{F_{ij*}(g,J,j)} \ar[r]^{\tilde \mC^{F_{jk}}} & *+[r]{\m^{F_{ij*}F_{jk*}(g,J,k)} = \m^{F_{ik*}(g,J,k)}} &  &  &	\text{on} \ H^*(L_i)
}
\]

\begin{lem}\label{mCmC_simud_mC_FFF_lem}
	$\mC^{F_{ik}}$ is ud-homotopic to $\tilde \mC^{F_{jk}}\circ \mC^{F_{ij}}$.
\end{lem}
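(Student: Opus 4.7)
The plan is to proceed in direct analogy with the choice–independence proof (Lemmas \ref{simud_mC_mC-lem} and \ref{choice_indep_main-lem}), replacing the ``triangle with one constant edge'' by a ``triangle with three non-degenerate edges'' indexed by $i,j,k$.

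First I would work at the chain level on the single Lagrangian $L_i$. After applying Fukaya's trick (Lemma \ref{Fukaya_II-cochain-level-lem} and Corollary \ref{Fukaya_II-cochain-cor}) with the constant diffeomorphism $F_{ij}$, the pseudo-isotopy $\check\M^{F_{jk},j}$ on $\Omega^*(L_j)_{\oi}$ pushes forward to a pseudo-isotopy $\tilde{\check\M}^{F_{jk}}$ on $\Omega^*(L_i)_{\oi}$ connecting $\check\m^{F_{ij*}(J,j)}$ and $\check\m^{F_{ij*}F_{jk*}(J,k)}=\check\m^{F_{ik*}(J,k)}$; the induced chain-level $A_\infty$ homomorphism is denoted $\tilde{\check\mC}^{F_{jk}}$. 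Together with $\check\mC^{F_{ij}}$ and $\check\mC^{F_{ik}}$ we obtain a triangle of morphisms in $\UD(L_i,X)$ with vertices $\check\m^{J,i}$, $\check\m^{F_{ij*}(J,j)}$ and $\check\m^{F_{ik*}(J,k)}$.

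Next I would fill this triangle by a $\Delta^2$-family of tame almost complex structures. The three edges of $\Delta^2=[v_0,v_1,v_2]$ should carry the three paths $\mJ_{ij}$, $F_{ij*}\mJ_{jk}$ and $\mJ_{ik}$; since each one lies in the open neighborhood $\mathcal V'\subset \mathfrak J(X,\omega)$ of \S\ref{sss_V_U_neighborhood_and_P_complex} and since $\mathcal V'$ is contractible, we can interpolate to obtain a smooth family $\JJ=(J_x)_{x\in\Delta^2}$ in $\mathcal V'$ whose restrictions to the edges recover $\mJ_{ij}$, $F_{ij*}\mJ_{jk}$ and $\mJ_{ik}$. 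Applying Theorem \ref{Vir-M-triangle-thm} to the moduli space system $\mathbb M(\JJ)$, bounding the fiber $L_i$, supplies a $\Delta^2$-pseudo-isotopy $\check\M^{\JJ}\in\Obj\UD(L_i,X)$ whose edge restrictions agree with $\check\M^{F_{ij},i}$, $\tilde{\check\M}^{F_{jk}}$ and $\check\M^{F_{ik},i}$ respectively (the latter two after Fukaya's trick identification). Exactly as in the proof of Lemma \ref{simud_mC_mC-lem}, the restriction morphisms $\restr^{\Delta^2}_I$ together with Theorem \ref{Whitehead-full-thm} and Theorem \ref{UD-mC_thm} now give the chain-level conclusion
\[
\check\mC^{F_{ik}} \simud \tilde{\check\mC}^{F_{jk}} \circ \check\mC^{F_{ij}}.
\]

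Finally I would transfer this ud-homotopy to the cohomology level by the same diagram chase used in Lemma \ref{choice_indep_main-lem}. Picking the harmonic contractions $\con(g)$ on $L_i$ and $\con(F_{ij*}g)$ on the relevant Lagrangians, Lemma \ref{mC-diamond-diagram-lem} yields the commutation up to ud-homotopy of the three ``pseudo-isotopy squares''
\begin{align*}
\mi^{F_{ij*}(g,J,j)}\circ \mC^{F_{ij}} &\simud \check\mC^{F_{ij}}\circ \mi^{g,J,i},\\
\mi^{F_{ik*}(g,J,k)}\circ \tilde\mC^{F_{jk}} &\simud \tilde{\check\mC}^{F_{jk}}\circ \mi^{F_{ij*}(g,J,j)},\\
\mi^{F_{ik*}(g,J,k)}\circ \mC^{F_{ik}} &\simud \check\mC^{F_{ik}}\circ \mi^{g,J,i},
\end{align*}
where the middle line uses Proposition \ref{Fukaya_II_F_s=F_mC_prop} to identify the push-forward canonical model with the canonical model of the pushed-forward pseudo-isotopy. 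Composing the first two and comparing with the third via the chain-level relation, then applying Whitehead (Theorem \ref{Whitehead-full-thm}) to a ud-homotopy inverse of $\mi^{F_{ik*}(g,J,k)}$, produces $\mC^{F_{ik}}\simud \tilde\mC^{F_{jk}}\circ \mC^{F_{ij}}$ as desired.

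The main obstacle I anticipate is step two: constructing the $\Delta^2$-family $\JJ$ entirely inside $\mathcal V'$ (so that uniform reverse isoperimetric inequalities continue to hold on every fiber involved) \emph{and} arranging the system of Kuranishi data so that the $\Delta^2$-tree-like K-system restricts on each edge to precisely the tree-like K-systems already fixed for $\check\M^{F_{ij},i}$, $\check\M^{F_{ik},i}$, and the Fukaya-trick pushforward $\tilde{\check\M}^{F_{jk}}$. The latter compatibility relies on the contractibility of CF-perturbation choices (\cite[\S 17.7]{FOOOKuTwo}) applied to the ``thickened'' moduli system over $\Delta^2$; once this is in place the rest of the argument is a formal diagram chase mirroring \S\ref{Subsec_Choice_indep}.
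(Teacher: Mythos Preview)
Your proposal is correct and follows essentially the same route as the paper's own (sketch of) proof: push forward the $j$-pseudo-isotopy via the constant $F_{ij}$ to get a triangle of chain-level pseudo-isotopies on $L_i$, fill it by a $\Delta^2$-family $\JJ$ and apply Theorem \ref{Vir-M-triangle-thm} to obtain the chain-level relation as in Lemma \ref{simud_mC_mC-lem}, then descend to cohomology via Lemma \ref{mC-diamond-diagram-lem} and Whitehead exactly as in Lemma \ref{choice_indep_main-lem}. Your identification of the main technical point (building $\JJ$ inside $\mathcal V'$ and matching the edge Kuranishi data, including the Fukaya-trick pushforward on the $[v_1,v_2]$-edge) is also what the paper implicitly relies on.
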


\begin{proof}[Sketch of proof]
The proof is almost the same as Lemma \ref{choice_indep_main-lem}. Write $\mJ_{ab}=(J_{ab}^s)_{s\in\oi}$ and $\mg_{ab}=(g_{ab}^s)_{s\in\oi}$ for $a,b\in\{i,j,k\}$.
Let $\check \M^{\mJ_{ij}}$, $\check \M^{\mJ_{ik}}$ and $\check \M^{\mF_*\mJ_{jk}}$ be the underlying chain-level pseudo-isotopies (Theorem \ref{Vir-M-path-thm}).
We can extend them to a $\Delta^2$-pseudo-isotopy as in (\ref{check_M_mJ_mJ_eq}).
Just like Lemma \ref{simud_mC_mC-lem}, we can similarly prove $\check {\tilde \mC}^{F_{jk}}\diamond \check \mC^{F_{ij}}\simud \check \mC^{F_{ik}}$ for the $A_\infty$ homomorphisms associated to the three chain-level pseudo-isotopies. 
Next, for the cohomology-level, utilizing Lemma \ref{mC-diamond-diagram-lem} and chasing diagrams like the proof of Lemma \ref{choice_indep_main-lem}, we can similarly show $\tilde \mC^{F_{jk}}\diamond \mC^{F_{ij}} \simud \mC^{F_{ik}}$.
\end{proof}

\begin{proof}[Proof of Theorem \ref{cocycle-condition-thm}]
There are natural isomorphisms induced by $F_{ij}$, $F_{jk}$ and $F_{ik}$ among the label groups $\G(X,L_i)$, $\G(X,L_j)$, and $\G(X,L_k)$ (\ref{G(X,L)_F_Fuk_trick_eq}). Their elements are denoted by $\beta$, $\beta'$ and $\beta''$ in sequence. We will always follow this convention in the below. For instance, given $\alpha''\in \pi_1(L_k)$ we will set $\alpha=F_{ik*}\alpha''\in \pi_1(L_i)$ and $\alpha'=F_{jk*}\alpha'' \in \pi_1(L_j)$. Note that $F_{ij*}\alpha'=\alpha$.

To start with, we specify the bases as before. Let $\{f''_\ell\}_{1\le \ell \le n}$ be a basis of $\pi_1(L_k)$ which can induce a basis on $\pi_1(L_i)$ (resp. $\pi(L_j)$) given by $f_\ell:=F_{ik*}f''_\ell$ (resp. $f_\ell':=F_{jk*}f''_\ell$).
Denote the dual bases by $\{\theta_\ell\}$, $\{\theta'_\ell\}$, and $\{\theta''_\ell\}$ respectively.
Since the bases are related by the diffeomorphisms, the above $\alpha$, $\alpha'$, and $\alpha''$ can be all identified with the same tuple $(\alpha_1,\dots,\alpha_n)\in\mathbb Z^n$ such that $\alpha=\sum \alpha_\ell f_\ell$, $\alpha'=\sum \alpha_\ell f'_\ell$, and $\alpha''=\sum \alpha_\ell f''_\ell$.
Thus, each of $Y^{\alpha}$, $Y^{\alpha'}$ or $Y^{\alpha''}$ can be viewed as $Y_1^{\alpha_1}\cdots Y_n^{\alpha_n}$, thereby inducing the natural identifications
$\Lambda[[\pi_1(L_a)]]\cong\Lambda[[Y_1^\pm,\dots, Y_n^\pm]]$ for any $a\in\{i,j,k\}$.
Particularly, we can regard $\partial\beta$, $\partial \beta'$ or $\partial\beta''$ as the same tuple $(\partial_1 \beta,\dots, \partial_n \beta)\in\mathbb Z^n$ where $\partial_i\beta= \langle \partial\beta, \theta_i\rangle=\langle \partial\beta', \theta_i'\rangle =\langle \partial\beta'', \theta_i''\rangle$; we can also identify $Y^{\partial\beta}$, $Y^{\partial\beta'}$, or $Y^{\partial\beta''}$ with the same monomial $Y_1^{\partial_1\beta}\cdots Y_n^{\partial_n \beta}$.

Using the energy formula (\ref{energy-change-eq}) and the definition formulas (\ref{phi_jk_defn-eq}), we first compute as follows:
\begin{align*}
& \quad \phi_{ij}^{F_{ij}}\circ \phi_{jk}^{F_{jk}} (Y^{\alpha''})\\
&
=
\textstyle
	\phi_{ij}^{F_{ij}} 
\Big(
T^{\langle \alpha'', q_j-q_k\rangle}
Y^{\alpha'}
\exp
 \langle \alpha', 
 \sum \mC_{0,\beta'}^{F_{jk}}  T^{E(\beta')} Y^{\partial \beta'}\rangle
\Big) \\
&
=
\textstyle
T^{\langle \alpha'', q_j-q_k\rangle}
\phi_{ij}^{F_{ij}}(Y^{\alpha'} ) 
\exp
\Big(
\sum  \langle \alpha', \mC_{0,\beta'}^{F_{jk}} \rangle  T^{E(\beta')} \phi_{ij}^{F_{ij}} (Y^{\partial \beta'})
\Big)
\\
&
=
\textstyle 
T^{\langle \alpha'', q_j-q_k\rangle} T^{\langle \alpha', q_i-q_j \rangle} Y^\alpha 
\exp
 \langle \alpha, \sum  \mC_{0,\gamma}^{F_{ij}}  T^{E(\gamma)}Y^{\partial \gamma}
 \rangle 
\exp
\Big(
\sum
\langle\alpha',  \mC_{0,\beta'}^{F_{jk}} \rangle T^{E(\beta')}
T^{\langle \partial\beta', q_i-q_j \rangle}
Y^{\partial\beta} 
\exp
 \langle
\partial\beta, \sum \mC_{0,\eta}^{F_{ij}}
T^{E(\eta)}Y^{\partial \eta}
\rangle
\Big)   \\
&
=
\textstyle
T^{\langle \alpha'', q_i-q_k\rangle} Y^\alpha 
\exp 
 \langle \alpha,
 \sum \mC_{0,\gamma}^{F_{ij}}  T^{E(\gamma)}Y^{\partial \gamma} \rangle   
\exp
\Big(
\sum
\langle\alpha, \tilde \mC_{0,\beta}^{F_{jk}} \rangle T^{E(\beta)}
Y^{\partial\beta}
\exp
\langle
\partial\beta, \sum \mC_{0,\eta}^{F_{ij}}
T^{E(\eta)}Y^{\partial \eta}
\rangle
\Big)
\end{align*}
where in the last step we use Proposition \ref{Fukaya_II_F_s=F_mC_prop} to get $\mC_{0,\beta'}^{F_{jk}}=F_{ij}^*\tilde \mC_{0,\beta}^{F_{jk}}$ and so $\langle \alpha', \mC_{0,\beta'}^{F_{jk}}\rangle =\langle \alpha, \tilde \mC_{0,\beta}^{F_{jk}}\rangle$.

Once again, we will take advantage of Lemma
\ref{val=0-lem} for the computations.
Assume $\mathbf y=(y_1,\dots, y_n)$ is an arbitrary point in $U_\Lambda^n$, and we can find $x_\ell \in\Lambda_0$ such that $y_\ell=e^{x_\ell}$ for each $1\le \ell \le n$ due to Lemma \ref{exp-log-lem}.
We set $b=\sum x_\ell \theta_\ell$, $b'=\sum x_\ell \theta'_\ell$, and $b''=\sum x_\ell  \theta''_\ell$.
For a general monomial $Y^{\alpha}$, the evaluation at $\mathbf y$ gives the value $\mathbf y^\alpha=\exp(\alpha\cap b)=\exp(\alpha'\cap b')=\exp(\alpha''\cap b'')=e^{\alpha_1  x_1+\cdots+ \alpha_n  x_n}$ in $\Lambda$.

After the substitution $Y=\mathbf y$, applying the divisor axiom to the first exponential power, we obtain
\[
\textstyle
\exp 
\langle \alpha,
\sum \mC_{0,\gamma}^{F_{ij}}  T^{E(\gamma)}Y^{\partial \gamma} \rangle  
 |_{Y=\mathbf y}
=
\sum  \langle\alpha, \mC_{0,\gamma}^{F_{ij}}\rangle T^{E(\gamma)}e^{\partial\gamma\cap b}=\langle \alpha, \mC^{F_{ij}}_*(b)-b\rangle
\]
Recall (\S \ref{ss_weak_MC}) for the notation $\mC_*^{F_{ij}}$.
The similar holds for the last power replacing $\alpha$ by $\partial \beta$.
Thus,
\begin{align*}
\phi_{ij}^{F_{ij}}\circ \phi_{jk}^{F_{jk}}(Y^{\alpha''})|_{Y=\mathbf y}
=&
\textstyle
T^{\langle \alpha'', q_i-q_k\rangle}
\cdot
\mathbf y^\alpha
\cdot
\exp  
\langle\alpha,\mC_*^{F_{ij}}(b)-b\rangle
\cdot
\exp
\Big(
\sum \langle\alpha,\tilde \mC_{0,\beta}^{F_{jk}} \rangle T^{E(\beta)}\mathbf y^{\partial\beta} \exp \langle\partial\beta, \mC^{F_{ij}}_*(b)-b
\rangle
\Big)  \\
=&
\textstyle
T^{\langle \alpha'', q_i-q_k\rangle}
\cdot \exp 
\langle\alpha, \mC_*^{F_{ij}}(b)\rangle
\cdot
\exp
\Big(
\sum \langle\alpha, \tilde \mC_{0,\beta}^{F_{jk}}\rangle
T^{E(\beta)}\exp \langle \partial\beta, \mC^{F_{ij}}_*(b)\rangle
\Big)   \\
=
&
\textstyle
T^{\langle \alpha'', q_i-q_k\rangle}
\cdot \exp \langle \alpha, \widehat b\rangle 
\cdot
\exp
\Big(
\sum \langle\alpha, \tilde \mC_{0,\beta}^{F_{jk}}\rangle
T^{E(\beta)}
\exp \langle \partial\beta , \widehat b \rangle 
\Big)
\end{align*}
where we put
$
\widehat b:=\mC_*^{F_{ij}}(b)\in H^1(L_i)\hat\otimes \Lambda_0
$.
Viewing $\widehat b$ as a new divisor input (\ref{Divisor_input-eq}), we similarly gain
\[
\textstyle
\sum \langle\alpha, \tilde \mC_{0,\beta}^{F_{jk}}\rangle
T^{E(\beta)}
\exp \langle \partial\beta , \widehat b \rangle =\langle \alpha, \tilde \mC^{F_{jk}}_*(\widehat b)-\widehat b\rangle
\]
Besides, it is easy to check
$\tilde \mC_*^{F_{jk}}(\widehat b)=\tilde \mC^{F_{jk}}_* \big( \mC_*^{F_{ij}} (b) \big)=
(\tilde \mC^{F_{jk}} \diamond \mC^{F_{ij}})_*(b)
$ from the definition. Therefore,
\begin{align*}
\phi_{ij}^{F_{ij}}\circ \phi_{jk}^{F_{jk}}(Y^{\alpha''})|_{Y=\mathbf y}
=
T^{\langle \alpha'', q_i-q_k\rangle}  
\exp \langle \alpha, \widehat b\rangle \cdot
\exp \langle\alpha, \tilde \mC_*^{F_{jk}}(\widehat b) - \widehat b \rangle  
=
T^{\langle \alpha'', q_i-q_k\rangle}  \exp \langle\alpha,  ( \tilde\mC^{F_{jk}} \circ \mC^{F_{ij}})_*(b) \rangle
\end{align*}
Just like how we use Lemma \ref{choice_indep_main-lem} to show (\ref{S(Y)_for_mC_FF'-eq}) before, we can similarly use Lemma \ref{mCmC_simud_mC_FFF_lem}
to show
\begin{align*}
\textstyle
 \langle\alpha,  ( \tilde\mC^{F_{jk}} \diamond \mC^{F_{ij}})_*(b) \rangle
 -
  \langle\alpha,  \mC^{F_{ik}}_*(b) \rangle
 = \sum_{p<q} S_{pq}^{\alpha}(\mathbf y) \cdot Q_{pq}^{g,J,i}(\mathbf y)
\end{align*}
for some formal series $S_{pq}^\alpha$ depending on $\alpha$. 
Accordingly, given an arbitrary point $\mathbf y$ in $U_\Lambda^n$,
we have
\begin{align*}
\phi_{ij}^{F_{ij}}\circ \phi_{jk}^{F_{jk}}(Y^{\alpha''})|_{Y=\mathbf y}
=&
\Scale[0.9]
{T^{\langle \alpha'', q_i-q_k\rangle}  \exp \langle\alpha, \mC_*^{F_{ik}}(b)\rangle \cdot \exp \Big[ \textstyle \sum_{p<q} S_{pq}^{\alpha}(\mathbf y) \cdot Q_{pq}^{g,J,i}(\mathbf y) \Big]}\\
=&
\Scale[0.9]
{T^{\langle \alpha'', q_i-q_k\rangle}
\mathbf y^\alpha \exp \langle
\alpha, \textstyle \sum_{\beta} \mC_{0,\beta}^{F_{ik}}T^{E(\beta)} 
\mathbf y^{\partial\beta}
\rangle\cdot \exp \Big[ \sum_{p<q} S_{pq}^{\alpha}(\mathbf y) \cdot Q_{pq}^{g,J,i}(\mathbf y) \Big] }
\\
=&
\Scale[0.9]
{
\phi_{ik}^{F_{ik}}( \mathbf y^{\alpha''})\cdot \exp \Big[ \textstyle \sum_{p<q} S_{pq}^{\alpha}( \mathbf y ) \cdot Q_{pq}^{g,J,i}( \mathbf y) \Big] 
}
\end{align*}
where the second equation uses the divisor axiom of $\mC^{F_{ik}}$ in reverse, and the last equation just follows from the definition.
By Lemma \ref{val=0-lem} again, we actually know the above equation holds everywhere, and so $\phi_{ij}^{F_{ij}}\circ \phi_{jk}^{F_{jk}}(Y^{\alpha''})=\phi_{ik}^{F_{ik}}(Y^{\alpha''})+ (\text{something in $\ia_i$})$. Therefore, $\varphi_{ij}\circ \varphi_{jk} =\varphi_{ik}$.
\end{proof}

\begin{cor}\label{cocycle-condition_ij_cor}
$\psi_{ji}\circ \psi_{ij}=\id$ and $\psi_{ii}=\id$.
\end{cor}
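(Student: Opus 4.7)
The plan is to deduce both identities from the cocycle condition (Theorem \ref{cocycle-condition-thm}), after first establishing $\psi_{ii} = \id$ by direct computation with the most convenient admissible data and invoking the choice-independence from Theorem \ref{phi_jk-indep-thm}. Once $\psi_{ii} = \id$ is in hand, applying Theorem \ref{cocycle-condition-thm} with the triple $(i,j,k)$ specialized to $(i,j,i)$ yields $\id = \psi_{ii} = \psi_{ji}\circ\psi_{ij}$ on the relevant overlap, which is the second identity.

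To establish $\psi_{ii} = \id$, the plan is to make every choice in \S\ref{sss_Fuk_trick_transition_map}--\S\ref{sss_A_homotopy_equivalence_local_chart} trivial: take $F = \id_X \in \mathcal U$ (which satisfies $F(L_i) = L_i$ and $F_*J = J$), take $\mathbf J = \hat J$ to be the constant path at $J$, and take $\mathbf g = \hat g$ to be the constant path at $g$. By the last clause of Theorem \ref{Vir-M-path-thm}, we may arrange the chain-level pseudo-isotopy $\check\M^{\hat J,L_i}$ to be the trivial pseudo-isotopy about $\check\m^{J,L_i}$. Running the homological perturbation of \S\ref{ss_pseudo_isotopy_canonical_model} against $\con(\hat g)$ then produces a trivial pseudo-isotopy $\M^{\hat g,\hat J,L_i}$ about $\m^{g,J,L_i}$ (all the families $h_s, k_s, \sigma_s$ of Lemma \ref{harmonic h_t k_t-lem} and Lemma \ref{harmonic-sigma-lem} vanish for a constant $\mathbf g$, so $\con(\mathbf g)$ is the trivial extension of $\con(g)$). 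Corollary \ref{from-pseudo-isotopies-to-A-infty-homo-cor_trivial_pseudo_isotopy} then gives $\mC^{F} = \id$, i.e.\ $\mC^{F}_{1,0} = \id$ and $\mC^{F}_{k,\beta} = 0$ for all $(k,\beta)\neq(1,0)$; in particular $\mC^{F}_{0,\beta} = 0$ for every $\beta$.

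Substituting into the defining formula (\ref{phi_jk_defn-eq}) with $j = k = i$, the prefactor $T^{\langle\alpha,q_j-q_k\rangle}$ becomes $T^0 = 1$, the monomial $Y^{F_*\alpha}$ equals $Y^\alpha$, and the exponential factor is $\exp(0) = 1$; hence $\phi^{F}_{ii}(sY^\alpha) = sY^\alpha$, so $\phi^{F}_{ii} = \id$ on $\Lambda\langle\Delta_i;q_i\rangle$. The induced quotient $\varphi_{ii}$ is then $\id_{A_i}$, and consequently $\psi_{ii} = \varphi_{ii}^* = \id_{X_i}$. Theorem \ref{phi_jk-indep-thm} guarantees this conclusion is independent of the choices, validating our use of the trivial ones. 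Combining $\psi_{ii} = \id$ with the specialization $\psi_{ii} = \psi_{ji}\circ\psi_{ij}$ of Theorem \ref{cocycle-condition-thm} completes the proof; no serious obstacle is anticipated, since both assertions reduce formally to already-established results, with the only subtlety being the consistent bookkeeping of Kuranishi-theoretic choices, which is exactly what the trivial-pseudo-isotopy clause of Theorem \ref{Vir-M-path-thm} accommodates.
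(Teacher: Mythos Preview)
Your proposal is correct and follows essentially the same approach as the paper: both establish $\psi_{ii}=\id$ by choosing the trivial data $F=\id$, constant $\mathbf J$, constant $\mathbf g$, so that the resulting pseudo-isotopy is trivial and hence $\mC^{F}=\id$ (via Corollary \ref{from-pseudo-isotopies-to-A-infty-homo-cor_trivial_pseudo_isotopy}), whence $\phi^{F}_{ii}=\id$ from the defining formula; then the specialization $k=i$ of Theorem \ref{cocycle-condition-thm} gives $\psi_{ji}\circ\psi_{ij}=\psi_{ii}=\id$. Your write-up is simply more explicit about the intermediate steps (the triviality of $\con(\hat g)$ and the invocation of choice-independence), but the logic is identical.
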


\begin{proof}
It is straightforward from Theorem \ref{cocycle-condition-thm}.
In fact, we just need to set $k=i$, $F_{ii}=\id$ and set $\mathbf J_{ii}, \mathbf g_{ii}$ to be the constant families. By construction, we can easily show that $\mC^{F_{ii}}=\mC^\id=\id$. From the definition formulas (\ref{phi_jk_defn-eq}, \ref{phi_jk-eq}), it follows that $\phi_{ii}^{F_{ii}}=\id$ and thus $\psi_{ji}\circ \psi_{ij}=\psi_{ii}=\id$.
\end{proof}

\begin{proof}
[Proof of Main Theorem \ref{Main_theorem_thm}]
Note that the above construction starts from a chosen compact domain $K\subset B_0$ (\ref{K_domain_eq}) together with the related choices, such like $J\in\mathfrak J_K$.
By Theorem \ref{cocycle-condition-thm}, Corollary \ref{cocycle-condition_ij_cor}, and Proposition \ref{gluing X_ij -prop-appendix}, we can glue $X_i$ (\S \ref{sss_local_charts_defn}) along $X_{ij}$ (\S \ref{sss_A_homotopy_equivalence_local_chart}) through $\psi_{ij}$ (\ref{psi_jk-eq}), thereby obtaining a rigid analytic space $X^\vee_{J,K}$ such that the collection $(X_i)$ gives an admissible covering.
Moreover, by Theorem \ref{phi_jk-A-thm}, the various $W^{g,J,i}$ on $X_i$ are compatible with the gluing maps $\psi_{ij}$ and give rise to a global function $W^\vee_{J,K}$ on $X^\vee_{J,K}$.
Finally, by Corollary \ref{Val_psi_transition-compatible-cor}, one can also glue the various $\Val_{q_i}$ (\ref{Val_q-eq}) to obtain a map $\pi^\vee_{J,K} : X^\vee_{J,K} \to K$. 

When $K$ is fixed, we claim that the isomorphism class of triple $\mathbb X^\vee_{J,K}:=(X_{J,K}^\vee, W_{J,K}^\vee, \pi^\vee_{J,K})$ does not depend on $J$. Indeed, suppose $\tilde J\in\mathfrak J_K$ is another choice. First, using a path between $J$ and $\tilde J$ inside $\mathfrak J_K$ and considering the induced pseudo-isotopies, one can construct local isomorphisms between the local pieces just like how we previously construct the transition maps.
Then, by the same method of showing the cocycle conditions, they can be glued together to obtain a global isomorphism $\mathbb X^\vee_{J,K} \cong \mathbb X^\vee_{\tilde J, K}$.
On the other hand, if $\tilde K$ is another compact domain, we may assume $\tilde K\supset K$; the same construction yields a natural open embedding $\mathbb X^\vee_{J,K}\subset \mathbb X^\vee_{J,\tilde K}$.

In general, we choose a sequence of compact domains $K_1\subset \cdots \subset K_n\subset K_{n+1} \subset \cdots$ such that $\bigcup_{n\ge 1} K_n =B_0$.
Then, we also have a sequence $\mathfrak J_{K_1} \supset \cdots \supset \mathfrak J_{K_n}\supset \mathfrak J_{K_{n+1}} \supset \cdots$. By Assumption \ref{assumption-mu ge 0}, each $\mathfrak J_{K_n}$ is open. Fix some $J_n\in\mathfrak J_{K_n}$ and a sufficiently small neighborhood $\mathcal  V_n$ of $J_n$ in $\mathfrak J_{K_n}$, and then we choose some $\tilde J_n\in\mathcal V_n\cap \mathcal V_{n+1}$ to serve as a bridge. By the above arguments, we can obtain an open embedding $\mathbb X^\vee_{J_n,K_n}\xhookrightarrow{} \mathbb X^\vee_{J_{n+1}, K_{n+1}}$.
Ultimately, applying Proposition \ref{gluing X_ij -prop-appendix} again to the increasing sequence $(\mathbb X^\vee_{J_n,K_n})$, we get the mirror $\mathbb X^\vee=(X^\vee, W^\vee, \pi^\vee)$ consisting of an analytic space with a fibration $\pi^\vee: X^\vee \to B_0$ and a global function $W^\vee$.
\end{proof}

\appendix

\section{Appendix}

\subsection{General aspects of non-archimedean geometry}
\label{SA_non-archimedean}

We briefly cover the fundamentals of non-archimedean analytic geometry.
A partial list of references are \cite{BoschBook, BGR, EKL,Gubler}.

Let $\K$ be an algebraically closed field. 
A norm $|\cdot|$ on $\K$ is called \textit{non-archimedean} if $|a+b|\le \max\{|a|,|b|\}$ for any $a,b\in\K$. It is equivalent to a valuation $\val: \K \to \mathbb R \cup \{\infty\}$ satisfying (1) $\val(a)=0$ if and only if $a=0$; (2) $\val(ab)=\val(a) + \val(b)$; (3) $\val(a+b)\ge \min\{\val(a),\val(b)\}$. They are related to each other by $\val(a)=-\log|a|$ and $|a|=e^{-\val(a)}$. 
For the purpose of this paper, we always choose $\K$ to be the Novikov field $\Lambda=\mathbb C((T^{\mathbb R}))$ as (\ref{Novikov_eq}).
Let $T_d= \K \langle z_1,\dots,z_d\rangle \subset \K[[z_1,\dots,z_d]]$ be the set of all formal power series $\sum_{\nu\in\mathbb Z^d_{\ge 0}}a_{\nu} \mathbf{z}^\nu$ so that $|a_{\nu}| \to 0$ as $|\nu| = \sum |\nu_i|\to \infty$. It form a Banach $\K$-algebra, called the $d$-th \textit{Tate algebra}.

An \textit{affinoid algebra} is defined to be a $\K$-Banach algebra $A$ admitting a continuous epimorphism $T_d\to A$ for some $d$. 
The spectrum of maximal ideals, $\Sp A := \mathrm{Max}A$, is called an \textit{affinoid $\K$-space} or an \textit{affinoid space}.

This falls within classical Tate's rigid analytic geometry \cite{Tate_origin}. In the context of modernized Berkovich's analytic geometry \cite{Berkovich_2012spectral}, an affinoid space actually refers to the spectrum of multiplicative seminorms on $A$, adding extra generic points (see e.g. \cite[1.3]{baker2008introduction}).
However, these theories are essentially equivalent when focusing on so-called \textit{good} Berkovich analytic spaces. There exists a category equivalence between the (sub)category of good Berkovich analytic spaces and the category of rigid analytic spaces, as shown in \cite[1.6]{Berkovich1993etale}.
To make our discussion more accessible to a broader audience, we will henceforth focus on rigid geometry.

An \textit{analytic space} over $\Lambda$ is a topological space with an atlas of affinoid domains glued by affinoid isomorphisms.
This resembles how a scheme is covered by an atlas of affine schemes or a complex manifold by an atlas of holomorphic charts. Thus, to construct an analytic space as outlined in Theorem \ref{Main_theorem_thm}, it's necessary to utilize affinoid algebra isomorphisms.
Note that points $x,y,\dots$ in $\Sp A$ correspond to maximal ideals $\m_x,\m_y,\dots$ in the algebra $A$ defined by $\m_x=\{ f\in A\mid f(x)=0\}$. Each $f\in A$ can be regarded as a \textit{function} on the space $\Sp A$ by setting $f(x)$ to be the residue class of $f$ in $A / \m_x\equiv \K$. (Indeed, $A/ \m_x$ is first a finite field over $\K$ due to \cite[2.2/12]{BoschBook} and must equal to $\K$, since $\K$ is algebraically closed.)
If we set $V(F)=\{ x\in \Sp A \mid f(x) =0, \ \forall f\in F\}$ and $\id(E) = \{ f \in A \mid f(x)=0, \  \forall x\in E\}$, then the Hilbert's Nullstellensatz \cite[3.2/4]{BoschBook} also holds in the sense that for an ideal $\ia\subset A$, we have $\id(V(\ia)) =\sqrt{\ia}$. In particular, several functions $f_i$ on $\Sp A$ has no common zeros if and only if the unit ideal is generated by $f_i$.
As a typical example, the affinoid space associated to the $n$-th Tate algebra is exactly the unit ball in $\K^n$
\[
B^n(\K) \xrightarrow{\cong} \Sp T_n, \ \ \ x\mapsto \m_x=\{f\in T_n\mid f(x)=0\}
\]
where $B^n(\K):=\{(x_1,\dots,x_n) \in \K^n \mid |x_i|\le 1\}$. 
The Tate algebra can be viewed as the ring of well-defined functions on the unit ball, meaning that a formal power series $f\in \K[[z_1,\dots, z_n]]$ belongs to $T_n$ if and only if $f$ converges on the unit ball $B^n(\K)$. By \cite[Proposition 3.1.8]{EKL}, if $A=T_d /(f_1,\dots,f_r)$ for $f_1,\dots, f_r\in T_d$, then $\Sp A$ agrees with $V(f_1,\dots,f_r)$ of $B^n(\K)$.

\begin{prop}[{\cite[5.3/5]{BoschBook}} {\cite[\S 4.1.4]{TemkinIntro}}]
\label{gluing X_ij -prop-appendix}
Consider the data (i) analytic spaces $X_i$, $i\in I$; (ii) open subspaces $X_{ij}\subset X_i$; (iii) morphisms $\psi_{ij}:X_{ij}\to X_{ji}$.
Suppose (a) $X_{ii}=X_i$, $\psi_{ij}\circ \psi_{ji} =\id$ and $\psi_{ii}=\id$; (b) $\psi_{ij}$ induces isomorphisms $\psi_{ij}: X_{ij}\cap X_{ik} \to X_{ji}\cap X_{jk}$ with cocycle conditions $\psi_{ik}= \psi_{jk}\circ \psi_{ij}$. Then these $X_i$ can be glued by identifying $X_{ij}$ with $X_{ji}$ to get an analytic space $X$ admitting $(X_i)_{i\in I}$.
\end{prop}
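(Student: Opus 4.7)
The plan is to mimic the familiar scheme-theoretic gluing construction, adapted to the Grothendieck-topological setting of rigid analytic geometry. Since the statement is a purely general construction with no non-archimedean input beyond the formal definition of a rigid analytic space, the argument is structural and parallels \cite[5.3/5]{BoschBook}.

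First I would build the underlying set. Put $\tilde X = \bigsqcup_{i\in I} X_i$ and declare $x\sim y$ whenever $x\in X_{ij}$, $y\in X_{ji}$, and $\psi_{ij}(x)=y$. Reflexivity follows from $\psi_{ii}=\id$, symmetry from $\psi_{ji}\circ\psi_{ij}=\id$, and transitivity is exactly the cocycle relation $\psi_{ik}=\psi_{jk}\circ\psi_{ij}$ restricted to $X_{ij}\cap X_{ik}$. Let $X:=\tilde X/\sim$ and let $p_i\colon X_i\to X$ be the canonical maps; then each $p_i$ is injective and $p_i(X_{ij}) = p_i(X_i)\cap p_j(X_j)$, with $p_j\circ\psi_{ij}=p_i$ on $X_{ij}$.

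Next I would equip $X$ with a Grothendieck topology by declaring $U\subset X$ admissible open iff $p_i^{-1}(U)\subset X_i$ is admissible open for every $i$, and a covering $\{U_\lambda\}$ of such $U$ admissible iff $\{p_i^{-1}(U_\lambda)\}$ is admissible in $X_i$ for every $i$. The image $p_i(X_i)$ is then admissible open in $X$, and the family $\{p_i(X_i)\}_{i\in I}$ is an admissible covering of $X$ by construction. The structure sheaf is defined on admissible opens $U$ by
\[
\mathcal O_X(U):=\Bigl\{(f_i)\in\textstyle\prod_i \mathcal O_{X_i}(p_i^{-1}U)\;\Big|\;\psi_{ij}^{*}\bigl(f_j|_{X_{ji}\cap p_j^{-1}U}\bigr)=f_i|_{X_{ij}\cap p_i^{-1}U}\;\forall i,j\Bigr\},
\]
which is a sheaf because each $\mathcal O_{X_i}$ is a sheaf and the cocycle condition guarantees compatibility on triple overlaps. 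Restricted to $p_i(X_i)\cong X_i$, this recovers $\mathcal O_{X_i}$, so $X$ is locally an affinoid space, hence a rigid analytic space admitting $(X_i)$ as an admissible covering.

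The main obstacle is not the set-theoretic gluing but the verification that the prescribed $G$-topology on $X$ satisfies the completeness axioms (G0)--(G2) of \cite[5.1]{BoschBook}: stability under finite intersections, the refinement axiom, and the local character of admissibility. Each of these follows by pulling back to the $X_i$, where the corresponding condition already holds, and then using that the pulled-back data are compatible under $\psi_{ij}$ on $X_{ij}\cap X_{ik}$ thanks to the cocycle condition. The sheaf axiom on arbitrary admissible coverings of $X$ reduces, via the covering $\{p_i(X_i)\}$, to the sheaf axiom on each $X_i$ combined with the explicit equalizer definition of $\mathcal O_X$ above. All of this is carried out in detail in \cite[5.3/5]{BoschBook}, so in the body of the paper it suffices to invoke the cited result.
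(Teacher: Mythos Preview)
The paper does not give its own proof of this proposition; it is stated with the citation \cite[5.3/5]{BoschBook} and used as a black box. Your sketch is a faithful outline of the standard gluing construction carried out in that reference, and your closing remark---that in the body of the paper it suffices to invoke the cited result---is exactly what the paper does.
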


Every $\K$-scheme $X$ of locally finite type admits the \textit{analytification}, which is an analytic $\K$-space together with a morphism $X^{\an}\to X$, satisfying some universal properties. An instructive fact is that \textit{the map on the underlying sets identifies the points of $X^{\an}$ with the closed points of the scheme $X$}. In reality, we have the so-called GAGA-functor from the category of $\K$ schemes of locally finite type to the category of analytic $\K$-space (see \cite[Sec. 5.4]{BoschBook} or \cite[Sec.5.1]{TemkinIntro}).
For example,
fix $d>0$ and $s>1$. Pick the (scaled) Tate algebra $T_n^{(i)}:= \K\langle s^{-i} z_1, \dots,  s^{-i} z_n \rangle$ for some $i\in \mathbb N$ which consists of formal power series $\sum a^\nu \mathbf z^\nu$ with $|a_\nu| s^{i|\nu|} \to 0$.
As above, $\Sp T_n^{(i)}$ agrees with the ball with radius $s^i$.
The analytification of the affine $n$-space $\mathbb A_{\mathbb K}^n$ is given by $\mathbb A_{\mathbb K}^{n,\an}=\bigcup_{i=0}^\infty \Sp T_n^{(i)}$.

We next consider a tropical example. Let $\GG^n_m=\Spec \K[z_1^\pm,\dots, z_n^\pm]$. The points of the analytification $\GG_m^{n,\an}$ are in bijection with the closed points in $\GG_m^n$, i.e. the set $(\K^\times)^n$ where $\K^\times =\K\setminus\{0\}$.
In fact, the analytification $\GG_m^{n,\an}$ admits an admissible covering
$
\textstyle
\GG_m^{n,\an} = \bigcup_{r \ge 1} \Sp \left( \K \langle r^{-1}z_i, r^{-1}z_i^{-1} \mid 1\le i \le n  \rangle \right)
$
where $\Sp  ( \K \langle r^{-1}z_i , r^{-1} z^{-1}_i \rangle  )$ is the subset of $(\mathbb K^\times )^n$ defined by $\frac{1}{r}\le |x_i| \le r$.
In our non-archimedean setting, analogous to the logarithm map $(\mathbb C^*)^n \to \mathbb R^n$, we have the following well-known map:
\[
\Val: ( \K^{\times})^n\cong \GG_m^{n,\an} \to \mathbb R^n, \ \ \
\Val (a_1,\dots,a_n)=(\val(a_1),\dots,\val(a_n))
\]
Let $\Gamma=\val( \K^\times)$ be the valuation group. For instance, when $\K$ is the Novikov field, we have $\Gamma=\mathbb R$. A convex subset $\Delta\subset \mathbb R^n$ is called a \textit{$\Gamma$-rational convex polyhedron} (often omitting `$\Gamma$') if it is defined by finitely many inequalities 
$
\sum_j b_{ij} x_j \ge c_i$
for $c_i\in \Gamma$ and $b_{ij}\in \mathbb Z$.

For a bounded $\Gamma$-rational polyhedron $\Delta\subset \mathbb R^n$, we define the so-called \textbf{\textit{polyhedral affinoid algebras}}
$
\K\langle\Delta\rangle \subset \K[[ z_1^\pm,\dots, z_n^\pm]]
$
to be the set of formal Laurent series 
$f=\sum_{\nu\in \mathbb Z^d} a_{\nu} \mathbf z^{\nu}$
so that 
$
\val(a_{\nu}) + \nu \cdot u \to \infty
$
for all $u\in\Delta$. The multiplication $\cdot$ on $\K\langle \Delta\rangle$ is given by
$\big(\sum_\nu a_\nu \mathbf z^\nu\big) \cdot
\big(\sum_\nu b_\nu \mathbf z^\nu\big)
=
\sum_\nu \big( \sum_{\nu=\nu_1+\nu_2} a_{\nu_1}b_{\nu_2} \big) \cdot \mathbf z^\nu$
which converges as
$
\val(\sum_{\nu=\nu_1+\nu_2}a_{\nu_1}b_{\nu_2} ) + \nu\cdot u \ge \min_{\nu=\nu_1+\nu_2}\{ \val(a_{\nu_1})+\val(b_{\nu_2}) +\nu_1\cdot u+\nu_2\cdot u \} \to \infty$.

\begin{prop}\label{polyhedral-affinoid-algebra-evaluation-convergence-prop}
	Let $f=\sum_{\nu\in\mathbb Z^d} a_\nu \mathbf z^\nu \in \K[[z^\pm]]$. Then
$f \in \K\langle \Delta\rangle$ if and only if $f(\mathbf y)$ converges for any $\mathbf y \in \Val^{-1}(\Delta)$. In this case, $f$ can be recognized as a global function defined on $\trop^{-1}(\Delta)$.
\end{prop}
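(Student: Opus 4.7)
The plan is to reduce both directions to the basic observation that, in a non-archimedean setting, a series $\sum c_k$ converges if and only if $\val(c_k)\to\infty$. Applied to $f(\mathbf y)=\sum_\nu a_\nu \mathbf y^\nu$, this means
\[
f(\mathbf y)\ \text{converges}\ \Longleftrightarrow\ \val(a_\nu)+\nu\cdot \Val(\mathbf y)\to\infty\quad\text{as }|\nu|\to\infty.
\]
The forward direction is then immediate: if $f\in\K\langle\Delta\rangle$ and $\mathbf y\in \Val^{-1}(\Delta)$, then $u:=\Val(\mathbf y)\in\Delta$, so by the defining condition of $\K\langle\Delta\rangle$ we have $\val(a_\nu)+\nu\cdot u\to\infty$, and the above equivalence gives convergence of $f(\mathbf y)$.

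For the reverse direction, the task is to show that pointwise convergence on all of $\Val^{-1}(\Delta)$ forces the uniform-looking condition $\val(a_\nu)+\nu\cdot u\to\infty$ for every $u\in\Delta$. The key step is to produce, for each $u=(u_1,\dots,u_n)\in\Delta$, at least one $\mathbf y\in\K^n$ with $\Val(\mathbf y)=u$. Since the paper works over the Novikov field $\K=\Lambda$, whose value group is $\Gamma=\val(\K^\times)=\mathbb R$, we can take $\mathbf y=(T^{u_1},\dots,T^{u_n})\in(\Lambda^\times)^n$; then $\Val(\mathbf y)=u$ and $\mathbf y\in \Val^{-1}(\Delta)$ by construction. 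Plugging this $\mathbf y$ into the hypothesis and applying the basic observation again yields $\val(a_\nu)+\nu\cdot u\to\infty$, so $f\in\K\langle\Delta\rangle$.

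For the final assertion, once $f\in\K\langle\Delta\rangle$ is known, the value $f(\mathbf y)\in\Lambda$ at any $\mathbf y\in\trop^{-1}(\Delta)\cong\Sp\K\langle\Delta\rangle$ is defined as the residue class of $f$ modulo the maximal ideal $\m_{\mathbf y}=\{g\in\K\langle\Delta\rangle\mid g(\mathbf y)=0\}$ (this is the general mechanism by which elements of an affinoid algebra are functions on its maximal spectrum, as reviewed earlier in this appendix). One checks this agrees with the naive evaluation of the convergent sum $\sum a_\nu \mathbf y^\nu$, and the fact that $\mathcal O_{\Sp\K\langle\Delta\rangle}$ is a genuine sheaf in the $G$-topology (Tate's Acyclicity) promotes the assignment $\mathbf y\mapsto f(\mathbf y)$ to a global analytic function.

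The only substantive point in the argument is the existence of a $\K$-rational point with prescribed valuation $u\in\Delta$; every other step is a direct unpacking of definitions. This is painless for $\K=\Lambda$ because $\val(\Lambda^\times)=\mathbb R$ contains all real coordinates of $u$, so I expect no real obstacle here. (If one wanted to state the proposition over an arbitrary non-archimedean field $\K$, one would instead reduce to $u\in\Gamma^n$ first by density and a limit argument, or pass to an algebraically closed field extension with larger value group, but this is not needed in our setting.)
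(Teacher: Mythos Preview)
Your proof is correct and follows essentially the same approach as the paper: both reduce to the non-archimedean fact that $\sum a_\nu \mathbf y^\nu$ converges iff $\val(a_\nu)+\nu\cdot\Val(\mathbf y)\to\infty$, which is precisely the defining condition of $\K\langle\Delta\rangle$. Your version is actually more careful than the paper's one-line argument, since you explicitly address the surjectivity of $\Val$ onto $\Delta$ (via $\mathbf y=(T^{u_1},\dots,T^{u_n})$ for $\K=\Lambda$) that the paper leaves implicit.
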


\begin{proof}
Recall \cite[2.1/3]{BoschBook} that the convergence means exactly that $a_\nu \mathbf y^\nu$ forms a zero sequence, or equivalently $\val (a_\nu) +\nu \cdot \Val(\mathbf y) \to \infty$. This exactly corresponds to the definition of $\K\langle\Delta\rangle$.
\end{proof}

\begin{prop}
 \label{polyhedral-affinoid-algebra-prop-appendix}
$\K\langle \Delta \rangle$ is an affinoid algebra and
$U_\Delta:= \Val^{-1} (\Delta) $
is identified with $\Sp \K \langle\Delta \rangle$ via $\mathbf x\xleftrightarrow{} \m_{\mathbf x}$. Moreover, $\Val^{-1}(\Delta)$ is a Weiestrass domain in $(\K^{\times})^n$, called a \textit{polytopal domain}.
\end{prop}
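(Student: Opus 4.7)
The plan is to realize $U_\Delta = \Val^{-1}(\Delta)$ explicitly as a Weierstrass domain sitting inside a slightly larger affinoid torus, and then identify its affinoid algebra with $\K\langle \Delta\rangle$ as defined by the coefficient-growth condition.

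First I would fix a defining presentation of $\Delta$ by finitely many $\Gamma$-rational inequalities
\[
\Delta=\bigl\{\,u\in\mathbb R^n \mid \nu_i\cdot u\ge c_i,\quad i=1,\ldots,N\bigr\},
\]
with $\nu_i\in\mathbb Z^n$ and $c_i\in\Gamma=\val(\K^\times)$. Since $c_i\in\Gamma$, choose $\lambda_i\in\K^\times$ with $\val(\lambda_i)=c_i$, and set $f_i:=\lambda_i^{-1}\mathbf z^{\nu_i}$. A direct unwinding gives
\[
\mathbf y\in U_\Delta \iff \val(\mathbf y^{\nu_i})\ge c_i \text{ for all }i \iff |f_i(\mathbf y)|\le 1 \text{ for all }i.
\]
Because $\Delta$ is bounded, I may pick $M\in\mathbb Z_{>0}$ with $\Delta\subset[-M,M]^n$, so that $U_\Delta$ already sits inside the affinoid torus $W_M:=\Val^{-1}([-M,M]^n)\subset(\K^\times)^n$. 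The algebra of $W_M$ is the standard Laurent Tate algebra $\K\langle T^{-M}z_j,\ T^{-M}z_j^{-1}\rangle$ for a uniformizer $T$ of valuation $1$ (this uses that $\Gamma=\mathbb R$ for the Novikov field, and in general one picks suitable scalars with the required valuations). Adjoining the generators $u_i\mapsto f_i$ subject to $|u_i|\le 1$ presents $U_\Delta$ as the Weierstrass domain $W_M\langle f_1,\ldots,f_N\rangle$; by \cite[7.2/3]{BoschBook} this is an affinoid subdomain and its algebra $A_\Delta$ is a quotient of a Tate algebra, hence affinoid.

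Next I would prove $A_\Delta\cong\K\langle\Delta\rangle$. One direction is Proposition \ref{polyhedral-affinoid-algebra-evaluation-convergence-prop}: every $f\in\K\langle\Delta\rangle$ converges pointwise on $U_\Delta$, so extends to a bounded analytic function there, giving a map $\K\langle\Delta\rangle\to A_\Delta$. For surjectivity and injectivity, expand any element of $A_\Delta$ as a Laurent series in $\mathbf z$ after eliminating the auxiliary variables $u_i=\lambda_i^{-1}\mathbf z^{\nu_i}$, and check that the Weierstrass growth conditions on the coefficients $a_\nu$ are equivalent, via Fourier–Motzkin / linear programming duality, to the bound $\val(a_\nu)+\nu\cdot u\to\infty$ uniformly for $u\in\Delta$. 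Concretely, every $u\in\Delta$ is a bounded nonnegative linear combination of the $\nu_i$'s together with the cube bounds, and these combinations exactly translate the Weierstrass inequalities into the coefficient condition defining $\K\langle\Delta\rangle$.

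Finally, the set-theoretic identification $U_\Delta\leftrightarrow \Sp \K\langle\Delta\rangle$ follows formally: a point $\mathbf x\in U_\Delta$ defines the evaluation homomorphism $\mathrm{ev}_{\mathbf x}:\K\langle\Delta\rangle\to\K$, $f\mapsto f(\mathbf x)$, whose kernel $\m_{\mathbf x}$ is maximal; conversely, given any maximal ideal $\m\subset\K\langle\Delta\rangle$, the quotient is a finite extension of the algebraically closed field $\K$, hence equal to $\K$, producing a $\K$-point at which every coordinate $z_j$ is nonzero (because the $f_i$ are units on the Weierstrass domain up to controlled norm) and whose tropicalization lies in $\Delta$ by construction.

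The main obstacle I expect is the equivalence of the two descriptions of $A_\Delta$ in step two — namely, matching the coefficient-decay condition $\val(a_\nu)+\nu\cdot u\to\infty$ with the Weierstrass presentation via the generators $f_i$. This is a careful but standard combinatorial/convex-geometric computation, essentially Gordan's lemma together with the fact that the recession cone of a bounded polyhedron is trivial so all sufficiently large exponents are dominated by the defining inequalities. Once this matching is in hand, the remaining statements (affinoid structure, Weierstrass domain, and the bijection with points of $U_\Delta$) are direct consequences of the general theory in \cite{BoschBook}.
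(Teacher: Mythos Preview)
Your sketch is correct in outline and follows the standard argument, but note that the paper does not actually prove this proposition: it simply states it as a well-known fact with references to \cite[3.1.5/3.18(c)]{EKL} and \cite[Prop.~4.1]{Gubler}. Your approach---presenting $\Delta$ by integral linear inequalities, realizing $U_\Delta$ as the Weierstrass domain in an ambient affinoid annulus cut out by the monomial conditions $|\lambda_i^{-1}\mathbf z^{\nu_i}|\le 1$, and then matching the resulting affinoid algebra with the coefficient-decay description of $\K\langle\Delta\rangle$---is essentially the argument given in those references.

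One small point: the step where you invoke ``Fourier--Motzkin / linear programming duality'' is stated a bit loosely. The cleaner way to phrase it is that since $\Delta$ is compact, the condition $\val(a_\nu)+\nu\cdot u\to\infty$ for all $u\in\Delta$ is equivalent to the same condition at the finitely many vertices of $\Delta$, and at each vertex the condition reduces to a standard Tate-type decay after a monomial change of coordinates adapted to the edge directions. This avoids having to express arbitrary $u\in\Delta$ as combinations of the $\nu_i$'s (which is not quite the right direction anyway---the $\nu_i$ are facet normals, not points of $\Delta$).
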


This is due to \cite[3.1.5/3.18(c)]{EKL}; see also \cite[Prop. 4.1]{Gubler}.

Next, we briefly explain the notion of affinoid torus fibration in Theorem \ref{Main_theorem_thm}.
For every open subset $U$ in $\mathbb R^n$, the pre-image $\trop^{-1}(U)$ is an \textit{analytic open subset}, which is not necessarily an affinoid space yet.
Note that an analytic open domain $\trop^{-1}(U)$ is covered by various affinoid spaces $\trop^{-1}(\Delta)$ for $\Delta\subset U$.
The set $\Sp\Lambda\langle \Delta\rangle$ of all maximal ideals are in bijection with the points in $\trop^{-1}(\Delta)$.

\begin{defn}
	An \textit{affinoid torus fibration} $f:Y\to B$ refers to a continuous map, from an analytic space to a topological manifold, that is locally modeled on $\trop^{-1}(U)\to U$ for open subsets $U$ in $\mathbb R^n$ (see also \cite{KSAffine, NA_nonarchimedean_SYZ} for further details).
\end{defn}

\subsection{Uniform reverse isoperimetric inequalities}
\label{SA_reverse_isoperimetric}

\begin{thm}[Theorem 1.1 \cite{ReverseI}]
	\label{reverse-ineq-original-thm-appendix}
Let $(X,\omega)$ be a symplectic manifold and $L$ be a closed Lagrangian submanifold. For a tame almost complex structure $J$, there exists a constant $c=c(L,J)>0$ so that
$
\area( u; g_J) \ge c \cdot \ell(\partial u; g_J)$ for any $J$-holomorphic curve $u:(\mathbb D, \partial \mathbb D) \to (X,L)$.
\end{thm}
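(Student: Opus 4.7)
The plan is to follow the strategy of DuVal's simplified proof, which combines a Weinstein neighborhood reduction with a mean value / monotonicity argument for $J$-holomorphic curves. I would first observe that the inequality is local near the Lagrangian: if the disk $u$ is mostly "far" from $L$, then standard (area-bounds-length) considerations via a Gromov-type compactness argument give the desired estimate with a very generous constant, so the real content is a uniform estimate near the boundary. Accordingly, I would fix a Weinstein neighborhood $\nu L \subset X$ symplectomorphic to a neighborhood of the zero section in $T^*L$, on which $\omega = d\lambda$ with $\lambda|_L = 0$ and $|\lambda(x)| \leq C \cdot \mathrm{dist}(x,L)$; shrinking the neighborhood and using the $C^1$-openness of $\omega$-tameness, I may also assume $J$ is sufficiently close to a fixed $J_0$ adapted to this neighborhood.

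Next I would carry out the main local estimate. Fix a small radius $r_0 > 0$ (depending on $L$, $J$, and the injectivity radius of $g_J$) and cover the boundary $\partial u$ by balls $B_{r_0}(p_\alpha)$ with $p_\alpha \in \partial u \subset L$, chosen so that the covering is efficient, i.e.\ the number $N$ of balls satisfies $N \cdot r_0 \leq C_1 \cdot \ell(\partial u)$ for a universal constant $C_1$ (Vitali-type covering of the $1$-dimensional curve $\partial u$). In each ball $B_{r_0}(p_\alpha)$, I would apply the boundary monotonicity inequality for $J$-holomorphic maps bounding a Lagrangian submanifold: there exists $c_1 = c_1(L,J) > 0$ such that
\[
\area\bigl(u \cap B_{r_0}(p_\alpha); g_J\bigr) \;\geq\; c_1 \cdot r_0 \cdot \ell\bigl(\partial u \cap B_{r_0}(p_\alpha); g_J\bigr).
\]
This linear (rather than quadratic in $r_0$) form of monotonicity is the crucial local ingredient, and one can derive it from the Schwarz-reflection doubling trick of DuVal together with the standard interior monotonicity for closed $J$-holomorphic curves: near $p_\alpha$, one flattens $L$ to $\mathbb{R}^n \subset \mathbb{C}^n$ in Darboux coordinates, doubles $u$ across $L$ to produce a $\tilde J$-holomorphic curve (for a reflected tame $\tilde J$) whose area and boundary length along $L$ are each comparable to those of $u$, and then applies interior monotonicity with parameter $r_0$ to get the required linear bound.

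Summing the local estimate over $\alpha$ gives
\[
\area(u) \;\geq\; \sum_\alpha \area\bigl(u \cap B_{r_0}(p_\alpha)\bigr) \;\geq\; c_1 r_0 \cdot \sum_\alpha \ell\bigl(\partial u \cap B_{r_0}(p_\alpha)\bigr) \;\geq\; c_1 r_0 \cdot \ell(\partial u),
\]
and since $\area(u) = E(u)$ for $J$-holomorphic maps, this yields the desired inequality with $c = c_1 r_0$. The extension to non-immersed stable maps is harmless: one discards sphere bubble components (which only decrease $E(u)$ and do not contribute to $\ell(\partial u)$) and sums the estimate over disk components.

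The main obstacle, and the step demanding the most care, is the boundary monotonicity inequality in the displayed local estimate: the ordinary interior monotonicity gives a lower bound of the form $\area \geq c r^2$, which is the wrong scaling to produce a linear dependence on $\ell(\partial u)$ after summing. The linear-in-$r$ improvement at the boundary requires either the Schwarz-reflection doubling argument (which in the general tame $J$ case only gives an \emph{approximately} pseudoholomorphic double, so one must control the defect) or, alternatively, the direct subharmonic barrier argument of Groman--Solomon using a function $\phi$ near $L$ that is subharmonic when restricted to $J$-holomorphic curves and vanishes to first order on $L$. Once this local inequality is in hand with a constant depending only on $(L, J)$ through the Weinstein neighborhood data, the covering argument is straightforward, and the uniform version needed in Lemma \ref{reverse_ineq-mainbody_lem} follows by allowing $J$ to vary in a small $C^1$-neighborhood and $\tilde L$ to vary through small $C^1$-graphs over $L$, since both the Weinstein data and the doubling/barrier estimates depend continuously on these choices.
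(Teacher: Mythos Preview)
The paper does not give its own proof of this theorem: it is stated as a citation of Groman--Solomon, and the paper instead proves the strengthened version (Theorem~\ref{reverse-ineq-uniform-thm-appendix}) directly, following DuVal. That proof uses precisely the second alternative you mention in your last paragraph --- a plurisubharmonic barrier function --- rather than the covering argument you lead with. Concretely, the paper constructs a $C^2$ function $\rho$ on a tubular neighborhood of $L$, vanishing on $L$, such that $\rho$ is strictly $\tilde J$-plurisubharmonic and $\sqrt{\rho}$ is $\tilde J$-plurisubharmonic (Lemma~\ref{plurisubharmonic-lem}). Then for a $\tilde J$-holomorphic disk $u$, the function $a(r)=\frac{1}{r}\int_{\{u^*\rho\le r^2\}} u^*dd^{\tilde J}\rho$ is shown to be increasing in $r$ via Stokes and the plurisubharmonicity of $\sqrt{\rho}$, and the coarea formula together with the gradient estimate $|\nabla\rho|\lesssim\sqrt{\rho}$ gives $E(u)\gtrsim a(r_0)\ge\lim_{r\to 0}a(r)\gtrsim\ell(\partial u)$.

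Your primary proposal --- covering $\partial u$ by balls and invoking a local boundary monotonicity of the form $\area(u\cap B_{r_0})\ge c_1 r_0\cdot\ell(\partial u\cap B_{r_0})$ --- has a genuine gap at exactly the step you flag. Schwarz reflection plus interior monotonicity only gives $\area(u\cap B_r)\ge c r^2$ for a ball centered at a boundary point; it does not control $\ell(\partial u\cap B_r)$ in terms of $r$, and indeed the boundary can have arbitrarily large length inside a fixed ball. Your Vitali bound $N\cdot r_0\le C_1\ell(\partial u)$ goes the wrong way to rescue this: it bounds $N$ from above, whereas what you would need is $\sum_\alpha\ell(\partial u\cap B_{r_0}(p_\alpha))\ge\ell(\partial u)$ together with disjointness of the balls (so that areas can be summed), and these two requirements are in tension when $\partial u$ is highly tangled. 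The plurisubharmonic approach sidesteps this entirely by working globally with sublevel sets of $\rho$ rather than metric balls, so that the ``length'' appearing in the coarea step is automatically the full $\ell(\{u^*\rho=t\})$ without any covering loss.
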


For our purpose, we need to strengthen it to Theorem \ref{reverse-ineq-uniform-thm-appendix} below. As in \cite[Appendix A]{AboFamilyWithout}, we will closely follow DuVal's more flexible argument \cite{ReverseII}.
First, we note that by \cite[Lemma 2.2.1]{MS}, if $u$ is a $J$-holomorphic curve for some $\omega$-tame almost complex structure $J$, then the energy $
E(u)=\frac{1}{2}\int_\Sigma |du|^2 d \mathrm{vol}_\Sigma$ is actually topological:
$E(u)=\area(u)
=\int_\Sigma u^*\omega$.

\begin{thm}\label{reverse-ineq-uniform-thm-appendix}
Fix $(X,\omega)$, $L$ and $J$ as above. There is a $C^1$-neighborhood $\mathcal V_1$ of $J$ and a constant $c_1>0$ such that for any $\tilde J \in \mathcal V_1$ and $\tilde J$-holomorphic disk $u:(\mathbb D,\partial\mathbb D)\to (X,L)$, we have 
$
E(u) \ge c_1 \cdot \ell (\partial u )
$.
\end{thm}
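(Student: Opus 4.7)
My plan is to follow DuVal's flexible argument from \cite{ReverseII} and track the dependence of all constants on the almost complex structure, showing that the bounds in his proof are in fact open conditions in the $C^1$-topology on $\mathfrak{J}(X,\omega)$. First I would reduce to a local problem near $L$: choose a Weinstein tubular neighborhood $\nu_X L \cong T^*L$ in which $\omega = d\lambda$ with $\lambda|_L = 0$, and note that by the monotonicity lemma for $J$-holomorphic curves (which has a constant depending only on the injectivity radius and sectional curvature bounds of $g_J$, hence continuous in $J$), any $\tilde J$-holomorphic disk $u$ bounding $L$ with sufficiently small area stays inside $\nu_X L$. Disks of large area satisfy the inequality trivially by choosing $c_1$ small, so we may assume $u \subset \nu_X L$.

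Next, in the Weinstein neighborhood, DuVal's argument rewrites $E(u) = \int_{\mathbb{D}} u^*\omega = \int_{\partial \mathbb{D}} u^*\lambda$ via Stokes. The point then is to bound $\int_{\partial \mathbb{D}} u^*\lambda$ from below by $\ell(\partial u; g_J)$ times a positive constant. DuVal accomplishes this by producing, near each boundary point, a comparison with a local model in which the $\tilde J$-holomorphic condition forces the disk to leave $L$ transversally at a controlled rate, so that $|\lambda(u(e^{i\theta}))|$ along the boundary is comparable to the distance traveled. The constants involved are (i) a lower bound on the ellipticity of $\bar\partial_{\tilde J}$, (ii) an upper bound on $\|\tilde J\|_{C^1}$ on $\nu_X L$, and (iii) the geometry of the pair $(L, g_{\tilde J})$ near $L$. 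All three vary continuously as $\tilde J$ varies in $C^1$, so each of them admits a uniform bound on a sufficiently small $C^1$-neighborhood $\mathcal V_1$ of $J$.

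The rest of the proof would be a direct assembly: extract from DuVal's estimates a single constant $c_1 = c_1(L, J) > 0$ and a $C^1$-neighborhood $\mathcal V_1$ so that for every $\tilde J \in \mathcal V_1$ and every $\tilde J$-holomorphic disk $u$ with boundary on $L$, one has $E(u) \geq c_1 \ell(\partial u; g_{\tilde J})$. Since $g_{\tilde J}$ is comparable to $g_J$ on compact subsets when $\tilde J$ is $C^1$-close to $J$, shrinking $c_1$ if necessary absorbs the discrepancy between $\ell(\partial u; g_{\tilde J})$ and $\ell(\partial u; g_J)$, so the inequality can be stated with a fixed reference metric if desired.

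The main obstacle, and the only step that requires genuine care rather than soft functional-analytic arguments, is justifying that DuVal's pointwise boundary estimate is uniform over $\tilde J \in \mathcal V_1$. This amounts to verifying that the implicit function theorem or contraction-mapping step in DuVal's local normal-form construction admits estimates depending only on $\|\tilde J\|_{C^1}$ near $L$ and the geometry of $L$, rather than on higher derivatives of $\tilde J$. I expect this to go through because DuVal's argument is essentially first order in $J$, but making this explicit is the technical heart of the proof. Once that is in hand, Corollary \ref{Reverse_ineq_cor_for_compact_domain} will follow by the covering argument already sketched in the main text.
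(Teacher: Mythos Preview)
Your reduction step is where the argument breaks. First, the claim that ``disks of large area satisfy the inequality trivially by choosing $c_1$ small'' is false: there is no a priori upper bound on $\ell(\partial u)$, so no choice of $c_1$ handles all large-area disks at once. The whole point of a \emph{reverse} isoperimetric inequality is that the boundary length is controlled by the area, not the other way around. Second, and more seriously, your Stokes computation collapses: if $u$ lies entirely in the Weinstein neighborhood where $\omega = d\lambda$ with $\lambda|_L = 0$, then $E(u) = \int_{\partial\mathbb D} u^*\lambda = 0$ since $u(\partial\mathbb D)\subset L$. So every such disk is constant, and your proposed lower bound on $\int_{\partial\mathbb D} u^*\lambda$ by $\ell(\partial u)$ would be bounding zero from below by something positive. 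The description of DuVal's argument as a ``local normal-form construction'' with an implicit-function-theorem step is also not what he does.

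The paper's proof never localizes the disk to the tubular neighborhood. Instead it constructs a single $C^2$ function $\rho$ on a neighborhood of $L$, vanishing exactly on $L$, such that both $\rho$ and $\sqrt{\rho}$ are $\tilde J$-plurisubharmonic for every $\tilde J$ in a $C^1$-neighborhood of $J$ (this is the key Lemma \ref{plurisubharmonic-lem}, and the openness in $\tilde J$ is where the $C^1$-neighborhood enters). One then studies, for the full disk $u$, the function $a(r) = \frac{1}{r}\int_{\{u^*\rho\le r^2\}} u^* dd^{\tilde J}\rho$ restricted to the sublevel sets of $u^*\rho$. Plurisubharmonicity of $\sqrt{\rho}$ makes $a(r)$ increasing; strict plurisubharmonicity of $\rho$ makes $r_0\, a(r_0)$ dominated by the total energy; a gradient estimate $|\nabla\rho|\lesssim\sqrt{\rho}$ together with the coarea formula gives $\lim_{r\to 0} a(r)\gtrsim \ell(\partial u)$. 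The crucial point is that only the portion of $u$ near $L$ is analyzed, so no global localization of the disk is needed.
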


\begin{cor}\label{reverse-ineq-cor-appendix}
There is a $C^1$-neighborhood $\mathcal V_0$ of $J$, a Weinstein neighborhood $\nu_X L$ of $L$ and a constant $c_0>0$ such that:
If $\tilde L\subset \nu_X L$ is an adjacent Lagrangian given by the graph of a small closed one-form on $L$, then for $\tilde J\in\mathcal V_0$ and $\tilde J$-holomorphic disk $u:(\mathbb D,\partial\mathbb D)\to (X, \tilde L)$, we have
$
E(u) \ge c_0 \cdot \ell (\partial u)
$.
\end{cor}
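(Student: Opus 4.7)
The plan is to reduce the statement for a nearby Lagrangian $\tilde L$ to the uniform reverse isoperimetric inequality for $L$ itself (Theorem~\ref{reverse-ineq-uniform-thm-appendix}) by pushing everything back to $L$ through a symplectomorphism of a Weinstein neighborhood. By the Weinstein neighborhood theorem, I would identify $\nu_X L$ with an open neighborhood of the zero section in $(T^*L,\omega_{\mathrm{can}})$. Under this identification, any $C^1$-small closed one-form $\alpha\in \Omega^1(L)$ realizes a Lagrangian $\tilde L\subset \nu_X L$ as its graph, and the fibrewise translation
\[
\tau_\alpha: T^*L\to T^*L, \qquad (q,p)\mapsto (q,\,p+\alpha(q)),
\]
is a symplectomorphism (precisely because $d\alpha=0$) sending the zero section $L$ to $\tilde L$, with inverse $\tau_{-\alpha}$.

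Given $\tilde J\in\mathcal V_0$ and a $\tilde J$-holomorphic disk $u:(\mathbb D,\partial\mathbb D)\to (X,\tilde L)$, I would form the composition $\tilde u:=\tau_{-\alpha}\circ u:(\mathbb D,\partial\mathbb D)\to (X,L)$, which is automatically holomorphic for the pushforward almost complex structure $J':=(\tau_{-\alpha})_*\tilde J$. By shrinking $\nu_X L$ so as to bound $\|\alpha\|_{C^1}$ and shrinking $\mathcal V_0$ so as to bound $\|\tilde J-J\|_{C^1}$, I would arrange that $J'$ lies inside the $C^1$-neighborhood $\mathcal V_1$ furnished by Theorem~\ref{reverse-ineq-uniform-thm-appendix}, uniformly in the pair $(\alpha,\tilde J)$. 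Applying that theorem then yields $E(\tilde u)\ge c_1\cdot \ell(\partial \tilde u)$.

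The conclusion then follows from two comparisons. First, since $\tau_{-\alpha}$ is a symplectomorphism and the energy of a pseudo-holomorphic disk equals the topological quantity $\int u^*\omega$, one has $E(u)=E(\tilde u)$. Second, because $\tau_{-\alpha}$ is uniformly $C^1$-close to the identity on the chosen neighborhood, there is a constant $C\ge 1$ depending only on the a priori $C^1$-bound on $\alpha$ such that $\ell(\partial u)\le C\cdot \ell(\partial \tilde u)$ with respect to any fixed background metric. Setting $c_0:=c_1/C$ completes the argument.

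The main technical obstacle is ensuring uniformity: one must simultaneously shrink $\nu_X L$ and $\mathcal V_0$ so that $(\tau_{-\alpha})_*\tilde J$ stays inside the fixed neighborhood $\mathcal V_1$ for \emph{all} admissible $(\alpha,\tilde J)$. This is the only step requiring genuine care, but it reduces to the continuity of the pushforward operation $(F,J)\mapsto F_*J$ in the $C^1$-topology, evaluated near $(F,J)=(\mathrm{id},J)$. Everything else is a direct application of Theorem~\ref{reverse-ineq-uniform-thm-appendix} plus the symplectic identity $\tau_\alpha^*\omega=\omega$.
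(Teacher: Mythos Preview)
Your overall strategy---pull the disk back to $L$ by a small diffeomorphism and invoke Theorem~\ref{reverse-ineq-uniform-thm-appendix}---is exactly the paper's. The gap is that your diffeomorphism $\tau_{-\alpha}$ is only defined on the Weinstein neighborhood $\nu_X L\cong T^*L$, whereas the disk $u:(\mathbb D,\partial\mathbb D)\to(X,\tilde L)$ is allowed to leave $\nu_X L$ in its interior. Hence the composition $\tilde u=\tau_{-\alpha}\circ u$ is simply not defined, and the argument breaks before you can compare energies. You must extend $\tau_{-\alpha}$ to a diffeomorphism $F\in\diff_0(X)$ (e.g.\ by cutting off the generating vector field), which is what the paper does.

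Once you extend, the second gap appears: $F$ is no longer a symplectomorphism (for closed but non-exact $\alpha$ the translation flow is symplectic but not Hamiltonian, and a cutoff destroys closedness of $\iota_V\omega$), so your clean identity $E(u)=E(\tilde u)$ fails. The paper instead uses the soft comparison $E(u)\gtrsim E(F\circ u)$: since $F$ is $C^1$-close to the identity, the metrics $g_{\tilde J}$ and $F^*g_{F_*\tilde J}$ are uniformly equivalent, and for holomorphic curves the symplectic energy equals the $g_{\tilde J}$-area. With that replacement (and the analogous length comparison you already wrote), your argument becomes the paper's.
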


\begin{proof}
[Proof of Theorem \ref{reverse-ineq-uniform-thm-appendix} implying Corollary \ref{reverse-ineq-cor-appendix}]
Suppose a neighborhood $\mathcal V_1$ of $J$ and a constant $c_1>0$ are obtained by Theorem \ref{reverse-ineq-uniform-thm-appendix}.
Choose the $\mathcal V_0\subset \mathcal V_1$ and $\nu_X L$ small enough, and we may require that for any such $\tilde L\subset \nu_XL$ and $\tilde J \in\mathcal V_0$, one can find a small $F\in\diff_0(X)$ such that $F(\tilde L)=L$ and $F_*\tilde J\in\mathcal V_1$.
Then, since $F\circ u$ is a $F_* \tilde J$-holomorphic disk bounding $L$, we have $E(F\circ u)\ge c_1  \ell(\partial (F\circ u) )$.
Finally, we can compare the energy and the boundary as follows:
$
E(u) \gtrsim E(F\circ u) \gtrsim  \ell (\partial (F\circ u)) \gtrsim \ell(\partial u)
$.
\end{proof}

%
%

\begin{defn}
	\label{plurisubharmonic_defn}
	Define $d^{\tilde J} f:= -df\circ \tilde J$.
	Given an almost complex structure $\tilde J$, a function $\rho$ is called \textit{$\tilde J$-plurisubharmonic} (resp. \textit{strict $\tilde J$-plurisubharmonic}) if $dd^{\tilde J} \rho (v, \tilde Jv)\ge 0$ (resp. $>0$) for any $v\neq 0$.
\end{defn}

Now, we aim to show Theorem \ref{reverse-ineq-uniform-thm-appendix}.
We slightly generalize a lemma in \cite{ReverseII} by allowing a small neighborhood of the almost complex structure. 

\begin{lem}\label{plurisubharmonic-lem}
There exists a $C^1$-neighborhood $\mathcal W$ of $J$ and a function $\rho$ of class $C^2$ on a tubular neighborhood of $L$ such that the $\rho$ vanishes exactly on $L$. Moreover, for any $\tilde J \in \mathcal W$,

\begin{itemize}
	\itemsep 2pt
\item[(i)] $\sqrt \rho$ is $\tilde J$-plurisubharmonic outside $L$
\item[(ii)] $\rho$ is strictly $\tilde J$-plurisubharmonic
\end{itemize} 

\end{lem}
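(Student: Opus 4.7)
The plan is to follow DuVal's construction \cite{ReverseII} of the function $\rho$ that handles a single almost complex structure $J$, and then promote the properties (i) and (ii) to a $C^1$-neighborhood of $J$ by a combination of compactness arguments and the homogeneity of $\rho$ in Weinstein coordinates.

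First I would fix a Weinstein neighborhood identifying a neighborhood of $L$ in $X$ with a neighborhood of the zero section of $T^*L$ so that $\omega$ becomes the canonical symplectic form. Choosing a Riemannian metric $h$ on $L$ which is compatible with $J$ along $L$ in the sense used by DuVal, set $\rho(q,p):=|p|^2_{h,q}$. This $\rho$ is smooth, vanishes exactly on $L$, and DuVal's computation shows that $\rho$ is strictly $J$-plurisubharmonic on a tubular neighborhood $U$ of $L$, while the combination
\[
dd^J\!\sqrt{\rho}(v,Jv)=\frac{1}{2\sqrt{\rho}}\,dd^J\rho(v,Jv)-\frac{1}{4\rho^{3/2}}\bigl(d\rho(v)\,d^J\rho(Jv)-d\rho(Jv)\,d^J\rho(v)\bigr)
\]
is nonnegative off $L$, because the second term is absorbed by the first via a Cauchy--Schwarz-type estimate built into the choice of $h$.

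Next I would extend each property to a $C^1$-neighborhood. The form $dd^{\tilde J}\rho$ depends continuously on $\tilde J$ in the $C^0$-topology as $\tilde J$ varies in $C^1$, because $\rho$ is $C^2$ and only one derivative of $\tilde J$ enters. For (ii), shrink $U$ to a precompact neighborhood $U'\Subset U$; on the compact closure $\overline{U'}$ the quadratic form $v\mapsto dd^J\rho(v,Jv)$ is bounded below by $c|v|^2$ for some $c>0$, hence for $\tilde J$ in a sufficiently small $C^1$-neighborhood $\mathcal W_1$ of $J$ the same form for $\tilde J$ is bounded below by $\tfrac{c}{2}|v|^2$, giving strict $\tilde J$-plurisubharmonicity of $\rho$.

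The main obstacle is (i), since $\sqrt{\rho}$ is not $C^2$ at $L$ and ordinary compactness on $\{0<\rho\le C\}$ is not available. Here I would exploit the homogeneity of the setup: in the Weinstein coordinates the fiber dilation $\lambda\cdot(q,p)=(q,\lambda p)$ scales $\rho$ by $\lambda^2$, so a direct computation shows that $dd^{\tilde J}\!\sqrt{\rho}(v,\tilde J v)$ is homogeneous of degree zero under the simultaneous dilation of the point and the vector, modulo terms that involve how $\tilde J$ itself varies radially. Thus it suffices to verify nonnegativity of $dd^{\tilde J}\!\sqrt{\rho}(v,\tilde Jv)$ on the compact unit sphere bundle $\{\rho=1\}\cap U$, and the general case follows by rescaling back to a tubular neighborhood $U''\Subset U'$. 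On this compact sphere bundle DuVal's estimate for $J$ is not merely nonnegative but in fact an identity of the form $\tfrac{1}{2\sqrt{\rho}}A(v,Jv)$ with $A\ge 0$, and in the borderline directions where $A$ vanishes one checks that the analogous expression for $\tilde J$ is the sum of a term bounded below by a multiple of $\|\tilde J-J\|_{C^1}^{-1}$-independent positive quantity and a small perturbation. Setting $\mathcal W:=\mathcal W_1\cap\mathcal W_2$ where $\mathcal W_2$ is the resulting $C^1$-neighborhood from (i), and shrinking the tubular neighborhood to $U''$, both properties hold for every $\tilde J\in\mathcal W$, completing the proof.
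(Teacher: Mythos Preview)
Your argument for (ii) by compactness on a precompact tube is fine and matches the paper's Step~1. The gap is in (i). The form $dd^{J}\!\sqrt{\rho}(v,Jv)$ is only positive \emph{semi}-definite: it vanishes in the radial fiber direction (this is exactly the Cauchy--Schwarz equality case). Hence on your sphere bundle $\{\rho=1\}$ the infimum over unit $v$ is zero, and a compactness/perturbation argument cannot upgrade $\ge 0$ for $J$ to $\ge 0$ for nearby $\tilde J$; your sentence about ``borderline directions where $A$ vanishes'' asserts a positive lower bound that does not exist. The homogeneity rescue also breaks exactly where you say ``modulo terms that involve how $\tilde J$ itself varies radially'': those terms contribute at order $O(1)$ to $dd^{\tilde J}\!\sqrt{\rho}$, which is the same order you are trying to control, so nothing has been gained.

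The paper handles (i) by two different ideas that your proposal lacks. First, it shows by a direct pointwise computation, using only the algebraic relation $\tilde J^2=-\mathrm{id}$, that the singular $O(1/|y|)$ part of $dd^{\tilde J}\!\sqrt{\rho}(\,\cdot\,,\tilde J\,\cdot\,)$ is positive semi-definite for \emph{every} $\tilde J$, not just for $J$; no perturbation argument is used at this step. This leaves only a bounded $O(1)$ remainder, which may be negative. Second, and this is the step you are missing entirely, the paper does not use $\rho$ itself but replaces it by $\rho_1=(\sqrt{\rho}+B\rho)^2$ for large $B$. Then $\sqrt{\rho_1}=\sqrt{\rho}+B\rho$, so $dd^{\tilde J}\!\sqrt{\rho_1}=dd^{\tilde J}\!\sqrt{\rho}+B\,dd^{\tilde J}\rho\ge O(1)+B\cdot(\text{uniformly strictly positive})$, which is $\ge 0$ once $B$ is large; and one checks $\rho_1$ is still $C^2$ and strictly $\tilde J$-plurisubharmonic. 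Without this modification, the semi-definiteness of $\sqrt{\rho}$ cannot be made robust under perturbation of $J$.
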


\begin{proof}
Let $\nu_X L$ be a small tubular neighborhood of $L$. Fix a nonnegative $C^2$ function $\rho\ge 0$ defined on $\nu_XL$ which vanishes exactly on $L$. And, we are going to modify $\rho$ to meet the requirements.

The question is local. We take a system of local coordinates $z^\alpha = x^\alpha + i y^\alpha$ ($1\le \alpha \le n$) near $L$ in $X$ so that the coordinates $x^\alpha$ in the subspace $\mathbb R^n \cong \{y^\alpha=0\}\subset \mathbb C^n$ gives a local chart of $L$.
Note that since $L$ is compact, one can cover $L$ be finitely many such local charts.
We may require the restriction of $J$ on $L\cong \mathbb R^n$ is the standard complex structure $J_0$ on $\mathbb C^n$.
Note that $J=J_0 + O(|y|)$ and $\rho=O(|y|^2)$; besides, we write
\[
\rho = \sum a_{\beta\gamma}(x) y^\beta y^\gamma +O(|y|^3)=: q+ O(|y|^3)
\]
where $(a_{\beta\gamma}(x))$ is a symmetric strictly positive matrix which only depends on $x=(x^\alpha)$.
Put $x^{\alpha+n}=y^\alpha$ for $1\le \alpha \le n$; we will use the letters $i,j,\dots$ to indicate integers from $1$ to $2n$, while we will use $\alpha,\beta,\dots$ for integers from $1$ to $n$.
We will also use the Einstein summation convention.

Now, we express an arbitrary almost complex structure $\tilde J$ in a small neighborhood $\mathcal W$ of $J$ with respect to these local coordinates as follows:
\[
\textstyle
\tilde J= \sum_{i,j=1}^{2n}\tilde J_j^i \frac{\partial}{\partial x^i} \otimes dx^j
\]
One may similarly write $J=J^i_j \partial_{x^i}\otimes dx^j$ and $J_0=(J_0)^i_j \partial_{x^i}\otimes dx^j$. Then, $(J_0)^{\beta+n}_\alpha = \delta^\beta_\alpha$, $(J_0)^{\beta}_{\alpha+n} = -\delta^\beta_\alpha$ and all other $(J_0)^j_i=0$;
given a function $f$ on $\nu_XL$, we have
\[
d^{\tilde J}f =- \tilde J_i^j \partial_{x^j} f dx^i
\]
By Definition \ref{plurisubharmonic_defn}, $\theta=\theta_{ij} dx^i\wedge dx^j$ with $\theta_{ij}=-\theta_{ji}$ is said to be $\tilde J$-positive (resp. strictly $\tilde J$-positive) if
\[
\theta(v,\tilde J v) =v^i \theta_{ik}  \tilde J^k_j v^j \ge 0  \ (\text{resp.} \ >0)
\]
for any $v=v^i\partial_{x^i}\neq 0$, i.e. the (strict) positivity of the following symmetry matrix 
$(\theta_{ik}\tilde J^k_j+\theta_{jk}\tilde J^k_i)_{1\le i, j\le 2n}
$.

\textit{Step one.} We first study $\rho$.
Compute
$dd^{\tilde{J}}\rho= dd^{\tilde J} q + O(|y|)$.
Beware that this $O(|y|)$ actually depends on $\tilde J$ but there is a uniform constant $C$ so that $O(|y|)\le C|y|$.
Next, we compute
\begin{align*}
dd^{\tilde J} q
 &
 =
 a_{\beta\gamma} dd^{\tilde J} (y^\beta y^\gamma) + O(|y|)
=
2a_{\beta\gamma} \tilde J_i^{n+\beta} dx^i\wedge dy^\gamma  + O(|y|) \\
&
=
 2a_{\beta\gamma}(J_0)_i^{n+\beta} dx^i\wedge dy^\gamma +
 O(\tilde J- J_0) +O(|y|) \\
 &
 =
 2a_{\beta\gamma}dx^\beta\wedge dy^\gamma +
 O(\tilde J- J_0) +O(|y|) \qquad
 =:2\theta+ O(\tilde J- J_0) +O(|y|) 
\end{align*}
where the $O(\tilde J-J_0)$ represents a term bounded by a multiple of the $C^1$-norm of $\tilde J - J_0$. Define a 2-form $\theta=\theta_{ij}dx^i\wedge dx^j =a_{\beta\gamma}dx^\beta\wedge dy^\gamma$, where $\theta_{\beta,\gamma+n}=-\theta_{\gamma+n ,\beta} =\frac{1}{2} a_{\beta\gamma}$ and all other $\theta_{ij}=0$.
Then, 
\begin{align}
\label{dd^J rho_pre}
dd^{\tilde J} \rho ( \cdot, \tilde J \cdot) 
= 2\theta (\cdot, J_0 \cdot) + O(\tilde J - J_0) + O(|y|)
\end{align}
It can be viewed as a matrix identity, and the $\theta(\cdot, J_0\cdot)$ corresponds to the symmetric strictly positive-definite $(2n)\times (2n)$ matrix $A=(A_{ij})$ defined by setting
$
A_{\beta\gamma}=A_{\beta+n, \gamma+n} = a_{\beta\gamma}
$
and all other $A_{ij}=0$.
Remark that the $\theta$ and $A$ only depend on $\rho$.
By shrinking the neighborhood $\nu_XL$, one can make the $O(|y|)$ small; by shrinking $\mathcal W$, we can make the $O(\tilde J - J_0)$ small. So, by (\ref{dd^J rho_pre}), one can ensure that for any $\tilde J\in \mathcal W$, the following equation holds
\begin{equation}
\label{dd^J rho}
dd^{\tilde J} \rho (\cdot, \tilde J \cdot ) \ge \theta( \cdot, J_0\cdot )
\end{equation}

\textit{Step two.}
We next deal with $\sqrt \rho$ and aim to prove the following:
\begin{equation}\label{dd^J sqrt rho}
dd^{\tilde J}\sqrt \rho (\cdot, \tilde J \cdot)\ge O(1)
\end{equation}
if $\tilde J$ is sufficiently close to $J$. A subtle point is that $dd^{\tilde J}\sqrt \rho$ may be unbounded near $L\cong\{y^\alpha=0\}$.
The idea is that one can show the unbounded part corresponds to a positive semi-definite matrix.
To see this, observe first that the highest unbounded terms are asymptotically $O(\frac{1}{|y|})$.
Recall that $q=a_{\beta\gamma}(x) y^\beta y^\gamma$.
In the computation of $dd^{\tilde J}\sqrt q$ modulo $O(1)$, there is no need to differentiate $a_{\beta\gamma}(x)$, since otherwise one cannot produce any $O(\frac{1}{|y|})$ terms.
Hence, modulo $O(1)$, we may assume that $a_{\beta\gamma}$ are all constant; up to a linear transformation, we may further assume $q=|y|^2$.

Now, we have $\rho=q+O(|y|^3)$ and $\sqrt \rho = \sqrt q + O(|y|^2)=|y|+O(|y|^2)$. Then, $dd^{\tilde J} \sqrt \rho = dd^{\tilde J}|y| +O(1)$, and the unbounded part is $dd^{\tilde J} |y|$. Hence, to show (\ref{dd^J sqrt rho}), it suffices to show $dd^{\tilde J}|y|(\cdot, \tilde J\cdot)$ is positive semi-definite.
In reality, we fix $v=v^i\partial_{x^i}\neq 0$ and compute:
\begin{align*}
dd^{\tilde J} |y| (v,\tilde J v)
&
=
\textstyle
\big( \sum_{\alpha,\beta} \tilde J_\ell^{n+\alpha} \frac{\delta^\alpha_\beta |y|^2 - y^\alpha y^\beta}{|y|^3}dx^\ell\wedge dy^\beta
\big)
 \Big( v^i\partial_{x^i}, \tilde J^j_k v^k\partial_{x^j} \Big)\\
&
=
\textstyle
\sum_{\alpha,\beta}  \frac{\delta^\alpha_\beta |y|^2 - y^\alpha y^\beta}{|y|^3} \cdot
\Big(
 (v^\ell\tilde J_\ell^{n+\alpha}) \cdot (v^k \tilde J^{n+\beta}_k) + v^{n+\alpha} v^{n+\beta}
\Big)
\end{align*}
where the last equality holds follows from the identity $\tilde J^m_\ell \tilde J^{\ell}_k = - \delta^m_k$.
By setting $u^\alpha= \sum_\ell v^\ell \tilde J^{n+\alpha}_\ell$ or $u^\alpha=v^{n+\alpha}$, it reduces to the obvious positive semi-definiteness of the quadratic form 
$
\sum_{\alpha,\beta}  ( \delta^\alpha_\beta |y|^2 - y^\alpha y^\beta) \cdot  u^\alpha u^\beta = n\sum_\alpha (u^\alpha y^\alpha)^2 - (\sum_\alpha u^\alpha y^\alpha)^2 $. Now our claim (\ref{dd^J sqrt rho}) is now established. 

\textit{Step three.} 
Finally, we replace $\rho$ by $\rho_1= (\sqrt \rho + B \rho)^2=\rho+2B\rho^{\frac{3}{2}}+B^2\rho^2$ for a sufficiently large constant $B>0$.
Since $\rho=O(|y|^2)$ and $\rho^{\frac{3}{2}}=O(|y^3|)$, the $\rho_1$ is $C^2$ near $L$.

(i) Observe that $\sqrt \rho_1= \sqrt \rho +B \rho$, and so $dd^{\tilde J} \sqrt \rho_1 (\cdot, \tilde J \cdot) =dd^{\tilde J} \sqrt \rho  (\cdot, \tilde J \cdot)+ Bdd^{\tilde J} \rho (\cdot, \tilde J \cdot)$ is positive-definite by (\ref{dd^J rho}) and (\ref{dd^J sqrt rho}) for a sufficiently large $B>0$.
Thus, the $\sqrt {\rho_1}$ is $\tilde J$-plurisubharmonic.

(ii) We compute $dd^{\tilde J} \rho_1 = (\frac{3B}{2\sqrt \rho} + 2B^2 ) d\rho \wedge d^{\tilde J} \rho + (1+3B\sqrt \rho +2B^2 \rho) dd^{\tilde J} \rho $. For any $v\neq 0$, we have $d\rho\wedge d^{\tilde J} \rho (v, \tilde J v) = (d\rho (v))^2 + (d\rho (\tilde J v))^2 \ge 0$, so $d\rho\wedge d^{\tilde J} \rho(\cdot, \tilde J \cdot)$ is positive semi-definite. By (\ref{dd^J rho}), we also show that $dd^{\tilde J} \rho_1 (\cdot, \tilde J \cdot) >0$.
Thus, the $\rho_1$ is strictly $\tilde J$-plurisubharmonic.
\end{proof}

\begin{proof}
[Proof of Theorem \ref{reverse-ineq-uniform-thm-appendix}]
Let $\rho$ and $\mathcal W$ be as in Lemma \ref{plurisubharmonic-lem}, and let $\nu_XL$ be the tubular neighborhood of $L$ therein.
For any function $f$ we denote by $h_{\tilde J}^f$ the symmetric tensor defined as follows: 
\[
h^f_{\tilde J}(v,w)=\tfrac{1}{2}\big( dd^{\tilde J}f (v,\tilde Jw) + dd^{\tilde J} f (w,\tilde Jv) \big)
\]
Then, the Lemma \ref{plurisubharmonic-lem} actually tells that $h^{\sqrt \rho}_{\tilde J}$ is a semi-metric and $h^\rho_{\tilde J}$ is a metric on their domains for any $\tilde J \in \mathcal W$.
Shrinking $\nu_X L$ if necessary, we may assume $\nu_X L = 
\{ p\in X \mid \dist (p, L) \le d_0\}$ for some small $d_0>0$.
Notice that $\rho=O(|y|^2)$ and $|y|$ is comparable to $\dist(p,L)$, thus, there is $c_2>0$ so that
\begin{equation}
\label{rho  y^2 -eq}
\tfrac{1}{c_2}\dist(p,L) \le \sqrt \rho(p) \le c_2 \dist(p,L)
\end{equation}
for any $p\in \nu_X L$.
In particular, there exists a constant $r_0>0$ which is independent of $\tilde J\in\mathcal W$ such that
\begin{equation}\label{subset nu_ML -eq}
\{ p\mid \rho(p) \le  r_0^2 \} \subset \nu_X L
\end{equation}
We choose local coordinates $(x^\alpha, y^\alpha)$ as before. Given $p\in \nu_X L$, the distance $\dist (p, L)$ is comparable to $|y|$.
Take a cube $Q$ centered at $p$ so that $\partial Q$ touches $L=\{y^\alpha=0\}$.
Applying the gradient estimate \cite[Eq.(3.15)]{EllipticPDE} to $\rho$ on $Q$ deduces that 
\[
|\nabla \rho| \le \tfrac{n}{|y|} \textstyle{ \sup_{\partial Q} |\rho|+ \tfrac{|y|}{2}\sup_Q|\Delta\rho|}
\lesssim( \tfrac{1}{|y|} |2y|^2+ \tfrac{|y|}{2}) \lesssim |y|
\]
Particularly, there is a constant $c_3>0$ so that
\begin{equation}
\label{gradient-estimate-eq}
|\nabla \rho| \le c_3 \dist( p, L )
\end{equation}
Suppose we have a $\tilde J$-holomorphic disk $u: (\mathbb D, \partial \mathbb D) \to (X,L)$. Denote by $j$ the complex structure on the unit disk $\mathbb D\subset \mathbb C$.
The equation $\tilde J\circ du = du \circ j$ implies that for any function $f$, we have
$u^* d^{\tilde J} f = d^j u^* f$. Thus,
$
u^*dd^{\tilde J} f =dd^j u^*f
$.
In our case, $\rho$ is $\tilde J$-plurisubharmonic tells that $u^*\rho$ is $j$-plurisubharmonic; then, $u^*h^\rho_{\tilde J}$ is a semi-metric, and we have $u^*dd^{\tilde J} \rho=d \vol_{u^* h^\rho_{\tilde J}}$.
Consider the following function
\[
a(r)= \frac{1}{r} \int_{ \{u^*\rho \le r^2\} } u^* dd^{\tilde J}\rho
=\frac{1}{r} \int_{ \{u^*\rho \le r^2\} } d\vol_{u^* h^\rho_{\tilde J}} 
\]
It is at least well-defined on $[0, r_0 ]$ thanks to (\ref{subset nu_ML -eq}).
Since $d^{\tilde J} \rho = 2\sqrt \rho \cdot  d^{\tilde J} \sqrt \rho$, the Stokes formula implies $a(r) =2\int_{\{u^*\rho =r^2\}} u^* d^{\tilde J} \sqrt \rho$ and also
\[
a(r')-a(r)=
2\int_{\{r^2\le u^*\rho \le {r'}^2\}} du^* d^{\tilde J} \sqrt \rho =
2\int_{\{r^2\le u^*\rho \le {r'}^2\}} d\vol_{u^*h^{\sqrt \rho}_{\tilde J}}
\]
 for $r'\ge r$. By condition, the $h_{\tilde J}^{\sqrt \rho}$ is a semi-metric away $L\cong\{\rho=0\}$, and thus the $a(r)$ is increasing.
Recall that the constant $r_0$ is independent of $\tilde J$. Now, we have the energy estimate
\[
E(u)
\gtrsim r_0  \ a(r_0)\ge
r_0 \lim_{r\to 0} a(r) \gtrsim \lim_{r\to 0} a(r)
\]
Due to (\ref{rho  y^2 -eq}) and (\ref{gradient-estimate-eq}), we get $|\nabla \rho|
\lesssim \dist (\cdot , L) \lesssim \sqrt \rho \lesssim r$.
So, the coarea formula tells
$
\int_0^{r^2} \ell ( \{u^*\rho =t\} ) dt
=
\int_{\{ u^*\rho \le r^2 \}} | \nabla  \rho | \cdot d \vol_{ u^* h^\rho_{\tilde J}} \lesssim r \cdot r \ a(r) =r^2 a(r)
$ and so
\[
E(u)\gtrsim \lim_{r\to 0} a(r)\gtrsim \lim_{r\to 0} \tfrac{1}{r^2} \textstyle \int_0^{r^2} \ell (\rho =t) dt =\ell (\rho=0)=\ell(\partial u)
\]
\end{proof}

\subsection{Integral affine structures}\label{SA_integral_affine}

We state two equivalent definitions of an \textit{integral affine structure} on an $n$-dimensional manifold $Y$ (see \cite{KSAffine}): (i) there is an atlas of charts
such that the transition functions belong to $GL(n,\mathbb Z) \ltimes \mathbb R^n$; (ii) there is a torsion-free flat connection $\nabla$ on $TY$ and a $\nabla$-covariant lattice of maximal rank $TY^{\mathbb Z}$.

Given an integral affine structure on $Y$, a local chart $\phi: U \to \mathbb R^n$ on a small open subset $U\subset Y$ is called an \textit{integral affine chart} if the torsion-free flat connection is given by $\nabla=d$ in $TY|_U$.
For the coordinates $x_i=x_i\circ \phi$, the lattice $TY^{\mathbb Z}_x$ for each $x\in U$ is the free abelian group generated by $\partial / \partial x_i$'s.
The transition maps among them belong to $GL(n,\mathbb Z) \ltimes \mathbb R^n$. If we have two systems of local coordinates $(x_i)$ and $(x'_i)$, then the transition map is of the form
$
x'_j = \sum_k a_{jk} x_k + f_j
$
for some matrix $A=(a_{jk})\in GL(n,\mathbb Z)$ and $f_j\in \mathbb R$.

\begin{prop}\label{Polyhedron-transition-prop-appendix}
Let $U,U'$ be two integral affine charts. A subset $\Delta\subset U\cap U'$ is a rational convex polyhedron in $U\subset \mathbb R^n$ if and only if it is a one in $U'\subset \mathbb R^n$. Such $\Delta$ is also called a \emph{rational convex polyhedron} in $Y$.
\end{prop}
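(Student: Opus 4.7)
The plan is to show that the defining inequalities of a rational polyhedron transform into inequalities of the same form under the integral affine transition map, so the property of being a rational polyhedron is coordinate-independent on an integral affine manifold. The key observation is that the matrix part of the transition map lies in $GL(n,\mathbb Z)$, which preserves integrality of linear coefficients, while the translation part lies in $\mathbb R^n$, which only shifts the constant terms and therefore stays inside the valuation group $\Gamma=\mathbb R$.

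Concretely, assume $\Delta\subset U\cap U'$ is a rational polyhedron with respect to the coordinates $(x_1,\dots,x_n)$ on $U$, so that it is cut out by finitely many inequalities
\[
\sum_{k=1}^n b_{ik}\, x_k \ \ge\ c_i, \qquad b_{ik}\in\mathbb Z, \ c_i\in\mathbb R.
\]
I would then invert the transition formula (\ref{transition-map-eq}) to write $x_k=\sum_j (A^{-1})_{kj}(x'_j-f_j)$. Because $A=(a_{jk})\in GL(n,\mathbb Z)$ has determinant $\pm 1$, Cramer's rule gives $A^{-1}\in GL(n,\mathbb Z)$ as well, so the entries $(A^{-1})_{kj}$ are integers. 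Substituting and collecting terms rewrites each defining inequality as
\[
\sum_{j=1}^n b'_{ij}\, x'_j \ \ge\ c'_i, \qquad b'_{ij}=\sum_{k}b_{ik}(A^{-1})_{kj}\in\mathbb Z,
\]
with $c'_i=c_i+\sum_{j,k}b_{ik}(A^{-1})_{kj}f_j\in\mathbb R=\Gamma$. Thus $\Delta$ is defined by finitely many rational inequalities in the $x'$-coordinates as well, i.e.\ it is a rational polyhedron in $U'$. The converse direction is identical after exchanging the roles of $U$ and $U'$ and using the inverse transition map, which is again of the form (\ref{transition-map-eq}) with matrix in $GL(n,\mathbb Z)$ and translation in $\mathbb R^n$.

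This argument is essentially bookkeeping; there is no real obstacle. The only point to keep an eye on is the integrality of $A^{-1}$, which relies crucially on $\det A=\pm 1$ (not just $A$ being invertible over $\mathbb Q$), and the fact that $\Gamma=\mathbb R$ so that arbitrary real translations are harmless for the constant terms. Once these two observations are in place, the proof reduces to a one-line substitution, and convexity is automatically preserved since the transition is affine.
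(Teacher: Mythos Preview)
Your proof is correct and follows essentially the same substitution argument as the paper. The only cosmetic difference is that the paper substitutes the forward transition $x'_j=\sum_k a_{jk}x_k+f_j$ directly (so integrality of the new coefficients comes from $a_{jk}\in\mathbb Z$ without needing to invoke $A^{-1}\in GL(n,\mathbb Z)$), whereas you invert first; both are equivalent.
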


\begin{proof}
Assume $\Delta$ is defined by $\sum_j b_{ij}x'_j \ge c_i$ in $U'$ for some $b_{ij}\in \mathbb Z$ and $c_j\in \mathbb R$, and the transition map is given as above. Then using the coordinated in $U$, $\Delta$ is given by $\sum_k (\sum_j b_{ij} a_{jk}) x_k \ge c_i - \sum_j b_{ij}f_j$. 
\end{proof}

\begin{defn}\label{Polyhedral decomp-defn-appendix}
A \emph{rational polyhedral complex} $\mathscr Q$ in an integral affine manifold $Y$ is a CW complex so that (i) the underlying space of each cell $\Delta\in \mathscr Q$ is a rational convex polyhedron in $Y$; (ii) each face of $\Delta\in\mathscr Q$ is in $\mathscr Q$; (iii) The intersection $\Delta\cap\Delta' \in \mathscr Q$ is a face of both $\Delta, \Delta' \in \mathscr Q$.
\end{defn}

\begin{lem} \label{Polyhedral decomposition- integral-lem-appendix}
Fix $\epsilon>0$ and a metric in $Y$. Suppose $K\Subset K'\subset Y$ are two compact domains in $Y$. Then, there exists a rational polyhedral complex $\mathscr P$ in $B_0$, and all the cells in $\mathscr P$ have diameters less than $\epsilon$, and the underlying topological space satisfies $K \Subset |\mathscr P| \Subset K'$.
\end{lem}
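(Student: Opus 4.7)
My plan is to construct $\mathscr P$ by covering $K$ with finitely many integral affine charts and then taking a rational cubical subdivision on each, eventually forming a common refinement that respects the polyhedral complex axioms.

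First, I would use compactness to choose finitely many integral affine charts $\phi_i \colon U_i \to \mathbb R^n$, $1 \le i \le N$, whose union covers $K$, and whose closures $\overline{U_i}$ are each compactly contained in some larger integral affine chart sitting inside $K'$. Since the transition functions belong to $GL(n,\mathbb Z)\ltimes \mathbb R^n$ (hence are affine with bounded Jacobians on each $\overline{U_i}\cap \overline{U_j}$), I can fix a uniform constant $C>1$ comparing the Euclidean diameter in any $\phi_i$-coordinate to the diameter in the fixed ambient metric on the compact set $\bigcup_i \overline{U_i}$. Then I pick a rational number $\delta>0$ so small that (a) every closed cube of side $\delta$ in $\phi_i(U_i)$ has diameter less than $\epsilon/C$ in $Y$; (b) the $\delta$-neighborhood of $K$ stays inside $\bigcup_i U_i$; and (c) the $\delta$-neighborhood of $\bigcup_i \phi_i(K\cap U_i)$ stays inside $K'$ after transport.

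Next, in each chart $U_i$, I would collect the finite family $\mathscr Q_i$ of all closed rational cubes of the standard lattice $\delta\mathbb Z^n$ in $\phi_i(U_i)$ that meet $\phi_i(K\cap U_i)$, together with all their faces; each $\mathscr Q_i$ is a finite rational polyhedral complex, and by construction $K\subset \bigcup_i |\mathscr Q_i|\subset K'$. The heart of the argument is to form a \emph{common refinement} $\mathscr P$. For each pair $(i,j)$, Proposition \ref{Polyhedron-transition-prop-appendix} ensures that every cell of $\mathscr Q_j$ which meets $U_i$ transfers to a rational polyhedron in $U_i$; the finite family of hyperplanes supporting the facets of all these transferred polyhedra, together with the original coordinate hyperplanes $\{x_k\in \delta\mathbb Z\}$ of $\mathscr Q_i$, cuts $U_i$ into finitely many closed rational convex cells. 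Taking closures, faces and pairwise intersections produces a finite collection $\mathscr P_i$ of rational polyhedra in $U_i$ which satisfies conditions (i)--(iii) of Definition \ref{Polyhedral decomp-defn-appendix} \emph{and} such that the restriction to $U_i\cap U_j$ agrees with the corresponding restriction from $\mathscr P_j$. These locally defined complexes then glue unambiguously to the desired rational polyhedral complex $\mathscr P$ on $Y$; diameters of cells are bounded by the diameters of the $\mathscr Q_i$-cubes, hence are less than $\epsilon$.

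The main obstacle I anticipate is precisely the gluing step, i.e.\ verifying that the common refinement has the face-intersection property (iii) globally and not merely inside each chart. Two cells of $\mathscr P$ coming from different charts may meet in $U_i\cap U_j$ in a way that requires one to further cut by hyperplanes inherited from \emph{both} charts, and one has to check that after iterating this cutting process the outcome is still a finite complex (it is, because all hyperplanes involved come from a single finite list once $\delta$ and the charts are fixed) and that every maximal cell is contained in some $U_i$ (this follows from (b)). With these verifications in place, $\mathscr P$ is the required rational polyhedral complex, with $K\subset |\mathscr P|\subset K'$ and $\operatorname{diam}(\Delta)<\epsilon$ for each $\Delta\in\mathscr P$.
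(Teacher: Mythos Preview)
Your approach is correct and follows the same overall strategy as the paper: cover by finitely many integral affine charts, lay down enough rational hyperplanes in each chart to make all chambers smaller than $\epsilon$, and then pass to a common refinement. The paper, however, organizes the refinement step differently in a way that neatly dissolves the gluing obstacle you flag. Rather than transferring facets pairwise between charts and then checking compatibility, the paper takes each rational hyperplane appearing in some chart and extends it \emph{maximally} as a codimension-one rational polyhedron in $Y$ (this makes sense because rational hyperplanes are preserved under the $GL(n,\mathbb Z)\ltimes\mathbb R^n$ transitions, by Proposition~\ref{Polyhedron-transition-prop-appendix}). The resulting finite family $\mathscr H$ of globally defined hypersurfaces then cuts $K$ directly; the chambers of a global hyperplane arrangement automatically satisfy the face-intersection axiom, so no chart-by-chart gluing verification is needed. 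Your route yields the same complex after enough iterated refinement, but the paper's maximal-extension trick is what buys the one-line conclusion.
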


\begin{proof}
In an integral affine chart at some point $p\in K$, there always exists a rational hypersurface $H$ (codimension-one rational polyhedron) passing through $p$. By extension, we may require $H$ is maximal with respect to the inclusions among the compact domains in $Y$.
In special, the $H$ must `escape' $K$ in the sense that $H\cap K$ is closed in $K$.
Now, we cover $K'$ by finitely many open sets $V_1,\dots, V_m$ so that for every $1\le i\le m$, the closure $\bar V_i$ is compact and is contained in some integral affine chart $U_i$.
We may find a sufficiently dense collection of rational hyperplanes in $U_i\cong \phi(U_i)\subset \mathbb R^n$ so that every chamber enclosed by them has diameter less that $\epsilon$.
Extending all these hyperplanes to be maximal in the above sense, we get a collection $\mathscr H$ of closed rational hyperplanes in $K'$. By Proposition \ref{Polyhedron-transition-prop-appendix}, the $\mathscr H$ divides $K$ into rational polyhedrons each of which has diameter less than $\epsilon$. 
\end{proof}

\paragraph{\pmb{Acknowledgment.}}
This work is done during my PhD program at Stony Brook University.
My foremost thanks are to my advisor, Kenji Fukaya, for his encouragement, guidance and support along the way.
I would like to thank my college teacher Jun Sun for useful lessons of differential geometry.
I also benefit from conversations with
Mohammed Abouzaid, Mohamed El Alami, Catherine Cannizzo, Xujia Chen, Andrew Hanlon, 
Enrica Mazzon,
Mark McLean, 
Kevin Sackel, 
Yuhan Sun, Yi Wang,
and Aleksey Zinger.
I am grateful to
Tim Campion,
J\'er\^ome Poineau,
Xavier Xarles, and some anonymous people for discussions via
\textit{mathoverflow.net} from which I learn a lot about non-archimedean geometry; moreover, I am especially grateful to Will Sawin for his answer there which inspires the elegant proof of Lemma \ref{val=0-lem}.
Thanks also to the organizers of \textit{Western Hemisphere Virtual Symplectic Seminar} for giving a chance to have a video talk in April 2020. 
Finally, I would like to thank deeply anonymous referees for helpful comments.

\bibliographystyle{abbrv}
\bibliography{mybib}

\end{document}